\title[Power Towers]{Power Towers:\\
Purely Inseparable Galois Theory\\
and Foliations in Positive Characteristic}
\author{Przemysław Grabowski}
\date{\today}
\newcommand{\cO}{\mathcal{O}}
\newcommand{\C}{\mathbb{C}}
\newcommand{\Proj}{\mathbb{P}}
\newcommand{\op}{\operatorname}
\renewcommand{\phi}{\varphi}
  \newtheorem{thm}{Theorem}[section]
  \newtheorem{lemma}[thm]{Lemma}
  \newtheorem{prop}[thm]{Proposition}
  \newtheorem{cor}[thm]{Corollary}
  \theoremstyle{definition} 
  \newtheorem{defin}[thm]{Definition}
  \newtheorem{remark}[thm]{Remark}
  \newtheorem{obs}[thm]{Observation}
  \newtheorem{example}[thm]{Example}
  \newtheorem{question}[thm]{Question}
  \newtheorem{convention}[thm]{Convention}
    \newcommand{\F}{\mathcal{F}}
    \newcommand{\G}{\mathcal{G}}
    \newcommand{\E}{\textswab{E}}
    \newcommand{\D}{\mathcal{D}}
    \newcommand{\Z}{\mathbb{Z}}
\begin{document}

\begin{abstract}
    We build a purely inseparable Galois theory using non-derived commutative algebra. Our theory works on fields and on normal varieties. It says that a purely inseparable morphism corresponds to a finite (saturated) subalgebra of differential operators. Our approach unifies most of the literature about purely inseparable morphisms and shows new research directions. 

    Indeed, our theory extends to a theory that covers all (saturated) subalgebras of differential operators. The extended theory gives a correspondence between the subalgebras and a new notion of \textbf{power towers}. A power tower is an object analogous to a foliation from differential geometry. In particular, it admits its own versions of many key results about foliations, such as a ``fibrations inject into foliations'' and a ``Frobenius theorem''. The latter has a nontrivial twist: the local structure is trivial at every point, but it may differ between points! 
    
    In general, our analogy between power towers and foliations explains why purely inseparable morphisms are foliation-like, because these morphisms are power towers in an explicit way.

    Finally, we use our theory to produce a formula for pullbacks of canonical divisors for arbitrary purely inseparable morphisms. We use it to conclude some ``not-L{\"u}roth theorems'': if the characteristic is high enough, then any variety purely inseparably covered by an $n$-dimensional projective space has Iitaka dimension equal to $-\infty$ or $n$, i.e., it is not zero.
\end{abstract}

\maketitle

\tableofcontents

\newpage
\section{Introduction}

Algebraic geometry is an area of mathematics that studies systems of polynomial equations. Frequently, the coefficients of these equations are taken from a fixed field. Any field has a characteristic, which can be equal to zero, e.g., for complex numbers $\mathbb{C}$, or to a prime number $p>0$, e.g., for a finite field $\mathbb{F}_p$. In the second case, we say that the field is of a positive characteristic. Therefore, we have two types of algebraic geometries over fields: the characteristic zero and the positive characteristic. They have different flavors. The first one is analytical, the second one is arithmetical. Analysis is strongly developed. Consequently, we know plenty about the characteristic zero. On the other hand, classical calculus theorems are not true modulo $p$. Instead, we have the Frobenius morphism: $x\mapsto x^p$ on any ring satisfying $p=0$. This generates a new class of morphisms: purely inseparable morphisms.
This class is exclusive to the positive characteristic. Most of us know about them from the field theory, where they are called purely inseparable subfields.
Surprisingly, after over 100 years of studying fields abstractly, which was started in \cite{steinitz1910} by Ernst Steinitz and his influence is explained in \cite{about-steinitz}, we still know little about these morphisms - even when we restrict our attention to fields!
This paper aims to change it for both - for fields and for varieties.

\begin{defin}[Purely Inseparable Ring Map]\label{purely inseparable - rings - def}
    Let $B\to A$ be an injective morphism between commutative rings. This morphism is \emph{purely inseparable} of exponent at most $r$ if for every $a\in A$ we have $a^{p^r}\in B$. It is of exponent $r$ if the integer $r$ is the minimal such.
\end{defin}

\begin{defin}[Purely Inseparable Morphism]\label{purely inseparable - varieties - def}
   A morphism $f: X\to Y$ between normal varieties is \emph{purely inseparable} of exponent $\le r$ if it is finite, dominant, and affine locally it is purely inseparable of exponent $\le r$. It is of exponent $r$ if the integer $r$ is the minimal such.

   We say that the morphism $f$ is purely inseparable if there is a number $r$ such that $f$ is purely inseparable of exponent $\le r$.\footnote{For varieties, any purely inseparable morphism is of finite exponent, thus we only define these.}
\end{defin}

Our main goal in this paper is to establish a Galois-type description for all purely inseparable morphisms - for fields, where we work with subfields, and for varieties, where we work with morphisms. This description refers to a correspondence between these morphisms and other algebraic objects that are easier to handle. In our case, we cannot use groups of automorphisms, because these are blind to $p$-powers; thus, instead, we will use subalgebras of differential operators. We will get a correspondence between purely inseparable morphisms and finite subalgebras of differential operators. However, a natural framework for this theorem uses a new notion: \textbf{a power tower}, which is analogous to a foliation (a nice set of vector fields that resembles a fibration). With this notion, we get a correspondence between all subalgebras of differential operators and power towers that restricts to the correspondence for purely inseparable morphisms/subfields. 

As a happy consequence of the analogy between power towers and foliations, we establish new, never mentioned anywhere else, properties of purely inseparable morphisms. e.g., any such morphism naturally induces a stratification on its source!

Our secondary goal is to explore explicitly some positive characteristic phenomena. This label refers to all theorems that are unique to the positive characteristic compared to characteristic zero. Usually, they are about ``pathologies'' related to some techniques or theorems from characteristic zero not being true modulo $p$: 
vanishing theorems \cite{Vanishing_ordinary_abelian_Hacon_Zsolt},
the minimal model program for surfaces
\cite{MMP_surfaces_p_mumford},
etc. In many cases, such as the ones just cited, first we discover that they are no longer verbatim true modulo $p$, then we correct the statements. Often, the new theorems explicitly involve the Frobenius morphism or other purely inseparable morphisms. 

In particular, this paper explores the interaction between vector fields and fibrations modulo $p$. The expected result from the characteristic zero fails: vector fields tangent to the fibers do not determine the fibration! The correction is the notion of \textbf{power towers}. It emerges as the tangent vector fields plus all that must be added to that to determine the fibration.

Finally, we establish a formula for pullbacks of canonical divisors along arbitrary purely inseparable morphisms. So, we get a tool to track flows of geometric information along these morphisms. We use it to show how some unexpected unirationality of varieties may arise in positive characteristic.

Extra, this paper's informal short online presentation is available online \cite{YouTube-ME}. 

\pagebreak
\begin{question}
    Do purely inseparable morphisms explain all positive characteristic phenomena?
    
    For example, let's say we have a theorem from characteristic zero whose statement is not true in positive characteristic. Is there a purely inseparable morphism explicitly involved in this failure?
\end{question}

\subsection{A Brief Literature Overview}
The previous main approaches towards purely inseparable Galois theory are:

\begin{enumerate}
    \item In \cite{Jacobson1944}, Jacobson proved that purely inseparable subfields of exponent one correspond to $p$-Lie algebras.
    \item In \cite{Jacobson1964:FieldsAndGalois}, a book, we can find Jacobson--Bourbaki correspondence, which says that all subfields of a given field correspond to closed subalgebras of linear endomorphisms. This is based on the works of H. Cartan \cite{Cartan1947} and Jacobson \cite{Jacobson1947}. No specialization to all purely inseparable subfields was established, which was well known.
    \item In \cite{EkedahlFoliation1987}, Ekedahl proved  Jacobson correspondence for varieties. It covers exponent one. He also introduced some sketches towards higher exponents, but his ideas were not complete, nor followed by others, till now.
    \item In \cite{Sweedler-modular}, Sweedler proved that ``modular'' purely inseparable subfields correspond to sets of higher derivations. These can have arbitrary exponents, but they do not cover all subfields. In his thesis \cite{Wechter-modular-phd}, Wechter compared the Jacobson--Bourbaki correspondence with Sweedler's one. It says the higher derivations are a basis of the algebra.
    \item In \cite{brantner2023purely}, a preprint, Brantner and Waldron used ``derived $p$-Lie algebras'' called partition Lie algebroids to give a correspondence between them and purely inseparable subfields of any fixed finite exponent.
\end{enumerate}

Our theory of power towers unifies, expands, and completes the first three approaches. The fourth one is a case of our theory when extra structure of a free action of a specific infinitesimal group ($G\cong\alpha_{p^{n_1}}\oplus\ldots\oplus\alpha_{p^{n_k}}$) is present. That action induces a preferred basis for the algebra. 

Indeed, our Galois theory will show that purely inseparable subfields correspond to finite subalgebras of differential operators. This is a specialization of the Jacobson--Bourbaki correspondence. The case of exponent one is obtained by taking operators of order $1$; in this case, they generate the subalgebra. This is Jacobson's correspondence. We extend these results to varieties via saturation and get Ekedahl's correspondence. Then, we clarify his sketches using our theory. This is the first follow-up of these ideas in the literature. Finally, if a subfield is modular, then the higher derivations are a special choice of a basis for its subalgebra. But this is an extra choice; we want to work out a natural theory first.

\begin{question}
    Let $G$ be an infinitesimal finite group. Can we obtain ``a Galois theory in the form of Sweedler's one'' for the group $G$? I.e., how to describe quotients of free actions of $G$ on fields using ``higher derivations''-like objects? 
    
    The answer is: Yes, we can. However, the question asks ``how to do it nicely?'' because some choices are involved.
\end{question}

\subsection{What Is a Power Tower? Why Is It Like a Foliation?}

\begin{center}
    A \textbf{power tower} is a sequence of approximations up to $p^n$-powers. 
\end{center}

\begin{defin}[Up to $p^n$-powers.]
    Let $K$ be a field of characteristic $p>0$. Let $W\subset K$ be a subfield. An approximation of $W$ up to $p^n$-powers is the composite of the subfields $W$ and $K^{p^n}\coloneqq \{x^{p^j}: \ x\in K\}$, i.e., $W \cdot K^{p^n}$. We denote it by $W_n$.
\end{defin}

\begin{defin}[Power Tower]\label{Power Tower - Definition - intro}
    Let $K$ be a field of characteristic $p>0$. A power tower on $K$ is a sequence of subfields $W_n\subset K$ for $n=0,1,2,\ldots$ such that $W_j = W_i \cdot K^{p^j}$ for $j\le i$.
\end{defin}

In differential geometry, the notion of a foliation emerges as a product of gluing local fibrations on a manifold into a global object. This is how Ehresmann introduced it, see e.g. \cite{Ehresmann-history}. In particular, tautologically, (global) \textbf{fibrations inject into foliations}. Immediately, it was established that Lie subalgebras of tangent bundles are equivalent to foliations. Thus, this injection is just the fact that if we take a fibration, then vector fields tangent to its fibers, and then we integrate these vector fields, then in the end, we will get the initial fibration back. 

However, modulo $p$, vector fields are weaker. 

\begin{center}
    \textbf{Derivations kill $p$-powers modulo $p$}, e.g., $\frac{\partial}{\partial x}(g^p)=pg^{p-1}\frac{\partial}{\partial x}(g)=0$.
\end{center}
Thus, a single integration only gives the fibration up to $p$-powers, not the whole fibration. However, we can iterate and collect all the approximations up to $p^n$-powers together, and then they recover nice fibrations/morphisms/subfields. We provide an explicit example below, Example \ref{intro - example}.

Our Theorem \ref{infty=s refined - Corollary} covers all subfields that can be recovered from their power towers. In particular, we have:

\begin{itemize}
    \item \textbf{Purely inseparable subfields inject into power towers,} Lemma \ref{Finite Length = finite exponent - lemma}. Consequently, every such subfield admits a canonical decomposition into a sequence of subfields of exponent one - its power tower.
    \item \textbf{Purely transcendental subfields inject into $\infty$-foliations}, Proposition \ref{fibrations into infty-foliations - proposition}, a special class of power towers. 
\end{itemize}

Consequently, under certain natural assumptions, power towers allow us to reduce the study of subfields of a given field into a study of algebraic separable and purely inseparable subfields only, and their finite and infinite compositions.

For varieties, we extend the definition of power towers via normalization, Definition \ref{Power Tower on varieties - def}, and we obtain a similar result:
\begin{itemize}
    \item \textbf{Separable fibrations inject into $\infty$-foliations,} Theorem \ref{fibrations into infty-folaitions - varieties - theorem}.
\end{itemize}

\begin{question}
    Can we reduce the study of positive characteristic algebraic geometry to finite and infinite interactions between only three classes of operations: {\'e}tale morphisms, purely inseparable morphisms, and birational modifications?
\end{question}

\begin{center}
    Local structure of a power tower - a \textbf{Frobenius theorem}.
\end{center}

In (complex) differential geometry, foliations are trivial in local analytic coordinates, i.e., at every (regular) point $x$, in some formal coordinates, a foliation $(X/\mathbb{C},\mathcal{F})$ is given by 
\[
 \mathbb{C}[[t_1,\ldots,t_m]]\to \mathbb{C}[[t_1,\ldots,t_n]]\simeq \widehat{\cO_{X,x}},
\]
where $m\le n$. This is a Frobenius theorem for foliations, \cite[Theorem 2.20]{Voisin1}.

Power towers satisfy analogous theorem \ref{formal Frobenius Theorem for power towers- thm} - the  \textbf{Frobenius theorem for power towers}. Locally, in some formal coordinates, at a regular point, it is given by
\[
k[[t_1^{p^{a_1}},t_2^{p^{a_2}},t_3^{p^{a_3}},\ldots,t_d^{p^{a_d}}]]\to k[[t_1,t_2,t_3,\ldots, t_d]],
\]
where $0\le a_1\le a_2\le a_3 \le \ldots \le a_d \le \infty$. The convention is $t^{p^\infty}=0$. (The classical theorem corresponds to using only $a_i= 0, \infty$.) 

Consequently, we get a function $G$ from closed points to tuples of indices
$x \mapsto (a_1,\ldots,a_d)$. \textbf{This function does not have to be constant!} Nevertheless, it is constructible, Theorem \ref{stratification exists - thm}, and thus we get a stratification of our variety, each piece being the locus where the index is constant, e.g., Example \ref{not Ekedahl power tower}:

\[
\text{For } f: k[x,y] \supset k[x^{p^2}, y^{p}, y^{p+1}-x^p],
\text{ we get $G(f): (x,y)\mapsto (a_1,a_2)$=} \begin{cases}
(0,2) , \text{ if } y \ne 0,\\
(1,1) , \text{ if } y=0.
\end{cases}
\]

\begin{question}
    What are all possible stratifications induced by purely inseparable morphisms?
\end{question}

Power towers admit the same origin as foliations in differential geometry and have analogous formal local structures. Therefore, they are foliation-like, and anything that can be interpreted as a power tower is foliation-like too. This includes purely inseparable morphisms and nice fibrations.

\subsection{The Basic Example}
It is harder to state our Galois theory than to understand it in practice. Therefore, we will give an example and show how the theory applies to it.

\begin{example}\label{intro - example}
    Let $k$ be a perfect field of characteristic $p>0$. Let $f: \mathbb{A}^2_k\to\mathbb{A}^1_k$ be the projection corresponding to the ring inclusion $ k[x,y]\supset k[y]$ — a set of parallel lines on a plane. Here, we will compute its power tower. The following diagram summarizes it.

    \begin{center}
    \begin{tikzcd}[ampersand replacement=\&]
    {\color{blue}\ldots}\\
    {\color{blue}\frac{1}{p^2!}\frac{\partial^{p^2}}{\partial x^{p^2}}}\ar[rrrdd,end anchor={[yshift=5ex, xshift=-5ex]},mapsto, start anchor ={[yshift=2ex]}]\\
    {\color{blue} \frac{1}{p!}\frac{\partial^p}{\partial x^p}}
    \ar[rrd, mapsto,start anchor ={[yshift=1.45ex]}, end anchor ={[yshift=3.5ex, xshift=-7.5ex]}]\\
      {\color{red}k[x,y]}\ar[r, bend left =30, dash, "{\color{blue}\frac{\partial}{\partial x}}={\color{violet}\frac{\partial}{\partial x}}"] 
      \& {\color{red}k[x^p,y]}\ar[l, hook']\ar[r, bend left =30, dash, "{\color{violet}\frac{\partial}{\partial x^p}}"] 
      \& {\color{red} k[x^{p^2},y]}
      \ar[l, hook']\ar[r, bend left =30, dash, "{\color{violet}\frac{\partial}{\partial x^{p^2}}}",
end anchor={[yshift=1ex]}]
      \& {\color{red}\cdots} \ar[l, hook'] 
      \& {k[y]}\ar[l, hook']
    \end{tikzcd}
\end{center}

The relative tangent bundle of $f$, vector fields tangent to the fibers, is spanned by $\frac{\partial}{\partial x}$. Integrating $\frac{\partial}{\partial x}$ means to solve $\frac{\partial}{\partial x}(g)=0$. By definition, the solution is $k[y]$ up to $p$-powers of $k[x,y]$. It equals the ring $k[x^p,y]$, because derivations kill $p$-powers. The subring $k[x^p,y]$ and the vector field $\frac{\partial}{\partial x}$ clearly determine each other. This is a case of Jacobson--Ekedahl correspondence.

Vector fields are weaker modulo $p$. We see a manifestation $\{g\in k[x,y] \ : \frac{\partial}{\partial x}(g)=0\}\ne k[y]$. Nevertheless, we can repeat the above computation for $k[x^p,y]\hookleftarrow k[y]$. We will get $k[y]$ up to $p$-powers on $k[x^p,y]$, which is also $k[y]$ up to $p^2$-powers on $k[x,y]$. It is the subring $k[x^{p^2},y]$ that corresponds to the vector field $\frac{\partial}{\partial x^p}$.

We repeat over and over. We will get $\frac{\partial}{\partial x^{p^n}}$ and $k[x^{p^{n+1}},y]$ that determine each other. Moreover, any of this sequences determine $k[y]$, because $k[y]=\bigcap_{n\ge 0} k[x^{p^n},y]$. This is a case of nice fibrations injecting into power towers.

However, out of the {\color{blue}blue}, there is more to this story, because the iterative data we have just obtained can be packed into a subalgebra of differential operators, whose elements, after restricting them to the subrings, will become the vector fields doing approximations up to $p$-powers, e.g., 

$\frac{1}{p!}\frac{\partial^p}{\partial x^p}$ becomes $\frac{\partial}{\partial x^p}$ on $k[x,y]$, $\frac{1}{p^2!}\frac{\partial^{p^2}}{\partial x^{p^2}}$ becomes $\frac{\partial}{\partial x^{p^2}}$ on $k[x^{p^2},y]$, and so on.
\end{example}

\subsection{Galois Theory for Power Towers}

Our Galois theory for power towers, Theorems \ref{MainTheorem} and \ref{MainTheorem - var - theorem}, applied to Example \ref{intro - example}.

\begin{thm}[Galois-Type Correspondence for a Power Tower]
    Each of the following data from Example \ref{intro - example} can be computed from any other:
        \begin{itemize}
            \item {\color{red}Power Tower}: a sequence of approximations up to $p^n$-powers, e.g.,
            \[
            {\color{red} k[x,y]\supset k[x^p,y] \supset k[x^{p^2},y]\supset \ldots}.
            \]
            
            \item {\color{violet}Jacobson Sequence:} a particular sequence of sets of vector fields, $p$-Lie algebras, each making an approximation up to $p$-powers, e.g.,
            \[
            {\color{violet} \frac{\partial}{\partial x},\frac{\partial}{\partial x^{p}},\frac{\partial}{\partial x^{p^2}}, \ldots}.
            \]
            
            \item {\color{blue} Subalgebra of Differential Operators}, e.g.,
            \[
            {\color{blue}\left< 1,{\color{blue}\frac{\partial}{\partial x} }, {\color{blue}\frac{1}{p!}\frac{\partial^{p}}{\partial x^{p}}}, {\color{blue}\frac{1}{p^2!}\frac{\partial^{p^2}}{\partial x^{p^2}}},\ldots \right>} \subset \op{Diff}_k(k[x,y]).
            \]
        \end{itemize}
        Explicitly, these computations are done by the following operations:
        \begin{itemize}
            \item Annihilator: $\op{Ann}({\color{violet}\frac{\partial}{\partial x^{p^n}}})\coloneqq \{g\in {\color{red}k[x^{p^n},y]}: \ {\color{violet}\frac{\partial}{\partial x^{p^n}}}(g)=0\}={\color{red}k[x^{p^{n+1}},y]}$.
            \item Tangent Space: $T_{{\color{red}Y_n}/{\color{red}Y_{n+1}}}=\{a{\color{violet}\frac{\partial}{\partial x^{p^n}}}: \ a\in {\color{red}k[x^{p^n},y]}\}$.
            \item Subalgebra of $\op{Diff}_k(X)$: ${\color{blue}\left< 1,{\color{blue}\frac{\partial}{\partial x} }, {\color{blue}\frac{1}{p!}\frac{\partial^{p}}{\partial x^{p}}}, {\color{blue}\frac{1}{p^2!}\frac{\partial^{p^2}}{\partial x^{p^2}}},\ldots \right>}=\bigcup_{n\ge 0} \op{Diff}_{{\color{red}Y_n}}(X)$.
            \item Unpacking: $d(X/{\color{red}Y_n})({\color{blue}\frac{1}{p^n!}\frac{\partial^{p^n}}{\partial x^{p^n}}})={\color{violet}\frac{\partial}{\partial x^{p^n}}}$.
        \end{itemize}
        Above, we used the notation: $X=\op{Spec}(k[x,y]), Y_n=\op{Spec}(k[x^{p^n},y])$.
\end{thm}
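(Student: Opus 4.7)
The plan is to verify the four displayed formulas one by one; once each is checked, the three data mutually determine each other because the operations fit into a cycle that recovers the original object (Power Tower $\to$ Jacobson via Tangent Space, Jacobson $\to$ Power Tower via Annihilator, Power Tower $\to$ Subalgebra directly, Subalgebra $\to$ Jacobson via Unpacking).

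For the Annihilator, I write $u = x^{p^n}$ so that $k[x^{p^n}, y] = k[u, y]$ and $\frac{\partial}{\partial x^{p^n}} = \frac{\partial}{\partial u}$. The kernel of $\partial_u$ acting on $k[u, y]$ is precisely the subring of polynomials in $u^p$ and $y$, namely $k[x^{p^{n+1}}, y]$, by the standard characterization of derivation-killed polynomials in characteristic $p$. For the Tangent Space, $\Omega_{k[u,y]/k[u^p, y]}$ is freely generated by $du$: we have $dy = 0$ since $y$ lies in the base, and the relation $d(u^p) = p u^{p-1}\, du = 0$ imposes nothing further. Dually, $\op{Der}_{k[u^p, y]}(k[u, y], k[u, y])$ is the free $k[u, y]$-module of rank one on $\partial/\partial u = \partial/\partial x^{p^n}$, which is the desired description of $T_{Y_n/Y_{n+1}}$.

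For the Subalgebra and Unpacking, I would work with the Hasse--Schmidt operators $D^{(k)}_x \coloneqq \frac{1}{k!}\frac{\partial^k}{\partial x^k}$, which form a $k[x,y]$-basis of $\op{Diff}_{k[y]}(k[x, y])$. The Leibniz expansion $D^{(k)}_x(af) = \sum_{i+j=k} D^{(i)}_x(a)\, D^{(j)}_x(f)$, combined with the vanishing $\binom{p^m}{i} \equiv 0 \pmod p$ for $0 < i < p^m$, shows that $D^{(k)}_x$ is $k[x^{p^m}, y]$-linear exactly when $k < p^m$. Hence $\op{Diff}_{Y_m}(X)$ is the $k[x, y]$-span of $D^{(0)}_x, \ldots, D^{(p^m - 1)}_x$, and the union over $m$ is the $k[x, y]$-module generated by all $D^{(k)}_x$, which as a $k$-algebra is generated by $\{D^{(p^i)}_x\}_{i\ge 0}$ via base-$p$ decomposition of $k$ and the composition rule $D^{(a)}_x D^{(b)}_x = \binom{a+b}{a} D^{(a+b)}_x$. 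For Unpacking, Lucas' theorem gives $\binom{kp^n}{p^n} \equiv k \pmod p$, so $D^{(p^n)}_x(x^{kp^n} g(y)) = \binom{kp^n}{p^n} x^{(k-1)p^n} g(y)$ becomes $k u^{k-1} g(y) = \partial_u(u^k g(y))$ under $u = x^{p^n}$, matching $\partial/\partial x^{p^n}$ on $k[x^{p^n}, y]$.

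The main obstacle is the Subalgebra identification in the preceding paragraph: it is the only formula relying on nontrivial mod-$p$ arithmetic (the full vanishing pattern of $\binom{p^m}{i}$ in characteristic $p$) and on the fact that Hasse--Schmidt operators form a $k[x,y]$-basis of $\op{Diff}_{k[y]}(k[x, y])$ — a result requiring one to know the structure of differential operators on a polynomial ring. Once the four formulas are in place, the cycle closes: each of the three data can be recovered from any other by applying the appropriate operations in sequence, proving the theorem.
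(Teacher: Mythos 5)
Your verification is correct, and it takes a genuinely different route from the paper. The paper presents this introduction theorem purely as an instance of the general machinery: it is stated as Theorems~\ref{MainTheorem} and~\ref{MainTheorem - var - theorem} (Galois-Type Correspondence for Power Towers, on fields and on varieties) ``applied to Example~\ref{intro - example}'', with no independent proof given; the general theorems in turn rest on the Jacobson--Bourbaki correspondence, the Diff=End lemma, and the differential $d(K/W)$. You instead verify the four displayed formulas by direct, elementary computation in the example itself: the substitution $u = x^{p^n}$ and the kernel of $\partial_u$, the computation of $\Omega_{k[u,y]/k[u^p,y]}$, the Lucas-theorem analysis identifying $\op{Diff}_{Y_m}(X)$ with the $k[x,y]$-span of $D^{(0)}_x,\ldots,D^{(p^m-1)}_x$, and the congruence $\binom{kp^n}{p^n}\equiv k\pmod p$ for the unpacking. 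Each step is sound, and the cycle (tangent space, annihilator, union of $\op{Diff}$, unpacking) does close, so mutual determination follows. What the paper's route buys is generality and conceptual clarity about \emph{why} the correspondence exists; what your route buys is concreteness and independence from the heavy apparatus --- for this particular example you need only polynomial-ring calculus, the explicit description of $\op{Diff}_{k[y]}(k[x,y])$ as a free module on the Hasse--Schmidt operators $D^{(a)}_x$, and mod-$p$ arithmetic of binomial coefficients.
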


Purely inseparable morphisms/subfields inject into power towers. Nice fibrations inject into power towers. Therefore, we can use the above theorem in these cases. In this way, we obtain a purely inseparable Galois theory. And, a new insight that nice fibrations are ``infinite compositions'' of purely inseparable morphisms, thus they enjoy the same descriptions via taking appropriate limits.

\subsection{Foliations, $1$-Foliations, and $\infty$-Foliations}

Power towers are analogous to foliations from the characteristic zero or differential geometry. Here we explain their relationships.

Let $X/\mathbb{Z}_p$ be a smooth variety over $p$-adic integers. Let $X_0/\mathbb{F}_p$ be its reduction modulo $p$, i.e., $X_0=X\times_{ \op{Spec}(\mathbb{Z}_p)} \op{Spec}(\mathbb{F}_p)$. Let $\F\subset T_{X/\mathbb{Z}_p}$ be a regular foliation on $X$, i.e., a subbundle that is closed under Lie brackets.

Two cases may happen with $\F$ modulo $p$:
\begin{enumerate}
    \item It might be closed under $p$-powers. If so, then $\F$ modulo $p$ is a $1$-foliation on $X_0$.
    \item It might NOT be closed under $p$-powers. Then  $\F$ modulo $p$ is not foliation-like, but there is an obstruction towards being $1$-foliation. It is called a ``$p$-curvature''. It is described in \cite[Section 5]{Katz-nilpotent-connection}. This can be used to study foliations in the characteristic zero, see \cite{Pereira-mendson-foli_codim_1}.
\end{enumerate}
We are interested in the first case, where the information flows freely. However, we lose some information anyway. Indeed, if $X=\mathbb{A}^2=\op{Spec}(\mathbb{Z}_p[x,y])$ and $\F_1,\F_2$ are the relative tangent bundles for $\mathbb{Z}_p[y]\subset \mathbb{Z}_p[x,y]$ and $\mathbb{Z}_p[y+x^p]\subset \mathbb{Z}_p[x,y]$ respectively. They determine these fibrations, but their reductions modulo $p$ are equal, i.e., these two fibrations modulo $p$ are identical up to $p$-powers. What happened? Here is a helpful diagram:

\begin{center}
    \begin{tikzcd}
        X\to Y \ar[r, mapsto] & \op{Diff}_Y(X) \ar[r, mapsto]& T_{X/Y}\\
        \text{Fibrations on $X$} \ar[r, hook]\ar[d,"\text{mod } p"] &  \text{Foliations' on $X$} \ar[r, "\simeq"]\ar[d,"\text{mod } p", dashed] & \text{Foliations on $X$}\ar[d,"\text{mod } p", dashed] \\
        \text{Fibrations on $X_0$} \ar[r, hook]& \text{$\infty$-Foliations on $X_0$}\ar[r,two heads]& \text{$1$-Foliations on $X_0$}\\
        X_0\to Y_0 \ar[r, mapsto] & \op{Diff}_{Y_0}(X_0) \ar[r, mapsto]& T_{X_0/Y_0}
    \end{tikzcd}
\end{center}

The dashed arrows are well defined only on the elements from the first case. The arrows are dashed to \emph{emphasize} it.

The observation is that over $p$-adic integers (or in characteristic zero), derivations inside a subalgebra of differential operators determine\footnote{``Determine" is not  ``generate". The generation is true in characteristic zero, but not for the $p$-adic integers.} this subalgebra. Above, we denote ``foliations given by derivations'' by Foliations, and  ``foliations given by subalgebras of differential operators'' by Foliations'. These two notions are equivalent, but this stops being true after reducing them modulo $p$. The subalgebras are richer modulo $p$.

Consequently, the better reduction of $\F$ modulo $p$ is to take the (saturated) subalgebra of differential operators generated by it, and then to take the reduction modulo $p$ of that subalgebra. The result will be an $\infty$-foliation extending $\F$ modulo $p$. And, $\infty$-foliations differentiate between fibrations, because nice fibrations inject into $\infty$-foliations, so we no longer lose this information.

This discussion can be reversed into Theorem \ref{lifting => extension - thm}. Here we provide its non-formal version.

\begin{thm}[{Slogan version of Theorem \ref{lifting => extension - thm}}]
    Let $\F_0$ be a $1$-foliation on a variety $X_0$. If there is a lifting of $\F_0$ to a foliation $\F$ on $X$, which is a lifting of $X_0$, then $\F$ induces a unique extension of $\F_0$ to an $\infty$-foliation on $X_0$.

    Explicitly, $\F$ determines a saturated subalgebra of differential operators on $X$. The reduction modulo $p$ of this subalgebra is the extension.
\end{thm}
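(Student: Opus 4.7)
The plan is to produce the $\infty$-foliation explicitly as the reduction mod $p$ of the saturated subalgebra of differential operators on $X$ generated by $\F$, and then to verify that its first-order part is exactly $\F_0$.

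First I would set $\D := \D(\F) \subset \op{Diff}_{\mathbb{Z}_p}(X)$ to be the smallest $\cO_X$-submodule containing $\F$ that is closed under composition and $\cO_X$-saturated; here $\cO_X$-saturation is set up to include $\mathbb{Z}_p$-torsion-freeness, so $\D$ is $\mathbb{Z}_p$-flat. This allows me to form $\D_0 := \D \otimes_{\mathbb{Z}_p} \mathbb{F}_p$ and inject it into $\op{Diff}_{\mathbb{F}_p}(X_0)$. I would then verify that $\D_0$ is closed under composition and $\cO_{X_0}$-saturated — the second property being the more delicate, best handled by a local argument in \'etale coordinates on $X$, lifting candidate divisibility relations back through the infinitesimal thickenings $X_0 \subset X \bmod p^n$.

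The crux is to show that the order-one piece of $\D_0$ equals $\cO_{X_0} \oplus \F_0$. The inclusion $\supset$ is automatic from $\F \subset \D$. For the reverse, the order filtration on $\op{Diff}_{\mathbb{Z}_p}(X)$ passes to $\D$, and the order-one piece of $\D$ is $\cO_X \oplus \F$ by construction: we imposed no relations beyond Lie closure and $\cO_X$-saturation, both of which are already satisfied by $\F$ as a subbundle of $T_{X/\mathbb{Z}_p}$. This passes to order-one pieces after reduction, and this is where the hypothesis that $\F_0$ is a $1$-foliation enters essentially: in characteristic $p$, a Hochschild-type formula turns $p$-th power composites of derivations into new derivations, and to prevent these from enlarging the order-one part of $\D_0$ one needs $D^p \bmod p \in \F_0$ whenever $D \bmod p \in \F_0$, which is precisely closure of $\F_0$ under $p$-th powers.

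With the order-one piece pinned down, the variety version of the Galois-type correspondence (Theorem \ref{MainTheorem - var - theorem}) turns the saturated subalgebra $\D_0 \subset \op{Diff}_{\mathbb{F}_p}(X_0)$ into an $\infty$-foliation on $X_0$, and by the previous paragraph it extends $\F_0$. Uniqueness is built in: each stage of the construction $\F \mapsto \D \mapsto \D_0 \mapsto (\infty\text{-foliation})$ is canonical, so the $\F$-induced extension of $\F_0$ is well-defined. The main obstacle I expect is the compatibility of $\cO$-saturation with reduction mod $p$: showing both that the reduction of an $\cO_X$-saturated subalgebra is $\cO_{X_0}$-saturated, and that no ``ghost'' first-order operators appear in $\D_0$ beyond reductions of elements already in $\D$. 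Once those technical points are dealt with, the remainder of the argument is bookkeeping with the order filtration.
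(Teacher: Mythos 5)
Your proposal follows the same high-level skeleton as the paper's proof: form the saturated subalgebra $\D = \op{Sat}\langle\F\rangle$ over the base ring of Witt vectors, reduce modulo $p$, and read off a power tower via the Galois-type correspondence (Theorem \ref{MainTheorem - var - theorem}). The explicit formula for the extension and the uniqueness claim match the paper. But there is a genuine gap at the crucial step.

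You pin down the order-one piece of $\D_0$ and then assert that the correspondence "turns the saturated subalgebra $\D_0$ into an $\infty$-foliation." This does not follow. The correspondence only turns a saturated subalgebra into a \emph{power tower}; whether that power tower is an $\infty$-foliation is an additional property. An $\infty$-foliation requires that the ranks $r_1, r_2, \ldots$ of the Jacobson sequence are all equal, i.e., that each floor has the same rank as $\F_0$. Knowing the order-one piece of $\D_0$ fixes only $r_1$. For a counterexample to the logic, take a subfield $W$ of exponent $1$: then $\op{Diff}_W(K)$ is a saturated subalgebra whose order-one piece is $T_{K/W}$, but the corresponding power tower has length $1$, not $\infty$. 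The ranks in your $\D_0$ could a priori drop (they must be nonincreasing by Proposition \ref{Nonincreasing Degrees Power Tower - Proposition}), and nothing in your argument excludes this. Moreover, you haven't verified that the tower even has infinite length.

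The technical heart of the paper's proof is exactly what you skip: an inductive construction of explicit lifts $\overline{D_{i,(j)}}$ of saturated distinguished generators, using the trick $\overline{D_{i,(j+1)}} \sim \frac{\overline{D_{i,(j)}}^{\circ p} - C}{p!}$ where $C \in \mathbb{M}_j$ is a lift of $(\overline{D_{i,(j)}} \bmod p)^{\circ p}$ chosen inside the already-constructed module. This produces $r$ operators with leading forms exactly $\gamma_{p^{j-1}}(\overline{D_i})$ in each order $p^{j-1}$, which gives the key equality $\op{gr}\op{Sat}\langle\overline{\F}\rangle = S(\overline{\F}^*)^{*gr}$. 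After reduction mod $p$ one applies Proposition \ref{gr-saturared + regular = pd envelope - lemma} to conclude gr-saturated regular $\infty$-foliation. Your argument about $p$-power closure of $\F_0$ and "Hochschild-type formulas" gestures in the right direction for why the order-one piece is controlled, but it is orthogonal to what must be proved at orders $p, p^2, \ldots$ — that the divided powers $\gamma_{p^m}$ of elements of $\F$ really do lie in $\op{gr}\op{Sat}\langle\F\rangle$, which is not automatic from the definition of the saturation and is where the inductive construction does its work. You also omit the reduction (via Proposition \ref{codim 2 regular power tower finite length- prop}) to a big open subset where $\F_0$ is a subbundle, which the paper uses to ensure all the relevant sheaves are locally free.
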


This is a remarkable result, because from one lifting, one vector bundle, we get infinitely many vector bundles, the Jacobson sequence of the $\infty$-foliation.

\subsection{Canonical Divisor Formula}

In the positive characteristic, purely inseparable morphisms are a new type of bridge allowing for varieties' properties to flow through; new if compared to the characteristic zero. One of the basic holders of important geometric information is the canonical divisor, e.g., it is related to rationality questions. The following is our formula that extends Ekedahl's formula \cite{EkedahlFoliation1987}[Corollary 3.4] from exponent one to all purely inseparable morphisms. And after it, we apply it to unirationality.

\begin{thm}[{Canonical Divisor Formula: Theorems \ref{Canonical Divisor Formula - theorem} and \ref{canonical divisor for n-foli and ekedahl - prop}}]
    Let $f: X\to Y\to X^{(n)}$ be a purely inseparable morphism of exponent $n$ between normal varieties over a perfect field $k$ of characteristic $p>0$. We assume that the variety $X/k$ is geometrically connected.
    Let $\mathcal{F}_1=T_{X/Y}\coloneqq \Omega_{X/Y}^*$. Let $K_X, K_Y$ be canonical divisors of $X, Y$ respectively.
    
    Then, there exist explicit divisors $D_2,D_3,\ldots, D_n$ on $X$ such that
    \[
    f^* K_Y -K_X = (p^n - 1) (-c_1(\mathcal{F}_1)) + \sum_{i=2}^n (p^{n+1-i} - 1)D_i,
    \]
    where $c_1$ is the first Chern class.
    Explicitly, these divisors are given by, for $i\ge 2$, 
    \[
    D_i\coloneqq f_{i-1}^*c_1\left(\frac{T_{Y_{i-1}/k}}{\F_i+\G_{i-1}}\right),
    \]
    where $Y_\bullet$ is the power tower of $f$ on $X$ with $f_i:X\to Y_i$, $\mathcal{F}_\bullet$ is its Jacobson sequence, and $\G_{i} \coloneqq T_{Y_i/Y_{i-1}^{(1)}}$.

    Moreover, if $f: X\to Y$ is a $n$-foliation on $X$, then $D_i\ge 0$, for $i\ge 2$. 
    
    If $f: X\to Y$ is an Ekedahl $n$-foliation on $X$, then $D_i= 0$, for $i\ge 2$.
\end{thm}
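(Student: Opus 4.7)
The plan is to induct on the exponent $n$. The base case $n = 1$ is Ekedahl's formula \cite{EkedahlFoliation1987}: for a purely inseparable morphism $\pi\colon Z \to Z'$ of exponent one with $\mathcal{F} = T_{Z/Z'}$ one has $\pi^* K_{Z'} - K_Z = (p-1)(-c_1(\mathcal{F}))$. Since each step $\pi_i\colon Y_{i-1} \to Y_i$ of the power tower of $f$ is of exponent one with $\mathcal{F}_i = T_{Y_{i-1}/Y_i}$, Ekedahl's formula applies at every level.

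For the inductive step, factor $f = \pi_n \circ f_{n-1}$ where $f_{n-1}\colon X \to Y_{n-1}$ is purely inseparable of exponent $n-1$ with power tower $Y_0 \to \cdots \to Y_{n-1}$. Applying Ekedahl to $\pi_n$, pulling back by $f_{n-1}$, and combining with the inductive hypothesis for $f_{n-1}$ yields
\[
f^* K_Y - K_X \;=\; (p^{n-1}-1)(-c_1(\mathcal{F}_1)) + \sum_{i=2}^{n-1}(p^{n-i}-1)\, D_i + (p-1)\, f_{n-1}^*(-c_1(\mathcal{F}_n)).
\]
Subtracting this from the desired formula reduces the whole theorem to a single Chern-class identity on $X$:
\[
f_{n-1}^* c_1(\mathcal{F}_n) \;=\; p^{n-1}\, c_1(\mathcal{F}_1) \;-\; \sum_{i=2}^{n-1} p^{n-i}\, D_i \;-\; D_n.
\]

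The main obstacle is establishing this identity, which is where the Frobenius structure built into the power tower is essential. Because $\pi_i$ has exponent one, the relative Frobenius $F_{Y_{i-1}/k}$ factors as $Y_{i-1} \xrightarrow{\pi_i} Y_i \to Y_{i-1}^{(1)}$, producing on $Y_{i-1}$ exact sequences relating $\mathcal{F}_i$, $\mathcal{G}_{i-1}$, and $T_{Y_{i-1}/k}$; by construction the cokernel of $\mathcal{F}_i + \mathcal{G}_{i-1} \hookrightarrow T_{Y_{i-1}/k}$ is exactly what enters into $D_i$. Pulling back a line bundle through a purely inseparable morphism that factors through a Frobenius multiplies its first Chern class by $p$, so iterating the Frobenius factorization along $X \to Y_1 \to \cdots \to Y_{n-1}$ converts $f_{n-1}^* c_1(\mathcal{F}_n)$ into $p^{n-1}\, c_1(\mathcal{F}_1)$ plus an error at each level, and these errors are precisely the divisors $D_2,\ldots, D_n$. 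The bookkeeping must be handled carefully because the relative tangent sequences can fail to be left exact in positive characteristic; one works on the dense open regular locus and extends the Chern-class equality by codimension, or equivalently passes to the saturations that appear in the definition of $D_i$.

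Finally, the foliation assertions are structural. When $f$ is an $n$-foliation, the defining condition at level $i$ forces $\mathcal{F}_i + \mathcal{G}_{i-1} \subset T_{Y_{i-1}/k}$ to have torsion cokernel, supported in codimension $\ge 1$; its first Chern class is then represented by an effective divisor, and purely inseparable pullback preserves effectivity, giving $D_i \ge 0$. When $f$ is an Ekedahl $n$-foliation, the stronger equality $\mathcal{F}_i + \mathcal{G}_{i-1} = T_{Y_{i-1}/k}$ holds, so the cokernel vanishes and $D_i = 0$.
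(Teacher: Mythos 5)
Your proposal is correct and is essentially the paper's argument repackaged as an induction on the exponent: the key Chern-class identity you reduce to, $f_{n-1}^* c_1(\F_n) = p^{n-1} c_1(\F_1) - \sum_{i=2}^{n-1} p^{n-i} D_i - D_n$, is precisely what the paper obtains by iterating its step-3 relation, which in turn comes from a $3\times 3$ diagram of short exact sequences built from the Frobenius factorization $Y_{i-1}\to Y_i\to Y_{i-1}^{(1)}$, and the factor of $p$ appears exactly as you describe via the Frobenius twist. One small imprecision in your final paragraph: for an Ekedahl $n$-foliation the equality $\F_i + \G_{i-1} = T_{Y_{i-1}/k}$ need not hold everywhere — the Ekedahl condition only forces the torsion cokernel to be supported in codimension at least $2$ — but its first Chern class still vanishes, so the conclusion $D_i = 0$ is unaffected.
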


\begin{question}
    What are possible collections of divisors coming from the above formula?
\end{question}

We can use the above formula to prove some ``not-L{\" u}roth theorems'' for projective spaces.

\begin{thm}[{Theorem \ref{big not-luroth theorem}}]
    Let $n>1$ be an integer. 
    Let $\mathbb{P}^n_k$ be the $n$-dimensional projective space over a perfect field $k$ of characteristic $p>0$. 
    Let $p-1$ not divide $n+1$.
    
    If $f: \mathbb{P}^n_k\to Y$ is a purely inseparable morphism between normal varieties over $k$, then the Iitaka dimension of a canonical divisor of $Y$ is equal $-\infty$ or $n$, i.e., we have
    \[
    \kappa(Y, K_Y) = -\infty, \text{or } n.
    \]
\end{thm}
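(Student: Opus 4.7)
The plan is to pull $K_Y$ back to $\Proj^n_k$ using the Canonical Divisor Formula, express the result as an integer multiple $cH$ of the hyperplane class, and then observe that the arithmetic hypothesis $p-1\nmid n+1$ forces $c\neq 0$; this rules out the case $\kappa(Y,K_Y)=0$ and leaves only the two options $\kappa=-\infty$ and $\kappa=n$.

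Let $e$ denote the exponent of $f$ and let $H$ be the hyperplane class on $\Proj^n_k$. Since $\Proj^n_k$ is geometrically connected, the first step is to apply Theorem \ref{Canonical Divisor Formula - theorem} to obtain
\[
f^*K_Y \;=\; K_{\Proj^n_k} \;+\; (p^e-1)\bigl(-c_1(\F_1)\bigr) \;+\; \sum_{i=2}^{e}(p^{e+1-i}-1)\,D_i,
\]
where $\F_1$ and $D_2,\ldots,D_e$ are the divisor classes on $\Proj^n_k$ produced from the power tower of $f$. Next, using $\op{Pic}(\Proj^n_k)=\Z\cdot H$ and $K_{\Proj^n_k}=-(n+1)H$, I would write $c_1(\F_1)=aH$ and $D_i=b_iH$ for some integers $a,b_i\in\Z$, so that $f^*K_Y = cH$ with
\[
c \;=\; -(n+1) \;-\; (p^e-1)\,a \;+\; \sum_{i=2}^{e}(p^{e+1-i}-1)\,b_i.
\]

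The heart of the argument is then arithmetic: every factor $p^k-1$ with $k\ge 1$ is divisible by $p-1$, so reducing modulo $p-1$ yields $c\equiv -(n+1)\pmod{p-1}$. The hypothesis $p-1\nmid n+1$ therefore gives $c\neq 0$. From $c\neq 0$ one reads off $\kappa(\Proj^n_k, cH) = n$ if $c>0$ and $-\infty$ if $c<0$; finally, applying the standard invariance of the Iitaka dimension under finite surjective pullback to the purely inseparable (in particular, finite and surjective) morphism $f$ gives $\kappa(Y,K_Y) = \kappa(\Proj^n_k, f^*K_Y) \in \{-\infty, n\}$.

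Once the Canonical Divisor Formula is available, this argument is essentially bookkeeping and I do not expect a serious obstacle. The one point that deserves care is checking that the classes $\F_1$ and the $D_i$ are genuine integral divisor classes on $\Proj^n_k$, so that the mod-$(p-1)$ reduction makes sense; this will be automatic from their definition as first Chern classes of coherent sheaves. Notably, the signs of $a$ and of the $b_i$ play no role, so the effectivity statement $D_i\ge 0$ — which would require $f$ to be an $e$-foliation — is not needed here.
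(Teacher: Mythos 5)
Your proposal is correct and follows essentially the same route as the paper: pull back $K_Y$ via the Canonical Divisor Formula, identify everything in $\op{Pic}(\Proj^n_k)\cong\Z$, observe that all coefficients $p^k-1$ vanish mod $p-1$ so that $f^*K_Y\equiv -(n+1)H\pmod{p-1}$, and conclude $f^*K_Y\neq 0$ from the hypothesis $p-1\nmid n+1$. The paper phrases this as a proof by contradiction (assume $f^*K_Y=0$ and read off $p-1\mid n+1$) and invokes its own Lemma \ref{iitaka is preserved under purely insep - lem} for the Iitaka-dimension transfer rather than citing a general invariance statement, but the mathematical content is identical, including your correct observation that the signs and effectivity of the $D_i$ are irrelevant here.
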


This is interesting, because for $n=2$, we cannot have $\kappa(Y, K_Y)=0$, which is puzzling. Indeed, K3 surfaces are such $Y$s, and in the positive characteristic, there are unirational K3 surfaces, see e.g. \cite{Liedtke_K3_unirational_part1},\cite{Liedtke_K3_unirational_part2}, and the rational maps $\mathbb{P}^2\dashrightarrow Y$ are not separable on the generic points. Our result implies that this map cannot be obtained directly from a single purely inseparable morphism, and thus some birational modifications must be present.

\begin{question}
    I believe that if $p-1$ divides $n+1$, then $\kappa(Y, K_Y) \ne -\infty, \text{and } n$ is possible. Of course, it will equal zero then, but I don't have any example. 
    
    Can we uniformly describe all cases when $p-1$ divides $n+1$ and $\kappa(Y, K_Y)=0$? Are these morphisms ``special''?
\end{question}

\subsubsection*{Conventions}\label{Conventions}

\begin{itemize}
    \item 
    Let $X$ be a scheme over a field $k$. The scheme $X$ is a \emph{\textbf{variety}} over the field $k$ if it is integral, and the morphism $X\to \op{Spec}(k)$ is separated and of finite type.
\end{itemize}

\subsubsection*{Structure of the paper} The paper consists of $7$ main sections:
\begin{enumerate}
    \item Introduction. You have just read it.
    \item Preliminaries, where we fix notation and recall some basic facts about fields, purely inseparable subfields/morphisms, differential operators, and divided powers. Though some facts might be actually new, we prove them.
    \item We review previous methods for describing purely inseparable subfields/morphisms that directly precede the developments of this paper.
    \item We develop a theory of power towers on fields.
    \item We develop a theory of power towers on normal varieties.
    \item We discuss a subproblem of a moduli problem for power towers. Namely, a problem of extending power towers into longer power towers.
    \item We prove the formula for the pullback of canonical divisors with respect to an arbitrary purely inseparable morphism, and we give an application of this formula.
\end{enumerate}

\subsubsection*{Acknowledgment}
The results of this paper were a part of the author’s PhD
thesis (University of Amsterdam 2025), \cite{Grabowski_PhD_thesis}. The thesis is available online: \url{https://hdl.handle.net/11245.1/5c74a618-d534-4030-a083-93e68cd90fc8}. I am grateful to my supervisors, Diletta Martinelli and Mingmin Shen, for giving me this opportunity and their great support. In particular, my PhD research has been carried out at the Korteweg-de Vries Institute for Mathematics
and was funded by the MacGillavry Fellowship of Diletta Martinelli.

I would like to thank my PhD committee:  P. Cascini,
 B.J.J. Moonen,
 Z. Patakfalvi,
 S. Shadrin,
 A.L. Kret,
 J.V. Stokman for being fierce opponents.

I would like to thank an anonymous reviewer who rejected my first preprint containing some results about power towers on fields. Their work was quick and their comments immensely helpful!

I would like to thank Piotr Achinger, Fabio Bernasconi, Federico Bongiorno, Lukas Brantner, Remy van Dobben de Bruyn, Paolo Cascini, Zsolt Patakfalvi, Calum Spicer, Roberto
Svaldi, Lenny Taelman, and Joe Waldron for discussions about purely inseparable Galois theories, foliations, and related topics. These were helpful for the initial formal development of this theory, sharing its content with others, and keeping me motivated.

The work of adjusting the thesis into this paper has been carried out while employed as a Simons postdoc (2025-2028) in mathematics at Kyiv School of Economics.

\newpage
\section{Preliminaries}
This section fixes notation and recalls key definitions that are used in this paper, as well as some basic facts about them. Some propositions might actually be new. An expanded version of this preliminary can be found in my thesis \cite{Grabowski_PhD_thesis}.

\subsection{Definitions and Basic Facts about Fields}

Most of the field theory in this paper is standard and easily accessible. For example, here are some sources: Section 09FA in \cite{stacks-project}, Jacobson's book \cite{Jacobson1964:FieldsAndGalois}, and Bourbaki's book \cite{Bourbaki:Algebra2Chapter4-7}.

\subsubsection{Purely Inseparable}

\begin{defin}\label{Purely Inseparable Extension - Definition}
    Let $K$ be a field of characteristic $p>0$.

    A field extension $K/W$ is \emph{purely inseparable} if for every $x\in K$ there is a natural number $n$ such that $x^{p^n}\in W$.
\end{defin}

\begin{defin}[Subfields of $p$-Powers]\label{Subfields of $p$-Powers: definition}
Let $K$ be a field of characteristic $p>0$. 
Let $n\ge 0$ be an integer.
A subfield of $p^n$-powers of $K$ is the subfield $K^{p^n}\coloneqq \{ x^{p^n}; x\in K \}$.  
\end{defin}

\begin{defin}[Supfields of $p$-Roots]\label{Supfields of $p$-Roots: definition}
Let $K$ be a field of characteristic $p>0$. 
Let $n\ge 0$ be an integer.
A supfield of $p^n$-roots of $K$ is the subfield $K^{1/p^n}\coloneqq \{ x^{1/p^n} \in \overline{K}; x\in K \}$ of an algebraic closure $\overline{K}$ of $K$.  
\end{defin}

\begin{defin}
    Let $K$ be a field of characteristic $p>0$. 
    The field $K$ is \emph{perfect}, if $K=K^p$.
\end{defin}

The following naming ``(co-)perfection'' I took from \cite[Definition 3.4.1]{Kedlaya_prismatic}.

\begin{defin}\label{Coperfection: Definition}
    A \emph{coperfection}, or a \emph{perfect closure}, of a field $K$ is a supfield $K^{1/p^\infty}\coloneqq\bigcup_{n\ge 0} K^{1/p^n}$, where $p$-roots and the union are taken in a single algebraic closure of $K$.
\end{defin}

\begin{defin}\label{Perfection: Definition}
    Let $K$ be a field of characteristic $p>0$. 
    A \emph{perfection} of a field $K$ is the subfield $K^{p^\infty}\coloneqq \bigcap_{n\ge 0} K^{p^n}$.
\end{defin}

\begin{lemma}\label{The Largest Perfect Subfield}
    The perfection of a field is a perfect field. Moreover, it is the largest perfect subfield of this field.
\end{lemma}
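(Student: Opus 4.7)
The plan is to verify two things separately: that $K^{p^\infty}$ is perfect (i.e., closed under $p$-th roots inside itself), and that any perfect subfield of $K$ lies inside $K^{p^\infty}$.

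For perfection, one inclusion $(K^{p^\infty})^p \subset K^{p^\infty}$ is immediate: if $x \in K^{p^n}$ for every $n$, then $x^p \in K^{p^{n+1}} \subset K^{p^n}$ for every $n$, so $x^p \in K^{p^\infty}$. The content is the reverse inclusion $K^{p^\infty} \subset (K^{p^\infty})^p$. Given $x \in K^{p^\infty}$, I would, for each $n \ge 1$, pick $y_n \in K$ with $y_n^{p^n} = x$ and set $z_n \coloneqq y_n^{p^{n-1}}$, so that $z_n \in K^{p^{n-1}}$ and $z_n^p = x$. The key point is that these $z_n$ coincide: from $z_n^p = z_m^p$ and the injectivity of Frobenius on $K$, I get $z_n = z_m$. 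Calling the common value $z$, I obtain $z \in \bigcap_{m \ge 0} K^{p^m} = K^{p^\infty}$, and $z^p = x$, as desired.

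For the maximality claim, let $L \subset K$ be any perfect subfield. Then $L = L^p$, and by induction $L = L^{p^n} \subset K^{p^n}$ for every $n$. Intersecting over $n$ gives $L \subset K^{p^\infty}$, which is the claim.

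The only step with any actual content is the well-definedness of the $p$-th root inside $K^{p^\infty}$, namely that the various candidates $z_n$ must all agree; this is where Frobenius injectivity on a field is used essentially. Everything else is formal manipulation with the definitions of $K^{p^n}$ and a straightforward induction.
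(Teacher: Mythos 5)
Your proof is correct and follows the same strategy as the paper's: show $(K^{p^\infty})^p = K^{p^\infty}$ and then that any perfect subfield descends into every $K^{p^n}$, hence into the intersection. The only difference is that you spell out, via injectivity of Frobenius on a field, the step the paper compresses into the one-line identity $\left(\bigcap_{n\ge 0} K^{p^n}\right)^p = \bigcap_{n\ge 0} K^{p^{n+1}}$; this is a faithful unpacking of the paper's argument, not a genuinely different route.
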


\begin{proof}
    Let $K$ be a field with $\op{char}(K)=p>0$.
    We prove its perfection is perfect. Indeed, we have:
    \[
    {\left(K^{p^\infty}\right)}^p=\left(\bigcap_{n\ge 0} K^{p^n}\right)^p=\bigcap_{n\ge 0} K^{p^{n+1}}=\bigcap_{n\ge 0} K^{p^n}=K^{p^\infty}.
    \]
    
    We prove it is the largest perfect subfield. We notice that if $L$ is a perfect subfield, then, for every $n\ge 0$, we have $K^{p^n}\supset L^{p^n}=L^{p^{n-1}}=\ldots=L$. Therefore, we conclude $\bigcap_{n\ge 0} K^{p^n}\supset L$.
\end{proof}

\begin{defin}\label{Exponent: Definition}
Let $K$ be a field of characteristic $p>0$.
Let $n\ge 0$ be an integer, or $\infty$.

A field extension $K/W$ is of \emph{exponent $\le n$} if  $K\supset W\supset K^{p^n}$. And, it is of exponent $n$, if the $n$ is minimal such.
\end{defin}

\begin{remark}
    In the literature, at least two names are used to refer to exponents of subfields. For example, the first one is the \emph{exponent} used in Jacobson's book \cite{Jacobson1964:FieldsAndGalois}. For instance, the second one is a \emph{height} used in Bourbaki's book \cite{Bourbaki:Algebra2Chapter4-7}. I prefer to use exponents.
\end{remark}

\begin{lemma}\label{Purely Inseparable is of Finite Exponent}
Let $K$ be a field of characteristic $p>0$ such that $K/K^{p^\infty}$ is a finitely generated field extension. 

Then every purely inseparable subfield of $K$ of exponent $\le \infty$ is of finite exponent.
\end{lemma}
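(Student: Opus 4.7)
Let $W \subset K$ be a purely inseparable subfield. The plan is to show that $K^{p^\infty} \subset W$, and then use the finite generation of $K/K^{p^\infty}$ to extract a uniform exponent.

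\textbf{Step 1: $K^{p^\infty} \subset W$.} I would prove this directly. Take any $x \in K^{p^\infty}$. Since $K^{p^\infty}$ is perfect (Lemma \ref{The Largest Perfect Subfield}), for every $k\ge 0$ there is some $z_k \in K^{p^\infty}$ with $z_k^{p^k} = x$. Pick one such $z_k$ and view it as an element of $K$; since $K/W$ is purely inseparable, there exists $\ell$ with $z_k^{p^\ell} \in W$. Choosing $k \ge \ell$ from the start, one writes $x = z_k^{p^k} = (z_k^{p^\ell})^{p^{k-\ell}} \in W$. Hence $K^{p^\infty} \subset W$.

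\textbf{Step 2: bounding the exponent via finite generation.} By hypothesis, $K = K^{p^\infty}(x_1,\ldots,x_n)$ for some $x_1,\ldots,x_n \in K$. Because $K/W$ is purely inseparable, there exist integers $m_1,\ldots,m_n$ with $x_i^{p^{m_i}} \in W$. Set $m\coloneqq \max_i m_i$, so that $x_i^{p^m} \in W$ for every $i$.

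\textbf{Step 3: $K^{p^m} \subset W$.} An arbitrary element of $K$ is a rational expression $P(x_1,\ldots,x_n)/Q(x_1,\ldots,x_n)$ with coefficients in $K^{p^\infty}$. Taking the $p^m$-th power and using that $K^{p^\infty}$ is perfect, so that $(K^{p^\infty})^{p^m} = K^{p^\infty}$, every element of $K^{p^m}$ becomes a rational expression in the $x_i^{p^m}$ with coefficients in $K^{p^\infty}$. By Step 1 and the choice of $m$, all of these ingredients lie in $W$, so $K^{p^m} \subset W$. This says exactly that the extension $K/W$ has exponent at most $m$, finishing the proof.

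\textbf{Main obstacle.} The only nontrivial point is Step 1: the fact that a purely inseparable subfield automatically swallows the whole perfection $K^{p^\infty}$. The subtlety is that the pure inseparability hypothesis on $K/W$ gives, for a given $y\in K$, a bound $\ell(y)$ on the $p$-power needed to land in $W$, but the bound is not uniform; the trick is that elements of $K^{p^\infty}$ admit arbitrarily deep $p$-th roots inside $K^{p^\infty}$ itself, which lets us absorb any such $\ell$ by replacing $x$ with a sufficiently high root. Once this is in hand, Steps 2 and 3 are a routine bookkeeping argument using finite generation and perfection.
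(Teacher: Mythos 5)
Your Step~1 is where the argument goes wrong, and the gap is real. The quantifiers are circular: you first fix $k$, then choose $z_k$ with $z_k^{p^k}=x$, and only then obtain $\ell$ (depending on $z_k$, hence on $k$) with $z_k^{p^\ell}\in W$; you cannot ``choose $k\ge\ell$ from the start'' because $\ell$ is not known until $z_k$ has already been picked, and nothing prevents $\ell$ from growing at least as fast as $k$. In fact the conclusion of Step~1 is false without further hypotheses: let $K=\mathbb{F}_p(t^{1/p^\infty},x)$ with $x$ transcendental, so $K^{p^\infty}=\mathbb{F}_p(t^{1/p^\infty})$ and $K/K^{p^\infty}$ is finitely generated, and take $W=\mathbb{F}_p(t,x)$. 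Then $K/W$ is purely inseparable, yet $t^{1/p}\in K^{p^\infty}\setminus W$; for that $x=t^{1/p}$ the unique $z_k$ is $t^{1/p^{k+1}}$, which needs $\ell=k+1>k$ for every $k$. This same example has infinite exponent, so it also shows the lemma would be false if the phrase ``of exponent $\le\infty$'' were dropped.

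The resolution is already in the statement: by Definition~\ref{Exponent: Definition} with $n=\infty$, ``of exponent $\le\infty$'' means precisely $K\supset W\supset K^{p^\infty}$, so the inclusion you try to derive in Step~1 is a hypothesis, not a claim to prove. Granting it, your Steps~2 and~3 are correct and are exactly the paper's proof: take finitely many generators $x_1,\ldots,x_n$ of $K$ over $K^{p^\infty}$, choose $m\ge m_i$ with $x_i^{p^{m_i}}\in W$, and observe $K^{p^m}=K^{p^\infty}\bigl(x_1^{p^m},\ldots,x_n^{p^m}\bigr)\subset W$.
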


\begin{proof}
    Let $K/W/K^{p^\infty}$ be a purely inseparable extension of exponent $\le \infty$. Let $f_i$ be a finite set of generators of $K$ over $K^{p^\infty}$. Then, for every $f_i$ there is an integer $n_i>0$ such that $f_i^{p^{n_i}}\in W$. Let $N$ be the greatest integer among them. Then,
    \[
    K^{p^{N}}=K^{p^\infty}(f_i^{p^{N}})\subset W.
    \] 
    This proves that $K/W$ is of exponent $\le N$.
\end{proof}

\pagebreak
\subsubsection{$p$-basis and separable transcendental basis}

\begin{lemma}[Separable Closure, {\cite{stacks-project}[09HC, 030K]}]\label{SeparableClosure: definition}
Let $L/K$ be an extension of fields. There is a maximal subextension $L/K^s/K$ such that $K^s/K$ is separable algebraic, where the maximality means that if $L/F/K$ is a subextension such that $F/K$ is separable algebraic, then $F \subset K^s$. The field $K^s$ is called a \emph{separable closure} of $K$ in $L$.

Moreover, if $L/K$ is algebraic, then $L/K^s$ is purely inseparable.
\end{lemma}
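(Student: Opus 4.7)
The plan is to define $K^s$ directly as the set of all elements of $L$ that are separable algebraic over $K$, and then show this set is a field with the required maximality property; the ``moreover'' clause will follow by analyzing minimal polynomials.

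First I would set $K^s \coloneqq \{\alpha \in L : \alpha \text{ is separable algebraic over } K\}$. The nontrivial step is showing $K^s$ is closed under the field operations. For this, given $\alpha,\beta \in K^s$, I would observe that $K(\alpha,\beta)/K$ is finite separable: it is finite because both $\alpha$ and $\beta$ are algebraic of finite degree, and it is separable because the compositum of two finite separable extensions inside a common overfield is separable (a standard fact, provable via the primitive element theorem applied to $K(\alpha,\beta)$, or equivalently via the characterization of separability by linear disjointness from $K^{1/p^\infty}$). Once $K(\alpha,\beta) \subseteq K^s$, closure under $+,-,\cdot,\div$ is immediate. That $K^s \supseteq K$ and that $K^s/K$ is separable algebraic (as a union of the finite separable subextensions $K(\alpha)/K$ for $\alpha \in K^s$) then follows directly, and maximality is automatic from the definition: any separable algebraic subextension $F/K$ sits inside $K^s$ element by element.

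For the ``moreover'' part, I would assume $L/K$ is algebraic and pick an arbitrary $\alpha \in L$; the task is to show $\alpha^{p^n} \in K^s$ for some $n \geq 0$. Let $f(x) \in K[x]$ be the minimal polynomial of $\alpha$ over $K$. Using the standard structure theorem for irreducible polynomials in characteristic $p$, I can write $f(x) = g(x^{p^n})$ for a unique $n \geq 0$ and a separable irreducible $g(x) \in K[x]$. Then $\alpha^{p^n}$ is a root of the separable polynomial $g$, so it is separable algebraic over $K$, i.e., $\alpha^{p^n} \in K^s$. Hence $\alpha$ is purely inseparable over $K^s$, and since $\alpha$ was arbitrary, $L/K^s$ is purely inseparable.

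The main potential obstacle is the closure-under-sum/product step, which rests on the nontrivial classical result that separability is preserved under composita. If I wanted a self-contained argument without invoking that theorem as a black box, I would reduce to the finite case and characterize separability of $K(\alpha,\beta)/K$ via the injectivity of $K(\alpha,\beta) \otimes_K K^{1/p^\infty} \to L \otimes_K K^{1/p^\infty}$, or equivalently via $\Omega_{K(\alpha,\beta)/K}$ having the expected dimension, but in practice I would simply cite the fact as it is standard (and already used implicitly in the sources listed just before the lemma).
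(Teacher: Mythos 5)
Your proposal is correct and essentially the standard proof. Note, however, that the paper does not prove this lemma at all: it is cited verbatim from the Stacks project (tags 09HC and 030K), so there is no internal argument to compare against. Your construction — defining $K^s$ as the set of separable algebraic elements, proving it is a subfield via composita of finite separable extensions, reading off maximality element-by-element, and handling the purely inseparable tail by factoring the minimal polynomial as $f(x)=g(x^{p^n})$ with $g$ separable — is exactly how the cited Stacks entries (and most textbooks) do it. One small thing worth flagging: your ``moreover'' argument implicitly assumes $\operatorname{char}(K)=p>0$ when you invoke the decomposition $f(x)=g(x^{p^n})$; in characteristic zero every algebraic extension is separable, so $L=K^s$ and the statement is vacuous. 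This costs nothing but a one-line remark, and since the paper's notion of ``purely inseparable'' (Definition~\ref{purely inseparable - rings - def}) is defined only in characteristic $p$, your focus on that case is the right one.
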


The following two definitions are from \cite[030O (2)]{stacks-project}, or \cite[Definition on page 557]{Eisenbud:Commutative}.

\begin{defin}
    Let $K/W$ be a field extension. A subset $S\subset K$
    is a \emph{separable transcendence basis} of $K/W$ if $S$ is a transcendence basis of $K/W$, and the extension $K/W(x)_{x\in S}$ is separable algebraic, i.e., $\left(W(x)_{x\in S}\right)^s=K$.

    If $K/W$ admits a separable transcendence basis, then we say that $K$ is \emph{separably generated} over $W$.
\end{defin}

\begin{defin}\label{Separable: Definition}
    A field extension $K/W$ is a \emph{separable} extension if for every subextension $K/K'/W$ with $K'$ finitely generated as a field over $W$, the extension $K'/W$ is separably generated.
\end{defin}

\begin{cor}[MacLane 1939, \cite{MacLane:ModularFields}]\label{Over Perfect is Separable}
    Let $K/k$ be a field extension with $k$ being a perfect field. Then, this extension is separable.
\end{cor}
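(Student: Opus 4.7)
The plan is to verify the definition of separable extension directly. Fix any intermediate field $K/K'/k$ with $K'$ finitely generated over $k$; we must exhibit a separable transcendence basis of $K'/k$. Write $K' = k(x_1, \ldots, x_n)$ for some finite generating set, let $r = \op{trdeg}_k(K')$, and argue by induction on $n$ that some subset of $\{x_1, \ldots, x_n\}$ is a separable transcendence basis of $K'/k$.

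The base case $n = r$ is immediate: the generators are algebraically independent over $k$, so they themselves form a separable transcendence basis because the residual algebraic extension is trivial.

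For the inductive step $n > r$, choose $r+1$ of the generators (after reindexing, say $x_1, \ldots, x_{r+1}$) which must be algebraically dependent over $k$, and pick an irreducible $f \in k[X_1, \ldots, X_{r+1}]$ with $f(x_1, \ldots, x_{r+1}) = 0$. The crux is to show that some partial $\partial f/\partial X_i$ is nonzero. If all of them vanished, every monomial of $f$ would have every exponent divisible by $p$, so $f = \sum_\alpha a_\alpha X^{p\alpha}$ with $a_\alpha \in k$; perfectness of $k$ would then supply $b_\alpha \in k$ with $b_\alpha^p = a_\alpha$, yielding $f = \bigl(\sum_\alpha b_\alpha X^\alpha\bigr)^p$ and contradicting irreducibility. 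With some $\partial f/\partial X_i \ne 0$, the element $x_i$ is separable algebraic over the subfield generated over $k$ by the remaining $r$ elements of $\{x_1,\ldots,x_{r+1}\}$; those $r$ elements must be algebraically independent (otherwise the transcendence degree of $K'$ would drop below $r$), hence they are a transcendence basis. Setting $K'' = k(x_l : l \ne i)$, the extension $K'/K''$ is separable algebraic (it is generated by the separable element $x_i$), so it suffices to produce a separable transcendence basis of $K''/k$; the induction hypothesis applies to $K''$ since it has $n-1$ generators, and the result composes upward because separable algebraic extensions are closed under composition.

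The main obstacle is the heart of the inductive step: exhibiting a nonzero partial derivative of $f$. This is precisely where the hypothesis $k = k^p$ intervenes, turning a hypothetical all-$p$-th-powers polynomial $f$ into an actual $p$-th power and violating irreducibility. The rest is organizational: the reduction to finitely generated $K'$ is the very definition of separable extension, and the passage from $K''$ to $K'$ uses only that separable algebraic extensions behave well under composition.
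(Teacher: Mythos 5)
Your overall strategy is the standard one (MacLane's): induct on the number $n$ of generators of the finitely generated intermediate field $K'$, and at each step use perfectness of $k$ to force some partial of an irreducible relation to be nonzero, so that one generator becomes separable algebraic over the rest. The lemma that an irreducible $f$ with all $\partial f/\partial X_i = 0$ would be a $p$-th power over a perfect field is correctly argued. Note that the paper itself does not prove this statement; it only cites MacLane, so there is no in-paper proof to compare against.

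However, the inductive step has a genuine gap in the choice of the $r+1$ dependent generators and in the claim that follows. You pick \emph{any} $r+1$ of the $x_j$ that are algebraically dependent, take an arbitrary irreducible $f$ vanishing on them, find $i$ with $\partial f/\partial X_i \ne 0$, and then assert that the remaining $r$ of those chosen elements are algebraically independent, justified by the sentence that otherwise the transcendence degree of $K'$ would drop below $r$. That justification is not sound --- the remaining generators $x_{r+2},\ldots,x_n$ can carry all of the transcendence degree --- and the claim can actually fail, which then invalidates the conclusion that $x_i$ is separable algebraic over the others. Concretely, let $r = 2$ and $K' = k(x_1,x_2,x_3,x_4)$ with $x_1,x_2$ a transcendence basis and $x_3,x_4$ algebraic over $k$ with coprime irreducible minimal polynomials $g_3, g_4 \in k[T]$. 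The three elements $x_2, x_3, x_4$ are algebraically dependent, and $f = g_3(X_3)\,X_2 + g_4(X_4) \in k[X_2,X_3,X_4]$ is irreducible (primitive of degree one in $X_2$, since $\gcd(g_3(X_3),g_4(X_4)) = 1$) and vanishes at $(x_2,x_3,x_4)$. Here $\partial f/\partial X_2 = g_3(X_3) \ne 0$, so $i=2$ is admissible under your criterion; yet $x_2$ is transcendental over $k(x_3,x_4)$, not separable algebraic, because substituting $X_3 \mapsto x_3$ and $X_4 \mapsto x_4$ annihilates $f$ identically.

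The fix, as in the standard argument, is to constrain the selection up front: first choose $r$ of the $x_j$ forming a transcendence basis of $K'/k$, and adjoin one further generator $x_{r+1}$ that is algebraic over them. Then the kernel $I$ of $k[X_1,\ldots,X_{r+1}] \to K'$ has height one in a UFD, hence $I = (f)$ with $f$ irreducible and essentially unique. From this one deduces both that the specialized polynomial $\tilde{f}(X_i)$ is nonzero (the projection of $V(f)$ away from the $X_i$-axis is dominant because $f$ involves $X_i$, so the generic point maps to the generic point) and that $(\partial f/\partial X_i)(x) \ne 0$ (otherwise $f$ would divide its own nonzero partial of strictly smaller total degree). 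With that repair, your recursion to $K'' = k(x_l : l \ne i)$ and the composition of separable algebraic extensions go through exactly as you describe.
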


Some basic facts about $p$-bases, I took them from \cite[Section 07P0]{stacks-project}.

\begin{defin}\label{p-basis definition}
    Let $p$ be a prime number. Let $K/k$ be an extension of fields of characteristic $p$. Denote $k\cdot K^p$ the compositum of $k$ and $K^p$ in $K$.
    \begin{enumerate}
        \item A subset $\{x_i\}\subset K$ is called $p$-independent over $k$ if the elements $x^e\coloneqq \prod x_i^{e_i}$ where $0\le e_i <p$ are linearly independent over $k\cdot K^p$.
        \item A subset $\{x_i\}$ of $K$ is called a $p$-basis of $K$ over $k$ if the elements $x^E$ form a basis of $K$ over $k\cdot K^p$.
    \end{enumerate}
\end{defin}

\begin{lemma}\label{p-basis - characterisation}
   Let $K/k$ be a field extension. Assume $k$ has characteristic $p>0$. Let $\{x_i\}$ be a subset of $K$. The following are equivalent:
   \begin{enumerate}
       \item the elements $\{x_i\}$ are $p$-independent over $k$,
       \item the elements $dx_i$ are $K$-linearly independent in $\Omega_{K/k}$.
   \end{enumerate}

Consequently, any $p$-independent collection can be extended to a $p$-basis of $K$ over $k$. In particular, the field $K$ admits a $p$-basis over $k$. Moreover, the following are equivalent:
    \begin{enumerate}
        \item $\{x_i\}$ is a $p$-basis of $K$ over $k$,
        \item $dx_i$ is a basis of the $K$-vector space $\Omega_{K/k}$.
    \end{enumerate}
\end{lemma}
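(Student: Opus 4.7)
The plan is to reduce every claim to the structure of the purely inseparable extension $K/F$ of exponent $\le 1$, where $F \coloneqq k \cdot K^p$. The crucial observation is that $\Omega_{K/k} = \Omega_{K/F}$: any $k$-derivation $D$ of $K$ satisfies $D(x^p) = p x^{p-1} D(x) = 0$, so it kills $K^p$ and hence $F$, and conversely every $F$-derivation is automatically a $k$-derivation. By definition, $p$-independence of $\{x_i\}$ over $k$ is precisely the $F$-linear independence of the monomials $x^\alpha$ with $0 \le \alpha_i < p$, so both conditions in the equivalence now live naturally over $F$.

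For the direction $(1) \Rightarrow (2)$, I would extend $\{x_i\}$ to a maximal $p$-independent set $\{x_i\} \cup \{y_j\}$ by Zorn's lemma and then show that any maximal $p$-independent set is automatically a $p$-basis. If some $y \in K$ were outside the subfield $F[\{x_i, y_j\}]$, then because $y^p \in F$ the polynomial $T^p - y^p$ would be the minimal polynomial of $y$ over $F[\{x_i, y_j\}]$; the tower-rule basis combining $\{y^s\}_{0 \le s < p}$ with the monomial basis coming from $p$-independence would show $\{x_i, y_j, y\}$ is still $p$-independent, contradicting maximality. Hence $K = F[\{x_i, y_j\}]$ is generated by a $p$-basis. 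Writing $K$ as the directed colimit of finite simple-extension towers $F[T_1, \ldots, T_n]/(T_1^p - c_1, \ldots, T_n^p - c_n)$ (over finite sub-$p$-bases), I can construct an $F$-derivation $D_i$ of $K$ with $D_i(x_\ell) = \delta_{i\ell}$ and $D_i(y_j) = 0$ by prescribing values on each simple factor. Evaluating the $dx_i$ on these derivations witnesses their $K$-linear independence in $\Omega_{K/F}$.

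For $(2) \Rightarrow (1)$: assuming independence of $\{dx_i\}$, suppose for contradiction that $\{x_i\}$ is $p$-dependent, and pick a nontrivial relation $\sum c_\alpha x^\alpha = 0$ with $c_\alpha \in F$ involving the fewest nonzero terms. Fixing an index $i$ with $\alpha_i \ne 0$ for some nonzero coefficient, I would apply an $F$-derivation $D$ to the relation and extract the coefficient of a chosen $dx_j$; the minimality assumption forces a shorter relation, yielding the contradiction. The ``Consequently'' clause is then immediate from Zorn: extend any $p$-independent set to a maximal one, which we have already shown is a $p$-basis. The final ``Moreover'' equivalence runs the same construction one more time: $\{x_i\}$ is a $p$-basis precisely when no proper extension is $p$-independent, which, by the argument above, is equivalent to $K = F[\{x_i\}]$, and for such $K$ every $F$-derivation is determined by its values on $\{x_i\}$, i.e., $\{dx_i\}$ spans $\Omega_{K/F}$.

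The main technical obstacle I expect is handling possibly infinite $p$-bases — in particular, constructing the derivations $D_i$ coherently requires presenting $K$ as a colimit of finite simple-extension towers and applying a Zorn-type argument. Both $p$-(in)dependence and elements of $\Omega_{K/F}$ are detected on finite sub-expressions, so every verification ultimately reduces to the standard computation for the simple extension $F[T]/(T^p - c)$, where an $F$-derivation can be assigned any value on $T$.
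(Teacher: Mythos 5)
The paper does not supply a proof of this lemma: it is quoted verbatim from the Stacks Project (tag 07P0), so there is no in-paper argument to compare against, only correctness to check. Your strategy is the standard one --- pass to $\Omega_{K/F}$ for $F = k\cdot K^p$, prove $(1)\Rightarrow(2)$ by extending to a maximal $p$-independent set and constructing dual $F$-derivations via the colimit presentation, and prove $(2)\Rightarrow(1)$ by a minimality argument applied to the universal derivation $d$. The $(1)\Rightarrow(2)$ direction and the ``Consequently''/``Moreover'' clauses are fine as sketched.

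The gap is in $(2)\Rightarrow(1)$. Having picked a relation $\sum_\alpha c_\alpha x^\alpha = 0$ with minimal support $S$ and an index $i$ with $\alpha_i \neq 0$ for some $\alpha \in S$, you assert that the coefficient of $dx_i$ after applying $d$ gives ``a shorter relation''. It need not: that coefficient is $\sum_\alpha c_\alpha \alpha_i x^{\alpha - e_i}$, and if every $\alpha \in S$ has $\alpha_i \geq 1$ it again has exactly $|S|$ terms --- already visible for $p \geq 3$ with the one-variable relation $c_1 x + c_2 x^2 = 0$ (here $S=\{1,2\}$ and the only available index yields the two-term expression $c_1 + 2c_2 x$). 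Two standard repairs: either additionally minimize the total degree $\sum_{\alpha \in S} |\alpha|$, which guarantees an $i$ for which $\{\alpha \in S : \alpha_i \geq 1\}$ is a proper nonempty subset (otherwise dividing the relation by some $x_{i_0}$ preserves $|S|$ while lowering the degree), so that the vanished coefficient really is a shorter nontrivial relation; or, after assuming the coefficient of $dx_i$ vanishes, choose $\alpha',\alpha'' \in S$ with $\alpha'_i \neq \alpha''_i$, multiply the vanished coefficient by $x_i$, and subtract $\alpha''_i$ times the original relation, producing $\sum_\alpha c_\alpha(\alpha_i - \alpha''_i)x^\alpha = 0$, a nontrivial $p$-relation whose support drops $\alpha''$ (since $\alpha_i - \alpha''_i$ lies strictly between $-p$ and $p$, it is nonzero mod $p$ precisely when $\alpha_i \neq \alpha''_i$), contradicting minimality. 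With either fix the argument closes.
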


These two notions, a $p$-basis and a separable transcendence basis, will be equivalent for us. The following is Corollary \ref{Over Perfect is Separable} combined with \cite[Corollary A1.5]{Eisenbud:Commutative}].

\begin{thm}\label{p-basis is separable transcedence basis}
    Let $k$ be a perfect field of characteristic $p>0$.
    Let $K/k$ be a finitely generated field extension.
    Then, every $p$-basis of $K$ over $k$ is a separable transcendence basis of $K/k$, and vice versa.
\end{thm}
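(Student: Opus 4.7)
The plan is to reduce both directions to the already-proved characterization in Lemma \ref{p-basis - characterisation}: a subset $\{x_i\}\subset K$ is a $p$-basis of $K/k$ if and only if $\{dx_i\}$ is a $K$-basis of $\Omega_{K/k}$. Under this translation, the theorem becomes the assertion that $\{x_i\}$ is a separable transcendence basis of $K/k$ if and only if $\{dx_i\}$ is a $K$-basis of $\Omega_{K/k}$. The bridge between these two pictures is the cotangent exact sequence of the tower $k\subset k(x_i)\subset K$, together with Corollary \ref{Over Perfect is Separable}, which tells us that $K/k$ is separable and hence that the dimension identity $\dim_K\Omega_{K/k}=\op{trdeg}(K/k)$ holds.

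For the direction ``separable transcendence basis $\Rightarrow$ $p$-basis'', let $\{x_1,\ldots,x_n\}$ be a separable transcendence basis. Then $K/k(x_i)$ is finite separable algebraic, so $\Omega_{K/k(x_i)}=0$ and the cotangent sequence
\[
0 \to \Omega_{k(x_i)/k}\otimes_{k(x_i)} K \to \Omega_{K/k} \to \Omega_{K/k(x_i)} \to 0
\]
is exact on the left (this is where separability of $K/k(x_i)$ is used). Since $k(x_i)/k$ is purely transcendental, $\{dx_i\}$ is a $k(x_i)$-basis of $\Omega_{k(x_i)/k}$ and therefore a $K$-basis of $\Omega_{K/k}$ after base change, and Lemma \ref{p-basis - characterisation} yields the conclusion.

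For the direction ``$p$-basis $\Rightarrow$ separable transcendence basis'', let $\{y_1,\ldots,y_n\}$ be a $p$-basis, so that $\{dy_j\}$ is a $K$-basis of $\Omega_{K/k}$ and $n=\dim_K\Omega_{K/k}=\op{trdeg}(K/k)$. I would first check algebraic independence of the $y_j$: if they were algebraically dependent, then $\op{trdeg}(k(y_j)/k)<n$, and since $k(y_j)/k$ is separable (perfect base), $\dim\Omega_{k(y_j)/k}<n$; but the cotangent map $\Omega_{k(y_j)/k}\otimes K\to\Omega_{K/k}$ sends $dy_j\mapsto dy_j$ and is therefore surjective, contradicting the dimension count. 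Hence $K/k(y_j)$ is algebraic, and right-exactness of the same cotangent sequence forces $\Omega_{K/k(y_j)}=0$; since $K/k(y_j)$ is finitely generated and algebraic, hence finite, the vanishing of $\Omega$ implies separability.

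The main obstacle I expect is the bookkeeping around the cotangent sequence: left exactness holds only when the top extension of the tower is separable, and the dimension identity $\dim_K\Omega_{K/k}=\op{trdeg}(K/k)$ rests on separability of $K/k$. Once these standard differential facts are invoked in the right places, both implications reduce to tracking spans, kernels, and dimensions in a short exact sequence.
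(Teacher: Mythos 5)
Your proof is correct. The paper does not actually write out a proof here; it simply cites Corollary \ref{Over Perfect is Separable} together with \cite[Corollary A1.5]{Eisenbud:Commutative}, so you have supplied a self-contained argument where the paper relies on a reference. Your route is the standard one: use Lemma \ref{p-basis - characterisation} to translate "$p$-basis" into "$dx_i$ is a $K$-basis of $\Omega_{K/k}$", then push both directions through the cotangent exact sequence for $k\subset k(x_i)\subset K$, invoking MacLane's separability over a perfect base both to get the dimension identity $\dim_K\Omega_{K/k}=\op{trdeg}(K/k)$ and to conclude separability of the finite algebraic top from $\Omega_{K/k(y_j)}=0$. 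The two nontrivial inputs you rely on are the standard ones: left exactness of the cotangent sequence when the top extension is separable, and the equivalence "finite extension separable $\Leftrightarrow$ $\Omega=0$"; both are used exactly where needed, and the dimension-count contradiction establishing algebraic independence of a $p$-basis is sound. What the paper's citation buys is brevity; what your version buys is that the reader sees explicitly where the hypothesis that $k$ is perfect enters (via MacLane) and where separability is used for left exactness — both of which are natural questions for a reader of this preliminaries section.
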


\subsection{Purely Inseparable Morphisms and Frobenius Morphisms}

We already defined purely inseparable morphisms in Definitions \ref{purely inseparable - rings - def} and \ref{purely inseparable - varieties - def}. These definitions are inspired by private notes on positive characteristic algebraic geometry by Joe Waldron and Zsolt Patakfalvi. I mention it because ``purely inseparable morphisms'' sometimes are defined as any morphisms that induce purely inseparable subfields on generic points. However, such ``purely inseparable morphisms'' are too wobbly, too shapeless; thus, we have extra ``rigidity'' conditions on what we call a purely inseparable morphism. Luckily, this rigidity makes the connection with power towers easy.

\begin{defin}[Absolute Frobenius Morphism]
    Let $X$ be a scheme over $\mathbb{F}_p$. Then taking $p$-powers in local affine charts defines a morphism $F_X: X\to X$. This is the \emph{absolute Frobenius morphism} of $X$.
\end{defin}

\begin{defin}[Relative Frobenius Morphism, {\cite[Definition 0CC9]{stacks-project}}]\label{relative Frob twist - def}
    Let $S$ be a scheme over $\mathbb{F}_p$. Let $X$ be an $S$-scheme. Then a \emph{relative Frobenius morphism} of $X/S$, denoted by $F_{X/S}$, is defined as the unique dashed arrow from the diagram below.
    \begin{center}
        \begin{tikzcd}
            X\ar[r,dashed, "F_{X/S}"]\ar[rd]\ar[rr, bend left =45,"F_X"]& X^{(1)}\ar[d]\ar[r]& X\ar[d]\\
            & S \ar[r, "F_S"]& S,
        \end{tikzcd}
    \end{center}
    where $X^{(1)}$ is a pullback of $X\to S$ and $F_S$.
\end{defin}

\begin{defin}
    We define $X^{(2)}$ to be ${X^{(1)}}^{(1)}$, $X^{(3)}$ to be ${{X^{(2)}}}^{(1)}$, and so on.
\end{defin}

The following states that, for a normal variety, purely inseparable morphisms from it and purely inseparable subfields of its generic point are in a natural bijection with each other.

\begin{prop}\label{purely inseparable equiv on fields}
   Let $X$ be a normal variety over a perfect field $k$ of characteristic $p>0$, then purely inseparable morphisms from $X$ are in a bijection with subfields $W\subset K(X)$ of finite exponent, i.e., the ones for which there exists $r$ such that $K(X)^{p^r}\subset W$.

   In particular, purely inseparable factorizations of $F^r_{X/k}$ given as $X\to Y \to X^{(r)}$ with $Y$ being a normal variety over $k$, are in bijection with purely inseparable subfields of $K(X)$ of exponent $\le r$.
\end{prop}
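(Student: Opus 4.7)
The plan is to realize the bijection by sending a purely inseparable morphism $f\colon X\to Y$ to the induced subfield $K(Y)\subset K(X)$, and by sending a subfield $W\subset K(X)$ of finite exponent to a suitable normalization of $X^{(r)}$. The forward direction is essentially immediate from Definition \ref{purely inseparable - varieties - def}: if $f$ has exponent $\le r$, then $a^{p^r}\in K(Y)$ for every $a\in K(X)$, so $K(X)^{p^r}\subset K(Y)$, giving a subfield of finite exponent.

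For the backward direction, I first note that since $k$ is perfect, the relative Frobenius $F^r_{X/k}\colon X\to X^{(r)}$ is affine-locally the inclusion $A^{p^r}\hookrightarrow A$, so that $K(X^{(r)})\cong K(X)^{p^r}\subset K(X)$. Moreover, $K(X)/K(X)^{p^r}$ is a finite purely inseparable extension since $K(X)/k$ is finitely generated over a perfect field (Theorem \ref{p-basis is separable transcedence basis}). Given $W$ with $K(X)^{p^r}\subset W\subset K(X)$, I define $Y$ to be the normalization of $X^{(r)}$ in $W$. Affine-locally on $U=\op{Spec}(A)\subset X$ with $A$ normal, this is $\op{Spec}(A\cap W)$; I would check that $A\cap W$ coincides with the integral closure of $A^{p^r}$ in $W$, using that $A$ is already the integral closure of $A^{p^r}$ in $K(X)$. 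These pieces glue to a normal variety $Y/k$ with $K(Y)=W$, and the inclusions $A^{p^r}\subset A\cap W\subset A$ yield a factorization $X\to Y\to X^{(r)}$ of $F^r_{X/k}$. The morphism $X\to Y$ is purely inseparable of exponent $\le r$ because $a^{p^r}\in A^{p^r}\subset A\cap W$ for every $a\in A$.

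The two assignments are mutually inverse. The composition $W\mapsto Y\mapsto K(Y)=W$ is immediate from the construction. Conversely, if $f\colon X\to Y$ is purely inseparable of exponent $\le r$, then affine-locally the chain $A^{p^r}\subset B\subset A$ produces a scheme-theoretic factorization $X\to Y\to X^{(r)}$ of $F^r_{X/k}$; since $Y$ is normal, finite over $X^{(r)}$, with function field $K(Y)$, the universal property of normalization identifies $Y$ with the normalization of $X^{(r)}$ in $K(Y)$. The ``in particular'' clause is then this same bijection restricted to a fixed exponent $r$.

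The main obstacle I anticipate is not any single conceptual step but the globalization argument: checking that the local data $A\cap W$ assemble into a genuine normal variety over $k$. Integrality is automatic since $W$ is a field, so the normalization of an integral scheme in $W$ is irreducible with generic residue field $W$; finite-typeness and separatedness over $k$ follow from $Y\to X^{(r)}$ being finite, which rests on the finiteness of $[K(X):K(X)^{p^r}]$; normality is built into the construction; and compatibility of integral closure with localization ensures that the local rings glue correctly to the structure sheaf $U\mapsto \cO_X(U)\cap W$.
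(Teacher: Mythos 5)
Your proof is correct and follows essentially the same approach as the paper's: pass from a morphism to its function field, and recover the morphism by normalizing $X^{(r)}$ in the given subfield. The paper organizes this slightly differently — as a chain of three equivalent notions (morphisms, sheaves of normal subrings $\mathcal{O}_X\supset\mathcal{A}\supset\mathcal{O}_X^{p^r}$, and intermediate subfields), leaning on the observation that $X$ and $Y$ share the same underlying topological space so the normalization sheaf can be built directly on $|X|$ — whereas you write the inverse directly as normalization of $X^{(r)}$ in $W$ and then verify local structure, gluing, finite type, and separatedness by hand. Both arguments rest on the same two facts you use: $[K(X):K(X)^{p^r}]<\infty$ (so normalization is finite), and that for a normal $A$ the integral closure of $A^{p^r}$ in $W$ is exactly $A\cap W$ (your observation that $a\in A\cap W$ satisfies $t^{p^r}-a^{p^r}$ over $A^{p^r}$). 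Your more explicit handling of the globalization is a fair substitute for the paper's appeal to \cite[Proposition 2.9]{ZsoltJoe-1-foliations}, and there is no gap.
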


\begin{proof}
First, we note that we can restrict the reasoning to finite exponents by Lemma \ref{Purely Inseparable is of Finite Exponent}.

Our proof is an adaptation of the proof of \cite[Proposition 2.9.]{ZsoltJoe-1-foliations}.
Indeed, we have a sequence of equivalent notions:
\begin{itemize}
    \item purely inseparable morphisms $X\to Y \to X^{(r)}$ with $Y$ being a normal variety over $k$,
    \item coherent sheaves of normal rings $\mathcal{O}_X\supset \mathcal{A} \supset \mathcal{O}_X^{p^r}$,
    \item intermediate subfields $K(X)\supset W\supset K(X)^{p^r}$.
\end{itemize}

The equivalences are respectively given by:
\begin{itemize}
    \item $X\to Y \to X^{(r)} \mapsto \mathcal{O}_X\supset \mathcal{O}_Y \supset \mathcal{O}_X^{p^r}$
    \item $\mathcal{O}_X\supset \mathcal{O}_Y \supset \mathcal{O}_X^{p^r} \mapsto K(X)\supset K(Y) \supset K(X)^{p^r}$.
\end{itemize}

These operations are bijections, because $X, Y$ have the same underlying topological space, but different functions on it. To recover $Y$ we take the space of $X$ with the sheaf $\mathcal{O}_Y$ defined as the normalization of $\mathcal{O}_X^{p^r}=\mathcal{O}_{X^{(r)}}$ in $W$ from $K(X)\supset W\supset K(X)^{p^r}$.
\end{proof}

\subsection{Saturation}

In Proposition \ref{purely inseparable equiv on fields}, we used normalization of a ring/sheaf to get a bijection. On the other side of our Galois-type correspondence for power towers, this operation will correspond to taking a saturation.

\begin{defin}\label{Saturation: Definition}
    Let $X$ be a variety over a field $k$. Let $M$ be a torsion-free coherent sheaf on $X$. Let $\mathcal{M}=M\otimes K(X)$ be the pullback of $M$ to the generic point of $X$. Let $\mathcal{N}$ be a $K(X)$-vector subspace of $\mathcal{M}$.

    We define the \emph{saturation} of $\mathcal{N}$ in $M$ to be the following subsheaf $N$ of $M$:
    \[
    N(U) \coloneqq \mathcal{N} \cap M(U),
    \]
    where $U$ is an open subset of $X$, and $M(U)\to \mathcal{M}$ is the localization.

    Finally, we say that a subsheaf $N\subset M$ is \emph{saturated} if it is equal to its saturation. (Equivalently, it means that $M/N$ is torsion-free.)
\end{defin}

\begin{lemma}\label{Saturation is Saturated - Lemma}
    Let $X$ be a variety over a field $k$. Let $M$ be a torsion-free coherent sheaf on $X$.
    Let $\mathcal{N}$ be a subspace of $\mathcal{M}$.
    
    Then, the saturation of $\mathcal{N}$ in $M$ is a saturated coherent subsheaf of $M$.
\end{lemma}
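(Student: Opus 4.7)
The plan is to verify three properties of $N$: that it is a subsheaf of $M$, that it is coherent, and that $M/N$ is torsion-free (which is the saturation condition). The first and third are formal consequences of the definition; the main work will be in establishing coherence, and I expect the hardest part to be a compatibility-with-localization check needed for quasi-coherence. For the subsheaf property, since $M$ is torsion-free, each localization map $M(U) \to \mathcal{M}$ is injective, and these maps are compatible with restriction; hence $U \mapsto \mathcal{N} \cap M(U)$ is a sub-presheaf of $M$. The sheaf axioms for $N$ are inherited from those for $M$, since any collection of local sections of $N$ that glues to an element of $M(U)$ automatically lies in $\mathcal{N}$, as this can be verified after restriction to the generic point.

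For coherence, I would pass to an affine open $U = \op{Spec}(A)$, so that $M(U) = M_A$ is a finitely generated torsion-free $A$-module and $\mathcal{M} = M_A \otimes_A K(X)$. Then $N(U) = \mathcal{N} \cap M_A$ is an $A$-submodule of $M_A$, hence finitely generated since $A$ is noetherian. The more delicate step is quasi-coherence, namely that $N(D(f)) = N(U)_f$ for every $f \in A$. The inclusion $N(U)_f \subseteq N(D(f))$ is immediate, because nonzero powers of $f$ lie in $K(X)^\times$ and $\mathcal{N}$ is a $K(X)$-subspace of $\mathcal{M}$. Conversely, any $y \in \mathcal{N} \cap (M_A)_f$ has the form $y = m/f^n$ with $m \in M_A$, and then $m = f^n y \in \mathcal{N} \cap M_A = N(U)$, so $y \in N(U)_f$. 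Patching over an affine cover then exhibits $N$ as a coherent subsheaf of $M$.

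For saturation, suppose $m \in M(U)$ and $a \in \mathcal{O}_X(U)$ is nonzero in $K(X)$ with $am \in N(U)$. Then the image of $am$ in $\mathcal{M}$ lies in $\mathcal{N}$, and since $\mathcal{N}$ is a $K(X)$-subspace we obtain $m = (am)/a \in \mathcal{N}$. As $m$ already lies in $M(U)$, we conclude $m \in \mathcal{N} \cap M(U) = N(U)$, so $M/N$ is torsion-free, as required.
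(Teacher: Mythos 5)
Your proof is correct and follows essentially the same approach as the paper's: coherence via quasi-coherence (you verify the localization compatibility $N(D(f)) = N(U)_f$ explicitly, which the paper declares ``trivial'') plus finite generation from noetherianity, and the saturation property by a direct contradiction/clearing-of-denominators argument. Your write-up is more detailed but not a different route.
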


\begin{proof}
    Trivially, the saturation of a vector subspace is a coherent subsheaf, because it is quasicoherent and inside a coherent one. It is saturated, because if not, then the quotient has a torsion, so there is a local section $x$ in $M$, and a local regular function $f$ such that $fx$ belongs to the saturation, but $x$ does not. This is a contradiction with the definition of saturation.
\end{proof}

\begin{lemma}\label{Saturated are in bij with gen subspaces - lemma}
Let $X$ be a variety over a field $k$. Let $M$ be a torsion-free coherent sheaf on $X$. There is a bijection between saturated coherent subsheaves of $M$ and vector subspaces of $\mathcal{M}=M\otimes K(X)$. Explicitly, it is given by pullbacking to the generic point, and the inverse is the saturation in $M$.
\end{lemma}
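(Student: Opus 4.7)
The plan is to verify that the two operations — pullback to the generic point $N \mapsto \mathcal{N}\coloneqq N \otimes_{\cO_X} K(X)$ and saturation $\mathcal{N} \mapsto \op{sat}_M(\mathcal{N})$ — are mutually inverse. Both constructions are easily seen to be well-defined: the first lands in subspaces of $\mathcal{M}$ because localization is exact, and the second lands in saturated coherent subsheaves by Lemma \ref{Saturation is Saturated - Lemma}. So only the two round-trips need to be checked.

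For the round-trip starting from a saturated coherent subsheaf $N\subset M$, I would argue as follows. Torsion-freeness of $M$ makes the restriction maps $M(U)\to \mathcal{M}$ injective, so $N(U)$ embeds into $\mathcal{N}$ and hence $N(U)\subset \mathcal{N}\cap M(U)=\op{sat}_M(\mathcal{N})(U)$. For the reverse inclusion, take $x\in M(U)$ whose image in $\mathcal{M}$ lies in $\mathcal{N}$; after possibly shrinking $U$, there exists a nonzero regular function $f$ on $U$ with $fx\in N(U)$. Because $N$ is saturated, $M/N$ is torsion-free, so $fx\in N(U)$ forces $x\in N(U)$. This gives $\op{sat}_M(\mathcal{N})=N$.

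For the round-trip starting from a subspace $\mathcal{N}\subset \mathcal{M}$, set $N\coloneqq \op{sat}_M(\mathcal{N})$ and consider $N\otimes_{\cO_X} K(X)\subset \mathcal{M}$. The inclusion $N\otimes K(X)\subset \mathcal{N}$ is clear from the definition of $N$. Conversely, any $v\in \mathcal{N}$ can be written locally as $v=x/f$ with $x\in M(U)$ and $f$ a nonzero regular function on a suitable open $U$. Then $x=fv$ lies in $M(U)\cap\mathcal{N}=N(U)$, so $v=x/f\in N\otimes K(X)$. This shows $N\otimes K(X)=\mathcal{N}$, completing the bijection.

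The main step — and essentially the only non-formal point — is the clearing-of-denominators argument used in both directions, which rests on the hypothesis that $X$ is a variety (so $K(X)$ really is the localization of $\cO_X$ at the generic point, and every element of $\mathcal{M}$ has such a local representative) and that $M$ is torsion-free and coherent (so the relation $fx\in N$ actually forces $x\in N$ when $N$ is saturated). No serious obstacles are expected; this is a standard sheaf-theoretic reformulation of the familiar correspondence between saturated submodules and subspaces at the generic point.
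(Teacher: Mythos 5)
Your proof is correct and follows essentially the same route as the paper's: check that both operations are well defined and then verify the two round-trips, with the saturation hypothesis used to clear denominators. You simply spell out the clearing-of-denominators step more explicitly than the paper does.
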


\begin{proof}
We have to show that these operations are well-defined and inverse to each other.

They are well defined as the first is a pullback, the second one is well defined by Lemma \ref{Saturation is Saturated - Lemma}.

We show that they are inverse to each other. 
Indeed, if $\mathcal{N}$ is a subspace, then its saturation $N$ in $M$ determines every $N(U)$, and these sets determine $\mathcal{N}$. If $N$ is a saturated subsheaf, then the saturation of $\mathcal{N}=N\otimes K(X)$ is equal to $N$ by the definition.
\end{proof}

\subsection{Differential Operators}\label{Differential operators - section}

This section is an exposition of an algebraic theory of differential operators based on Grothendieck's EGA IV \cite[Chapter 16]{EGAIV}.

Derivations over a polynomial ring $R=k[x_1,\ldots,x_n]$ can be given by formulas of the form $\sum_{i=1}^n a_i\frac{\partial}{\partial x_i }$, where $a_i\in R$. Suppose $k$ is a field of characteristic zero. In that case, any differential operator on $R$ is a linear combination of compositions of derivations, i.e., it can be written as $\sum_{I=(c_i)\in \mathbb{N}^n} a_I \prod_{i=1}^n\frac{\partial}{\partial x_i }^{c_i}$, where $\mathbb{N}=\{0,1,2,\ldots\}$, $a_I\in R$. However, in positive characteristic, it is not true, e.g., $\frac{\partial}{\partial x_i }^p=0$, so we cannot reach the operator $\frac{1}{p!}\frac{\partial}{\partial x_i }^p$ via compositions and sums. Yet, this operator must be included, because the natural construction of differential always gives them, or for pragmatic reasons: this operator allows us to get the coefficient next to $x^p$. Consequently, the algebraic theory of differential operators modulo $p$ initially admits more complexity, but one can quickly realize that this ``pathology'' is a blessing.

\subsubsection{Inductive Definition} Leibniz's rule definition of a derivation generalizes to the following one for differential operators.

\begin{defin}\label{Differential Operators: Explicit Definition}
Let $R$ be a ring and $k$ its subring.

    For $d\ge -1$ we define inductively the module $F_d \mathcal{D}_{R/k}$ of differential operators of order $\le d$ for $R/k$ as follows:
    \begin{itemize}
        \item $F_{-1} \mathcal{D}_{R/k} \coloneqq\{0\}\subset \op{End}_k(R)$.
        \item For $d\ge 0$, $F_d \mathcal{D}_{R/k}\coloneqq \{D\in \op{End}_k(R); \ \forall_{f\in R} \ [D,f]\in F_{d-1} \mathcal{D}_{R/k}\}$.
    \end{itemize}

The algebra of differential operators for $R/k$ is defined by
\[
\mathcal{D}_{R/k} \coloneqq \bigcup_d F_d \mathcal{D}_{R/k}\subset \op{End}_k(R).
\]
\end{defin}

\begin{example} \label{Diff Oper on Affine Line: Definiton}
    Let $R=k[x]$ be a ring of polynomials in one variable over a ring $k$. Let $a\ge 0$ be an integer. We can define a $k$-linear function $\frac{1}{a!}\frac{\partial^{a}}{\partial x ^{a}}$ from $R$ to $R$ by the formula:
    \[
    \frac{1}{a!}\frac{\partial^{a}}{\partial x ^{a}}(x^b)\coloneqq {{b}\choose{a}} x^{b-a}.
    \]
    It is a differential operator of order $a$.
    The algebra $\mathcal{D}_{R/k}$ is a free $R$-module spanned by all these operators $\frac{1}{a!}\frac{\partial^{a}}{\partial x ^{a}}$.
\end{example}

\subsubsection{Universal Construction}

The inductive definition is pretty, but it is hard to work with. Here we will describe a definition of differential operators analogous to the one for derivations via the module of K{\" a}hler differentials.

\begin{defin}\label{Set Up: Differential Operators}
    Let $k\to R$ be a morphism of rings. We define the following objects:
    \begin{itemize}
        \item A first projection $p_1: R \to R\otimes_k R$ is given by $a \mapsto a\otimes 1$.
        \item A second projection $p_2: R \to R\otimes_k R$ is given by $a \mapsto 1\otimes a$.
        \item A diagonal map $\Delta_{R/k}: R\otimes_k R\to R$ is given by $a\otimes b \mapsto ab$.
        \item The kernel of $\Delta_{R/k}$ is denoted by $J_{R/k}$.
        \item A universal ``derivation'' $d: R \to R\otimes_k R$ is equal $p_2 - p_1$, i.e., it is given by $a \mapsto 1\otimes a - a\otimes 1$. 
    \end{itemize}
\end{defin}

\begin{convention}\label{convention - diff algebra}
    Let $k\to R$ be a morphism of rings. Unless stated otherwise, the ring $R\otimes_k R$ is considered an $R$-algebra via $p_1$.
\end{convention}

\begin{defin}\label{space of principal parts: definition}
    Let $k\to R$ be a morphism of rings. A \emph{space of principal parts} of $R$ over $k$ of order at most $n$ is
    defined by
    \[
    \mathcal{P}^{\le n}_{R/k}\coloneqq R \otimes_k R/ J^{n+1}_{R/k}.
    \]
\end{defin}

\begin{remark}\label{Principal Parts - Surjection}
    For $m\ge n$, there is a natural surjective map $\mathcal{P}^{\le m}_{R/k} \to \mathcal{P}^{\le n}_{R/k}$, i.e., the quotient map.
\end{remark}

\begin{defin}\label{Differential Operators: Universal Definition}
    Let $k\to R$ be a morphism of rings. 
    
    The set of \emph{differential operators} of order at most $n$ on $R$ relative to $k$ is 
    \[
    \op{Diff}^{\le n}_{k}(R)\coloneqq \op{Hom}_R(\mathcal{P}^{\le n}_{R/k}, R).
    \]

    The set of \emph{differential operators} on $R$ relative to $k$ is 
    \[
    \op{Diff}_{k}(R)\coloneqq \op{colim}_{n\ge 0} \op{Diff}^{\le n}_{k}(R)=\bigcup_{n\ge 0} \op{Diff}^{\le n}_{k}(R),
    \]
    where the sum is taken with respect to the natural identifications.

    The filtration of $\op{Diff}_{k}(R)$ by $\op{Diff}^{\le n}_{k}(R)$ is called the (standard) order filtration.
\end{defin}

\subsubsection{Comparison of definitions}

\begin{lemma}\label{Diagonal is augmented}
    Let $k\to R$ be a morphism of rings.
    The $R$-algebra $R\otimes_k R$ naturally decomposes as an $R$-module into $R\oplus J_{R/k}$, because $p_1$ splits $0\to J_{R/k} \to R\otimes_k R \to R \to 0$.
\end{lemma}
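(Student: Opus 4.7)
The plan is to observe that this is an instance of the standard splitting lemma: a short exact sequence of modules equipped with a section splits. So the two things to verify are (i) that we really do have a short exact sequence of $R$-modules in the sense of the convention, and (ii) that $p_1$ provides a section of $\Delta_{R/k}$.

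First I would note that $\Delta_{R/k}: R\otimes_k R\to R$ is a surjective $R$-algebra homomorphism (surjectivity follows, e.g., from $\Delta_{R/k}(a\otimes 1)=a$), so tautologically $0\to J_{R/k}\to R\otimes_k R\xrightarrow{\Delta_{R/k}} R\to 0$ is an exact sequence of $R$-modules, where the $R$-module structure on $R\otimes_k R$ is the one inherited from $p_1$ according to Convention \ref{convention - diff algebra}. Both $\Delta_{R/k}$ and $p_1$ are $R$-linear for this structure: for $p_1$ this is immediate since multiplication by $a\in R$ acts as $a\otimes 1$, and $p_1(ab)=ab\otimes 1=(a\otimes 1)(b\otimes 1)=a\cdot p_1(b)$; for $\Delta_{R/k}$ it is $\Delta_{R/k}(a\cdot(x\otimes y))=\Delta_{R/k}(ax\otimes y)=axy=a\Delta_{R/k}(x\otimes y)$.

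Next, I would verify that $p_1$ is a section by the one-line computation $\Delta_{R/k}\circ p_1(a)=\Delta_{R/k}(a\otimes 1)=a\cdot 1=a$, so $\Delta_{R/k}\circ p_1=\operatorname{id}_R$. The standard splitting lemma for short exact sequences of modules then yields the $R$-module decomposition
\[
R\otimes_k R\;=\;p_1(R)\oplus J_{R/k}\;\cong\;R\oplus J_{R/k},
\]
where the first summand is the image of the section $p_1$ and the second is the kernel of $\Delta_{R/k}$.

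There is no real obstacle here; the only mild subtlety is to be consistent with Convention \ref{convention - diff algebra} and make sure every map in sight is $R$-linear for the $p_1$-structure rather than the $p_2$-structure. Once that is pinned down, the conclusion is a direct application of the splitting lemma.
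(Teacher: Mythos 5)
Your proof is correct and follows exactly the route the lemma's own wording suggests — checking $R$-linearity under the $p_1$-module structure and applying the splitting lemma to the section $p_1$ of $\Delta_{R/k}$. The paper gives no separate proof (the "because $p_1$ splits\ldots" clause is the proof), so you have simply spelled out the same argument in full, which is fine.
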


\begin{prop}\label{E(K/W) definition}
    Let $k\to R$ be a morphism of rings.
    There is a submodule $\E(R/k)$ of $\op{Diff}_{k}(R)$ corresponding to $J_{R/k}$ in the decomposition $R\oplus J_{R/k}=R\otimes_k R$ such that  $\op{Diff}_{k}(R)=R\oplus \E(R/k)$. Namely, it is given by $\E(R/k)\coloneqq \bigcup_{n\ge 0}\op{Hom}_R( J_{R/k}/J_{R/k}^{n+1},R)$.
\end{prop}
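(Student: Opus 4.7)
The plan is to transport the decomposition from Lemma \ref{Diagonal is augmented} through the definition of differential operators, level by level in the order filtration, and then pass to the colimit.

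First, by Lemma \ref{Diagonal is augmented} we have a canonical splitting $R\otimes_k R = R \oplus J_{R/k}$ of $R$-modules, where the $R$ summand is the image of $p_1$. Since $J_{R/k}^{n+1}\subset J_{R/k}$, this splitting descends to
\[
\mathcal{P}^{\le n}_{R/k} = R\otimes_k R / J_{R/k}^{n+1} \;=\; R \oplus J_{R/k}/J_{R/k}^{n+1},
\]
as $R$-modules. Concretely, $p_1$ composed with the quotient gives a section, and its kernel is $J_{R/k}/J_{R/k}^{n+1}$.

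Next, I apply $\op{Hom}_R(-,R)$ to this direct sum. Because Hom distributes over finite direct sums and $\op{Hom}_R(R,R)=R$, I obtain
\[
\op{Diff}^{\le n}_k(R) \;=\; \op{Hom}_R(\mathcal{P}^{\le n}_{R/k}, R) \;=\; R \oplus \op{Hom}_R\!\left(J_{R/k}/J_{R/k}^{n+1},R\right).
\]
I then check that this splitting is natural in $n$: for $m\ge n$, the surjection $\mathcal{P}^{\le m}_{R/k}\twoheadrightarrow \mathcal{P}^{\le n}_{R/k}$ of Remark \ref{Principal Parts - Surjection} is the identity on the $p_1$-image of $R$ and sends $J_{R/k}/J_{R/k}^{m+1}$ onto $J_{R/k}/J_{R/k}^{n+1}$. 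Dualizing gives inclusions $\op{Diff}^{\le n}_k(R)\hookrightarrow \op{Diff}^{\le m}_k(R)$ which are the identity on the $R$-summand and which embed $\op{Hom}_R(J_{R/k}/J_{R/k}^{n+1},R)$ into $\op{Hom}_R(J_{R/k}/J_{R/k}^{m+1},R)$.

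Finally, taking the union/colimit and using that colimits commute with direct sums, I conclude
\[
\op{Diff}_k(R) \;=\; \bigcup_{n\ge 0}\op{Diff}^{\le n}_k(R) \;=\; R \oplus \bigcup_{n\ge 0} \op{Hom}_R\!\left(J_{R/k}/J_{R/k}^{n+1},R\right) \;=\; R \oplus \E(R/k),
\]
which is the desired decomposition. There is no real obstacle here; the only delicate point is verifying that the splitting descending from $R\otimes_k R$ is compatible with the transition maps $\mathcal{P}^{\le m}_{R/k}\to \mathcal{P}^{\le n}_{R/k}$, and this is immediate because the section $p_1$ is defined before quotienting by any power of $J_{R/k}$.
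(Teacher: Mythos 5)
The paper states this proposition without proof, so there is no target argument to compare against word for word; however, your argument is exactly the one the proposition invites and almost certainly the intended one, since the statement itself names the decomposition $R\oplus J_{R/k}=R\otimes_k R$ from Lemma \ref{Diagonal is augmented} and gives $\E(R/k)$ precisely as a union of the modules $\op{Hom}_R(J_{R/k}/J_{R/k}^{n+1},R)$ that appear in your chain of isomorphisms. Your proof is correct: the splitting descends to each $\mathcal{P}^{\le n}_{R/k}$ because $J_{R/k}^{n+1}$ sits entirely in the $J_{R/k}$ summand, $\op{Hom}_R(-,R)$ carries the finite direct sum across, and the compatibility with the transition maps of Remark \ref{Principal Parts - Surjection} is exactly what makes the union well defined. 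You have also implicitly recovered the characterization in Proposition \ref{Diff into End} that $\E(R/k)$ consists of operators killing $1$, since the projection onto the $R$-summand is precomposition with $p_1$, i.e., evaluation at $1\otimes 1$.
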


\begin{remark}
    The above notation $\E(R/k)$ is very non-standard. I introduced it in my thesis. A naive notation for this object would be something like
    $\op{Diff}^{\ge 1}_{k}(R)$, but it is vague and super bad, so I want to avoid it. The name of the font is ``textswab''.
\end{remark}

\begin{defin}
    Let $k\to R$ be a morphism of rings. Let $\Phi$ be a differential operator of order at most $n$ on $R$ relative to $k$, i.e., a morphism of $R$-modules $\mathcal{P}^{\le n}_{R/k} \to R$. A $k$-linear operator on $R$ corresponding to $\Phi$ is given by
    \[
    D_\Phi\coloneqq \Phi\circ p_2: R\to R\otimes_k R \to \mathcal{P}^{\le n}_{R/k} \to R,
    \]
    where the middle arrow is dividing by the ideal $J_{R/k}^{n+1}$.
    The above operation defines a natural morphism of $R$-modules
    \[
    \op{Diff}_{k}(R) \to \op{End}_k(R): \Phi \mapsto D_\Phi.
    \]
\end{defin}

Finally, the comparison.

\begin{prop}\label{Diff into End}
    Let $k\to R$ be a morphism of rings. The natural map $\op{Diff}_{k}(R) \to \op{End}_k(R)$ is an injection, and the image of $\op{Diff}^{\le n}_{k}(R)$ is $F_n \mathcal{D}_{R/k}$. 
    
    Moreover, the natural decomposition $\op{Diff}_{k}(R)= R\oplus \E(R/k)$ in terms of endomorphisms is given by 
    \[
    D\mapsto (D(1),D-D(1)).
    \]
    So, the module $\E(R/k)$ consists of differential operators whose value on $1$ is zero.
\end{prop}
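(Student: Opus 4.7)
The plan rests on the $R$-module decomposition $R \otimes_k R = R \oplus J_{R/k}$ from Lemma \ref{Diagonal is augmented}, together with a single computational identity obtained by direct expansion and iteration on the number of factors:
\[
\Phi(df_0 \cdot df_1 \cdots df_m \cdot (1 \otimes a)) = [\cdots[[D_\Phi, f_0], f_1], \ldots, f_m](a),
\]
where $df = 1 \otimes f - f \otimes 1 \in J_{R/k}$, and we view $\Phi$ as defined on $R \otimes_k R$ by precomposing with the quotient map onto $\mathcal{P}^{\le n}_{R/k}$.

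First, injectivity of $\Phi \mapsto D_\Phi$: any $\Phi \in \op{Hom}_R(\mathcal{P}^{\le n}_{R/k}, R)$ is $R$-linear under the $p_1$-action, and since $b \otimes a = p_1(b) \cdot (1 \otimes a)$ the elements $\{1 \otimes a : a \in R\}$ generate $R \otimes_k R$ (hence its quotient $\mathcal{P}^{\le n}_{R/k}$) as a left $R$-module, so $\Phi$ is determined by $\Phi(1 \otimes a) = D_\Phi(a)$. The decomposition $\op{Diff}^{\le n}_{k}(R) = R \oplus \E^{\le n}(R/k)$ is then obtained by applying $\op{Hom}_R(-, R)$ to $\mathcal{P}^{\le n}_{R/k} = R \oplus J_{R/k}/J_{R/k}^{n+1}$; tracking a multiplication-by-$r$ $\Phi$ from the $R$-summand through to endomorphisms gives $D_\Phi(a) = r \cdot a$ with $r = D_\Phi(1)$, so for an arbitrary $D_\Phi$ the two summands are precisely $D_\Phi(1) \cdot \op{id}$ and $D_\Phi - D_\Phi(1) \cdot \op{id}$. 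In particular, $\E(R/k)$ consists of operators with $D(1) = 0$.

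The equality $\op{image}(\op{Diff}^{\le n}_{k}(R)) = F_n \mathcal{D}_{R/k}$ I would prove by induction on $n$. The case $n = 0$ reduces to $\mathcal{P}^{\le 0}_{R/k} = R$, so $\op{Diff}^{\le 0}_{k}(R) = R$ acts by multiplication, matching $F_0 \mathcal{D}_{R/k}$. For the inductive step, in one direction the operator $\Phi \circ m_{df}$ (where $m_{df}$ is multiplication by $df$) factors through $\mathcal{P}^{\le n-1}_{R/k}$ because $m_{df}$ maps $J_{R/k}^n$ into $J_{R/k}^{n+1} \subset \ker \Phi$, and its associated endomorphism is $[D_\Phi, f]$ by the $m = 0$ case of the identity; by the induction hypothesis $[D_\Phi, f] \in F_{n-1} \mathcal{D}_{R/k}$, whence $D_\Phi \in F_n \mathcal{D}_{R/k}$. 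For the converse, given $D \in F_n \mathcal{D}_{R/k}$, define $\Phi : R \otimes_k R \to R$ by $\Phi(b \otimes a) := b D(a)$; the displayed identity applied with $m = n$ shows that $\Phi$ annihilates each generator $df_0 \cdots df_n \cdot (1 \otimes a)$, since the iterated commutator lies in $F_{-1}\mathcal{D}_{R/k} = 0$, so $\Phi$ descends to $\mathcal{P}^{\le n}_{R/k}$ and realizes $D = D_\Phi$.

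The main obstacle is verifying that these products $df_0 \cdots df_n \cdot (1 \otimes a)$ really do generate $J_{R/k}^{n+1}$ as an $R$-module under $p_1$. This reduces to the $n = 0$ case $J_{R/k} = \sum_{f \in R} R \cdot df$, which follows from the identity $a \otimes b = ab \otimes 1 + a \cdot db$ applied to an arbitrary element $\sum a_i \otimes b_i \in J_{R/k}$ (so $\sum a_i b_i = 0$). Iterating using $J_{R/k}^{n+1} = J_{R/k} \cdot J_{R/k}^n$ together with the commutativity of $R \otimes_k R$ moves all the $R$-scalars to the first factor, yielding the required generation and completing the proof.
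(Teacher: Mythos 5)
The paper states Proposition~\ref{Diff into End} without proof, treating it as standard background from EGA~IV~\cite[Chapter 16]{EGAIV}, so there is no in-paper argument to compare against. Your proof is correct and complete. The computational identity
$\Phi(df_0\cdots df_m\cdot(1\otimes a)) = [\cdots[[D_\Phi,f_0],f_1],\ldots,f_m](a)$
is verified at $m=0$ and then iterated (using that $\Phi\circ m_{df_0}$ is again $R$-linear under $p_1$, so the inductive step is legitimate); the injectivity argument correctly uses that $\{1\otimes a\}$ generates $\mathcal P^{\le n}_{R/k}$ as a left $R$-module; the base case $F_0\mathcal D_{R/k}=R$ and the two-sided induction for $\op{image}(\op{Diff}^{\le n}_k(R))=F_n\mathcal D_{R/k}$ are both sound (the forward step uses that $m_{df}$ carries $J^n$ into $J^{n+1}$, the converse uses that the $\Phi$ you define is $R$-linear and annihilates $J^{n+1}$ because the $(n+1)$-fold iterated commutator lies in $F_{-1}=0$); and the generating-set claim for $J^{n+1}$ is handled correctly via the identity $a\otimes b = ab\otimes 1 + a\cdot db$ together with commutativity of $R\otimes_k R$ to move all $p_1$-scalars to the front. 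The reading of the splitting $\op{Diff}^{\le n}_k(R)=R\oplus\E^{\le n}(R/k)$ through $\op{Hom}_R(-,R)$ applied to $\mathcal P^{\le n}_{R/k}=R\oplus J_{R/k}/J_{R/k}^{n+1}$, and the identification of the $R$-summand as $D(1)\cdot\op{id}$, are exactly right. This is essentially the standard argument one would extract from EGA~IV, §16.8.
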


\subsubsection{Graded Algebras of Differential Operators}

\begin{defin}
    Let $k\to R$ be a morphism of rings. We define a \emph{graded algebra of differential operators} on $R$ over $k$, denoted by $\op{gr}\op{Diff}_k(R)$, by the following formulas:
    \begin{align*}
      \op{gr}^i \op{Diff}_k(R) &\coloneqq \op{Diff}^{\le i}_k(R)/\op{Diff}^{\le i-1}_k(R),\\
      \op{gr}\op{Diff}_k(R) &\coloneqq \bigoplus_{i=0}^\infty \op{gr}^i \op{Diff}_k(R),
    \end{align*}
    where $\op{Diff}^{\le i}_k(R)=0$ for $i<0$.
\end{defin}

\begin{defin}
    Let $k\to R$ be a morphism of rings. Let $D\in \op{Diff}_k(R)$ be a differential operator of order $i$. We define the \emph{leading form} of $D$, denoted by $[D]$, to be the image of $D$ with respect to the composition of $\op{Diff}^{\le i}_k(R)\to \op{gr}^i \op{Diff}_k(R)\to \op{gr}\op{Diff}_k(R) $.
\end{defin}

\begin{defin}
    Let $k\to R$ be a morphism of rings. Let $S$ be a subset of $\op{Diff}_k(R)$. We define a graded subset of $S$, or a gradation of $S$, by the formula $\op{gr}(S)\coloneqq \{[D]: D\in S\}$.
\end{defin}

\begin{lemma}\label{graded d in terms of diagonal - lemma}
    Let $k\to R$ be a morphism of rings. We assume $J_{R/k}/J_{R/k}^{2}$ is locally free.

    Then, we have a natural isomorphism $\op{gr}^i\op{Diff}_k(R) \simeq \op{Hom}_R (J_{R/k}^i/J_{R/k}^{i+1},R)$.
\end{lemma}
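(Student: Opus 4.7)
The plan is to build the isomorphism from the short exact sequence of $R$-modules (with $R$-structure via $p_1$ as in Convention \ref{convention - diff algebra})
\[
0 \to J^i_{R/k}/J^{i+1}_{R/k} \to \mathcal{P}^{\le i}_{R/k} \to \mathcal{P}^{\le i-1}_{R/k} \to 0
\]
obtained by dividing the flag $J^{i+1}_{R/k} \subset J^i_{R/k} \subset R\otimes_k R$ by $J^{i+1}_{R/k}$. Applying $\op{Hom}_R(-,R)$, the leftmost two terms recover the inclusion $\op{Diff}^{\le i-1}_k(R) \hookrightarrow \op{Diff}^{\le i}_k(R)$ attached to the surjection of Remark \ref{Principal Parts - Surjection}, and left-exactness yields a canonical injection
\[
\op{gr}^i \op{Diff}_k(R) = \op{Diff}^{\le i}_k(R)/\op{Diff}^{\le i-1}_k(R) \hookrightarrow \op{Hom}_R\!\bigl(J^i_{R/k}/J^{i+1}_{R/k},\, R\bigr).
\]
Concretely, this map sends the class of $\Phi \colon \mathcal{P}^{\le i}_{R/k} \to R$ to its restriction along the inclusion; the kernel description is immediate because operators killing $J^i_{R/k}/J^{i+1}_{R/k}$ are precisely those factoring through $\mathcal{P}^{\le i-1}_{R/k}$.

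For surjectivity I need every $R$-linear form $\varphi \colon J^i_{R/k}/J^{i+1}_{R/k} \to R$ to extend to $\mathcal{P}^{\le i}_{R/k} \to R$, equivalently $\op{Ext}^1_R(\mathcal{P}^{\le i-1}_{R/k}, R) = 0$, for which it suffices that $\mathcal{P}^{\le i-1}_{R/k}$ be projective. I would proceed by induction on $i$: the base case $\mathcal{P}^{\le 0}_{R/k} = R$ is free, and the inductive step is closed once I know that $J^j_{R/k}/J^{j+1}_{R/k}$ is locally free for every $j \le i-1$, since the displayed sequence then exhibits $\mathcal{P}^{\le j}_{R/k}$ as a successive extension of locally free $R$-modules (hence locally free, hence projective). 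Once projectivity is in hand, the sequence even splits, and an explicit inverse to the canonical map above is obtained by extending $\varphi$ along a chosen splitting.

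The main obstacle is therefore the purely algebraic input that local freeness of $J_{R/k}/J^2_{R/k}$ propagates to higher graded pieces, namely that the symbol map
\[
\op{Sym}^j_R\!\bigl(J_{R/k}/J^2_{R/k}\bigr) \twoheadrightarrow J^j_{R/k}/J^{j+1}_{R/k}
\]
is an isomorphism for every $j \ge 0$. Surjectivity is formal, because $J^j_{R/k}$ is generated by $j$-fold products of elements of $J_{R/k}$, so the real content is injectivity. I would argue Zariski-locally on $\op{Spec}(R)$: after localizing so that $J_{R/k}/J^2_{R/k}$ is free, lift a basis to elements $dx_\alpha := 1 \otimes x_\alpha - x_\alpha \otimes 1 \in J_{R/k}$, and verify by direct expansion—using the splitting $R \otimes_k R = R \oplus J_{R/k}$ of Lemma \ref{Diagonal is augmented}—that the degree-$j$ monomials in the $dx_\alpha$ are $R$-linearly independent modulo $J^{j+1}_{R/k}$. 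This PBW-type computation is the heart of the lemma; once secured, each $J^j_{R/k}/J^{j+1}_{R/k}$ is a symmetric power of a locally free module and the inductive argument above closes, giving the desired natural isomorphism.
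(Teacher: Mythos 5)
Your proof follows essentially the same route as the paper's: apply $\op{Hom}_R(-,R)$ to the short exact sequence
\[
0 \to J^i_{R/k}/J^{i+1}_{R/k} \to \mathcal{P}^{\le i}_{R/k} \to \mathcal{P}^{\le i-1}_{R/k} \to 0,
\]
observe that the graded Hom computation works precisely when $\op{Ext}^1_R(\mathcal{P}^{\le i-1}_{R/k},R)=0$, and secure this by showing $\mathcal{P}^{\le i-1}_{R/k}$ is locally free. The paper's own proof is terser: it simply asserts ``if $J/J^2$ is locally free then all $R\otimes_k R/J^i$ are locally free'' and moves on. You are more honest about where the content lies, and you correctly isolate the PBW-type statement $\op{Sym}^j_R(J/J^2) \xrightarrow{\sim} J^j/J^{j+1}$ as the crux.

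One caution is worth adding, because it affects both your write-up and the paper's. The lemma's stated hypothesis is only that $J_{R/k}/J^2_{R/k}$ is locally free, which is weaker than ``admits coordinates'' (smooth, with free $\Omega_{R/k}$), and the PBW statement is not a formal consequence of the weaker hypothesis. Your proposed verification—localize, lift a basis to elements $dx_\alpha$, and ``verify by direct expansion'' that the degree-$j$ monomials are $R$-independent modulo $J^{j+1}$—is exactly the argument that works in the formally smooth case (it is what Proposition~\ref{Monomials in dx are a basis} packages, and what the paper's own Lemma~\ref{graded principal is symmetric algebra - lemma} proves under the ``admits coordinates'' hypothesis), but ``direct expansion'' alone does not produce the linear independence from local freeness of $J/J^2$ without some additional input on the structure of $R\otimes_k R$. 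In all the paper's applications the lemma is invoked in settings where coordinates are available, so the conclusion is safe; if you want the lemma as literally stated, you should either add a smoothness hypothesis or cite a specific argument (e.g.\ EGA~IV~16.11 or an André--Quillen vanishing) for the symbol map being an isomorphism. This is a genuine gap you share with the paper's own proof, and your flagging of it is a useful improvement rather than a defect.
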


\begin{proof}
If $J_{R/k}/J_{R/k}^{2}$ is locally free, then all $R\otimes_k R/J_{R/k}^{i}=R\oplus J_{R/k}/J_{R/k}^{i}$ are locally free, so we have $\op{Ext}^1_R(R\otimes_k R/J_{R/k}^{i},R)=0$.

Consequently, we have
    \begin{align*}
    \op{gr}^i\op{Diff}_k(R)=&\op{Diff}^{\le i}_k(R)/\op{Diff}^{\le i-1}_k(R)=\\ 
    =& \op{Hom}_R (R\otimes_k R/J_{R/k}^{i+1},R)/\op{Hom}_R (R\otimes_k R/J_{R/k}^{i},R)\\ 
    \simeq& \op{Hom}_R (J_{R/k}^i/J_{R/k}^{i+1},R).
    \end{align*}
\end{proof}

\subsubsection{Explicit Formulas Using Coordinates}

The propositions and definitions below are from a short \cite[Section 16.11]{EGAIV}. It allows us to work with differential operators via coordinates. It is useful, because in this way we can check orders of our operators just by writing them down and looking at them.

\begin{defin}\label{coordinates - definition}
    Let $f:k\to R$ be a smooth morphism of rings. Let $\{x_1,x_2,\ldots,x_n \}$ be a \emph{finite} set of elements of $R$. We say that the set $\{x_i \}$ is a set of coordinates for $k\to K$ if it is a basis of $\Omega_{R/k}$.

    Thus, we say that $k\to R$ admits coordinates, if $k\to R$ is smooth, and $\Omega_{R/k}$ is free.
\end{defin}

\begin{example}
Our two cases of admitting coordinates that we use in this paper are:
\begin{itemize}
    \item A geometrically regular (i.e., smooth) variety over a perfect field admits coordinates locally. Any local basis of K{\" a}hler differentials is a set of coordinates.
    \item If $K$ is a field of characteristic $p>0$, and $K/K^{p^\infty}$ is finitely generated, then $K^{p^\infty}\to K$ admits coordinates. In this case, it is any $p$-basis.
\end{itemize}
   
\end{example}

\begin{remark}
    The above assumption about smoothness is there solely to get the following result that is REALLY what we want from ``\textbf{admitting coordinates}'', and the only thing we use from that assumption in the whole paper. Thus, in this paper, the next proposition could be taken as a definition of ``admitting coordinates''.
\end{remark}

\begin{prop}[Monomials Generate Spaces of Principal Parts]\label{Monomials in dx are a basis}
    Let a ring map $k\to R$ admit coordinates. Let $\{x_i\}$ be a set of coordinates of $R$ over $k$. Then monomials $d(x)^e\coloneqq\prod d(x_i)^{e_i}$, with $e_i \ge 0$ and $\sum e_i \le n$, form a $R$-basis of $\mathcal{P}_{R/k}^{\le n}$.
\end{prop}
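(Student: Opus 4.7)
The plan is to lift a basis from the associated graded of $R\otimes_k R$ with respect to the $J_{R/k}$-adic filtration back to $\mathcal{P}^{\le n}_{R/k}$ itself. The filtration is
\[
0 = J_{R/k}^{n+1}/J_{R/k}^{n+1} \subset J_{R/k}^n/J_{R/k}^{n+1} \subset \cdots \subset J_{R/k}/J_{R/k}^{n+1} \subset R\otimes_k R/J_{R/k}^{n+1} = \mathcal{P}^{\le n}_{R/k},
\]
so it suffices to produce free bases of each subquotient $J_{R/k}^i/J_{R/k}^{i+1}$ compatibly, and then observe that freeness of the graded pieces forces the filtration to split $R$-linearly.

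First I would identify $J_{R/k}/J_{R/k}^2$ with $\Omega_{R/k}$ via the usual isomorphism sending $d(x_i) = 1\otimes x_i - x_i\otimes 1$ to $dx_i$; since $\{dx_i\}$ is an $R$-basis of $\Omega_{R/k}$ by hypothesis, $\{d(x_i)\}$ is a free $R$-basis of $J_{R/k}/J_{R/k}^2$. The second step is the structural input for smooth morphisms: the canonical map of graded $R$-algebras
\[
\op{Sym}_R^\bullet\!\left(J_{R/k}/J_{R/k}^2\right) \longrightarrow \bigoplus_{i\ge 0} J_{R/k}^i/J_{R/k}^{i+1}
\]
is an isomorphism (EGA IV 16.10.2). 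Combining the two steps, $J_{R/k}^i/J_{R/k}^{i+1}$ is free as an $R$-module on the monomials $\{d(x)^e : |e|=i\}$.

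Finally, since every successive quotient in the filtration above is free, each short exact sequence $0\to J_{R/k}^{i+1}/J_{R/k}^{n+1}\to J_{R/k}^i/J_{R/k}^{n+1}\to J_{R/k}^i/J_{R/k}^{i+1}\to 0$ splits $R$-linearly. Concretely, I would lift each basis element $d(x)^e$ of $J_{R/k}^{|e|}/J_{R/k}^{|e|+1}$ to the element $d(x)^e \in \mathcal{P}^{\le n}_{R/k}$ defined by the same formula; it automatically lies in $J_{R/k}^{|e|}/J_{R/k}^{n+1}$ and represents the correct class in the graded piece. Splicing these splittings together shows that $\{d(x)^e : |e| \le n\}$ is an $R$-basis of $\mathcal{P}^{\le n}_{R/k}$.

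The main obstacle is the structural result $\op{Sym}(\Omega_{R/k}) \simeq \op{gr}_{J_{R/k}}(R\otimes_k R)$ used in the second step; everything else is a formal splitting argument. If one does not wish to black-box it from EGA, the standard proof proceeds by reducing via étale localization to the case $R = k[x_1,\ldots,x_n]$, where $R\otimes_k R = k[x_1,\ldots,x_n,y_1,\ldots,y_n]$, $J_{R/k}$ is the regular ideal generated by $y_i - x_i$, and the claim follows from the fact that the associated graded of a regular sequence is a polynomial ring over the quotient. The étale descent step uses that étale morphisms are formally étale, so $\mathcal{P}^{\le n}$ is compatible with étale base change on the source.
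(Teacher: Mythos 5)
Your proposal is correct, and it is worth noting upfront that the paper does not supply its own proof of this statement: it is cited verbatim from \cite[Section 16.11]{EGAIV}, and the remark preceding it even suggests that the proposition could be adopted as the definition of ``admitting coordinates''. So there is no in-paper argument to compare against.

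Your route is the standard EGA IV one, and all the steps check out. The chain $J_{R/k}/J_{R/k}^{2}\cong\Omega_{R/k}$ (via $d(x_i)\mapsto dx_i$), the isomorphism $\op{Sym}_R(\Omega_{R/k})\xrightarrow{\sim}\op{gr}_{J_{R/k}}(R\otimes_k R)$ for smooth $k\to R$, and the observation that a finite filtration with free graded pieces splits $R$-linearly together give exactly the claimed basis, and the specific lifts $d(x)^e\in J_{R/k}^{|e|}/J_{R/k}^{n+1}$ are indeed legitimate representatives of the corresponding classes in $J_{R/k}^{|e|}/J_{R/k}^{|e|+1}$. The only place deserving a small caveat is the middle step, whose proof you defer to the local étale factorization $k\to k[t_1,\ldots,t_n]\to R$: to make the global conclusion clean, one checks the map $\op{Sym}_R(\Omega_{R/k})\to\op{gr}_{J_{R/k}}(R\otimes_k R)$ is an isomorphism after passing to a Zariski cover where such a factorization exists, then notes that isomorphism of $R$-modules is a local property, while the globality of the final basis comes from the hypothesis that $\{x_i\}$ is a global basis of $\Omega_{R/k}$. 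That is all implicit in what you wrote; it just helps to say it, since the reduction you describe produces a polynomial-ring presentation whose variables $t_j$ are generally unrelated to the coordinates $x_i$.
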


\begin{defin}
     Let $k\to R$ be a morphism of rings that admits coordinates. Let $\{x_i\}$ be a finite set of coordinates of $R$ over $k$. We denote the $R$-basis of $\op{Diff}^{\le n}_k(R)$ that is dual to the $R$-basis $d(x)^e$ of $\mathcal{P}_{R/k}^{\le n}$ by $D^e$, i.e., $D^e(d(x)^f)=\begin{cases}
1, \ \text{if } e=f \\
0, \ \text{if } e\ne f
\end{cases}$. We will refer to this basis as the \emph{dual basis} of $\op{Diff}^{\le n}_k(R)$ corresponding to the set of coordinates $\{x_i\}$ for $k\to R$.
\end{defin}

\begin{remark}
The dual basis strongly depends on the choice of the set of coordinates, e.g., both sets $\{x,y\}$ and $\{x, x+y\}$ are sets of coordinates for $k[x,y]$ over $k$. Still, their dual bases are not equal, though it will often happen that they will have operators that are denoted exactly the same way, but are not actually equal. For example, we have: $\{x,y\}$ gives rise to $\frac{\partial}{\partial x}$, $\frac{\partial}{\partial y}$, and $\{x,x+y\}$ gives rise to $\frac{\partial}{\partial x}$, $\frac{\partial}{\partial x+y}$. However, $\frac{\partial}{\partial x}\ne \frac{\partial}{\partial x}$! Therefore, whenever we encounter explicit formulas, it is important to know the whole set of coordinates; otherwise, it is likely that we will make some silly mistakes.
\end{remark}

\begin{cor}\label{Dual Basis of Diff}
    Let $k\to R$ be a morphism of rings that admits coordinates. Let $\{x_i\}$ be a \emph{finite} set of coordinates of $R$ over $k$. The operators $D^e$ with $\sum e_i \ge 0$ are a basis of $\op{Diff}_k(R)$.
    We call this basis the \emph{dual basis} of $\op{Diff}_k(R)$ corresponding to the coordinates $x_i$.
\end{cor}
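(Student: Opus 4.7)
The plan is to reduce this corollary to Proposition \ref{Monomials in dx are a basis} by dualizing level by level in the order filtration, and then checking that the individual bases patch compatibly as $n$ grows.

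First I would observe the level-$n$ statement. By Proposition \ref{Monomials in dx are a basis}, the monomials $\{d(x)^e : \sum_i e_i \le n\}$ form a free $R$-basis of $\mathcal{P}^{\le n}_{R/k}$. By Definition \ref{Differential Operators: Universal Definition}, $\op{Diff}^{\le n}_k(R) = \op{Hom}_R(\mathcal{P}^{\le n}_{R/k}, R)$, so dualizing yields the free $R$-basis $\{D^e : \sum_i e_i \le n\}$ of $\op{Diff}^{\le n}_k(R)$ by the very definition of the dual basis.

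Next I would verify that these level-$n$ bases are compatible with the order filtration. For $m \le n$, the inclusion $\op{Diff}^{\le m}_k(R) \hookrightarrow \op{Diff}^{\le n}_k(R)$ is obtained by precomposing with the surjection $\mathcal{P}^{\le n}_{R/k} \twoheadrightarrow \mathcal{P}^{\le m}_{R/k}$ from Remark \ref{Principal Parts - Surjection}. On our monomial bases, this surjection sends $d(x)^f$ to itself whenever $\sum_i f_i \le m$ and to $0$ otherwise. Consequently, for an index $e$ with $\sum_i e_i \le m$, the element $D^e$ defined at level $m$ evaluates on $d(x)^f$ at level $n$ to $\delta_{e,f}$ if $\sum_i f_i \le m$ and to $0$ if $\sum_i f_i > m$; but since $\sum_i e_i \le m$, this agrees exactly with the value of the level-$n$ dual basis element labelled $e$ on $d(x)^f$. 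Hence the notation $D^e$ is unambiguous across levels.

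Finally I would pass to the union. Since $\op{Diff}_k(R) = \bigcup_{n \ge 0} \op{Diff}^{\le n}_k(R)$, every differential operator lies in some $\op{Diff}^{\le n}_k(R)$ and has a unique finite $R$-linear expansion in $\{D^e : \sum_i e_i \le n\}$; the previous paragraph ensures these expansions are consistent as $n$ increases. Therefore $\{D^e\}_{e \in \mathbb{N}^{|\{x_i\}|}}$ is an $R$-basis of $\op{Diff}_k(R)$. There is essentially no hard step here: all the geometric content sits in Proposition \ref{Monomials in dx are a basis}, and the present corollary is the formal statement that a filtered colimit of free modules with compatible bases is itself free on the union of those bases.
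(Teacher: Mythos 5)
Your proof is correct and fills in exactly the verification the paper leaves implicit (the paper gives no explicit proof of this corollary, treating it as immediate from Proposition~\ref{Monomials in dx are a basis} and the preceding definition). The only substantive point worth checking — that the level-$m$ operator $D^e$ agrees with the level-$n$ one under the inclusion $\op{Diff}^{\le m}_k(R)\hookrightarrow \op{Diff}^{\le n}_k(R)$, because $d(x)^f$ dies under $\mathcal{P}^{\le n}_{R/k}\twoheadrightarrow\mathcal{P}^{\le m}_{R/k}$ precisely when $\sum f_i>m\ge\sum e_i$, hence when $f\ne e$ — is exactly the one you verify, and your handling of the filtered-colimit-of-free-modules step is clean.
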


The following is a part of \cite[Theorem 16.11.2]{EGAIV}.
    
\begin{prop}\label{Determination1}
    Let $k\to R$ be a morphism of rings that admits coordinates. Let $\{x_i\}$ be a finite set of coordinates of $R$ over $k$. Let $D^e$ be the dual $R$-basis of $\op{Diff}^{\le n}_k(R)$ corresponding to $\{x_i\}$. These operators are determined by their values on monomials $x^f=\prod_{i} x_i^{f_i} $, where $f_i\ge 0$. These values are:
    \[
    D^e(x^f)={f \choose e}x^{f-e}=\prod_i {f_i \choose e_i} x_i^{f_i-e_i}.
    \]

    We mean $D^e : R\to R$, not $D^e: \mathcal{P}_{R/k}^{\le n} \to R$ for some $n$. (We abuse notation not to write $D_{D^e}$.)
\end{prop}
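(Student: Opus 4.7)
The plan is to unwind the definition of $D^e: R \to R$ and to expand $p_2(x^f)$ in the basis of $\mathcal{P}^{\le n}_{R/k}$ furnished by Proposition \ref{Monomials in dx are a basis}. By construction of the map $\op{Diff}_k(R) \to \op{End}_k(R)$ (the paragraph defining $D_\Phi$), the value $D^e(x^f)$ equals the image of the class of $p_2(x^f)$ modulo $J_{R/k}^{n+1}$ under the $R$-linear functional $D^e : \mathcal{P}^{\le n}_{R/k} \to R$. Since that functional is by definition dual to the basis $\{d(x)^{e'}\}_{|e'|\le n}$, the entire computation reduces to extracting the coefficient of $d(x)^e$ in this expansion.

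The key observation is that, with Convention \ref{convention - diff algebra} turning $R \otimes_k R$ into an $R$-algebra via $p_1$, one has $p_2(x_i) = p_1(x_i) + d(x_i) = x_i + d(x_i)$ for each coordinate $x_i$, because $d = p_2 - p_1$ by Definition \ref{Set Up: Differential Operators}. Since $p_2$ is a ring homomorphism and $R \otimes_k R$ is commutative, the multivariate binomial theorem gives
\[
p_2(x^f) \;=\; \prod_i \bigl(x_i + d(x_i)\bigr)^{f_i} \;=\; \sum_{0 \le e' \le f} \binom{f}{e'}\, x^{f-e'}\, d(x)^{e'}.
\]
Reducing modulo $J_{R/k}^{n+1}$ kills the summands with $|e'| > n$, since each $d(x_i)$ lies in $J_{R/k}$, and the surviving monomials $\{d(x)^{e'}\}_{|e'|\le n}$ are precisely the $R$-basis of $\mathcal{P}^{\le n}_{R/k}$ of Proposition \ref{Monomials in dx are a basis}. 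Reading off the coefficient of $d(x)^e$ then yields $D^e(x^f) = \binom{f}{e}\, x^{f-e}$, with the standard convention $\binom{f_i}{e_i} = 0$ whenever $e_i > f_i$.

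The main subtlety to watch — and arguably the only ``obstacle'' — is to avoid confusing the two $R$-algebra structures on $R \otimes_k R$: the basis $\{d(x)^{e'}\}$ and the monomial expansion above are all taken with respect to the $p_1$-structure fixed by Convention \ref{convention - diff algebra}, while the $p_2$-structure enters exactly once, in the identity $p_2(x_i) = x_i + d(x_i)$. Once this bookkeeping is set up correctly, the proposition is nothing more than the commutative binomial theorem inside $R \otimes_k R$ applied coordinate by coordinate.
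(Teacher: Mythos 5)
The paper provides no proof of this statement — it simply attributes it to \cite[Theorem 16.11.2]{EGAIV} — so your self-contained derivation is a genuine addition rather than a retread. Your computation of the formula is correct: you unwind $D^e(x^f)=D^e\bigl(p_2(x^f)\bmod J^{n+1}_{R/k}\bigr)$, substitute $p_2(x_i)=p_1(x_i)+d(x_i)$, expand by the multivariate binomial theorem in the commutative $R$-algebra $(R\otimes_k R,\,p_1)$, reduce modulo $J^{n+1}_{R/k}$, and read off the coefficient of the basis vector $d(x)^e$ from Proposition \ref{Monomials in dx are a basis}. The explicit attention to the two $R$-module structures on $R\otimes_k R$ is exactly the bookkeeping that makes this argument airtight.

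There is, however, one small but real gap: you prove the displayed formula but do not address the proposition's first assertion, namely that operators in $\op{Diff}^{\le n}_k(R)$ are \emph{determined} by their values on the monomials $x^f$. This is not automatic — when $R$ is, say, a field admitting coordinates via a $p$-basis, the monomials do not $k$-span $R$, so one cannot simply evaluate on a spanning set — and the paper invokes exactly this uniqueness later (see Step~1 of the proof of Theorem \ref{SES for diff - fields - theorem}, where an operator is built by prescribing its values on monomials). Fortunately your own expansion already contains the argument. For any $D\in\op{Diff}^{\le n}_k(R)$ and any multi-index $f$ with $|f|\le n$, your identity gives
\[
D(x^f) \;=\; D\bigl(d(x)^f\bigr) \;+\; \sum_{\substack{0\le e'\le f\\ e'\neq f}} \binom{f}{e'}\, x^{f-e'}\, D\bigl(d(x)^{e'}\bigr),
\]
and every term in the sum has $|e'|<|f|$, so the system is triangular in $|e|$: by induction on $|e|$ one recovers each $D(d(x)^e)$ for $|e|\le n$ from the values $D(x^f)$, and since $\{d(x)^e\}_{|e|\le n}$ is an $R$-basis of $\mathcal{P}^{\le n}_{R/k}$, this pins down $D$ completely. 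Adding a sentence along these lines would close the argument.
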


\begin{defin}\label{symbols diff operators - def}
Let $k\to R$ be a morphism of rings that admits coordinates.  Let $\{x_1,x_2,\ldots, x_n\}$ be a finite set of coordinates of $R$ over $k$.
For every $i=1,2,\ldots, n$ and $a\ge 0$ we define a differential operator $\frac{1}{a!}\frac{\partial^{a}}{\partial x_i ^{a}}$ on $R$ over $k$ by the formula\footnote{In particular, we do not actually divide by $a!$. It is just a (suggestive) symbol.}
    \[
    \frac{1}{a!}\frac{\partial^{a}}{\partial x_i ^{a}}(\prod_{j=1}^n x_j^{b_j})={{b_i}\choose{a}} x_i^{b_i -a}\prod_{j=1, j\ne i}^n x_j^{b_j} .
    \]
This operator's order is $a$.
\end{defin}

\begin{cor}\label{explicit formulas calculus - cor}
    Let $k\to R$ be a morphism of rings that admits coordinates. Let $\{x_1,x_2,\ldots, x_n\}$ be a finite set of coordinates of $R$ over $k$. Let $D^e$ be an operator from the dual basis corresponding to $x_i$ with $e=(e_1,\ldots, e_n)$, then
    \[
    D^e = \prod_{i=1}^{n} \frac{1}{e_i!}\frac{\partial^{e_i}}{\partial x_i ^{e_i}}.
    \]
\end{cor}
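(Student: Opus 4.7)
The plan is to verify the identity by evaluating both sides on the multiplicative monomials $x^f = \prod_i x_i^{f_i}$ and appealing to Proposition \ref{Determination1}, which pins down each dual basis element by its values on such monomials.

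First I would read off from Proposition \ref{Determination1} that the left-hand side satisfies
\[
D^e(x^f) = \binom{f}{e} x^{f-e} = \prod_i \binom{f_i}{e_i} x_i^{f_i - e_i}.
\]
Then I would compute the right-hand side on $x^f$. By Definition \ref{symbols diff operators - def}, each factor $\frac{1}{e_i!}\frac{\partial^{e_i}}{\partial x_i^{e_i}}$ touches a monomial only in its $i$-th coordinate, leaving the other $x_j^{f_j}$ alone. This is exactly what makes the factors for distinct $i$'s commute when acting on monomials, so applying them successively to $x^f$ peels off each coordinate independently and produces $\prod_i \binom{f_i}{e_i} x_i^{f_i - e_i}$, matching the left-hand side. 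The binomial coefficients lie in the prime subring, so they commute past the subsequent $k$-linear operators without issue.

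To upgrade the agreement on monomials to an equality as differential operators, I would combine Proposition \ref{Monomials in dx are a basis} with Proposition \ref{Determination1}: both sides belong to $\op{Diff}^{\le |e|}_k(R)$, the module $\mathcal{P}^{\le |e|}_{R/k}$ is freely generated over $R$ by the $d(x)^f$ with $|f|\le |e|$, and the resulting dual pairing is characterized precisely by the monomial values computed above. Hence two operators of order $\le |e|$ that agree on all $x^f$ must coincide as elements of $\op{Diff}_k(R)$.

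No serious obstacle is expected; the whole statement is bookkeeping built on the multiplicative expansion $1 \otimes x^f \equiv \prod_i (x_i + d(x_i))^{f_i} \pmod{J_{R/k}^{|e|+1}}$ that underlies Proposition \ref{Determination1}. The only mildly delicate point is the commutativity of the partials, which I would justify not by a general statement but directly from Definition \ref{symbols diff operators - def}, where each $\frac{1}{a!}\frac{\partial^a}{\partial x_i^a}$ is defined coordinate-wise on monomials.
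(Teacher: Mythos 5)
Your argument is correct and is essentially the intended one: the paper states this corollary without proof, and the natural justification is precisely what you describe — evaluate both sides on the monomials $x^f$ via Proposition \ref{Determination1} and Definition \ref{symbols diff operators - def} (noting that the factors act coordinate-wise and the integer binomial coefficients pass through the $k$-linear operators), then invoke the fact, also packaged in Proposition \ref{Determination1} and resting on the free basis of $\mathcal{P}^{\le |e|}_{R/k}$ from Proposition \ref{Monomials in dx are a basis}, that a differential operator of bounded order is determined by its values on monomials. No gap.
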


\begin{remark}
    One can say that the values of the operators $ \frac{1}{a!}\frac{\partial^{a}}{\partial x_i ^{a}}$ are \emph{the obvious ones}. The same is true about most relations between them because we can check any relation by comparing values on monomials. 
    
    For example, an obvious relation is that all these operators $ \frac{1}{a!}\frac{\partial^{a}}{\partial x_i ^{a}}$ commute with each other. 
    
    For example, if $p=0$, then a non-obvious relation from a differential geometry point of view is $\left(\frac{1}{a!}\frac{\partial^{a}}{\partial x_i ^{a}}\right)^{\circ p}=0$ for $a\ge 1$. This relation is easy to check on monomials.
\end{remark}

\subsubsection{Differential}

We discuss the difference between algebras of differential operators. We will define it under mild assumptions following \cite[page 23]{EGAIV}. 

A motivation for a differential comes from differential geometry. For any smooth map between two manifolds $f: X\to  Y$, we get a differential between their tangent spaces $df: TX\to f^* TY$. However, it is true that $df$ naturally extends to a map $df: \op{Diff}(X)\to f^* \op{Diff}(Y)$ between their spaces of differential operators. This is what we will define in a general algebraic setting.

In characteristic zero, the first order $TX$ determines the algebra $\op{Diff}(X)$, so there is not much gain from considering the extension. However, in general, the algebra $\op{Diff}(X)$ is a richer source of information about $X$ than $TX$ alone, e.g., this exactly happens in positive characteristic.

\begin{defin}\label{delta n - definitions}
    Let $k\to A \xrightarrow{f} R$ be morphisms of rings.
    These maps induce a morphism
    \[
    A\otimes_k A\to R\otimes_k R: a \otimes b \mapsto f(a)\otimes f(b)
    \]
    and this induces morphisms
    \[
    \Delta^{\le n}_{R/A}: \mathcal{P}^{\le n}_{A/k}=A\otimes_k A/J_{A/k}^{n+1}\rightarrow R\otimes_k R/J_{R/k}^{n+1}=\mathcal{P}^{\le n}_{R/k}.
    \]
\end{defin}

\begin{defin}\label{Natural Maps: Differential}
     Let $k\to A \to R$ be morphisms of rings. We have the following natural maps:
     \begin{enumerate}
         \item $\op{Hom}_R (\mathcal{P}^{\le n}_{R/k}, R) \rightarrow
        \op{Hom}_R (\mathcal{P}^{\le n}_{A/k}\otimes R, R):
        \phi \mapsto \phi \circ (\mathcal{P}^{\le n}_{A/k}\otimes R\xrightarrow{\Delta^{\le n}_{R/A}\otimes 1} \mathcal{P}^{\le n}_{R/k})$,
        \item $ \op{Hom}_A (\mathcal{P}^{\le n}_{A/k}, R) \rightarrow \op{Hom}_R (\mathcal{P}^{\le n}_{A/k}\otimes R, R)
       : \phi \mapsto (a \otimes x \mapsto x\phi(a))$,
        \item $ \op{Hom}_A (\mathcal{P}^{\le n}_{A/k}, A) \otimes R
        \rightarrow
        \op{Hom}_A (\mathcal{P}^{\le n}_{A/k}, R): \phi \otimes y \mapsto ((A \to R) \circ \phi) \cdot y$.
     \end{enumerate}
\end{defin}

\begin{remark}
    Item 2 from Definition \ref{Natural Maps: Differential} is always an isomorphism. It is a universal property of extending scalars from $A$ to $R$.
\end{remark}

The following definition is non-standard. I introduced it a bit ad hoc.

\begin{defin}\label{Admit a differential - definition}
    Let $k\to A \to R$ be morphisms of rings. We say that these morphisms \emph{admit the differential} if the map from item 3 of Definition \ref{Natural Maps: Differential} is an isomorphism.
\end{defin}

\begin{defin}[Differential Map for Differential Operators]\label{Differential - definition}
    Let $k\to A \to R$ be morphisms of rings that admit the differential. We define \emph{differentials} 
    \[
    d^{\le n}(R/A): \op{Diff}^{\le n}_k(R)\to \op{Diff}^{\le n}_k(A)\otimes R
    \]
    by being the composition of the map 1. with the inverses of the maps 2. and 3. from Definition \ref{Natural Maps: Differential}: 
    \begin{align*}
    \op{Diff}^{\le n}_k(R)\coloneqq \op{Hom}_R(\mathcal{P}^{\le n}_{A/k}\otimes R, R) &\xrightarrow{\text{The item 1.}} \op{Hom}_R (\mathcal{P}^{\le n}_{A/k}\otimes R, R)\\
    &\xrightarrow{\left(\text{The item 2.}\right)^{-1}} \op{Hom}_A (\mathcal{P}^{\le n}_{A/k}, R)\\
    &\xrightarrow{\left(\text{The item 3.}\right)^{-1}} \op{Hom}_A (\mathcal{P}^{\le n}_{A/k}, A) \otimes R\\
    &\eqqcolon \op{Diff}^{\le n}_k(A)\otimes R.
    \end{align*}
    These differentials $\le n$ are compatible with each other and define the \emph{differential}
    \[
    d(R/A): \op{Diff}_k(R)\to \op{Diff}_k(A)\otimes R.
    \]
\end{defin}

\begin{prop}\label{Criterion: admitting a differential}\label{Fields Admit Differentials}
    Let $k\to A \to R$ be morphisms of rings. If $k\to A$ is a morphism of rings that admits coordinates, then $k\to A\to R$ admits the differential.
\end{prop}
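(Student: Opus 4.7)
The plan is to reduce everything to a general fact about $\op{Hom}$ and tensor products of finite free modules. Unpacking Definition \ref{Admit a differential - definition}, I must show that for each $n \ge 0$ the natural map
\[
\mu_n\colon \op{Hom}_A(\mathcal{P}^{\le n}_{A/k}, A) \otimes_A R \longrightarrow \op{Hom}_A(\mathcal{P}^{\le n}_{A/k}, R),\qquad \phi \otimes y \longmapsto y \cdot \bigl((A \to R) \circ \phi\bigr),
\]
from item 3 of Definition \ref{Natural Maps: Differential} is an isomorphism. The whole proof revolves around the structure of the source $A$-module $\mathcal{P}^{\le n}_{A/k}$ in the first slot.

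First I would invoke the hypothesis that $k \to A$ admits coordinates: by Definition \ref{coordinates - definition}, there is a \emph{finite} set $\{x_1,\ldots,x_m\} \subset A$ forming a basis of $\Omega_{A/k}$. Proposition \ref{Monomials in dx are a basis} then furnishes an explicit \emph{finite} $A$-basis of $\mathcal{P}^{\le n}_{A/k}$, namely the monomials $d(x)^e$ with $\sum_i e_i \le n$. Thus $\mathcal{P}^{\le n}_{A/k}$ is a free $A$-module of finite rank $N$ (where $N$ is the number of such multi-indices).

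Once this freeness is in place, $\mu_n$ is the standard ``$\op{Hom}$ versus base change'' map with a finite free module in the first slot, which is well known to be an isomorphism. Concretely, the dual basis identifies $\op{Hom}_A(\mathcal{P}^{\le n}_{A/k}, A)$ with $A^N$, so the left-hand side of $\mu_n$ becomes $A^N \otimes_A R \cong R^N$, while the right-hand side $\op{Hom}_A(\mathcal{P}^{\le n}_{A/k}, R) \cong \op{Hom}_A(A^N, R)$ is also $R^N$. A direct coordinate check shows that, under these identifications, $\mu_n$ is the identity map of $R^N$.

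Since every $\mu_n$ is thereby an isomorphism, $k \to A \to R$ admits the differential in the sense of Definition \ref{Admit a differential - definition}, which is the desired conclusion. The only genuine ``obstacle'' worth naming is the \emph{finiteness} of the coordinate set built into Definition \ref{coordinates - definition}: without it one would only get a projective (or just flat) first slot and would need a finite presentation argument to commute $\op{Hom}$ with the tensor product. Here that subtlety is avoided by hypothesis, so the proof is essentially linear algebra once Proposition \ref{Monomials in dx are a basis} is invoked.
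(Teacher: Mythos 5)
Your proposal is correct and follows essentially the same route as the paper: invoke Proposition \ref{Monomials in dx are a basis} to see that $\mathcal{P}^{\le n}_{A/k}$ is finite free, then conclude that the item-3 map of Definition \ref{Natural Maps: Differential} is the standard $\op{Hom}$-versus-base-change isomorphism via a coordinate check. Your closing remark about finiteness being the genuine obstruction is a nice addition, but the argument itself is the paper's.
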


\begin{proof}
    From $k\to A$ admitting finite coordinates, by Proposition \ref{Monomials in dx are a basis}, we get that $\mathcal{P}^{\le n}_{A/k}$ is a free $A$-module of finite rank, say $N$. Then, there is an isomorphism $\mathcal{P}^{\le n}_{A/k}\simeq \bigoplus^N A$. The choice of this isomorphism induces the following canonical isomorphisms
    \begin{align*}
     \op{Hom}_A (\mathcal{P}^{\le n}_{A/k}, A) \otimes R
        &\simeq \op{Hom}_A (\bigoplus^N A, A) \otimes R
        \simeq \prod^N \op{Hom}_A (A, A) \otimes R\\
        &\rightarrow
         \prod^N \op{Hom}_A (A,R) \simeq  \op{Hom}_A (\bigoplus^N A, R)
         \simeq \op{Hom}_A (\mathcal{P}^{\le n}_{A/k}, R).   
    \end{align*}
    The above arrow is an isomorphism, because, in each coordinate, it is explicitly given by the following formula:
    \[
    (1\mapsto a) \otimes r \mapsto (1 \mapsto a\cdot r)
    \]
    whose explicit inverse is
    \[
    (1 \mapsto r) \mapsto (1\mapsto 1) \otimes r. 
    \]
\end{proof}

\begin{lemma}[Differential is Restriction]\label{Differential is Restriction}
    Let $k\to A \to R$ be morphisms of rings that admit the differential. Let $D$ be a differential operator on $A$ relative to $k$, then $d(R/A)(D)=D_{|A}: A\to R$, i.e., the composition $A\to R \xrightarrow{D} R$.
\end{lemma}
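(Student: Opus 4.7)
I read the statement as follows: for $D\in \op{Diff}_k(R)$ — since $d(R/A)$ has domain $\op{Diff}_k(R)$, the ``on $A$'' in the statement must be a typo — the element $d(R/A)(D)\in \op{Diff}_k(A)\otimes R$, viewed as a $k$-linear map $A\to R$ via $\psi\otimes r \mapsto (a\mapsto \psi(a)\cdot r)$, will coincide with the composition $A\hookrightarrow R\xrightarrow{D} R$.

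The approach will be to unwind the three-step recipe from Definition \ref{Differential - definition}. I will write $D = D_\phi$ for some $\phi\in \op{Hom}_R(\mathcal{P}^{\le n}_{R/k},R)$, so that $D(a) = \phi(1\otimes a \bmod J_{R/k}^{n+1})$ for every $a\in R$. Step~(1) replaces $\phi$ by $\phi\circ(\Delta^{\le n}_{R/A}\otimes 1)$; inverting Step~(2), which is just the standard tensor-hom adjunction, produces the map
\[
\chi\colon \mathcal{P}^{\le n}_{A/k}\to R,\qquad \chi(\xi)=\phi(\Delta^{\le n}_{R/A}(\xi));
\]
and inverting Step~(3) — available by the hypothesis that $k\to A\to R$ admits the differential — lifts $\chi$ to an element of $\op{Diff}^{\le n}_k(A)\otimes R$.

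The decisive observation is that when one views this final element of $\op{Diff}_k(A)\otimes R$ as a $k$-linear map $A\to R$, the inverse of Step~(3) becomes transparent: the resulting map is nothing but $a\mapsto \chi(1\otimes a \bmod J_{A/k}^{n+1})$, i.e., $\chi$ composed with the canonical map $A\to \mathcal{P}^{\le n}_{A/k}$, $a\mapsto 1\otimes a$. Since $\Delta^{\le n}_{R/A}$ is induced by applying the ring map $A\to R$ to both tensor factors, it sends $1\otimes a$ to $1\otimes a$; hence the chain collapses to $a\mapsto \phi(1\otimes a \bmod J_{R/k}^{n+1}) = D_\phi(a) = D(a)$, which is precisely $D_{|A}(a)$.

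The main obstacle is bookkeeping rather than any conceptual content: one must keep careful track of which tensor products are taken over $k$, over $A$, or over $R$, and respect the dichotomy between the two $R$-algebra structures $p_1$ and $p_2$ on $R\otimes_k R$ (Convention \ref{convention - diff algebra}). Once these conventions are pinned down, the entire lemma reduces to the tautological identity $\Delta^{\le n}_{R/A}(1\otimes a) = 1\otimes a$ transported through two routine adjunctions.
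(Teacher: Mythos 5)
Your proof is correct and takes essentially the same approach as the paper: the paper's proof is the one-line assertion that the claim follows from unwinding the explicit formulas in Definition \ref{Natural Maps: Differential} and Definition \ref{Differential - definition}, and your argument is exactly that unwinding carried out carefully (including the key identity $\Delta^{\le n}_{R/A}(1\otimes a)=1\otimes a$ and the tensor-hom bookkeeping). You are also right that the statement contains a typo — $D$ should be a differential operator on $R$, not on $A$, for both $d(R/A)(D)$ and $D_{|A}=D\circ(A\to R)$ to parse.
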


\begin{proof}
    It follows from the explicit formulas for the natural maps from Definition \ref{Natural Maps: Differential} that appear in the definition of the differential \ref{Differential - definition}.
\end{proof}

\begin{example}\label{symbol.differential}
    Let $K$ be a field of positive characteristic $p>0$. Let $k=K^{p^\infty}$ be its perfection. We assume that $K/k$ is a finitely generated field extension.

     Let $x_1,\ldots x_n$ form a set of coordinates for $K/k$. (It exists, because it is any $p$-basis, Theorems \ref{Over Perfect is Separable} and \ref{p-basis is separable transcedence basis}.) Their $p^m$-th powers $x_1^{p^m},\ldots, x_n^{p^m}$ form a set of coordinates for $K^{p^m}/k$. Moreover, we have that
\[
    d(K/K^p)\left(\frac{1}{p^{m}!}\frac{\partial^{p^m}}{\partial x_i ^{p^m}}\right)= \frac{1}{p^{m-1}!}\frac{\partial^{p^{m-1}}}{\partial \left(x_i^p\right)^{p^{m-1}}}.
\]
Consequently,
\[
    d(K/K^{p^m})\left(\frac{1}{p^{m}!}\frac{\partial^{p^m}}{\partial x_i ^{p^m}}\right)= \frac{\partial}{\partial \left(x_i^{p^m}\right)}.
\]
We can conclude that all these differentials $d(K^{p^m}/K^{p^n})$, for $m\le n$, are surjective, which is a special case of my theorem \ref{SES for diff - fields - theorem}. Furthermore, this surjectivity may justify an abuse of notation to use symbols $\frac{\partial}{\partial \left(x_i^{p^m}\right)}$ and $\frac{1}{p^{m}!}\frac{\partial^{p^m}}{\partial x_i ^{p^m}}$ interchangeably.
\end{example}

\subsubsection{$p$-Filtration}

From Definition \ref{Differential Operators: Explicit Definition}, every algebra of differential operators admits a filtration by orders, i.e., $\bigcup_n \op{Diff}^{\le n}_k(R)=\op{Diff}_k(R)$. It is the ``standard filtration''. However, in positive characteristic, algebras of differential operators admit one more filtration that is called a \emph{$p$-filtration} in \cite[Section 1.2.]{Haastert:p-filtration}. This filtration is induced by Frobenius morphisms. Moreover, one can easily argue that it is, in some ways, a better filtration than the standard one, because it is a filtration by subalgebras, not just by subsets. Finally, in a positive characteristic, this filtration explains why the algebra of differential operators is usually not finitely generated. The $p$-filtration is not discussed in \cite{EGAIV}.

\begin{defin}[$p$-Filtration]\label{p-filtration: definition}
    Let $k\to R$ be an injection of rings such that there is a prime number $p$ such that $p=0$ in $k$.
    
    We define a \emph{$p$-filtration} of $\op{Diff}_k(R)$ to be the collection of subalgebras $\op{Diff}_{R^{p^n}\cdot k}(R)$ for $n \ge 0$.
\end{defin}

\begin{lemma}
    The $p$-filtration is a filtration of the algebra of differential operators.
\end{lemma}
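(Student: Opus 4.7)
The plan is to verify three properties of the sequence $\{\op{Diff}_{R^{p^n}\cdot k}(R)\}_{n\ge 0}$: each term is a $k$-subalgebra of $\op{Diff}_k(R)$, the terms are nested with respect to $n$, and their union is all of $\op{Diff}_k(R)$.

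The subalgebra and nesting claims are essentially formal. Since $k \subset R^{p^n}\cdot k \subset R$, any $R^{p^n}\cdot k$-linear endomorphism of $R$ is a fortiori $k$-linear, and composing such operators preserves $R^{p^n}\cdot k$-linearity. Since $R^{p^{n+1}}\cdot k \subset R^{p^n}\cdot k$, being linear over the larger ring forces linearity over the smaller one, giving $\op{Diff}_{R^{p^n}\cdot k}(R) \subset \op{Diff}_{R^{p^{n+1}}\cdot k}(R)$.

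The substantive content is exhaustion: I would show that each $D \in \op{Diff}^{\le m}_k(R)$ already lies in $\op{Diff}_{R^{p^n}\cdot k}(R)$ once $p^n > m$. Writing $A \coloneqq R^{p^n}\cdot k$, the surjection of $R$-algebras $R\otimes_k R \twoheadrightarrow R\otimes_A R$ has kernel $K$ generated by $\{a\otimes 1 - 1\otimes a : a\in A\} = \{-d(a) : a\in A\}$, and $J_{R/A}$ is the image of $J_{R/k}$ under this quotient, so $\mathcal{P}^{\le m}_{R/A} = R\otimes_k R / (K + J_{R/k}^{m+1})$. The key positive-characteristic input is the freshman's dream:
\[
d(r^{p^n}) = (1\otimes r - r\otimes 1)^{p^n} = d(r)^{p^n} \in J_{R/k}^{p^n}, \quad r\in R.
\]
Since $A$ is $k$-spanned by elements of the form $r^{p^n}$ and $d$ is $k$-linear, every generator $d(a)$ of $K$ lies in $J_{R/k}^{p^n}$, and therefore $K \subset J_{R/k}^{p^n}$.

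Once $p^n > m$, this gives $K \subset J_{R/k}^{p^n} \subset J_{R/k}^{m+1}$, so the canonical quotient $\mathcal{P}^{\le m}_{R/k} \to \mathcal{P}^{\le m}_{R/A}$ has zero kernel and is an isomorphism; applying $\op{Hom}_R(-, R)$ yields $\op{Diff}^{\le m}_k(R) = \op{Diff}^{\le m}_A(R) \subset \op{Diff}_A(R)$, and taking the union over $m$ finishes the proof. The step I expect to require the most care is the containment $K \subset J_{R/k}^{p^n}$: it rests both on freshman's dream applied to $d$, and on observing that although $A$ is usually not spanned by the literal set $\{r^{p^n} : r\in R\}$ as a subring, it is spanned by this set as a $k$-module, which is exactly what the argument needs.
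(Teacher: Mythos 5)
Your argument is correct and is essentially the paper's: you construct the kernel of $R\otimes_k R\twoheadrightarrow R\otimes_A R$ (the paper's $I^{(p^n)}$), compare spaces of principal parts via the quotient, and deduce exhaustion from the containment of that kernel in $J_{R/k}^{p^n}$. You supply slightly more detail than the paper on the freshman's-dream computation $d(r^{p^n})=d(r)^{p^n}$, which the paper asserts without proof, and you spell out why it suffices that $A$ is $k$-spanned (not just ring-generated) by $p^n$-th powers.
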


\begin{proof}
   We have to show that $\op{Diff}_{R^{p^n}}(R)$ is a subalgebra of $\op{Diff}_k(R)$, and that $\bigcup \op{Diff}_{R^{p^n}}(R)=\op{Diff}_k(R)$. Moreover, without loss of generality, we assume that $k\subset R^{p^n}$ for every $n$, so we do not have to write down the smallest ring containing both $k$ and $R^{p^n}$, denoted by $R^{p^n}\cdot k$, in the below computation.

   We show that it is a subalgebra. By Definitions \ref{space of principal parts: definition} and \ref{Differential Operators: Universal Definition} we have $\mathcal{P}^{\le m}_{R/k}=R\otimes_k R / J^{m+1}_{R/k}$ and $\op{Diff}^{\le m}_{k}(R)=\op{Hom}_R(\mathcal{P}^{\le m}_{R/k}, R)$, $\mathcal{P}^{\le m}_{R/R^{p^n}}=R\otimes_{R^{p^n}} R / J^{m+1}_{R/R^{p^n}}$, and $\op{Diff}^{\le m}_{R^{p^n}}(R)=\op{Hom}_R(\mathcal{P}^{\le m}_{R/R^{p^n}}, R)$.
   Therefore, it is enough to compare the spaces $\mathcal{P}^{\le m}_{R/R^{p^n}}$ and $\mathcal{P}^{\le m}_{R/k}$ with each other. First, we observe that there is an ideal $I^{(p^n)}$ of $R\otimes_k R$ generated by the set $\{d(x^{p^n}); \ x \in R\}$ satisfying
   $
   R\otimes_k R/I^{(p^n)}= R\otimes_{R^{p^n}}R.
  $
   So, for every $m$, we have a surjection $\mathcal{P}^{\le m}_{R/k}\to \mathcal{P}^{\le m}_{R/R^{p^n}}$ inducing the injection $\op{Diff}_{R^{p^n}}(R)\to \op{Diff}_k(R)$.

   We show that the sum of these subalgebras is the whole algebra. Again, we notice that for on $R\otimes_k R$ we have an equality of ideals $J^{m+1}_{R/R^{p^n}}=J^{m+1}_{R/k}+I^{(p^n)}$. However, if $m+1=p^n$, then we have that $I^{(p^n)}\subset J^{m+1}_{R/k}$. So, then $J^{m+1}_{R/R^{p^n}}=J^{m+1}_{R/k}$, and therefore $\op{Diff}^{\le p^n-1}_{R^{p^n}}(R)= \op{Diff}^{\le p^n-1}_k(R)$. Consequently, we get the equality $\bigcup \op{Diff}_{R^{p^n}}(R)= \op{Diff}_k(R)$.
\end{proof}

\begin{example}
    Let $k$ be a field of characteristic $p>0$ and $R=k[x_1,\ldots,x_n]$, then the $p$-filtration satisfies:
    \begin{itemize}
        \item $\op{Diff}_{R^{p}}(R)$ is generated by $\frac{\partial}{\partial x_i}$,
        \item $\op{Diff}_{R^{p^2}}(R)$ is generated by $\frac{\partial}{\partial x_i}, \frac{1}{p!}\frac{\partial^{p}}{\partial x_i ^{p}}$,
        \item $\op{Diff}_{R^{p^3}}(R)$ is generated by $\frac{\partial}{\partial x_i}, \frac{1}{p!}\frac{\partial^{p}}{\partial x_i ^{p}},\frac{1}{p^2!}\frac{\partial^{p^2}}{\partial x_i ^{p^2}}$,
        \item In general, $\op{Diff}_{R^{p^n}}(R)$ is generated by $\frac{\partial}{\partial x_i}, \frac{1}{p!}\frac{\partial^{p}}{\partial x_i ^{p}},\ldots,\frac{1}{p^{n-1}!}\frac{\partial^{p^{n-1}}}{\partial x_i ^{p^{n-1}}}$.
    \end{itemize}
Furthermore, each of the subalgebras above has a basis consisting of monomials in generators enlisted next to it with powers of the generators being smaller than $p$, e.g., for $n=3$, $\op{Diff}_{R^{p^3}}(R)$ has a $R$-basis made of $\frac{\partial}{\partial x_i}^{a_1}\circ \frac{1}{p!}\frac{\partial^{p}}{\partial x_j ^{p}}^{a_2}\circ \frac{1}{p^2!}\frac{\partial^{p^2}}{\partial x_k ^{p^2}}^{a_3}$, where $0\le a_1,a_2,a_3<p$, and $i,j,k\in \{1,2,3\}$. 

We can observe that all the inclusions $\op{Diff}_{R^{p^n}}(R)\to \op{Diff}_{R^{p^{n+1}}}(R)$ are proper inclusions, thus $\op{Diff}_{k}(R)$ is not finitely generated algebra over $R$.
\end{example}

\subsubsection{Relative Differential Operators}
We define differential operators over $k$ between $R$-modules $D:M\to N$ for given $k\to R$. The previous definitions and work were about the case $M=N=R$. We will use this notion only once in Theorem \ref{stratification exists - thm}~(The stratification exists.), where we will have $M=R, N=R/q$, where $q$ is an ideal in $R$.

\begin{defin}[{\cite[Proposition (16.8.4)]{EGAIV}}]\label{relative differential operators - def}
    Let $k\to R$ be a morphism of rings. Let $M,N$ be $R$-modules, then
    \[
    \op{Diff}_{k}(R)^{\le n}(M,N)\coloneqq \op{Hom}_{R}(\mathcal{P}^{\le n}_{R/k}\otimes M, N).
    \]
\end{defin}

Now, we could repeat enlisting all extra structures: the inductive definition, the standard filtration, the graded algebra, the $p$-filtration, etc. However, I believe in the reader, so I skip it.

\subsection{Divided Power Rings}

A divided power ring intuitively is a ring $R$ with a structure consisting of countably many functions $f_n: R \to R$, which should be thought of as the operations $x\mapsto \frac{x^n}{n!}$. However, these operations cannot be canonically defined for every ring; therefore, if we want to work with them, we have to choose a set of such functions. For us, the need for introducing them comes from differential operators, because graded algebras of differential operators (under mild assumptions) admit a \emph{canonical} divided power structure (and even if not, then they always have it at least partially!). This is useful because it makes a lot of explicit computation involving leading forms of differential operators possible and relatively easy.

\begin{remark}
    Abstract divided powers were introduced to algebraic geometry to allow Taylor expansions in positive characteristic. Indeed, if we have a polynomial $f\in k[x]$, then
    \[
    f(x+h)=f(x) + f^{(1)}(x)h+\frac{f^{(2)}(x)h^2}{2!}+\frac{f^{(3)}(x)h^2}{3!}+\ldots+\frac{f^{(n)}(x)h^n}{n!}+\ldots,
    \]
    But $\frac{1}{n!}$ could make no sense, so we must do something about it. One can add $\frac{h^n}{n!}$ abstractly. This leads to crystalline cohomology. I want to thank Jan Stienstra for sharing this origin story with me. However, we will use divided powers differently. We will show that $\op{gr}\op{Diff}_k(K)$ admits a canonical such structure, Proposition \ref{divided power for gr diff - prop}, so it will be more about $[\frac{\partial^{n}}{n!\partial x^n}]$. 
\end{remark}

\subsubsection{Divided Powers}

\begin{defin}[{\cite[Definition 07GL]{stacks-project}}]\label{PD-ring -definition}
    A \emph{divided power ring} is a triple $(A,I,\gamma)$, where $A$ is a commutative ring, $I$ is an ideal of $A$, and $\gamma$ is a collection of functions $\gamma_i: I \to A$ for $i\ge 0$ such that
    \begin{enumerate}
        \item For all $x\in I$, we have $\gamma_0(x)=1$, $\gamma_1(x)=x$, and $\gamma_i(x)\in I$ for $i>1$.
        \item For all $x,y\in I$, we have $\gamma_k(x+y)=\sum_{i+j=k}\gamma_i(x)\gamma_j(y)$.
        \item For all $\lambda \in A$, $x\in I$, we have $\gamma_k(\lambda x)=\lambda^k \gamma_k(x)$.
        \item For all $x\in I$, we have $\gamma_i(x)\gamma_j(x)={{i+j}\choose {i}}\gamma_{i+j}(x)$.
        \item For all $x\in I$, we have $\gamma_i(\gamma_j(x))=\frac{(ij)!}{i!j!^i}\gamma_{ij}(x)$.
    \end{enumerate}
\end{defin}

\begin{defin}{\cite[Section 07H4]{stacks-project}}\label{divided power polynomial algebra - def}
Let $A$ be a ring. A \emph{divided power polynomial algebra} over $A$ in $t\ge 1$ variables is the following $A$-module:
\[
A\left<x_1,x_2,\ldots,x_t\right>\coloneqq \bigoplus_{n_1,n_2,\ldots,n_t\ge 0} A x_1^{[n_1]}\cdot x_2^{[n_2]}\cdot \ldots \cdot x_t^{[n_t]}
\]
with the (commutative) multiplication determined by
\[
x_i^{[n]} \cdot x_i^{[m]} \coloneqq {{n+m}\choose n} x_i^{[n+m]}.
\]

We also set $x_1=x_i^{[1]}$. Note that $1=x_1^{[0]}\cdot x_2^{[0]}\cdot \ldots \cdot x_t^{[0]}$. There is a canonical $A$-algebra map $A\left<x_1,x_2,\ldots,x_t\right>\to A$ sending $x_i^{[n]}$ to zero for $n>0$. The kernel of this map is denoted $A\left<x_1,x_2,\ldots,x_t\right>_+$. 
\end{defin}

\begin{lemma}[{\cite[Lemma 07H5]{stacks-project} for $I=0$}]\label{divided power polynomial is PD lemma}
    Let $A$ be a ring. Let $t\ge 1$. There exists a natural divided power structure $\delta$ such that
    $(A\left<x_1,x_2,\ldots,x_t\right>,A\left<x_1,x_2,\ldots,x_t\right>_+,\delta)$ is a divided power ring. This structure is determined by
    \[
    \delta_n(x_i)=x_i^{[n]},
    \]
    where $i=1,\ldots,t$ and $n\ge 0$.
\end{lemma}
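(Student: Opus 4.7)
\bigskip

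\textbf{Proof proposal.} The plan is to reduce to a universal situation and then transport a ``tautological'' divided power structure from a characteristic zero ring. First, I would observe that the construction of the divided power polynomial algebra in Definition \ref{divided power polynomial algebra - def} is compatible with base change: $A\langle x_1,\ldots,x_t\rangle = A\otimes_{\Z} \Z\langle x_1,\ldots,x_t\rangle$, both as an $A$-module and as an $A$-algebra, since the structure constants $\binom{n+m}{n}$ live in $\Z$. Any divided power structure over $\Z$ will then automatically give one over $A$, because the five axioms in Definition \ref{PD-ring -definition} are universal polynomial identities. So it suffices to treat the case $A=\Z$.

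For $A=\Z$, I would construct an injective ring map $\iota: \Z\langle x_1,\ldots,x_t\rangle \to \Q[x_1,\ldots,x_t]$ by sending $x_i^{[n]}\mapsto x_i^n/n!$. Multiplicativity follows from $(x_i^n/n!)(x_i^m/m!)=\binom{n+m}{n}x_i^{n+m}/(n+m)!$, and injectivity is immediate on the basis $x^{[I]}$. On $\Q[x_1,\ldots,x_t]$ there is the tautological divided power structure on the augmentation ideal given by $\gamma_n(f)=f^n/n!$. I would then define $\delta_n$ on $\Z\langle x_1,\ldots,x_t\rangle_+$ as the restriction of $\gamma_n$ via $\iota$, provided it lands back in the image.

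The main obstacle is integrality: we must show that $\delta_n$ actually takes values in $\Z\langle x_1,\ldots,x_t\rangle$ rather than only in $\Q[x_1,\ldots,x_t]$. On a single monomial $x^{[I]}=\prod_j x_j^{i_j}/i_j!$, a direct computation gives $\delta_k(x^{[I]})=\tfrac{(kI)!}{k!\prod_j(i_j!)^k}\,x^{[kI]}$ up to factoring in the denominators, and the resulting coefficient is an integer by a standard multinomial identity (the same argument that shows $\frac{(ij)!}{i!(j!)^i}\in\Z$ in axiom 5). For a general element of the augmentation ideal, axioms 2 and 3 reduce $\delta_k$ on a sum to a polynomial expression in $\delta$-values of the summands, so integrality of values on each basis monomial suffices. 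Once this is established, axioms (1)--(5) are automatic on $\Z\langle x_1,\ldots,x_t\rangle_+$ because they already hold for $\gamma$ on $\Q[x_1,\ldots,x_t]$.

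Finally, for the uniqueness clause ``determined by $\delta_n(x_i)=x_i^{[n]}$'', I would observe that axioms 3 and 4 iteratively express $\delta_n$ on any monomial $\prod x_i^{[i_j]}$ in terms of $\delta$-values on the single generators $x_i$, and axiom 2 then extends the definition additively; so any divided power structure on $A\langle x_1,\ldots,x_t\rangle_+$ satisfying $\delta_n(x_i)=x_i^{[n]}$ must coincide with the one just constructed.
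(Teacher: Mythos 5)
Your route via the embedding $\iota:\Z\langle x_1,\ldots,x_t\rangle\hookrightarrow\mathbb{Q}[x_1,\ldots,x_t]$ and the tautological divided power structure $\gamma_n(f)=f^n/n!$ on a $\mathbb{Q}$-algebra is a valid and standard trick, genuinely different from what the paper invokes (the citation \cite[Lemma 07H5]{stacks-project} checks the five axioms directly, via a reduction-to-generators lemma). Your version buys axioms (1)--(5) for free once integrality is in place, and the integrality of $\tfrac{1}{k!}\prod_{j}\tfrac{(ki_j)!}{(i_j!)^k}$ does follow from multinomial identities as you indicate, once one factors out a single unordered multinomial $\tfrac{(ki_{j_0})!}{k!(i_{j_0}!)^k}$ using one of the nonzero $i_{j_0}$ and notes that each remaining factor $\tfrac{(ki_j)!}{(i_j!)^k}$ is itself an (ordered) multinomial coefficient; the point to watch is that the $k!$ appears only once, so it must be absorbed by exactly one of the factors.

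The step that needs real repair is the opening reduction: ``Any divided power structure over $\Z$ will then automatically give one over $A$, because the five axioms \ldots are universal polynomial identities.'' As written this is false --- divided power structures do \emph{not} base change along arbitrary ring maps. The axioms constrain an arbitrary function $\gamma_n:J\to B$, not a polynomial map, and $\Z\langle x_1,\ldots,x_t\rangle\to A\langle x_1,\ldots,x_t\rangle$ is neither flat nor covered by the usual extension lemmas for general $A$. What rescues the argument, and what you should say explicitly, is that $\Z\langle x_1,\ldots,x_t\rangle_+$ is a \emph{free} $\Z$-module with distinguished basis $\{x^{[I]}\}_{|I|>0}$, and $A\langle x_1,\ldots,x_t\rangle_+=A\otimes_\Z\Z\langle x_1,\ldots,x_t\rangle_+$ is free over $A$ with the same basis. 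After constructing $\delta$ over $\Z$ via $\mathbb{Q}$ and recording $\delta_n(x^{[I]})=c_{n,I}\,x^{[nI]}$ with $c_{n,I}\in\Z$, axioms (2) and (3) give a closed $\Z$-polynomial formula for $\delta_k\bigl(\sum_I a_I x^{[I]}\bigr)$ in the coordinates $a_I$; that same formula defines $\delta_k$ over any $A$, and each axiom becomes an identity of $\Z$-polynomials in the $a_I$, which holds over $A$ because it holds over the infinite integral domain $\Z$. So the reduction is correct in substance, but it must be justified through freeness of the ideal and the explicitness of the formulas --- and it is logically cleaner to run it in the opposite order, $\mathbb{Q}\Rightarrow\Z\Rightarrow A$, rather than asserting the passage from $A$ to $\Z$ before the $\Z$-structure even exists.
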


Here is a valuable observation that directly follows from the definition.

\begin{lemma}\label{divided powers - decompostion of order p^n}
    Let $A$ be a ring. Let $R=A\left<x_1,x_2,\ldots,x_t\right>$ be a divided power polynomial ring. It is a graded ring, where the order is given by the sum $n_1+\ldots +n_t$, the numbers are from the ``exponents'' $[n_i]$, i.e., $R_d =\bigoplus_{n_1,n_2,\ldots,n_t\ge 0; \sum n_i= d}A x_1^{[n_1]}\cdot x_2^{[n_2]}\cdot \ldots \cdot x_t^{[n_t]}$.

    Moreover, if the ring $A$ is an integral domain of characteristic $p=0$, then $R$ admits a $p$-filtration $F_{p^{\bullet}}$ by ``partial divided power polynomial rings'' that are the subrings
    \[
    F_{p^{n}}(R)=\bigoplus_{p^n>n_1,n_2,\ldots,n_t\ge 0} A x_1^{[n_1]}\cdot x_2^{[n_2]}\cdot \ldots \cdot x_t^{[n_t]}.
    \]
    In particular, the order $p^n$ of the divided power polynomial ring naturally decomposes
    \[
    R_{p^n} \cong \left(F_{p^{n}}(R) \cap R_{p^n} \right) \oplus \left(\gamma_{p^n}\left(R \right)\otimes_{\gamma_{p^n}(A)}A\right).
    \]
\end{lemma}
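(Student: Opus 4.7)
My plan is as follows.

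First I would verify that the multiplication respects the total degree: on products of basis elements, the rule $x_i^{[n]}\cdot x_j^{[m]}=x_i^{[n]}x_j^{[m]}$ for $i\ne j$ and $x_i^{[n]}\cdot x_i^{[m]}=\binom{n+m}{n}x_i^{[n+m]}$ both yield homogeneous elements of degree $n+m$, so $R=\bigoplus_d R_d$ is graded. For the $p$-filtration claim (now assuming $A$ is a domain with $p=0$ in $A$), closure of $F_{p^n}(R)$ under addition is immediate from the basis description. For closure under multiplication, take two basis elements $\prod_i x_i^{[a_i]}$ and $\prod_i x_i^{[b_i]}$ with each $a_i,b_i<p^n$; their product equals $\prod_i\binom{a_i+b_i}{a_i}x_i^{[a_i+b_i]}$. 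If every $a_i+b_i<p^n$ we are done. Otherwise, for any index $i$ with $a_i+b_i\ge p^n$, writing $a_i,b_i$ in base $p$ (both less than $p^n$), the inequality $a_i+b_i\ge p^n$ forces at least one carry in the base-$p$ addition; by Kummer's theorem $v_p\binom{a_i+b_i}{a_i}$ equals the number of such carries, hence $\binom{a_i+b_i}{a_i}=0$ in $A$. Thus the product either lies in $F_{p^n}(R)$ or is zero, proving that $F_{p^n}(R)$ is a subring; the containments $F_{p^n}(R)\subset F_{p^{n+1}}(R)$ and $\bigcup_n F_{p^n}(R)=R$ are evident from the definition.

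For the decomposition of $R_{p^n}$, I would simply inspect the basis $\{x_1^{[n_1]}\cdots x_t^{[n_t]}:\sum n_i=p^n\}$. Since no $n_i$ can exceed $p^n$, each basis element is either in $F_{p^n}(R)\cap R_{p^n}$ (precisely when every $n_i<p^n$) or has exactly one index $j$ with $n_j=p^n$ and the remaining exponents zero, giving $x_j^{[p^n]}$. These two disjoint collections span complementary $A$-submodules, so $R_{p^n}=(F_{p^n}(R)\cap R_{p^n})\oplus\bigoplus_{j=1}^t A\, x_j^{[p^n]}$.

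To finish, I must identify the second summand with $\gamma_{p^n}(R)\otimes_{\gamma_{p^n}(A)}A$. Since $\gamma_{p^n}(x_j)=x_j^{[p^n]}$, the submodule generated by $\gamma_{p^n}$ applied to $R_+$ contains each $x_j^{[p^n]}$; conversely, applying the addition formula $\gamma_{p^n}(x+y)=\sum_{i+j=p^n}\gamma_i(x)\gamma_j(y)$ iteratively on a general element of $R_+$, only pure $x_j^{[p^n]}$ terms survive because of the defining axiom $\gamma_{p^n}(\lambda x)=\lambda^{p^n}\gamma_{p^n}(x)$, which forces the natural module structure on $\gamma_{p^n}(R)$ to be over the subring of $p^n$-th powers (here denoted $\gamma_{p^n}(A)$). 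Hence the tensor product with $A$ restores the full $A$-linear span of the $x_j^{[p^n]}$, as required.

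The main obstacle will be nailing down the identification of $\bigoplus_j A\, x_j^{[p^n]}$ with $\gamma_{p^n}(R)\otimes_{\gamma_{p^n}(A)}A$; the combinatorial decomposition itself is essentially bookkeeping, but making the tensor-product description precise requires being careful about how the Frobenius-type twist induced by $\gamma_{p^n}(\lambda x)=\lambda^{p^n}\gamma_{p^n}(x)$ interacts with the $A$-module structure on the image of $\gamma_{p^n}$. Everything else reduces to the Kummer-theoretic computation of the binomial coefficients modulo $p$.
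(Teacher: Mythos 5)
Your proof is essentially sound, but you take a genuinely different route from the paper on the key step, and there is one imprecise claim worth fixing.

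For the subring property of $F_{p^n}(R)$, the paper's proof reduces everything to the single relation $(x_i^{[m]})^p = p!\,\gamma_p(x_i^{[m]}) = 0$ (using $p!\equiv 0$ in characteristic $p$); structurally this exhibits $F_{p^n}(R)$ as generated by the $x_i^{[p^j]}$ for $0\le j<n$ subject to $p$-th power vanishing. You instead argue directly on binomial coefficients via Kummer's theorem: if $a_i,b_i<p^n$ but $a_i+b_i\ge p^n$, base-$p$ addition carries, so $\binom{a_i+b_i}{a_i}=0$ in $A$. Both are correct; Kummer is more elementary and self-contained, while the paper's route is more structural. Either way the hypothesis that $A$ is a domain is not really used in this step — what matters is only that $p=0$ in $A$.

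The one place your write-up is not literally correct is the phrase ``only pure $x_j^{[p^n]}$ terms survive.'' Applying $\gamma_{p^n}$ to a general degree-one element $\sum_i a_i x_i$ via the Vandermonde formula produces, besides the pure terms $\sum_i a_i^{p^n} x_i^{[p^n]}$, many cross-terms $\prod_i a_i^{k_i}x_i^{[k_i]}$ with $\sum k_i=p^n$ and all $k_i<p^n$; these do \emph{not} vanish. What is true, and what you should say, is that those cross-terms lie inside $F_{p^n}(R)\cap R_{p^n}$, so that modulo $F_{p^n}(R)$ the assignment $r\mapsto\gamma_{p^n}(r)$ becomes additive and $\gamma_{p^n}(A)$-semilinear from $R_1$ to $R_{p^n}/(F_{p^n}(R)\cap R_{p^n})=\bigoplus_j A\,x_j^{[p^n]}$, with image $\bigoplus_j\gamma_{p^n}(A)\,x_j^{[p^n]}$; tensoring that $\gamma_{p^n}(A)$-module up to $A$ gives the complement. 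You flagged this identification as the main obstacle, and you are right that it needs care, but note the paper's own proof does not spell it out at all (it stops at verifying $(x_i^{[m]})^p=0$ and declares ``the rest follows''), so you have actually supplied more detail than the source, not less.
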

\begin{proof}
    The only thing that must be checked is that $\left(x_i^{[{n}]}\right)^p=0$ for all $i=1,2,\ldots,t$, and $n\ge 0$, if $A$ is of characteristic $p>0$. This follows from the identity \cite[Lemma 07GM (1)]{stacks-project}
    \[
    p!\gamma_p(x)=x^p
    \]
    that holds for any divided power ring $(A,I,\gamma)$ if $x\in I$. The rest follows.
\end{proof}

\subsubsection{Graded Dual Algebras of Symmetric Algebras}
We recall what graded dual algebras are in the case of symmetric algebras. Furthermore, we prove that these subalgebras naturally are divided power rings. Later, we will prove that many graded algebras of differential operators are graded dual algebras of symmetric algebras, Proposition \ref{divided power structure for graded dual - prop}, so this will prove that each of these graded algebras of differential operators admits the canonical divided power structure.

\begin{defin}[{ \cite[page 587]{Bourbaki_algebra_1-3}}]\label{graded dual - def}
    Let $A$ be a ring. Let $M$ be a finite $A$-module. The \emph{graded dual of the symmetric algebra} $S(M)$ is
    \[
    S(M)^{*gr}\coloneqq \bigoplus_{n\ge 0} \op{Hom}_A(S^{n}(M), A),
    \]
    where $S^{n}(M)$ is the $m$-th symmetric power of the module $M$.
Moreover, we introduce the following notations:
    \begin{align*}
        \left(S(M)^{*gr}\right)^n\coloneqq& \op{Hom}_A(S^{n}(M), A),\\
        \left(S(M)^{*gr}\right)^{>0}\coloneqq& \bigoplus_{n\ge 1} \op{Hom}_A(S^{n}(M), A).
    \end{align*}
\end{defin}

\begin{prop}\label{graded dual is commutative - prop}
    Let $A$ be a ring. Let $M$ be a finite $A$-module. Then
    the symmetric algebra $S(M)$ admits a natural coalgebra structure given by
    \[
    c: S(M)\xrightarrow{S(\Delta)} S(M\times M) \xrightarrow{h} S(M)\otimes_A S(M),
    \]
    where $\Delta:M\to M\times M$ is given by $x\mapsto (x,x)$, and $h$ is the ring homomorphism determined by $M\times M\ni (x,y)\mapsto x\otimes 1 + 1\otimes y \in S(M)\otimes_A S(M)$.
    
    Consequently, the graded dual algebra $S(M)^{*gr}$ is naturally a commutative ring with respect to the coalgebra structure $c$. 
\end{prop}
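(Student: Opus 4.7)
The plan is first to confirm that $c$ is an $A$-algebra map and then to verify the coalgebra axioms by invoking the universal property of $S(M)$. Both ingredients are $A$-algebra homomorphisms: $S(\Delta)$ is the functorial image of the $A$-linear map $\Delta:M\to M\times M$, and $h$ arises from the universal property of $S(M\times M)$ applied to the $A$-linear map $M\times M\to S(M)\otimes_A S(M)$ given by $(x,y)\mapsto x\otimes 1+1\otimes y$. Their composite $c$ is therefore an $A$-algebra map, whose restriction to $M\subset S(M)$ sends $x$ to $x\otimes 1+1\otimes x$.

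For the coalgebra axioms, the key tool is that an $A$-algebra map out of $S(M)$ is determined by its restriction to $M$. For coassociativity, both $(c\otimes 1)\circ c$ and $(1\otimes c)\circ c$ are $A$-algebra maps $S(M)\to S(M)\otimes_A S(M)\otimes_A S(M)$, and on $M$ both send $x$ to $x\otimes 1\otimes 1+1\otimes x\otimes 1+1\otimes 1\otimes x$, so they agree. The counit is the augmentation $\epsilon:S(M)\to A$ that kills $M$; on $M$ one computes $(\epsilon\otimes 1)\circ c(x)=\epsilon(x)\cdot 1+\epsilon(1)\cdot x=x$, and similarly from the right. For cocommutativity, let $\tau$ swap the two tensor factors: both $\tau\circ c$ and $c$ are $A$-algebra maps that, on $x\in M$, return $x\otimes 1+1\otimes x$.

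Next I would check that $c$ is compatible with the grading on $S(M)$. Because $c(M)\subset M\otimes A\oplus A\otimes M$ sits in total degree one of $S(M)\otimes_A S(M)$, and $c$ is a ring homomorphism, $c(S^n(M))\subset\bigoplus_{i+j=n} S^i(M)\otimes_A S^j(M)$. Hence $c$ decomposes into finite components $c_{i,j}:S^{i+j}(M)\to S^i(M)\otimes_A S^j(M)$. This is the step to handle with care: it is what guarantees that the graded dual is actually a graded ring, i.e., that the sums defining the product below are finite in each degree.

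Finally, for $f\in\op{Hom}_A(S^i M,A)$ and $g\in\op{Hom}_A(S^j M,A)$ I would define $f\cdot g\in\op{Hom}_A(S^{i+j}M,A)$ as the composition $S^{i+j}M\xrightarrow{c_{i,j}} S^i M\otimes_A S^j M\xrightarrow{f\otimes g} A\otimes_A A\simeq A$, noting that the map $\op{Hom}_A(S^i M,A)\otimes_A \op{Hom}_A(S^j M,A)\to\op{Hom}_A(S^i M\otimes_A S^j M,A)$ always exists. Associativity of this product is then a direct translation of coassociativity of $c$, the unit is $\epsilon$ viewed in $(S^0 M)^*=A$, and commutativity follows from cocommutativity: $(f\cdot g)(x)=(f\otimes g)(c_{i,j}(x))=(g\otimes f)(\tau(c_{i,j}(x)))=(g\otimes f)(c_{j,i}(x))=(g\cdot f)(x)$. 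The main obstacle is conceptual rather than computational, namely to keep the grading under control so that all the relevant maps and sums are finite in each degree.
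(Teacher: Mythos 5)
Your proof is correct, and it works. The difference from the paper's argument is one of presentation rather than substance: the paper dispatches the proposition in two lines by citing Bourbaki (page 575, Example (6), for the coalgebra structure $c$; page 582, Example (5), for cocommutativity), then observes that the graded dual of a cocommutative graded coalgebra is a commutative ring. You instead rebuild what Bourbaki proves from scratch — first checking that $c$ is an $A$-algebra homomorphism, then verifying coassociativity, the counit axioms, and cocommutativity by reducing to generators via the universal property of $S(M)$, then checking grading compatibility so the dualization produces finite sums, and finally dualizing. This is the standard route; it buys you a self-contained argument at the cost of length, and it also makes visible the precise place where finiteness of $M$ could matter (it doesn't here, since you only need the one-directional map $\operatorname{Hom}_A(S^i M,A)\otimes_A\operatorname{Hom}_A(S^j M,A)\to\operatorname{Hom}_A(S^iM\otimes_A S^j M,A)$, which always exists, rather than an isomorphism). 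The paper's citation-based approach buys brevity and delegates the verification to a standard reference; the two are interchangeable. One small stylistic point: in your counit check, the term $\epsilon(x)\cdot 1$ should be flagged as vanishing because $\epsilon$ kills $M$ — you state it but the reader has to notice that $\epsilon(x)=0$ is what makes the identity land on $x$.
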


\begin{proof}
    The natural coalgebra structure $c$ on $S(M)$ is defined in \cite[page 575, Example (6)]{Bourbaki_algebra_1-3}. It is shown that this structure $c$ is cocommutative in \cite[page 582, Example (5)]{Bourbaki_algebra_1-3}, hence the ring $S(M)^{*gr}$ is commutative. 
\end{proof}

The following result checks that the two rings defined in two contexts are isomorphic. I do not know how well known is this particular fact.

\begin{prop}\label{divided power structure for graded dual - prop}
    Let $A$ be a ring. Let $M$ be a finite $A$-module. Then the graded dual algebra $S(M)^{*gr}$ admits a natural extension to a divided power (commutative) ring
    $
    (S(M)^{*gr},\left(S(M)^{*gr}\right)^{>0}, \gamma)
    $.
    This structure is uniquely determined by the functions
    \[
    \gamma_n: \left(S(M)^{*gr}\right)^1 \to \left(S(M)^{*gr}\right)^{n}
    \]
    given by
    \[
    \gamma_n(\phi)(a_1 \otimes a_2 \ldots \otimes a_n)=\phi(a_1)\cdot \phi(a_2) \cdot \ldots \cdot \phi(a_i),
    \]
    where $\phi : M=S^1(M) \to A$, and $a_1,\ldots,a_n\in M$.

    Moreover, if $M$ is free, and $y_1,\ldots,y_t$ is a $A$-basis of $\left(S(M)^{*gr}\right)^1$, then the natural map of divided power rings
\[ 
(A\left<x_1,x_2,\ldots,x_t\right>,A\left<x_1,x_2,\ldots,x_t\right>_+,\delta) \to (S(M)^{*gr},\left(S(M)^{*gr}\right)^{>0}, \gamma)
\]
that is given by $x_i \mapsto y_i$ is an isomorphism.
\end{prop}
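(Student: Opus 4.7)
First I would unpack the multiplication on $S(M)^{*gr}$ induced by the coalgebra $c$ of Proposition~\ref{graded dual is commutative - prop}. Since $c(a_1\cdots a_n)=\sum_{S\subset\{1,\ldots,n\}}\bigl(\prod_{i\in S}a_i\bigr)\otimes\bigl(\prod_{j\notin S}a_j\bigr)$, the product of $\phi\in\op{Hom}_A(S^m(M),A)$ and $\psi\in\op{Hom}_A(S^n(M),A)$ is the shuffle functional
\[
(\phi\cdot\psi)(a_1\cdots a_{m+n})=\sum_{|S|=m}\phi\Bigl(\prod_{i\in S}a_i\Bigr)\,\psi\Bigl(\prod_{j\notin S}a_j\Bigr),
\]
and iterating gives, for $\phi$ of homogeneous degree $d$, that $\phi^n(a_1\cdots a_{nd})$ is the sum over \emph{ordered} partitions $(S_1,\ldots,S_n)$ of $\{1,\ldots,nd\}$ with $|S_j|=d$ of the products $\prod_j\phi(\prod_{i\in S_j}a_i)$.

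I would then define $\gamma_n$ on a homogeneous element $\phi$ of positive degree by the same formula, but summed over \emph{unordered} partitions; this forces $\phi^n=n!\,\gamma_n(\phi)$ and specializes to the formula in the statement when $\deg\phi=1$, because an unordered partition of $\{1,\ldots,n\}$ into singletons is trivial. Values on non-homogeneous elements are then fixed by axiom~2. The remaining work is combinatorial bookkeeping: axioms~1 and~3 are immediate; axiom~4, $\gamma_i(\phi)\gamma_j(\phi)=\binom{i+j}{i}\gamma_{i+j}(\phi)$, is a direct shuffle count; axiom~2 is the multinomial expansion of $\prod_r(\phi(a_r)+\psi(a_r))$; and axiom~5 reduces to the identity
\[
\#\{\text{unordered partitions of }\{1,\ldots,ij\}\text{ into $i$ groups of size $j$}\}=\frac{(ij)!}{i!\,(j!)^{i}}.
\]
Because the defining formulas are intrinsic and use only the $A$-module structure of $M$, they work for arbitrary finite $M$ and require no appeal to a free presentation.

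For the Moreover part, let $e_1,\ldots,e_t$ be a basis of $M$ and $y_1,\ldots,y_t$ its dual basis in $(S(M)^{*gr})^1$. Lemma~\ref{divided power polynomial is PD lemma} produces a unique divided power algebra map $\Phi\colon A\langle x_1,\ldots,x_t\rangle\to S(M)^{*gr}$ with $x_i\mapsto y_i$ and hence $x_i^{[n]}\mapsto\gamma_n(y_i)$. The left side has the $A$-basis $\prod_i x_i^{[n_i]}$; the right side in degree $d$ has the $A$-basis dual to the monomial basis $\prod_i e_i^{n_i}$ of $S^d(M)$ with $\sum n_i=d$. A direct computation with the shuffle formula shows $\Phi(\prod_i x_i^{[n_i]})$ is exactly this dual functional, so $\Phi$ is a graded $A$-linear bijection, and it preserves the product by the multiplicative matching of the shuffle formula with $\prod_i x_i^{[a_i]}\cdot\prod_i x_i^{[b_i]}=\prod_i\binom{a_i+b_i}{a_i}x_i^{[a_i+b_i]}$.

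The main obstacle will be the combinatorics at the heart of axioms~4 and~5 when the argument $\phi$ has homogeneous degree $\geq 2$, since one must track partitions of partitions carefully; the unordered-partition formula above is what makes the numerology close. Once this is in place, uniqueness of the PD structure from its values on degree~$1$ elements follows by induction on degree using axiom~5 and axiom~2, and transporting $\delta$ along $\Phi$ recovers the constructed $\gamma$, completing the proof.
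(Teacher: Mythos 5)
Your outline is correct but takes a genuinely different route from the paper. The paper proves the free case first, using Bourbaki's explicit product formula $u_a\cdot u_b=\prod_i\binom{a_i+b_i}{a_i}u_{a+b}$ to set up a ring isomorphism with $A\langle x_1,\ldots,x_t\rangle$ and transport $\delta$, and then settles general finite $M$ by choosing a free surjection $N\twoheadrightarrow M$, restricting the PD structure along $S(M)^{*gr}\hookrightarrow S(N)^{*gr}$, and checking independence of $N$. You instead construct $\gamma$ intrinsically: you identify the product on $S(M)^{*gr}$ as a shuffle, define $\gamma_n$ on every homogeneous element of positive degree (not just degree one) by a sum over unordered partitions, and verify the PD axioms by counting. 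Your counts do close in all degrees $d$, not only $d=1$: for axiom~5, a partition of $ijd$ symbols into $i$ unordered $jd$-blocks each refined into $j$ unordered $d$-blocks is the same datum as a partition into $ij$ unordered $d$-blocks plus a grouping of those into $i$ unordered batches of $j$, of which there are $\tfrac{(ij)!}{i!\,(j!)^{i}}$; axiom~4 is the analogous bijection with choosing $i$ of $i+j$ blocks. Your route buys self-containment, explicit formulas for $\gamma_n$ in every degree, and no appeal to a free cover nor the accompanying (and not entirely trivial) check that $\delta$ on $S(N)^{*gr}$ preserves $(S(M)^{*gr})^{>0}$ and is independent of $N$; the paper's route is shorter because it outsources the ring identity to Bourbaki. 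Two small cautions: after defining $\gamma_n$ on non-homogeneous elements via axiom~2, you still owe a (routine but real) verification that axioms~3--5 persist for these sums, so axiom~2 there is both a definition and a constraint that must be checked against the others; and the unique PD-algebra map $\Phi$ rests on the universal mapping property of divided power polynomial algebras, which is more than what Lemma~\ref{divided power polynomial is PD lemma} literally asserts, although the underlying Stacks reference does supply it.
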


\begin{proof}

    We prove the theorem for the free modules first, and then we conclude the general case from the free one.

    Let $M$ be a finite free module. Let $e_1,\ldots, e_t$ be a basis of $M$. Then monomials $e^a\coloneqq e_1^{a_1}\cdot\ldots\cdot e_n^{a_t}$ are a basis of $S^m(M)$, where $a_i\ge0$ and $\sum_{i=1}^t a_i=m$. Let $u_a$ be the dual basis of $\left(S(M)^{*gr}\right)^m$ with respect to $e^a$. By the explicit calculation in \cite[page 593, (33)]{Bourbaki_algebra_1-3},
     we have:
    \[
    u_a \cdot u_b = \prod_{i=1}^t {{a_i +b_i}\choose {a_i}} u_{a+b},
    \]
    where the multiplication is the one of $S(M)^{*gr}$. This means that the $A$-homomorphism
    \[
    A\left<x_1,x_2,\ldots,x_t\right> \to S(M)^{*gr}
    \]
    determined by
    \[
    x_1^{[n_1]}\cdot x_2^{[n_2]}\cdot \ldots \cdot x_t^{[n_t]} \mapsto u_{(n_1,n_2,\ldots,n_t)}
    \]
    is a ring isomorphism. Also, the ideals $A\left<x_1,x_2,\ldots,x_t\right>_+$ and $\left(S(M)^{*gr}\right)^{>0}$ are mapped to each other by this isomorphism. 
    Finally, the functions 
    $\gamma_n: \left(S(M)^{*gr}\right)^1 \to \left(S(M)^{*gr}\right)^{n}$ coincide with restrictions of the functions $\delta_n$ (from Lemma \ref{divided power polynomial is PD lemma}) to $\left(S(M)^{*gr}\right)^1$. Consequently, $\delta$ extends the functions $\gamma_n$ to a divided power structure. 
    And, the ideal $\left(S(M)^{*gr}\right)^{>0}$ is generated by $\gamma_n(D)$ for $D\in \left(S(M)^{*gr}\right)^{1}$, it follows from applying axioms from Definition \ref{divided power polynomial algebra - def}. So, there exists at most one such extension of the functions $\gamma_n$ to a divided power structure on $(S(M)^{*gr},\left(S(M)^{*gr}\right)^{>0})$. Since we have already found one extension, i.e., $\delta$, this extension exists and it is unique. Also, the definition of the functions $\gamma_n$ is natural. Therefore, the structure $\delta$ is natural.

    Now, let $M$ be a finite $A$-module. Let $N$ be a finite free $A$-module with a surjection $f: N\to M$. Then we have a natural map
    \[
    S(f)^{*gr}: S(M)^{*gr}\to S(N)^{*gr}
    \]
    that is an injection of rings. We observe that the functions $\gamma_n$ for $S(M)^{*gr}$ are the restrictions of the functions $\gamma_n$ for $S(N)^{*gr}$. Let $\delta$ be the unique extension of $\gamma_n$ to a divided power structure on $S(N)^{*gr}$. 
    By using the axioms from Definition \ref{divided power polynomial algebra - def}, we see that for every $D\in \left(S(M)^{*gr}\right)^{>0})$ we have $\delta_n(D)\in \left(S(M)^{*gr}\right)^{>0})$. 
    Hence, the restrictions of the functions $\delta_n$ are a divided power structure on $S(M)^{*gr}$. Finally, this structure does not depend on $N$, because if we have two surjections $f_1: N_1\to M$ and $f_2: N_2\to M$, where $N_1, N_2$ are finite free modules, then there exists a finite free module $N_3$ with surjections $g_1: N_3\to N_1$ and $g_2: N_3\to N_1$ such that the unique divided power structures on $S(N_1)^{*gr}$ and $S(N_2)^{*gr}$ are the restrictions of the unique divided power structure on $S(N_3)^{*gr}$. Therefore, the divided power structure we obtained on $S(M)^{*gr}$ does not depend on $N$, so it is natural. This finishes the proof.
\end{proof}

\subsection{Tool: Diff=End Lemma}
This subsection proves the following key lemma that will allow us to specialize the Jacobson--Bourbaki correspondence to purely inseparable subfields/morphisms. It might be well known to some experts, but I could not find any good source.

\begin{prop}[Diff=End Lemma]\label{diff=end - new}
    Let $r$ be a natural number. Let 
    \[
    A\supset B \supset A^{p^r}
    \] be inclusions of rings of characteristic $p>0$. Let $A$ be a finite $B$-module. 
    Then the natural map
    \[
    \op{Diff}_B(A)\to \op{End}_B(A)
    \]
    is an isomorphism.
\end{prop}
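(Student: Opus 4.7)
My strategy is to reduce the isomorphism to a nilpotence statement for the kernel $J_{A/B}$ of $A \otimes_B A \to A$. By Definitions \ref{space of principal parts: definition} and \ref{Differential Operators: Universal Definition},
$\op{Diff}^{\le n}_B(A) = \op{Hom}_A\bigl(A \otimes_B A / J_{A/B}^{n+1},\, A\bigr)$, while tensor--hom adjunction identifies $\op{End}_B(A) = \op{Hom}_A(A \otimes_B A, A)$ (with $A$-module structure via the first projection $p_1$). Under these identifications, the natural map of the statement is just pullback along the quotient $A \otimes_B A \twoheadrightarrow A \otimes_B A / J_{A/B}^{n+1}$. Injectivity is then immediate; the content is surjectivity, which will follow once I produce an integer $N$ with $J_{A/B}^{N} = 0$, since every $A$-linear map $A \otimes_B A \to A$ would then automatically factor through $\mathcal{P}^{\le N-1}_{A/B}$.

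The two hypotheses are designed precisely to give this nilpotence. First, I will use finite generation to obtain a finite generating set for $J_{A/B}$. Picking $A = Be_1 + \cdots + Be_m$, the universal derivation $d(a) := 1 \otimes a - a \otimes 1$ is $B$-linear (since $b \otimes 1 = 1 \otimes b$ for $b \in B$), and a short manipulation shows $J_{A/B} = A \cdot d(A)$ as an $A$-module via $p_1$; combining these gives $J_{A/B} = A\,d(e_1) + \cdots + A\,d(e_m)$. Second, the hypothesis $A^{p^r} \subset B$ makes each of these generators $p^r$-nilpotent: in characteristic $p$, Frobenius yields
\[
d(e_i)^{p^r} = (1 \otimes e_i - e_i \otimes 1)^{p^r} = 1 \otimes e_i^{p^r} - e_i^{p^r} \otimes 1 = 0,
\]
where the last equality uses $e_i^{p^r} \in B$ to slide the element across the tensor.

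To conclude, I will expand powers of $J_{A/B}$. Since $A \otimes_B A$ is commutative, $J_{A/B}^{N}$ is generated as an $A$-module by monomials $d(e_1)^{a_1} \cdots d(e_m)^{a_m}$ with $\sum a_i \ge N$. Taking $N = m(p^r - 1) + 1$ and applying the pigeonhole principle forces some $a_i \ge p^r$ in each such monomial, so the monomial vanishes; hence $J_{A/B}^{m(p^r-1)+1} = 0$. This gives the isomorphism and, as a by-product, the explicit bound $\op{End}_B(A) = \op{Diff}^{\le m(p^r-1)}_B(A)$. The only delicate point I foresee is bookkeeping around which $A$-module structure (via $p_1$ or $p_2$) is being used when moving between $\op{End}_B(A)$ and $\op{Hom}_A(A \otimes_B A, A)$; once that is pinned down, the rest is the clean characteristic-$p$ nilpotence trick $(x-y)^{p^r} = x^{p^r} - y^{p^r}$.
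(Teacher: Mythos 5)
Your proof is correct and follows essentially the same route as the paper: both reduce to showing $J_{A/B}$ is nilpotent by expressing it in terms of finitely many generators $d(e_i)$ (via finiteness of $A$ over $B$) and using the Frobenius identity $d(e_i)^{p^r}=d(e_i^{p^r})=0$ to kill all sufficiently long monomials, then identifying $\op{Hom}_A(A\otimes_B A,A)$ with $\op{End}_B(A)$. Even the numerical bound $N=m(p^r-1)+1$ matches the paper's.
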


\begin{proof}
    By Definition \ref{Differential Operators: Universal Definition}, we have that 
    \begin{center}
      $\op{Diff}_B(A)=\bigcup_{n\ge 1} \op{Diff}^{\le n}_B(A) $ and $\op{Diff}^{\le n}_B(A)=\op{Hom}_A (A\otimes_B A /J_{A/B}^{n+1}, A)$,  
    \end{center}
     where $A\otimes_B A$ is a $A$-module with respect to $p_1$. 

    First, we prove that the ideal $J_{A/B}^{n+1}$ is zero for $n$ great enough. We recall an elementary fact that the ideal $J_{A/B}$ is generated by elements $\tilde{d}(a)\coloneqq 1\otimes a - a \otimes 1$, where $a\in A$. Moreover, if $b\in B$, then $\tilde{d}(ba)=b\tilde{d}(a)$ by a simple calculation
    \[
    \tilde{d}(ba)=1\otimes ba - ba \otimes 1=b\otimes a -ba\otimes 1=(b\otimes 1)\tilde{d}(a)=b\tilde{d}(a).
    \]
    In particular, $\tilde{d}(b)=0$, because $\tilde{d}(1)=0$.
    
    Now, by $A/B$ being finite, there exists an integer $m$ and elements $x_1,\ldots, x_m \in A$ such that $A=\sum_{i=1}^m B\cdot x_i$. Therefore, for every $a\in A$, there exist $b_i\in B$ such that $a=\sum_{i=1}^m b_i x_i$. From this, we conclude that $J_{A/B}$ is generated by $\tilde{d}(x_1),\tilde{d}(x_2),\ldots,\tilde{d}(x_m)$. Consequently, the ideal $J_{A/B}^{n+1}$ is spanned by monomials $\tilde{d}(x_1)^{e_1}\tilde{d}(x_2)^{e_2}\ldots\tilde{d}(x_m)^{e_m}$, where $\sum_{i=1}^m e_i \ge n+1$. Finally, if $n+1> mp^{r}-m$, then for at least one $i$ we have $e_i\ge p^r$, say $i=1$, hence
    $$
    \tilde{d}(x_1)^{e_1}=\tilde{d}(x_1)^{p^r}\tilde{d}(x_1)^{e_1-p^r}=
    \tilde{d}(x_1^{p^r})\tilde{d}(x_1)^{e_1-p^r}=0,
    $$
    because $x_1^{p^r} \in A^{p^r}\subset B$. Therefore, all these monomials are zero, and hence the ideal $J_{A/B}^{n+1}$ is zero for $n+1> mp^{r}-m$.
    
    We already know that the ideal $J_{A/B}^{n+1}$ is zero for $n$ great enough, say $n> N$, where $N$ is an integer. From this, we get
    \begin{align*}
      \op{Diff}_B(A)=&\bigcup_{n\ge 1} \op{Diff}^{\le n}_B(A)=
    \bigcup_{n\ge N} \op{Diff}^{\le n}_B(A)=\\
    =&\bigcup_{n\ge N}\op{Hom}_A (A\otimes_B A , A)=\op{Hom}_A (A\otimes_B A, A).
    \end{align*}
    Now, the natural map $\op{Diff}_B(A)\to \op{End}_B(A)$ is obtained by mapping
    a function $A\otimes_B A \xrightarrow{f} A $ to the composition $A \xrightarrow{p_2}A\otimes_B A \xrightarrow{f} A$.
    This is the same map from the natural isomorphism $\op{Hom}_A (A\otimes_B A, A)\to \op{Hom}_B(A,A)=\op{End}_B(A)$. This proves the lemma.
\end{proof}

\subsection{Tool: Enveloping Algebras from Divided Powers}
Here we will prove an interesting observation. Let  $A/k$ be a ring extension that admits a separable transcendental basis. Then the graded algebra $\op{gr}\op{Diff}_{k}(A)$ naturally is isomorphic to a divided power polynomial ring. Moreover, if we have a subalgebra of differential operators $\D\subset \op{Diff}_{k}(A)$ of the form $\D=\op{Diff}_C(A)$, where $k\to C\to A$, then the inclusion of their graded algebras factorizes through a divided power polynomial ring that depends only on the first order of $\D$, i.e., on the $p$-Lie algebra $\F=\D\cap T_{A/k}=\op{Der}_C(A)$ and 
\[
\op{gr}\mathcal{D}\subset {S}(\F^*)^{*gr} \subset \op{gr}\op{Diff}_k(K).
\]
Finally, these results hold for modules of relative differential operators, i.e., if we have $k\subset C\subset A\to B$. These results hold for the modules $\op{Diff}_{k}(A, B)$ and $\op{Diff}_{C}(A, B)\subset \op{Diff}_{k}(A, B)$, though the modules are not algebras - their graded modules are algebras! Even commutative ones.

These properties are key ingredients in some technical parts of the next chapters. In particular, it is used to prove that the stratification induced by regular power towers is constructible. Some experts might know these facts, but they are not known to everyone.

The precise statements follow now.

\begin{lemma}\label{graded principal is symmetric algebra - lemma}
    Let $k\to A$ be a morphism of rings that admits coordinates.

    Then, the natural map 
    \[
    S(I_{A/k}/I_{A/k}^{2}) \to \bigoplus_{i\ge 0} I_{A/k}^i/I_{A/k}^{i+1}
    \]
    is an isomorphism of graded rings, where $S (V)$ is the symmetric algebra for a $K$-vector space $V$, and $I_{K/k}$ is the ideal of the diagonal for $A/k$.
\end{lemma}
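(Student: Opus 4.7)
The plan is to reduce the statement to the explicit monomial basis provided by Proposition \ref{Monomials in dx are a basis}, which is exactly the tool we are told to use from the assumption of admitting coordinates. Let $\{x_1,\ldots,x_n\}$ be a finite set of coordinates for $A/k$, and write $J = J_{A/k}$.

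First, I would observe that for each $n\ge 0$, Proposition \ref{Monomials in dx are a basis} gives that the monomials $d(x)^e = \prod d(x_i)^{e_i}$ with $\sum e_i \le n$ form an $A$-basis of $\mathcal{P}^{\le n}_{A/k} = A\otimes_k A / J^{n+1}$. Combining this for consecutive values of $n$ via the natural surjection $\mathcal{P}^{\le n}_{A/k}\twoheadrightarrow \mathcal{P}^{\le n-1}_{A/k}$ (Remark \ref{Principal Parts - Surjection}) whose kernel is $J^n/J^{n+1}$, one concludes that $J^i/J^{i+1}$ is a free $A$-module with basis given by the classes of the monomials $d(x)^e$ with $\sum e_i = i$. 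In particular, $J/J^2$ is free of rank $n$ with basis $d(x_1),\ldots,d(x_n)$.

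Next, I would construct the map. The inclusion $J/J^2 \hookrightarrow \bigoplus_{i\ge 0} J^i/J^{i+1}$ into degree one of the associated graded ring is an $A$-module map into a commutative graded $A$-algebra, so by the universal property of the symmetric algebra it extends uniquely to a graded $A$-algebra homomorphism
\[
\phi: S(J/J^2) \longrightarrow \bigoplus_{i\ge 0} J^i/J^{i+1}.
\]
Since $J/J^2$ is free on $d(x_1),\ldots,d(x_n)$, the symmetric algebra $S(J/J^2)$ is the polynomial ring $A[d(x_1),\ldots,d(x_n)]$, with an $A$-basis given by the monomials $d(x)^e$; the degree-$i$ part has as basis those $d(x)^e$ with $\sum e_i = i$. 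The map $\phi$ sends each such monomial in $S(J/J^2)$ to the corresponding class in $J^i/J^{i+1}$.

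Finally, I would conclude by comparing bases: in each degree $i$, $\phi$ sends the $A$-basis $\{d(x)^e : \sum e_j = i\}$ of $S^i(J/J^2)$ bijectively to the $A$-basis of $J^i/J^{i+1}$ identified in the first step. Hence $\phi$ is an isomorphism in every degree, and therefore an isomorphism of graded $A$-algebras. There is no genuine obstacle here; the only care required is the bookkeeping that the monomial basis of $\mathcal{P}^{\le n}_{A/k}$ respects the filtration by powers of $J$, so that the graded pieces inherit the obvious monomial bases — once this is in hand, the result is immediate from the universal property of $S(-)$.
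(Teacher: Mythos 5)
Your proof is correct and takes essentially the same approach as the paper: construct the map via the universal property of the symmetric algebra and then verify it is an isomorphism degree by degree by matching the monomial basis coming from Proposition \ref{Monomials in dx are a basis}. The only difference is that you spell out the intermediate step of deducing the monomial basis of $J^i/J^{i+1}$ from the filtration of the spaces of principal parts, which the paper treats as immediate.
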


\begin{proof}
    The natural map $S (I_{A/k}/I_{A/k}^{2}) \to \bigoplus_{i\ge 0} I_{A/k}^i/I_{A/k}^{i+1}$ 
    exists by the universal property of the symmetric algebra $S(V)$ for $V=I_{K/k}/I_{K/k}^{2}$ (i.e., $S(V)$ is a polynomial ring.) applied to the inclusion 
    \[
    I_{A/k}/I_{A/k}^{2}\to \bigoplus_{i\ge 0} I_{A/k}^i/I_{A/k}^{i+1}.
    \] 
    It is an isomorphism by dimension counting in each degree. More specifically, if $x_1,\ldots, x_n$ form coordinates for $A/k$, then $dx_1^{ a_1}\cdot dx_2^{ a_2}\cdot \ldots \cdot  dx_n^{ a_n}$, where $a_i\ge 0$ and $\sum a_i =m$, is a basis for $S^m (I_{K/k}/I_{K/k}^{2})$ These elements are mapped to $\tilde{d}x_1^{ a_1} \tilde{d}x_2^{a_2} \ldots  \tilde{d}x_n^{ a_n}$ mod $I_{A/k}^{m+1}$ that are a basis of $I_{K/k}^m/I_{K/k}^{m+1}$, see Proposition \ref{Monomials in dx are a basis}.. This finishes the proof.
\end{proof}

\begin{remark}
    I do not like my definition of \textbf{``admitting coordinates''}. I have my two main examples, and they satisfy that definition, good! 
    
    However, all that I want from the notion of ``admitting coordinates'' is precisely the content of the above lemma: the map $S(I_{A/k}/I_{A/k}^{2}) \to \bigoplus_{i\ge 0} I_{A/k}^i/I_{A/k}^{i+1}$ is an isomorphism of $A$-modules and $I_{A/k}/I_{A/k}^{2}$ is free of finite rank. It is the same remark as before, but slightly restated.

    Thus, my personal taste would be to redefine ``admitting coordinates'' to mean precisely that. I believe that this notion has an ``official'' name in the literature.
\end{remark}

\begin{prop}\label{divided power for gr diff - prop}
   Let $k\to A$ be a morphism of rings that admits coordinates. Let $p$ be a prime number. Let $p=0$ in $A$.

   Then, the graded algebra of differential operators 
   $\op{gr}\op{Diff}_k(A)$ extends naturally to a divided power ring
   $(\op{gr}\op{Diff}_k(A),[\E(A/k)], \gamma)$, where $\E(A/k)$ is the subset of all operators $D$ such that $D(1)=0$.
   
    Moreover, if $x_1,x_2,\ldots,x_n$ form coordinates for $A/k$, then the divided power structure $\gamma$ is determined by the values
    \begin{align*}
       \gamma_i\left(\left[\frac{\partial}{\partial x_j }\right]\right)=\left[\frac{1}{i!}\frac{\partial^i}{\partial x_j ^i}\right]
   \end{align*}
   for $j=1,2,\ldots,n$, and $i\ge 1$. In particular, we have an isomorphism of divided power rings
   \[
   A\left<y_1,y_2,\ldots, y_n\right> \to \op{gr}\op{Diff}_k(A)
   \]
   given by $y_i \mapsto \left[\frac{\partial}{\partial x_i }\right]$ for $i=1,2,\ldots, n$, where $A\left<y_1,y_2,\ldots, y_n\right>$ is a divided power polynomial algebra.
\end{prop}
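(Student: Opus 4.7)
My plan is to realize $\op{gr}\op{Diff}_k(A)$ as the graded dual of a symmetric algebra on a finite free module, and then transport the divided power structure supplied by Proposition \ref{divided power structure for graded dual - prop}.

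Set $M \coloneqq J_{A/k}/J_{A/k}^{2}$. Because $k\to A$ admits coordinates, Proposition \ref{Monomials in dx are a basis} implies $M$ is a finite free $A$-module with basis $dx_1,\ldots,dx_n$ (in particular, locally free). Lemma \ref{graded d in terms of diagonal - lemma} then gives a natural isomorphism of graded $A$-modules
\[
\op{gr}^i\op{Diff}_k(A) \simeq \op{Hom}_A\bigl(J_{A/k}^{i}/J_{A/k}^{i+1},\,A\bigr),
\]
and Lemma \ref{graded principal is symmetric algebra - lemma} identifies $\bigoplus_{i\ge 0} J_{A/k}^{i}/J_{A/k}^{i+1}$ with $S(M)$ as graded $A$-algebras. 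Together these yield a natural $A$-module identification $\op{gr}\op{Diff}_k(A) \simeq S(M)^{*gr}$.

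The second step is to check that this identification is multiplicative, turning it into an isomorphism of graded $A$-algebras. The product on $\op{gr}\op{Diff}_k(A)$ is induced by composition of differential operators, which is well-defined on the graded pieces because $[F_n,F_m]\subset F_{n+m-1}$ — this is also why the graded algebra is commutative. On the other side, the product on $S(M)^{*gr}$ is the one from Proposition \ref{graded dual is commutative - prop}, dual to the natural cocommutative coalgebra structure on $S(M)$ induced by the diagonal. Dualising the $A$-algebra structure of $\bigoplus J_{A/k}^i/J_{A/k}^{i+1}$ (multiplication of principal parts) through $\op{Hom}_A(-,A)$ realises composition of differential operators (modulo lower order) as the graded dual product. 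This is the one step that requires some care, and it is the main obstacle: one must unwind Definition \ref{Differential Operators: Universal Definition} to see that the product of two differential operators on principal parts, when restricted to the associated graded, coincides with the transpose of the comultiplication $c$. Once this is settled, all remaining structure is formal.

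Now transport the divided powers. By Proposition \ref{divided power structure for graded dual - prop}, $S(M)^{*gr}$ carries a natural divided power structure $\gamma$ with divided power ideal $(S(M)^{*gr})^{>0}$. Under the identification above, $(S(M)^{*gr})^{>0}$ corresponds to $\bigoplus_{i\ge 1}\op{gr}^i\op{Diff}_k(A)$, which by Proposition \ref{E(K/W) definition} is exactly $[\E(A/k)]$. Hence $(\op{gr}\op{Diff}_k(A),\,[\E(A/k)],\,\gamma)$ is a divided power ring, naturally in $A/k$.

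For the explicit formula, let $y_j\in (S(M)^{*gr})^{1}$ be the basis element dual to $dx_j$. Corollary \ref{Dual Basis of Diff} combined with Proposition \ref{Monomials in dx are a basis} identifies $y_j$ with $[\partial/\partial x_j]$, and more generally identifies the dual basis element of $(dx_j)^{i}\in S^{i}(M)$ with $[\tfrac{1}{i!}\tfrac{\partial^{i}}{\partial x_j^{i}}]$ (using Definition \ref{symbols diff operators - def}). The explicit formula from Proposition \ref{divided power structure for graded dual - prop} gives
\[
\gamma_i(y_j)\bigl(dx_{j_1}\otimes\cdots\otimes dx_{j_i}\bigr) \;=\; y_j(dx_{j_1})\cdots y_j(dx_{j_i}),
\]
which is $1$ if $j_1=\cdots=j_i=j$ and $0$ otherwise. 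Thus $\gamma_i(y_j)$ is precisely the dual basis element of $(dx_j)^{i}$, proving $\gamma_i([\partial/\partial x_j]) = [\tfrac{1}{i!}\tfrac{\partial^{i}}{\partial x_j^{i}}]$. The final claim, the isomorphism $A\langle y_1,\ldots,y_n\rangle \to \op{gr}\op{Diff}_k(A)$ of divided power rings, is then the "moreover" clause of Proposition \ref{divided power structure for graded dual - prop} applied to the free $A$-module $M$.
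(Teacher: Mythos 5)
Your proof takes the same route as the paper: identify $\op{gr}\op{Diff}_k(A)$ with $S(J_{A/k}/J_{A/k}^2)^{*gr}$ via Lemmas \ref{graded d in terms of diagonal - lemma} and \ref{graded principal is symmetric algebra - lemma}, transport the divided power structure from Proposition \ref{divided power structure for graded dual - prop}, and verify the explicit formula $\gamma_i([\partial/\partial x_j]) = [\tfrac{1}{i!}\tfrac{\partial^i}{\partial x_j^i}]$ by evaluation on monomials. The overall architecture matches the paper's.

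There is, however, a real gap where you say it is. You write ``Dualising the $A$-algebra structure of $\bigoplus J_{A/k}^i/J_{A/k}^{i+1}$ (multiplication of principal parts) through $\op{Hom}_A(-,A)$ realises composition of differential operators (modulo lower order) as the graded dual product''; this is not quite the right formulation, since dualizing an algebra structure yields a coalgebra, not an algebra. Your next sentence corrects course and names the actual content --- that the product on $\op{gr}\op{Diff}_k(A)$ is the transpose of the comultiplication $c$ on $S(M)$ --- but then you declare it a matter of ``unwinding the definitions'' and move on. This is exactly the step the paper does not treat as formal: it invokes the abstract definition of composition of differential operators via the maps $\delta: \mathcal{P}^{\le a+b}_{A/k} \to \mathcal{P}^{\le a}_{A/k} \otimes_A \mathcal{P}^{\le b}_{A/k}$ from \cite[Lemme 16.8.9.1]{EGAIV} together with the explicit formula on \cite[page 45]{EGAIV} showing $\delta$ is determined by $x\otimes y \mapsto (x\otimes 1 \bmod J^{a+1})\otimes(1\otimes y \bmod J^{b+1})$, and then passes to the associated graded to recover exactly $x\mapsto x\otimes 1 + 1\otimes x$ on $J/J^2$. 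Without that input, the claim that the graded composition is the transpose of $c$ is an assertion rather than a proof; the associated-graded pieces inherit a product for an a priori reason (because $[F_n,F_m]\subset F_{n+m-1}$), but identifying it with the coalgebra dual requires the $\delta$ maps. The remaining parts of your argument --- identifying $[\E(A/k)]$ with the positive-degree ideal and the dual-basis computation of $\gamma_i(y_j)$ --- are fine and match the paper.
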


\begin{proof}
    First, we prove that there is a natural isomorphism of rings preserving grading
    \[
    \op{gr}\op{Diff}_k(A) \simeq S(I_{A/k}/I_{A/k}^{2})^{*gr},
    \]
    where $I_{A/k}$ is the diagonal ideal for $K/k$, and $S(M)^{*gr}$ is a graded dual algebra of a symmetric algebra for an $A$-module $M$, Definition \ref{graded dual - def}.
    Let $d\ge 0$ be an integer. This isomorphism in degree $d$ is given by the composition of morphisms of $A$-modules:
    \begin{align*}
     \op{gr}^d\op{Diff}_k(A) =& \op{Diff}^{\le d}_k(A)/\op{Diff}^{\le d-1}_k(A)
    \\
    \simeq& \op{Hom}_A (I_{K/A}^d/I_{A/k}^{d+1},A)
    \\
    \simeq& \op{Hom}_A (S^d (I_{A/k}/I_{A/k}^{2}),A)\\
    =& \left(S(I_{A/k}/I_{A/k}^{2})^{*gr}\right)^d,
    \end{align*}
    where the first isomorphism is from Lemma \ref{graded d in terms of diagonal - lemma} and the second one from Lemma \ref{graded principal is symmetric algebra - lemma}.
    
    We show that the above isomorphisms for all $d\ge 0$ together define a ring map. We recall, Proposition \ref{graded dual is commutative - prop}, that the natural ring structure on any $S(M)^{*gr}$ is given by the map
    \[
    c: S(M)\to S(M)\otimes_A S(M)
    \]
    that is determined by $x \mapsto x\otimes 1 + 1\otimes x$ for $x\in M=S^1(M)$. And, though in this paper we recalled the definition of the composition for differential operators to be the composition as operators, the composition $\op{Diff}_k(A)$ may be abstractly defined by the following maps
    \[
    \delta: \mathcal{P}^{\le a+b}_{A/k} \to \mathcal{P}^{\le a}_{A/k} \otimes_K \mathcal{P}^{\le b}_{A/k},
    \]
    where $\mathcal{P}^{\le a}_{A/k}$ are defined in Definition \ref{space of principal parts: definition}.
    These maps $\delta$ are from \cite[Lemme 16.8.9.1]{EGAIV}, and, according to \cite[page 45]{EGAIV}, they are determined by 
    \[
    A\otimes_k A \to \mathcal{P}^{\le a}_{A/k} \otimes_A \mathcal{P}^{\le b}_{A/k}: \ x \otimes y \mapsto \left(x\otimes1 \text{ mod } I_{A/k}^{a+1}\right) \otimes  \left(1\otimes y \text{ mod } I_{A/k}^{b+1}\right).
    \]
    Finally, by dividing by right powers of the ideal $I_{K/k}$, we obtain that the ring structure on $\op{gr}\op{Diff}_k(K)$ is given by the following comultiplication.
    \[
    \bigoplus_{i\ge 0} I_{A/k}^i/I_{A/k}^{i+1} \to \bigoplus_{i\ge 0} I_{A/k}^i/I_{A/k}^{i+1} \otimes_A \bigoplus_{i\ge 0} I_{A/k}^i/I_{A/k}^{i+1}
    \]
    This map is determined by $x \mapsto x\otimes 1 + 1\otimes x$ for $x\in I_{A/k}/I_{A/k}^{2}$. Hence, this is the above map $c$ up to the isomorphism from Lemma \ref{graded principal is symmetric algebra - lemma}. Therefore, the natural isomorphism $\op{gr}\op{Diff}_k(A) \simeq S(I_{A/k}/I_{A/k}^{2})^{*gr}$ of $A$-modules is an isomorphism of $A$-algebras.

    Next, the isomorphism of graded rings $\op{gr}\op{Diff}_k(A) \simeq S(I_{A/k}/I_{A/k}^{2})^{*gr}$ allows to enrich the algebra $\op{gr}\op{Diff}_k(A)$ into a divided power ring. Indeed, by Proposition \ref{divided power structure for graded dual - prop}, the algebra $S(I_{A/k}/I_{A/k}^{2})^{*gr}$ is a divided power ring in a natural way. It is isomorphic to the divided power polynomial algebra $A\left<y_1,y_2,\ldots, y_n\right>$ with respect to the prescribed map from the theorem we are proving.

    We finish the proof by showing that the divided power structure $\gamma$ on $\op{gr}\op{Diff}_k(A)$ coming from $S(I_{A/k}/I_{A/k}^{2})^{*gr}$ is the one determined by the requirement that $\gamma_i\left(\left[\frac{\partial}{\partial x_j }\right]\right)=\left[\frac{1}{i!}\frac{\partial^i}{\partial x_j ^i}\right]$.
    
    We recall that the space for principal parts $\mathcal{P}^{\le m}_{A/k}$ for $A/k$ has a basis 
    $\prod_{k=1}^n \tilde{d}x_k^{a_k}$ 
    for $0\le a_k\le m$ and $\sum a_k\le m$. Thus, the classes of
    $\prod_{k=1}^n \tilde{d}x_k^{a_k}$ modulo $\mathcal{P}^{\le m-1}_{A/k}$
    for $0\le a_k\le m$ and $\sum a_k=m$ form a basis of $S^{m}(I_{A/k}/I_{A/k}^{2})$, we denote them by $\left[\prod_{k=1}^n \tilde{d}x_k^{a_k}\right]$.

    Now, we prove the operators $\gamma_i\left(\left[\frac{\partial}{\partial x_j }\right]\right)$ and $\left[\frac{1}{i!}\frac{\partial^i}{\partial x_j ^i}\right]$ have the same values on $S^{i}(I_{A/k}/I_{A/k}^{2})$ as linear operators.
    In our computation, we will freely refer to Definition \ref{symbol.differential} of the operators $\frac{1}{i!}\frac{\partial^i}{\partial x_j ^i}$, and to the definition of $\gamma_i$ from Proposition \ref{divided power structure for graded dual - prop}.
    
    Let $j\in\{1,2,\ldots,n\}$, $i\ge 1$, and $m=i$. Then, we compute
    \begin{align*}
        \gamma_i\left(\left[\frac{\partial}{\partial x_j }\right]\right)\left(\left[\prod_{k=1}^n \tilde{d}x_k^{a_k}\right]\right)
        &=
        \gamma_i\left(\left[\frac{\partial}{\partial x_j }\right]\right)\left(\bigotimes_{k=1}^n \left[\tilde{d}x_k\right]^{\otimes a_k}\right)\\
        &\coloneqq\prod_{k=1}^n \left[\frac{\partial}{\partial x_j }\right] \left( \left[\tilde{d}x_k\right]\right)^{a_k}\\
        &=\prod_{k=1}^n \frac{\partial}{\partial x_j } \left( dx_k\right)^{a_k}=
    \begin{cases}
1, \text{ if $a_j=i$,}\\
0, \text{ if $a_j\ne i$.}
\end{cases}
    \end{align*}
    And, we have
    \begin{align*}
        \left[\frac{1}{i!}\frac{\partial^i}{\partial x_j ^i}\right]\left(\left[\prod_{k=1}^n \tilde{d}x_k^{a_k}\right] \right)=
        \frac{1}{i!}\frac{\partial^i}{\partial x_j ^i}\left(\prod_{k=1}^n \tilde{d}x_k^{a_k} \right)
        =
        \begin{cases}
1, \text{ if $a_j=i$,}\\
0, \text{ if $a_j\ne i$.}
\end{cases}
    \end{align*}
    These operators have equal values on the monomials from the basis, so they are equal to each other by Corollary \ref{Determination1}. This finishes the proof.
\end{proof}

\begin{remark}
    I do not know any direct reference for the fact that the graded algebra of differential operators for $k\to A$ admits a canonical divided power structure. The only paper I know that explicitly mentions this fact is \cite{EkedahlFoliation1987}. In that paper, Ekedahl claims this is true, but he does not prove it or provide any reference.
\end{remark}

\begin{prop}\label{pd envelope - prop}
     Let $k\to A$ be a morphism of rings that admits coordinates. Let $k\to C\to A$ be ring maps.
    
    Let $\mathcal{D}=\op{Diff}_C(A)\subset  \op{Diff}_k(A)$ be a subalgebra.
    Let $\F=\mathcal{D}\cap T_{A/k}=\op{Der}_C(A)$. 

    Then, the natural inclusion
$\op{gr}\mathcal{D}\subset  \op{gr}\op{Diff}_k(A)$ of graded algebras
factorizes
\[
\op{gr}\mathcal{D}\subset {S}(\F^*)^{*gr} \subset \op{gr}\op{Diff}_k(A)
\]
through the graded dual of the symmetric algebra of $S(\F^*)$. The inclusions are ring morphisms.

Moreover, the ring ${S}(\F^*)^{*gr}$ is a divided power subring of $\op{gr}\op{Diff}_k(A)$. It is isomorphic to a divided power polynomial algebra if $\F$ is a free $A$-module. Consequently, it is generated as a subring of $\op{gr}\op{Diff}_k(A)$ by all elements $\gamma_i(D)$, where $D\in\F, i\ge 0$.
\end{prop}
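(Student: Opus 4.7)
The plan is to reduce everything to properties of symmetric algebras and their graded duals via the identification $\op{gr}\op{Diff}_k(A) \simeq S(\Omega_{A/k})^{*gr}$ from Proposition \ref{divided power for gr diff - prop}. Since $\F = \op{Der}_C(A)$, we have $\F^* = \Omega_{A/C}$, and the first fundamental exact sequence $\Omega_{C/k}\otimes_C A \to \Omega_{A/k} \to \Omega_{A/C} \to 0$ provides a surjection $\Omega_{A/k} \twoheadrightarrow \F^*$. Taking symmetric algebras yields a surjection of commutative coalgebras (in the sense of Proposition \ref{graded dual is commutative - prop}), and passing to graded duals yields an injective ring map $S(\F^*)^{*gr} \hookrightarrow S(\Omega_{A/k})^{*gr} = \op{gr}\op{Diff}_k(A)$. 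That will be the second inclusion.

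For the first inclusion $\op{gr}\mathcal{D}\subset S(\F^*)^{*gr}$, I would work degree by degree. A class in $\op{gr}^n\mathcal{D}$ is represented by some $D\in\op{Hom}_A(\mathcal{P}^{\le n}_{A/C},A)$, and its leading form corresponds, via the short exact sequence $0\to I_{A/C}^n/I_{A/C}^{n+1}\to\mathcal{P}^{\le n}_{A/C}\to\mathcal{P}^{\le n-1}_{A/C}\to 0$, to an element of $\op{Hom}_A(I_{A/C}^n/I_{A/C}^{n+1},A)$. Although $A/C$ need not admit coordinates, one always has a natural surjection $S^n(\Omega_{A/C})\twoheadrightarrow I_{A/C}^n/I_{A/C}^{n+1}$ (the standard map to the associated graded), so precomposition lands in $\op{Hom}_A(S^n\F^*,A) = (S(\F^*)^{*gr})^n$. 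Naturality of the commuting square
\[
\begin{array}{ccc}
S^n(\Omega_{A/k}) & \twoheadrightarrow & I_{A/k}^n/I_{A/k}^{n+1} \\
\downarrow & & \downarrow \\
S^n(\F^*) & \twoheadrightarrow & I_{A/C}^n/I_{A/C}^{n+1}
\end{array}
\]
then guarantees this construction matches the image of $[D]$ in $\op{gr}^n\op{Diff}_k(A)$ under the inclusion constructed above, giving $\op{gr}\mathcal{D}\subset S(\F^*)^{*gr}$ as a chain of graded subrings.

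The divided power claims follow from Proposition \ref{divided power structure for graded dual - prop}: $S(\F^*)^{*gr}$ carries a canonical divided power structure, and since its defining formula $\gamma_n(\phi)(a_1\otimes\cdots\otimes a_n)=\prod_i\phi(a_i)$ is natural in the module, the inclusion into $\op{gr}\op{Diff}_k(A)$ automatically respects divided powers. When $\F$ is free of rank $t$, so is $\F^*$, and the same proposition produces an explicit isomorphism with a divided power polynomial algebra $A\langle x_1,\dots,x_t\rangle$. Generation of $S(\F^*)^{*gr}$ by elements $\gamma_i(D)$ for $D\in\F,\ i\ge 0$ is then immediate, using $\gamma_n(aD)=a^n\gamma_n(D)$ and $\gamma_n(D+D')=\sum_{i+j=n}\gamma_i(D)\gamma_j(D')$ to reduce arbitrary $D\in\F=S^1(\F^*)^*$ to a chosen basis.

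The main technical obstacle will be bookkeeping around ring structures. The ring structure on each $S(M)^{*gr}$ arises from the ``diagonal'' comultiplication on $S(M)$, while the ring structure on $\op{gr}\op{Diff}_k(A)$ arises from composition of operators; one must verify that these agree under the isomorphism of Proposition \ref{divided power for gr diff - prop} and that each surjection $S^n(\Omega_{A/k})\twoheadrightarrow S^n(\F^*)$ is a coalgebra map. A secondary subtlety is that $A/C$ need not admit coordinates, so the analogue of Proposition \ref{divided power for gr diff - prop} for $A/C$ is not available as an isomorphism — only the surjection $S^n(\Omega_{A/C})\twoheadrightarrow I_{A/C}^n/I_{A/C}^{n+1}$ is at hand, but containment is all the proposition requires.
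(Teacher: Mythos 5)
Your proof follows essentially the same route as the paper: both express the inclusion $\op{gr}\mathcal D\subset\op{gr}\op{Diff}_k(A)$ as the graded dual of a surjection of graded rings, factor that surjection through $S(\Omega_{A/C})\simeq S(\F^*)$ via the universal property of the symmetric algebra applied to the degree-one surjection $\Omega_{A/k}\twoheadrightarrow\Omega_{A/C}$, and then dualize to obtain the chain of inclusions, with the divided power claims imported from Proposition~\ref{divided power structure for graded dual - prop}. The two ``technical obstacles'' you flag are genuine but already discharged in the preliminaries (ring-structure compatibility is precisely what the proof of Proposition~\ref{divided power for gr diff - prop} establishes, and the observation that only a surjection $S^n(\Omega_{A/C})\twoheadrightarrow I_{A/C}^n/I_{A/C}^{n+1}$ is needed for $A/C$ is exactly the point the paper exploits), so your proposal is correct and complete.
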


\begin{proof}
    The inclusion $\op{gr}\mathcal{D}\subset  \op{gr}\op{Diff}_k(K)$ is induced by the surjection of rings
    \[
    \bigoplus_{i\ge 0} I_{A/k}^i/I_{A/k}^{i+1} \to \bigoplus_{i\ge 0} I_{A/C}^i/I_{A/C}^{i+1}
    \]
    via taking graded duals.

    By Lemma \ref{graded principal is symmetric algebra - lemma}, we have $S(I_{A/k}/I_{A/k}^{2}) \simeq \bigoplus_{i\ge 0} I_{A/k}^i/I_{A/k}^{i+1}$. Therefore, the above map is determined by the first degree $I_{A/k}/I_{A/k}^{2} \to I_{A/C}/I_{A/C}^{2}$. Consequently, it factorizes through $S(I_{A/C}/I_{A/C}^{2})$:
    \[
    S(I_{A/k}/I_{A/k}^{2})\simeq\bigoplus_{i\ge 0} I_{A/k}^i/I_{A/k}^{i+1} \to S(I_{A/C}/I_{A/C}^{2}) \to \bigoplus_{i\ge 0} I_{A/C}^i/I_{A/C}^{i+1},
    \]
    and both maps are surjective here.
    Now, by taking graded duals of the above factorization, we obtain the inclusions
    \[
\op{gr}\mathcal{D}\subset {S}(\F^*)^{*gr} \subset \op{gr}\op{Diff}_k(K).
    \]
    
    Finally, the algebra ${S}(\F^*)^{*gr}$ is a divided power polynomial ring, and it is generated by all $\gamma_i(D)$ for $D\in \F$ by Proposition \ref{divided power structure for graded dual - prop}.
\end{proof}

\begin{remark}
    The role of the algebra ${S}(\F^*)^{*gr}$ for a $p$-Lie algebra $\F$ is analogous to the universal property of the universal enveloping algebra of a Lie algebra. So, one could call it a \emph{universal divided power enveloping algebra} of the $p$-Lie algebra $\F$. 
    Actually, it is possible that there is a paper calling it precisely that way, but I do not have such a reference. 

    Moreover, we can observe that we did not use the characteristic of $A$ at all. So, accidentally, we proved the existence of ``universal enveloping algebras of Lie algebras'' of some sort.
\end{remark}

\begin{cor}\label{pd envelope - prop}
     Let $k\to A$ be a morphism of rings that admits coordinates. Let $k\to C\to A\to B$ be ring maps. We assume that $A\to B$ is surjective.
    
    Let $\mathcal{D}=\op{Diff}_C(A,B)\subset  \op{Diff}_k(A,B)$ be a submodule.
    Let $\F=\mathcal{D}\cap \op{Der}_k(A,B)=\op{Der}_C(A,B)$. We assume $\F$ is a free $A$-module. 

    Then, the natural inclusion
$\op{gr}\mathcal{D}\subset  \op{gr}\op{Diff}_k(A)$ of graded modules
factorizes
\[
\op{gr}\mathcal{D}\subset {S}(\F^*)^{*gr} \subset \op{gr}\op{Diff}_k(A,B)
\]
through the graded dual of the symmetric algebra of $S(\F^*)$.

Moreover, the graded $B$-modules $\op{gr}\mathcal{D}$, ${S}(\F^*)^{*gr}, \op{gr}\op{Diff}_k(A,B)$ are $B$-algebras.
\end{cor}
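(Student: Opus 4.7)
The plan is to run the same argument as the preceding proposition but with the target module $A$ replaced by $B$ throughout, exploiting that $B$ is a commutative $A$-algebra so the machinery of symmetric algebras and their graded duals survives base change. Concretely, by Definition \ref{relative differential operators - def}, the inclusion $\mathcal{D}\subset\op{Diff}_k(A,B)$ is obtained from the surjection of graded $A$-algebras
\[
\bigoplus_{i\ge 0} I_{A/k}^i/I_{A/k}^{i+1}\ \twoheadrightarrow\ \bigoplus_{i\ge 0} I_{A/C}^i/I_{A/C}^{i+1}
\]
by applying $\op{Hom}_A(-,B)$ degreewise. Since $k\to A$ admits coordinates, Lemma \ref{graded principal is symmetric algebra - lemma} identifies the left-hand side with $S(I_{A/k}/I_{A/k}^2)$, and by the polynomial-algebra universal property the surjection factors through $S(I_{A/C}/I_{A/C}^2)$, exactly as in the proof of the previous proposition.

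Next I would take $\op{Hom}_A(-,B)$ of the resulting chain of graded algebra maps. Setting $V_k\coloneqq(I_{A/k}/I_{A/k}^2)\otimes_A B$ and $V_C\coloneqq(I_{A/C}/I_{A/C}^2)\otimes_A B$, and using $\op{Hom}_A(M,B)=\op{Hom}_B(M\otimes_A B,B)$ together with the compatibility of the symmetric algebra with base change ($S(M)\otimes_A B=S_B(M\otimes_A B)$), I obtain inclusions of graded $B$-modules
\[
\op{gr}\mathcal{D}\ \subset\ S_B(V_C)^{*gr}\ \subset\ S_B(V_k)^{*gr}\ \simeq\ \op{gr}\op{Diff}_k(A,B).
\]
To identify $S_B(V_C)^{*gr}$ with $S(\F^*)^{*gr}$, I would use $\F=\op{Der}_C(A,B)=\op{Hom}_A(I_{A/C}/I_{A/C}^2,B)=\op{Hom}_B(V_C,B)$; the hypothesis that $\F$ is $A$-free forces $V_C$ to be a finite free $B$-module with $V_C\simeq\F^*$ (as $B$-modules, via double duality), so $S_B(V_C)^{*gr}=S(\F^*)^{*gr}$.

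For the $B$-algebra structure I would invoke Proposition \ref{graded dual is commutative - prop} over the ring $B$: each $S_B(V)^{*gr}$ is a commutative $B$-algebra, because $S_B(V)$ is a cocommutative $B$-coalgebra via $c$ and $B$ is commutative. The inclusions in the chain are $B$-algebra maps because they arise by $\op{Hom}_B(-,B)$ from surjective maps of $B$-coalgebras (the base changes of $S(I_{A/k}/I_{A/k}^2)\to S(I_{A/C}/I_{A/C}^2)\to\bigoplus I_{A/C}^i/I_{A/C}^{i+1}$, all of which respect the natural comultiplication $c$). Finally $\op{gr}\op{Diff}_k(A,B)$ inherits the same $B$-algebra structure as $S_B(V_k)^{*gr}$ because, as in the proof of Proposition \ref{divided power for gr diff - prop}, the ring structure on the graded algebra of differential operators is induced from the comultiplication $\delta$ on principal parts, and that is precisely the base change to $B$ of the comultiplication on $S(I_{A/k}/I_{A/k}^2)$.

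The one step requiring some care—and the main obstacle—is the final compatibility: checking that the abstract ``multiplication'' on $\op{gr}\op{Diff}_k(A,B)$ (dual to $\delta$ with $B$-coefficients) genuinely agrees with the multiplication on $S_B(V_k)^{*gr}$ coming from $c$, since we cannot appeal to composition of endomorphisms of a single module as in the absolute case. This is handled exactly as in the previous proposition by tracing through the explicit formula for $\delta$ in \cite[Lemme 16.8.9.1]{EGAIV}, which shows $\delta$ is obtained from $c$ by passing to quotients $\mathcal{P}^{\le n}_{A/k}$; base-changing along $A\to B$ does not disturb this identification, so the algebra structures match and the corollary follows.
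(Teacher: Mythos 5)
Your proposal is correct and takes essentially the same route as the paper's own proof: both reduce the statement to base change along $A\to B$ applied to the chain $S(I_{A/k}/I_{A/k}^2)\twoheadrightarrow S(I_{A/C}/I_{A/C}^2)\twoheadrightarrow\bigoplus_i I_{A/C}^i/I_{A/C}^{i+1}$ and then take graded duals over $B$. The only cosmetic difference is in how the base change is packaged: the paper writes $\op{Diff}_k(A,B)\cong\op{Diff}_k(A)\otimes_A B$ directly (using freeness of $\mathcal{P}^{\le n}_{A/k}$ to move $\otimes_A B$ out of the $\op{Hom}$), while you use the restriction-of-scalars adjunction $\op{Hom}_A(M,B)\cong\op{Hom}_B(M\otimes_A B,B)$, which does the same work and is arguably slightly more robust.

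One place where you are actually more explicit than the paper is the identification $S_B(V_C)^{*gr}\simeq S(\F^*)^{*gr}$: the paper simply factors through $S\bigl(I_{A/C}/I_{A/C}^2\otimes_A B\bigr)^{*gr}$ and never mentions $\F^*$ in the proof at all. However, the specific justification you offer has a subtle soft spot: from $\F=\op{Hom}_B(V_C,B)$ being free it does \emph{not} follow in general that $V_C$ is free (a finite module with free dual need not be free, e.g.\ $V_C=\Z/2$ over $\Z$ has zero, hence free, dual). The correct hypothesis that makes both this step and the paper's implicit relabeling work is that $V_C=(I_{A/C}/I_{A/C}^2)\otimes_A B$ is itself finite free, after which $\F=V_C^*$ is free and $V_C\simeq\F^{**}$ by double duality. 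In the paper's sole application (Theorem \ref{stratification exists - thm}, regular power towers restricted to a subvariety), $V_C$ is indeed locally free, so the discrepancy does not cause any harm downstream; but strictly speaking, the corollary's hypothesis ``$\F$ is free'' should be read as ``$V_C$ is free'' for your double-duality argument to close.
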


\begin{proof}
    It is analogous to the previous theorem.
    
    We recall Definition \ref{relative differential operators - def}. It follows that
    \[
    \op{Diff}_k(A,B)\coloneqq \op{Diff}_{k}(A)^{\le n}(A,B)\coloneqq \op{Hom}_{A}(\mathcal{P}^{\le n}_{A/k}\otimes A, B)\simeq \op{Hom}_{A}(\mathcal{P}^{\le n}_{A/k}\otimes A,A)\otimes_A B=\op{Diff}_k(A)\otimes B,
    \]
    where we have the isomorphism, because $\mathcal{P}^{\le n}_{A/k}$ is a free $A$-module. 

    Consequently, the graded module of $\op{Diff}_k(A)\otimes B$ is the graded dual of the ring 
    \[
    \bigoplus_{i\ge 0} \left(I_{A/k}^i/I_{A/k}^{i+1}\otimes_A B\right)
    \]
    as a $B$-module.

    Similarly, the graded module of $\op{Diff}_C(A,B)=\op{Diff}_C(A)\otimes B$ is the graded dual of the ring
    \[
    \bigoplus_{i\ge 0} \left(I_{A/C}^i/I_{A/C}^{i+1}\otimes_A B\right)\]
    as a $B$-module.

    Finally, we have a factorization of $B$-algebras:
    \[
    \bigoplus_{i\ge 0} \left(I_{A/C}^i/I_{A/C}^{i+1}\otimes_A B\right)\simeq S(I_{A/k}^1/I_{A/k}^{2})\otimes_A B
    \to S(I_{A/C}^1/I_{A/C}^{2}\otimes_A B)\to\bigoplus_{i\ge 0} \left(I_{A/C}^i/I_{A/C}^{i+1}\otimes_A B\right)
    \]
    Each map is a surjection; thus, the maps between their graded duals, as $B$-algebras, are injections. Moreover, the graded modules are graded dual algebras; thus, they are algebras.
\end{proof}

\newpage
\section{Review of Previous Galois Theories}

This section quickly reviews the Galois-type theories that we generalize in this paper. A more detailed presentation is available in my thesis \cite{Grabowski_PhD_thesis}, where I also discuss Sweedler's theory following Wechter.

\subsection{Jacobson--Bourbaki Correspondence}

In short, this correspondence says that a subfield $K\subset L$ and an algebra $\op{End}_K(L)$ of linear endomorphisms of $L$ determine each other naturally. These theorems are taken from \cite[Section 8.2]{Jacobson-BasicAlgebraII}. I added the extra adjectives ``topological'' and ``discrete''.

\begin{defin}[Finite Topology/the Krull Topology]\label{Finite Topology- Definition}
Let $X,Y$ be sets. The \emph{finite topology} on functions from $X$ to $Y$, this space is denoted by $Y^X$, is the topology with a base given by subsets of the form
$$U(f,S)\coloneqq \{ g\in Y^X: \forall_{x\in S} g(x)=f(x)  \}_{f,S},$$ where $f$ is a function from $X$ to $Y$, and $S$ is a finite subset of $X$.
\end{defin}

\begin{thm}[Topological Jacobson--Bourbaki Correspondence]\label{Topological Jacobson--Bourbaki Correspondence}
Let $L$ be a field. There is a natural order reversing correspondence between subfields $K$ of $L$, and closed in the finite topology $L$-subalgebras of $\op{End}_+(L)$. 

Explicitly,
this correspondence is given by the following formulas.

From subfields to closed subalgebras:
\[
L\supset K\mapsto \mathcal{JB}(K)\coloneqq \op{End}_K(L)\subset \op{End}_+(L).
\]

From closed subalgebras to subfields:
\[
\mathcal{JB}=\overline{\mathcal{JB}}\subset \op{End}_+(L) \mapsto \op{const}(\mathcal{JB})\coloneqq \{x\in L; \forall_{D\in \mathcal{JB}} \ [x,D]=0\}\subset L.
\]
\end{thm}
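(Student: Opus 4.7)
The plan is to prove the correspondence via the classical Jacobson--Chevalley density theorem, splitting the work into well-definedness, the easy composition $\op{const}\circ\mathcal{JB}=\op{id}$, and the harder composition $\mathcal{JB}\circ\op{const}=\op{id}$.

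First I would check both maps are well-defined. For a subfield $K\subset L$, the set $\op{End}_K(L)$ contains every left multiplication $L_a$ with $a\in L$ (in particular it is an $L$-subalgebra) and equals $\bigcap_{k\in K,\,y\in L}\{D:D(ky)-kD(y)=0\}$; each factor is the preimage of $\{0\}$ under a map that is continuous for the finite topology (since $L$ is discrete), so $\op{End}_K(L)$ is closed. For any $L$-subalgebra $\mathcal{JB}$, the centralizer $\op{const}(\mathcal{JB})\subset L$ is closed under sums and products, and closed under inverses (if $xD=Dx$ with $x\neq 0$, multiplying both sides by $x^{-1}$ as operators gives $x^{-1}D=Dx^{-1}$), so it is a subfield. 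Next, for $\op{const}\circ\mathcal{JB}=\op{id}$, the inclusion $K\subset\op{const}(\op{End}_K(L))$ is immediate from $K$-linearity. For the reverse, given $x\in L\setminus K$, the elements $1,x$ are $K$-linearly independent; extending to a $K$-basis of $L$ one defines a $K$-linear $D:L\to L$ by $D(1)=0$, $D(x)=1$, arbitrary elsewhere. Then $[x,D](1)=xD(1)-D(x\cdot 1)=-1\neq 0$, so $x\notin\op{const}(\op{End}_K(L))$.

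The direction $\mathcal{JB}\circ\op{const}=\op{id}$ is the actual content. Setting $K\coloneqq\op{const}(\mathcal{JB})$, the inclusion $\mathcal{JB}\subset\op{End}_K(L)$ is tautological. View $L$ as a left $\mathcal{JB}$-module. Since $\mathcal{JB}$ contains $L_a$ for every $a\in L$ and $L$ is a field, the operators $L_a$ already act transitively on $L\setminus\{0\}$, so $L$ is a simple $\mathcal{JB}$-module. A short calculation identifies its commutant: if $f\in\op{End}_{\mathcal{JB}}(L)$, then commutation with all $L_a$ forces $f(y)=y\cdot f(1)$, so $f=R_{a}$ with $a=f(1)$; commutation with an arbitrary $D\in\mathcal{JB}$ then becomes $D(ya)=D(y)a$, which by commutativity of $L$ is equivalent to $L_a$ commuting with $D$, i.e., $a\in K$. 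Hence $\op{End}_{\mathcal{JB}}(L)\cong K$ acting by multiplication. The Jacobson--Chevalley density theorem then asserts that $\mathcal{JB}$ is dense in $\op{End}_K(L)$ in the finite topology, and since $\mathcal{JB}$ is closed, equality holds. Order reversal of the bijection is transparent from the formulas.

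The main obstacle is the density step: one must check that the notion of ``finite topology'' used here (approximation on arbitrary finite subsets of $L$) matches what the density theorem provides, namely that for any $K$-linearly independent $y_1,\ldots,y_n$ and prescribed values $z_1,\ldots,z_n\in L$ there exists $D\in\mathcal{JB}$ with $D(y_i)=z_i$ for all $i$. This is proved by induction on $n$: using simplicity one finds $D_i\in\mathcal{JB}$ sending a suitably chosen vector to $z_i$ while annihilating the others, which in turn requires the identification $\op{End}_{\mathcal{JB}}(L)=K$ to guarantee that the relevant annihilators are nontrivial. Everything else is bookkeeping.
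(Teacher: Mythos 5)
Your proof is correct. The paper itself does not prove this theorem --- it is cited directly from Jacobson, \emph{Basic Algebra II}, \S 8.2 --- so there is no ``paper's own proof'' to compare against; but the argument you give (realize $L$ as a simple faithful module over the closed $L$-subalgebra $\mathcal{JB}$, identify its commutant as the multiplication operators by elements of $K=\op{const}(\mathcal{JB})$, then invoke the Jacobson density theorem and use closedness to upgrade density to equality) is precisely Jacobson's classical argument, so the two approaches coincide. The only place worth tightening in writing it up is the translation between the two phrasings of density: the density theorem gives approximation on finite \emph{$K$-linearly independent} tuples, whereas the finite topology of Definition~\ref{Finite Topology- Definition} quantifies over arbitrary finite subsets of $L$; the bridge is that any finite subset lies in the $K$-span of a finite independent set, and both the target $\phi\in\op{End}_K(L)$ and any $D\in\mathcal{JB}$ are $K$-linear, so agreement on the independent set forces agreement on the original subset.
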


Finite subalgebras are closed in the finite topology. This gives the following result.

\begin{cor}[Discrete Jacobson--Bourbaki Correspondence]\label{Discrete Jacobson--Bourbaki Correspondence}
    Let $L$ be a field. There is a natural order reversing correspondence between subfields $K$ of $L$ with $[L:K]<\infty$, and finite dimensional $L$-subalgebras of $\op{End}_+(L)$. 

Explicitly,
this correspondence is given by the following formulas.

From subfields of finite coorder to finite-dimensional subalgebras:
\[
L\supset K\mapsto \mathcal{JB}(K)\coloneqq \op{End}_K(L)\subset \op{End}_+(L).
\]

From finite-dimensional subalgebras to subfields of finite coorder:
\[
\mathcal{JB}\subset \op{End}_+(L) \mapsto \op{const}(\mathcal{JB})\coloneqq \{x\in L; \forall_{D\in \mathcal{JB}} \ [x,D]=0\}.
\]
Moreover, we have that $\op{dim}_K(L)=\op{dim}_L(\mathcal{JB}(K))$.
\end{cor}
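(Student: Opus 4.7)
The plan is to deduce this corollary from the topological Jacobson--Bourbaki correspondence (Theorem \ref{Topological Jacobson--Bourbaki Correspondence}) by showing two things: first, every finite-dimensional $L$-subalgebra of $\op{End}_+(L)$ is automatically closed in the finite topology, so that the two correspondences describe the same kind of object on the algebra side; second, the condition ``$\mathcal{JB}(K) = \op{End}_K(L)$ is finite-dimensional over $L$'' corresponds exactly to $[L:K] < \infty$, with matching dimensions.

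For closedness, given a finite-dimensional $L$-subspace $V \subset \op{End}_+(L)$ with $L$-basis $D_1,\ldots,D_n$, I would produce elements $x_1,\ldots,x_n \in L$ such that the matrix $(D_i(x_j))_{i,j}$ is invertible; this is possible by the $L$-linear independence of the $D_i$, extending one coordinate at a time. Every element of $V$ is then uniquely determined by its values on $x_1,\ldots,x_n$. If $D$ lies in the closure of $V$ in the finite topology, then for every finite set $S \supset \{x_1,\ldots,x_n\}$ there is some $D_S \in V$ agreeing with $D$ on $S$; uniqueness forces all these $D_S$ to be one and the same element $D' \in V$, which must then agree with $D$ on every $S$, so $D = D' \in V$.

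For the dimension correspondence, the forward direction is standard: if $[L:K] = m$, then $\op{End}_K(L) \cong M_m(K)$ as $K$-algebras, and via the $L$-module structure $(a \cdot D)(x) \coloneqq a D(x)$ it is free of rank $m$ over $L$. For the converse, assume $\op{dim}_L \op{End}_K(L) = n < \infty$. I would argue by contradiction: if there were $K$-linearly independent elements $x_0, x_1, \ldots, x_n \in L$, one could extend them to a $K$-basis of $L$ and define $K$-linear maps $D_0, \ldots, D_n$ by $D_i(x_j) = \delta_{ij}$ on $x_0,\ldots,x_n$ and $0$ on the rest of the basis. Then any $L$-linear relation $\sum_i a_i D_i = 0$ evaluated at $x_j$ would give $a_j = 0$, so the $n+1$ operators would be $L$-linearly independent, contradicting the hypothesis. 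Hence $[L:K] \le n$, and the forward direction gives $[L:K] = n$.

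The main subtlety I anticipate is keeping straight which of the two natural $L$-module structures on $\op{End}_+(L)$ is in play: the correspondence uses post-composition by left multiplications, so that $(a \cdot D)(x) = a D(x)$, and the ``evaluate to extract the coefficient'' trick depends on this convention. Once the conventions are fixed, both steps are short, and the remaining assertions in the corollary (bijectivity of the two maps and order-reversal) are inherited verbatim from the topological version.
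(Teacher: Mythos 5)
Your proposal is correct and follows the same route the paper takes: the paper simply notes, in the sentence immediately preceding the corollary, that finite-dimensional subalgebras are closed in the finite topology and then invokes the topological Jacobson--Bourbaki correspondence, which is exactly your strategy. You have filled in the two details the paper elides — the proof that a finite-dimensional $L$-subspace of $\op{End}_+(L)$ is closed, and the verification that $[L:K]<\infty$ corresponds to $\dim_L\op{End}_K(L)<\infty$ with matching dimensions — and both arguments are sound.
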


\begin{remark}
    For a Galois extension $L/K$ with a Galois group $G$. The subalgebra $\mathcal{JB}=\op{End}_K(L)$ is the group ring $L[G]$, because it is inside and the dimensions agree. So, $G$ is a nice choice of generators for $\mathcal{JB}$.
\end{remark}

\subsection{Jacobson Correspondence} In short, this correspondence says that a subfield of exponent one $K^p\subset W \subset K$ and a module of relative derivations $T_{K/W}\coloneqq \op{Der}_{K^p}(K, W)$ determine each other, if we have $[K: K^p]<\infty$. Originally, this theorem was proved in \cite{Jacobson1944}, but it can be obtained as a corollary of the Jacobson--Bourbaki correspondence, as it is done in the book \cite{Jacobson-BasicAlgebraII}. A part of deducing it in that way includes a lemma that gives an equivalent definition of a $p$-Lie algebra. We assume $\op{char}(K)=p>0$.

\begin{remark}
    Derivations kill $p$-powers, thus if we have $L\subset K^p\subset W$, then $\op{Der}_L(K,W)=\op{Der}_{K^p}(K,W)$, i.e., $K^p$ is a natural base to study derivations on $K$ modulo $p$.
\end{remark}

\begin{defin}
    Let $T_{K/K^p}$ be the tangent space of $K$. We define two operations:
    \begin{itemize}
        \item Lie bracket: $T_{K/K^p}\oplus T_{K/K^p}\to T_{K/K^p}: (D,E)\mapsto [D,E]\coloneqq D\circ E-E\circ D$,
        \item $p$-power: $T_{K/K^p}\to T_{K/K^p}: D \mapsto D^{\circ p}$.
    \end{itemize}
\end{defin}

Since $p=0$, we see that $v^p(fg)=\sum_{i=0}^p {{p}\choose {i}} v^{i}(f)v^{p-i}(g)=fv^p(g)+v^p(f)g$, so the above operations are well defined.

\begin{defin}\label{p-lie algebra: definition}
    A $p$-Lie algebra on a field $K$ is a $K$-vector subspace of $T_{K/K^p}$ that is closed under Lie bracket and $p$-power operations.
\end{defin}

We can provide an equivalent definition that does not use any fancy operations.

\begin{lemma}\label{p-lie algebra: definition 2}
    We assume $[K:K^p]<\infty$.
    Let $\F\subset T_{K/K^p}$ be a $K$-vector subspace. Let $\left<\F\right>$ be the smallest subalgebra of $\op{End}_k(K)$ containing $\F$, where $k=\bigcap_{n\ge 0} K^{p^n}$. Then $\F$ is a $p$-Lie algebra on $K$ if and only if the equality $\left<\F\right>\cap T_{K/K^p}=\F$ is satisfied.
\end{lemma}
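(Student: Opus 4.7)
The direction $(\Leftarrow)$ is immediate from the definition. Assume $\langle\F\rangle \cap T_{K/K^p} = \F$. For any $D,E\in\F$ the commutator $[D,E]=D\circ E-E\circ D$ lies in $\langle\F\rangle$ by subalgebra closure, and it is a derivation by the standard Leibniz-rule computation. In characteristic $p>0$ the same is true of $D^{\circ p}$, because $\binom{p}{i}=0$ in $k$ for $0<i<p$, so $D^{\circ p}(xy)=D^{\circ p}(x)y+xD^{\circ p}(y)$. Both endomorphisms are $K^p$-linear (derivations kill $p$-powers), hence they lie in $\langle\F\rangle\cap T_{K/K^p}=\F$; so $\F$ is closed under bracket and $p$-power.

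For $(\Rightarrow)$, set $W:=\op{const}(\F)=\{x\in K : D(x)=0 \text{ for every } D\in\F\}$. This is a subfield of $K$, and $K^p\subset W$ because every derivation kills $p$-powers. By the definition of $W$, each element of $\F$ is $W$-linear, so $\langle\F\rangle\subset \op{End}_W(K)$, and hence
\[
\langle\F\rangle\cap T_{K/K^p}\subset \op{End}_W(K)\cap T_{K/K^p}=\op{Der}_W(K).
\]
Since $\F\subset\op{Der}_W(K)$ holds trivially, the lemma reduces to proving the equality $\op{Der}_W(K)=\F$.

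I would establish this equality by a three-step dimension squeeze. Let $n=\dim_K\F$ and fix a $K$-basis $D_1,\dots,D_n$ of $\F$. The $p$-Lie relations $D_iD_j=D_jD_i+[D_i,D_j]$ (with $[D_i,D_j]\in\F$) together with $D_i^{\circ p}\in\F$ allow every product of $D_i$'s to be rewritten as a $K$-linear combination of ordered monomials $D_1^{\circ e_1}\cdots D_n^{\circ e_n}$ with $0\le e_i<p$, so $\dim_K\langle\F\rangle\le p^n$. The discrete Jacobson--Bourbaki correspondence (Corollary~\ref{Discrete Jacobson--Bourbaki Correspondence}) applied to the finite-dimensional $K$-subalgebra $\langle\F\rangle\subset\op{End}_+(K)$ then gives that $W=\op{const}(\langle\F\rangle)=\op{const}(\F)$ satisfies $[K:W]=\dim_K\langle\F\rangle\le p^n$. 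Finally, a standard $p$-basis computation for an exponent-one extension $K^p\subset W\subset K$ (using $[K:K^p]<\infty$, so that a $p$-basis of $K/K^p$ is finite and its subset lying outside $W$ controls both $[K:W]$ and $\op{Der}_W(K)$) yields $\dim_K\op{Der}_W(K)=\log_p[K:W]\le n$. Combined with $\F\subset\op{Der}_W(K)$, this forces $\op{Der}_W(K)=\F$.

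The main obstacle is the middle step: the bound $\dim_K\langle\F\rangle\le p^n$ requires the full $p$-Lie closure to carry out a PBW-style reduction of all products to ordered monomials of bounded exponent, and the third step rests on the derivation calculus for purely inseparable extensions of exponent one via $p$-bases. Both hinge essentially on the hypothesis $[K:K^p]<\infty$, which guarantees that all dimensions involved are finite and that the Jacobson--Bourbaki correspondence can be applied in its discrete form.
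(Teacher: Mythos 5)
The paper states this lemma without proof, deferring to the standard treatment in Jacobson's \emph{Basic Algebra II} (it is a lemma used en route to deducing the Jacobson correspondence from Jacobson--Bourbaki, so one cannot simply cite Theorem~\ref{Jacobson Purely Inseparable Galois Theory for Exponent $1$} without circularity). Your proof is correct and follows exactly that standard route: the $(\Leftarrow)$ direction is the routine Leibniz check that bracket and $p$-th power of derivations stay derivations, and the $(\Rightarrow)$ direction is the dimension squeeze via the restricted-PBW bound $\dim_K\langle\F\rangle\le p^{\dim_K\F}$, the discrete Jacobson--Bourbaki identity $[K:\op{const}(\langle\F\rangle)]=\dim_K\langle\F\rangle$, and the $p$-basis computation $\dim_K\op{Der}_W(K)=\log_p[K:W]$ for exponent-one $K/W$. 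This correctly avoids the circularity and uses only what the paper has made available (Corollary~\ref{Discrete Jacobson--Bourbaki Correspondence} and the $p$-basis material); the only blemishes are cosmetic: in the appeal to Jacobson--Bourbaki you wrote $\op{End}_+(L)$ where you mean $\op{End}_+(K)$, and the PBW rewriting is sketched rather than carried out (one must also move $K$-scalars past the $D_i$'s via $D_i a = aD_i + D_i(a)$, which your normal-ordering induction accommodates but does not spell out).
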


\begin{thm}[Jacobson Correspondence aka Jacobson's Purely Inseparable Galois Theory for Exponent $1$]\label{Jacobson Purely Inseparable Galois Theory for Exponent $1$}
   Let $K$ be a field of positive characteristic $p>0$. Let $k$ be the perfection of $K$, i.e., $k=\bigcap_{n\ge 0} K^{p^n}$. Assume that $K/k$ is a finitely generated field extension.

   Then, there is a natural inclusion reversing correspondence between $p$-Lie algebras on $K$ and subfields $K\supset W\supset K^p$. 

Explicitly, it is given by the following formulas.

From subfields of exponent $\le 1$ to $p$-Lie subalgebras:
\[
K \supset W \supset K^{p} \mapsto T_{K/W} \subset T_{K/k}.
\]

From $p$-Lie subalgebras to subfields of exponent $\le 1$:
\[
\F \subset T_{K/k} \mapsto K \supset \op{Ann}(\F) \supset K^p,
\]
where $\op{Ann}(\F)\coloneqq\{x\in K| \forall_{D\in \F} D(x)=0\}$.
Moreover, $p^{\op{dim}_K(\F)}= \op{dim}_W(K)$.
\end{thm}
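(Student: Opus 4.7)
The plan is to deduce Jacobson's theorem from the Discrete Jacobson--Bourbaki Correspondence (Corollary \ref{Discrete Jacobson--Bourbaki Correspondence}) combined with the Diff=End Lemma (Proposition \ref{diff=end - new}). Because $K/k$ is finitely generated, a $p$-basis argument shows $[K:K^p] = p^n < \infty$, so any subfield $K \supset W \supset K^p$ has finite coorder and Jacobson--Bourbaki attaches to it the $K$-subalgebra $\op{End}_W(K) \subset \op{End}_+(K)$. By Diff=End (with $r=1$) this coincides with $\op{Diff}_W(K)$. The task is then to show that passing from this subalgebra to its part in order $\le 1$ (namely $\op{Der}_W(K) = T_{K/W}$) loses no information.

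First I would check that the two assignments are well-defined: $T_{K/W}$ is closed under Lie brackets and $p$-th powers by the usual computation with derivations in characteristic $p$, and $\op{Ann}(\F)$ is a subfield containing $K^p$ because derivations always kill $p$-th powers. The inclusions $W \subset \op{Ann}(T_{K/W})$ and $\F \subset T_{K/\op{Ann}(\F)}$ are immediate.

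The substantive content is the reverse inclusions. For these I would use Lemma \ref{p-lie algebra: definition 2}: given a $p$-Lie algebra $\F$, the $K$-subalgebra $\langle \F \rangle \subset \op{End}_k(K)$ satisfies $\langle \F \rangle \cap T_{K/K^p} = \F$, and it is finite dimensional of $K$-dimension at most $p^{\dim_K \F}$ by a restricted-PBW argument. Hence the Discrete Jacobson--Bourbaki Correspondence applies to $\langle \F \rangle$, producing a subfield $V = \op{const}(\langle \F \rangle) = \op{const}(\F) = \op{Ann}(\F)$ with $\langle \F \rangle = \op{End}_V(K)$. Since $\F$ consists of derivations, $V \supset K^p$, so Diff=End yields $\op{End}_V(K) = \op{Diff}_V(K)$, and intersecting with $T_{K/K^p}$ gives $\F = \op{Der}_V(K) = T_{K/V} = T_{K/\op{Ann}(\F)}$. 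Running the same argument starting from a subfield $W \supset K^p$ applied to the algebra $\op{End}_W(K) = \op{Diff}_W(K)$ proves $\op{Ann}(T_{K/W}) = W$, so the two maps are mutually inverse.

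The dimension formula then follows from the chain $\dim_W K = \dim_K \op{End}_W(K) = \dim_K \op{Diff}_W(K) = p^{\dim_K T_{K/W}}$, where the first equality is part of the Discrete Jacobson--Bourbaki Correspondence and the last comes from choosing a $p$-basis of $K/W$ (which exists since $K/W$ is purely inseparable of exponent one) and applying Proposition \ref{Monomials in dx are a basis} to produce a $K$-basis of $\op{Diff}_W(K)$ of cardinality $p^{\dim_K T_{K/W}}$. The main obstacle is the content of Lemma \ref{p-lie algebra: definition 2}, together with the finite-dimensionality bound $\dim_K \langle \F \rangle \le p^{\dim_K \F}$: without these the map $\F \mapsto \op{Ann}(\F)$ cannot be controlled and the reverse inclusion $T_{K/\op{Ann}(\F)} \subset \F$ would be missing. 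Everything else is bookkeeping between the Jacobson--Bourbaki correspondence and the identification $\op{End}_W(K) = \op{Diff}_W(K)$.
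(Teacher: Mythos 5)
Your proposal is correct and matches the route the paper itself indicates: the statement is stated in the review section with a pointer to Jacobson's \emph{Basic Algebra II}, and the paper explicitly says the theorem ``can be obtained as a corollary of the Jacobson--Bourbaki correspondence'' with Lemma~\ref{p-lie algebra: definition 2} as the key input, which is exactly what you do (adding the Diff=End lemma to make the identification $\op{End}_W(K)=\op{Diff}_W(K)$ transparent, as the paper itself does in the proof of Theorem~\ref{MainTheorem}). The only point worth flagging is that the finite-dimensionality bound $\dim_K\langle\F\rangle\le p^{\dim_K\F}$ is not actually contained in Lemma~\ref{p-lie algebra: definition 2}; you correctly identify it as needing a separate restricted-PBW argument, and this is indeed where the real work sits, but you do not spell it out. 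Since the paper treats the whole chain (Lemma~\ref{p-lie algebra: definition 2} included) as imported from the literature without proof, your sketch is at the same level of detail as the source.
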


\begin{remark}
    The theorem still holds if we assume only that $[K: K^p]<\infty$. This is implied by our condition that $K\supset k$ is finitely generated as a field extension. In other words, our statement is for generic points of varieties, because we only work in this case.
\end{remark}

\subsection{Ekedahl Correspondence} Ekedahl's correspondence is the Jacobson correspondence for normal varieties. Originally, it was proved by Ekedahl in \cite[Proposition 2.2.]{EkedahlFoliation1987}. A more recent treatment is given in \cite{ZsoltJoe-1-foliations}.

\begin{defin}
Let $X$ be a normal variety over a perfect field $k$ of characteristic $p>0$.
A \emph{$p$-Lie algebra}, or a \emph{$1$-foliation}, on $X$ is a quasicoherent subsheaf $\F$ of the tangent sheaf $T_{X/k} \coloneqq \Omega_{X/k}^* = \op{Hom}_{\mathcal{O}_X}(\Omega_{X/k},\mathcal{O}_X)$ such that
    \begin{itemize}
        \item $\F$ is saturated, i.e., the quotient $T_{X/k}/\F$ is torsion-free,
        \item $\F$ is closed under Lie bracket, i.e., for every open subset $U\subset X$ and sections  $v,w\in \F(U)$ we have $[v,w]\in \F(U)$,
        \item $\F$ is closed under $p$-power, i.e., for every open subset $U\subset X$ and a section $v\in \F(U)$ we have $v^{\circ p}\in \F(U)$.
    \end{itemize}
\end{defin}

\begin{defin}
    Let $X$ be a normal variety over a perfect field $k$ of characteristic $p>0$.
    Let $\F$ be a $p$-Lie algebra on $X$.

    Then, the sheaf of annihilated elements by $\F$ is defined by
    \[
    \op{Ann}(\F)(U)=\{x\in \cO_{X/k}(U): \ \forall_{D\in \F(U)} D(x)=0\},
    \]
    where $U\subset X$ is an open subset.
\end{defin}

\begin{remark}
    Let $X$ be a normal variety over a perfect field $k$ of characteristic $p>0$.
    Let $\F$ be a $p$-Lie algebra on $X$. Then, $\op{Ann}(\F)$ is a quasicoherent sheaf of algebras on $X^{(1)}$, because vector fields kill $p$-powers.
\end{remark}

\begin{thm}[Ekedahl Correspondence]\label{Jacobson--Ekedahl Correspondence - proposition}
    Let $X$ be a normal variety over a perfect field $k$ of characteristic $p>0$. Then there is a bijection between purely inseparable morphisms $X\to Y\to X^{(1)}$ with $Y$ a normal variety over $k$ and $1$-foliations on $X$.

    Explicitly, it is given by the following formulas.

    From purely inseparable morphisms of exponent $\le 1$ to $1$-foliations:
    \[
    X\to Y\to X^{(1)} \mapsto T_{X/Y}\subset T_{X/k}.
    \]

    From $1$-foliations to purely inseparable morphisms of exponent $\le 1$:
    \[
    \F \subset T_{X/k} \mapsto X\to \op{Spec}_{X^{(1)}}(\op{Ann}(\F))\to X^{(1)}.
    \]   
\end{thm}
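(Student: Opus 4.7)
The plan is to reduce the statement to its field-theoretic counterpart via Proposition \ref{purely inseparable equiv on fields} and Lemma \ref{Saturated are in bij with gen subspaces - lemma}, and then invoke the Jacobson correspondence of Theorem \ref{Jacobson Purely Inseparable Galois Theory for Exponent $1$}. Concretely, Proposition \ref{purely inseparable equiv on fields} identifies purely inseparable factorizations $X\to Y\to X^{(1)}$ through normal varieties $Y$ with subfields $W$ satisfying $K(X)\supset W \supset K(X)^p$. Jacobson's theorem matches such $W$ with $p$-Lie subalgebras $\F_K \subset T_{K(X)/k_0}$, where $k_0$ denotes the perfection of $K(X)$. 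Finally, Lemma \ref{Saturated are in bij with gen subspaces - lemma} identifies $K(X)$-subspaces of $T_{K(X)/k_0}=T_{X/k}\otimes_{\mathcal{O}_X} K(X)$ with saturated coherent subsheaves of $T_{X/k}$. Composing these three bijections, and restricting to those subsheaves whose generic fiber satisfies the Jacobson closure conditions, yields the desired correspondence.

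The main routine work is to check that the intermediate equivalences respect the additional structure. First I would verify that a saturated subsheaf $\F \subset T_{X/k}$ is closed under Lie bracket and $p$-power if and only if its generic fiber $\F_K$ is. One direction is trivial by localization; the converse uses that $T_{X/k}/\F$ is torsion-free, so an operation on local sections of $\F$ which lands in $\F_K$ at the generic point must already land in $\F$. Next I would check that the two explicit assignments in the theorem agree with the composed bijection: for a factorization $X\to Y\to X^{(1)}$ with $W=K(Y)$, the relative tangent sheaf $T_{X/Y}$ is saturated in $T_{X/k}$ and its generic fiber is precisely $\op{Der}_W(K(X))$, matching the Jacobson construction; conversely, given $\F$, the sheaf $\op{Ann}(\F)$ must be identified with the normalization of $\mathcal{O}_{X^{(1)}}$ in $W=\op{Ann}(\F_K)$.

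The main obstacle, in my view, is this last identification, i.e., showing that $\op{Spec}_{X^{(1)}}(\op{Ann}(\F))$ is a normal variety and coincides with the $Y$ produced by Proposition \ref{purely inseparable equiv on fields}. Here $\op{Ann}(\F)$ contains $\mathcal{O}_{X^{(1)}}=\mathcal{O}_X^p$ because derivations kill $p$-powers, so it is naturally an $\mathcal{O}_{X^{(1)}}$-subalgebra of $\mathcal{O}_X$, and its generic fiber is the subfield $W$ produced by Jacobson's theorem. To show $\op{Ann}(\F)$ equals the normalization $\mathcal{O}_Y$ of $\mathcal{O}_{X^{(1)}}$ in $W$, I would argue as follows: suppose $b \in W$ is integral over $\op{Ann}(\F)$; by normality of $X$ and the inclusion $\op{Ann}(\F)\subset \mathcal{O}_X$ we have $b \in \mathcal{O}_X$, and for any local section $D$ of $\F$, the element $D(b)\in \mathcal{O}_X$ vanishes at the generic point (since $\F_K$ annihilates $W$), so $D(b)=0$ by torsion-freeness, giving $b \in \op{Ann}(\F)$. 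Conversely, any section of $\op{Ann}(\F)$ lies in $W$ at the generic point and is an element of the normal sheaf $\mathcal{O}_X$, hence belongs to $\mathcal{O}_Y$. This reverse-lifts the field bijection to a bijection of sheaves, completing the proof.
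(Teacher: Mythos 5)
The paper does not prove Theorem~\ref{Jacobson--Ekedahl Correspondence - proposition}; it only cites Ekedahl's original paper and the treatment in \cite{ZsoltJoe-1-foliations}, placing it in the review section. Your proposal is correct and follows exactly the route the paper's own machinery is designed for: Proposition~\ref{purely inseparable equiv on fields} to pass between purely inseparable factorizations and subfields, Jacobson's Theorem~\ref{Jacobson Purely Inseparable Galois Theory for Exponent $1$} at the generic point (with $k$ replaced by the perfection $K(X)^{p^\infty}$, which does not change $T_{X/k}$), and Lemma~\ref{Saturated are in bij with gen subspaces - lemma} to pass between saturated subsheaves of $T_{X/k}$ and subspaces of $T_{K(X)/k}$; this is also the same ``sheaf normalization matches annihilator'' argument that the paper uses implicitly in the proof of Proposition~\ref{purely inseparable equiv on fields}. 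Two small points worth tightening: in the verification that sections of $\op{Ann}(\F)$ belong to $\mathcal{O}_Y$, you should state explicitly that any $a\in\mathcal{O}_X$ is integral over $\mathcal{O}_{X^{(1)}}$ because $a^p\in\mathcal{O}_{X^{(1)}}$ (so $a$ satisfies $t^p-a^p$), which is what makes the inclusion $\op{Ann}(\F)\subset\mathcal{O}_Y$ go through; and when checking that the closure conditions on $\F$ and on $\F_K$ are equivalent, note that although the Lie bracket and $p$-power are not $\mathcal{O}_X$-linear, they still produce sections of $T_{X/k}$, so the ``generic image lands in $\F_K$ hence in the saturation $\F$'' argument applies verbatim.
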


\newpage
\section{Power Towers on Fields}

We develop a theory of power towers on fields, primarily for fields that are generic points of varieties. We define power towers. We show their basic properties. We prove the Galois-type theorem for power towers, which includes that power towers are in an explicit bijection with subalgebras of differential operators. We show how these subalgebras can be visualized. There is a way to think about them generally that leads to the right ideas for proving things about them. We prove interactions between these algebras using differentials between them - we obtain a short exact sequence. We use that development to figure out exactly which subfields are power towers. Finally, we study a specific class of power towers: $n$-foliations, where we give examples of power towers that do not correspond to any subfields.

\subsection{Basic Theory}\label{section one - fields}

We repeat Definition \ref{Power Tower - Definition - intro} of a power tower from the introduction and introduce a compact notation.

\begin{defin}[Power Tower]\label{Power Tower - Definition}
Let $K$ be a field of characteristic $p>0$.
A \textbf{\emph{power tower}} on $K$ is a sequence of subfields $W_n\subset K$ for $n=0,1,2,\ldots$ such that $W_j = W_i \cdot K^{p^j}$ for $j\le i$. 

We denote the power tower $(W_0,W_1,W_2,\ldots)$ on $K$ by $W_\bullet$.
\end{defin}

\begin{obs}\label{observation W_n is exponent <=n}
    If $W_\bullet$ is a power tower on a field $K$, then $K\supset W_n\supset W_n\cdot K^{p^n}\supset K^{p^n}$, i.e., the subfield $W_n$ is of exponent $\le n$.
\end{obs}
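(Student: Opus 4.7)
The plan is essentially to unwind the definition. A power tower satisfies $W_j = W_i \cdot K^{p^j}$ for all $j \le i$, and this relation is most interesting to me at the diagonal $i = j = n$.

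First I would specialize the defining relation by setting $i = j = n$, obtaining the equality $W_n = W_n \cdot K^{p^n}$. This single equality does all the work: the middle step of the chain $K \supset W_n \supset W_n \cdot K^{p^n} \supset K^{p^n}$ becomes trivial (an equality), and the right-most inclusion is tautological because any compositum contains each of its factors. The left-most inclusion $K \supset W_n$ is part of the data of Definition \ref{Power Tower - Definition}.

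Second, collecting these, we read off $K^{p^n} \subset W_n$, which by Definition \ref{Exponent: Definition} means exactly that the extension $K/W_n$ is of exponent $\le n$. Thus the ``i.e.'' clause follows immediately from the displayed chain of inclusions.

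I expect no real obstacle here: the statement is a direct consequence of the defining relation of a power tower, and the only subtlety is noticing that applying the relation at $i=j=n$ automatically forces $K^{p^n} \subset W_n$, which is the entire content of ``exponent at most $n$''. If one wanted a slightly more leisurely version, one could alternatively set $i = n$ and $j = 0$ to get $W_0 = W_n \cdot K = K$, which is a sanity check, and then set $i = n$, $j$ varying to see how the tower interpolates between $K$ and $K^{p^n}$; but for the stated observation the single instance $i = j = n$ suffices.
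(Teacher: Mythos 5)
Your proof is correct and is the natural argument: setting $i=j=n$ in the defining relation $W_j = W_i\cdot K^{p^j}$ gives $W_n = W_n\cdot K^{p^n}$, whence $K^{p^n}\subset W_n$, and combined with $W_n\subset K$ this is exactly Definition \ref{Exponent: Definition}. The paper states this as an unproved observation, and your one-line derivation is precisely what the author intends the reader to supply.
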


It is convenient to visualize a power tower in the form of a diagram:

\begin{center}
    \begin{tikzcd}
    K=W_0 & W_1\ar[l] & K^p \ar[l]\\
    K \ar[u,equal]& W_2\ar[l]\ar[u] & K^{p^2}\ar[l]\ar[u] \\
    \ldots \ar[u,equal]& \ldots\ar[l]\ar[u] & \ldots\ar[l]\ar[u] \\
    K \ar[u,equal]& W_m\ar[l]\ar[u] & K^{p^m}\ar[l]\ar[u] \\
    \ldots \ar[u,equal]& \ldots\ar[l]\ar[u] & \ldots\ar[l]\ar[u] .
    \end{tikzcd}
\end{center}

We have already noticed two simple examples of power towers.

\begin{example}\label{Constant Power Tower}
A constant tower $W_n\coloneqq K$ is a power tower on $K$. 
\end{example}

\begin{example}\label{p-power power tower}
A tower of $p$-powers of $K$, $W_n\coloneqq K^{p^n}$, is a power tower on $K$.
\end{example}

We can strengthen Observation \ref{observation W_n is exponent <=n}.

\begin{lemma}\label{Subsequent Extensions are of exponent 1 - lemma}
Let $K$ be a field of characteristic $p>0$.
Let $W_\bullet$ be a power tower on $K$, then each extension $W_i/W_{i+1}$ is of exponent at most $1$, i.e., $W_{i+1} \supset W_i^p$, where $i\ge 0$. 
\end{lemma}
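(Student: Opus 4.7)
My plan is to unwrap the defining relation of a power tower and apply the Frobenius endomorphism.

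First, I would specialize the defining identity $W_j=W_i\cdot K^{p^j}$ to the pair $(j,i)=(i,i+1)$ in order to obtain the single relation
\[
W_i \;=\; W_{i+1}\cdot K^{p^i}.
\]
This is the only structural input I need; everything else will be formal.

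Next, I would take $p$-th powers of both sides. Since the Frobenius $x\mapsto x^p$ is a ring (in fact field) homomorphism, it is multiplicative on compositions, so
\[
W_i^{p} \;=\; \bigl(W_{i+1}\cdot K^{p^i}\bigr)^{p} \;=\; W_{i+1}^{p}\cdot K^{p^{i+1}}.
\]
Now I would dispose of the two factors on the right separately. The factor $W_{i+1}^p$ is contained in $W_{i+1}$ because $W_{i+1}$ is a subfield of $K$ (hence closed under $p$-th powers). The factor $K^{p^{i+1}}$ is contained in $W_{i+1}$ by Observation \ref{observation W_n is exponent <=n} applied to the index $i+1$, which says exactly $W_{i+1}\supset K^{p^{i+1}}$. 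Combining these two inclusions inside the field $W_{i+1}$ yields $W_i^p\subset W_{i+1}$, which is the desired statement $W_{i+1}\supset W_i^p$.

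I do not anticipate a genuine obstacle: the only subtlety is the correct use of the compositum identity $(A\cdot B)^p=A^p\cdot B^p$, which follows from Frobenius being a ring homomorphism together with the fact that a compositum of subfields is generated (as a field) by products of elements from the two factors. If one preferred to be fully explicit, one could argue elementwise: any $w\in W_i$ is a rational expression in finitely many elements $a_k\in W_{i+1}$ and $b_\ell\in K^{p^i}$, and then $w^p$ is the same rational expression in $a_k^p\in W_{i+1}$ and $b_\ell^p\in K^{p^{i+1}}\subset W_{i+1}$, so $w^p\in W_{i+1}$. Either way, the proof is a two-line computation.
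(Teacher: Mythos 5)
Your proof is correct and follows essentially the same route as the paper's: take $p$-th powers of the compositum identity $W_i = W_{i+1}\cdot K^{p^i}$ and observe that both resulting factors land in $W_{i+1}$. The paper phrases the last step as $W_{i+1}^p\cdot K^{p^{i+1}}\subset W_{i+1}\cdot K^{p^{i+1}}=W_{i+1}$, while you argue both inclusions $W_{i+1}^p\subset W_{i+1}$ and $K^{p^{i+1}}\subset W_{i+1}$ separately and combine them, but these are the same facts arranged slightly differently.
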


\begin{proof}
We have to prove that $W_{i+1} \supset W_i^p$. This follows from the following calculation
\[
W_i^p={\left(W_{i+1}\cdot K^{p^i}\right)}^{p}=W_{i+1}^p\cdot K^{p^{i+1}}
\subset W_{i+1} \cdot K^{p^{i+1}}=W_{i+1},
\]
where, in the second equality, we used that taking $p$-powers commutes with taking composites.
\end{proof}

More examples come from power towers that are eventually constant.

\begin{defin}\label{Finite Length - Definition}
Let $K$ be a field of characteristic $p>0$.
Let  $W_\bullet$ be a power tower on $K$. It is of \emph{length at most $n$} if $W_N=W_n$ for $N\ge n$. It is of length $n$, if the integer $n$ is minimal such. 

Furthermore, we say that a power tower is of \emph{finite length} if there is an integer $n$ such that the tower is of length at most $n$.
\end{defin}

\begin{lemma}\label{structure of inclusions - power tower - lemma}
    Let $K$ be a field of characteristic $p>0$. 
    Let  $W_\bullet$ be a power tower on $K$. 
    
    Then, for $i\ge 0$, all the inclusions $W_i\subset W_{i+1}$ are proper inclusions, or the power tower $W_\bullet$ is eventually constant.

    If the power tower $W_\bullet$ is eventually constant of length $n$, then the inclusions $K=W_0\supset W_1\supset W_2 \supset \ldots \supset W_n$ are proper, and all the inclusions $W_n\supset W_{n+1}\supset W_{n+2}\supset \ldots$ are equalities.
\end{lemma}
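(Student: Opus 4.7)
The plan is to establish a single dichotomy: either every inclusion in the tower is strict, or once a single coincidence $W_n = W_{n+1}$ occurs, the tower stays constant from that index onwards. Both conclusions in the lemma fall out of this claim combined with the definition of length.

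First I would note that by Definition \ref{Power Tower - Definition}, taking $i \ge j$ gives $W_j = W_i \cdot K^{p^j} \supseteq W_i$, so the tower is automatically \emph{decreasing}: $K = W_0 \supseteq W_1 \supseteq W_2 \supseteq \ldots$. Thus the real content is to rule out an interior equality $W_n = W_{n+1}$ followed later by a strict inclusion. This is the main (and only) obstacle; it is essentially a one-line trick once set up, but it requires combining two defining features of a power tower.

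The key step is: \emph{if $W_n = W_{n+1}$, then $W_{n+1} = W_{n+2}$.} To prove it, I apply the compositum formula of Definition \ref{Power Tower - Definition} with $i = n+2$ to obtain
\[
W_n = W_{n+2}\cdot K^{p^n}, \qquad W_{n+1} = W_{n+2}\cdot K^{p^{n+1}}.
\]
The assumed equality $W_n = W_{n+1}$ together with $K^{p^{n+1}} \subseteq K^{p^n}$ forces $K^{p^n} \subseteq W_{n+2}\cdot K^{p^{n+1}} = W_{n+1}$. Raising to the $p$-th power gives $K^{p^{n+1}} \subseteq W_{n+1}^{p}$, and by Lemma \ref{Subsequent Extensions are of exponent 1 - lemma} we have $W_{n+1}^{p} \subseteq W_{n+2}$. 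Hence $K^{p^{n+1}} \subseteq W_{n+2}$, so $W_{n+1} = W_{n+2}\cdot K^{p^{n+1}} \subseteq W_{n+2}$, which proves the claim. An obvious induction then shows $W_n = W_m$ for all $m \ge n$.

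With this in hand both assertions are immediate. For the first, if the tower is not eventually constant, then no two consecutive terms are equal by the contrapositive of the key step, so every inclusion $W_{i+1} \subseteq W_i$ is strict. For the second, suppose the tower has length $n$ in the sense of Definition \ref{Finite Length - Definition}. Then $W_m = W_n$ for all $m \ge n$ by definition of length, while if some earlier equality $W_k = W_{k+1}$ with $k < n$ occurred, the key step would yield $W_k = W_m$ for all $m \ge k$, contradicting the minimality of $n$. Therefore the first $n$ inclusions $W_0 \supsetneq W_1 \supsetneq \ldots \supsetneq W_n$ are all strict, completing the proof.
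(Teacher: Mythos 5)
Your proof is correct and follows essentially the same route as the paper's: the key step — using $W_{n}=W_{n+1}$, raising $K^{p^n}\subseteq W_{n+1}$ to the $p$-th power, and invoking Lemma~\ref{Subsequent Extensions are of exponent 1 - lemma} to get $K^{p^{n+1}}\subseteq W_{n+1}^p\subseteq W_{n+2}$ and hence $W_{n+1}=W_{n+2}\cdot K^{p^{n+1}}=W_{n+2}$ — is identical to the induction step in the paper. The only cosmetic difference is that you derive $K^{p^n}\subseteq W_{n+1}$ by comparing compositum formulas for $W_n$ and $W_{n+1}$, whereas the paper simply reads it off from $K^{p^n}\subseteq W_n=W_{n+1}$; either is fine.
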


\begin{proof}
    Let $0\le i$ be such that $W_i=W_{i+1}$. We claim that this implies that for $i< k$ we have $W_k=W_i$, i.e., the power tower is of length $\le i$.
    Indeed, we can prove it by induction. 
    
    It is true for $k=i+1$, because $W_i=W_{i+1}$. Now, we prove that if $W_k=W_{k+1}$ for $k\ge i$, then $W_{k+2}=W_{k+1}=W_{k}$. Indeed, we have $W_{k+1}=W_k\supset K^{p^k}$. Thus, by Lemma \ref{Subsequent Extensions are of exponent 1 - lemma}, we have $W_{k+2}\supset W_{k+1}^p\supset W_k^p\supset \left(K^{p^k}\right)^p=K^{p^{k+1}}$. Consequently, $W_{k+1}=W_{k+2}\cdot K^{p^{k+1}}=W_{k+2}$. This proves that for $i< k$ we have $W_k=W_i$. 
    
    Consequently, if $n\ge 0$ is the minimal such that $W_n=W_{n+1}$, then $W_\bullet$ is of length $n$, because if it was of length $<n$, then $n$ would not be minimal.
\end{proof}

\begin{lemma}[Purely Inseparable Subfields of Finite Exponents are Power Towers of Finite Length]\label{Finite Length = finite exponent - lemma}
Let $K$ be a field of characteristic $p>0$.
Let $n$ be a positive integer. Then, there is an explicit bijection between power towers on $K$ of length $n$, and subfields of $K$ of exponent $n$. 

Explicitly, this correspondence is given by the operation $W_\bullet\mapsto W_n$.  
\end{lemma}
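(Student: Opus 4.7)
The plan is to construct an explicit inverse to $W_\bullet \mapsto W_n$ and verify both compositions are the identity. The inverse will send a subfield $W\subset K$ of exponent $n$ to the sequence $W_j \coloneqq W\cdot K^{p^j}$. The work splits cleanly into three tasks: (i) check that this sequence is a power tower, (ii) check its length is exactly $n$, and (iii) check that both round-trips are identities. In each step the key fact already in hand is Observation \ref{observation W_n is exponent <=n}, Lemma \ref{Subsequent Extensions are of exponent 1 - lemma}, and the description of length in Lemma \ref{structure of inclusions - power tower - lemma}.

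For (i), given $W$ of exponent $n$, set $W_j \coloneqq W\cdot K^{p^j}$. Note $W_j = W$ for all $j\ge n$ since $W\supset K^{p^n}\supset K^{p^j}$. The defining equation $W_j = W_i\cdot K^{p^j}$ for $j\le i$ is a routine composite/power computation split into the cases $j\le i\le n$, $j\le n\le i$, and $n\le j\le i$; the last case needs $K^{p^j}\subset W$, which again follows from $W$ having exponent $\le n$.

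For (ii), we show the length is exactly $n$. By construction $W_N = W = W_n$ for $N\ge n$, so the length is at most $n$. For minimality, suppose the length were some $m<n$, i.e.\ $W_m = W_{m+1}$. Applying Lemma \ref{structure of inclusions - power tower - lemma} would force $W_n = W_m$; then $W = W_n = W_m = W\cdot K^{p^m}\supset K^{p^m}$, contradicting that $W$ has exponent exactly $n>m$. For (iii), starting from $W$ of exponent $n$, clearly $W_n = W\cdot K^{p^n} = W$. Starting from a power tower $W_\bullet$ of length $n$, first note that $W_n$ has exponent exactly $n$: it has exponent $\le n$ by Observation \ref{observation W_n is exponent <=n}, and if it had exponent $\le n-1$ then $W_{n-1} = W_n\cdot K^{p^{n-1}} = W_n$, contradicting the length being $n$ via Lemma \ref{structure of inclusions - power tower - lemma}. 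Reconstructing via $\widetilde W_j \coloneqq W_n\cdot K^{p^j}$ reproduces $W_j$ for $j\le n$ directly from the power tower axiom $W_j = W_n\cdot K^{p^j}$, and for $j\ge n$ both sides equal $W_n$.

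The only potentially subtle step is the interplay between the two notions of minimality (length vs.\ exponent), but both reduce to Lemma \ref{structure of inclusions - power tower - lemma} together with the trivial observation that if $W\supset K^{p^m}$ then $W\cdot K^{p^m}=W$. I expect no serious obstacle; the argument is purely bookkeeping with composites and $p$-powers.
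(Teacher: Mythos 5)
Your proof is correct and follows essentially the same route as the paper: both construct the inverse $W\mapsto (W\cdot K^{p^j})_{j\ge 0}$, verify via Lemma \ref{structure of inclusions - power tower - lemma} that length and exponent match, and check the two round-trips. The only cosmetic difference is that the paper frames it as injectivity-plus-surjectivity while you directly verify a two-sided inverse (and the case split you anticipate in step (i) is unnecessary, since $W_i\cdot K^{p^j}=W\cdot K^{p^i}\cdot K^{p^j}=W\cdot K^{p^j}$ uniformly for $j\le i$), but the content is the same.
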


\begin{proof}
    We prove that the map $W_\bullet\mapsto W_n$ is injective by proving that any power tower $W_\bullet$ of length $n$ is determined by $W_n$, which is of exponent $n$. Indeed, if $i<n$, then $W_i=W_n\cdot K^{p^i}$ by Definition \ref{Power Tower - Definition}, and if $i>n$, then $W_i=W_n$ by Definition \ref{Finite Length - Definition}. And, the subfield $W_n$ is of exponent $n$, because $W_n\supset K^{p^n}$ by Observation \ref{observation W_n is exponent <=n}, and, by Lemma \ref{structure of inclusions - power tower - lemma}, we have $W_{n-1}\ne W_n$ , so $W_{n}\not \supset K^{p^{n-1}}$, i.e., it is not of exponent $\le n-1$. 

    The map is surjective, because the map $W\mapsto W_\bullet\coloneqq (W_i\coloneqq W\cdot K^{p^i})_{i\ge 0}$ from subfields of exponent $n$ is its left inverse.
    
    This proves that the map $W_\bullet\mapsto W_n$ is a bijection.
\end{proof}

By Lemma \ref{structure of inclusions - power tower - lemma}, the inclusions of subfields in a power tower are proper till they are equalities. We can compute the dimensions between these subfields - under mild assumptions on $K$.

\begin{prop}[Nonincreasing Degrees]\label{Nonincreasing Degrees Power Tower - Proposition}
Let $K$ be a field of positive characteristic $p>0$. Let $\op{dim}_{K^p}(K)$ be finite.
Let $W_\bullet$ be a power tower on $K$. 

Then, the power tower $W_\bullet$ satisfies the following sequence of inequalities:
\[\op{dim}_{W_1}(K)\ge \op{dim}_{W_2}(W_1)\ge
\ldots\ge\op{dim}_{W_n}(W_{n-1})
\ge \ldots \ge 0.
\]
\end{prop}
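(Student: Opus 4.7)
My plan is to show, for each $n\ge 1$, the single inequality $\dim_{W_n}(W_{n-1})\ge \dim_{W_{n+1}}(W_n)$; iterating yields the full chain. First I would address finiteness: the assumption $\dim_{K^p}(K)<\infty$ together with iterated Frobenius gives $\dim_{K^{p^n}}(K)<\infty$ for every $n$, and since $K^{p^n}\subset W_n\subset W_{n-1}\subset K$, the dimensions $\dim_{W_n}(W_{n-1})$ are all finite integers. So the comparisons make sense.

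The structural heart of the argument is the identity
\[
W_n \;=\; W_{n-1}^{\,p}\cdot W_{n+1},
\]
which I would verify by a direct computation: raising $W_{n-1}=W_n\cdot K^{p^{n-1}}$ to the $p$-th power (using that Frobenius commutes with composita) gives $W_{n-1}^p=W_n^p\cdot K^{p^n}$; composing with $W_{n+1}$ and applying $W_n=W_{n+1}\cdot K^{p^n}$ from the power-tower axiom collapses the right-hand side to $W_n$. This presents $W_n$ as a compositum of two intermediate subfields over $W_{n+1}$.

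Now I invoke the elementary compositum inequality: if $L=MN$ inside some ambient field, then $[L:N]\le [M:M\cap N]$ (any basis of $M/M\cap N$ spans $L$ over $N$). Applied with $L=W_n$, $M=W_{n-1}^{\,p}$, $N=W_{n+1}$, this gives
\[
\dim_{W_{n+1}}(W_n)\;\le\;\dim_{W_{n-1}^{\,p}\cap W_{n+1}}(W_{n-1}^{\,p}).
\]
Since $W_n^{\,p}$ sits inside both $W_{n-1}^{\,p}$ (from $W_n\subset W_{n-1}$) and $W_{n+1}$ (by Lemma \ref{Subsequent Extensions are of exponent 1 - lemma}), enlarging the base to $W_n^{\,p}$ can only decrease the dimension, so the right-hand side is at most $\dim_{W_n^{\,p}}(W_{n-1}^{\,p})$.

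Finally, I transport via Frobenius: the map $x\mapsto x^p$ is a field isomorphism $K\xrightarrow{\sim} K^p$ carrying $W_n\mapsto W_n^{\,p}$ and $W_{n-1}\mapsto W_{n-1}^{\,p}$, hence $\dim_{W_n^{\,p}}(W_{n-1}^{\,p})=\dim_{W_n}(W_{n-1})$. Chaining the three estimates yields the desired inequality. I do not anticipate any serious obstacle; the entire content is the compositum identity together with the Frobenius symmetry, and the only mild subtlety is keeping track that raising to the $p$-th power commutes with taking composita, which is what makes $W_n=W_{n-1}^{\,p}\cdot W_{n+1}$ come out on the nose.
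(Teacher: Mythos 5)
Your argument is correct, and it is a genuinely different route from the one in the paper.

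The paper proves $\op{dim}_{W_{n+1}}(W_n)\ge\op{dim}_{W_{n+2}}(W_{n+1})$ by choosing an explicit $p$-basis $\{x_1,\dots,x_r\}\subset K^{p^{n+1}}$ of $W_{n+1}$ over $W_{n+2}$, observing that the $x_i^{1/p}$ live in $K^{p^n}\subset W_n$, showing that any $W_{n+1}$-linear dependence among the $p$-root monomials would Frobenius-push to a $W_{n+2}$-linear dependence among the original monomials (contradicting $p$-independence), and then nesting the tower $W_{n+2}\subset W_{n+1}\subset W_{n+1}(x_i^{1/p})\subset W_n$ to extract the inequality. Your proof instead starts from the structural identity $W_n=W_{n-1}^{\,p}\cdot W_{n+1}$ (this is exactly condition (3) of Lemma \ref{Power Tower - alternative def}, so it is already available in the paper, proved by the same two-line composita computation you give), applies the general compositum bound $[MN:N]\le[M:M\cap N]$ with $M=W_{n-1}^{\,p}$, $N=W_{n+1}$, then enlarges the base from $W_{n-1}^{\,p}\cap W_{n+1}$ down to $W_n^{\,p}$ (legitimate because $W_n^{\,p}$ lies in the intersection by Lemma \ref{Subsequent Extensions are of exponent 1 - lemma}), and finally uses the Frobenius isomorphism $K\xrightarrow{\sim}K^p$ to identify $\op{dim}_{W_n^p}(W_{n-1}^p)$ with $\op{dim}_{W_n}(W_{n-1})$. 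The two arguments are morally cousins -- both rely in the end on Frobenius collapsing a linear dependence -- but yours is the more abstract and arguably cleaner packaging: the $p$-basis bookkeeping is replaced by a black-box compositum inequality, and the ``raise a relation to the $p$-th power'' step is absorbed into the Frobenius transport. The paper's version has the advantage of exhibiting explicit generators, which fits the surrounding development (it immediately suggests the distinguished-generator machinery that follows), but as a standalone proof of this proposition yours is shorter and just as rigorous. One small bookkeeping note: the compositum inequality needs $[M:M\cap N]$ finite, which your finiteness preamble indeed secures.
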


\begin{proof}
First, we note that all these dimensions are finite, because all dimensions $\op{dim}_{K^{p^n}}(K)=\prod_{i=0}^{n-1} \op{dim}_{K^{p^{i+1}}}(K^{i})=\op{dim}_{K^p}(K)^n$ are finite. Therefore, the dimension $\op{dim}_{W_n}(W_{n-1})$ is finite since we have $K\supset W_{n-1}\supset W_n\supset K^{p^n}$.

Now, let $n\ge 0$. We are going to show that $\op{dim}_{W_{n+1}}(W_{n})\ge \op{dim}_{W_{n+2}}(W_{n+1})$ by writing some generators of these extensions explicitly. 

Let $\{x_1,\ldots,x_r\}\subset K^{p^{n+1}}$ be a minimal subset such that $W_{n+1}=W_{n+2}(x_1,\ldots,x_r)$. We have that $x_i^p\in W_{n+2}$ for $i=1,\ldots,r$, because, by Lemma \ref{Subsequent Extensions are of exponent 1 - lemma}, the extension $W_{n+1}/W_{n+2}$ is of exponent at most $1$. Therefore, this set is a $p$-basis for $W_{n+1}/W_{n+2}$, Definition \ref{p-basis definition}.
Now, we claim that 
\[
\op{dim}_{W_{n+2}(x_i)}(W_{n+2}(x_i^{1/p}))=\op{dim}_{W_{n+2}}(W_{n+1}).
\]
Indeed, if not, then there is a nontrivial $(W_{n+2}(x_i))$-linear relation $F$ between monomials $\prod_{i=1}^r {\left(x_i^{1/p}\right)}^{a_i}$, where $0\le a_i \le p-1$, but then $F^p$ is a $W_{n+2}$-linear relation for monomials $\prod_{i=1}^r {\left(x_i\right)}^{a_i}$, because $W_{n+2}(x_i)^p\subset W_{n+2}$. This means that the set $\{x_i\}$ is not a $p$-basis. This is a contradiction. Thus, the equality of dimensions holds.

Next, we have a series of extensions 
\[
W_{n+2}\subset W_{n+1}=W_{n+2}(x_1,\ldots,x_r) \subset W_{n+1}(x_1^{1/p},\ldots,x_r^{1/p}) \subset W_{n}
\]
that gives us 
\begin{align*}
   \op{dim}_{W_{n+1}}(W_n)&=\op{dim}_{W_{n+1}(x_i^{1/p})}(W_n)\cdot \op{dim}_{W_{n+1}}(W_{n+1}(x_i^{1/p}))
   \\ & \ge \op{dim}_{W_{n+1}}(W_{n+1}(x_i^{1/p}))=\op{dim}_{W_{n+2}}(W_{n+1}). 
\end{align*}
This finishes the proof.
\end{proof}

\begin{remark}
    Proposition \ref{Nonincreasing Degrees Power Tower - Proposition} can be applied to generic points of varieties over perfect fields. Namely: Let $K$ be a field of positive characteristic $p>0$. Let $k=K^{p^\infty}$ be its perfection. If $K/k$ is a finitely generated field extension, then $\op{dim}_{K^p}(K)$ is finite.
\end{remark}

We finish this section with some equivalent conditions for being a power tower.

\begin{lemma}\label{Power Tower - alternative def}
    Let $K$ be a field of characteristic $p>0$.
Let $K=W_0\supset W_1 \supset W_2\supset \ldots$ be a sequence of subfields. The following statements are equivalent.
\begin{enumerate}
    \item $(W_0,W_1,W_2,\ldots)$ is a power tower on $K$.
    \item For every $n\ge 1$, we have $W_n= W_{n+1} \cdot K^{p^n}$.
    \item For every $n\ge 1$, we have $W_n = W_{n+1}\cdot W_{n-1}^p$.
\end{enumerate}
\end{lemma}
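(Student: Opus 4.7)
The plan is to prove the cycle $(1) \Rightarrow (2) \Rightarrow (1) \Rightarrow (3) \Rightarrow (2)$; in fact $(1) \Rightarrow (2)$ is literally the specialization of (1) to $j=n$, $i=n+1$, so only three directions require work.

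For $(2) \Rightarrow (1)$, I would first extract from (2) the containment $K^{p^n} \subset W_n$ for all $n \ge 0$: for $n=0$ this is $W_0 = K$, and for $n \ge 1$ it reads off the right-hand side $W_{n+1} \cdot K^{p^n} \supset K^{p^n}$. Then, fixing $j \ge 0$, I induct on $i \ge j$. The base case $i=j$ asks for $W_j = W_j \cdot K^{p^j}$, which holds because $K^{p^j} \subset W_j$. For the step, suppose $W_j = W_i \cdot K^{p^j}$; apply (2) to rewrite $W_i = W_{i+1} \cdot K^{p^i}$ and absorb $K^{p^i} \subset K^{p^j}$ to obtain $W_j = W_{i+1}\cdot K^{p^j}$.

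For $(1) \Rightarrow (3)$, I would combine Lemma \ref{Subsequent Extensions are of exponent 1 - lemma} (which gives $W_{n-1}^p \subset W_n$) with $W_{n+1} \subset W_n$ to get the inclusion $W_{n+1} \cdot W_{n-1}^p \subset W_n$. The reverse inclusion uses (1) twice: first $W_{n-1} \supset K^{p^{n-1}}$, so $W_{n-1}^p \supset K^{p^n}$; then $W_{n+1} \cdot W_{n-1}^p \supset W_{n+1} \cdot K^{p^n} = W_n$.

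The crux is $(3) \Rightarrow (2)$. I would induct on $n \ge 1$. The base $n=1$ is immediate: (3) gives $W_1 = W_2 \cdot W_0^p = W_2 \cdot K^p$. For the inductive step, assume $W_{n-1} = W_n \cdot K^{p^{n-1}}$; raising to the $p$-th power yields $W_{n-1}^p = W_n^p \cdot K^{p^n}$. Plug this into (3) at level $n$ to get $W_n = W_{n+1} \cdot W_n^p \cdot K^{p^n}$. The decisive move is a ``look-ahead'': applying (3) at level $n+1$ gives $W_{n+1} = W_{n+2} \cdot W_n^p$, so $W_n^p \subset W_{n+1}$, and the $W_n^p$ factor above is absorbed, leaving $W_n = W_{n+1} \cdot K^{p^n}$. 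The main obstacle is precisely this look-ahead trick: one cannot close the induction using (3) at levels $\le n$ alone, because the spurious $W_n^p$ factor only disappears once we invoke (3) at level $n+1$.
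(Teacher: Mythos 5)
Your proposal is correct and matches the paper's proof in all essentials: the same trivial verification for $(1)\Rightarrow(2)$, the same telescoping/induction for $(2)\Rightarrow(1)$, and, crucially, the same ``look-ahead'' maneuver in $(3)\Rightarrow(2)$ (applying (3) at level $n+1$ to obtain $W_n^p\subset W_{n+1}$ and absorb the stray factor), which is the actual content of the lemma. The only cosmetic difference is that you prove $(1)\Rightarrow(3)$ by citing Lemma~\ref{Subsequent Extensions are of exponent 1 - lemma} where the paper proves $(2)\Rightarrow(3)$ directly, but since $(1)\Leftrightarrow(2)$ is already established this changes nothing.
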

\begin{proof}
    We prove the lemma by proving four implications.
    
    $(1. \Longrightarrow 2.)$ The condition $W_n= W_{n+1} \cdot K^{p^n}$ is a part of the definition of a power tower, Definition \ref{Power Tower - Definition}.

    $(2. \Longrightarrow 1.)$ Let $i<j$, then 
    \[
    W_i=W_{i+1}\cdot K^{p^i}=W_{i+2}\cdot K^{p^{i+1}}\cdot K^{p^i}=W_{i+2}\cdot K^{p^i}=\ldots =W_j\cdot K^{p^i}.
    \]
    For $i=j$, we have $W_i=W_{i+1}\cdot K^{p^i}\supset K^{p^i}$, so $W_i=W_i\cdot K^{p^i}$.

    $(2. \Longrightarrow 3.)$ We have
    $
    W_n\supset W_{n+1}\cdot W_{n-1}^p \supset W_{n+1} \cdot K^{p^{n}} =W_n,
    $
    where we used $W_{n-1}=W_{n}\cdot K^{p^{n-1}}\supset K^{p^{n-1}}$. So, $W_n = W_{n+1}\cdot W_{n-1}^p$.

    $(3. \Longrightarrow 2.)$ For $n=1$, we have $W_1=W_2\cdot W_0^p=W_2\cdot K^p$. Now, let assume that $n\ge 2$ and that we already know $W_{n-1}= W_{n} \cdot K^{p^{n-1}}$. This gives
    \[
    W_n = W_{n+1}\cdot W_{n-1}^p=W_{n+1}\cdot \left(W_{n} \cdot K^{p^{n-1}}\right)^p=W_{n+1} \cdot W_n^p \cdot K^{p^n}.
    \]
    But $W_{n+1} = W_{n+2}\cdot W_{n}^p\supset W_n^p$, so $W_{n+1} \cdot W_n^p \cdot K^{p^n}=W_{n+1}\cdot K^{p^n}$. Therefore $W_n =W_{n+1}\cdot K^{p^n}$.
\end{proof}

\subsection{Galois-Type Correspondence for Power Towers}\label{section two - fields}

The fundamental theorem of Galois theory says that any algebraic separable subfield, a field theoretic datum, is equivalent to a subgroup of a Galois group, a group theoretic datum. By a Galois-type correspondence, I mean a bijection that takes field data and outputs something else: rings, groups, algebras, etc. Something that is usually more computable than abstract fields.

Let $K$ be a field of characteristic $p>0$.
Power towers on $K$ are a field-theoretic datum.
Here, we prove that two more notions are equivalent to it. Both of them are of an infinitesimal nature. The first is a subalgebra of differential operators, Section \ref{Differential operators - section}.
The second is a sequence of $p$-Lie algebras satisfying a particular compatibility condition. And, a $p$-Lie algebra is a set of vector fields/derivations, Definition \ref{p-lie algebra: definition}. 

First, we write down all required definitions.

\begin{defin}[Jacobson sequence]\label{Jacobson Sequence - definition}
Let $K$ be a field of characteristic $p>0$.
Let $k=K^{p^\infty}$ be its perfection. We assume that $K/k$ is a finitely generated field extension.

A \textbf{\emph{Jacobson sequence}} on $K$  is a sequence of $p$-Lie algebras $(\F_1,\F_2,\ldots)$ such that
\begin{itemize}
    \item $\F_1$ is a $p$-Lie algebra on $K$, 
    \item for $i\ge 1$, $\F_{i+1}$ is a $p$-Lie algebra on $\op{Ann}(\F_i)$,
    \item for $i\ge 1$, these $p$-Lie algebras satisfy 
\[
\F_{i+1}\cap T_{\op{Ann}(\F_i)/\op{Ann}(\F_{i-1})^p}=0,
\]
where we put $\op{Ann}(\F_{0})\coloneqq K$.
\end{itemize}

We denote the Jacobson sequence $(\F_1,\F_2,\ldots)$ by $\F_\bullet$.

Moreover, we say that a Jacobson sequence $\F_\bullet$ is of length at most $n$ if $\F_N=0$ for $N>n$. And, we say it is of length $n$ if this integer $n$ is minimal.
\end{defin}

\begin{remark}
    I named the above notion ``\textbf{Jacobson sequence}'' after Nathan Jacobson to celebrate his contributions to the theory of $p$-Lie algebras. This notion arises from iteratively applying his correspondence to a power tower.

    Alternatively, we could call it a ``\textbf{sequence of floors}''. Then we would call the $p$-Lie algebra $\F_1$ the ``first floor'', $\F_2$ the ``second floor'', and so on. The letter $\F$ could represent a \textit{floor}. This would be compatible with the name power \textit{tower}: a tower and its floors.
\end{remark}

\begin{defin}($p$-Filtration of a Subalgebra)\label{p-filtration for a subalgebra - def}
Let $K$ be a field of characteristic $p>0$. Let $k=K^{p^\infty}$ be its perfection.
Let $K\subset \mathcal{D}\subset \op{Diff}_k(K)$ be a $K$-subalgebra of $\op{Diff}_k(K)$. 

The \emph{\textbf{$p$-filtration}} of $\mathcal{D}$ is the family of subalgebras $\mathcal{D}_n \coloneqq \mathcal{D} \cap \op{Diff}_{K^{p^n}}(K)$ for $n\ge 0$.

Moreover, a subalgebra $\mathcal{D}$ is of \emph{height at most $n$} if $\mathcal{D}=\mathcal{D}_n$. And, of height $n$, if the number $n$ is minimal.
\end{defin}

Here is the theorem.

\begin{thm}[Galois-Type Correspondence for Power Towers]\label{MainTheorem}
Let $K$ be a field of characteristic $p>0$. Let $k\coloneqq \bigcap_{n\ge 0} K^{p^n}$ be the perfection of $K$. Let the extension $K/k$ be a finitely generated field extension.

Then, there are natural bijections between the following data on $K$:
\begin{enumerate}
    \item power towers on $K$,
    \item Jacobson sequences on $K$,
    \item $K$-subalgebras of differential operators on $K$ over $k$.
\end{enumerate}
Explicitly,
some of these bijections are given by the following operations.
\begin{enumerate}
    \item From power towers to Jacobson sequences. \emph{An iterated tangent space}:
    \[
    (W_0,W_1,W_2,\ldots) \mapsto (T_{W_0/W_1},T_{W_1/W_2},T_{W_2/W_3}, \ldots),
    \]
    \item From Jacobson sequences to power towers. \emph{An iterated annihilator}:
    \[
    (\F_1,\F_2,\F_3,\ldots)\mapsto (K,\op{Ann}(\F_1),\op{Ann}(\F_2),\ldots),
    \]
    \item From power towers to subalgebras. \emph{An algebra of differential operators relative to a power tower}:
    \[
    W_\bullet=(W_0,W_1,W_2,\ldots) \mapsto \op{Diff}_{W_\bullet}(K)\coloneqq \bigcup_{i\ge 0} \op{Diff}_{W_i}(K),
    \]
    \item From subalgebras to power towers. \emph{Fields of constants of the $p$-filtration}:
    \[
    \mathcal{D} \mapsto (K=\op{const}(\mathcal{D}_0),\op{const}(\mathcal{D}_1),\op{const}(\mathcal{D}_2),\ldots).
    \]
\end{enumerate}
\end{thm}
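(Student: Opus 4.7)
The plan is to establish the two bijections power towers $\leftrightarrow$ subalgebras of differential operators and power towers $\leftrightarrow$ Jacobson sequences separately, then compose them. Throughout, note that the hypothesis that $K/k$ is finitely generated (with $k = K^{p^\infty}$) forces $[K:K^{p^n}] < \infty$ for every $n$, which makes every $\op{Diff}_{K^{p^n}}(K)$ finite-dimensional over $K$ and places all the intermediate fields inside $K$ into the regime where Jacobson's theorem applies.

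First I would treat power towers versus $K$-subalgebras. In the forward direction, given $W_\bullet$, I verify that $\op{Diff}_{W_\bullet}(K) = \bigcup_i \op{Diff}_{W_i}(K)$ is a subalgebra: the subfield inclusions $W_i \supset W_{i+1}$ give an \emph{increasing} chain $\op{Diff}_{W_i}(K) \subset \op{Diff}_{W_{i+1}}(K)$ of $K$-subalgebras, so the union is a subalgebra. In the reverse direction, given $\mathcal{D}$, I form the $p$-filtration $\mathcal{D}_n = \mathcal{D} \cap \op{Diff}_{K^{p^n}}(K)$ and set $W_n = \op{const}(\mathcal{D}_n)$. The Diff=End Lemma (Proposition~\ref{diff=end - new}), applied with $A=K$ and $B=K^{p^n}$, identifies $\op{Diff}_{K^{p^n}}(K) = \op{End}_{K^{p^n}}(K)$, so each $\mathcal{D}_n$ is a finite-dimensional subalgebra of $\op{End}_+(K)$; thus the Discrete Jacobson--Bourbaki Correspondence (Corollary~\ref{Discrete Jacobson--Bourbaki Correspondence}) yields $\mathcal{D}_n = \op{End}_{W_n}(K) = \op{Diff}_{W_n}(K)$. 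To see that $W_\bullet$ so defined is a power tower, I use criterion (2) of Lemma~\ref{Power Tower - alternative def}: since $\op{End}_{K^{p^n}}(K) \cap \op{End}_{W_{n+1}}(K) = \op{End}_{K^{p^n} \cdot W_{n+1}}(K)$ (commuting with two subfields is the same as commuting with their compositum), we get $\mathcal{D}_n = \mathcal{D}_{n+1} \cap \op{Diff}_{K^{p^n}}(K) = \op{End}_{K^{p^n} \cdot W_{n+1}}(K)$, and Jacobson--Bourbaki applied again gives $W_n = K^{p^n} \cdot W_{n+1}$. Finally, the two composites are identity: $\mathcal{D} \to W_\bullet \to \bigcup_n \mathcal{D}_n = \mathcal{D}$ because the $p$-filtration exhausts $\op{Diff}_k(K)$, and $W_\bullet \to \mathcal{D}(W_\bullet) \to \op{const}\bigl(\mathcal{D}(W_\bullet)_n\bigr)$ recovers $W_n$ by the Diff=End + Jacobson--Bourbaki identification just made.

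Second I would treat power towers versus Jacobson sequences. Given $W_\bullet$, Lemma~\ref{Subsequent Extensions are of exponent 1 - lemma} says $W_{i-1} \supset W_i \supset W_{i-1}^p$, so Jacobson's Correspondence (Theorem~\ref{Jacobson Purely Inseparable Galois Theory for Exponent $1$}) produces a $p$-Lie algebra $\F_i = T_{W_{i-1}/W_i}$ on $W_{i-1}$ with $\op{Ann}(\F_i) = W_i$, which gives the required iterated structure. The compatibility condition $\F_{i+1} \cap T_{W_i / W_{i-1}^p} = 0$ is precisely the Jacobson-correspondent of $W_{i+1} \cdot W_{i-1}^p = W_i$, which is criterion (3) of Lemma~\ref{Power Tower - alternative def}; hence it is automatic for a power tower and, conversely, forces $W_\bullet = (\op{Ann}(\F_i))_i$ to be a power tower when starting from a Jacobson sequence. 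The forward/backward composites are then inverse by the uniqueness part of Jacobson's correspondence applied level by level.

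The main obstacle I anticipate is the step showing $W_n = W_{n+1} \cdot K^{p^n}$ when reconstructing a power tower from a subalgebra $\mathcal{D}$; the delicate point is the identity $\op{End}_A(K) \cap \op{End}_B(K) = \op{End}_{A \cdot B}(K)$ together with the fact that the intersection with the $p$-filtration behaves correctly with respect to the fixed-field operation. Everything else is either a routine application of Jacobson / Jacobson--Bourbaki at a fixed level or a bookkeeping check that the various defining conditions are mutually translating via Lemma~\ref{Power Tower - alternative def}.
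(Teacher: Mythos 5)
Your plan is correct and matches the paper's own proof in its essentials: both treat the two bijections (power towers $\leftrightarrow$ subalgebras via Diff=End plus the discrete Jacobson--Bourbaki correspondence, and power towers $\leftrightarrow$ Jacobson sequences via Jacobson's correspondence applied level by level), both reduce the power-tower axiom to the identity $\op{End}_A(K)\cap\op{End}_B(K)=\op{End}_{A\cdot B}(K)$, and both translate the compatibility condition $\F_{i+1}\cap T_{W_i/W_{i-1}^p}=0$ into the compositum condition of Lemma~\ref{Power Tower - alternative def}. The only cosmetic differences are that the paper verifies $W_j=W_i\cdot K^{p^j}$ for all $j\le i$ (criterion (1)) while you verify the consecutive case $W_n=W_{n+1}\cdot K^{p^n}$ (criterion (2)), and the paper spells out the ``inverse'' verifications a bit more explicitly; both are fine.
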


\begin{proof}
    This proof carefully shows that all the operations mentioned in the theorem are well defined and bijective. There are no fireworks.

    (1. $\Longleftrightarrow$ 2.)
    For convenience, we recall Jacobson Correspondence \ref{Jacobson Purely Inseparable Galois Theory for Exponent $1$}. It says that a subfield $K\supset W \supset K^p$ corresponds to a $p$-Lie algebra $T_{K/W}=\op{Der}_W(K)$ and $\op{Ann}(T_{K/W})=W$.

    First, we prove that the operations 1. and 2. are well defined. Then, they are inverse to each other.

    We prove that taking an iterated tangent space, the operation 1., is well defined. Let $W_\bullet$ be a power tower on $K$. Then we can compute a sequence of $p$-Lie algebras
    $(T_{W_0/W_1},T_{W_1/W_2},T_{W_2/W_3}, \ldots)$. 
    By Jacobson Correspondence, $T_{W_0/W_1}$ is a $p$-Lie algebra on $W_0=K$, and
    $T_{W_n/W_{n+1}}$ is a $p$-Lie algebra on $W_n=\op{Ann}(T_{W_{n-1}/W_{n}})$ for $n\ge 1$. 
    Therefore, in order to show that this sequence is a Jacobson sequence, it is sufficient to show that for $n\ge 1$ we have
    \[
T_{W_{n}/W_{n+1}}\cap T_{W_n/W_{n-1}^p}=0.
    \]
    We prove the above here. Any intersection of $p$-Lie subalgebras of a $p$-Lie algebra is a $p$-Lie subalgebra of that $p$-Lie algebra, so
    $T_{W_{n}/W_{n+1}}\cap T_{W_n/W_{n-1}^p}\subset T_{W_n/W_n^p}$ is a $p$-Lie algebra on $W_n$. By Jacobson Correspondence, it equals $T_{W_n/H}$, where $W_n \supset H \supset W_n^p$ is a subfield. By the construction, this subfield contains both $W_{n+1}$ and $W_{n-1}^p$, so it contains their composite $W_{n+1}\cdot W_{n-1}^p$ that is equal to $W_n$ by Lemma \ref{Power Tower - alternative def}.  Therefore, $H=W_n$, so 
    \[
    T_{W_{n}/W_{n+1}}\cap T_{W_n/W_{n-1}^p}=T_{W_n/H}=T_{W_n/W_n}=0.
    \]
    This proves that taking an iterated relative tangent space is a well-defined operation.

    We prove that taking an iterated annihilator, the operation 2., is well defined. Let $\F_\bullet$ be a Jacobson sequence on $K$. Then we can compute a sequence of subfields of $K$
    \[
    K\supset \op{Ann}(\F_1)\supset \op{Ann}(\F_2)\supset \ldots.
    \]
    To show that this sequence is a power tower on $K$, by Lemma \ref{Power Tower - alternative def}, it is enough to show that for every $n\ge 1$ we have
    \[
    \op{Ann}(\F_n) = \op{Ann}(\F_{n+1}) \cdot \op{Ann}(\F_{n-1})^p.
    \]
    This follows immediately from the definition of a Jacobson sequence, Definition \ref{Jacobson Sequence - definition}. Indeed, we have
    \begin{align*}
        0=\F_{n+1}\cap T_{\op{Ann}(\F_n)/\op{Ann}(\F_{n-1})^p}&=T_{\op{Ann}(\F_n)/\op{Ann}(\F_{n+1})}\cap T_{\op{Ann}(\F_n)/\op{Ann}(\F_{n-1})^p}\\
        &=T_{\op{Ann}(\F_n)/\op{Ann}(\F_{n+1})\cdot \op{Ann}(\F_{n-1})^p}.
    \end{align*}
    Now, since the subfield $\op{Ann}(\F_{n+1}) \cdot \op{Ann}(\F_{n-1})^p \subset \op{Ann}(\F_n)$  is of exponent at most $1$, we conclude the equality $\op{Ann}(\F_{n+1}) \cdot \op{Ann}(\F_{n-1})^p = \op{Ann}(\F_n)$ from Jacobson Correspondence. This proves that taking an iterated annihilator is a well-defined operation.
    
    Finally, we know these operations 1 and 2 are well defined. Now, we prove they are inverse to each other. Indeed, by iteratively applying Jacobson Correspondence, we get that $W_1$ corresponds to $\F_1$, $W_2$ corresponds to $\F_2$, and so on. Therefore, these operations are bijective.

    (1. $\Longleftrightarrow$ 3.) For convenience, we recall the discrete Jacobson--Bourbaki Correspondence \ref{Discrete Jacobson--Bourbaki Correspondence}. It says that a subfield $K\supset L$ with a finite degree $[K:L]$ corresponds to a finite algebra of linear endomorphisms $\op{End}_{L}(K)$ and $\op{const}(\op{End}_{L}(K))=L$. Moreover, if $L$ is a purely inseparable subfield of $K$, then it is of finite exponent by Lemma  \ref{Purely Inseparable is of Finite Exponent}. In particular, it is a finite extension. Therefore, we have $\op{Diff}_L(K)=\op{End}_{L}(K)$, by Lemma(Diff=End) \ref{diff=end - new}. Hence, there is a correspondence between purely inseparable subfields and finite subalgebras of differential operators given by the Jacobson--Bourbaki correspondence operations.

    First, we prove that operations 3 and 4 are well-defined and inverse to each other.

    We prove that taking an algebra of differential operators relative to a power tower, the operation 3.,  is well defined.
    Let $W_\bullet$ be a power tower on $K$. We can compute the subalgebras $\op{Diff}_{W_i}(K)\subset \op{Diff}_k(K)$ of differential operators commuting with $W_i$. Since we have inclusions $W_i\supset W_{i+1}$, therefore we have inclusions $\op{Diff}_{W_i}(K)\subset \op{Diff}_{W_{i+1}}(K)$. This means that the sum $ \bigcup_{i\ge 0} \op{Diff}_{W_i}(K)$ is an ascending sum of inclusions of subalgebras of $\op{Diff}_k(K)$, and hence it is a subalgebra of differential operators.

    We prove that taking fields of constants of the $p$-filtration, the operation 4., is well defined. Let $\mathcal{D}$ be a $K$-subalgebra (with $1$) of $\op{Diff}_k(K)$. By Definition \ref{p-filtration for a subalgebra - def}, the $p$-filtration of $\mathcal{D}$ is the sequences of subalgebras $\mathcal{D}_i=\mathcal{D}\cap \op{Diff}_{K^{p^i}}(K)$. From it, we can compute a sequence of subfields $W_i=\op{const}(\mathcal{D}_i)$. And, from the inclusions $\mathcal{D}_i\subset \mathcal{D}_{i+1}$, we get $W_i\supset W_{i+1}$. Now, we show that this sequence $(W_0,W_1,W_2,\ldots)$ is a power tower on $K$.
    The algebra $\mathcal{D}_i$ is a finitely dimensional subalgebra of linear endomorphisms, so, by Jacobson--Bourbaki Correspondence, it is equal to $\op{End}_{W_i}(K)$. And, by the definition, for $j\le i$, we have
    $\mathcal{D}_i \cap \op{Diff}_{K^{p^j}}(K)=\mathcal{D}_j$, i.e.
    \[
    \op{End}_{W_i}(K) \cap \op{End}_{K^{p^j}}(K)=\op{End}_{W_j}(K).
    \]
    The left hand side equals $\op{End}_{W_i\cdot K^{p^j}}(K)$, therefore $W_j=W_i\cdot K^{p^j}$ by Jacobson--Bourbaki Correspondence. So, the sequence is a power tower on $K$. 

    Finally, by Jacobson--Bourbaki Correspondence, the operations 3. and 4. are inverse to each other, because $W_1$ corresponds to $\mathcal{D}_1$,
    $W_2$ corresponds to $\mathcal{D}_2$, and so on. Therefore, these operations are bijective.

    This finishes the proof.
\end{proof}

\begin{remark}
    The assumption ``Let $K$ be a field of characteristic $p>0$. Let $k\coloneqq \bigcap_{n\ge 0} K^{p^n}$ be the perfection of $K$. Let the extension $K/k$ be a finitely generated field extension.'' could be replaced with ``Let $K$ be a field of characteristic $p>0$ such that $[K:K^p]<\infty$.'' No changes are required to the above proof. We only need to be able to access Jacobson--Bourbaki Correspondence and End=Diff Lemma, because Jacobson Correspondence follows from Jacobson--Bourbaki Correspondence.
    
    We frequently use the former assumption, here and there, because we will apply the theorem only to that case.
\end{remark}

One of the remaining bijections is the one from subalgebras to Jacobson sequences. 
We call this operation ``unpacking'': for a power tower, given in the form of a subalgebra, we get a sequence of floors, its Jacobson sequence. Later, we will discuss Remark \ref{packing} 's non-explicit ``inverse'' to unpacking that we call ``packing''.

\begin{thm}[Unpacking]\label{unpacking - Thm}
    Let $K$ be a field of characteristic $p>0$. Let $k\coloneqq \bigcap_{n\ge 0} K^{p^n}$ be the perfection of $K$. Let the extension $K/k$ be a finitely generated field extension.
    Let $\mathcal{D}$ be a subalgebra of differential operators on $K$ over $k$ corresponding to a power tower $W_\bullet$, and to a Jacobson sequence $\F_\bullet$. 
    
    Then, we have the following equalities that we call \emph{an \textbf{unpacking} of the subalgebra $\mathcal{D}$ into the Jacobson sequence $\F_\bullet$}:
    \begin{align*}
        \mathcal{D} \cap T_{K/K^p} &= T_{K/W_1}=\F_1,\\
        d(K/W_{1})(\mathcal{D})\cap T_{W_{1}/W_{1}^p} &= T_{W_1/W_2}=\F_2,\\
        \ldots &= \ldots ,\\
        d(K/W_{n})(\mathcal{D})\cap T_{W_{n}/W_{n}^p} &= T_{W_n/W_{n+1}}=\F_{n+1},\\
        \ldots &= \ldots ,
    \end{align*}
    where $d(K/W_i): \op{Diff}_k(K)\to \op{Diff}_k(W_i)\otimes K$ are differentials between algebras of differential operators, Definition \ref{Differential - definition}.
\end{thm}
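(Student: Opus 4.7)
The first equation $\mathcal{D}\cap T_{K/K^p}=T_{K/W_1}=\F_1$ follows immediately from the description $\mathcal{D}=\bigcup_{i\geq 0}\op{Diff}_{W_i}(K)$. Any derivation lying in $\op{Diff}_{W_i}(K)$ vanishes on $W_i$, and since derivations automatically kill $K^p$, such a derivation vanishes on $W_i\cdot K^p=W_1$ by the power tower condition (for $i\geq 1$; the case $i=0$ contributes only $0$). Conversely $T_{K/W_1}\subset\op{Diff}_{W_1}(K)\subset\mathcal{D}$.

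For the $n$-th equation, two preparations are decisive. First, the truncation $(W_n,W_{n+1},W_{n+2},\ldots)$ is itself a power tower on $W_n$: iterating Lemma \ref{Subsequent Extensions are of exponent 1 - lemma} gives $W_{n+j}\supset W_n^{p^j}$, and together with $W_{n+j}=W_{n+i}\cdot K^{p^{n+j}}$ and the inclusion $K^{p^{n+j}}\subset W_n^{p^j}$ (which holds because $W_n\supset K^{p^n}$), one obtains $W_{n+j}=W_{n+i}\cdot W_n^{p^j}$ for $j\leq i$. In particular,
\[
W_{n+1}=W_j\cdot W_n^p \quad \text{for every } j\geq n+1,
\]
which is the key identity. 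Second, I use the fact, directly from the definition and Lemma \ref{Differential is Restriction}, that for any $D\in\op{Diff}_k(K)$ the element $d(K/W_n)(D)\in\op{Diff}_k(W_n)\otimes K$ corresponds to the restriction $D|_{W_n}:W_n\to K$.

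For the inclusion $\subseteq$: take $D\in\op{Diff}_{W_j}(K)\subset\mathcal{D}$ with $d(K/W_n)(D)\in T_{W_n/W_n^p}$, so that $\partial:=D|_{W_n}$ is a derivation from $W_n$ into $W_n$. If $j\leq n$ then $W_j\supset W_n$, so $D$ is $W_n$-linear and $D|_{W_n}$ is multiplication by the scalar $D(1)\in K$; this forces $\partial=0$. If $j\geq n+1$, then for every $c\in W_j$ the identity $[D,c]=0$ on $K$ restricts to $[\partial,c]=0$ as operators $W_n\to K$, and for the derivation $\partial$ this says precisely $\partial(c)=0$. So $\partial$ vanishes on $W_j$; together with the automatic vanishing on $W_n^p$, $\partial$ vanishes on $W_j\cdot W_n^p=W_{n+1}$. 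Hence $\partial\in T_{W_n/W_{n+1}}=\F_{n+1}$.

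For the inclusion $\supseteq$, I must lift each $\partial\in T_{W_n/W_{n+1}}$ to an operator $D\in\op{Diff}_{W_{n+1}}(K)\subset\mathcal{D}$ with $D|_{W_n}=\partial$. Such a $D$ will typically fail to be a first-order operator on $K$: in Example \ref{intro - example}, the derivation $\frac{\partial}{\partial x^{p^n}}$ on $W_n$ is lifted by the divided-power operator $\frac{1}{p^n!}\frac{\partial^{p^n}}{\partial x^{p^n}}$ on $K$, of order $p^n$. What is really needed is the surjectivity of the restriction map $d(K/W_n):\op{Diff}_{W_{n+1}}(K)\to\op{Diff}_{W_{n+1}}(W_n)\otimes_{W_n}K$ at the first-order-derivation level. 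This is the content of the short exact sequence for differentials (the forward reference Theorem \ref{SES for diff - fields - theorem}) applied to the chain $W_{n+1}\subset W_n\subset K$, and can also be verified directly in coordinates using a $p$-basis of $K$ over $W_{n+1}$ compatible with $W_n$ (available via Theorem \ref{p-basis is separable transcedence basis}) by exhibiting explicit divided-power lifts. The main obstacle is precisely this surjectivity: the $\subseteq$ direction reduces to naturality of commutators, but the $\supseteq$ direction genuinely requires this internal structural fact about algebras of differential operators in positive characteristic.
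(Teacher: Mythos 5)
Your first equality and your $\subseteq$ argument for the higher equalities are correct and in the same spirit as the paper's proof. You also correctly diagnose exactly where the difficulty sits: the $\supseteq$ direction requires lifting each $\partial\in T_{W_n/W_{n+1}}$ not merely to $\op{Diff}_k(K)$ but specifically to $\op{Diff}_{W_{n+1}}(K)\subset\mathcal{D}$.

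The gap is in how you propose to discharge that surjectivity. Theorem \ref{SES for diff - fields - theorem} asserts surjectivity of $d(K/W_n)\colon\op{Diff}_k(K)\to\op{Diff}_k(W_n)\otimes K$ --- a statement about the whole algebra over the perfection $k$, not about the filtered piece $\op{Diff}_{W_{n+1}}(K)$. Applying it to ``the chain $W_{n+1}\subset W_n\subset K$'' would require using $W_{n+1}$ in the role of $k$, but $W_{n+1}$ is not perfect, so the hypotheses fail. Even applied to $K/W_n/k$, the theorem produces a lift in $\op{Diff}_k(K)$, and nothing guarantees that lift lands in any $\op{Diff}_{W_j}(K)$, hence in $\mathcal{D}$. (Adding an element of the kernel $\op{Diff}_k(K)\cdot\E(K/W_n)$ does not obviously move a lift into the right subalgebra.) The alternative you suggest, a compatible $p$-basis via Theorem \ref{p-basis is separable transcedence basis}, has the same problem: that theorem is also formulated over a perfect base.

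What the paper actually uses is the Diff=End lemma (Proposition \ref{diff=end - new}): because $K/W_{n+1}$ is finite and $W_{n+1}\supseteq K^{p^{n+1}}$, we have $\op{Diff}_{W_{n+1}}(K)=\op{End}_{W_{n+1}}(K)$. Then surjectivity of the restriction $\op{End}_{W_{n+1}}(K)\to\op{Hom}_{W_{n+1}}(W_n,K)$ is trivial linear algebra (choose a $W_{n+1}$-linear complement of $W_n$ in $K$ and extend by zero). This is exactly the statement you need, it stays inside the subalgebra by construction, and it involves no forward reference. Replacing your appeal to Theorem \ref{SES for diff - fields - theorem} with this argument closes the gap and turns your proposal into a correct proof that is essentially a repackaging of the paper's --- the paper proves both inclusions in one stroke by computing $d(K/W_n)(\mathcal{D}_i)=\op{Hom}_{W_i}(W_n,K)=\op{Diff}_{W_i}(W_n)\otimes K$ and intersecting, whereas you split the argument into $\subseteq$ and $\supseteq$ and establish the $\subseteq$ direction by a direct commutator computation.
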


\begin{proof}
    First, we prove the first equality from the unpacking of $\mathcal{D}$ into a Jacobson sequence: $\mathcal{D} \cap T_{K/K^p} = T_{K/W_1}$. It is proved by the following calculation.
    \[
    \mathcal{D} \cap T_{K/K^p}=\bigcup_{i\ge 0} \op{Diff}_{W_i}(K) \cap T_{K/K^p}=\bigcup_{i\ge 0} T_{K/K^p\cdot W_i}=\bigcup_{i\ge 1} T_{K/ W_1}=T_{K/W_1}.
    \]

    Now, we prove the  equalities $d(K/W_{n})(\mathcal{D})\cap T_{W_{n}/W_{n}^p}=T_{W_n/W_{n+1}}$ for $n\ge 1$.
    
    First, we explain that the left-hand side makes sense. By Corollary \ref{Fields Admit Differentials}, for every field extension $K/W/k$, we have a differential between their algebras of differential operators 
    $d(K/W):\op{Diff}_k(K)\to \op{Diff}_k(W)\otimes K$, Definition \ref{Differential - definition}, which is a restriction of a map $K\to K$ to a map $W\to K$, Lemma \ref{Differential is Restriction}. The algebra $\op{Diff}_k(W)$ contains $T_{W/W^p}$ in its first degree, and the map $\op{Diff}_k(W)\to \op{Diff}_k(W)\otimes K$ is an inclusion. Consequently, we can intersect the image of $\mathcal{D}$ with the $p$-Lie algebra $T_{W/W^p}$.

    We proceed to the proof.  Let $i>n$. We have
    \[
    d(K/W_{n})(\mathcal{D}_i)=d(K/W_{n})( \op{Diff}_{W_i}(K))= d(K/W_{n})( \op{End}_{W_i}(K))=\op{Hom}_{W_i}(W_n, K),
    \]
    where we used Lemma(Diff=End) \ref{diff=end - new}.
    At the same time, we have 
    \[
    \op{Hom}_{W_i}(W_n, K)=\op{Hom}_{W_i}(W_n, W_n)\otimes K=\op{Diff}_{W_i}(W_n)\otimes K.
    \]
    Therefore, we have
    \begin{align*}
        \op{Diff}_{W_i}(W_n)\otimes K \cap T_{W_n/W_n^p}=&\op{Diff}_{W_i}(W_n)\cap T_{W_n/W_n^p}= \\
    =& T_{W_n/W_i}=T_{W_n/W_i\cdot W_n^p}=T_{W_n/W_{n+1}}.
    \end{align*}
    This result does not depend on $i$ and therefore it holds for $\mathcal{D}$, and not just for $\mathcal{D}_i$, because we have $\mathcal{D}=\bigcup_{i>n}\mathcal{D}_i$. This finishes the proof of the unpacking.
\end{proof}

\subsection{Distinguished Generators of Subalgebras}\label{generators - subsection}

Let $K$ be a field of characteristic $p>0$. Let $K/k$ be a finitely generated field extension, where $k$ is the perfection of $K$. Then, $K/k$ admits coordinates, Definition \ref{coordinates - definition}. Let $x_1,\ldots, x_m$ be a set of coordinates for $K/k$. Then any differential operator on $K$ relative to $k$ can be described in terms of the differential operators $\frac{1}{n!}\frac{\partial^n}{\partial x_i ^n}$ for $n\ge 0$, $1\le i\le m$, Corollary \ref{explicit formulas calculus - cor}. It is not a canonical description; it depends on the coordinates. In this section, we will show that any subalgebra of $\op{Diff}_k(K)$ admits analogous generators to the symbols $\frac{1}{n!}\frac{\partial^n}{\partial x_i^n}$.

\begin{defin}\label{distinguished generators - def}
    Let $K$ be a field of characteristic $p>0$. Let $K/k$ be a finitely generated field extension, where $k$ is the perfection of $K$. Let $\mathcal{D}$ be a subalgebra of $\op{Diff}_k(K)$. Let $W_\bullet$ be a power tower that corresponds to $\mathcal{D}$. Let $p^{r_i}=[W_{i}:W_{i-1}]$ for $i>0$.

    We define a set of \textbf{\emph{distinguished generators}} of  $\mathcal{D}$ to be a subset 
    \[G=\{G^m_1,\ldots ,G^m_{r_m}\}_{m=1,2,\ldots}\subset \mathcal{D}\]
    such that
    \begin{enumerate}
        \item Every operator $G^m_i$ for $m>0$, $1\le i\le r_m$ is of order $p^{m-1}$.
        \item The leading form $[G^m_i]$ for $m>1, 1\le i\le r_m$ is a $K$-linear sum of leading forms $\gamma_{p^{m-1}}([G^{1}_{j}])$ for $1\le j\le r_{1}$.
        \item The graded subset $[G]=\{[D] : D\in G\}\subset\op{gr}(\mathcal{D})$ is $K$-linearly independent.
    \end{enumerate}
    Moreover, we say that the set of distinguished generators $G$ is \emph{well-chosen} if $d(K/W_{m-1})(G^m_i)$ belongs to $\F_m$ for every $i$, where $\F_\bullet$ is the Jacobson sequence of $W_\bullet$. 
\end{defin}

\begin{remark}
    The second condition in Definition \ref{distinguished generators - def} may be changed to an equivalent, though a priori stronger, condition:
    \begin{center}
        ``2'. The leading form $[G^m_i]$ for $m>1, 1\le i\le r_m$ is 
        
        a $K$-linear sum of leading forms $\gamma_{p}([G^{m-1}_{j}])$ for $1\le j\le r_{m-1}$.''.
    \end{center}
     This follows from the methods from the proof of Theorem \ref{saturated dist generators - exist and do the job - prop}, more specifically, the diagram there proves that.
\end{remark}

\begin{defin}\label{completion of distinguished generators - def}
    Let $K$ be a field of characteristic $p>0$. Let $K/k$ be a finitely generated field extension, where $k$ is the perfection of $K$. Let $p^n=[K:K^p]$. Let $\mathcal{D}$ be a subalgebra of $\op{Diff}_k(K)$. Let $G=\{G^m_1,\ldots , G^m_{r_m}\}_{m=1,2,\ldots}$ be a set of distinguished generators of  $\mathcal{D}$.

    We define a \emph{\textbf{completion}} of $G$ to be a subset 
    \[
    E=\{E^m_1,\ldots ,E^m_{n-r_m}\}_{m=1,2,\ldots}\subset \op{Diff}_k(K)
    \]
    such that
    \begin{itemize}
        \item Every operator $E^m_i$ for $m>0$, $1\le i\le n-r_m$ is of order $p^{m-1}$.
        \item The set $\{[G^m_1],\ldots ,[G^m_{r_m}],[E^m_1],\ldots, [E^m_{n-r_m}]\}$ for $m>0$ is a $K$-basis for the $K$-vector space spanned by $\gamma_{p^{m-1}}([T_{K/K^p}])$ inside $\op{gr}(\op{Diff}_k(K))$.
    \end{itemize}
\end{defin}

\begin{example}
    Let $K$ be a field of characteristic $p>0$. Let $K/k$ be a finitely generated field extension, where $k$ is the perfection of $K$. Let $p^n=[K:K^p]$. 
    
    Let $K\supset W\supset k$ be a subfield such that $K/W$ is a separable extension. Then, by Theorem \ref{Over Perfect is Separable}, all extensions $K/W$, $K/k$, $W/k$ are separable. Let $x_1,\ldots, x_r$ be a separable transcendental basis for $W/k$. Let $y_1,\ldots, y_{n-r}$ be a separable transcendental basis for $K/W$. Then the set $ x_1,\ldots, x_r,y_1,\ldots, y_{n-r}$ is a separable transcendental basis for $K/k$. With respect to this basis, the set $G$:
    \[
    \left\{\frac{1}{p^m!}\frac{\partial^{p^m}}{\partial x_1 ^{p^m}},\ldots , \frac{1}{p^m!}\frac{\partial^{p^m}}{\partial x_r ^{p^m}} \right\}_{m=1,2,\ldots}
    \] 
    is a set of (well-chosen) distinguished generators for $\op{Diff}_W(K)$, and the set $E$:
    \[
    \left\{\frac{1}{p^m!}\frac{\partial^{p^m}}{\partial y_1 ^{p^m}},\ldots , \frac{1}{p^m!}\frac{\partial^{p^m}}{\partial y_{n-r} ^{p^m}} \right\}_{m=1,2,\ldots}
    \] 
    is a completion of this distinguished basis. Moreover, what is not part of the definition, and it is a special property of this example, is that all these operators commute with each other.

    It is important to emphasize that the sets $G$ and $E$ are not canonical and they heavily depend on the choice of the basis $x_1,\ldots, x_r,y_1,\ldots, y_{n-r}$.
\end{example}

\begin{lemma}[Graded Differential and Graded Kernels of Differentials]\label{Graded Differential for K/K^m - lemma}
    Let $K$ be a field of characteristic $p>0$. Let $K/k$ be a finitely generated field extension, where $k$ is the perfection of $K$. 
    Let $\D$ be a subalgebra of $\op{Diff}_{k}(K)$. Let $W_\bullet$ be its power tower. Let $m\ge1$.
    
    Then the following are true:
    \begin{enumerate}
        \item The differential $d(K/K^{p^{m}})$ induces a \textbf{\textit{graded differential}} $\op{gr}d(K/K^{p^{m}})$:
        \[
        \op{gr}d(K/K^{p^{m}}): \op{gr}\op{Diff}_k(K) \to \op{gr}\op{Diff}_k(K^{p^{m}})\otimes K.
        \]
        
        Explicitly, if $x_1,\ldots,x_n$ are coordinates for $K/k$, then the graded differential is determined by, see Example \ref{symbol.differential}:
        \[
        \op{gr}d(K/K^{p^{m}}): \left[ \frac{1}{p^M!}\frac{\partial^{p^M}}{\partial x_i ^{p^{M}}}\right] \mapsto\left[\frac{1}{p^{M-m}!}\frac{\partial^{p^{M-m}}}{\partial x_i ^{p^{M-m}}}\right]
        \]
        for $i=1,2,\ldots,n$, $M\ge 0$. If $M-m<0$, then we put the value zero.

        In terms of Proposition \ref{divided power for gr diff - prop}, it is simply dividing by the ideal $I=(\gamma_{j}(y_i))_{0<j<p^m, i=1,\ldots,n}$ of $K\left<y_1,\ldots,y_n\right>$, so, up to that isomorphism, we have:
        \[
        \op{gr}d(K/K^{p^{m}}):K\left<y_1,\ldots,y_n\right>\to K\left<y_1,\ldots,y_n\right>/I.
        \]
        \item We have $\op{ker}d(K/K^{p^{m}}) \cap \D = \op{ker} d(K/W_{m}) \cap \D$.
        \item We have $\op{gr}\left(\op{ker} d(K/W_{m}) \cap \D\right)=\op{ker}\left(\op{gr}d(K/K^{p^{m}})\right)\cap \op{gr}\D$.
    \end{enumerate}
\end{lemma}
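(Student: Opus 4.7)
For part (1), I would use Lemma \ref{Differential is Restriction} to identify $d(K/K^{p^m})$ with operator restriction $D \mapsto D|_{K^{p^m}}$. A direct calculation shows restriction sends $\op{Diff}^{\le n}_k(K)$ into $\op{Diff}^{\le \lfloor n/p^m\rfloor}_k(K^{p^m}) \otimes K$, so it descends to well-defined graded maps $\op{gr}^n \op{Diff}_k(K) \to \op{gr}^{n/p^m} \op{Diff}_k(K^{p^m}) \otimes K$ when $p^m \mid n$ (and to zero otherwise), consistent with the claimed formula. To verify the explicit formula on the standard generators, I would compute $\frac{1}{p^M!}\frac{\partial^{p^M}}{\partial x_i^{p^M}}$ on a monomial $\prod_j y_j^{k_j}$ with $y_j = x_j^{p^m}$, obtaining $\binom{p^m k_i}{p^M}\prod_j x_j^{p^m k_j - \delta_{ij}p^M}$. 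Lucas's theorem gives $\binom{p^m k_i}{p^M} \equiv \binom{k_i}{p^{M-m}}\pmod p$ for $M \geq m$ and $\binom{p^m k_i}{p^M} \equiv 0\pmod p$ for $0 < M < m$, identifying the restriction with $\frac{1}{p^{M-m}!}\frac{\partial^{p^{M-m}}}{\partial y_i^{p^{M-m}}}$ or zero respectively. The divided-power formulation follows by transporting this along the isomorphism of Proposition \ref{divided power for gr diff - prop}, where the elements $\gamma_j(y_i)$ for $0 < j < p^m$ are precisely those sent to zero, so the map factors through the quotient by the ideal $I$.

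For part (2), I would observe that any $D \in \D = \bigcup_i \op{Diff}_{W_i}(K)$ lies in $\op{Diff}_{W_j}(K)$ for some $j \geq m$, and hence is $W_j$-linear. Lemma \ref{Power Tower - alternative def} gives the power-tower identity $W_m = W_j \cdot K^{p^m}$. If $d(K/K^{p^m})(D) = 0$, that is, $D|_{K^{p^m}} = 0$, then $W_j$-linearity propagates the vanishing to the composite: for all $w \in W_j$ and $a \in K^{p^m}$, $D(wa) = w D(a) = 0$, so $D$ vanishes on $W_j \cdot K^{p^m} = W_m$. The converse is immediate from $K^{p^m} \subset W_m$.

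For part (3), the inclusion $\op{gr}(\ker d(K/W_m) \cap \D) \subset \op{gr}\D \cap \ker\op{gr} d(K/K^{p^m})$ is formal, coming from part (2) by passage to leading forms. The reverse inclusion is the delicate direction: given $[D] \in \op{gr}^n\D \cap \ker \op{gr} d(K/K^{p^m})$, I would lift to some $D \in \D \cap \op{Diff}^{\le n}_k(K)$ and inductively correct $D$ by a lower-order element of $\D$ to make the differential vanish altogether. The graded-kernel hypothesis forces $d(K/K^{p^m})(D)$ to have strictly smaller order than the expected $\lfloor n/p^m\rfloor$, so the surjectivity of $d$ onto the appropriately filtered piece of $\op{Diff}_k(K^{p^m}) \otimes K$ (supplied by Theorem \ref{SES for diff - fields - theorem}) yields a lower-order correction inside $\D$; iterating this in finitely many steps (the process terminates because each corrective step strictly lowers the order of $d$ applied to the current representative) produces $D'$ with $[D'] = [D]$ and $d(K/K^{p^m})(D') = 0$, hence $D' \in \ker d(K/W_m)$ by part (2). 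The main obstacle is exactly this lifting step, which couples the order filtration to the $p$-filtration of $\D$ from Definition \ref{p-filtration for a subalgebra - def} and leans crucially on the surjectivity properties of the differential between algebras of differential operators.
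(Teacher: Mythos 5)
Parts (1) and (2) of your proposal are correct but take somewhat different routes from the paper's. For (1), your Lucas-theorem computation makes explicit what the paper leaves as self-evident; this is fine. For (2), the paper factorizes $d(K/K^{p^m})=(d(W_m/K^{p^m})\otimes K)\circ d(K/W_m)$ and invokes injectivity of the first factor on the relevant subspace; you instead argue directly from the compositum identity $W_m=W_j\cdot K^{p^m}$ for $j\ge m$: $W_j$-linearity plus vanishing on $K^{p^m}$ forces vanishing on all finite $W_j$-linear combinations of elements of $K^{p^m}$, and since the compositum is a finite integral extension of $K^{p^j}$ and hence already a field, these combinations exhaust $W_m$. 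Both routes work; yours is arguably more elementary, and each buys the same statement.

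Part (3) has a genuine gap at its central step. You lift $[D]\in\op{gr}^n\D\cap\ker\op{gr}d(K/K^{p^m})$ to $D\in\D$ of order $n$, note that $d(K/K^{p^m})(D)$ has order strictly below $\lfloor n/p^m\rfloor$, and propose to subtract a lower-order $C\in\D$ with $d(C)=d(D)$. The surjectivity you cite from Theorem \ref{SES for diff - fields - theorem} is surjectivity of $d(K/K^{p^m}):\op{Diff}_k(K)\to\op{Diff}_k(K^{p^m})\otimes K$: it produces a low-order preimage of $d(D)$ somewhere in $\op{Diff}_k(K)$, but gives no reason at all that such a preimage can be taken inside $\D$. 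The only preimage of $d(D)$ in $\D$ that you actually have in hand is $D$ itself, which has the wrong order. The compatibility between the order filtration of $\D$ and the restriction of $d$ to $\D$ -- namely that the filtered pieces of $\D$ surject onto the corresponding filtered pieces of $d(\D)$ -- is precisely what item (3) amounts to, so appealing to it to produce $C\in\D$ is circular at the key moment. The paper avoids this issue entirely by never constructing a correction: it works with the finite-dimensional $\D_{m+i}$, writes down two exact sequences (one for $\D_{m+i}\to d(\D_{m+i})$ and one graded), and compares dimensions using the elementary fact that $\dim\op{gr}V=\dim V$ for a subspace $V$ of a filtered finite-dimensional space, forcing the a priori inclusion $\op{gr}(\ker d\cap\D)\subset\ker\op{gr}d\cap\op{gr}\D$ to be an equality without any element-level lift inside $\D$.
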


\begin{proof}
    The first item is so detailed that it proves itself.

    The second item. We will show it for $\D=\D_{m+i}$, where $i\ge 1$. it is enough, because $\D$ is a sum of its $p$-filtration, $\D=\bigcup_{N> m}\D_m$.

    We observe that $d(W_{m}/K^{p^{m}})$ is injective on $\bigcup_{N> m}\op{Diff}_{W_N}(W_{m})$, because every differential is a restriction, Lemma \ref{Differential is Restriction}, and we do not add any more commutativity. It is also true after tensoring with $K$. Since $d(K/K^{p^{m}})=d(W_{m}/K^{p^{m}})\otimes K \circ d(K/W_{m})$, we conclude that the kernels of $d(K/K^{p^{m}})$ and $d(K/W_{m})$ intersected with $\mathcal{D}_{m+i}$ are equal, i.e.,
    \[
    \op{ker}d(K/K^{p^{m-1}}) \cap \D_{m+i} = \op{ker} d(K/W_{m-1}) \cap \D_{m+i}.
    \]
    This proves the second item by taking a union over $i\ge 1$.

    The third item. Again, we work with $\D=\D_{m+i}$, $i\ge 1$. We have a sequence; the first three arrows are exact:
    \[
    0\to \mathcal{D}_{m+i}\cap \op{ker}(d(K/K^{p^{m}}))\to \mathcal{D}_{m+i} \to d(K/K^{p^{m}})(\mathcal{D}_{m+i}) \subset \op{Diff}_k(K^{p^{m}}) \otimes K.
    \]
    
    From the first item, we have that $\op{gr}d(K/K^{p^{m-1}})$ is well defined, and it fits the following short exact sequence:  
    \[
    0\to T\to \op{gr}\mathcal{D}_m \xrightarrow{\op{gr}d(K/K^{p^{m-1}})}  \op{gr}(d(K/K^{p^{m-1}})(\mathcal{D}_m))\to 0,
    \]
    where $T= \op{ker}(\op{gr}d(K/K^{p^{m-1}}))\cap  \op{gr}\mathcal{D}_m$.
    
    An elementary fact is that if $V$ is a vector subspace of a graded vector space $W$, then the dimension of $\op{gr}(V)$ equals the dimension of $V$. 
    Thus, we have that the dimension of $\op{gr}(d(K/K^{p^{m}})(\mathcal{D}_{m+i}))$ is the dimension of $d(K/K^{p^{m}})(\mathcal{D}_{m+i})$, and therefore the dimension of $T$ is the dimension of $ \mathcal{D}_{m+i}\cap \op{ker}(d(K/K^{p^{m}}))$.
    Finally, we have $\op{gr}( \mathcal{D}_m\cap \op{ker}(d(K/K^{p^{m-1}})))\subset T$, so they are equal. This proves the third item.
\end{proof}

\begin{prop}\label{dist generators exist - prop}
    Let $K$ be a field of characteristic $p>0$. Let $K/k$ be a finitely generated field extension, where $k$ is the perfection of $K$. Let $\mathcal{D}$ be a subalgebra of $\op{Diff}_k(K)$. 
    
    Then, a set of well-chosen distinguished generators for $\mathcal{D}$ exists, and every such set admits a completion.
\end{prop}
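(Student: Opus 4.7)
The plan is to construct the families $\{G^m_1,\ldots,G^m_{r_m}\}$ inductively in $m$: at each step we apply the unpacking theorem (Theorem \ref{unpacking - Thm}) to lift a $K$-basis of $\F_m$ back to $\mathcal{D}$, and then verify the axioms of Definition \ref{distinguished generators - def} by combining the graded differential from Lemma \ref{Graded Differential for K/K^m - lemma} with the divided power envelope inclusion $\op{gr}\mathcal{D}\subset S(\F_1^*)^{*gr}$ of Proposition \ref{pd envelope - prop}. The base case $m=1$ is immediate: unpacking gives $\F_1 = \mathcal{D}\cap T_{K/K^p}$, so one simply chooses $G^1_1,\ldots,G^1_{r_1}$ to be any $K$-basis of $\F_1$; these are derivations (order $p^0=1$) with $K$-linearly independent leading forms, and the set is automatically well-chosen since $W_0=K$ makes $d(K/W_0)$ the identity.

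For the inductive step, assume $G^j_i$ have been chosen for $j<m$. By unpacking, $\F_m = d(K/W_{m-1})(\mathcal{D})\cap T_{W_{m-1}/W_{m-1}^p}$, so we pick $G^m_1,\ldots,G^m_{r_m}\in\mathcal{D}$ whose images $\bar{G}^m_i \coloneqq d(K/W_{m-1})(G^m_i)$ form a $K$-basis of $\F_m\otimes_{W_{m-1}}K$; well-chosenness is then immediate. For condition (1), Jacobson--Bourbaki applied to the $p$-filtration (Theorem \ref{MainTheorem}) identifies $\mathcal{D}_{m-1}=\op{Diff}_{W_{m-1}}(K)$, and a Lucas-type binomial calculation gives $\op{Diff}^{\le p^{m-1}-1}_k(K)\subset\op{Diff}_{K^{p^{m-1}}}(K)$; hence any element of $\mathcal{D}$ of order $<p^{m-1}$ is $W_{m-1}$-linear, so its restriction under $d(K/W_{m-1})$ is multiplication by a scalar, not the nonzero derivation $\bar{G}^m_i$. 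Therefore $\op{ord}(G^m_i)\ge p^{m-1}$, and a minimal-order lift achieves equality. For condition (2), the graded differential $\op{gr}\,d(K/K^{p^{m-1}})$ is, in coordinates, the quotient of $K\left<y_1,\ldots,y_n\right>$ by the ideal $I=(\gamma_j(y_i):0<j<p^{m-1})$; inside the subring $S(\F_1^*)^{*gr}\cong K\left<[G^1_1],\ldots,[G^1_{r_1}]\right>$, the degree-$p^{m-1}$ monomials surviving mod $I$ span exactly the $K$-span of the $\gamma_{p^{m-1}}([G^1_j])$'s. Since $[G^m_i]$ has nonzero image under $\op{gr}\,d(K/K^{p^{m-1}})$ and Lemma \ref{Graded Differential for K/K^m - lemma}(3) allows us to subtract elements of $\op{gr}(\ker d(K/W_{m-1})\cap\mathcal{D})$ without altering the image, we can arrange $[G^m_i]$ to lie in the required $K$-span. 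Condition (3) is then automatic: leading forms for distinct $m$ sit in different graded degrees $p^{m-1}$, and within a fixed $m$ the $K$-linear independence of the $\bar{G}^m_i$'s, combined with Lemma \ref{Graded Differential for K/K^m - lemma}(3), forces $K$-linear independence of the $[G^m_i]$'s.

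For the completion, the set $\{[G^m_i]\}_i$ is $K$-linearly independent inside the $K$-span of $\gamma_{p^{m-1}}([T_{K/K^p}])$ in $\op{gr}^{p^{m-1}}\op{Diff}_k(K)$; by elementary linear algebra we extend to a $K$-basis by adjoining $[E^m_1],\ldots,[E^m_{n-r_m}]$ and lift each $[E^m_i]$ to an operator of order exactly $p^{m-1}$ (possible since a leading form determines an operator up to lower order). The main obstacle is the interplay between the order filtrations on $K$ and on $W_{m-1}$ under $d(K/W_{m-1})$: pinning down the minimal $K$-order of a lift of a given derivation on $W_{m-1}$ requires simultaneously invoking Jacobson--Bourbaki for the $p$-filtration, Lucas-type binomial identities, and the divided power envelope description of the graded differential, all available from earlier sections but needing careful synchronization.
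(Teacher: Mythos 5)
Your proposal follows essentially the same route as the paper's proof: construct the generators inductively by lifting bases of $\F_m$ back to $\mathcal{D}$ via the unpacking theorem and correct the lifts using the graded differential (Lemma~\ref{Graded Differential for K/K^m - lemma}) and the divided power decomposition of $\op{gr}\mathcal{D}$ inside ${S}(\F_1^*)^{*gr}$. Your independent argument for the lower bound $\op{ord}(G^m_i)\ge p^{m-1}$ via the inclusion $\op{Diff}^{\le p^{m-1}-1}_k(K)\subset\op{Diff}_{K^{p^{m-1}}}(K)$ is a nice touch not made explicit in the paper, which instead shows nonvanishing of the graded differential directly.

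Two points need repair. First, you pick $G^m_i\in\mathcal{D}$ whose images under $d(K/W_{m-1})$ form a $K$-basis of $\F_m\otimes_{W_{m-1}}K$, and claim well-chosenness is immediate; but well-chosenness (Definition~\ref{distinguished generators - def}) requires the images $d(K/W_{m-1})(G^m_i)$ to lie in the $W_{m-1}$-vector space $\F_m$ itself, and a general $K$-basis of $\F_m\otimes K$ need not consist of elements of $\F_m$. The paper avoids this by first choosing a $W_{m-1}$-basis of $\F_m$ and only then lifting --- you should do the same. Second, you assert ``a minimal-order lift achieves equality'' to get $\op{ord}(G^m_i)=p^{m-1}$, but the minimum over all lifts is not a priori $p^{m-1}$: one must actively show that any lift of order $>p^{m-1}$ can be brought down by subtracting an element of $\ker d(K/W_{m-1})\cap\mathcal{D}$ with the same leading form, which is exactly what Lemma~\ref{Graded Differential for K/K^m - lemma}(3) provides and what the paper does iteratively. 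You invoke that lemma for condition (2) but your condition (2) argument presupposes the order is already $p^{m-1}$ (otherwise $[G^m_i]$ need not have nonzero image under $\op{gr}d(K/K^{p^{m-1}})$ in degree one), so the order-reduction step must be carried out first and explicitly rather than assumed.
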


\begin{proof}
    By Theorem \ref{MainTheorem}, $\D$ corresponds to a power tower $W_\bullet$ and to a Jacobson sequence $\F_\bullet$.
    The main goal of the proof is to show that for every $i\ge 1$ we can take a basis of $\F_i$ and ``pack it back'' to the subalgebra $\D$, see Theorem \ref{unpacking - Thm}. Initially, we take any preimages with respect to the unpacking, and then we correct them to satisfy our distinguished conditions.

    Let $p^{r_i}=[W_{i}:W_{i-1}]$ for $i>0$.

    We begin with $m=1$. Let $G^1_1,\ldots, G^1_{r_1}$ be any $K$-basis of $\F_1$.

    Let $m>1$. Let $G'^m_1,\ldots, G'^m_{r_m}$ be a $W_{m-1}$-basis of $\F_{m}$. According to the unpacking, Theorem \ref{unpacking - Thm}, we have that 
    \[
    d(K/W_{m-1})(\mathcal{D})\cap T_{W_{m-1}/W_{m-1}^p}=d(K/W_{m-1})(\mathcal{D}_m)\cap T_{W_{m-1}/W_{m-1}^p}=\F_m.
    \]
    (We recall $\mathcal{D}_m=\mathcal{D}\cap \op{Diff}_{K^{p^{m}}}(K)$.)
    Therefore, there are differential operators $G''^m_1,\ldots, G''^m_{r_m} \subset \mathcal{D}_m$ that restrict to $G'^m_1,\ldots, G'^m_{r_m}$ respectively, Lemma \ref{Differential is Restriction}. However, not every choice gives distinguished generators, so we must choose them carefully. We will use Lemma \ref{Graded Differential for K/K^m - lemma}.

    We now work with $\mathcal{D}_m$, instead of the whole $\D$, to simplify some reasoning.

    We have $G''^m_1,\ldots, G''^m_{r_m} \subset \mathcal{D}_m$
    that are operators restricting to $G'^m_1,\ldots, G'^m_{r_m}$. 
    
    If operators $G'''^m_1,\ldots, G'''^m_{r_m} \in \mathcal{D}_m$ are another such choice, then we have, for $0\le i \le r_m$,
    \[
    G'''^m_i = G''^m_i + B_i,
    \]
    where  $B_i\in\mathcal{D}_m\cap \op{ker}(d(K/W_{m-1}))=\mathcal{D}_m\cap \op{ker}(d(K/K^{p^{m-1}}))$, by Lemma \ref{Graded Differential for K/K^m - lemma}. We will finish the proof by modifying operators $B_i$.

    If the order of $G'''^m_i$ is higher than $p^{m-1}$, then $\op{gr}d(K/K^{p^{m-1}}) ([G'''^m_i])=0$, so, by Lemma \ref{Graded Differential for K/K^m - lemma}, there is an operator $C_i\in \mathcal{D}$ that is also from the kernel of $d(K/K^{p^{m-1}})$ such that $[C_i]=[G'''^m_i]$.

    We substitute $G'''^m_i$ with $G'''^m_i-C_i$. The new $G'''^m_i$ still satisfies the conditions, but it is of lower order. We continue these substitutions till the order of $G'''^m_i$ becomes $p^{m-1}$. It cannot go lower, because $d(K/K^{m-1})(G'''^m_i)\ne 0$ 
    
    Now, by Proposition\ref{divided power for gr diff - prop} and Lemma \ref{divided powers - decompostion of order p^n},
    we have
    \[
    [G'''^m_i]= D_i +E_i
    \]
    where the leading form $D_i$ is a linear combination of $\gamma_{p^{m-1}}([G^1_j])$, and the leading form $E_i$ is a linear combination of compositions of $\gamma_{k}([G^1_j])$ for $k<p^{m-1}$. So, $E_i$ is killed by $d(K/K^{p^{m-1}})$. This means that there is an operator $E'_i \in \mathcal{D}_m\cap d(K/K^{p^{m-1}})=\mathcal{D}_m\cap \op{ker}(d(K/W_{m-1}))$ such that $[E'_i]=E_i$, Lemma \ref{Graded Differential for K/K^m - lemma}. The final substitution for $G'''^m_i$ is $G'''^m_i - E'_i$. We name the result: $G^m_i$.

    Finally, the leading forms $[G^m_i]$ for all admissible $m,i$ are linearly independent, because for different $m$ they are in various degrees. For the same order $p^{m-1}$, they stay independent after applying the linear map $d(K/W_{m-1})$, so they are independent.
    This finishes the construction of well-chosen distinguished generators for $\mathcal{D}$.

    Now, we prove that every set of distinguished generators admits a completion. It is trivial: just complete independent vectors $[G^m_i]$ by some vector $E'^m_j$ to a basis of the vector space spanned by $\gamma_{p^{m-1}}([T_{K/K^p}])$ inside $\op{gr}(\op{Diff}_k(K))$ and take operators $E^m_j \in \op{Diff}_k(K)$ such that $[E^m_j]=E'^m_j$. The union of all these operators $E^m_j$ is a completion of $\{G^m_i\}$.
\end{proof}

Here is the main point why these distinguished generators are useful.

\begin{prop}\label{description of any subalgebra with dist generators - prop}
    Let $K$ be a field of characteristic $p>0$. Let $K/k$ be a finitely generated field extension, where $k$ is the perfection of $K$. Let $p^n=[K:K^p]$. Let $\mathcal{D}$ be a subalgebra of $\op{Diff}_k(K)$. Let $G=\{G^m_1,\ldots , G^m_{r_m}\}_{m=1,2,\ldots}$ be a set of distinguished generators of  $\mathcal{D}$. Let $E=\{E^m_1,\ldots ,E^m_{n-r_m}\}_{m=1,2,\ldots}$ be a completion of $G$.

    Then, the set of differential operators
    \[
    \left\{
    \left(\prod_{m=1}^k\prod^{r_m}_{i=1} \left(G^m_i\right)^{b^m_i}\right): k\ge 0, 0 \le b^m_i \le p-1\right\}
    \]
    is a $K$-basis of $\mathcal{D}$.
    And, the set of differential operators
    \[
    \left\{\left(\prod_{m=1}^k \prod^{r-r_m}_{j=1} \left(E^m_j\right)^{a^m_j}\right)\cdot \left(\prod_{m=1}^k\prod^{r_m}_{i=1} \left(G^m_i\right)^{b^m_i}\right): k\ge 0, 0 \le a^m_i \le p-1, 0 \le b^m_i \le p-1 \right\},
    \]
    is a $K$-basis of $\op{Diff}_k(K)$.
\end{prop}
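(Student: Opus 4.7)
The plan is to work inside the associated graded algebra $\op{gr}\op{Diff}_k(K)$, identify the proposed products with basis elements there via their leading forms, and then lift back. By Proposition~\ref{divided power for gr diff - prop}, after choosing a $K$-basis $y_1,\ldots,y_n$ of $T_{K/k}$ from $\{[G^1_j]\}\cup\{[E^1_l]\}$, I have an isomorphism of divided power $K$-algebras $\op{gr}\op{Diff}_k(K)\cong K\langle y_1,\ldots,y_n\rangle$. Write $z^m_i\coloneqq\gamma_{p^{m-1}}(y_i)$.

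The key structural step is to establish a tensor product decomposition of the $p$-filtration piece,
\[
F_{p^N}\;\cong\;\bigotimes_{m=1}^{N}\frac{K[z^m_1,\ldots,z^m_n]}{((z^m_i)^p)},
\]
sending $z^m_i\mapsto\gamma_{p^{m-1}}(y_i)$. Two ingredients are needed. First, $\gamma_{p^{m-1}}(y)^p=0$ in characteristic $p$, because the multinomial $(p^m)!/((p^{m-1})!)^p$ has $p$-adic valuation one (Kummer's theorem on carries), so the map is well-defined. Second, by Lemma~\ref{divided powers - decompostion of order p^n}, $\dim_K F_{p^N}=p^{Nn}$, which equals the dimension of the right-hand side; combined with surjectivity (both sides are generated as $K$-algebras by the $z^m_i$'s), the map is an isomorphism.

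Now I perform a linear change of basis inside each factor. A truncated polynomial algebra $K[z^m_1,\ldots,z^m_n]/((z^m_i)^p)$ depends only on the $K$-vector space $V_m\coloneqq K z^m_1+\cdots+K z^m_n$: any $K$-basis $\{\tilde z^m_i\}$ of $V_m$ still satisfies $(\tilde z^m_i)^p=0$ (by the Frobenius identity $(\sum\alpha_j z^m_j)^p=\sum\alpha_j^p(z^m_j)^p$) and gives an isomorphic presentation with the expected monomial basis. By conditions~(2) and~(3) of Definition~\ref{distinguished generators - def} together with Definition~\ref{completion of distinguished generators - def}, the set $\{[G^m_i]\}_i\cup\{[E^m_j]\}_j$ is a $K$-basis of $V_m$ for every $m$. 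Consequently, the products
\[
\prod_{m=1}^{N}\prod_{j=1}^{n-r_m}[E^m_j]^{a^m_j}\prod_{m=1}^{N}\prod_{i=1}^{r_m}[G^m_i]^{b^m_i},\qquad 0\le a^m_j,b^m_i<p,
\]
form a $K$-basis of $F_{p^N}$.

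Finally I lift to $\op{Diff}_k(K)$ itself. For each tuple $(\vec a,\vec b)$ the ungraded product $P_{\vec a,\vec b}$ has leading form equal to the graded monomial above, which is a nonzero basis element; the usual highest-order argument shows the $P_{\vec a,\vec b}$ are $K$-linearly independent. For spanning I count dimensions: each $G^s_i$ and each $E^s_j$ has order $p^{s-1}<p^s$, so it lies in $\op{Diff}_{K^{p^s}}(K)\subset\op{Diff}_{K^{p^m}}(K)$ for all $s\le m$, and hence every $P_{\vec a,\vec b}$ supported on levels $\le m$ lies in $\op{Diff}_{K^{p^m}}(K)$. The number of such tuples is $p^{mn}=[K:K^{p^m}]=\dim_K\op{Diff}_{K^{p^m}}(K)$ by the Diff$=$End lemma (Proposition~\ref{diff=end - new}), so they form a basis of $\op{Diff}_{K^{p^m}}(K)$, and taking $m\to\infty$ gives the claimed basis of $\op{Diff}_k(K)$. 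The $\mathcal{D}$-case follows verbatim using only the $G$'s: the $G$-products with level $\le m$ lie in $\mathcal{D}_m=\op{Diff}_{W_m}(K)$, their number equals $p^{\sum_{s\le m}r_s}=[K:W_m]=\dim_K\mathcal{D}_m$, and linear independence is already established. The main obstacle is the tensor decomposition in the second step; once it is in place, every other step reduces to careful bookkeeping of orders and a dimension count.
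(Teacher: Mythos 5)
Your proof is correct and takes essentially the same route as the paper's: work in the graded divided-power polynomial algebra, show that the leading forms of the proposed products are $K$-linearly independent, and conclude by a dimension count against the $p$-filtration (via Proposition~\ref{diff=end - new}). The tensor-product decomposition of $F_{p^N}$ into truncated polynomial rings and the Frobenius-linear base change inside each factor supply the detail that the paper's proof compresses into the assertion that the leading forms are ``clearly'' independent inside the divided power polynomial ring.
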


\begin{proof}
    By Proposition \ref{pd envelope - prop}, we know that $\op{gr}\mathcal{D}$ is a subset of a divided power polynomial algebra. Inside this algebra, it is clear that leading forms
    \[
    \left[\left(\prod_{m=1}^k\prod^{r_m}_{i=1} \left(G^m_i\right)^{b^m_i}\right)\right]\] 
    are $K$-independent, and therefore the operators 
    $\prod_{m=1}^k\prod^{r_m}_{i=1} \left(G^m_i\right)^{b^m_i}$ are $K$-independent.
    Moreover, for every $n\ge 0$, we have that $\mathcal{D}_n$ is of the dimension that is equal to the number of distinct operators $\prod_{m=1}^k\prod^{r_m}_{i=1} \left(G^m_i\right)^{b^m_i}$ inside it. Consequently, those operators form a basis. This proves the first claim.

    The second claim follows from the first one, because the set $G \cup E$ is a set of distinguished generators for $\op{Diff}_k(K)$ and the order of compositions does not matter for the leading forms argument above, because $\op{gr}\op{Diff}_k(K)$ is commutative.
      
\end{proof}

\subsubsection{Visualization of Subalgebras}

This section will provide a neat graphical way to think about any subalgebra of differential operators.

Let $K$ be a field of characteristic $p>0$. Let $K/k$ be a finitely generated field extension, where $k$ is the perfection of $K$. Let $p^n=[K:K^p]$. 

The algebra $\op{Diff}_k(K)$ is\textbf{ big and noncommutative}, but it admits many regularities, and these regularities can be turned into a semiformal way of thinking about it and its subalgebras. The main pieces are unpacking \ref{unpacking - Thm} and divided powers \ref{divided power polynomial algebra - def}. Actually, during my PhD, I had first discovered these visual representations. I had believed in them, and these pictures guided me to develop the formal theory of power towers, my PhD thesis, and now this paper. I think it is helpful to share it, though it may be initially confusing to read.

The algebra $\op{Diff}_k(K)$ has its first degree $T_{K/k}$. This first degree does not generate it, but it may determine $\op{gr}\op{Diff}_k(K)$ with a push from divided powers. The graded algebra is a divided power polynomial ring, and thus it is generated by all
\[
T_{K/k} \cup \gamma_p\left(T_{K/k}\right)\cup \gamma_{p^2}\left(T_{K/k}\right)\cup \ldots.
\]
For every $n\ge 0$, the set $\gamma_{p^n}\left(T_{K/k}\right)$ is a set of forms of order $p^n$. So, we can draw $\op{Diff}_k(K)$ as an infinite rectangular, a \textbf{tower}, and we can draw $\op{gr}\op{Diff}_k(K)$ similarly. Still, if we do it in a logarithmic scale, in a way that $+1$ higher is $p$-times higher, then we can put $\gamma_{p^n}\left(T_{K/k}\right)$ to be arithmetically growing \textbf{floors} of that tower.

Now, let $\mathcal{D}$ be a subalgebra of $\op{Diff}_k(K)$. We can observe that
this algebra looks like \textbf{stairs} inside the tower of $\op{Diff}_k(K)$. Indeed, if $\F_\bullet$ is its Jacobson sequence, Definition \ref{Jacobson Sequence - definition}, and $W_\bullet$ its power tower, then their dimensions are nonincreasing \ref{Nonincreasing Degrees Power Tower - Proposition}. Thus, how much space $\mathcal{D}$ takes inside $\op{Diff}_k(K)$ looks like \textbf{stairs}. Finite, or with the last step being infinite. This makes more sense inside $\op{gr}\D$, because we can ``pack back'' \textbf{shadows} of $\F_i$ there. Indeed, let 
\[
\overline{\F_i}\coloneqq \op{gr}\D \cap \gamma_{p^{i-1}}\left(T_{K/k}\right).
\]
This is well defined and the rank of $\F_i$ (over $W_{i-1}$) is the rank of $\overline{\F_i}$ (over $K^{p^{i-1}}$). Indeed, let $G=\{G^m_1,\ldots , G^m_{r_m}\}_{m=1,2,\ldots}$ be a set of well-chosen distinguished generators, Definition \ref{distinguished generators - def}. Then $\op{gr} G \cap \gamma_{p^{i-1}}\left(T_{K/k}\right)$ is a basis for $\overline{\F_i}$ and $d(K/W_{i-1})(G)\cap T_{W_{i-1}/W_{i-1}^p}$ is a basis of $\F_i$. So, we do not have $\F_i$, $i\ge 2$, inside $\D$, but we have their \textbf{shadows} inside $\op{gr}\D$.

The figure below, which was created using \nolinkurl{https://www.mathcha.io}, is a visual manifestation of the above discussion.

\FloatBarrier
\begin{figure}
    \centering  
    
{

\tikzset{every picture/.style={line width=0.75pt}} 

\begin{tikzpicture}[x=0.75pt,y=0.75pt,yscale=-0.7,xscale=0.7]

\draw   (50,50) -- (250,50) -- (250,300) -- (50,300) -- cycle ;
\draw   (350,50) -- (550,50) -- (550,300) -- (350,300) -- cycle ;
\draw   (275,175) -- (305,175) -- (305,170) -- (325,180) -- (305,190) -- (305,185) -- (275,185) -- cycle ;
\draw   (355,225) .. controls (355,222.24) and (357.24,220) .. (360,220) -- (540,220) .. controls (542.76,220) and (545,222.24) .. (545,225) -- (545,240) .. controls (545,242.76) and (542.76,245) .. (540,245) -- (360,245) .. controls (357.24,245) and (355,242.76) .. (355,240) -- cycle ;
\draw   (355,275) .. controls (355,272.24) and (357.24,270) .. (360,270) -- (540,270) .. controls (542.76,270) and (545,272.24) .. (545,275) -- (545,290) .. controls (545,292.76) and (542.76,295) .. (540,295) -- (360,295) .. controls (357.24,295) and (355,292.76) .. (355,290) -- cycle ;
\draw   (355,175) .. controls (355,172.24) and (357.24,170) .. (360,170) -- (540,170) .. controls (542.76,170) and (545,172.24) .. (545,175) -- (545,190) .. controls (545,192.76) and (542.76,195) .. (540,195) -- (360,195) .. controls (357.24,195) and (355,192.76) .. (355,190) -- cycle ;
\draw   (355,125) .. controls (355,122.24) and (357.24,120) .. (360,120) -- (540,120) .. controls (542.76,120) and (545,122.24) .. (545,125) -- (545,140) .. controls (545,142.76) and (542.76,145) .. (540,145) -- (360,145) .. controls (357.24,145) and (355,142.76) .. (355,140) -- cycle ;
\draw    (490,98.5) -- (490,59.5) ;
\draw [shift={(490,57.5)}, rotate = 90] [color={rgb, 255:red, 0; green, 0; blue, 0 }  ][line width=0.75]    (10.93,-3.29) .. controls (6.95,-1.4) and (3.31,-0.3) .. (0,0) .. controls (3.31,0.3) and (6.95,1.4) .. (10.93,3.29)   ;
\draw   (275,475) -- (305,475) -- (305,470) -- (325,480) -- (305,490) -- (305,485) -- (275,485) -- cycle ;
\draw   (455,525) .. controls (455,522.24) and (457.24,520) .. (460,520) -- (540,520) .. controls (542.76,520) and (545,522.24) .. (545,525) -- (545,540) .. controls (545,542.76) and (542.76,545) .. (540,545) -- (460,545) .. controls (457.24,545) and (455,542.76) .. (455,540) -- cycle ;
\draw   (405,575) .. controls (405,572.24) and (407.24,570) .. (410,570) -- (540,570) .. controls (542.76,570) and (545,572.24) .. (545,575) -- (545,590) .. controls (545,592.76) and (542.76,595) .. (540,595) -- (410,595) .. controls (407.24,595) and (405,592.76) .. (405,590) -- cycle ;
\draw   (455,475) .. controls (455,472.24) and (457.24,470) .. (460,470) -- (540,470) .. controls (542.76,470) and (545,472.24) .. (545,475) -- (545,490) .. controls (545,492.76) and (542.76,495) .. (540,495) -- (460,495) .. controls (457.24,495) and (455,492.76) .. (455,490) -- cycle ;
\draw   (505,425) .. controls (505,422.24) and (507.24,420) .. (510,420) -- (540,420) .. controls (542.76,420) and (545,422.24) .. (545,425) -- (545,440) .. controls (545,442.76) and (542.76,445) .. (540,445) -- (510,445) .. controls (507.24,445) and (505,442.76) .. (505,440) -- cycle ;
\draw    (525,398.5) -- (525,359.5) ;
\draw [shift={(525,357.5)}, rotate = 90] [color={rgb, 255:red, 0; green, 0; blue, 0 }  ][line width=0.75]    (10.93,-3.29) .. controls (6.95,-1.4) and (3.31,-0.3) .. (0,0) .. controls (3.31,0.3) and (6.95,1.4) .. (10.93,3.29)   ;
\draw   (135,321) -- (150,305) -- (165,321) -- (157.5,321) -- (157.5,345) -- (142.5,345) -- (142.5,321) -- cycle ;
\draw   (435,321) -- (450,305) -- (465,321) -- (457.5,321) -- (457.5,345) -- (442.5,345) -- (442.5,321) -- cycle ;
\draw    (100,600) -- (250,600) ;
\draw    (250,350) -- (250,600) ;
\draw    (100,600) -- (100,550) ;
\draw    (100,550) -- (150,550) ;
\draw    (150,550) -- (150,450) ;
\draw    (200,450) -- (150,450) ;
\draw    (200,450) -- (200,350) ;
\draw    (200,350) -- (250,350) ;
\draw    (400,600) -- (550,600) ;
\draw    (550,350) -- (550,600) ;
\draw    (400,600) -- (400,550) ;
\draw    (400,550) -- (450,550) ;
\draw    (450,550) -- (450,450) ;
\draw    (500,450) -- (450,450) ;
\draw    (500,450) -- (500,350) ;
\draw    (500,350) -- (550,350) ;
\draw   (55,275) .. controls (55,272.24) and (57.24,270) .. (60,270) -- (240,270) .. controls (242.76,270) and (245,272.24) .. (245,275) -- (245,290) .. controls (245,292.76) and (242.76,295) .. (240,295) -- (60,295) .. controls (57.24,295) and (55,292.76) .. (55,290) -- cycle ;
\draw    (190,98.5) -- (190,59.5) ;
\draw [shift={(190,57.5)}, rotate = 90] [color={rgb, 255:red, 0; green, 0; blue, 0 }  ][line width=0.75]    (10.93,-3.29) .. controls (6.95,-1.4) and (3.31,-0.3) .. (0,0) .. controls (3.31,0.3) and (6.95,1.4) .. (10.93,3.29)   ;
\draw    (225,398.5) -- (225,359.5) ;
\draw [shift={(225,357.5)}, rotate = 90] [color={rgb, 255:red, 0; green, 0; blue, 0 }  ][line width=0.75]    (10.93,-3.29) .. controls (6.95,-1.4) and (3.31,-0.3) .. (0,0) .. controls (3.31,0.3) and (6.95,1.4) .. (10.93,3.29)   ;
\draw   (105,575) .. controls (105,572.24) and (107.24,570) .. (110,570) -- (240,570) .. controls (242.76,570) and (245,572.24) .. (245,575) -- (245,590) .. controls (245,592.76) and (242.76,595) .. (240,595) -- (110,595) .. controls (107.24,595) and (105,592.76) .. (105,590) -- cycle ;

\draw (285,150) node [anchor=north west][inner sep=0.75pt]   [align=left] {gr};
\draw (416,75) node [anchor=north west][inner sep=0.75pt]   [align=left] {continues};
\draw (197,18.4) node [anchor=north west][inner sep=0.75pt]    {$\text{Diff}_{k}( K)$};
\draw (285,450) node [anchor=north west][inner sep=0.75pt]   [align=left] {gr};
\draw (451,375) node [anchor=north west][inner sep=0.75pt]   [align=left] {continues};
\draw (479,18.4) node [anchor=north west][inner sep=0.75pt]    {$\text{gr Diff}_{k}( K)$};
\draw (232,323.4) node [anchor=north west][inner sep=0.75pt]    {$D$};
\draw (517,323.4) node [anchor=north west][inner sep=0.75pt]    {$\text{gr} \ D$};
\draw (412,223.4) node [anchor=north west][inner sep=0.75pt]    {$\gamma _{p}( T_{K/K^{p}})$};
\draw (412,273.4) node [anchor=north west][inner sep=0.75pt]    {$\ \ \ \ \ T_{K/K^{p}}$};
\draw (412,173.4) node [anchor=north west][inner sep=0.75pt]    {$\gamma _{p^{2}}( T_{K/K^{p}})$};
\draw (412,123.4) node [anchor=north west][inner sep=0.75pt]    {$\gamma _{p^{3}}( T_{K/K^{p}})$};
\draw (112,273.4) node [anchor=north west][inner sep=0.75pt]    {$\ \ \ \ \ T_{K/K^{p}}$};
\draw (116,75) node [anchor=north west][inner sep=0.75pt]   [align=left] {continues};
\draw (151,375) node [anchor=north west][inner sep=0.75pt]   [align=left] {continues};
\draw (167,575.4) node [anchor=north west][inner sep=0.75pt]    {$F_{1}$};
\draw (492,521.4) node [anchor=north west][inner sep=0.75pt]    {$\overline{F_{2}}$};
\draw (492,471.4) node [anchor=north west][inner sep=0.75pt]    {$\overline{F_{3}}$};
\draw (517,421.4) node [anchor=north west][inner sep=0.75pt]    {$\overline{F_{4}}$};
\draw (467,575.4) node [anchor=north west][inner sep=0.75pt]    {$F_{1}$};

\FloatBarrier
\end{tikzpicture}

}  

    \caption{A visualization of a subalgebra $\mathcal{D}\subset \op{Diff}_k(K)$ for orders satisfying $n>r_1>r_2=r_3>r_4=r_5=\ldots>0$, where $p^{r_i}=[W_{i}:W_{i-1}]$ for $i>0$.}
    \label{fig:stairs}
\end{figure}
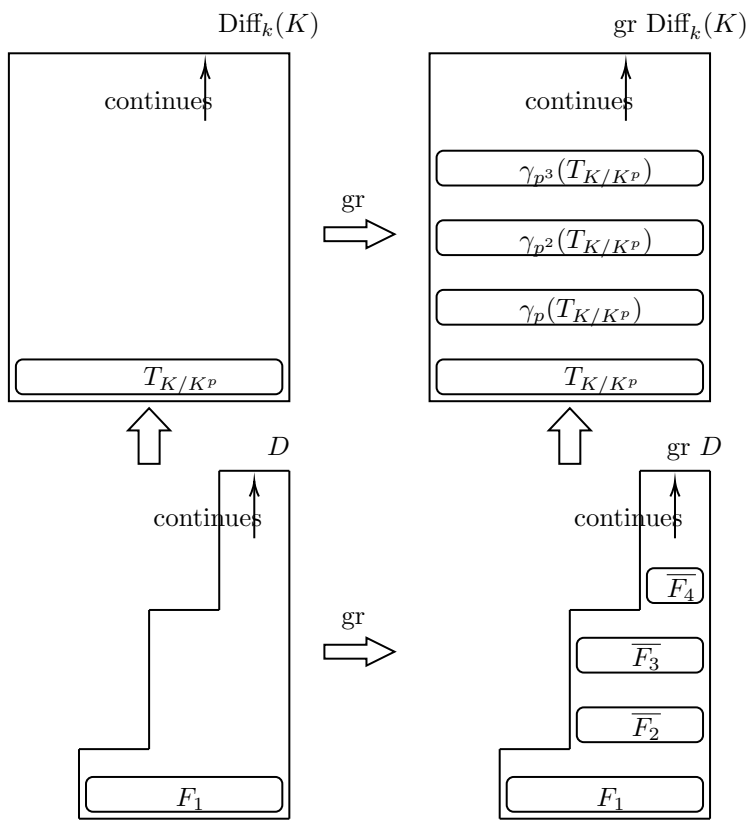
\FloatBarrier

\begin{remark}\label{packing}
    Above, we have $\overline{\F_i}\coloneqq \op{gr}\D \cap \gamma_{p^{i-1}}\left(T_{K/k}\right)$, i.e. it is canonically defined.
    Moreover, we have an isomorphism that I call ``packing''.
    \[
    \F_m \to \overline{\F_{m}}\otimes W_{m-1}: \sum_{i=1}^{r_m} a_i d(K/W_{m-1})(G^m_{i}) \mapsto \sum_{i=1}^{r_m} a_i [G^m_{i}],
    \]
    where $a_i\in W_{m-1}$. It does not depend on the chosen distinguished generators' choice $G$. This can be checked using Lemma \ref{Graded Differential for K/K^m - lemma}.

    The vector spaces $\overline{\F_{m}}$ look great, but they know less than $\F_m$. This is because different subalgebras can have the same graded subalgebras. In particular, we will see later that all $n$-foliations with the same $W_1$ have the same graded subalgebras, Proposition \ref{dist generators for n-foli - prop}.

    However, we cannot put $\F_m$ directly into $\D$. The best thing is to choose $G$ and to take vector spaces generated by its subsets with fixed top index, but this is not canonical. Therefore, I claim no ``canonical formula for computing $\D$ from $\F_\bullet$'' exists. This means I cannot really compute any good generators of $\D$ from all $\F_m$. At least I do not know how to do it, but if there were such a way, then there would be a formula for how to get a set of distinguished generators $G$ from bases of all $\F_m$, but the differentials $d(K/W_{m-1})$ are generally not injective... So, why one choice over another? Sometimes, like in Sweedler's modular theory, a group makes the choice, but generally, there is nothing.
\end{remark}

\subsection[Short Exact Sequence for Differential]{Short Exact Sequence for Differential between Algebras of Differential Operators}\label{SES for diff - subsection}

We use the differential between algebras of differential operators to unpack \ref{unpacking - Thm} any subalgebra into a Jacobson sequence. Here is more information about that function - we show it is always surjective and we describe the kernel. This should be compared with a differential for derivations, which is false. Indeed, the differential $d(K/K^p):T_{K/k}\to T_{K^p/k}\otimes K$ is zero, because derivations kill $p$-powers. Consequently, the surjectivity follows from interactions between different orders of differential operators.

\begin{thm}[SES for Diff]\label{SES for diff - fields - theorem}
     Let $K$ be a field of characteristic $p>0$. Let $k=K^{p^\infty}$ be the perfection of $K$. Let $K/k$ be a finitely generated field extension.
    Let $K\supset W\supset k$ be a subfield. 
    
    Then,
    we have a short exact sequence of $K$-vector spaces:
    \[
    0\to \op{Diff}_k(K) \circ \E(K/W)\hookrightarrow \op{Diff}_k(K) \xrightarrow{d(K/W)}  \op{Diff}_k(W) \otimes K\to 0,
    \]
    where $ \E(K/W)\coloneqq \{D\in \op{Diff}_W(K) \ : \ D(1)=0\}$, and $\op{Diff}_k(K) \circ \E(K/W)$ is a left-ideal of $\op{Diff}_k(K)$ that is spanned  as a $K$-vector space by $D_1 \circ D_2$ for $D_1\in \op{Diff}_k(K)$ and $D_2\in \E(K/W)$.
\end{thm}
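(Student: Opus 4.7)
The plan is to verify three statements: (i) $\op{Diff}_k(K)\circ\E(K/W)\subseteq\ker d(K/W)$; (ii) $d(K/W)$ is surjective; (iii) $\ker d(K/W)\subseteq \op{Diff}_k(K)\circ\E(K/W)$. Statement (i) is immediate: if $D_2\in\E(K/W)$, then $D_2$ is $W$-linear and $D_2(1)=0$, so $D_2(w)=w\,D_2(1)=0$ for every $w\in W$; hence $D_1\circ D_2$ vanishes on $W$, and by Lemma \ref{Differential is Restriction} this means $d(K/W)(D_1\circ D_2)=0$.

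For (ii) I would reduce to two more tractable cases by inserting the separable closure $W^s$ of $W$ inside $K$, obtaining the chain $k\subset W\subset W^s\subset K$ with $W^s/W$ separable algebraic and $K/W^s$ finitely generated purely inseparable, so that $K^{p^N}\subset W^s$ for some $N$. Functoriality of the differential gives $d(K/W)=(d(W^s/W)\otimes_{W^s} K)\circ d(K/W^s)$, reducing surjectivity to each step. The separable algebraic step is essentially formal: the canonical map $\op{Diff}_k(W)\otimes_W W^s \to \op{Diff}_k(W^s)$ is an isomorphism (differential operators extend uniquely along separable algebraic extensions of fields), so after tensoring with $K$ the map $d(W^s/W)\otimes K$ is bijective. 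For the purely inseparable step, a $p$-basis $y_1,\dots,y_n$ of $K/k$ is simultaneously a set of coordinates for $K/K^{p^m}$ for every $m$; the explicit ``change of scale'' formula already recorded in Example \ref{symbol.differential} shows that each dual basis element of $\op{Diff}_k(W^s)\otimes K$ lifts to an explicit (possibly higher order) dual basis element of $\op{Diff}_k(K)$, which yields the surjectivity of $d(K/W^s)$ and hence of $d(K/W)$.

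For (iii) I would proceed by induction on the order $n$ of $D\in\ker d(K/W)$. The case $n=0$ is trivial ($D\in K$ with $D|_W=0$ forces $D=0$). For the inductive step I would work on the graded level: by Proposition \ref{divided power for gr diff - prop}, $\op{gr}\op{Diff}_k(K)$ is a divided power polynomial $K$-algebra, and by Proposition \ref{pd envelope - prop} the graded subalgebra $\op{gr}\op{Diff}_W(K)$ is the divided power subalgebra generated by $T_{K/W}=\F_1$. Combining this description with the explicit coordinate lifts produced in (ii) shows that the leading form $[D]$ of any $D\in\ker d(K/W)$ of order $n$ can be realized as the leading form of some element of the left ideal $\op{Diff}_k(K)\circ\E(K/W)$; subtracting that element yields a strictly lower-order operator still in $\ker d(K/W)$ (by the already-established inclusion (i)), and the induction closes.

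The main technical obstacle throughout is the mismatch of coordinates across $W\subset K$ when $K/W$ is not separable: a $p$-basis of $W/k$ need not extend to a $p$-basis of $K/k$, which is precisely why the reduction through $W^s$ is forced, and why the lifts in step (ii) are genuinely of higher order rather than coming from a naive coordinate extension. Once that is handled correctly, the graded/PD-envelope picture supplied by Propositions \ref{divided power for gr diff - prop} and \ref{pd envelope - prop} makes the inductive descent for (iii) routine.
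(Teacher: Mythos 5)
Your step (ii) has a genuine gap that the paper avoids by a different decomposition. You write that inserting the separable closure $W^s$ of $W$ inside $K$ gives a chain $k\subset W\subset W^s\subset K$ with $W^s/W$ separable algebraic and $K/W^s$ finitely generated purely inseparable, ``so that $K^{p^N}\subset W^s$ for some $N$.'' Those two requirements together force $K/W$ to be algebraic, which is not assumed. If $K/W$ has positive transcendence degree (take $k$ perfect, $K=k(x,y)$, $W=k(x)$), then $W$ is already separably (even algebraically) closed in $K$, so $W^s=W$ and $K/W^s=K/W$ is purely transcendental, not purely inseparable, and no power $K^{p^N}$ lies inside $W^s$. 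The transcendental part of the extension is simply not touched by your reduction. The paper handles this by first choosing a transcendence basis $x_1,\dots,x_n$ of $K/W$ and then taking $W'$ to be the separable algebraic closure of $W(x_1,\dots,x_n)$ in $K$: then $W'/W$ is separable (but not algebraic) and $K/W'$ is purely inseparable of finite exponent, and both legs are proved surjective separately (the separable leg using a separable transcendence basis to extend coordinates, the purely inseparable leg using Diff=End and a $K^{p^{n+i}}$-linear splitting). Your separable leg $W^s/W$ being \emph{algebraic} also loses generality in a second way: you would then be proving surjectivity only for algebraic separable steps, whereas the needed step is a genuinely transcendental separable extension.

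A secondary issue: in (iii) you cite Proposition \ref{pd envelope - prop} as saying $\op{gr}\op{Diff}_W(K)$ \emph{is} the divided power subalgebra generated by $T_{K/W}$. That proposition gives only the one-sided inclusion $\op{gr}\op{Diff}_W(K)\subset S(\F_1^*)^{*gr}$; equality holds precisely for $n$-foliations (Proposition \ref{dist generators for n-foli - prop}), not for an arbitrary $W$. Your inductive descent by leading forms is a reasonable alternative to the paper's strategy (which instead compares dimensions along the $p$-filtration using distinguished generators and their completions), but the key step — that any $D\in\ker d(K/W)$ has the same leading form as some element of the left ideal — is asserted rather than proved, and as stated leans on the misquoted equality. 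With the decomposition in (ii) repaired, either the paper's dimension count or a corrected graded-descent argument (tracking exactly how leading forms of kernel elements sit in the PD envelope) would close (iii), but the current sketch does not.
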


\begin{proof}
    This proof consists of two parts. In the first part, we prove the surjectivity of $d(K/W)$. In the second part, we provide a description of the kernel.

    \textbf{Part 1: Surjectivity} The first part of the proof consists of three steps. First, we prove the surjectivity for separable extensions $K/W$. Second, we prove surjectivity for purely inseparable extensions $K/W$. Third, we combine the previous two steps to prove it in full generality.

    \textbf{Part 1, Step 1: Surjectivity For Separable Extensions}
    We prove that if the field extension $K/W$ is separable, then the differential $d(K/W)$ is surjective. 
    
    By the assumption, the extension $K/W$ is separable, i.e., it admits a separable transcendence basis $x_1,\ldots,x_n$. And, the extension $W/k$ admits a separable transcendence basis $y_1,\ldots,y_m$ by Theorem \ref{p-basis is separable transcedence basis}. Therefore, we can conclude that $y_1,\ldots,y_m,x_1,\ldots,x_n$ is a separable transcendence basis for the extension $K/k$.
    
    We use the bases $y_1,\ldots,y_m,x_1,\ldots,x_n$ and $y_1,\ldots,y_m$ to compute $\op{Diff}_k(K)$ and $\op{Diff}_k(W)$ respectively, Proposition \ref{Determination1}. In particular, we know that every differential operator on $W$ over $k$ is uniquely determined by its values on monomials in $y_i$'s, and every differential operator on $K$ over $k$ is determined by its values on monomials in $y_i$'s and $x_j$'s. Moreover, every choice of these values defines a unique differential operator.
    
    We prove the surjectivity. First, we observe that it is enough to show that for every $\phi \in \op{Diff}_k(W)$, there is a $\psi \in \op{Diff}_k(K)$ such that $d(K/W)(\psi)=\phi$, because $\op{Diff}_k(W)\otimes K$ is spanned by $\op{Diff}_k(W)$. Now, let $\phi \in \op{Diff}_k(W)$. We define $\psi\in \op{Diff}_k(K)$ by having the same values as $\phi$ on monomials in $y_i$'s and by being zero on monomials in $x_i$'s and $y_i$'s, including some $x_i$'s. Consequently, we have $d(K/W)(\psi)=\phi$. 
    
    This finishes the proof of the surjectivity for separable extensions.

    \textbf{Part 1, Step 2: Surjectivity For Purely Inseparable Extensions}
    We prove that if the field extension $K/W$ is purely inseparable, then $d(K/W)$ is surjective. 
    
    The extension $K/W$ is of finite exponent by Lemma \ref{Purely Inseparable is of Finite Exponent}, i.e., there is an integer $n$ such that $W\supset K^{p^n}$. This implies that the $p$-filtration of $\op{Diff}_k(K)$ partially descends to $\op{Diff}_k(W)$:
\begin{align*}
 &\op{Diff}_{K^{p^n}}(K) \subset \op{Diff}_{K^{p^{n+1}}}(K) \subset \op{Diff}_{K^{p^{n+2}}}(K) \subset \ldots \subset \op{Diff}_k(K), \\
 &\op{Diff}_{K^{p^n}}(W) \subset \op{Diff}_{K^{p^{n+1}}}(W) \subset \op{Diff}_{K^{p^{n+2}}}(W) \subset \ldots \subset \op{Diff}_k(W).
\end{align*}
    Moreover, we can show that $d(K/W)$ is compatible with these filtrations. Indeed, being compatible means that the subalgebra $\op{Diff}_{K^{p^{n+i}}}(K)$ is mapped to $\op{Diff}_{K^{p^{n+i}}}(W)\otimes K$. This holds, because the differential $d(K/W)$ is a restriction from $K$ to $W$, Lemma \ref{Differential is Restriction}, so it maps $K^{p^{n+i}}$-linear maps to $K^{p^{n+i}}$-linear maps. 
    
    Next, we prove that every map $\op{Diff}_{K^{p^{n+i}}}(K)\to \op{Diff}_{K^{p^{n+i}}}(W)\otimes K$ is surjective for $i\ge 0$.
    First, we observe that this map is the natural map 
    $\op{Hom}_{K^{p^{n+i}}}(K,K)\to \op{Hom}_{K^{p^{n+i}}}(W,K)$,
    by Proposition(Diff=End)~\ref{diff=end - new}. Now, let $\phi\in \op{Hom}_{K^{p^{n+i}}}(W,K)$. We have that $W\subset K$ is a $K^{p^{n+i}}$-linear subspace, so there exists a complementary vector space $V$ such that $W\oplus V= K$. Let $\psi\in \op{Hom}_{K^{p^{n+i}}}(K,K)$ be a linear operator that is equal $\phi$ on $W$ and $0$ on $V$, then the restriction  of $\psi$ to $W$ is equal $\phi$. Therefore, this map is surjective.

    Finally, the map $d(K/W)$ is surjective, because we have an equality 
    \[
    \bigcup_{i\ge0}\op{Diff}_{K^{p^{n+i}}}(W)\otimes K=\op{Diff}_{k}(W)\otimes K.
    \]
    This finishes the proof of the surjectivity for purely inseparable extensions.

    \textbf{Part 1, Step 3: Surjectivity In General} 
    Let $K/W/k$ be an arbitrary field extension. We prove that $d(K/W)$ is surjective.
    
    We can decompose the extension $K/W$ into two extensions $K/W'/W$, where the extension $K/W'$ is purely inseparable and $W'/W$ is separable. Indeed, let  $x_1,x_2,\ldots,x_n$ be a transcendental basis for $K/W$. (This basis is finite, because $K/W$ is finitely generated.) This means that the extension
    $K/W(x_1,\ldots, x_n)$ is finite and therefore algebraic.
    Let $W'$ be the separable algebraic closure of $W(x_1,\ldots, x_n)$ in $K$. Then, the extension $K/W'$ is purely inseparable, and the extension $W'/W$ is separable. (Caveat: this decomposition is not canonical if $n\ne0$.)
    
    Finally, we can conclude that the differential $d(K/W)$ is a composition of surjections $d(K/W')$ and $d(W'/W)\otimes K$:
    \[
\op{Diff}_k(K)\to \op{Diff}_k(W')\otimes K \to \left(\op{Diff}_k(W) \otimes W'\right) \otimes K=\op{Diff}_k(W)\otimes K.
    \]
    Therefore, it is surjective. 
    
    This finishes the proof of part 1.

    \textbf{Part 2: Kernel} Let $\mathbb{K}$ be the kernel of $d(K/W)$. We prove that it is equal to $\op{Diff}_k(K) \cdot \E(K/W)$ in two steps. First, we will show that the space $\op{Diff}_k(K) \cdot \E(K/W)$ is inside the kernel. Second, we will show that it is the whole kernel by a dimension counting argument.

    \textbf{Part 2, Step 1:} We prove $\op{Diff}_k(K) \cdot \E(K/W)\subset \mathbb{K}$. 
    
    We recall that the differential $d(K/W)$ is a restriction from $K$ to $W$, Lemma \ref{Differential is Restriction}. This means that $\mathbb{K}$ is the subspace of $\op{Diff}_k(K)$ consisting of all operators that are equal to zero on $W$.
    
    First, we prove that the space $\E(K/W)$ is in $\mathbb{K}$. Indeed, let $D\in \E(K/W)$ and $w\in W$, then $D(w)=wD(1)=0$. 
    
    Next, we prove that the kernel $\mathbb{K}$ is a left ideal. Indeed, let $D\in\mathbb{K}$, i.e., $D(W)=0$, then for every $D'\in\op{Diff}_k(K) $ we have $D'\circ D (W)=D'(D(W))=D'(0)=0$. 
    
    Finally, we can conclude that the left ideal generated by $\E(K/W)$, i.e., the space $\op{Diff}_k(K) \cdot \E(K/W)$, is in the kernel $\mathbb{K}$. 
    
    This proves the inclusion.

    \textbf{Part 2, Step 2:} We prove that the inclusion $\op{Diff}_k(K) \cdot \E(K/W)\subset \mathbb{K}$ is an equality.
    The proof goes by comparing $p$-filtrations of these spaces, and using distinguished generators to count dimensions.
    
    The $p$-filtration of the kernel $\mathbb{K}$ are the subspaces 
    \[
    \mathbb{K}_n\coloneqq \mathbb{K} \cap \op{Diff}_{K^{p^n}}(K).
    \]
    And, the $p$-filtration of $\op{Diff}_k(K) \cdot \E(K/W)$ are the subspaces 
    \[
    \left(\op{Diff}_k(K) \cdot \E(K/W)\right)\cap \op{Diff}_{K^{p^n}}(K).
    \] 
    However, we will focus on its subfiltration
    \[
    \op{Diff}_{K^{p^n}}(K)\cdot \E(K/W_n) \subset \op{Diff}_k(K) \cdot \E(K/W)\cap \op{Diff}_{K^{p^n}}(K).
    \]
    where $W_n=W\cdot K^{p^n}$ is $W$ up to $p^n$-powers. It is a filtration:
    \[
    \bigcup_{n\ge 0}\op{Diff}_{K^{p^n}}(K)\cdot \E(K/W_n)=\op{Diff}_k(K) \cdot \E(K/W).
    \]

    We observe that 
    \[
    \op{Diff}_{K^{p^n}}(K)\cdot \E(K/W_n)\subset \mathbb{K}_n= \op{ker}(d(K/W_n))\cap \op{Diff}_{K^{p^n}}(K) \subset \op{Diff}_{K^{p^n}}(K)
    \]
    because $\mathbb{K}$ is the set of operators that are zero on $W$, so $ \mathbb{K}_n$ is the set of operators that are zero on $W$ and $K^{p^n}$, i.e., on $W_n$.

    For convenience, we include a diagram:
    \begin{center}
        \begin{tikzcd}
            \{D : D(W)=0\}\ar[r] & \op{End}_k(K)\ar[r,two heads] & \op{Hom}_k(W,K)\\
            \op{ker}(d(K/W))\ar[u]\ar[r] & \op{Diff}_k(K)\ar[u] \ar[r,two heads]&  \op{Diff}_k(W) \otimes K\ar[u]\\
            \mathbb{K}_n\ar[u]\ar[r]&\op{Diff}_{K^{p^n}}(K) \ar[ru]\ar[r,two heads]\ar[u]&\op{Diff}_{K^{p^n}}(W_n) \otimes K\ar[u].
        \end{tikzcd}
    \end{center}

    We prove that, for every $n\ge 0$, we have 
    $\op{Diff}_{K^{p^n}}(K)\cdot \E(K/W_n)=\mathbb{K}_n$.

    First, we make some preparations. Let $[K:K^p]=p^r$, $[K:W_n]=p^N$, and $[W_n:K^{p^n}]=p^M$. Then, by Proposition(Diff=End)~\ref{diff=end - new} and elementary calculations, we have the following equalities
    \begin{align*}
       \op{dim}_K (\op{Diff}_{K^{p^n}}(K))=p^{N+M}=p^{rn},& \op{dim}_K (\op{Diff}_{K^{p^n}}(W_n) \otimes K)=p^{M}, \\
       \op{dim}_K(\op{Diff}_{W_n}(K))=p^N,& \op{dim}_K \E(K/W_n)= p^N-1.
    \end{align*}

    Now, we compute the dimension of $\mathbb{K}_n$. It is
    \[
    \op{dim}_K (\op{Diff}_{K^{p^n}}(K))-\op{dim}_K (\op{Diff}_{K^{p^n}}(W_n) \otimes K)=p^{N+M}-p^{M}.
    \]

    Next, we compute the dimension of $\op{Diff}_{K^{p^n}}(K)\cdot \E(K/W_n)$. 
    
    Let $G=\{G^m_1,\ldots G^m_{r_m},\}_{m=1,2,\ldots,n}$ be a set of distinguished generators for $\op{Diff}_{W_n}(K)$ and let $\{E^m_1,\ldots, E^m_{r-r_m}\}_{m=1,2,\ldots}$ be a completion of $G$, Definitions \ref{distinguished generators - def} and \ref{completion of distinguished generators - def}. Now, by Proposition \ref{description of any subalgebra with dist generators - prop}, we can identify elements of $\op{Diff}_{K^{p^n}}(K)\cdot \E(K/W_n)$ as the ones that are a $K$-linear sum of operators 
    \[
    \left(\prod_{m=1}^n \prod^{r-r_m}_{j=1} \left(E^m_j\right)^{a^m_j}\right)\left(\prod_{m=1}^n\prod^{r_m}_{i=1} \left(G^m_i\right)^{b^m_i}\right),
    \]
    where $0 \le a^m_i <p-1$ and $0 \le b^m_i <p-1$, but $\sum_{i, m}b^m_i>0$. Moreover, these operators are linearly independent, forming a basis of this subspace.
    
    We count them. First, we fix the part $\prod_{m=1}^n\prod^{r_m}_{i=1} \left(G^m_i\right)^{b^m_i}$. There are exactly $p^{\sum_{i=1}^n r-r_m}=p^{rn-\sum_{i=1}^n r_m}=p^{(N+M) -N}=p^M$ monomials with this part fixed. 
    
    Now, since the monomials $\prod_{m=1}^n\prod^{r_m}_{i=1} \left(G^m_i\right)^{b^m_i}$ are a basis of $\E(K/W_n)$, there are $p^N-1$ of them. Therefore, the dimension of $\op{Diff}_{K^{p^n}}(K)\cdot \E(K/W_n)$ is equal $p^M(p^N-1)$.

    The numbers are equal: $p^{N+M}-p^{M}=p^M(p^N-1)$. Consequently, the inclusion
    $\op{Diff}_{K^{p^n}}(K)\cdot \E(K/W_n)\subset\mathbb{K}_n$ is an equality.

    Finally, we have
    \[
    \mathbb{K}=\bigcup_{n\ge0}\mathbb{K}_n=\bigcup_{n\ge0}\op{Diff}_{K^{p^n}}(K)\cdot \E(K/W_n)=\op{Diff}_k(K) \cdot \E(K/W).
    \]
    This finishes the proof.
\end{proof}

\begin{remark}[Left Ideals, $D$-modules]
    We can observe that in Theorem \ref{SES for diff - fields - theorem} the $K$-vector space $\op{Diff}_k(K) \cdot \E(K/W)$ is only a left ideal of $\op{Diff}_k(K)$, but not a two-sided ideal. In particular, the space $\op{Diff}_k(W) \otimes K$ is not an algebra. In fact, we could easily prove that the algebra $\op{Diff}_k(K)$ is simple, i.e., it does not admit any non-trivial two-sided ideals. Indeed, the essence of a proof is that for every nonzero differential operator $D$ of order at least $1$ there exists a sequence of elements $x_1,\ldots,x_n \in K$ such that $0\ne [[\ldots[D,x_1],x_2],\ldots],x_n] \in K$. This simple lemma could be proved by writing $D$ in terms of some coordinates and then choosing elements $x_i$ from the coordinates based on the leading form of $[D]$. From this lemma, we can conclude that every non-zero two-sided ideal of $\op{Diff}_k(K)$ contains $1$, so it is the whole algebra.

    Moreover, the algebra $\op{Diff}_W(K)$ is equivalent to $\op{Diff}_k(K) \cdot \E(K/W)$. Consequently, power towers are equivalent to some left $D$-modules, where we put $D=\op{Diff}_k(K)$. Any left $D$-module can be considered a differential equation, thus power towers can be considered solutions to some differential equations.
\end{remark}

\subsection{Subfields and Power Towers}\label{section four - fields}
We determine which subfields inject into power towers.

\begin{defin}\label{Power Tower of a Subfield - definition}
Let $K$ be a field of characteristic $p>0$.
Let $W$ be a subfield of $K$. 
We define \emph{\textbf{the power tower of $W$ on $K$}} to be the power tower $W_\bullet$ defined by $W_i\coloneqq W\cdot K^{p^i}$ for $i\ge 0$.

Moreover, we say that $W_n$ is $W$ \emph{\textbf{up to $p^n$-powers}} of $K$.
\end{defin}

\begin{lemma}
    Let $K$ be a field of characteristic $p>0$.
Let $W$ be a subfield of $K$. Then the power tower of $W$ on $K$ is a power tower on $K$.
\end{lemma}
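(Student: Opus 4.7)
The plan is to verify directly that the sequence $W_i \coloneqq W \cdot K^{p^i}$ satisfies the defining condition of a power tower from Definition \ref{Power Tower - Definition}, namely that $W_j = W_i \cdot K^{p^j}$ whenever $j \le i$. This reduces to an elementary computation in the lattice of subfields of $K$, so the proof will be short.

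The key observation I would use is that for $j \le i$ one has the containment $K^{p^i} \subset K^{p^j}$: indeed, if $x = y^{p^i} \in K^{p^i}$, then $x = (y^{p^{i-j}})^{p^j} \in K^{p^j}$. Given this, I would compute
\[
W_i \cdot K^{p^j} = \bigl(W \cdot K^{p^i}\bigr) \cdot K^{p^j} = W \cdot K^{p^j} = W_j,
\]
where the middle equality uses that $K^{p^i} \subset K^{p^j}$, so adjoining it to $W \cdot K^{p^j}$ contributes nothing new. The composite operation $\cdot$ is associative and idempotent on subfields, which is the only other fact being invoked.

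There is no serious obstacle here, since the statement is essentially a tautology once the definition is unfolded; the only thing to double-check is the direction of the inclusion of $p^i$-th power subfields (which goes the way one expects, higher exponents giving smaller subfields). I would close by noting that the sequence $(W_i)$ thus defined is indeed a power tower, justifying the terminology introduced in Definition \ref{Power Tower of a Subfield - definition}.
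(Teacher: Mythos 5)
Your proof is correct and takes essentially the same approach as the paper's: both verify $(W\cdot K^{p^i})\cdot K^{p^j} = W\cdot K^{p^j}$ for $j\le i$ via associativity of the composite and the inclusion $K^{p^i}\subset K^{p^j}$.
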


\begin{proof}
We have to show that $(W\cdot K^{p^n})\cdot K^{p^m}=W\cdot K^{p^m}$ for $0\le m\le n$. 
This follows from the fact that taking the composite of subfields is associative. Indeed, we have 
\[
(W \cdot K^{p^n})\cdot K^{p^m}=W\cdot (K^{p^n}\cdot K^{p^m})=W\cdot K^{p^m}.
\]
This finishes the proof.
\end{proof}

Here are some examples.

\begin{example}
    The constant tower on $K$, Example \ref{Constant Power Tower}, is the power tower of $K$ on $K$. It corresponds to the subalgebra $K\subset \op{Diff}_k(K)$, and it corresponds to the Jacobson sequence $(0,0,\ldots)$.
\end{example}

\begin{example}
    The tower of $p$-powers of $K$, Example \ref{p-power power tower}, is the power tower of $k$ on $K$.
    It corresponds to the subalgebra $\op{Diff}_k(K)\subset \op{Diff}_k(K)$, and it corresponds to the Jacobson sequence $(T_{K/K^p}, T_{K^p/K^{p^2}},T_{K^{p^2}/K^{p^3}},\ldots)$.
\end{example}
    
\begin{example}
    Finite length power towers are the power towers of purely inseparable subfields on $K$, Lemma \ref{Finite Length = finite exponent - lemma}.

    In particular, the extension $K/K^{p^n}$ corresponds to the subalgebra $\op{Diff}_{K^{p^n}}(K)\subset \op{Diff}_k(K)$, and to the Jacobson sequence $(T_{K/K^p}, T_{K^p/K^{p^2}},\ldots,T_{K^{p^{n-1}}/K^{p^n}},0,0,\ldots)$.
\end{example}

\begin{example}
    Let $k$ be a perfect field of characteristic $p>0$.
Let $K=k(x,y)$, and $W=k(x)$. Then the subfield $W_n=k(x,y^{p^n})$ is $W$ up to $p^n$-powers of $K$.

The power tower of $k(x)$ on $ k(x,y)$ corresponds to the subalgebra $K$-spanned by $\frac{1}{a!}\frac{\partial^{a}}{\partial y^{a}}$ for $a\ge 0$.
And, it corresponds to the Jacobson sequence 
\[
(k(x,y)\frac{\partial}{\partial y},k(x,y^p)\frac{\partial}{\partial y^p},k(x,y^{p^2})\frac{\partial}{\partial y^{p^2}},\ldots).
\]
\end{example}

The next definition introduces a name for power towers that are power towers of subfields: algebraically integrable ones. This name is inspired by the theory of foliations, where foliations corresponding to rational fibrations are called algebraically integrable. A foliation corresponds to a rational fibration if its generic leaf is an algebraic variety.

\begin{defin}\label{algebraic integrability - power tower - def}
Let $K$ be a field of characteristic $p>0$. Let  $V_\bullet$ be a power tower on $K$.
We say that the tower $V_\bullet$ is \textbf{\emph{algebraically integrable}} if there is a subfield $W$ of $K$ such that $V_\bullet=W_\bullet$, where $W_\bullet$ is the power tower of $W$ on $K$.
\end{defin}

In the theory of foliations, if we have an algebraically integrable foliation, then we can recover the rational fibration corresponding to it by computing its first integrals. This inspires the name in the following definition.

\begin{defin}\label{First Integrals Power Towers - Definition}
Let $K$ be a field of characteristic $p>0$.
Let $W_\bullet$ be a power tower on $K$. We define a field 
\[
W_\infty\coloneqq \bigcap_{n\ge 0} W_n.
\] 
We call it the field of \textbf{\emph{first integrals}} of the power tower $W_\bullet$.
\end{defin}

The following is the main theorem of this section.

\begin{thm}\label{infty=s}
Let $K$ be a field of characteristic $p>0$. Let $k=K^{p^\infty}$ be the perfection of $K$. We assume that $K/k$ is a finitely generated field extension.
Let $W$ be a subfield of $K$. 

Then, the field of first integrals of the power tower of $W$ on $K$ is the separable closure of $k\cdot W$ in $K$, i.e., $W_\infty=\left(k\cdot W\right)^s$. 

Moreover, the power tower of $W$ on $K$ and the power tower of $W_\infty$ on $K$ are equal.
\end{thm}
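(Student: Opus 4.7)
Write $F := (k\cdot W)^s$. I plan to prove both inclusions of $W_\infty = F$ together with the moreover part through four steps. First, the easy direction: for $\alpha \in F$, the chain $k\cdot W \subset W_n$ (since $k \subset K^{p^n} \subset W_n$ and $W \subset W_n$) makes $\alpha$ separable algebraic over $W_n$; since $W_n \supset K^{p^n}$ the extension $K/W_n$ is purely inseparable, forcing $\alpha \in W_n$ and hence $\alpha \in W_\infty$. This gives $F \subset W_\infty$. For the moreover part, the inclusions $W \subset W_\infty$ (because $W \subset W_m$ for every $m$) and $W_\infty \subset W_n$ sandwich $W_n = W \cdot K^{p^n} \subset W_\infty \cdot K^{p^n} \subset W_n \cdot K^{p^n} = W_n$, proving $(W_\infty)_n = W_n$ and in particular $(W_\infty)_\infty = W_\infty$. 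Applying the first step with $W$ replaced by $W_\infty$ (noting $k\cdot W_\infty = W_\infty$) shows that $W_\infty$ is separably closed in $K$.

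Next, I would reduce the remaining inclusion $W_\infty \subset F$ to an identity of differential operator algebras:
\[
\op{Diff}_W(K) \;=\; \op{Diff}_F(K).
\]
Since differential operators are $k$-linear we have $\op{Diff}_W(K) = \op{Diff}_{k\cdot W}(K)$, and the inclusion $\op{Diff}_F(K) \subset \op{Diff}_{k\cdot W}(K)$ is automatic. The other inclusion rests on the sub-claim: if $E \subset K$, $D \in \op{Diff}_E(K)$, and $\alpha \in K$ is separable algebraic over $E$, then $[D,\alpha] = 0$ as operators. I would prove this by strong induction on $\op{ord}(D)$: the order-zero case is immediate; in the inductive step $D' := [D,\alpha]$ has smaller order and is $E$-linear, so by induction $[D',\alpha] = 0$, and a routine induction on $i$ using $[D,BC] = [D,B]C + B[D,C]$ gives $[D,\alpha^i] = i\alpha^{i-1}D'$. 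For the separable minimal polynomial $f \in E[T]$ of $\alpha$ this yields $0 = [D,f(\alpha)] = f'(\alpha)D'$, and since $f'(\alpha) \ne 0$ we conclude $D' = 0$. Iterating along separable-algebraic generators of $F$ over $k\cdot W$ gives the claimed equality. By Theorem \ref{MainTheorem} the power towers of $W$ and $F$ therefore coincide, so $W_n = F_n$ for all $n$, and hence $W_\infty = F_\infty$.

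Combined with Step~1 applied to $F$ itself (giving $F \subset F_\infty$), it remains to prove $F_\infty = F$, and this is the main obstacle I expect. Since $F$ is separably closed in $K$ and $F \supset k$, any element of $F_\infty \setminus F$ would be either transcendental over $F$ or purely inseparable algebraic over $F$. The approach I would pursue is to fix a $p$-basis $x_1,\dots,x_r$ of $K/k$ and expand a hypothetical such $\alpha$ in the $K^{p^n}$-basis $\{x^e : 0 \le e_i < p^n\}$ of $K$; the requirement that $\alpha$ lie in $F \cdot K^{p^n}$ for every $n$ imposes strong compatibility on the coefficients of these expansions, and pushing $n \to \infty$ forces $\alpha$ to be independent of any generator of $K/F$ not already in $F$. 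An equivalent and possibly cleaner route is to show that $\op{Diff}_F(K)$ is dense in $\op{End}_F(K)$ in the finite topology — for instance by using $\op{Diff}_F(K) = \bigcup_n \op{End}_{F_n}(K)$ (from the $p$-filtration and the Diff$=$End Lemma) and checking that any $F$-linearly independent finite set in $K$ is $F_n$-linearly independent for $n$ sufficiently large, so one can prescribe values of an $F_n$-linear operator freely on it — and then invoke the topological Jacobson--Bourbaki correspondence (Theorem \ref{Topological Jacobson--Bourbaki Correspondence}) to read off $\op{const}(\op{Diff}_F(K)) = F$, i.e.\ $F_\infty = F$. The first three steps are essentially formal; the real work sits in Step~4.
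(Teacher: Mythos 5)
Your Steps~1--3 are correct, and Step~3's commutator induction (showing $[D,\alpha]=0$ for $D\in\op{Diff}_E(K)$ and $\alpha$ separable algebraic over $E$ via the minimal polynomial) is a tidy self-contained argument that differs from the paper's more direct observation that $(W^s)_n/W_n$ is both separable and purely inseparable, hence trivial. Both routes reduce the theorem to the single claim $F_\infty = F$. As you anticipate, Step~4 is where the real work is, and the gap is that neither of your sketched routes actually closes it. The ``cleaner route'' is circular: your lemma --- an $F$-linearly independent finite set stays $F_n$-linearly independent for $n\gg 0$ --- specialised to $\{1,\alpha\}$ with $\alpha\notin F$ says precisely $\alpha\notin F_\infty$, i.e., the $m=2$ instance of the lemma already \emph{is} the statement $F_\infty\subset F$; the topological Jacobson--Bourbaki step then adds nothing, since $\op{const}(\op{Diff}_F(K))=F_\infty$ holds definitionally. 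The first route (expansion in $p^n$-power bases) is left as a sketch, and it is not evident it can be pushed through without substantial further structure: $K/F$ need not be separable, $F_n/F$ need not be purely inseparable, and there is no obvious finite anchor making the coefficients of $\alpha$ stabilize as $n\to\infty$.

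The paper closes Step~4 with a tool you do not invoke: the short exact sequence for the differential $d(K/W)$, Theorem~\ref{SES for diff - fields - theorem}, whose surjectivity and explicit kernel description are themselves the substantial content. Using it, one compares the exact sequences for $d(K/W^s)$ and $d(K/W_\infty)$ after restricting to $W^s$-linear operators; since the power towers of $W^s$ and $W_\infty$ agree (your Step~2 gives this), the kernels agree, forcing the differential $\op{Diff}_{W^s}(W_\infty)\otimes K\to\op{Diff}_{W^s}(W^s)\otimes K = K$ to be an isomorphism. If $W^s\subsetneq W_\infty$, one shows $T_{W_\infty/W^s}\neq 0$ by writing $W_\infty/W^s$ as a chain of one-generator extensions and computing $\Omega$; this makes the source at least two-dimensional over $K$, contradicting the target being one-dimensional. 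Without Theorem~\ref{SES for diff - fields - theorem} or a genuine substitute, the final step $F_\infty\subset F$ does not close --- that is exactly the idea missing from your proposal.
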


\begin{proof}
First, we observe that $W_\bullet=\left(W\cdot k\right)_\bullet$. Therefore, without loss of generality, we can assume that $k\subset W$.

Now, we prove that $W\subset W^s \subset W_\infty$, and their power towers on $K$ are equal. Indeed, for every $n\ge 0$, we have 
\[
K \supset \left(W^s\right)_n\supset W_n \supset K^{p^n}.
\] 
Thus $\left(W^s\right)_n\supset W_n$ is purely inseparable, but it is also separable. Therefore, we have $\left(W^s\right)_n= W_n$. Hence $W^s\subset W_\infty$, because $W^s\subset W_n$ for every $n\ge 0$.

Finally, power towers of $W^s$ and $W_\infty$ on $K$ are equal, because 
\[
W^s \subset W_\infty \subset W_n
\]
implies 
\[
W_n=W^s\cdot K^{p^n} \subset W_\infty\cdot K^{p^n} \subset W_n.
\] 
This finishes the proof of the power towers being equal.

Moreover, by the same argument as above, we have that $W_\infty^s=W_\infty$.

Next, we prove that if $W^s \ne W_\infty$, then $T_{W_\infty/W^s}\ne 0$.
Indeed, we know that we have an inclusion $W^s\subset W_\infty$ that is a finitely generated field extension, and the fields $W^s, W_\infty$ are separably closed in $K$. Now, if $W^s \ne W_\infty$, then we can find a finite sequence of elements $x_i\in W_\infty$, where $i=1,\ldots, r$, such that
\[
L_0\coloneqq W\subset L_1\coloneqq W(x_1)^s \subset L_2\coloneqq L_1(x_2)^s \subset ... \subset L_r\coloneqq L_{r-1}(x_r)^s=W_\infty
\]
is a sequence of proper extensions, and $x_i$ is purely inseparable, or purely transcendental over $L_{i-1}$. A simple calculation shows that $\Omega_{L_r/L_{r-1}}$ has dimension one, so it is non-zero. Furthermore, $\Omega_{L_r/L_0}$ surjects onto it, so it is non-zero too. Therefore, we have 
\[
T_{W_\infty/W^s}=\op{Hom}_{W_\infty}(\Omega_{L_r/L_0},W_\infty)\ne 0.
\]

Finally, we can prove that the assumption $W^s \ne W_\infty$ leads to a contradiction. Indeed, we have the following short exact sequences, Theorem \ref{SES for diff - fields - theorem}:
\begin{align*}
    0\to \op{Diff}_k(K) \cdot \E(K/W^s)\hookrightarrow &\op{Diff}_k(K) \xrightarrow{d(K/W^s)}  \op{Diff}_k(W^s) \otimes K\to 0,\\
    0\to \op{Diff}_k(K) \cdot \E(K/W_\infty)\hookrightarrow &\op{Diff}_k(K) \xrightarrow{d(K/W_\infty)}  \op{Diff}_k(W_\infty) \otimes K\to 0.\\
\end{align*}

However, every differential is a restriction of linear operators, Lemma \ref{Differential is Restriction}, so $W^s$-linear operators are mapped to $W^s$-linear operators; therefore, we get the following short exact sequences:
\begin{align*}
    0\to \left(\op{Diff}_k(K) \cdot \E(K/W^s)\right)_{W^s}\hookrightarrow &\op{Diff}_{W^s}(K) \xrightarrow{d(K/W^s)}  \op{Diff}_{W^s}(W^s) \otimes K\to 0,\\
    0\to \left(\op{Diff}_k(K) \cdot \E(K/W_\infty)\right)_{W^s}\hookrightarrow &\op{Diff}_{W^s}(K) \xrightarrow{d(K/W_\infty)}  \op{Diff}_{W^s}(W_\infty) \otimes K\to 0,\\
\end{align*}
where $\left(\op{Diff}_k(K) \cdot \E(K/W^s)\right)_{W^s}\coloneqq \op{Diff}_k(K) \cdot \E(K/W^s)\cap \op{Diff}_{W^s}(K)$ and also $\left(\op{Diff}_k(K) \cdot \E(K/W_\infty)\right)_{W^s}\coloneqq \op{Diff}_k(K) \cdot \E(K/W_\infty)\cap \op{Diff}_{W^s}(K)$.

Now, we know that power towers of $W^s$ and $W_\infty$ on $K$ are equal, therefore $\op{Diff}_{W^s}(K)=\op{Diff}_{W_\infty}(K)$ and $\E(K/W^s)=\E(K/W_\infty)$. Consequently, we have an equality of left ideals:
\[
\left(\op{Diff}_k(K) \cdot \E(K/W_\infty)\right)_{W^s}=\left(\op{Diff}_k(K) \cdot \E(K/W^s)\right)_{W^s}.
\]
This implies that 
\[
\op{Diff}_{W^s}(W_\infty) \otimes K \xrightarrow{d(W_\infty/W^s)\otimes K}\op{Diff}_{W^s}(W^s) \otimes K
\]
is an isomorphism. And this leads to a contradiction:
\[
0\ne T_{W_\infty/W^s}\otimes K \subset \op{Diff}_{W^s}(W_\infty) \otimes K \simeq\op{Diff}_{W^s}(W^s) \otimes K = W^s \otimes_{W^s} K = K,
\]
i.e., first-order operators are in the zeroth order operators. This cannot be true.

This finishes the proof of the theorem.    
\end{proof}

The next result determines all subfields determined by their approximations up to $p^n$-powers.

\begin{cor}[Subfields Into Power Towers]\label{infty=s refined - Corollary}
    Let $K$ be a field of characteristic $p>0$. Let $k=K^{p^\infty}$ be the perfection of $K$. We assume that $K/k$ is a finitely generated field extension.

    Then, there is a bijection between subfields $W\subset K$ satisfying the condition $k\subset W=W^s\subset K$ and algebraically integrable power towers on $K$. 
    
    Explicitly, this bijection is given by taking the power tower of a subfield $W$, i.e., $W\mapsto W_\bullet$. And, the inverse of this map is taking the field of first integrals of an algebraically integrable power tower, i.e,. $W_\bullet\mapsto W_\infty$.
\end{cor}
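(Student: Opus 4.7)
My plan is to deduce this corollary directly from Theorem~\ref{infty=s} by verifying that the two maps $W \mapsto W_\bullet$ and $W_\bullet \mapsto W_\infty$ are well-defined on the claimed domains and are mutually inverse. There should be no new hard content beyond Theorem~\ref{infty=s}; the work is bookkeeping with the conditions $k \subset W = W^s$.

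First I would check well-definedness. The map $W \mapsto W_\bullet$ lands in algebraically integrable power towers tautologically by Definition~\ref{algebraic integrability - power tower - def}. For the reverse map, given an algebraically integrable $V_\bullet$, write $V_\bullet = W_\bullet$ for some subfield $W \subset K$. Then $V_\infty = \bigcap_n V_n \supset \bigcap_n K^{p^n} = k$, so $k \subset V_\infty$. By Theorem~\ref{infty=s}, $V_\infty = W_\infty = (k \cdot W)^s$, which is separably closed in $K$ by maximality of a separable closure (Lemma~\ref{SeparableClosure: definition}); hence $V_\infty^s = V_\infty$, and $V_\infty$ belongs to the claimed domain.

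Next I would verify the two compositions. Starting from $W$ with $k \subset W = W^s$, Theorem~\ref{infty=s} gives $W_\infty = (k \cdot W)^s = W^s = W$, since $k \subset W$ forces $k \cdot W = W$. Starting from an algebraically integrable $V_\bullet = W_\bullet$, Theorem~\ref{infty=s} tells us that the power tower of $W_\infty$ on $K$ equals the power tower of $W$ on $K$, so $(V_\infty)_\bullet = (W_\infty)_\bullet = W_\bullet = V_\bullet$.

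This establishes the bijection; there is no serious obstacle, as all the real work sits in Theorem~\ref{infty=s}, in particular in its use of the short exact sequence for the differential (Theorem~\ref{SES for diff - fields - theorem}) to rule out the strict inclusion $W^s \subsetneq W_\infty$.
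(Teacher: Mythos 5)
Your proof is correct and takes essentially the same route as the paper: both deduce the corollary directly from Theorem~\ref{infty=s} by checking the two compositions. You are slightly more explicit than the paper about verifying well-definedness of $W_\bullet \mapsto W_\infty$ (that $k\subset W_\infty$ and $W_\infty^s = W_\infty$), but the paper's proof of Theorem~\ref{infty=s} already records $W_\infty^s = W_\infty$, so the two arguments coincide in substance.
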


\begin{proof}
    Let $W$ be a subfield such that $k\subset W=W^s\subset K$. Let $W_\bullet$ be the power tower of $W$ on $K$. Then, by Theorem \ref{infty=s}, we have $W_\infty=\left(W\cdot k\right)^s=W^s=W$. 
    
    Let $W_\bullet$ be an algebraically integrable power tower on $K$. Let $V\subset K$ be a subfield such that $V_\bullet=W_\bullet$. Then, $W_\infty=\left(V\cdot k\right)^s$ by Theorem \ref{infty=s}. Therefore, we have $k\subset W_\infty=W_\infty^s\subset K$ and $\left(W_\infty\right)_\bullet=W_\bullet$.
    
    This finishes the proof.
\end{proof}

Non-algebraically integrable power towers exist, and it is generally complicated to know whether or not a given power tower is algebraically integrable. This resembles a situation from characteristic zero, where whether a given foliation on a variety is algebraically integrable is a nontrivial question. Furthermore, there are ``more'' non-algebraically integrable power towers than algebraically integrable ones.

\begin{example}[Not Algebraically Integrable Power Tower]\label{Not Algebraicallt Integrable power tower - example}
Let $k$ be a perfect field of characteristic $p>0$.
Let $K=k(x,y)$. We define a power tower on $K$ by
\[
W_n\coloneqq k(x+y^p+\ldots + y^{p^{n-1}},y^{p^{n}}).
\]
The power tower $W_\bullet$ is not algebraically integrable. 

Indeed, if it were a power tower of a subfield $W\subset K$, then we would have $W\subset W\cdot k\subset \bigcap_{n\ge 0} W_n=k$, and, consequently, $W\cdot k=k$. But then, we would have that $W_n=W\cdot K^{p^n}=W\cdot k\cdot K^{p^n}=k\cdot K^{p^n}=K^{p^n}$. This is a contradiction.
\end{example}

We finish this section with two remarks connecting algebraically integrable power towers with the topological Jacobson--Bourbaki correspondence \ref{Topological Jacobson--Bourbaki Correspondence}.

\begin{remark}\label{closures}
Let $K$ be a field of characteristic $p>0$. Let $k=K^{p^\infty}$ be the perfection of $K$. We assume that $K/k$ is a finitely generated field extension.

Let $\mathcal{D}\subset \op{Diff}_k(K)$ be a $K$-subalgebra. Let $W_\infty$ be the first integrals of the power tower $W_\bullet$ corresponding to $\mathcal{D}$.

We can consider $\mathcal{D}$ as a subalgebra of $\op{End}_{\mathbb{Z}}(K)$. Let $\overline{\mathcal{D}}$ be the closure of $\mathcal{D}$ in $\op{End}_\mathbb{Z}(K)$ in the finite topology, Definition \ref{Finite Topology- Definition}. This closure is of the form $\op{End}_W(K)$, where $W$ is a subfield of $K$, by Topological Jacobson--Bourbaki Correspondence \ref{Topological Jacobson--Bourbaki Correspondence}. 
We can prove that $W=W_\infty$. Indeed, $\op{const}(\mathcal{D})=W_\infty$, so $\op{End}_{W_\infty}(K)\supset\mathcal{D}$. Hence, $\op{End}_W(K)\subset \op{End}_{W_\infty}(K)$, so $W_\infty\subset W$. But, $\op{End}_{W}(K)\supset\mathcal{D}$, so $\op{const}(\op{End}_{W}(K))=W\subset \op{const}(\mathcal{D})=W_\infty$. 

However, $\mathcal{D}$ is determined by $W_\infty$ if and only if $W_\bullet$ is algebraically integrable.
\end{remark}

\begin{remark}
One could say that there are more not algebraically integrable power towers than algebraically integrable ones. Indeed, let $k$ be a countable field and let $K/k$ be a finitely generated field extension with $k$ being the perfection of $K$, then if $K/k$ has transcendental degree at least two, there are only countably many subfields between $k$ and $K$. Still, there are uncountable many power towers on $K$. Therefore, in most cases, for such $k$, a closure of a subalgebra of differential operators (in the finite topology) says little about this subalgebra because the corresponding power tower is most likely not algebraically integrable.
\end{remark}

\subsection{$n$-Foliations, $n=1,2,\ldots,\infty$}\label{section five -fields}
We develop a big class of examples of power towers.

\subsubsection{Basic Theory}
The following definition shall be compared with Proposition \ref{Nonincreasing Degrees Power Tower - Proposition}.

\begin{defin}\label{n-foliation}
Let $K$ be a field of characteristic $p>0$. Let $k=K^{p^\infty}$ be the perfection of $K$. Let $K/k$ be a finitely generated field extension.

Let $0\le n \le \infty$.
Let $W_\bullet$ be a power tower on $K$.  

The power tower $W_\bullet$ is a \textbf{\emph{$n$-foliation}} if it is of length $n$, and it satisfies 
\[
\op{dim}_{W_{i+1}}(W_{i})=\op{dim}_{W_{j+1}}(W_j)
\]
for $0\le i,j< n$. 
    
The rank of a $n$-foliation $W_\bullet$ is  $\op{log}_p(\op{dim}_{W_{1}}(K))$.
\end{defin}

In other words, a power tower $W_\bullet$ is an $n$-foliation if and only if it is a power tower such that
\[
K=W_0\supset W_1 \supset W_2 \supset \ldots\supset W_n = W_{n+1}=\ldots,
\]
\[
\op{[W_0:W_1]=[W_1:W_2]=\ldots=[W_{n-1}:W_n]},
\]
where $[K:W]=\op{dim}_W(K)$.

\begin{remark}
    The name ``$n$-foliation'' is an extension of the name ``$1$-foliation'' that in the literature refers to saturated $p$-Lie algebras on varieties, \cite{Langer1}. I believe that the origin of this name is just a short way to refer to ``foliations of height $n$'' for $n=1$, which were introduced by Ekedahl in \cite{EkedahlFoliation1987}. His definition applied to the space $\op{Spec}(K)$ is equivalent to the definition of power towers on $K$ we just gave. However, the original definition is formulated for varieties, so we will discuss it later, Proposition \ref{gr-saturared + regular = pd envelope - lemma}.
\end{remark}

The following might be considered the motivational example for the notion of an $n$-foliation. 

\begin{example}\label{transcedental => infty-foli}
Let $k$ be a perfect field.
Let $K\supset W\supset k$ be both purely transcendental extensions. 
Then, the power tower of $W$ on $K$ is an $\infty$-foliation. Indeed, we can write $W=k(x_1,\ldots,x_r)$ and $K=W(y_1,\ldots y_m)$, where $x_i$ and $y_i$ are transcendental bases for $W/k$ and $K/W$ respectively. 
Then 
\[
W_n=W(y_1^{p^n},\ldots y_m^{p^n})=k(x_1,\ldots x_r,y_1^{p^n},\ldots y_m^{p^n}).
\]
Therefore, $\op{dim}_{W_n}(W_{n-1})=p^m$ is constant. Consequently, $W_\bullet$ is a $\infty$-foliation on $K$ of rank $m$, and the power tower $(W_n)_\bullet$ is a $n$-foliation of rank $m$.
\end{example}

The visualization of an $n$-foliation begins with the following result.

\begin{prop}\label{dist generators for n-foli - prop}
Let $K$ be a field of characteristic $p>0$. Let $k=K^{p^\infty}$ be the perfection of $K$. Let $K/k$ be a finitely generated field extension.

Let $0\le n \le \infty$. Let $W_\bullet$ be a $n$-foliation on $K$ of rank $r$. Let $\mathcal{D}\subset \op{Diff}_k(K)$ be a subalgebra corresponding to $W_\bullet$. Let $\F_1 = \mathcal{D} \cap T_{K/K^p}$.

Then, there exists a set of well-chosen distinguished generators for $\mathcal{D}$:
\[
G=\{G^m_1,\ldots , G^m_{r}\}_{m=1,2,\ldots,n}
\]
such that
\[
[G^m_i] = \gamma_{p^{m-1}}([G^1_i])
\]
for $i=1,2,\ldots,r$ and $1\le m< n+1$. Consequently, the graded subalgebra satisfies
\[
\op{gr}\mathcal{D} = {S}(\F_1^*)^{*gr} \cap\op{gr}\op{Diff}_{K^{p^n}}(K),
\]
where ${S}(\F_1^*)^{*gr}$ is the divided power polynomial algebra in variables $\F_1$.
\end{prop}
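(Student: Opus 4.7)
The plan is to first establish the structural identity $\op{gr}\mathcal{D} = S(\F_1^*)^{*gr} \cap \op{gr}\op{Diff}_{K^{p^n}}(K)$, and then use it to exhibit distinguished generators of the prescribed form.

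The key observation for an $n$-foliation is that $\op{Der}_{W_m}(K) = \F_1$ for every $1 \le m \le n$. This holds because the power tower axiom gives $W_1 = W_m \cdot K^p$, and since any derivation on $K$ annihilates all $p$-th powers, a derivation is $W_m$-linear if and only if it is $W_1$-linear. Applying Proposition \ref{pd envelope - prop} to $\mathcal{D}_m = \mathcal{D} \cap \op{Diff}_{K^{p^m}}(K) = \op{Diff}_{W_m}(K)$ yields $\op{gr}\mathcal{D}_m \subset S(\F_1^*)^{*gr}$, and combined with the trivial inclusion $\op{gr}\mathcal{D}_m \subset \op{gr}\op{Diff}_{K^{p^m}}(K)$ we obtain
\[
\op{gr}\mathcal{D}_m \subset S(\F_1^*)^{*gr} \cap \op{gr}\op{Diff}_{K^{p^m}}(K).
\]
A dimension count forces equality: the left side has $K$-dimension $[K:W_m] = p^{rm}$ by Lemma \ref{diff=end - new}, while after identifying $\op{gr}\op{Diff}_k(K)$ with $K\langle y_1,\ldots, y_d\rangle$ via Proposition \ref{divided power for gr diff - prop} so that $\F_1$ is spanned in degree one by $y_1,\ldots, y_r$, the right-hand intersection has explicit $K$-basis $\{\prod_{i=1}^r y_i^{[N_i]} : 0 \le N_i < p^m\}$ of cardinality $p^{rm}$. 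Taking $m = n$ (or the union for $n = \infty$) yields the desired identity.

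Next pick any $K$-basis $G^1_1, \ldots, G^1_r$ of $\F_1$. For each $m$ with $2 \le m \le n$ and each $i$, the divided power $\gamma_{p^{m-1}}([G^1_i])$ lies in $S(\F_1^*)^{*gr} \cap \op{gr}^{p^{m-1}} \op{Diff}_{K^{p^n}}(K) = \op{gr}^{p^{m-1}}\mathcal{D}$, so it lifts to some $G^m_i \in \mathcal{D}$ of order exactly $p^{m-1}$ with $[G^m_i] = \gamma_{p^{m-1}}([G^1_i])$. The order, leading-form, and linear-independence conditions in Definition \ref{distinguished generators - def} are then immediate; for the last, distinct divided-power monomials are linearly independent in $K\langle y_1,\ldots, y_r\rangle$.

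The expected main obstacle is securing the well-chosen condition $d(K/W_{m-1})(G^m_i) \in \F_m$, because a bare lift may produce an image lying in $\F_m \otimes_{W_{m-1}} K$ rather than in $\F_m$ itself, and arbitrary $K$-linear rescalings would destroy the prescribed leading form. I plan to resolve this by adjusting $G^m_i$ by an element of $\mathcal{D}$ of order strictly less than $p^{m-1}$, mimicking the algorithm of Proposition \ref{dist generators exist - prop}: using the surjectivity of $d(K/W_{m-1})$ from Theorem \ref{SES for diff - fields - theorem} together with the explicit description of the graded differential in Lemma \ref{Graded Differential for K/K^m - lemma} to cancel the non-derivation component of the image without touching the top-degree form.
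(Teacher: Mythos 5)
Your first half — establishing $\op{gr}\mathcal{D} = S(\F_1^*)^{*gr} \cap \op{gr}\op{Diff}_{K^{p^n}}(K)$ directly by a dimension count — is correct and takes a genuinely different route from the paper. The paper derives this identity \emph{at the end}, as a byproduct of having already constructed the generators; you establish it \emph{first}, using the observation $\op{Der}_{W_m}(K)=\F_1$ for all $m\le n$, Proposition \ref{pd envelope - prop}, and a count via Lemma \ref{diff=end - new}, and then use it to produce lifts of the prescribed leading forms. Both halves of your count check out: $\dim_K\op{gr}\mathcal D_m=[K:W_m]=p^{rm}$, and the intersection visibly has the explicit basis $\prod_{i\le r}y_i^{[N_i]}$ with $0\le N_i<p^m$ under Proposition \ref{divided power for gr diff - prop}. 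This is arguably the cleaner logical order, since the graded identity is the real structural fact.

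The gap is in the well-chosen step, and it is not a small one: adjusting $G^m_i$ by elements of $\mathcal D$ of order $<p^{m-1}$ cannot repair the condition $d(K/W_{m-1})(G^m_i)\in\F_m$. Any such correction $B$ lies in $\mathcal D\cap\op{Diff}^{<p^{m-1}}_k(K)\subset\mathcal D_{m-1}=\op{Diff}_{W_{m-1}}(K)=\op{End}_{W_{m-1}}(K)$, and the restriction of a $W_{m-1}$-linear endomorphism to $W_{m-1}$ is multiplication by $B(1)$, i.e.\ a scalar. So $d(K/W_{m-1})(G^m_i)$ is pinned down modulo $K$ by the leading form alone; your proposed adjustment can only remove the constant term, not move the image from $\F_m\otimes_{W_{m-1}}K$ into $\F_m$. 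The missing ingredient is exactly the claim the paper proves via the packing/uniqueness argument (the graded differential determines a \emph{canonical} element of $\F_m$, so the $K$-coefficients one is forced to write are actually in $W_{m-1}$): the paper secures this by starting from the already well-chosen generators of Proposition \ref{dist generators exist - prop} and showing the $K$-linear base change to the desired leading forms has coefficients in $W_{m-1}$. Your route would need to re-derive that fact directly — it does not fall out of the machinery you cite, and you yourself flag this paragraph as a plan rather than a proof, so the proposal is incomplete as it stands.
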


\begin{proof}
    There is almost nothing to be proved, but there is still something.  Proposition \ref{dist generators exist - prop} gives us a set of well-chosen distinguished operators 
    \[
    G=\{G^m_1,\ldots , G^m_{r_m}\}_{m=1,2,\ldots}
    \]
    for $\mathcal{D}$. Their leading forms $[G^m_i]$ are linear combinations of $\gamma_{p^m}(G_j^1)$, now we want equalities, not combinations. We will do some corrections.
    
    We recall and add. The numbers $r_m$ are defined to satisfy the relation $p^{r_m}=[W_{m+1}:W_m]$, so they are equal $r$ for $m<n$ and $0$ for $m\ge n$. This means that the effective range of the index $m$ is $\{1,2,\ldots,n\}$.

    By Definition \ref{distinguished generators - def}, the leading forms $[G^m_i]$ for $i=1,2,\ldots, r$ are linearly independent inside $\gamma_{p^{m-1}}(\F_1)$. This space has dimension $r$, therefore the set $[G^m_i]$ for $i=1,2,\ldots, r$ is a basis. Consequently, we can find a $K$-linear combination of these leading forms that satisfies
    \[
    \sum_{j=1}^r a_j [G^m_j]= \gamma_{p^{m-1}}([G^1_i]),
    \]
    where $a_j \in K$. So, we can substitute $G^m_i$ with $\sum_{j=1}^r a_j G^m_j$ (a linear base change). However, it is not obvious that these new operators are well-chosen. This is true, but it requires an argument. Indeed, by simple linear algebra, we see that the derivation $d(K/W_{m-1})(\sum_{j=1}^r a_j G^m_j)$ is the unique one in $\F_{m-1}\otimes K$ to be mapped to $\op{gr}d(K/K^{m-1})[\gamma_{p^{m-1}}([G^1_i])]$, see Lemma \ref{Graded Differential for K/K^m - lemma}, but that derivation already belongs to $\F_{m-1}$ implying that $a_j\in W_{m-1}$ for $j=1,2,\ldots,r$, thus the operators $\sum_{j=1}^r a_j G^m_j$ are well-chosen. 

    Finally, by Proposition \ref{pd envelope - prop}, we have a factorization
    \[
    \op{gr} \mathcal{D} \to {S}(\F_1^*)^{*gr} \to \op{gr}\op{Diff}_{k}(K),
    \]
    and ${S}(\F_1^*)^{*gr}$ is a divided power polynomial algebra. So, the leading forms of elements from $G$ generate exactly ${S}(\F_1^*)^{*gr}\cap\op{gr}\op{Diff}_{K^{p^n}}(K)$, thus the graded algebra $\op{gr} \mathcal{D}$ is equal to this intersection. This finishes the proof. 
\end{proof}

\begin{cor}\label{graded subalgebra of n-foli - only W1 and n - corollary}
    Let $K$ be a field of characteristic $p>0$. Let $k=K^{p^\infty}$ be the perfection of $K$. Let $K/k$ be a finitely generated field extension.

    The graded subalgebra of an $n$-foliation on $K$ depends only on $W_1$ and $n$.
\end{cor}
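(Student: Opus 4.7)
The plan is to observe that this corollary is an immediate consequence of the explicit description established in Proposition \ref{dist generators for n-foli - prop}. That proposition supplies the identification
\[
\op{gr}\mathcal{D} \;=\; {S}(\F_1^*)^{*gr} \,\cap\, \op{gr}\op{Diff}_{K^{p^n}}(K),
\]
so it suffices to check that each object on the right-hand side is determined by the data $(W_1, n)$.

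First, I would note that the second factor $\op{gr}\op{Diff}_{K^{p^n}}(K)$ depends only on the integer $n$ (and the ambient field $K/k$, which is fixed), since it is the graded piece coming from the $p$-filtration of $\op{Diff}_k(K)$ evaluated at step $n$. Second, for the first factor, by the unpacking theorem \ref{unpacking - Thm} (or equivalently the Jacobson correspondence), we have $\F_1 = \mathcal{D}\cap T_{K/K^p} = T_{K/W_1}$, so the $K$-vector space $\F_1$ is recovered from the subfield $W_1$ alone. Since ${S}(\F_1^*)^{*gr}$ is constructed functorially from $\F_1$, this factor depends only on $W_1$.

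Combining these two observations yields that $\op{gr}\mathcal{D}$ is determined entirely by $W_1$ and $n$, which is the content of the corollary. No obstacle is expected — the work has already been done in the proposition; this corollary is just a reading of its conclusion.
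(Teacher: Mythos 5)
Your proof is correct and follows essentially the same route as the paper's: both read off the formula $\op{gr}\mathcal{D} = {S}(\F_1^*)^{*gr}\cap \op{gr}\op{Diff}_{K^{p^n}}(K)$ from Proposition \ref{dist generators for n-foli - prop}, then observe that $\F_1 = T_{K/W_1}$ depends only on $W_1$ while the second factor depends only on $n$.
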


\begin{proof}
    It directly follows from the last part of Proposition \ref{dist generators for n-foli - prop}. Indeed, let $W_\bullet$ be a $n$-foliation, then
    $\op{Diff}_{W_\bullet}(K)= {S}(\F_1^*)^{*gr} \cap\op{gr}\op{Diff}_{K^{p^n}}(K)$, and $\F_1$ depends only on $W_1$, and it is of height $n$, so the $n$ is minimal such in that formula. 
\end{proof}

\begin{remark}
    Let $W_\bullet$ be an $n$-foliation corresponding to a subalgebra $\mathcal{D}$. 
    Let $G$ be a set of distinguished generators for $\mathcal{D}$ from Proposition \ref{dist generators for n-foli - prop}.
    
    From the elementary fact that $\gamma_p^{\circ m}$ is equal to $\gamma_{p^m}$ up to an integer that is invertible modulo $p$, we see that $\gamma_{p}([G^m_j])$ for $j=1,2,\ldots,r$ is a basis for $\gamma_{p^{m+1}}(\F_1)$ for $m<n$.
\end{remark}

\begin{remark}\label{original def n-foli - fields - remark}
    The Ekedahl's definition of a ``foliation of height $n$'' applied to a field $K$ is \cite[Definition 3.1]{EkedahlFoliation1987}. We repeat it: 
    \begin{center}
    A subalgebra $\mathcal{D}\subset \op{Diff}_k(K)$ is a foliation of height $n$ 
    
    if it satisfies 
    the equality $\op{gr}\mathcal{D} = {S}(\F_1^*)^{*gr} \cap\op{gr}\op{Diff}_{K^{p^n}}(K)$.    
    \end{center}
    From this equality for the graded algebra, we can soundly read out that the power tower corresponding to $\mathcal{D}$ is of length $n$ and is of constant degree till its length, i.e., it is an $n$-foliation. And, the converse is true by Proposition \ref{dist generators for n-foli - prop}. Therefore, our definition and Ekedahl's are equivalent to each other on fields. 
\end{remark}

\FloatBarrier
\begin{remark}
    We can visualize Proposition \ref{dist generators for n-foli - prop} for an $\infty$-foliation in an analogous way to Figure \ref{fig:stairs} in the figure below. (The figure was created using \nolinkurl{https://www.mathcha.io}.) The visualization of a $n$-foliation is the truncation of the one for $\infty$-foliation at $\gamma_{p^{n-1}}(\F_1)$.
    \begin{figure}
        \centering
        {

\tikzset{every picture/.style={line width=0.75pt}} 

\begin{tikzpicture}[x=0.75pt,y=0.75pt,yscale=-0.7,xscale=0.7]

\draw   (50,50) -- (250,50) -- (250,300) -- (50,300) -- cycle ;
\draw   (350,50) -- (550,50) -- (550,300) -- (350,300) -- cycle ;
\draw   (275,175) -- (305,175) -- (305,170) -- (325,180) -- (305,190) -- (305,185) -- (275,185) -- cycle ;
\draw   (355,225) .. controls (355,222.24) and (357.24,220) .. (360,220) -- (540,220) .. controls (542.76,220) and (545,222.24) .. (545,225) -- (545,240) .. controls (545,242.76) and (542.76,245) .. (540,245) -- (360,245) .. controls (357.24,245) and (355,242.76) .. (355,240) -- cycle ;
\draw   (355,275) .. controls (355,272.24) and (357.24,270) .. (360,270) -- (540,270) .. controls (542.76,270) and (545,272.24) .. (545,275) -- (545,290) .. controls (545,292.76) and (542.76,295) .. (540,295) -- (360,295) .. controls (357.24,295) and (355,292.76) .. (355,290) -- cycle ;
\draw   (355,175) .. controls (355,172.24) and (357.24,170) .. (360,170) -- (540,170) .. controls (542.76,170) and (545,172.24) .. (545,175) -- (545,190) .. controls (545,192.76) and (542.76,195) .. (540,195) -- (360,195) .. controls (357.24,195) and (355,192.76) .. (355,190) -- cycle ;
\draw   (355,125) .. controls (355,122.24) and (357.24,120) .. (360,120) -- (540,120) .. controls (542.76,120) and (545,122.24) .. (545,125) -- (545,140) .. controls (545,142.76) and (542.76,145) .. (540,145) -- (360,145) .. controls (357.24,145) and (355,142.76) .. (355,140) -- cycle ;
\draw    (490,98.5) -- (490,59.5) ;
\draw [shift={(490,57.5)}, rotate = 90] [color={rgb, 255:red, 0; green, 0; blue, 0 }  ][line width=0.75]    (10.93,-3.29) .. controls (6.95,-1.4) and (3.31,-0.3) .. (0,0) .. controls (3.31,0.3) and (6.95,1.4) .. (10.93,3.29)   ;
\draw   (55,275) .. controls (55,272.24) and (57.24,270) .. (60,270) -- (240,270) .. controls (242.76,270) and (245,272.24) .. (245,275) -- (245,290) .. controls (245,292.76) and (242.76,295) .. (240,295) -- (60,295) .. controls (57.24,295) and (55,292.76) .. (55,290) -- cycle ;
\draw    (190,98.5) -- (190,59.5) ;
\draw [shift={(190,57.5)}, rotate = 90] [color={rgb, 255:red, 0; green, 0; blue, 0 }  ][line width=0.75]    (10.93,-3.29) .. controls (6.95,-1.4) and (3.31,-0.3) .. (0,0) .. controls (3.31,0.3) and (6.95,1.4) .. (10.93,3.29)   ;

\draw (285,150) node [anchor=north west][inner sep=0.75pt]   [align=left] {gr};
\draw (416,75) node [anchor=north west][inner sep=0.75pt]   [align=left] {continues};
\draw (232,18.4) node [anchor=north west][inner sep=0.75pt]    {$D$};
\draw (517,18.4) node [anchor=north west][inner sep=0.75pt]    {$\text{gr} \ D$};
\draw (427,223.4) node [anchor=north west][inner sep=0.75pt]    {$\gamma _{p}( F_{1})$};
\draw (447,273.4) node [anchor=north west][inner sep=0.75pt]    {$F_{1}$};
\draw (427,173.4) node [anchor=north west][inner sep=0.75pt]    {$\gamma _{p^{2}}( F_{1})$};
\draw (427,123.4) node [anchor=north west][inner sep=0.75pt]    {$\gamma _{p^{3}}( F_{1})$};
\draw (127,273.4) node [anchor=north west][inner sep=0.75pt]    {$\ \ \ \ \ F_{1}$};
\draw (116,75) node [anchor=north west][inner sep=0.75pt]   [align=left] {continues};

\end{tikzpicture}
}
        \caption{A visualisation of a subalgebra corresponding to a $\infty$-foliation.}
        \label{fig:enter-label}
    \end{figure}
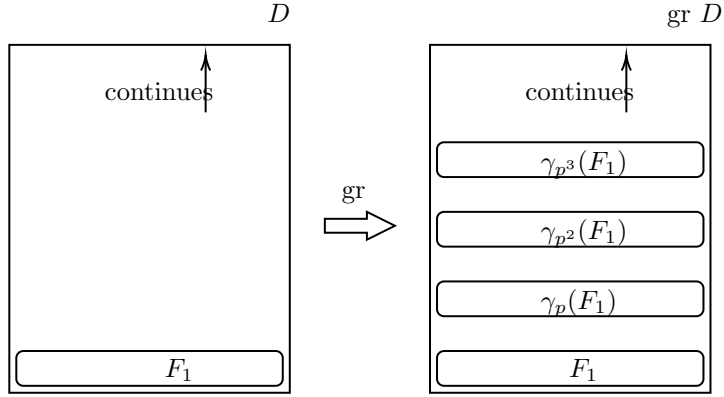
\end{remark}
\FloatBarrier

\subsubsection{Separable Field Extensions and $\infty$-Foliations}
A definition of a separable field extension can be found in Definition \ref{Separable: Definition}. Essentially, it says that such an extension admits a separable transcendental basis. It is algebraic if and only if the basis is empty.

\begin{prop}[Separable Subfields Into $\infty$-Foliations]\label{fibrations into infty-foliations - proposition}
Let $K$ be a field of characteristic $p>0$. Let $k=K^{p^\infty}$ be the perfection of $K$. Let $K/k$ be a finitely generated field extension.

Let $K/W/k$ be a subfield such that $K/W$ is a separable extension. 

Then, the power tower of $W$ on $K$ is a $\infty$-foliation on $K$.
\end{prop}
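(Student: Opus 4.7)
The plan is to pick a separable transcendence basis $y_1,\ldots,y_m$ of $K/W$ and use its $p^n$-th powers to compute every consecutive degree $[W_n:W_{n+1}]$ at once. Concretely, I will show that $y_1^{p^n},\ldots,y_m^{p^n}$ is a $p$-basis of $W_n$ over $W$ for every $n\ge 0$, which yields $[W_n:W\cdot W_n^p]=p^m$, and then identify $W\cdot W_n^p$ with $W_{n+1}$.

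For the first point, I verify that $y_i^{p^n}$ is a separable transcendence basis of $W_n/W$. Algebraic independence over $W$ is inherited from that of $y_1,\ldots,y_m$. Applying the $n$-th Frobenius to the separable algebraic extension $K/W(y_1,\ldots,y_m)$ yields a separable algebraic extension $K^{p^n}/W^{p^n}(y_1^{p^n},\ldots,y_m^{p^n})$; taking the composite with $W$ shows that $W_n=W\cdot K^{p^n}$ is separable algebraic over $W(y_1^{p^n},\ldots,y_m^{p^n})$. Hence $W_n/W$ is separable with separable transcendence basis $y_i^{p^n}$, so $\Omega_{W_n/W}$ is a free $W_n$-module of rank $m$ with basis $d(y_i^{p^n})$ (the standard fact that differentials of a separable transcendence basis form a basis of the module of differentials). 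By Lemma \ref{p-basis - characterisation}, $\{y_i^{p^n}\}$ is therefore a $p$-basis of $W_n$ over $W$, so $[W_n:W\cdot W_n^p]=p^m$.

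The second point is a direct compositum computation: $W\cdot W_n^p=W\cdot(W\cdot K^{p^n})^p=W\cdot W^p\cdot K^{p^{n+1}}=W\cdot K^{p^{n+1}}=W_{n+1}$. Combining with the previous step gives $[W_n:W_{n+1}]=p^m$ for all $n\ge 0$. When $m>0$ this common degree is $>1$, so no collapse $W_n=W_{n+1}$ occurs and the tower has infinite length, matching Definition \ref{n-foliation} for an $\infty$-foliation of rank $m$; when $m=0$ the extension $K/W$ is separable algebraic, $W_n=K$ for every $n$, and the tower is the degenerate rank-$0$ case. The only nontrivial step is keeping $W_n/W$ separable as $n$ varies, which is secured by the Frobenius-plus-composite argument; everything else is formal field theory.
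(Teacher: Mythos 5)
Your proof is correct and takes a genuinely different route from the paper's. The paper first reduces to $W=W^s$, expresses $W_n$ as the separable closure (in $K$) of a purely transcendental intermediate field, invokes the purely transcendental Example \ref{transcedental => infty-foli} as a base case, and then transfers the degree computation to the actual power tower via a flat base change $k(x_i, y_j^{p^n})\otimes_{k(x_i, y_j^{p^{n+1}})} W_{n+1} \simeq W_n$. You instead compute $[W_n:W_{n+1}]$ in one shot: you show $\{y_i^{p^n}\}$ is a separating transcendence basis of $W_n/W$ (via the Frobenius-plus-compositum argument, which correctly preserves separable algebraicity), convert that to a $p$-basis via Lemma \ref{p-basis - characterisation}, and read off $[W_n:W\cdot W_n^p]=p^m$; the identification $W\cdot W_n^p=W_{n+1}$ is then a one-line compositum calculation. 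This avoids both the $W=W^s$ reduction and the flatness step, at the cost of having to verify by hand that the separating transcendence basis persists under Frobenius and compositum — which you do carefully. The two approaches buy roughly the same thing; yours is shorter and self-contained, while the paper's makes the ``pullback from the purely transcendental model'' philosophy explicit, which is thematically consistent with its foliation analogy. (One shared quirk: both proofs are silent on the degenerate case $m=0$, where the tower is constant of length $0$ and hence technically a $0$-foliation rather than an $\infty$-foliation by Definition \ref{n-foliation}; you at least flag it as a degenerate case, which is appropriate.)
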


\begin{proof}
Without loss of generality, we can assume that $W=W^s$ inside $K$, because the power towers of $W$ and $W^s$ on $K$ are equal, Theorem \ref{infty=s}.

Let $y_1,\ldots,y_m$ be a $p$-basis of $W$ over $k$, see Definition \ref{p-basis definition}. Let $x_1,\ldots,x_r$ be a separating transcendence basis of $K$ over $W$. Then $y_1,\ldots,y_m,x_1,\ldots,x_r$ form a $p$-basis of $K$ over $k$.
In particular, we have 
\begin{align*}
    k(x_1,\ldots,x_r)^s&=W,\\
    W(y_1,\ldots,y_m)^s&=K,\\
    k(y_1,\ldots,y_m,x_1,\ldots,x_r)^s &=K,
\end{align*}
where the upper index $s$ means separable closure in $K$.

(To shorten the notation in the rest of the proof, we will write $(x_i)$ for $(x_1,\ldots,x_r)$, $(y_j)$ for $(y_1,\ldots,y_m)$, etc.)

Example \ref{transcedental => infty-foli} says that the power tower of $k(x_i)$ on $k(x_i,y_j)$ is a $\infty$-foliation on $k(x_i,y_j)$. The rest of the proof pulls back this information to the power tower of $W$ on $K$. 

Let $n\ge 0$ be an integer. 
We have that $k(x_i, y_j^{p^n})^s=W_n$, where the separable closure is taken in $K$. 
Moreover, we claim that
\[
    k(x_i, y_j^{p^{n}})\otimes_{k(x_i, y_j^{p^{n+1}})} W_{n+1} \simeq W_{n+1}(y_j^{p^n}) = W_{n}
\]
via natural maps to $K$. 
Of course, we have that $W_{n+1}(y_j^{p^n}) \subset W_{n}$.
We also have $W_{n+1}(y_j^{p^n})=W_{n+1}(y_j^{p^n})^s\supset k(x_i, y_j^{p^n})^s=W_n$. Therefore $ W_{n+1}(y_j^{p^n}) = W_{n}$.

Finally, we can conclude that the dimension of $ k(x_i, y_j^{p^{n}})$ over $ k(x_i, y_j^{p^{n+1}})$ is the same as the dimension of $W_n$ over $W_{n+1}$, because the later comes from the former by extending coefficients to $W_{n+1}$ and this operation is flat. This finishes the proof.
\end{proof}

\subsubsection{Examples of $\infty$-Foliations}
We compute more examples of $\infty$-foliations, their Jacobson sequences, and their subalgebras.

\begin{defin}\label{formal subfields - definition}
    Let $K=k(x,y)$ be the field of rational functions in two variables over a perfect field $k$ of characteristic $p>0$.

    Let $A_i\in k$ for $i\ge 1$. We define a power tower $W_\bullet$ of a ``formal subfield''
    \[
\overline{W_\infty}=k(x+A_1y^p+A_2y^{p^2}+A_3y^{p^3}+\ldots)
    \]
    by the following formulas, $n\ge 1$,
\[
W_n \coloneqq k(x+A_1 y^p + A_2 y^{p^2} + \ldots+ A_{n-1} y^{p^{n-1}},y^{p^n}).
\]
\end{defin}

\begin{question}[Algebraic Solutions to $D$-Modules from Power Towers]
    We do not define any ``formal subfields'' in the above definition. From the above definition, we treat the symbol $\overline{W_\infty}$ as a suggestive label for the power tower. It is not an actual notion, nor is it an actual object. It is just a name, a label.

    Nevertheless, I believe that if a good functor of solutions for algebraic $D$-modules over arbitrary bases exists, then these ``formal subfields'' could be something like ``formal first integrals on the generic point'' from that functor. It is pure speculation. 
    
    Does it make any sense? Are there actual formal subfields?
\end{question}

It is clear that the above power towers of ``formal subfields'' are power towers on $K$. Below, we present calculations of their Jacobson sequences and their subalgebras of differential operators.

\begin{example}\label{examples}
Let $K=k(x,y)$ be rational functions in two variables over a perfect field $k$ of characteristic $p>0$.

Let $W_\bullet$ be a power tower on $K$ of a ``formal subfield'', Definition \ref{formal subfields - definition},
    \[
\overline{W_\infty}=k(x+A_1y^p+A_2y^{p^2}+A_3y^{p^3}+\ldots).
    \]

We compute Jacobson sequences and subalgebras of differential operators corresponding to $W_\bullet$, Theorem \ref{MainTheorem}(Galois-type Correspondence for Power Towers).  However, we will use symbols $\frac{1}{p^{m}!}\frac{\partial^{p^m}}{\partial y ^{p^m}}$ and $\frac{\partial}{\partial \left(y^{p^m}\right)}$, and $\frac{1}{p^{m}!}\frac{\partial^{p^m}}{\partial x ^{p^m}}$ and $\frac{\partial}{\partial \left(x^{p^m}\right)}$ interchangeably on all subfields of $K$. (We can do this without losing anything, because they are mapped to each other by differentials, Proposition \ref{symbol.differential}.)

We begin with the Jacobson sequence.

The first field extension $K/W_1$ is $k(x,y)\supset k(x,y^p)$. This corresponds to a $p$-Lie algebra $\F_1$ on $K$ spanned by the vector field $\frac{\partial}{\partial y}$. 

The second extension $W_1/W_2$ is $k(x,y^p)\supset k(x+A_1y^p,y^{p^2})$.  The relative tangent space $T_{W_1/W_1^p}$ can be computed using $x,y$. Indeed, it is spanned by 
\begin{center}
    $\frac{\partial}{\partial y^p}$ and $\frac{\partial}{\partial x}$.
\end{center} 
Knowing this, we clearly see that the extension $W_1/W_2$ corresponds to a $p$-Lie algebra $\F_2$ on $W_1$ spanned by a vector field 
\[
\frac{\partial}{\partial y^p}-A_1\frac{\partial}{\partial x}.
\]
This is a $p$-Lie algebra, because the $p$-power of $\frac{\partial}{\partial y^p}-A_1\frac{\partial}{\partial x}$ is zero, because $A_1\in k$ (!). 

The third extension $W_2/W_3$ is 
\[
k(x+A_1y^p,y^{p^2})\supset k(x+A_1y^p+A_2y^{p^2}, y^{p^3}).
\]
From here, the computation of a relative tangent space $T_{W_2/W_2^p}$ in terms of $x,y$ gets harder. 
Indeed, we can use the fact that that the differential $d(K/W_2)$ is surjective, Theorem \ref{SES for diff - fields - theorem}, 
to find differential operators on $K$ that are mapped onto a basis of $T_{W_2/W_2^p}$.
It is easy to check that the operators 
\begin{center}
    $\frac{\partial}{\partial y^{p^2}}-A_1^p\frac{\partial}{\partial x^p}$ and $\frac{\partial}{\partial x}$
\end{center} 
are mapped to $T_{W_2/W_2^p}$ and span it. (My method: I have guessed them.)
In this basis, the extension $W_2/W_3$ corresponds to a $p$-Lie algebra $\F_3$ on $W_2$ spanned by a vector field 
\[
\frac{\partial}{\partial y^{p^2}}-A_1^p\frac{\partial}{\partial x^p} - A_2\frac{\partial}{\partial x}.
\]

We continue similarly for extensions $W_{n-1}/W_n$.
Let $n\ge 3$. We have that $T_{W_{n-1}/W_{n-1}^p}$ is spanned by the operators
\begin{center}
 $\frac{\partial}{\partial y^{p^{n-1}}}-A_1^{p^{n-2}}\frac{\partial}{\partial x^{p^{n-2}}} - A_2^{p^{n-3}}\frac{\partial}{\partial x^{p^{n-3}}}-\ldots-A_{n-2}^p\frac{\partial}{\partial x^{p}}$ and $\frac{\partial}{\partial x}$.   
\end{center}
 And, the extension $W_{n-1}/W_{n}$ corresponds to a $p$-Lie algebra $\F_n$ on $W_{n-1}$ spanned by the vector field 
\[
\frac{\partial}{\partial y^{p^{n-1}}}-A_1^{p^{n-2}}\frac{\partial}{\partial x^{p^{n-2}}} - A_2^{p^{n-3}}\frac{\partial}{\partial x^{p^{n-3}}}-\ldots-A_{n-1}\frac{\partial}{\partial x}.
\]

Finally, by Theorem \ref{unpacking - Thm}(Unpacking), we can conclude that $\op{Diff}_{W_\bullet}(K)$ is generated by the operators
\begin{align*}
    &\frac{\partial}{\partial y},\\
    &\frac{\partial}{\partial y^p}-A_1\frac{\partial}{\partial x},\\
    &\ldots,\\
    &\frac{\partial}{\partial y^{p^{n-1}}}-A_1^{p^{n-2}}\frac{\partial}{\partial x^{p^{n-2}}} - A_2^{p^{n-3}}\frac{\partial}{\partial x^{p^{n-3}}}-\ldots-A_{n-1}\frac{\partial}{\partial x},\\
    &\ldots
\end{align*}
that form a set of well-chosen distinguished generators for the subalgebra $\op{Diff}_{W_\bullet}(K)$, Definition \ref{distinguished generators - def}. One of its completions, Definition \ref{completion of distinguished generators - def}, is the set $\frac{\partial}{\partial x},\frac{\partial}{\partial x^p},\frac{\partial}{\partial x^{p^2}},\ldots$.

This finishes the computation of these examples.
\end{example}

\newpage
\section{Power Towers on Varieties}

We extend the theory of power towers from fields to normal varieties. We discuss why we assume ``geometrically connected'' everywhere. This is related to discussing a ``natural perfect field of definition'' for a variety. After explaining this, we set up definitions to prove the Galois-type correspondence for power towers on varieties. It follows from the case for fields. In particular, there is a bijection between the power tower on a variety and the power towers on its generic point. The next sections are about the properties of power towers, which actually depend on the variety. We study the regularity of power towers. We show that these power towers are ``trivial'' at any closed point in some formal coordinates, a Frobenius-type theorem. However, immediately after that, we show how ``trivial'' it is at one point may differ from how ``trivial'' it is at another point. This leads to a stratification of the variety. Then, we discuss a class of power towers inspired by Ekedahl's paper \cite{EkedahlFoliation1987}. We discuss when a nice morphism, a fibration, is a power tower.

\subsection{Natural Perfect Base}

 Our definition of a variety is \cite[Definition 020D]{stacks-project}. We repeat:

\begin{defin}[Variety]\label{variety - definition}
    Let $X$ be a scheme over a field $k$. The scheme $X$ is a \emph{variety} over the field $k$ if it is integral, and the morphism $X\to \op{Spec}(k)$ is separated and of finite type.
\end{defin}

\begin{remark}
        If $X$ is an integral scheme, then we denote its \textbf{field of rational functions} by $K(X)$. This $K$ is a notation from \cite{Hartshorne}, not the base field of $X$. In particular, one can use this notation for schemes that are not over any field, such as $\op{Spec}(\mathbb{Z})$, i.e., $K(\op{Spec}(\Z))=\mathbb{Q}$. This notation is compatible with the notation used in the previous chapter about power towers on fields, where we often have a big field $K$ and its perfection $k$. This chapter often has $K=K(X)$ and the base field $k$.
\end{remark}

In this chapter, we will encounter many ``geometric \emph{something}'' notions, e.g., \emph{something} may equal connected, reduced, normal, regular. However, instead of enlisting all lemmas about them here, we will organically cite them inside proofs as we go. Nevertheless, the notion of geometrically connected requires its own paragraph, as it will be almost universally present.

To develop the power tower theory for varieties, we will assume our varieties to be geometrically connected. 
We will assume this hypothesis for freely moving between a variety $X/k$ and its generic point $K(X)/k$ without doing any twists/pullback/changes, ensuring that $k$ is the perfection of $K=K(X)$. We do not lose any generality by doing so, because any variety $X/k$ over a perfect field $k$ admits a factorization through its perfection, the below Lemma \ref{natural perfect base - lemma}:
\[
X\to\op{Spec}(K(X)^{p^\infty}) \to \op{Spec}(k).
\]
So, at any moment we can replace $k$ with $K(X)^{p^\infty}$ and then the variety $X/K(X)^{p^\infty}$ is geometrically connected by the below Proposition \ref{geoemtrically connected over perfect - prop}. This discussion belongs to a classical question: ``What is a good field of definition for a given variety?''.

\begin{lemma}[Natural Perfect Base Field]\label{natural perfect base - lemma}
    Let $X/k$ be a variety over a perfect field of characteristic $p>0$. Then, we have a factorization
    \[
X\to\op{Spec}(K(X)^{p^\infty}) \to \op{Spec}(k).
    \]
    In particular, the field extension $K(X)^{p^\infty}/k$ is finite. 
    Moreover, if $U=\op{Spec}(A)$ is an affine open subset of $X$, then $K(X)^{p^\infty} \subset A$ is the greatest subfield inside $A$, and it equals $A^{p^\infty}=\cap_{n\ge 0} A^{p^n}$.
\end{lemma}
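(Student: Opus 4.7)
The plan is to identify $K(X)^{p^\infty}$ with the separable closure of $k$ in $K(X)$ (as in Lemma~\ref{SeparableClosure: definition}, applied to the extension $K(X)/k$), to deduce its finiteness over $k$ from the finite generation of $K(X)/k$, and then to use normality of $X$ (the implicit hypothesis throughout this chapter on normal varieties) for the factorization and the identifications with the coordinate ring~$A$. First, I will establish that $K(X)^{p^\infty}$ is algebraic over $k$.

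For algebraicity I argue by contradiction. Suppose $\alpha\in K(X)^{p^\infty}$ is transcendental over $k$. Completing $\alpha$ to a transcendence basis $\alpha,x_2,\ldots,x_r$ of $K(X)/k$, the field $K(X)$ is finite-dimensional over $F:=k(\alpha,x_2,\ldots,x_r)$. But the definition of the perfection forces $p^n$-th roots $\alpha^{1/p^n}\in K(X)$ for every $n$, and the tower $F\subsetneq F(\alpha^{1/p})\subsetneq F(\alpha^{1/p^2})\subsetneq\ldots\subset K(X)$ grows with $[F(\alpha^{1/p^n}):F]=p^n$, contradicting the finite-dimensionality of $K(X)/F$. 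Hence $K(X)^{p^\infty}$ is algebraic over $k$, and by Corollary~\ref{Over Perfect is Separable} separable algebraic. Since $K(X)/k$ is finitely generated, its separable closure $k^s\subset K(X)$ is a finite extension of $k$ (a standard consequence of the additivity of transcendence degree); together with the automatic inclusion $k^s\subset K(X)^{p^\infty}$ (coming from perfectness of $k^s$), this gives $K(X)^{p^\infty}=k^s$ and $[K(X)^{p^\infty}:k]<\infty$.

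Normality will take care of the factorization and the equality $K(X)^{p^\infty}=A^{p^\infty}$. For an affine open $\op{Spec}(A)\subset X$, the ring $A$ is integrally closed in $K(X)$, and each element of $k^s$ is algebraic over $k\subset A$, hence integral over $A$, so lies in $A$. This yields $K(X)^{p^\infty}\subset A$ and the desired factorization. The inclusion $A^{p^\infty}\subset K(X)^{p^\infty}$ is immediate, and the reverse follows because $K(X)^{p^\infty}\subset A$ is perfect.

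For the \emph{greatest subfield} claim, I will show that any subfield $L\subset A$ containing $k$ is algebraic over $k$ by a Krull dimension argument: $A=k[a_1,\ldots,a_N]=L[a_1,\ldots,a_N]$ is finitely generated both as a $k$-algebra and as an $L$-algebra, so $\dim A=\op{trdeg}_k K(X)=\op{trdeg}_L K(X)$, forcing $\op{trdeg}_k L=0$ and thus $L\subset k^s=K(X)^{p^\infty}$. The main obstacle is the case of a subfield $L$ not containing $k$: I plan to reduce to the previous case by considering the compositum $L\cdot k$ in $K(X)$ and verifying it already lies in $A$ (using normality), or, as a robust fallback, to invoke the Rosenlicht--Samuel finiteness theorem for unit groups on affine varieties to rule out transcendental elements in $L$ directly.
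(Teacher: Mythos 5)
You take a genuinely different route from the paper: the paper reaches for Noether normalization and declares a ``greatest subfield $k'$ inside $A$'', while you identify $K(X)^{p^\infty}$ with the separable algebraic closure $k^s$ of $k$ in $K(X)$ via a correct transcendence-basis argument, Corollary~\ref{Over Perfect is Separable}, and finite generation. This is cleaner for the finiteness of $K(X)^{p^\infty}/k$. One caution: the lemma is stated for arbitrary varieties and the paper's proof never invokes normality, so ``the implicit hypothesis throughout this chapter'' misdescribes this particular statement --- normality enters only from Definition~\ref{Power Tower on varieties - def} onward, not in Lemma~\ref{natural perfect base - lemma} or Proposition~\ref{geoemtrically connected over perfect - prop}. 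That said, the restriction you impose is not gratuitous: the statement actually fails without some such hypothesis. For $p$ odd and $a\in\mathbb{F}_p$ a non-square, $A=\mathbb{F}_p[x,y]/(y^2-ax^2)$ is a variety over $\mathbb{F}_p$ with $K(X)=\mathbb{F}_{p^2}(x)$, so $K(X)^{p^\infty}=\mathbb{F}_{p^2}$, yet the grading of $A$ (with $\deg x=\deg y=1$) shows no $b\in A$ satisfies $b^2=a$, so $\mathbb{F}_{p^2}$ does not embed in $A$. So present normality as a deliberate correction to the statement, not as something the paper assumed.

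The real gap is the greatest-subfield claim. Your dimension argument is sound for subfields $L\supset k$, but the compositum reduction for $L\not\supset k$ is circular: to conclude $L\cdot k\subset A$ via normality you need $L\cdot k$ integral over $k$, i.e.\ $L$ algebraic over $k$, which is exactly what you are trying to prove. The Rosenlicht--Samuel fallback does close it: if some $\alpha\in L$ were transcendental over the integral closure $\tilde k$ of $k$ in $A$, the infinitely many monic irreducibles of $\mathbb{F}_p[T]$, evaluated at $\alpha$, would generate a free abelian subgroup of infinite rank in $A^\times/\tilde k^\times$, contradicting finite generation; hence every subfield of $A$ lies in $\tilde k$, which is a perfect algebraic extension of $k$ and so lies inside $K(X)^{p^\infty}$. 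You should commit to that argument and write it out rather than leave it as an unexplored alternative --- though, to be fair, the paper's own proof posits the greatest subfield without establishing its existence, so it is no more complete here.
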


\begin{proof}
    Without loss of generality, we can assume that $X$ is affine and it is given by a ring $A$, then by the Noether normalization theorem, we have
    \[
    k[t_1,\ldots,t_r]\to A,
    \]
    where the arrow is a finite morphism, and $t_1,\ldots, t_r$ are free variables. This morphism factorizes
    \[
    k[t_1,\ldots,t_r]\to k'[t_1,\ldots,t_r] \to A,
    \]
    where $k'$ is the greatest subfield inside $A$\footnote{In general, the variables $t_i$ won't be free in $k'[t_1,\ldots,t_r]$. They will be the only generators. We just abuse the notation.} (so, the first arrow adds algebraic functions that do not involve any $t_i$, this happens finitely many times, and the second arrow adds algebraic functions involving some $t_i$ nontrivially), and $k'=A^{p^\infty}$ (all functions involving the variables $t_i$ disappear by taking the perfection), because $k'$ is perfect by being a finite extension of a perfect field. (The extension $k'/k$ is finite, because otherwise $A/k$ is not of finite type.) Finally, we can conclude that 
    \[ 
    A^{p^\infty}=k'=K(X)^{p^\infty}.
    \]
    This proves the lemma.
\end{proof}

\begin{remark}
    In Lemma \ref{natural perfect base - lemma}, the assumption that $k$ is perfect is essential. Indeed, here is a counterexample.
    
    Let $k=l(x)$, where $l$ is a perfect field. Let $A=k[t]$. Then $K(X)=l(x,t)$, so $K(X)^{p^\infty}=l$. Since it would have a nontrivial kernel, there cannot be any ring map $f:k\to l$.
\end{remark}

\begin{defin}[{\cite[Definition 0362]{stacks-project}}]
    A scheme $X$ over a field $k$ is \emph{geometrically connected} if for every field extension $k'/k$ the base change 
    \[
    X_{k'}\coloneqq X\times_{\op{Spec}(k)} \op{Spec}(k')
    \]
    is connected.
\end{defin}

\begin{prop}\label{geoemtrically connected over perfect - prop}
Let $X$ be a variety over a perfect field $k$ of characteristic $p>0$. Then, $X$ is geometrically connected if and only if $K(X)^{p^\infty}=k$.
\end{prop}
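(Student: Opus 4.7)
My plan is to reduce the geometric statement to the purely field-theoretic fact that ``$K(X)^{p^\infty}=k$ iff $k$ is algebraically closed in $K(X)$'' and then invoke standard characterizations of geometric connectedness of a variety.

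First, I would establish the algebraic reformulation: $K(X)^{p^\infty}=k \iff k$ is algebraically closed in $K(X)$. One inclusion is immediate from Lemma \ref{natural perfect base - lemma}, since $K(X)^{p^\infty}/k$ is finite and hence the perfection sits inside the algebraic closure of $k$ in $K(X)$. For the reverse inclusion, I would check the elementary fact that any algebraic extension of a perfect field is again perfect (reducing to the finite separable case and using that the Frobenius bijection forces $[L':L'^p]=1$ when $[L'^p:k]=[L':k]$). Together with Lemma \ref{The Largest Perfect Subfield}, this places the algebraic closure of $k$ in $K(X)$ inside $K(X)^{p^\infty}$, yielding the claimed equivalence.

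Next, the forward implication $(\Rightarrow)$ I would prove by contrapositive. Assume $k':=K(X)^{p^\infty}\supsetneq k$. By Lemma \ref{natural perfect base - lemma}, the structure map factors as $X\to\op{Spec}(k')\to\op{Spec}(k)$, and $k'/k$ is a finite separable extension (finite by the lemma, separable because $k$ is perfect). Base changing to an algebraic closure $\overline{k}/k$ and using the splitting $k'\otimes_k\overline{k}\simeq\overline{k}^{[k':k]}$ for finite separable extensions, I would compute
\[
X_{\overline{k}}\;\simeq\; X\times_{k'}\bigl(k'\otimes_k\overline{k}\bigr)\;\simeq\;\bigsqcup_{i=1}^{[k':k]} X\times_{k'}\overline{k},
\]
a disjoint union of at least two nonempty clopen pieces, so $X_{\overline{k}}$ is disconnected and $X$ is not geometrically connected.

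For the converse $(\Leftarrow)$, assuming $K(X)^{p^\infty}=k$ means $k$ is algebraically closed in $K(X)$. Since $k$ is perfect, Corollary \ref{Over Perfect is Separable} gives that $K(X)/k$ is separable. These are exactly the two classical criteria for $X/k$ to be \emph{geometrically integral}: separability of $K(X)/k$ supplies geometric reducedness via MacLane, while $k$ being algebraically closed in $K(X)$ supplies geometric irreducibility (one shows $K(X)\otimes_k\overline{k}$ is a domain, so its spectrum is irreducible, and $X_{\overline{k}}$ is the closure of this irreducible generic fiber). Since geometrically integral implies geometrically connected, we are done. The only nontrivial input is the standard field-theoretic fact that $K(X)\otimes_k\overline{k}$ is a domain under the two hypotheses above; this is the main obstacle in the sense that it is where the combination of perfectness of $k$ and algebraic-closedness of $k$ in $K(X)$ is genuinely used, and I would cite it from a standard reference rather than reprove it.
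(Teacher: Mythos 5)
Your proposal is correct, and it takes a genuinely different route from the paper's in two places. First, to show that $K(X)^{p^\infty}=k$ implies $k$ is algebraically closed in $K(X)$, the paper invokes its own Corollary \ref{infty=s refined - Corollary} (from the power-tower theory) to see that $k$ is separably closed in $K(X)$ and then combines this with perfectness; you instead derive the full equivalence ``$K(X)^{p^\infty}=k$ iff $k$ is algebraically closed in $K(X)$'' by the elementary observation that an algebraic extension of a perfect field is perfect, together with Lemma \ref{The Largest Perfect Subfield}. Your route is self-contained in basic field theory and does not reach forward into the power-tower machinery, which is cleaner for a reader encountering this statement before Section~4 has been fully digested; the paper's route has the virtue of illustrating the power-tower toolbox. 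Second, for the $(\Leftarrow)$ implication the paper cites the Stacks Project lemma (04KV) to pass directly from ``$k$ algebraically closed in $K(X)$'' to geometric connectedness, whereas you establish the strictly stronger fact that $X$ is geometrically integral, by splitting into geometric reducedness (MacLane/separability over a perfect base) and geometric irreducibility ($k$ algebraically closed in $K(X)$). Both are valid; your route proves more but requires the standard criterion that $K(X)\otimes_k\overline{k}$ is a domain, which you reasonably defer to a reference. Your $(\Rightarrow)$ direction is essentially identical to the paper's: the paper disconnects $X_{k'}$ using $\op{Spec}(k')\times_k\op{Spec}(k')$, you disconnect $X_{\overline{k}}$ using $k'\otimes_k\overline{k}\simeq\overline{k}^{[k':k]}$; same mechanism, different base change.
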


\begin{proof}
    ($\Leftarrow$) If the equality $K(X)^{p^\infty}=k$ is true, then we can apply \cite[Lemma 04KV]{stacks-project} to conclude that $X$ is geometrically connected. Indeed,
    this lemma says that: If $X$ is a scheme over a field $k$ and it has a point $x\in X$ such that $k$ is algebraically closed in $\kappa(x)$, then $X$ is geometrically connected. 
    
    We apply this lemma to the generic point of our $X$ to learn that if $k$ is algebraically closed in $K(X)$, then $X$ is geometrically connected. Now, we assume that $k$ is the perfection of $K(X)$, so it is perfect, and it is separably closed in $K(X)$ by Theorem \ref{infty=s refined - Corollary}(Subfields into Power Towers). Therefore, $k$ is algebraically closed in $K(X)$. This means that $X$ is geometrically connected. 

    ($\Rightarrow$) We have the inclusion of perfect fields $k\subset K(X)^{p^\infty}$, because $K(X)^{p^\infty}$ is the largest perfect subfield of $K(X)$, Lemma \ref{The Largest Perfect Subfield}. 
    
    By Lemma \ref{natural perfect base - lemma}, we have a factorization:
    \[
    X\to\op{Spec}(K(X)^{p^\infty}) \to \op{Spec}(k).
    \]
    
    And, we have that $\op{Spec}(K(X)^{p^\infty}) \to \op{Spec}(k)$ corresponds to a finite field extension. So, it is algebraic separable, because $k$ is perfect. 
    
    Finally, if we have $k\ne K(X)^{p^\infty}$, then $\op{Spec}(K(X)^{p^\infty}) \times_{\op{Spec}(k)} \op{Spec}(K(X)^{p^\infty})$ is disconnected. Therefore, $X_{K(X)^{p^\infty}}$ is disconnected. This is a contradiction with $X$ being geometrically connected. Therefore, we proved that $k= K(X)^{p^\infty}$.
\end{proof}

\begin{remark}[Why is ``$k=K(X)^{p^\infty}$'' everywhere?]
    Lemma \ref{natural perfect base - lemma} says that for every variety $X$ over a perfect field $k$ of characteristic $p>0$ there exists the factorization
    \[
    X\to\op{Spec}(K(X)^{p^\infty}) \to \op{Spec}(k).
    \]
    Therefore, every variety $X$ in positive characteristic is naturally a variety over $\op{Spec}(K(X)^{p^\infty})$ that is a perfect field. It means that every variety in positive characteristic admits a \emph{natural perfect base field} over which it is defined.

    Consequently, in the following sections, the assumption that $k=K(X)^{p^\infty}$ is not super essential and in most cases could probably be dropped, because we can pretty much always switch $k$ to be the natural perfect base $K(X)^{p^\infty}$. Still, it would be a drag to repeat it in every proof. Moreover, I enjoy being able to recover the base field from the tower of $p$-powers whenever I want, because this is recovering the trivial fibration $X\to \op{Spec}(k)$. Therefore, I assume it everywhere. This helps my sanity because, for me, the tower of $p$-powers is the ``foliation'' corresponding to the variety itself, and I do not wish to have any twists around that.
\end{remark}

\subsection{Basic Theory Including Galois-type Correspondence}
We define power towers on varieties and show that they are in an explicit bijection with power towers on the generic point. We use it to conclude a Galois-type correspondence for power towers on varieties from the one for fields via taking normalizations and saturations of relevant notions.

\subsubsection{Power Towers} Please, recall Definition \ref{purely inseparable - varieties - def}.

\begin{convention}[All is $k$-linear here]\label{All is k-linear - convention}
    All our purely inseparable morphisms $X\to Y$ between varieties will be twisted to be $k$-linear, i.e., morphisms of $k$-varieties, and to fit $X\to Y\to X^{(n)}$ for some $n$. See Definition \ref{relative Frob twist - def} for how to do these twists. Nothing changes, but this is always assumed, and we will not mention it again.
\end{convention}

\begin{defin}\label{Power Tower on varieties - def}
    Let $X$ be a normal variety over a perfect field $k$ of characteristic $p>0$ satisfying $k=K(X)^{p^\infty}$.

    A \emph{\textbf{power tower}} on the variety $X$ is a sequence of purely inseparable morphisms $f_i: X\to Y_i$ between normal varieties over $k$ for $i\ge 0$, such that the sequence
    \[
    (K(Y_0),K(Y_1),K(Y_2),\ldots)
    \]
    is a power tower on $K(X)$; we denote it by $K(Y_\bullet)$. And, we denote the power tower $f_0,f_1,f_2,\ldots$ by $f_\bullet$, or $Y_\bullet$.
\end{defin}

\begin{lemma}[``On $X$'' equals ``On $K(X)$'']\label{power towers on X = power towers on K(X) - lemma}
    Let $X$ be a normal variety over a perfect field $k$ of characteristic $p>0$ satisfying $k=K(X)^{p^\infty}$.
    Then, there is a bijection between power towers on $X$ and power towers on $K(X)$.
\end{lemma}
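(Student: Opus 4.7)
My plan is to reduce the statement directly to Proposition \ref{purely inseparable equiv on fields}, which already provides the fiber-level bijection between purely inseparable morphisms from $X$ into normal $k$-varieties and purely inseparable subfields of $K(X)$ of finite exponent. The only thing to check is that this bijection, applied term-by-term, transports the ``power tower'' condition on both sides to itself.

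First I would define the forward map. Given a power tower $f_\bullet = (f_i : X \to Y_i)_{i\ge 0}$ on $X$, set $W_i \coloneqq K(Y_i) \subset K(X)$. By Definition \ref{Power Tower on varieties - def}, the sequence $(W_0, W_1, W_2, \ldots)$ is already required to be a power tower on $K(X)$, so the map $f_\bullet \mapsto K(Y_\bullet)$ is well defined and lands in the target set.

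Next I would construct the inverse. Let $W_\bullet$ be a power tower on $K(X)$. By Observation \ref{observation W_n is exponent <=n}, each $W_n$ is a subfield of $K(X)$ of exponent at most $n$, in particular of finite exponent. By Proposition \ref{purely inseparable equiv on fields} applied to each $n$, there is a unique normal $k$-variety $Y_n$ together with a purely inseparable morphism $f_n : X \to Y_n$ (factoring as $X \to Y_n \to X^{(n)}$ after the $k$-linear twist fixed in Convention \ref{All is k-linear - convention}) such that $K(Y_n) = W_n$. The resulting sequence $f_\bullet = (f_n)$ is then a power tower on $X$ in the sense of Definition \ref{Power Tower on varieties - def}, because by construction $K(Y_\bullet) = W_\bullet$ is a power tower on $K(X)$.

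Finally I would check that these two assignments are mutually inverse. In one direction, starting with $f_\bullet$, applying the forward map gives $W_\bullet = K(Y_\bullet)$, and then applying the inverse map returns, for each $n$, the unique purely inseparable morphism $X \to Y'_n$ with $K(Y'_n) = K(Y_n)$; by the uniqueness part of Proposition \ref{purely inseparable equiv on fields}, $Y'_n = Y_n$, so we recover $f_\bullet$. In the other direction, starting with $W_\bullet$, the inverse produces $f_n : X \to Y_n$ with $K(Y_n) = W_n$, and the forward map then returns exactly $W_\bullet$. There is no substantive obstacle here: all the work is already carried out in Proposition \ref{purely inseparable equiv on fields}, and the definition of a power tower on $X$ was arranged precisely so that this term-by-term reduction is tautological.
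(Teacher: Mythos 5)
Your proof is correct and takes essentially the same route as the paper: both reduce termwise to Proposition~\ref{purely inseparable equiv on fields} and invoke its uniqueness to show the two assignments are mutually inverse. The only (harmless) extra detail you supply is the explicit appeal to Observation~\ref{observation W_n is exponent <=n} to guarantee each $W_n$ has finite exponent before applying the proposition, which the paper leaves implicit.
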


\begin{proof}
    Every power tower on $X$ induces a power tower on $K(X)$ by Definition \ref{Power Tower on varieties - def}. And, any power tower $W_\bullet$ on $K(X)$ can be ``lifted'' to a power tower on $X$. Indeed, by Proposition \ref{purely inseparable equiv on fields}, for every subfield $K(X)\supset W_i$ there exists a unique purely inseparable morphism $f_i: X\to Y_i$ between normal varieties such that $W_i=K(Y_i)$. Collecting all these morphisms defines the power tower on $X$ lifting $W_\bullet$.
\end{proof}

There are MANY arrows in a definition of a power tower. We label all of them below.

\begin{convention}[Notation for Power Towers]\label{power tower on variety - notation}
   Let $X$ be a normal variety over a perfect field $k$ of characteristic $p>0$ satisfying $k=K(X)^{p^\infty}$.
   Let $Y_\bullet$ be a power tower on $X$.

   Then, every morphism $f_i: X\to Y_i$ is of exponent at most $i$. (Its exponent is the exponent of $K(X)\supset K(Y_i)$) 
   And, they factorize through each other. We denote the between morphisms by 
   \[
   f'_i: Y_{i-1}\to Y_i.
   \]
   Each of the morphisms $f'_i$ is of exponent at most $1$ by Lemma \ref{Subsequent Extensions are of exponent 1 - lemma}. We denote the second factor of the factorization of $F_{Y_{i-1}/k}$ by 
   \[
   g_i: Y_i\to Y_{i-1}^{(1)}.
   \]
   Consequently, all these morphisms fit into the following diagram.

   \begin{center}
    \begin{tikzcd}
        X=Y_0 \ar[r,"f_1=f'_1"] & Y_{1}\ar[d,"f'_2"] \ar[r,"g_1"] & X^{(1)}\ar[d,"f^{(1)}_1"]&\\
        X \ar[r,"f_2"]\ar[u,equal] & Y_{2}\ar[d,"f'_3"] \ar[r,"g_2"] & Y_{1}^{(1)}\ar[d,"f'^{(1)}_2"]\ar[r, "g_1^{(1)}"]&X^{(2)}\ar[d,"f_1^{(2)}"]\\
        X \ar[r,"f_3"]\ar[u,equal] & Y_{3} \ar[d,"f'_4"]\ar[r,"g_3"] & Y_2^{(1)}\ar[d,"f'^{(1)}_3"]\ar[r, "g_2^{(1)}"] &Y_1^{(2)}\ar[d,"{f'}_2^{(2)}"]\\
        \cdots \ar[r]\ar[u,equal] & \cdots \ar[r] & \cdots \ar[r] &\cdots
    \end{tikzcd}
\end{center}
    There are many of them, and then there are their $p^n$-power versions with the index $\bullet^{(n)}$ for $n\in \Z$, not only for $n\ge 0$. In many cases, we can go backwards, too, which is often a critical trick for some reasoning.
\end{convention}

\begin{convention}\label{power tower is P - convention}
    Generally, we will follow a convention that a power tower $Y_\bullet$ on $X$ is $P$, where $P$ is a property of power towers on fields, if the corresponding to $Y_\bullet$ power tower $K(Y_\bullet)$ on $K(X)$ is $P$.
\end{convention}

Here is an example for Convention \ref{power tower is P - convention}.

\begin{defin}
    Let $X$ be a normal variety over a perfect field $k$ of characteristic $p>0$ satisfying $k=K(X)^{p^\infty}$.
   Let $Y_\bullet$ be a power tower on $X$.
\begin{itemize}
    \item We say that the power tower $Y_\bullet$ is of length at most $n$, where $n\ge0$ is an integer, if the morphisms $f'_m$ are identities for $m>n$. It is of length $n$ if this $n$ is minimal. And, it is of \emph{finite length}, if an integer $n\ge 0$ exists, such that $Y_\bullet$ is of length at most $n$.
    \item We say that the power tower $Y_\bullet$ is a \emph{$n$-foliation on $X$}, where $n\ge0$ is an integer, if it is of length $n$ and the ranks of finite maps $f_1,f'_2,\ldots, f'_n$ are equal.
    \item We say that the power tower $Y_\bullet$ is a \emph{$\infty$-foliation on $X$} if the rank of finite maps $f_1,f'_2,f'_3,\ldots$ are equal.
\end{itemize}
\end{defin}

Again, we have the following result: Lemma \ref{Finite Length = finite exponent - lemma}.

\begin{lemma}[Purely Inseparable Morphisms inject into Power Towers]\label{purely inseparable morphisms into power towers - lemma}
    Let $X$ be a normal variety over a perfect field $k$ of characteristic $p>0$ satisfying $k=K(X)^{p^\infty}$.
    Then, there is a bijection between purely inseparable morphisms $f: X\to Y$ between normal varieties and power towers on $X$ of finite length.
\end{lemma}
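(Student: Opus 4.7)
The plan is to obtain this correspondence by chaining together results already established: translate power towers on $X$ into power towers on $K(X)$, then translate power towers of finite length on $K(X)$ into purely inseparable subfields of finite exponent, and finally translate such subfields back into purely inseparable morphisms from $X$. Each link of this chain is an explicit bijection, so the composition is as well.

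More concretely, I would first observe that, under Convention \ref{power tower is P - convention}, a power tower $Y_\bullet$ on $X$ is of length $n$ if and only if the associated power tower $K(Y_\bullet)$ on $K(X)$ is of length $n$, because the transition morphisms $f'_m: Y_{m-1}\to Y_m$ are identities for $m>n$ exactly when the inclusions $K(Y_{m-1})\supset K(Y_m)$ are equalities for $m>n$ (normal varieties are recovered from their fields of rational functions, cf.\ the proof of Proposition \ref{purely inseparable equiv on fields}). Combined with Lemma \ref{power towers on X = power towers on K(X) - lemma}, this gives a bijection between power towers on $X$ of finite length and power towers on $K(X)$ of finite length.

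Next, I would invoke Lemma \ref{Finite Length = finite exponent - lemma}, which produces a bijection between power towers on $K(X)$ of length $n$ and subfields of $K(X)$ of exponent $n$, via the operation $W_\bullet \mapsto W_n$, with inverse $W\mapsto (W\cdot K(X)^{p^i})_i$. Taking unions over all $n$ gives a bijection between power towers on $K(X)$ of finite length and purely inseparable subfields of $K(X)$ of finite exponent. Finally, Proposition \ref{purely inseparable equiv on fields} provides a bijection between purely inseparable subfields of $K(X)$ of finite exponent and purely inseparable morphisms $f:X\to Y$ with $Y$ a normal variety (sending $X\to Y$ to $K(Y)\subset K(X)$, with the inverse given by normalization).

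Composing the three bijections yields the desired correspondence. Explicitly, given a purely inseparable morphism $f:X\to Y$ of exponent $n$, the associated power tower has $Y_i$ equal to the normalization of $X^{(i)}$ in the compositum $K(Y)\cdot K(X)^{p^i}$ (so $Y_i=Y$ for $i\ge n$), and conversely a power tower of length $n$ is mapped to $f_n:X\to Y_n$. I expect no genuine obstacle: the only thing that requires care is checking that ``length $n$ on $X$'' matches ``length $n$ on $K(X)$'', which is an easy consequence of the uniqueness of the normalization construction used in Proposition \ref{purely inseparable equiv on fields}.
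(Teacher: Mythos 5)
Your proof is correct and follows exactly the same three-step chain as the paper's own argument: Proposition \ref{purely inseparable equiv on fields} to pass between purely inseparable morphisms and subfields of finite exponent, Lemma \ref{Finite Length = finite exponent - lemma} to pass between such subfields and finite-length power towers on $K(X)$, and Lemma \ref{power towers on X = power towers on K(X) - lemma} to pass between power towers on $K(X)$ and on $X$. The extra paragraph verifying that ``length $n$ on $X$'' matches ``length $n$ on $K(X)$'' is a reasonable piece of bookkeeping that the paper leaves implicit in Convention \ref{power tower is P - convention}.
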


\begin{proof}
    Let $f: X\to Y$ be a purely inseparable morphism between normal varieties. It is equivalent to the field extension $K(X)\supset K(Y)$ by Proposition \ref{purely inseparable equiv on fields}. This field extension is equivalent to a finite length power tower on $K(X)$ by Lemma \ref{Finite Length = finite exponent - lemma}. This power tower on $K(X)$ is equivalent to a finite length power tower on $X$ by Lemma \ref{power towers on X = power towers on K(X) - lemma}. 
\end{proof}

\subsubsection{Jacobson Sequences}
We define Jacobson sequences on varieties by saturating Jacobson sequences from generic points, Definition \ref{Jacobson Sequence - definition}. First, for convenience, we recall a definition of a $p$-Lie algebra on a variety.

\begin{defin}\label{p-lie algebra - variety - def}
Let $X$ be a normal variety over a perfect field $k$ of characteristic $p>0$.
A \emph{\textbf{$p$-Lie algebra}} on $X$ is a quasicoherent subsheaf $\F$ of the tangent sheaf $T_{X/k} \coloneqq \Omega_{X/k}^* = \op{Hom}_{\mathcal{O}_X}(\Omega_{X/k},\mathcal{O}_X)$ such that
    \begin{itemize}
        \item $\F$ is saturated, i.e., the quotient $T_{X/k}/\F$ is torsion-free,
        \item $\F$ is closed under Lie bracket, i.e., for every open subset $U\subset X$ and sections  $v,w\in \F(U)$ we have $[v,w]\in \F(U)$,
        \item $\F$ is closed under $p$-power, i.e., for every open subset $U\subset X$ and a section $v\in \F(U)$ we have $v^{\circ p}\in \F(U)$.
    \end{itemize}
\end{defin}

We recall that Ekedahl Correspondence \ref{Jacobson--Ekedahl Correspondence - proposition} says that there is a bijection between purely inseparable morphisms of exponent at most $1$ between normal varieties from $X$ and $p$-Lie algebras on $X$. It extends the Jacobson correspondence \ref{Jacobson Purely Inseparable Galois Theory for Exponent $1$} to varieties. In particular, if $X\to Y \to X^{(1)}$ corresponds to a $p$-Lie algebra $\F$, then $Y=\op{Spec}_{X^{(1)}}(\op{Ann}(\F))$, where $\op{Ann}(\F)$ is the sheaf of functions that are killed by derivations in $\F$.

\begin{defin}\label{Jacobson sequence - variety - definition}
    Let $X$ be a normal variety over a perfect field $k$ of characteristic $p>0$ satisfying $k=K(X)^{p^\infty}$.
    A \emph{\textbf{Jacobson sequence}} on $X$ is a sequence of $p$-Lie algebras $\F_i$ for $i\ge 1$ such that:
    \begin{itemize}
        \item $\F_1$ is a $p$-Lie algebra on $X$,
        \item $\F_i$, for $i>1$, is a $p$-Lie algebra on $Y_{i-1}\coloneqq \op{Spec}_{Y_{i-2}^{(1)}}(\op{Ann}(\F_{i-1}))$,
        \item the sequence of vector spaces $\F_i \otimes K\left(Y_{i-1}\right)\subset T_{K\left(Y_{i-1}\right)/k}$ is a Jacobson sequence on $K(X)$, i.e., the following intersection is trivial, for $i>1$:
        \[
        \left(\F_i \otimes K\left(Y_{i-1}\right)\right) \cap T_{K\left(Y_{i-1}\right)/K\left(Y_{i-2}^{(1)}\right)}=0\subset T_{K\left(Y_{i-1}\right)/k}=T_{K\left(Y_{i-1}\right)/K\left(Y_{i-1}\right)^p}.
        \]
    \end{itemize}
    We denote the Jacobson sequence $\F_1,\F_2,\ldots$ by $\F_\bullet$.

    Moreover, we say that the Jacobson sequence $\F_\bullet$ is a \emph{saturation} of the Jacobson sequence $\F_1\otimes K\left(Y_{0}\right),\F_2\otimes K\left(Y_{1}\right),\ldots$ on $K(X)$, which we denote by $\F_\bullet\otimes K\left(Y_{\bullet-1}\right)$.
\end{defin}

\begin{remark}
    It is worth emphasizing that the last condition in Definition \ref{Jacobson sequence - variety - definition} is a generic point condition. It does not involve the variety. Because of this fact, measuring the quality of the saturation will bring us precious information about the variety, e.g., it will explain the behavior of canonical divisors for purely inseparable morphisms.
\end{remark}

\begin{lemma}\label{jacobson sequences on X and K(X) - lemma}
      Let $X$ be a normal variety over a perfect field $k$ of characteristic $p>0$ satisfying $k=K(X)^{p^\infty}$. Then, there is an explicit bijection between Jacobson sequences on $X$ and Jacobson sequences on $K(X)$.
    \end{lemma}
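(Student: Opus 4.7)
The plan is to exhibit the bijection as ``restrict to the generic point'' one way and ``take a tower of saturations'' the other, and then verify these are mutually inverse essentially term-by-term, reducing everything to Proposition~\ref{purely inseparable equiv on fields}, Theorem~\ref{MainTheorem}, Ekedahl Correspondence~\ref{Jacobson--Ekedahl Correspondence - proposition}, Lemma~\ref{power towers on X = power towers on K(X) - lemma}, and Lemma~\ref{Saturated are in bij with gen subspaces - lemma}.

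First I would define the forward map. Given a Jacobson sequence $\F_\bullet$ on $X$ with associated varieties $Y_0=X, Y_1, Y_2, \ldots$ (as in Definition~\ref{Jacobson sequence - variety - definition}), I send it to the sequence of generic fibers
\[
\F_\bullet \otimes K(Y_{\bullet -1}) \;=\; \bigl(\F_1 \otimes K(X),\ \F_2 \otimes K(Y_1),\ \F_3\otimes K(Y_2),\ \ldots\bigr).
\]
This is a Jacobson sequence on $K(X)$: the first $p$-Lie algebra condition is generic, the second condition is precisely the third bullet of Definition~\ref{Jacobson sequence - variety - definition}, and the underlying fields $K(Y_i)$ form a power tower on $K(X)$ by Proposition~\ref{purely inseparable equiv on fields}.

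Next I would construct the inverse. Starting from a Jacobson sequence $\G_\bullet$ on $K(X)$, Theorem~\ref{MainTheorem} produces a power tower of subfields $W_\bullet$ on $K(X)$ with $\op{Ann}(\G_i)=W_i$; by Lemma~\ref{power towers on X = power towers on K(X) - lemma} this lifts uniquely to a power tower $Y_\bullet$ of normal varieties over $k$ with $K(Y_i)=W_i$. For each $i\ge 1$, the morphism $Y_{i-1}\to Y_i\to Y_{i-1}^{(1)}$ is purely inseparable of exponent $\le 1$ between normal varieties (using that $Y_{i-2}^{(1)}\subset Y_i$ on fields implies the required factorization; for $i=1$ set $Y_{-1}^{(1)}\coloneqq X^{(1)}$). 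Ekedahl Correspondence~\ref{Jacobson--Ekedahl Correspondence - proposition} then assigns a $p$-Lie algebra $\F_i\coloneqq T_{Y_{i-1}/Y_i}$ on $Y_{i-1}$. I claim $\F_\bullet$ is a Jacobson sequence on $X$: saturation and the Ekedahl bijection give $\F_i\otimes K(Y_{i-1}) = T_{K(Y_{i-1})/K(Y_i)}=\G_i$, so the compatibility and intersection conditions in Definition~\ref{Jacobson sequence - variety - definition} follow automatically from the same conditions for $\G_\bullet$.

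Finally I would check the two compositions are the identity. Going $X\to K(X)\to X$: restricting $\F_i$ to $K(Y_{i-1})$ recovers $\G_i=T_{K(Y_{i-1})/K(Y_i)}$, and then the Ekedahl correspondence applied on $Y_{i-1}$ returns the unique saturated $p$-Lie subalgebra of $T_{Y_{i-1}/k}$ with that generic fiber; by Lemma~\ref{Saturated are in bij with gen subspaces - lemma} this is $\F_i$ itself since $\F_i$ was already saturated. Going $K(X)\to X\to K(X)$ is immediate from $\F_i\otimes K(Y_{i-1})=\G_i$. The main subtlety I expect to be trivial but worth spelling out is the inductive identification of the intermediate varieties $Y_i$ produced by the two constructions with the ones coming from the power tower of subfields $W_\bullet=(K(Y_i))_\bullet$; this is handled cleanly by invoking Proposition~\ref{purely inseparable equiv on fields} at every step, so no genuine obstacle arises.
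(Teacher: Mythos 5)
Your proposal is correct and rests on the same underlying idea as the paper: restrict to the generic point one way, saturate back the other, with Lemma~\ref{Saturated are in bij with gen subspaces - lemma} doing the real work. Where you differ is in how explicitly you package the inverse. The paper's proof is a two-sentence iteration of Lemma~\ref{Saturated are in bij with gen subspaces - lemma}: saturated subsheaves of $T_{Y_{i-1}/k}$ correspond to subspaces of $T_{K(Y_{i-1})/k}$, the $p$-Lie conditions transfer because brackets and $p$-powers of global sections that land generically in $\F$ land in the saturation $\F$, and the third (intersection) bullet of Definition~\ref{Jacobson sequence - variety - definition} is already stated at the generic point, so nothing further is needed. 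You instead route the inverse through Theorem~\ref{MainTheorem} on fields, then Lemma~\ref{power towers on X = power towers on K(X) - lemma}, then Ekedahl Correspondence~\ref{Jacobson--Ekedahl Correspondence - proposition} at each stage. That is a heavier toolchain, but it is not circular (all of those are proved before this lemma), and it has the minor advantage of making the inductive identification of the intermediate varieties $Y_i$ explicit via Proposition~\ref{purely inseparable equiv on fields}, which the paper handles with the phrase ``all conditions are essentially identical.'' Both approaches are fine; the paper's is shorter because it trusts the reader to carry the saturation argument through the induction on $i$, while yours spells out the bookkeeping.
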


\begin{proof}
    We recall that for a torsion-free coherent sheaf $M$ on a variety $X$, there is a bijection between saturated quasicoherent subsheaves of $M$ and vector subspaces of the restriction of $M$ to the generic point that will be denoted here by $M\otimes K(X)$, Lemma \ref{Saturated are in bij with gen subspaces - lemma}. This implies that there is a bijection between $p$-Lie algebras on $X$ and $p$-Lie algebras on $K(X)$.  This immediately implies that there is a bijection between Jacobson sequences, because all conditions for a sequence to be Jacobson are essentially identical for varieties and fields.
\end{proof}

\subsubsection{Saturated Subalgebras of Differential Operators}
For a variety $X$ over a field $k$, we can apply the definition of differential operators, Definition \ref{Differential Operators: Universal Definition}, locally. (It is analogous to the definition of $T_{X/k}$ and $\Omega_{X/k}$.) This leads to the formation of a sheaf of algebras on $X$ determined by
\[
\op{Diff}_k(X)(\op{Spec}(A))\coloneqq \op{Diff}_k(A),
\]
where $\op{Spec}(A)\subset X$ is an open affine subset. The sheaf $\op{Diff}_k(X)$ is torsion-free, and it admits a natural decomposition
\[
\op{Diff}_k(X)=\mathcal{O}_{X/k} \oplus \E(X/k),
\]
where $\E(X/k)$ is the subsheaf of operators that are zero on one, i.e., the operators $D$ such that $D(1)=0$.

Analogically, in characteristic $p>0$, one can define a sheaf of algebras
\[
\op{Diff}_{X^{(r)}}(X)(\op{Spec}(A))\coloneqq \op{Diff}_{A^{p^r}}(A)
\]
that is a sheaf of subalgebras of $\op{Diff}_k(X)$. (The quotient $\op{Diff}_k(X)/\op{Diff}_{X^{(r)}}(X)$ is torsion-free.) And, the collection of all of them is a $p$-filtration of the sheaf of algebras $\op{Diff}_k(X)$.

On fields, every subalgebra corresponds to a power tower; however, this is no longer true on varieties. We have to assume that our subalgebras are saturated. 

\begin{defin}\label{saturated subalgebras - variety - definition}
    Let $X$ be a normal variety over a perfect field $k$ of characteristic $p>0$ satisfying $k=K(X)^{p^\infty}$.
A \emph{\textbf{saturated subalgebra}} of differential operators on $X/k$ is a quasicoherent subsheaf $\mathcal{O}_{X/k}\subset \mathcal{D}\subset \op{Diff}_k(X)$ such that:
    \begin{itemize}
        \item $\mathcal{D}$ is saturated, i.e., the quotient $\op{Diff}_k(X)/\mathcal{D}$ is torsion-free,
        \item $\mathcal{D}$ is closed under composition, i.e., it is a sheaf of subalgebras.
    \end{itemize}

    The \emph{$p$-filtration} of the (saturated) subalgebra $\mathcal{D}$ is defined by, for $r\ge 0$:
    \[
    \mathcal{D}_r\coloneqq \op{Diff}_{X^{(r)}}(X)\cap \mathcal{D}.
    \]
\end{defin}

\begin{lemma}\label{saturated subalg and alg - lemma}
    Let $X$ be a normal variety over a perfect field $k$ of characteristic $p>0$ satisfying $k=K(X)^{p^\infty}$. Then, there is a bijection between saturated subalgebras of $\op{Diff}_k(X)$ and subalgebras of $\op{Diff}_k(K(X))$.
\end{lemma}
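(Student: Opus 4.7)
The plan is to reduce the statement to Lemma \ref{Saturated are in bij with gen subspaces - lemma}, applied order-by-order to the standard filtration of $\op{Diff}_k(X)$, and then verify that the closure-under-composition condition is preserved by saturation. The two operations of the bijection are the obvious ones: restriction to the generic point $\mathcal{D}\mapsto \mathcal{D}\otimes K(X)$ in one direction, and saturation inside $\op{Diff}_k(X)$ in the other.

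First I would check that each $\op{Diff}^{\le n}_k(X)$ is a torsion-free coherent $\cO_X$-module (it is locally $\op{Hom}_{\cO_X}(\mathcal{P}^{\le n}_{X/k}, \cO_X)$, and $\cO_X$ is torsion-free as $X$ is integral). Consequently $\op{Diff}_k(X)=\bigcup_n \op{Diff}^{\le n}_k(X)$ is a torsion-free quasicoherent sheaf with $\op{Diff}_k(X)\otimes K(X)=\op{Diff}_k(K(X))$, the latter identification being the standard one for the generic fiber of the filtration. The forward direction is then immediate: if $\mathcal{D}\subset \op{Diff}_k(X)$ is saturated and closed under composition, then $\mathcal{D}\otimes K(X)\subset \op{Diff}_k(K(X))$ is a $K(X)$-subalgebra because composition commutes with localization.

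For the inverse direction, given a $K(X)$-subalgebra $\mathcal{D}\subset \op{Diff}_k(K(X))$, I would define $\widetilde{\mathcal{D}}$ to be its saturation in $\op{Diff}_k(X)$ in the sense of Definition \ref{Saturation: Definition}, applied to each piece $\mathcal{D}^{\le n}\coloneqq \mathcal{D}\cap \op{Diff}^{\le n}_k(K(X))$ of the order filtration, and take the union. By Lemma \ref{Saturation is Saturated - Lemma} each $\widetilde{\mathcal{D}}^{\le n}$ is a saturated coherent subsheaf of $\op{Diff}^{\le n}_k(X)$, and since quotients and filtered colimits preserve torsion-freeness, $\widetilde{\mathcal{D}}=\bigcup_n \widetilde{\mathcal{D}}^{\le n}$ is a saturated quasicoherent subsheaf of $\op{Diff}_k(X)$ whose restriction to the generic point equals $\mathcal{D}$. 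That the two operations are mutually inverse follows from Lemma \ref{Saturated are in bij with gen subspaces - lemma} applied at each order.

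The main (and essentially only) obstacle is verifying that $\widetilde{\mathcal{D}}$ is closed under composition, i.e., is a subsheaf of algebras. The argument is the following: let $U\subset X$ be open and let $D_1,D_2\in \widetilde{\mathcal{D}}(U)$. By definition of saturation, $D_1,D_2\in \op{Diff}_k(X)(U)$ and their images in the generic fiber lie in $\mathcal{D}$. Composition of differential operators is a regular operation, so $D_1\circ D_2\in \op{Diff}_k(X)(U)$; it is also compatible with restriction to the generic point, so its image there equals the composition in $\mathcal{D}$, which remains in $\mathcal{D}$ because $\mathcal{D}$ is a subalgebra. Combining these two facts with the local description $\widetilde{\mathcal{D}}(U)=\mathcal{D}\cap \op{Diff}_k(X)(U)$ from Definition \ref{Saturation: Definition} gives $D_1\circ D_2\in \widetilde{\mathcal{D}}(U)$, as required. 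The containment $\cO_{X/k}\subset \widetilde{\mathcal{D}}$ is automatic since $\cO_{X/k}=\op{Diff}^{\le 0}_k(X)$ is already saturated and sits inside every $K(X)$-subalgebra that contains $K(X)$. This completes the proof.
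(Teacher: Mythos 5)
Your proof is correct and follows essentially the same route as the paper: both use the bijection of Lemma \ref{Saturated are in bij with gen subspaces - lemma} between saturated subsheaves and generic-fiber subspaces. The paper's own proof is a single sentence asserting that the lemma is a special case of Lemma \ref{Saturated are in bij with gen subspaces - lemma}; yours is more careful, and usefully so. That cited lemma is stated for a \emph{coherent} torsion-free sheaf $M$, while $\op{Diff}_k(X)$ is only quasicoherent (locally free of infinite rank), so your reduction to the order filtration $\op{Diff}^{\le n}_k(X)$, applying the lemma order by order and taking unions, genuinely closes a gap the paper leaves implicit. Likewise, the paper does not explicitly check that the saturation of a subalgebra is again closed under composition; your argument via the characterization $\widetilde{\mathcal{D}}(U)=\mathcal{D}\cap \op{Diff}_k(X)(U)$ does this cleanly. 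So: same approach, but a more complete execution than the one-liner in the paper.
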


\begin{proof}
    It is a special case of Lemma \ref{Saturated are in bij with gen subspaces - lemma}.
\end{proof}

We can sheafify the operation of computing constants of a subalgebra of differential operators from Theorem \ref{Topological Jacobson--Bourbaki Correspondence} to obtain a similar operation on varieties.

\begin{defin}
    Let $X$ be a normal variety over a perfect field $k$ of characteristic $p>0$ satisfying $k=K(X)^{p^\infty}$.
    Let $\mathcal{D}\subset \op{Diff}_k(K(X))$ be a saturated subalgebra on $X/k$. We define a subsheaf $\op{const}(\mathcal{D})$ of $\cO_{X/k}$ by the following values on affine open subsets $\op{Spec}(A)\subset X$:
    \[
    \op{const}(\mathcal{D})(\op{Spec}(A))\coloneqq \{f\in \cO_{X/k}(\op{Spec}(A)) : \ \forall_{D\in\mathcal{D}(\op{Spec}(A))} \ [D,f]=0\}.
    \]
\end{defin}

\begin{lemma}
    Let $X$ be a normal variety over a perfect field $k$ of characteristic $p>0$ satisfying $k=K(X)^{p^\infty}$.
    Let $\mathcal{D}\subset \op{Diff}_k(K(X))$ be a saturated subalgebra on $X/k$.
    Then, the sheaf $\op{const}(\mathcal{D}_n)$ is a quasicoherent sheaf of algebras on $X^{(n)}$.
\end{lemma}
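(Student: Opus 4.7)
The plan is to identify $\op{const}(\mathcal{D}_n)$ with the structure sheaf $\cO_{Y_n}$, where $Y_n\to X^{(n)}$ is the purely inseparable morphism corresponding, via Proposition \ref{purely inseparable equiv on fields}, to the subfield $W_n\coloneqq \op{const}(\mathcal{D}_{n,K}) \subset K(X)$ (here $K\coloneqq K(X)$ and $\mathcal{D}_{n,K}$ is the generic fiber of $\mathcal{D}_n$, so $W_n$ is the $n$-th floor of the power tower associated to $\mathcal{D}_K$ by Theorem \ref{MainTheorem}). Since $Y_n\to X^{(n)}$ is finite, its structure sheaf is a coherent, hence quasicoherent, sheaf of $\cO_{X^{(n)}}$-algebras, so this identification finishes the proof.

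First I would dispose of the algebra structure. The inclusion $\cO_{X^{(n)}}\subset \op{const}(\mathcal{D}_n)$ is immediate from $\mathcal{D}_n\subset \op{Diff}_{X^{(n)}}(X)$, whose sections are by definition $\cO_{X^{(n)}}$-linear operators. Closedness under addition is trivial, and for multiplication one checks directly that if $[D,f]=0$ and $[D,g]=0$ as operators, then $D(fgh)=fD(gh)=fgD(h)$ for every $h$, so $[D,fg]=0$. Hence $\op{const}(\mathcal{D}_n)$ is a subsheaf of $\cO_{X^{(n)}}$-subalgebras of $\cO_{X/k}$.

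The heart of the argument is the local identification: on every affine open $\op{Spec}(A)\subset X$, one has
\[
\op{const}(\mathcal{D}_n)(\op{Spec}(A))=W_n\cap A.
\]
The inclusion $\supset$ is formal: given $f\in W_n\cap A$ and $D\in \mathcal{D}_n(\op{Spec}(A))\subset \mathcal{D}_{n,K}$, the commutator $[D,f]$ vanishes in $K$ and lies in $A$, hence is zero. For $\subset$, I would exploit that $\mathcal{D}$ is saturated (Lemma \ref{saturated subalg and alg - lemma}): any $D\in\mathcal{D}_{n,K}$ restricts to a section of $\mathcal{D}_n$ on some dense open $V\subset\op{Spec}(A)$. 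If $f\in A$ commutes with every section in $\mathcal{D}_n(\op{Spec}(A))$, then by restricting to $V$ one has $f|_V\in\op{const}(\mathcal{D}_n)(V)$, so $[D,f|_V]=0$ on $V$ and therefore $[D,f]=0$ in $K$; this gives $f\in W_n$.

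Finally, $W_n\cap A$ is exactly $\cO_{Y_n}(\op{Spec}(A^{p^n}))$, the normalization of $A^{p^n}$ in $W_n$: every $w\in W_n\cap A$ satisfies the monic relation $T^{p^n}-w^{p^n}\in A^{p^n}[T]$, and conversely any element of $W_n$ integral over $A^{p^n}$ is integral over $A$, hence lies in $A$ by normality. The main obstacle I anticipate is the careful bookkeeping in the saturation step — checking that restriction of $D\in \mathcal{D}_{n,K}$ to a dense open interacts correctly with taking commutators against sections of $\cO_X$ — but once the local equality above is in place, quasicoherence on $X^{(n)}$ follows immediately from the fact that $\cO_{Y_n}$ is a finite $\cO_{X^{(n)}}$-algebra, and the algebra structure has already been established in the second paragraph.
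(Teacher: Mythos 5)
Your proof is correct, but it takes a noticeably heavier route than the paper does. The paper's own proof is a one-liner: it observes $\mathcal{D}_n\subset\op{Diff}_{X^{(n)}}(X)$, hence $\op{const}(\mathcal{D}_n)\supset\op{const}(\op{Diff}_{X^{(n)}}(X))=\cO_{X^{(n)}/k}$, and stops there. Quasicoherence is left implicit; the quickest way to supply it in that framework is to note that on an affine $\op{Spec}(A)$ one has $\op{const}(\mathcal{D}_n)=\bigcap_i\ker\bigl([D_i,-]:A\to\op{Diff}_{A^{p^n}}(A)\bigr)$ for a finite set of $\cO_X$-generators $D_i$ of the coherent sheaf $\mathcal{D}_n$; each $[D_i,-]$ is $A^{p^n}$-linear between coherent $A^{p^n}$-modules (using Diff=End), so the kernel localizes.

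You instead compute $\op{const}(\mathcal{D}_n)$ outright: you show $\op{const}(\mathcal{D}_n)(\op{Spec}(A))=W_n\cap A=\cO_{Y_n}(\op{Spec}(A^{p^n}))$, with $Y_n\to X^{(n)}$ the finite normalization corresponding to the $n$-th floor $W_n$ of the generic power tower, and read off quasicoherence from finiteness of $\cO_{Y_n}$ over $\cO_{X^{(n)}}$. This is more work than the lemma strictly requires, since it leans on the field-level Theorem~\ref{MainTheorem} and Proposition~\ref{purely inseparable equiv on fields}, and it essentially re-derives the content of the ``from saturated subalgebras to power towers'' direction of Theorem~\ref{MainTheorem - var - theorem}. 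On the other hand, it buys you something real: you now know exactly what $\op{const}(\mathcal{D}_n)$ \emph{is}, not merely that it is a quasicoherent algebra over $\cO_{X^{(n)}}$, and that identification is precisely what makes the formula $\mathcal{D}\mapsto\op{Spec}_{X^{(i)}}(\op{const}(\mathcal{D}_i))$ in the variety-level correspondence land in the right place. One small point of hygiene: in the $\subset$ direction you invoke saturation of $\mathcal{D}$, but what you actually use is only coherence of $\mathcal{D}_n$ (to clear denominators and extend a generic operator to a dense open); saturation is stronger and not needed there.
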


\begin{proof}
    The sheaf $\mathcal{D}_n$ is a subsheaf of $\op{Diff}_{X^{(r)}}(X)$, by its definition. Therefore, we have an inclusion of sheaves of algebras on $|X|$:
    \[
    \op{const}(\mathcal{D}_n)\supset \cO_{X^{(n)}/k} = \op{const}(\op{Diff}_{X^{(r)}}(X)).
    \]
    This proves the lemma.
\end{proof}

\subsubsection{Galois-type Correspondence for Power Towers on Varieties} The following is the analogous statement of Theorem \ref{MainTheorem} for varieties.

\begin{thm}\label{MainTheorem - var - theorem}
    Let $X$ be a normal variety over a perfect field $k$ of characteristic $p>0$ satisfying $k=K(X)^{p^\infty}$.
    Then, there are natural bijections between the following data on $X$:
\begin{enumerate}
    \item power towers on $X$,
    \item Jacobson sequences on $X$,
    \item saturated subalgebras of $\op{Diff}_k(X)$.
\end{enumerate}
    Explicitly,
some of these bijections are given by the following operations.
\begin{enumerate}
    \item From power towers to Jacobson sequences. \emph{An iterated tangent sheaf}:
    \[
    Y_\bullet \mapsto (T_{Y_0/Y_1},T_{Y_1/Y_2},T_{Y_2/Y_3}, \ldots),
    \]
    \item From Jacobson sequences to power towers. \emph{An iterated Spec of annihilators}:
    \[
    F_\bullet \mapsto f_i : X\to \op{Spec}_{Y_{i-2}^{(1)}}(\op{Ann}(\F_{i-1})) \ \text{for $i>1$}
    \]
    \item From power towers to saturated subalgebras. \emph{A saturated algebra of differential operators relative to a power tower}:
    \[
    Y_\bullet \mapsto \op{Diff}_{Y_\bullet}(X)\coloneqq \bigcup_{i\ge 0} \op{Diff}_{Y_i}(X),
    \]
    \item From saturated subalgebras to power towers. \emph{Fields of constants of the $p$-filtration}:
    \[
    \mathcal{D} \mapsto f_i: X\to \op{Spec}_{X^{(i)}}(\op{const}(\mathcal{D}_i)) \ \text{for $i\ge 1$}.
    \]
\end{enumerate}
\end{thm}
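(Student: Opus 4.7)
The plan is to reduce the variety-level correspondence to the field-level correspondence of Theorem \ref{MainTheorem} applied at the generic point. Three explicit bijections already relate each kind of datum on $X$ to the corresponding kind on $K(X)$: Lemma \ref{power towers on X = power towers on K(X) - lemma} for power towers, Lemma \ref{jacobson sequences on X and K(X) - lemma} for Jacobson sequences, and Lemma \ref{saturated subalg and alg - lemma} for saturated subalgebras of differential operators. The hypothesis $k=K(X)^{p^\infty}$ is precisely what makes $k$ the perfection of $K(X)$, so Theorem \ref{MainTheorem} applies directly to $K(X)/k$. Composing the reduction bijections with the field-level three-way bijection yields, abstractly, a three-way bijection on $X$. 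The real work is checking that the four explicit formulas listed in the statement descend, under these reductions, to the four corresponding formulas of Theorem \ref{MainTheorem}.

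For (1)$\to$(2), given $Y_\bullet$, each relative tangent sheaf $T_{Y_{i-1}/Y_i}=\op{Hom}_{\cO_{Y_{i-1}}}(\Omega_{Y_{i-1}/Y_i},\cO_{Y_{i-1}})$ is automatically saturated in $T_{Y_{i-1}/k}$ (a $\op{Hom}$-sheaf into a torsion-free sheaf on a normal variety), and its restriction to the generic point is $T_{K(Y_{i-1})/K(Y_i)}$; so the saturated sequence obtained coincides, via Lemma \ref{jacobson sequences on X and K(X) - lemma}, with the iterated tangent space operation of Theorem \ref{MainTheorem} applied to $K(Y_\bullet)$. For (2)$\to$(1), given $\F_\bullet$, iterated Ekedahl correspondence (Theorem \ref{Jacobson--Ekedahl Correspondence - proposition}) produces normal varieties: $\F_1$ on $X=Y_0$ yields $Y_1=\op{Spec}_{X^{(1)}}(\op{Ann}(\F_1))$, then $\F_2$ on $Y_1$ yields $Y_2=\op{Spec}_{Y_1^{(1)}}(\op{Ann}(\F_2))$, and so on. At the generic point this reads as the iterated annihilator construction on $K(X)$, matching operation 2 of Theorem \ref{MainTheorem}; the purely inseparable morphism $X\to Y_i$ is then recovered via Proposition \ref{purely inseparable equiv on fields}.

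For (1)$\to$(3), the sheaf $\bigcup_{i\ge 0}\op{Diff}_{Y_i}(X)$ is a saturated subalgebra of $\op{Diff}_k(X)$: each $\op{Diff}_{Y_i}(X)$ is saturated (its cokernel in $\op{Diff}_k(X)$ being torsion-free, as visible from the coordinate basis of Corollary \ref{Dual Basis of Diff} locally at a smooth point together with normality), and a filtered union of saturated subsheaves is saturated. Its generic fiber is the union $\bigcup_{i\ge 0}\op{Diff}_{K(Y_i)}(K(X))=\op{Diff}_{K(Y_\bullet)}(K(X))$ from Theorem \ref{MainTheorem}. For (3)$\to$(1), $\op{const}(\mathcal{D}_i)$ is a sheaf of normal $\cO_{X^{(i)}}$-algebras whose generic stalk is the field of constants of $\mathcal{D}_i\otimes K(X)$; its relative Spec is the normal variety $Y_i$, again matching via Proposition \ref{purely inseparable equiv on fields}.

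The main obstacle is the routine but systematic compatibility check that saturation commutes with each of the four constructions (relative tangent, iterated $\op{Spec}$ of annihilators, union of relative differential operator sheaves, and sheafified constants), so that the variety-level formulas are precisely the unique saturated extensions of the field-level formulas. Once this is verified, the theorem follows by transporting the three-way bijection of Theorem \ref{MainTheorem} across the saturation/restriction equivalences of Lemmas \ref{power towers on X = power towers on K(X) - lemma}, \ref{jacobson sequences on X and K(X) - lemma}, and \ref{saturated subalg and alg - lemma}.
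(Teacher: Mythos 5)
Your proposal is correct and takes essentially the same route as the paper: reduce to the field-level Theorem \ref{MainTheorem} via the three reduction lemmas (Lemmas \ref{power towers on X = power towers on K(X) - lemma}, \ref{jacobson sequences on X and K(X) - lemma}, \ref{saturated subalg and alg - lemma}) and then observe that the explicit formulas on $X$ are the saturations/normalizations of the field-level formulas. The paper's own proof is just a terser version of your compatibility check, relying on the observation that $\op{const}$ and $\op{Ann}$ produce sheaves of normal algebras.
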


\begin{proof}
    We already know that power towers, Jacobson sequences, and subalgebras of differential operators on $K(X)$ are naturally in bijection with each other by Theorem \ref{MainTheorem}. 
    Now, power towers on $X$ and power towers on $K(X)$ are naturally in bijection by Lemma \ref{power towers on X = power towers on K(X) - lemma}.
    Jacobson sequences on $X$ and Jacobson sequences on $K(X)$ are naturally in bijection by Lemma \ref{jacobson sequences on X and K(X) - lemma}.
    Finally, saturated subalgebras of $\op{Diff}_k(X)$ and subalgebras of $\op{Diff}_k (K(X))$ are naturally in bijection by Lemma \ref{saturated subalg and alg - lemma}. Moreover, all these bijections are given by the formulas enlisted in the theorem, because computing $\op{const}$ and $\op{Ann}$ produces sheaves of normal algebras. Hence, these operations are ``saturations'' of the ones from Theorem \ref{MainTheorem}. This finishes the proof.
\end{proof}

\begin{remark}
    In Theorem \ref{MainTheorem - var - theorem}, we provided explicit formulas for some of the bijections. However, the remaining ones are less obvious. In particular, the unpacking, Theorem \ref{unpacking - Thm}(Unpacking), is tricky, because it uses differentials. It is unclear to me that a differential $d(X/Y)$ exists since I use Criterion \ref{Criterion: admitting a differential} to know if the differential is admitted. 
    This criterion applied to $X\to Y$ requires $Y$ to be a regular variety. It looks like a quite artificial assumption.

    But, if the power tower is regular, Definition \ref{regular power tower - def}, then it admits all required differentials, and then the unpacking is true, Corollary \ref{unpacking - corollary - regular power towers}. However, in general, I do not know what happens when all kinds of singularities are present, and I believe it is likely not true then. This is just a belief, not even a strong one.
\end{remark}

\subsection{Regular Power Towers}
We develop a notion of regularity for power towers on varieties. We define the notion. We prove what it means in practice. We show that every purely inseparable morphism is regular away from a codimension at least $2$ subset.

In short, regular power towers almost behave like power towers on fields. Whenever this fails, something interesting happens at the saturation level between the variety and its generic point. Something emergent. We will see that in the next sections.
Here we collect good behaviors.

\subsubsection{Motivation} In characteristic zero, a foliation of rank $1$ on a smooth variety $X$ generated locally by a local vector field $v$ is said to be regular if every such $v$ does not admit ``singularities'', e.g., for a global vector field $v=f\frac{\partial}{\partial x}+g\frac{\partial}{\partial y}$ on a plane $\mathbb{A}^2_k$ it means that the vanishing locus of both functions $f$ and $g$ is empty. This notion of ``regularity'' of a foliation extends to foliations of all ranks (it measures how much a subsheaf $\F\subset T_{X/k}$ is a subbundle) and it is true that for every foliation there exists a subset of the variety, which is of codimension at least $2$, such that the foliation is regular away from it. One can read \cite[Chapter 2]{Brunella} for more details about this topic.

\subsubsection{Development} 
We will say that a power tower is regular if its Jacobson sequence is a sequence of subbundles. This can be formally stated using a singular locus of a subsheaf. We recall that the singular locus of a subsheaf is a singular locus of the quotient, and the singular locus of a coherent sheaf is defined in \cite[Chapter 2, Definition 18]{Friedman-bundles}. This locus is where this sheaf is \emph{not} locally free.

\begin{defin}
    Let $X$ be a normal variety over a perfect field $k$ of characteristic $p>0$ satisfying $k=K(X)^{p^\infty}$.
Let $Y_\bullet$ be a power tower on $X$, and let $\F_\bullet$ be the Jacobson sequence corresponding to $Y_\bullet$.
We define the following \emph{singular loci} as subsets of the topological space $|X|$:
    \begin{center}
        $\op{Sing}_n(Y_\bullet) \coloneqq \bigcup_{i< n+1} \op{Sing}( \F_i\subset T_{Y_{i-1}/k})$ for $n\ge 1$ or $n=\infty$.
    \end{center}
\end{defin}

\begin{defin}\label{regular power tower - def}
    Let $X$ be a normal variety over a perfect field $k$ of characteristic $p>0$ satisfying $k=K(X)^{p^\infty}$. Let $Y_\bullet$ be a power tower on $X$.

    We say that the power tower $Y_\bullet$ is \emph{\textbf{regular}} if the following conditions are satisfied:
    \begin{itemize}
        \item $X/k$ is a regular variety, \cite[Definition 07R7]{stacks-project},
        \item $\op{Sing}_\infty(Y_\bullet)=\emptyset$.
    \end{itemize}
\end{defin}

\begin{remark}
    We recall that for a variety $X/k$, it is true that $T_{X/k}=T_{X/X^{(1)}}$, because we work with tangent spaces/sheaves, and not with tangent complexes. This is nothing but ``derivations kill $p$-powers''.
    
    We say it, because there could be a confusion about why $k$ and not $X^{(1)}$, because $X^{(1)}$ is used in the definition of a $p$-Lie algebra \ref{p-lie algebra: definition}, a Jacobson sequence \ref{Jacobson sequence - variety - definition}, or \ref{Jacobson Sequence - definition}.
\end{remark}

We recall that an open subset is sometimes called big if its complement is of codimension at least $2$.

\begin{prop}\label{codim 2 regular power tower finite length- prop}
    Let $X$ be a normal variety over a perfect field $k$ of characteristic $p>0$ satisfying $k=K(X)^{p^\infty}$. Let $Y_\bullet$ be a \emph{finite length} power tower on $X$, i.e., a purely inseparable morphism.

    Then, there exists a big open subset $U\subset X$ such that the restriction of $Y_\bullet$ to $U$ is a regular power tower on $U$.
\end{prop}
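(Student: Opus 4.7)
The plan is to write the complement of the desired big open subset $U$ as a finite union of closed subsets of $X$, each of codimension at least $2$, and then to take $U$ to be the complement of that union. The first observation I would use is that because $Y_\bullet$ has finite length $n$, its Jacobson sequence $\F_\bullet$ has only finitely many nonzero terms $\F_1,\ldots,\F_n$, so
\[
\op{Sing}_\infty(Y_\bullet)=\bigcup_{i=1}^{n}\op{Sing}(\F_i\subset T_{Y_{i-1}/k}).
\]
This reduces the task to bounding each of finitely many singular loci individually.

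Next I would take care of the ambient regularity. Each variety $Y_i$ is normal, so its non-regular locus has codimension at least $2$ in $Y_i$. Because every structural morphism of the power tower is purely inseparable, and purely inseparable morphisms are universal homeomorphisms, the identification $|X|\simeq |Y_i|$ preserves codimensions of closed subsets. Hence the preimages in $|X|$ of the singular loci of the $Y_i$ together form a closed subset of $X$ of codimension at least $2$. In particular, since $X$ itself is normal, its own singular locus is also of codimension at least $2$, handling the first clause of Definition \ref{regular power tower - def}.

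Then I would handle the singular loci of the saturated subsheaves. On the regular locus $(Y_{i-1})_{\text{reg}}$, the sheaf $T_{Y_{i-1}/k}$ is locally free, and the saturation condition on $\F_i$ says exactly that $T_{Y_{i-1}/k}/\F_i$ is torsion-free. I would invoke the standard fact that a torsion-free coherent sheaf on a regular noetherian scheme is locally free on a big open subset: at every codimension-one point the stalk is a finitely generated torsion-free module over a discrete valuation ring, hence free. This forces the non-locally-free locus of $T_{Y_{i-1}/k}/\F_i$, inside $(Y_{i-1})_{\text{reg}}$, to have codimension at least $2$, and again this pulls back to a codimension $\ge 2$ closed subset of $|X|$.

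Finally, I would let $U\subset X$ be the complement of the union of the finitely many codimension-$\ge 2$ closed subsets produced above; a finite union of such subsets is still codimension $\ge 2$, so $U$ is big. The remaining bookkeeping is to check that restriction is well-behaved: the preimages $V_i\subset Y_i$ of $U$ are open, give normal varieties, and the restricted morphisms $U\to V_i$ form a power tower whose Jacobson sequence is the restriction of $\F_\bullet$. By construction, $U$ is regular, each $V_i$ is regular, and each $\F_i|_{V_{i-1}}$ is a subbundle of $T_{V_{i-1}/k}$, which is precisely regularity of the restricted power tower. The only genuinely delicate point—more a sanity check than a real obstacle—is ensuring that ``big'' behaves well under the universal homeomorphisms $|X|\simeq|Y_i|$, which follows because dimensions of irreducible closed subsets are topological invariants.
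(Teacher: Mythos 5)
Your proof is correct and follows essentially the same structure as the paper's: reduce to finitely many singular loci (one per floor of the Jacobson sequence), show each is closed of codimension at least $2$, and intersect big open subsets using that purely inseparable morphisms are universal homeomorphisms. The only difference is that where the paper cites references (geometrically normal implies smooth away from codimension $2$, and Friedman's result that the singular support of a finite module over a regular local Noetherian ring has codimension at least $2$), you substitute direct elementary arguments (Serre's $R_1$ criterion for normality, and the observation that a torsion-free coherent sheaf on a regular scheme is free at every codimension-$\le 1$ point since the local ring there is a field or DVR, combined with openness of the locally free locus); these are exactly the facts those references package.
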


\begin{proof}
    First, a big open subset $U\subset X$ exists such that $U/k$ is regular. Indeed, it follows from \cite[Exercise 6.21]{Gortz_Wedhorn_AlgGeoI}: a geometrically normal scheme of finite type over a field is smooth away from a closed subset of codimension $2$. We can apply this result to our $X/k$, because $X$ is normal over a perfect field $k$, and thus geometrically normal by \cite[Exercise 6.19: (iii)]{Gortz_Wedhorn_AlgGeoI}.

    Let $\F_\bullet$ be the Jacobson sequence corresponding to $Y_\bullet$.

    Next, we show that there is a big open subset of $V\subset U$ such that $\op{Sing}(\F_1\subset T_{X/k})\cap V=\emptyset$. (This $V$ will be big in $X$ too.) Indeed, by definition of a singular support of a coherent module, we have to show that the quotient $T_{X/k}/\F_1$ is locally free away from a closed subset of codimension $\ge 2$. It is true precisely by \cite[Proposition 22]{Friedman-bundles} that says a singular support of a finite module over a regular local Noetherian ring is of codimension at least $2$. This proposition, combined with the fact that being locally free is an open condition, implies the existence of $V$.

    Finally, we assume that the length of the power tower $Y_\bullet$ is $n$. Then, the tower is a finite sequence:
    \[
    X\to Y_1\to Y_2 \to \ldots \to Y_n.
    \]
    By the first paragraph, there are big open subsets 
    \begin{center}
        $U_0\subset X$, $U_1\subset Y_1$, $\ldots$, and $U_{n-1}\subset Y_{n-1}$
    \end{center} that are regular. By the second paragraph, for every $U_i$, there exists a big open subset $V_i\subset U_i$ such that $\op{Sing}(\F_i\subset T_{Y_{i-1}/k})\cap V_i=\emptyset$. Let $|U|\subset |X|$ be the intersection of these open sets $\bigcap_{i=0}^{n-1}|V_i|$ (we use the fact that purely inseparable morphisms are homeomorphisms), and let $U\subset X$ be the corresponding open subscheme. It is a big open subset of $X$ because it is a finite intersection of big open subsets. Thus, the power tower $Y_\bullet$ restricted to $U$ is regular. This finishes the proof.
\end{proof}

\begin{remark}
    For a power tower of infinite length, there could be no big open subset such that the tower restricted to it is regular. Indeed, we can sketch a construction of an $\infty$-foliation of rank $1$ on a plane $X=\mathbb{A}^2_k$ with a wild singular locus for $n=\infty$. We will do it below. Anyway, this means that whenever we have a power tower that is regular at a big open subset, then this power tower is somehow special.
    
    A $\infty$-foliation on $X$ is equivalent to a sequence of vector fields satisfying some conditions on the generic point, i.e., they are local generators for the Jacobson sequence. 
    
    A sketch of a construction. The main point is accumulating singular points from different $\F_i$. First, we take any vector field that defines a $1$-foliation $X\to Y_1$. We extend it to a $2$-foliation $X\to Y_2$. (We can do this by (future) Proposition \ref{always}.) This gives us a new vector field on $Y_1$. It \textit{should} be possible to choose one with a singularity. It \textit{should} be possible to decide where it is: if this singularity is shared with singularities from $\F_1$, then we can translate this singularity by a vector to a new point in a way that the translation of this vector field from $\F_2$ still defines a $2$-foliation, but maybe it is a new one. We can continue it for $\F_3,\F_4,\ldots$. At each step, we can choose a new point from the closed points of $X$ to add to the singular locus. In particular, we could produce a $\infty$-foliation on $X$ whose $\op{Sing}_\infty$ is an infinite set of closed points that is dense in $X$.

    The sketch is solid up to the explicit claims about how to work with power towers on $\mathbb{A}^2$ that we do not discuss here in this paper in that much detail. In particular, as I have not provided formal statements and proofs of those results, this sketch should be considered speculation rather than an announcement of results. As we always should do with sketches of arguments.
\end{remark}

The following result explains what being regular means in practice.

\begin{lemma}\label{regular 1-foliation = p-bases}
    Let $X$ be a normal variety over a perfect field $k$ of characteristic $p>0$ satisfying $k=K(X)^{p^\infty}$.
    Let $\F$ be a $p$-Lie algebra. Let $f:X\to Y\to X^{(1)}$ be corresponding to it $1$-foliation.

    Then, the $1$-foliation $f$ is regular if and only if $\cO_{X/k}$ locally admits a $p$-basis over $\cO_{Y/k}$ and $\cO_{Y/k}$ locally admits a $p$-basis over $\cO_{X^{(1)}/k}$.
\end{lemma}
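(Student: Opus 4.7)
\emph{Plan.} My plan is to prove both implications by passing to stalk-local computations and exploiting the $p$-Lie algebra structure of $\F$. Throughout, recall that since the power tower has length one, regularity amounts to $X/k$ being regular and $\F \subset T_{X/k}$ being a subbundle (i.e., saturated with locally free quotient).

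\emph{The converse direction $(\Leftarrow)$.} Suppose locally at $x \in X$ we have $p$-bases $t_1, \ldots, t_r$ of $\cO_X$ over $\cO_Y$ and $s_1, \ldots, s_{n-r}$ of $\cO_Y$ over $\cO_{X^{(1)}}$. By composing the two decompositions, the concatenation $t_1, \ldots, t_r, s_1, \ldots, s_{n-r}$ is a $p$-basis of $\cO_X$ over $\cO_{X^{(1)}}$, since the monomials $t^a s^b$ with $0 \le a_i, b_j < p$ constitute a free basis. By Lemma \ref{p-basis - characterisation}, $\Omega_{X/k}$ is then locally free with basis $dt_i, ds_j$, so $X/k$ is smooth and in particular regular. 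Dually, $T_{X/k}$ has local basis $\partial/\partial t_i, \partial/\partial s_j$. A direct computation identifies $T_{X/Y}$, the derivations annihilating $\cO_Y$, as precisely the direct summand of $T_{X/k}$ spanned by $\partial/\partial t_1, \ldots, \partial/\partial t_r$. Now $\F \subset T_{X/Y}$ (since $\F$ kills $\op{Ann}(\F) = \cO_Y$), both subsheaves are saturated, and both correspond generically to the same subfield $K(Y) \subset K(X)$ via Theorem \ref{Jacobson Purely Inseparable Galois Theory for Exponent $1$}, so $\F = T_{X/Y}$ locally. Hence $T_{X/k}/\F$ is locally free, making the $1$-foliation regular.

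\emph{The direct direction $(\Rightarrow)$.} Assume the $1$-foliation is regular, so $X/k$ is smooth (regularity plus $k$ perfect) and $\F$ is a locally free direct summand of $T_{X/k}$ of rank $r$. The central step is to produce, locally around any closed point $x \in X$, an \emph{adapted} $p$-basis $t_1, \ldots, t_n$ of $\cO_X$ over $\cO_{X^{(1)}}$ such that $\F$ is generated by $\partial/\partial t_1, \ldots, \partial/\partial t_r$. Given such a basis, any $f \in \cO_X$ expands uniquely as $f = \sum_{0 \le e_i < p} c_e t^e$ with $c_e \in \cO_{X^{(1)}}$, and $f \in \cO_Y = \op{Ann}(\F)$ forces $\partial f/\partial t_i = 0$ for every $i \le r$, which by linear independence of monomials forces $c_e = 0$ whenever some $e_i > 0$ with $i \le r$. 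This exhibits $\cO_Y$ as the free $\cO_{X^{(1)}}$-module on monomials in $t_{r+1}, \ldots, t_n$, giving the $p$-basis of $\cO_Y$ over $\cO_{X^{(1)}}$; simultaneously $t_1, \ldots, t_r$ is a $p$-basis of $\cO_X$ over $\cO_Y$.

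\emph{Main obstacle.} The hard part is constructing the adapted $p$-basis. I would first choose local sections $v_1, \ldots, v_r$ of $\F$ whose classes in $\F \otimes \kappa(x)$ form a basis, so that by Nakayama they generate $\F$ at the stalk. Using closure of $\F$ under Lie bracket and $p$-power, one performs a linear change of basis inside $\F$ (with coefficients in the completed local ring $\widehat{\cO}_{X,x}$) producing pairwise commuting derivations with $v_i^{\circ p} = 0$, the standard local normalization of a restricted $p$-Lie subalgebra of $T$ on a formal power series ring. Working in formal coordinates $\widehat{\cO}_{X,x} \cong \kappa(x)[[u_1, \ldots, u_n]]$, one then solves $v_i(t_j) = \delta_{ij}$ for $1 \le i, j \le r$ by successive approximation (the linearized equations are non-degenerate at the closed point), and extends by arbitrary $t_{r+1}, \ldots, t_n$ lifting a basis of the remaining part of $\Omega_{X/k}$ at $x$. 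Descent from formal to Zariski-local coordinates is via Artin approximation, or equivalently by noting that the $p$-basis property can be checked after faithfully flat base change to the completion. Alternatively, one may invoke the smoothness of $Y$ proved in \cite{ZsoltJoe-1-foliations} together with miracle flatness of $X \to Y$ to extract $\Omega_{X/Y}$ locally free, and then produce adapted coordinates directly.
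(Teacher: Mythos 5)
Your converse direction is essentially correct and differs from the paper only mildly in route: the paper goes directly from ``$\cO_X$ has a $p$-basis over $\cO_X^p$'' to ``regular'' via Kunz's theorem on flatness of Frobenius, whereas you argue that the concatenated $p$-basis makes $\Omega_{X/k}$ locally free of rank $\dim X$ and then invoke the smoothness criterion. Both work, but note you implicitly use that the $p$-basis has exactly $\dim X$ elements (from the degree of relative Frobenius); without this, as the paper's warning about $k[x]/(x^2)$ shows, local freeness of $\Omega_{X/k}$ alone does not give regularity.

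The genuine gap is in the direct direction. The paper handles it entirely by citing Yuan's theorem, a result whose content is precisely the existence of $p$-bases for certain purely inseparable extensions of regular local rings. Your proposal replaces this citation with a from-scratch formal-local construction, and the load-bearing step --- ``one performs a linear change of basis inside $\F$ \ldots\ producing pairwise commuting derivations with $v_i^{\circ p}=0$, the standard local normalization of a restricted $p$-Lie subalgebra'' --- is not a linear change of basis and is not a standard lemma that can be cited informally. Even in rank one, killing the $p$-power is a nontrivial use of $p$-closure together with the subbundle hypothesis (pick $u$ with $v(u)$ a unit, rescale $v$ by $v(u)^{-1}$, write $v^{\circ p}=gv$ and apply to $u$ to get $g=0$), and in higher rank the brackets and $p$-powers must be killed simultaneously, which requires solving nonlinear differential equations inductively rather than doing linear algebra. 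The ``successive approximation'' step and the choice of the remaining $t_{r+1},\ldots,t_n$ (which cannot be arbitrary --- they must be killed by $\F$) are likewise unproved, and the descent from the formal completion to a Zariski neighborhood via Artin approximation requires verifying that the $p$-basis property is finitely presented, which you don't address. Your alternative route (smoothness of $Y$ plus miracle flatness) establishes that $\cO_X$ is locally free over $\cO_Y$, but local freeness is strictly weaker than the existence of a $p$-basis, and bridging that gap is exactly what Yuan-type results do. So the proposal is not wrong in spirit, but the core normalization is a genuine theorem that must either be proved in full or cited --- which is what the paper does.
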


\begin{proof}
    We recall and adapt Definition \ref{p-basis definition}~($p$-basis) to rings:
    
    A ring $A$ admits a $p$-basis over a subring $B\supset A^p$ if there are $x_1,\ldots,x_n\in A$ such that the map 
    \[
    \bigoplus_{0\le a_1,\ldots,a_n<p} \left(B\prod_{i=1}^n{x_i^{a_i}}\right)\to A
    \]
    is an isomorphism of $B$-modules, i.e., $A$ is free over $B$ and the monomials $\prod_{i=1}^n{x_i^{a_i}}$ for $0\le a_1,\ldots,a_n<p$ are a basis.

    We come back to the proof. 
    
    Of course, one way is easy. If we have the $p$-bases, then we can compute the rest, though we have to notice that in our case, smooth and regular are equivalent, because we are geometrically regular. (It is a special case of Kunz's theorem \cite[Theorem 2.1.]{kunz} to conclude that $A/k$ is regular from $A/A^p$ being flat, or having a $p$-basis.) In greater generality, we could not conclude regularity from $T_{K/k}$ being free (e.g., look at $k[x]/(x^2)$). 
    
    The other direction is hard. We work on local rings and then ``spread out'' the results to affine rings. We use Yuan's theorem \cite[12. Theorem]{Yuan-purely} to get $p$-bases, because this is precisely what that theorem does.

    This finishes the proof.
\end{proof}

\begin{prop}[Regular Power Tower in Practice]\label{regular power tower - meaning - prop}
    Let $X$ be a normal variety over a perfect field $k$ of characteristic $p>0$ satisfying $k=K(X)^{p^\infty}$. Let $Y_\bullet$ be a regular power tower on $X$, let $\F_\bullet$ be the Jacobson sequence corresponding to $Y_\bullet$, and let $\mathcal{D}$ be the saturated subalgebra of differential operators corresponding to $Y_\bullet$.

    Then, the following are true:
    \begin{enumerate}
        
        \item Every variety $X, Y_1,Y_2,\ldots$ is regular over $k$.
        \item The sheaf $\cO_{Y_i/k}$ is locally free over $\cO_{Y_{i+1}/k}$ for $i\ge 0$, and it is locally free over $\cO_{Y_{i-1}^{(1)}/k}$ for $i\ge 1$. Moreover, each of these inclusions admits a $p$-basis locally.
        \item The sheaf $\op{Diff}_{X^{(n)}}(X)$ is a vector bundle for $n\ge 0$, and $\op{Diff}_{k}(X)$ is a locally free quasicoherent sheaf.
        \item The subsheaf $\mathcal{D}_n\subset \op{Diff}_{X^{(n)}}(X)$ is a subbundle for $n\ge 0$.
        \item Let $i\ge 1$. The morphisms $f_i: X\to Y_i$ and $g_i: Y_i\to Y_{i-1}^{(1)}$ are flat, and every one of them admits a differential.
        \item The sheaves $\Omega_{Y_i/k}$, $T_{Y_i/k}$, $\Omega_{Y_i/Y_{i+1}}$, and $T_{Y_i/Y_{i+1}}=\F_{i+1}$ for $i\ge 0$ are vector bundles.
        \item The subsheaf $\F_{i+1}\subset T_{Y_i/k}$ for $i\ge 0$ is a subbundle, and the quotient is isomorphic to ${f'_{i+1}}^* \left(T_{Y_{i+1}/Y_i^{(1)}}\right)$, i.e., we have a short exact sequence of vector bundles:
        \[
        0\to \F_{i+1}\to T_{Y_i/k} \xrightarrow{d(Y_i/Y_{i+1})} {f'_{i+1}}^* \left(T_{Y_{i+1}/Y_i^{(1)}}\right) \to 0.
        \]
    \end{enumerate}
\end{prop}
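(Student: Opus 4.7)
The strategy is induction step-by-step along the power tower. Each piece $Y_{i-1} \to Y_i \to Y_{i-1}^{(1)}$ is the $1$-foliation associated to $\F_i$, and $\F_i$ being a subbundle (the hypothesis $\op{Sing}_\infty(Y_\bullet) = \emptyset$) makes this a regular $1$-foliation. By Lemma \ref{regular 1-foliation = p-bases}, $\cO_{Y_{i-1}/k}$ admits a local $p$-basis over $\cO_{Y_i/k}$, and $\cO_{Y_i/k}$ admits one over $\cO_{Y_{i-1}^{(1)}/k}$. This is exactly item (2), and the flatness assertion in (5) is immediate. An induction using Kunz's theorem (flatness of Frobenius characterizes regularity in characteristic $p$) shows that each $Y_i$ is regular, giving (1). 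With each $Y_i$ now admitting coordinates locally, Proposition \ref{Criterion: admitting a differential} ensures that all the differentials $d(Y_i/Y_{i+1})$ exist, completing (5). For (6), the sheaves $\Omega_{Y_i/k}$ and $T_{Y_i/k}$ are vector bundles by smoothness over $k$, while $\Omega_{Y_i/Y_{i+1}}$ and $T_{Y_i/Y_{i+1}}$ become free locally on the $dx_j$ and their duals for $x_j$ in the $p$-basis of $\cO_{Y_i}$ over $\cO_{Y_{i+1}}$; the identification $T_{Y_i/Y_{i+1}} = \F_{i+1}$ is Ekedahl's correspondence applied to the one-step $1$-foliation.

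For items (3), (4), and (7) the essential ingredient is an étale-local $p$-basis $x_1, \ldots, x_d$ of $\cO_{X/k}$ together with exponents $a_{i,j}$ such that, on each floor $Y_i$, the elements $\{x_j^{p^{a_{i,j}}}\}$ constitute a $p$-basis of $\cO_{Y_i/k}$ over $\cO_{Y_{i+1}/k}$. Such coordinates adapted to the whole tower exist by iterating Lemma \ref{regular 1-foliation = p-bases} across successive floors, using that a $p$-basis at one step can always be refined to be compatible with the next. In any such chart, Corollary \ref{Dual Basis of Diff} exhibits $\op{Diff}_{X^{(n)}}(X)$ as locally free on the dual monomials $\prod_j \tfrac{1}{e_j!}\tfrac{\partial^{e_j}}{\partial x_j^{e_j}}$, proving (3); and $\op{Diff}_k(X) = \bigcup_n \op{Diff}_{X^{(n)}}(X)$ is therefore locally free as a quasicoherent sheaf.

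For (4) I would use Propositions \ref{dist generators exist - prop} and \ref{description of any subalgebra with dist generators - prop} to produce well-chosen distinguished generators of the subalgebra on the generic point, then, in the adapted coordinates of the previous paragraph, lift them to honest local sections whose leading forms $[G^m_j]$ span a local direct summand of $\op{gr}\op{Diff}_{X^{(n)}}(X)$. The corresponding monomial products then form a local basis of $\mathcal{D}_n$, exhibiting it as a subbundle of $\op{Diff}_{X^{(n)}}(X)$. Passing from graded back to ungraded is where regularity of all $\F_i$ is used: it guarantees the generic-point constructions of Section \ref{generators - subsection} extend to actual local bases rather than merely fibrewise decompositions.

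Item (7) then reduces to a local calculation in the same adapted coordinates. Exactness on the left is the identity $T_{Y_i/Y_{i+1}} = \F_{i+1}$. The middle map $d(Y_i/Y_{i+1})$ is the restriction of the differential to first-order operators (Lemma \ref{Differential is Restriction}), which on the dual basis acts diagonally: $\partial/\partial x_j$ is sent to the matching basis vector of ${f'_{i+1}}^* T_{Y_{i+1}/Y_i^{(1)}}$ when $x_j$ already lies in $\cO_{Y_{i+1}}$, and to zero otherwise. This gives surjectivity locally and identifies the whole sequence as a short exact sequence of vector bundles. The single genuinely delicate step in the entire proposition is the construction of local $p$-bases compatible with every floor of the tower simultaneously; once this coordinate infrastructure is in place the remaining statements collapse to the bookkeeping already carried out on fields in Section \ref{generators - subsection}.
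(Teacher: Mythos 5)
Your proof is on the right track for items (1), (2), (3), (5), and (6): the step-by-step application of Lemma \ref{regular 1-foliation = p-bases} along the tower is exactly how the paper proceeds, and your conclusions there are correct. The gap is in how you attempt (4) and (7).

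You claim that there exist \'etale-local coordinates $x_1,\ldots,x_d$ for $\cO_{X/k}$ and exponents $a_{i,j}$ such that, for every $i$, the family $\{x_j^{p^{a_{i,j}}}\}$ is a $p$-basis of $\cO_{Y_i/k}$ over $\cO_{Y_{i+1}/k}$, and you say these exist ``by iterating Lemma \ref{regular 1-foliation = p-bases} across successive floors, using that a $p$-basis at one step can always be refined to be compatible with the next.'' This refinement does not exist in general. Lemma \ref{regular 1-foliation = p-bases} gives you a $p$-basis for $\cO_{Y_{i}}$ over $\cO_{Y_{i+1}}$ on each floor, but nothing forces these to be built from $p$-th powers of the $p$-basis one floor below: the tangent direction cut out at floor $i+1$ can twist into new elements that mix the previous coordinates. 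Example \ref{not Ekedahl power tower} is a regular power tower ($X=\mathbb{A}^2_k$, $\F_1=\langle\partial/\partial x\rangle$, $\F_2=\langle y^p\partial/\partial x^p+\partial/\partial y\rangle$, both subbundles) whose stratification function $G(Y_\bullet)$ takes the value $(0,2)$ for $y\neq 0$ and $(1,1)$ for $y=0$; if coordinates of the type you describe existed on any \'etale neighborhood of a point on $\{y=0\}$, the stratification would be constant there. The simultaneous ``diagonalization'' you want is only available at a formal completion of a single closed point (Theorem \ref{formal Frobenius Theorem for power towers- thm}), not Zariski- or \'etale-locally, and the exponents can vary from point to point.

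Consequently, your argument for (4) is also asking for too much: you say the leading forms $[G^m_j]$ of lifted distinguished generators ``span a local direct summand of $\op{gr}\op{Diff}_{X^{(n)}}(X)$.'' That is precisely the gr-saturation condition of Definition \ref{gr-saturated - def}, which is strictly stronger than regularity and again fails for the example above. Item (4) asserts only that $\mathcal{D}_n$ is a subbundle of $\op{Diff}_{X^{(n)}}(X)$ in the ungraded filtered sense, which does hold for all regular power towers; it does not assert that the gradation is a subbundle. The paper's route sidesteps both issues with the Diff$=$End Lemma \ref{diff=end - new}: locally $\mathcal{D}_n=\op{Hom}_{B_n}(A,A)$ and $\op{Diff}_{X^{(n)}}(X)=\op{Hom}_{A^{p^n}}(A,A)$, and since $A$ is finite locally free over $B_n$ which is finite locally free over $A^{p^n}$ (by item (2)), the inclusion of Hom modules is a split inclusion of finite free modules. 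Item (7) is proved the same way, reducing $d(Y_i/Y_{i+1})$ to the restriction $\op{Hom}_{A^p}(A,A)\to\op{Hom}_{B^p}(B,B)\otimes A$ and computing the image directly as $\op{Hom}_{A^p}(B,A)$. You should replace your coordinate argument for (4) and (7) with this Hom-module calculation; the coordinates you want would already imply the stratification is trivial, which is false for regular towers in general.
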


\begin{proof}
    The lemma \ref{regular 1-foliation = p-bases} does all the heavy lifting. It gives us plenty of $p$-bases that we can use to compute every item of this theorem directly. (It is good to mention that one can reverse combinations of these conditions to get ``regular if and only if blah and blah'', but we do not need that.)

    Purely inseparable morphisms are finite, and all conditions can be checked locally, so we can work with affine schemes only. Let $X=\op{Spec}(A)$ and $Y_i=\op{Spec}(B_i)$ for $i\ge 0$. 
    
    Indeed, $Y_\bullet$ is a regular power tower, thus $X$ is regular, $T_{X/k}$ is a vector bundle, and $\F_1$ is a subbundle. 
    From Lemma $\ref{regular 1-foliation = p-bases}$, we conclude that locally there is a $p$-basis $t_1,\ldots,t_r$ for $A/B_1$ and a $p$-basis $l_1,l_2,\ldots,l_m$ for $B_1/A^p$. By restricting ourselves to open subsets, we can assume that these $p$-bases are exactly for these rings. Now, a trick: $l_1,\ldots,l_m,t_1^p,\ldots,t_r^p$ is a $p$-basis for $B_1/B_1^p$, so by Lemma \ref{regular 1-foliation = p-bases} applied to $X=\op{Spec}(B_1),Y=\op{Spec}(B_1^p)$, we get that $Y_1$ is regular, $T_{Y_1/k}$ is a vector bundle, and $\F_2$ is a subbundle. We can continue this reasoning over and over. This proves (1), (2), (5) by Criterion \ref{Criterion: admitting a differential}, (6).

    We prove (3), (4). Recall that Proposition \ref{diff=end - new}~(Diff=End). It says that the algebras $ \mathcal{D}_n, \op{Diff}_{X^{(n)}}(X)$ are actually isomorphic to sheaf homs that are locally given by $\op{Hom}_{B_n}(A, A)$ and $\op{Hom}_{A^{p^n}}(A, A)$. Locally, $A$ is free and finite over $B_n$ and $B_n$ is free and finite over $A^{p^n}$, this implies that these sheaves are vector bundles, and that the former is a subbundle of the latter. Finally, $\op{Diff}_{k}(X)=\bigcup_{n\ge 0} \op{Diff}_{X^{(n)}}(X)$ proves the final claim that it is quasi-coherent and locally free. (This is due to the fact how this filtration works. It adds more free variables with each $n$. Recall that $\mathbb{Q}=\bigcup_{n\ge 0}\frac{1}{n!}\Z $ is not free.)

    The final statement (7) is a result for a regular $1$-foliation $\F_{i+1}\subset T_{Y_i/k}$, so we can assume that we work with $X$ and $\F_1$. It is an equally arbitrary choice. Let $B=B_1$.

    We start with an exact sequence
    \[
    0\to \F_1\to T_{X/k} \xrightarrow{d(X/Y_1)} T_{Y_1/k}\otimes \cO_{X/k}.
    \]
    We want to compute the image of the last map.

    After applying Proposition \ref{diff=end - new}~(Diff=End), the above sequence is equal to
    \[
    0\to \op{Hom}_{B}(A,A)\to \op{Hom}_{A^p}(A,A) \to \op{Hom}_{B^p}(B,B)\otimes A.
    \]
    Since $A$ is flat over $B$, and $B$ over $A^p$. We observe that $\op{Hom}_{B^p}(B,A)=\op{Hom}_{B^p}(B,B)\otimes A$ and therefore the image is equal to $\op{Hom}_{A^p}(B,A)$. It is the local chart of ${f'_{1}}^* \left(T_{Y_{1}/X^{(1)}}\right)$. This finishes the proof.
\end{proof}

\begin{cor}[Unpacking for Regular Power Towers]\label{unpacking - corollary - regular power towers}
    Let $X$ be a normal variety over a perfect field $k$ of characteristic $p>0$ satisfying $k=K(X)^{p^\infty}$. Let $Y_\bullet$ be a regular power tower on $X$, let $\F_\bullet$ be the Jacobson sequence corresponding to $Y_\bullet$, and let $\mathcal{D}$ be the saturated subalgebra of differential operators corresponding to $Y_\bullet$.

    Then, the following equalities hold:
    \begin{align*}
        \mathcal{D} \cap T_{X/k} &= T_{X/Y_1}=\F_1,\\
        d(X/Y_1)(\mathcal{D})\cap T_{Y_1/k} &= T_{Y_1/Y_2}=\F_2,\\
        \ldots &= \ldots ,\\
        d(X/Y_n)(\mathcal{D})\cap T_{Y_{n}/k} &= T_{Y_n/Y_{n+1}}=\F_{n+1},\\
        \ldots &= \ldots ,
    \end{align*}
    We call the set of these equalities the unpacking of the saturated subalgebra $\mathcal{D}$ into the Jacobson sequence $\F_\bullet$.
\end{cor}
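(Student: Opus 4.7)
My plan is to deduce each equality from its field-theoretic counterpart Theorem \ref{unpacking - Thm} by sandwiching the left-hand side between $\F_{i+1}$ and $T_{Y_i/k}$, and then invoking the saturation of $\F_{i+1}$ in $T_{Y_i/k}$.

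First I would spell out what regularity of $Y_\bullet$ gives us. By Proposition \ref{regular power tower - meaning - prop}, each morphism $f_i\colon X\to Y_i$ admits a differential, so $d(X/Y_i)\colon \op{Diff}_k(X)\to \op{Diff}_k(Y_i)\otimes \cO_X$ is a globally defined morphism of quasicoherent $\cO_X$-modules; moreover the sheaves $T_{Y_i/k}$, $\mathcal{D}_n$, and $\F_{i+1}\subseteq T_{Y_i/k}$ are all locally free, and $\F_{i+1}$ is a subbundle. Restriction to the generic point $\eta$ of $X$ sends these objects to the field-theoretic data attached to $K(Y_\bullet)$ by Theorem \ref{MainTheorem - var - theorem}, and Theorem \ref{unpacking - Thm} then supplies each claimed equality on the $\eta$-fibers.

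Second, I would establish the sheaf-level inclusion $\F_{i+1}\subseteq d(X/Y_i)(\mathcal{D})\cap T_{Y_i/k}$. Since $\op{Diff}_{Y_{i+1}}(X)\subseteq \mathcal{D}$, a sheafified form of the Diff=End identification (Proposition \ref{diff=end - new}) combined with the local freeness supplied by Proposition \ref{regular power tower - meaning - prop} shows that the restriction $d(X/Y_i)\colon \op{Diff}_{Y_{i+1}}(X)\to \op{Diff}_{Y_{i+1}}(Y_i)\otimes \cO_X$ is surjective: locally $\cO_X$ is a free $\cO_{Y_{i+1}}$-module, so any $\cO_{Y_{i+1}}$-linear map $\cO_{Y_i}\to\cO_X$ extends to one $\cO_X\to\cO_X$. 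Intersecting the target with $T_{Y_i/k}\hookrightarrow \op{Diff}_k(Y_i)\hookrightarrow \op{Diff}_k(Y_i)\otimes \cO_X$ picks out exactly $T_{Y_i/Y_{i+1}}=\F_{i+1}$, yielding the inclusion. The base case $i=0$ is the analogous $\F_1\subseteq \mathcal{D}\cap T_{X/k}$, immediate from $\op{Diff}_{Y_1}(X)\subseteq \mathcal{D}$.

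Finally, to upgrade these inclusions to equalities I would use the following general principle: if $A\subseteq B\subseteq C$ are quasicoherent sheaves with $A$ saturated in $C$ and $A_\eta=B_\eta$, then $A=B$, since any local section of $B$ has vanishing image at $\eta$ in $C/A$, and $C/A$ is torsion-free. Applied to $A=\F_{i+1}$, $B=d(X/Y_i)(\mathcal{D})\cap T_{Y_i/k}$, $C=T_{Y_i/k}$, with saturation of $\F_{i+1}$ provided by Definition \ref{Jacobson sequence - variety - definition}, this closes the argument. The step I expect to be most delicate is the sheafified Diff=End surjectivity in the third paragraph: on fields this is built into Proposition \ref{diff=end - new}, but globally it truly relies on $\cO_X$ being locally free over $\cO_{Y_{i+1}}$, which is precisely what regularity contributes via Lemma \ref{regular 1-foliation = p-bases} and the iterated $p$-basis argument of Proposition \ref{regular power tower - meaning - prop}(2).
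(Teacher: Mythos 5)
Your proof is correct, and it does reach the result by a route that is recognizably different from the paper's. The paper fixes an arbitrary $m\ge n+1$, works in a local affine chart where all the rings $B_i=\cO_{Y_i}(U)$ are free over one another (available by regularity), and computes the intersection directly via Diff=End: $\mathcal{D}_m=\op{Hom}_{B_m}(A,A)$, hence $d(X/Y_n)(\mathcal{D}_m)=\op{Hom}_{B_m}(B_n,A)$, hence $d(X/Y_n)(\mathcal{D}_m)\cap T_{Y_n/k}=\op{Der}_{B_{n+1}}(B_n)=\F_{n+1}$, with the answer independent of $m$. You instead prove only the containment $\F_{n+1}\subseteq d(X/Y_n)(\mathcal{D})\cap T_{Y_n/k}$ (effectively the $m=n+1$ case of the Diff=End surjectivity), and then close the other direction by passing to the generic point—where Theorem \ref{unpacking - Thm} gives equality—and invoking saturation of $\F_{n+1}\subset T_{Y_n/k}$ (which holds because $\F_{n+1}$ is a $p$-Lie algebra) via the standard principle that a saturated subsheaf agreeing with an intermediate subsheaf at $\eta$ equals it. Both arguments use the same inputs (regularity, local $p$-bases/freeness, Diff=End), but yours is more modular: it treats the field-level unpacking theorem as a black box and replaces the explicit identification of the intersection for general $m$ with the cleaner saturation-plus-generic-fiber device. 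One small caution worth flagging: when you apply the $A\subseteq B\subseteq C$ lemma you should note explicitly that $B=d(X/Y_i)(\mathcal{D})\cap T_{Y_i/k}$ is an $\cO_{Y_i}$-submodule of $T_{Y_i/k}$ (not just an $\cO_X$-module), which follows because the intersection sits inside $T_{Y_i/k}$ and $\cO_{Y_i}\subset\cO_X$; this is what licenses comparing $B$ against the $\cO_{Y_i}$-module saturation of $\F_{i+1}$.
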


\begin{proof}
    First, the differentials for $f_i: X\to Y_i$ are admitted by Proposition \ref{regular power tower - meaning - prop}(Item 5).

    Second, it is enough to prove these equalities locally. Therefore, by Proposition \ref{regular power tower - meaning - prop}(Item 2), we can assume that all $Y_i$ for $i\ge 0$ are affine and free over each other. Let $X=\op{Spec}(A)$ and $Y_i=\op{Spec}(B_i)$ for $i\ge 0$.

    Let $m\ge n+1$ be a fixed integer. Then, we have, by Proposition \ref{diff=end - new}~(Diff=End),
    \begin{align*}
    \mathcal{D}_m =& \op{Hom}_{B_m}(A)=\op{Diff}_{B_m}(A)\\
    d(X/Y_n)(\mathcal{D}_m)=& \op{Hom}_{B_m}(B_n,A)  =\op{Diff}_{B_m}(B_n)\otimes A   \\
    d(X/Y_n)(\mathcal{D}_m)\cap T_{Y_n/k}=& \op{Hom}_{B_m}(B_n,A) \cap \op{Der}_{B_n^p}(B_n) = \op{Der}_{B_{n+1}}(B_n)=\F_{n+1}
    \end{align*}
    Finally, since $m$ is arbitrary, we have
    \[
    d(X/Y_n)(\mathcal{D})\cap T_{Y_n/k}=\F_{n+1}.
    \]
    This finishes the proof.
\end{proof}

\begin{prop}[Saturated Well-Chosen Generators for Regular Power Towers]\label{saturated dist generators - exist and do the job - prop}
    Let $X$ be a normal variety over a perfect field $k$ of characteristic $p>0$ satisfying $k=K(X)^{p^\infty}$. Let $Y_\bullet$ be a regular power tower on $X$, and let $\mathcal{D}$ be the saturated subalgebra of differential operators corresponding to $Y_\bullet$. Let $\op{Spec}(A)$ be an open subset of $X$ such that $A/k$ admits a set of coordinates. Let $\op{Spec}(B_i)=Y_{i} \restriction_{|\op{Spec}(A)|}$.

    Then there exists a set of well-chosen distinguished generators $G$, Definition \ref{distinguished generators - def}, for the subalgebra $\mathcal{D}\otimes K(X)$ such that
    \begin{itemize}
        \item $G \subset \D(\op{Spec}(A))$, i.e., these operators are operators $A\to A$,
        \item The set $\left\{
    \left(\prod_{m=1}^k\prod^{r_m}_{i=1} \left(G^m_i\right)^{b^m_i}\right): k\ge 0, 0 \le b^m_i \le p-1\right\}$ is a $A$-basis of $\mathcal{D}(\op{Spec}(A))$.
        \item the set $\{d(A/B_{m-1})(G^m_1),\ldots,d(A/B_{m-1})(G^m_{r_m})\}$ is a $B_{m-1}$-basis of $\F_{m}(\op{Spec}(B_{m-1}))$.
    \end{itemize}
    We call any such set $G$ by the name a set of \textbf{saturated well-chosen generators for the regular power tower} $Y_\bullet \restriction_{|\op{Spec}(A)|}$.
\end{prop}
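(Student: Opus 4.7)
The plan is to promote the generic-fibre construction of Proposition \ref{dist generators exist - prop} to an integral construction over $A$, using the rigidity supplied by regularity of $Y_\bullet$ (Proposition \ref{regular power tower - meaning - prop}). First I would shrink $\op{Spec}(A)$ if necessary so that, in addition to admitting coordinates, every $\F_m$ is free on $\op{Spec}(B_{m-1})$ and each filtration piece $\mathcal{D}_n\restriction_{\op{Spec}(A)}$ is a free $A$-module. This is legitimate because Proposition \ref{regular power tower - meaning - prop} gives vector bundles throughout, the ranks $r_m$ are nonincreasing (Proposition \ref{Nonincreasing Degrees Power Tower - Proposition}) and stabilize, and a $p$-basis of $A$ over $k$ restricts compatibly to $p$-bases of each $B_{m-1}$ over $k$ via the $p^{m-1}$-power construction used inside the proof of Lemma \ref{regular 1-foliation = p-bases}.

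Next, I would construct the $G^m_i$ inductively, imitating the generic-point argument of Proposition \ref{dist generators exist - prop} but one level up. For $m=1$ take any $A$-basis of $\F_1(\op{Spec}(A))$, which lies in $\mathcal{D}(\op{Spec}(A))$ by the unpacking (Corollary \ref{unpacking - corollary - regular power towers}). For $m>1$, pick a $B_{m-1}$-basis $\bar G^m_1,\ldots,\bar G^m_{r_m}$ of $\F_m(\op{Spec}(B_{m-1}))$; the regular version of the short exact sequence of Theorem \ref{SES for diff - fields - theorem} (which holds integrally because by Proposition \ref{regular power tower - meaning - prop} every sheaf involved is locally free of the right rank and the differential $d(A/B_{m-1})$ is admitted) combined with the unpacking says that $d(A/B_{m-1})(\mathcal{D}_m(\op{Spec}(A))) \cap T_{B_{m-1}/k}\otimes A = \F_m \otimes A$, and since both source and target are free $A$-modules, I can lift each $\bar G^m_i$ to some $G^m_i\in\mathcal{D}_m(\op{Spec}(A))$.

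Then I would correct these lifts to make them ``distinguished''. If $G^m_i$ has order strictly greater than $p^{m-1}$, I subtract an element of $\mathcal{D}\cap \ker d(A/B_{m-1})$ with the same leading form; the existence of such a correction is the integral version of Lemma \ref{Graded Differential for K/K^m - lemma}, and all the required short exact sequences of graded pieces split as locally free $A$-modules on our shrunk chart, so the correction stays inside $\mathcal{D}(\op{Spec}(A))$. Iterating drops the order down to the expected $p^{m-1}$. Finally, applying the divided-power decomposition of Lemma \ref{divided powers - decompostion of order p^n} inside the integral enveloping algebra of Proposition \ref{pd envelope - prop} strips off the ``lower'' part of the leading form coming from compositions of earlier $G^{m'}_j$; again this correction is carried out in a free $A$-module so it lands in $\mathcal{D}(\op{Spec}(A))$.

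The first and third bullets then hold by construction. For the second, I would argue as in Proposition \ref{description of any subalgebra with dist generators - prop}: the leading forms of the asserted monomials are $A$-independent inside the divided-power polynomial algebra ${S}(\F_1^*)^{*gr}$ sitting inside $\op{gr}\op{Diff}_k(A)$, hence the monomials themselves are $A$-independent in $\mathcal{D}(\op{Spec}(A))$, and a rank count against the free $A$-module $\mathcal{D}_n\restriction_{\op{Spec}(A)}$ (performed at the generic point via Proposition \ref{description of any subalgebra with dist generators - prop} and propagated to $A$ because both sides are free of the same rank) forces them to be a basis. The main obstacle is the correction step: everything hinges on showing that the graded kernels $\op{gr}(\ker d(A/B_{m-1})\cap \mathcal{D})$ coincide with $\ker(\op{gr} d(A/B_{m-1}))\cap \op{gr}\mathcal{D}$ as free $A$-modules, i.e., that the leading-form gymnastics of Lemma \ref{Graded Differential for K/K^m - lemma} go through integrally; this is exactly what the regularity hypothesis — via the locally split short exact sequences of Proposition \ref{regular power tower - meaning - prop}(7) — is designed to deliver.
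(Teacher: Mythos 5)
Your construction of the operators $G^m_i$ — shrinking to a chart where everything is free, lifting a basis of $\F_m$ via the surjectivity of $d(A/B_{m-1})$ supplied by Corollary \ref{unpacking - corollary - regular power towers}, and correcting orders and leading forms using the integral versions of Lemma \ref{Graded Differential for K/K^m - lemma} and Proposition \ref{pd envelope - prop} — matches the paper's approach, which is stated tersely as ``repeat the proofs of Proposition \ref{dist generators exist - prop} and Proposition \ref{description of any subalgebra with dist generators - prop} in this local affine chart.'' The first and third bullets are handled correctly.

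The gap is in your final step for the second bullet. You establish $A$-linear independence of the monomials and then argue ``a rank count against the free $A$-module $\mathcal{D}_n$ \dots propagated to $A$ because both sides are free of the same rank forces them to be a basis.'' Over a ring this inference fails: a free $A$-submodule of a free $A$-module of the same rank need not be the whole module (think of $aA \subsetneq A$ for a non-unit $a$, or $A(1,0)\oplus A(0,x)\subsetneq A^2$). Agreement at the generic point only shows the quotient is torsion, not zero. The paper explicitly flags this as the place where the field case and the ring case diverge: ``instead of separating the construction from the proposition, we do them together, and therefore, we do not need the counting dimensions argument at the end. We could do that for fields, but it is tricky to separate them for rings.'' In other words, one must establish spanning constructively, not by counting.

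The repair is to interleave the basis claim with the construction. Having arranged, via the unpacking of Corollary \ref{unpacking - corollary - regular power towers} and the split short exact sequences of Proposition \ref{regular power tower - meaning - prop}(7), that the $G^m_i$ lift a genuine $B_{m-1}$-basis of $\F_m$, you can argue by induction on $n$ along the $p$-filtration that the monomials of the stated shape with $m\le n$ actually generate $\mathcal{D}_n(\op{Spec}(A))$: on a chart where all modules are free, $d(A/B_{n-1})$ maps $\mathcal{D}_n$ onto a free module with a basis detected by $G^n_\bullet$, and the kernel is handled by the inductive hypothesis together with the split sequence, so the monomials span. That argument uses the unpacking to prove spanning directly and never needs the rank count you are leaning on.
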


\begin{proof}
    By Proposition \ref{regular power tower - meaning - prop}(The meaning of regularity in practice), we know that all the gadgets used in the proof of Theorem \ref{dist generators exist - prop}(Existence of Distinguished Generators) and Proposition \ref{description of any subalgebra with dist generators - prop} are available for regular power towers. Therefore, we can just repeat these proofs in this local affine chart. First, we construct a set $G$ in the same way, and for \textit{this} set we use the unpacking to conclude it is a basis, not just a generic basis. So, instead of separating the construction from the proposition, we do them together, and therefore, we do not need the counting dimensions argument at the end. We could do that for fields, but it is tricky to separate them for rings.
\end{proof}

\subsection{Frobenius Theorem for Power Towers}

\subsubsection{Context} Let $(X/\C,\F)$ be a smooth complex variety of dimension $n$ with a regular foliation $\F$ on it. The Frobenius theorem for $(X/\C,\F)$ says that locally in the analytic topology the foliation $\F$, of rank $n-m$, where $0\le m\le n$, is induced by the projection $\C^n \to \C^m$, i.e., for every closed point $x\in X$ there is an analytic open subset $x\in U\subset X$ such that $U$ is diffeomorphic to $\C^n$ and there is an analytic map $f:\C^n \to \C^m$ such that $\F_{|U}=T_f$. For a precise statement with a proof, one can consult \cite[Theorem 2.20]{Voisin1}. 

However, this version makes sense only over complex numbers. Nevertheless, it implies a formal statement that makes sense over any field of characteristic zero: $\F$ is algebraically integrable at the formal point $\widehat{x}$, i.e., there exist local coordinates $t_1,\ldots,t_n\in \widehat{\cO_{X,x}}$ such that:
\[
 \C[[t_1,\ldots,t_m]]\to \C[[t_1,\ldots,t_n]]\simeq \widehat{\cO_{X,x}} 
\]
induces $\F_{|\overline{x}}$. 
We will prove an analogous result for regular power towers. 

\subsubsection{Formal Frobenius Theorem for Power Towers}

\begin{thm}[Formal Frobenius Theorem for Regular Power Towers]\label{formal Frobenius Theorem for power towers- thm}
Let $X$ be a variety over an \emph{algebraically closed} field $k$ of characteristic $p>0$ such that $K(X)^{p^\infty}=k$. Let $d$ be the dimension of $X/k$. 
Let $Y_\bullet$ be a regular power tower on $X$.

Let $m\ge 0$ be an integer.
Let $x$ be a closed point on $X$. 
Let $y_m=f_m(x)$ be its image in $Y_m$.
Then, there exist coordinates $t_1,\ldots,t_d\in \cO_{X,x}$ at $x$ such that $f_m:X\to Y_m$ at the formal point $\widehat{x}$ is given by
\[
    \widehat{f_m}:\widehat{\mathcal{O}_{Y_m,y_m}}\simeq k[[t_1^{p^{a_1}},t_2^{p^{a_2}},t_3^{p^{a_3}},\ldots,t_d^{p^{a_d}}]]\to k[[t_1,t_2,t_3,\ldots, t_d]]\simeq \widehat{\mathcal{O}_{X,x}},
\]
where $0\le a_1\le a_2\le a_3 \le \ldots \le a_d \le m$, and $\sum_{i=1}^d a_i = N$, where we have $p^N=[K(X):K(Y_m)]$.

Moreover, for $m=\infty$, there exist coordinates $t_1,\ldots,t_d$ such that the function $\widehat{f_\infty}\coloneqq \bigcap_{m\ge 0} \widehat{f_m}$ is given by
\[
    \widehat{f_\infty}:\bigcap_{m\ge 0}\widehat{\mathcal{O}_{Y_m,y_m}}\simeq k[[t_1^{p^{a_1}},t_2^{p^{a_2}},t_3^{p^{a_3}},\ldots,t_d^{p^{a_d}}]]\to k[[t_1,t_2,t_3,\ldots, t_d]]\simeq \widehat{\mathcal{O}_{X,x}},
\]
where $0\le a_1\le a_2\le a_3 \le \ldots \le a_d \le \infty$. (Our convention is $t^{p^\infty}=0$.)
\end{thm}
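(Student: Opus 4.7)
The plan is to work formally at the closed point $x$ and reduce everything to the structure of iterated inclusions of complete regular local $k$-algebras. By Cohen's structure theorem, applicable since $k$ is algebraically closed so $\kappa(x) = k = \kappa(y_i)$, each completion $\widehat{\mathcal{O}_{Y_i,y_i}}$ is isomorphic to a power series ring in $d = \dim X$ variables. The heart of the argument is constructing a single coordinate system on $\widehat{\mathcal{O}_{X,x}}$ that simultaneously puts \emph{every} $\widehat{\mathcal{O}_{Y_i,y_i}}$ into the desired standard form.

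I would proceed by induction on $m$, strengthening the statement to: there exist coordinates $t_1,\ldots,t_d \in \widehat{\mathcal{O}_{X,x}}$ and nonnegative integers $b_{j,i}$ (for $1 \le j \le d$, $0 \le i \le m$) nondecreasing in $i$, with $\widehat{\mathcal{O}_{Y_i,y_i}} = k[[t_1^{p^{b_{1,i}}},\ldots,t_d^{p^{b_{d,i}}}]]$ for every $i$; the ordering $a_1 \le \ldots \le a_d$ in the stated theorem is obtained at the end by permuting indices. The base $m=0$ is trivial with $b_{j,0} = 0$. For the inductive step, the inclusion $\widehat{\mathcal{O}_{Y_m,y_m}} \subset \widehat{\mathcal{O}_{Y_{m-1},y_{m-1}}}$ has exponent $\le 1$ and admits a formal $p$-basis by Proposition \ref{regular power tower - meaning - prop}. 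The existence of a $p$-basis in a regular local ring, combined with the fact that it can be extended to a regular system of parameters, furnishes coordinates $\sigma_1,\ldots,\sigma_d$ on $\widehat{\mathcal{O}_{Y_{m-1},y_{m-1}}}$ with $\widehat{\mathcal{O}_{Y_m,y_m}} = k[[\sigma_1^p,\ldots,\sigma_{r_m}^p,\sigma_{r_m+1},\ldots,\sigma_d]]$, which is the $m=1$ case (a Frobenius-style theorem for regular $1$-foliations).

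The principal obstacle is compatibility: these coordinates $\sigma_j$ on $\widehat{\mathcal{O}_{Y_{m-1}}}$ must come from a coordinate system $t_j'$ on $\widehat{\mathcal{O}_{X,x}}$ preserving the standard form established at all previous stages. An arbitrary change of parameters on $Y_{m-1}$ does not lift to $X$, since $\widehat{\mathcal{O}_{X,x}}$ generally lacks the required $p$-th roots. The approach is twofold. First, the characteristic-$p$ Frobenius identity $\bigl(\sum_j \alpha_j t_j\bigr)^{p^c} = \sum_j \alpha_j^{p^c} t_j^{p^c}$ (for $\alpha_j \in k$, valid because $k$ is perfect) shows that any $k$-linear substitution among coordinates sharing the same exponent history $(b_{j,0},\ldots,b_{j,m-1})$ lifts coherently through all previous standard forms. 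Second, substitutions of the form $t_j' = t_j + (\text{polynomial in } t_k^{p^c} \text{ for various } c)$ preserve the first-order behavior of $t_j$ and hence yield valid coordinate changes with controlled effect on higher levels.

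The hardest part, I expect, is the verification that these two kinds of modification together are sufficient to realize any required $p$-basis at stage $m$. This should reduce to a transversality/dimension count: the $p$-basis of $\widehat{\mathcal{O}_{Y_{m-1}}}/\widehat{\mathcal{O}_{Y_m}}$ corresponds to a complement of $\mathfrak{m}_{y_m}\widehat{\mathcal{O}_{Y_{m-1}}}/\mathfrak{m}_{y_{m-1}}^2$, and one must show that a complement consistent with the $(b_{j,i})$-data from earlier levels always exists; this is plausible because the allowed modifications are themselves a large subspace of the cotangent space at each stratum. Once the inductive statement is established for every finite $m$, the case $m = \infty$ follows immediately: the sequence $b_{j,m}$ is nondecreasing and bounded by $m$, so $a_j \coloneqq \sup_m b_{j,m} \in \mathbb{Z}_{\ge 0} \cup \{\infty\}$ is well-defined, and with the convention $t^{p^\infty} = 0$ the descending intersection yields $\bigcap_{m \ge 0} \widehat{\mathcal{O}_{Y_m,y_m}} = k[[t_1^{p^{a_1}},\ldots,t_d^{p^{a_d}}]]$, after relabeling so that $a_1 \le \ldots \le a_d$.
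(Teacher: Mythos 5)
Your proposal shares the paper's overall strategy (iterate through $Y_1, Y_2, \ldots$, taking $p$-th roots to build a single compatible coordinate system), but it stops short of the key technical step, and I do not think the two classes of substitutions you allow are by themselves enough to close the gap you flag as the ``hardest part.'' The difficulty is not merely a dimension count: it is an arithmetic constraint on which coordinate changes on $\widehat{\mathcal{O}_{Y_{m-1},y_{m-1}}}$ lift to $\widehat{\mathcal{O}_{X,x}}$. If $\sigma_j := t_j^{p^{b_{j,m-1}}}$ are your coordinates on $Y_{m-1}$, then a substitution $\sigma_j \mapsto \sigma_j + \alpha\,\sigma_k$ with $\alpha\in k$ lifts to a coordinate change on $X$ (namely $t_j \mapsto t_j + \alpha^{1/p^{b_{j,m-1}}}\,t_k^{\,p^{\,b_{k,m-1}-b_{j,m-1}}}$) precisely when $b_{k,m-1}\ge b_{j,m-1}$; mixing in the other direction would require a $p$-th root that does not exist in $\widehat{\mathcal{O}_{X,x}}$. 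So you must show that the $p$-basis of $\widehat{\mathcal{O}_{Y_{m-1},y_{m-1}}}/\widehat{\mathcal{O}_{Y_m,y_m}}$ can always be brought into standard form using only ``lower history into higher'' corrections, and this is exactly where the real work lies. Your plausibility remark does not establish it.

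The paper avoids the correction problem entirely by making the construction forward rather than post-hoc. It sets up a diagram chase on the cotangent spaces $m_{y_i}/m_{y_i}^2$, introducing the kernels $\mathbb{K}_{i+1}$ (image of $m_{y_{i+1}}/m_{y_{i+1}}^2$), the Frobenius-twisted images $\mathbb{L}_i$ coming from $Y_i^{(1)}$, and the relative kernels $\mathbb{K}_{i+1\to i}$. The two crucial facts, both obtained from ``derivations kill $p$-powers,'' are that the maps $\mathbb{K}_{i+1}\twoheadrightarrow\mathbb{K}_i$ are surjective (since $\Omega_{Y_{i-1}/Y_i}=\Omega_{Y_{i-1}/Y_{i+1}}$) and that $\mathbb{K}_{i+1\to i}\subset\mathbb{L}_i$, i.e., the genuinely new cotangent directions appearing at step $i+1$ are automatically Frobenius images, hence represented by $p^i$-th powers of elements of $m_x$. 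One can therefore simply take $p^i$-th roots of suitably chosen representatives and append them to the coordinate system; no reconciliation with earlier stages is needed, because the $\mathbb{L}_i$-membership \emph{is} the compatibility condition. This containment, together with the surjectivity of the $\mathbb{K}$-maps, is precisely what your dimension count would have to deliver, and I suspect that any successful execution of your plan would end up re-proving it. Separately, your passage to $m=\infty$ is a bit quicker than warranted: the finite-$m$ coordinate systems change with $m$, and the paper uses the stabilization of the ranks $r_i$ (the power tower is eventually an $\infty$-foliation of constant rank) to see that only the coordinates destined for exponent $p^\infty$ keep getting revised, so the intersection is generated by the eventually-fixed coordinates. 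This is more than just ``$b_{j,m}$ is nondecreasing, take the sup.''
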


\begin{proof}
    The proof has a simple structure: we choose any formal coordinates $t_1,\ldots,t_d\in \cO_{X,x}$ at $x$, and then we modify them to obtain coordinates satisfying the theorem using the sequence of natural maps
    \[
    \frac{m_x}{m_x^2} \leftarrow \frac{m_{y_1}}{m_{y_1}^2}
     \leftarrow \frac{m_{y_2}}{m_{y_2}^2}
      \leftarrow \ldots  \leftarrow \frac{m_{y_m}}{m_{y_m}^2}\leftarrow \ldots,
    \]
    where $y_i=f_i(x)\in Y_i$. Nevertheless, it requires some preparations beforehand. (We use the ``being regular'' assumption to know the dimensions of the vector spaces below, see Proposition \ref{regular power tower - meaning - prop}.)

    We introduce some notation. We put $y_0=x$, and, for $i=0,\ldots, m-1$, we denote the images and cokernels of the above maps (from cotangent complexes) by:
    \[
    \frac{m_{y_{i+1}}}{m_{y_{i+1}}^2}
     \twoheadrightarrow \mathbb{K}_{i+1} \hookrightarrow \frac{m_{y_i}}{m_{y_i}^2} \to \widetilde{\Omega_{Y_i/Y_{i+1},y_{i}}}\coloneqq \frac{m_{y_i}}{m_{y_i}^2+m_{y_{i+1}}}\to 0.
    \]
    In particular, the short exact sequence
    \[
    0 \to \mathbb{K}_{i+1} \to \frac{m_{y_i}}{m_{y_i}^2} \to \frac{m_{y_i}}{m_{y_i}^2+m_{y_{i+1}}}\to 0
    \]
    is a sequence of $k$-vector spaces.

    The map $f'_{i+1}:Y_i\to Y_{i+1}$ is a $1$-foliation, see Notation \ref{power tower on variety - notation}, so it factorizes $F_{Y_i/k}$. The map $g_{i}:Y_{i+1}\to Y^{(1)}_i$ induces a map
    \[
    \frac{m_{y_{i}^{(1)}}}{m_{{y_{i}}^{(1)}}^2}\twoheadrightarrow
    \mathbb{L}_{i+1}  \hookrightarrow \frac{m_{y_{i+1}}}{m_{y_{i+1}}^2},
    \]
    where $y_{i}^{(1)}=F_{Y_i/k}(y_i)$, and $\mathbb{L}_{i+1}$ is the image. The kernel of the surjection is ${m_{{y_{i}}^{(1)}}^2+m_{y_{i+1}^{(1)}}}$.

    We prove that the maps $\mathbb{K}_{i+1}\to  \mathbb{K}_{i}$, for $i=1,\ldots,m-1$, are surjective. Indeed, it follows from the equality of sheaves $\Omega_{Y_{i-1}/Y_{i}}=\Omega_{Y_{i-1}/Y_{i+1}}$. We can conclude this equality from comparing the cotangent complexes for $Y_{i-1}/Y_{i}$ and $Y_{i-1}/Y_{i+1}$, which define these sheaves. The image of $\mathbb{K}_{i+1}$ is the kernel of $\frac{m_{y_{i-1}}}{m_{y_{i-1}}^2} \to \frac{m_{y_{i-1}}}{m_{y_{i-1}}^2+m_{y_{i}}}$, so it is equal $\mathbb{K}_{i}$. Derivations kill $p$-powers.
    
    We define
    $
    \mathbb{K}_{i+1\to i}\coloneqq \op{Ker}(\mathbb{K}_{i+1}\twoheadrightarrow  \mathbb{K}_{i} ).$

    We observe that there is a natural function $\frac{m_{y_i}}{m_{y_i}^2}\to \frac{m_{y_i^{(1)}}}{m_{y_i^{(1)}}^2}$ given by the formula 
    \[
    t \op{ mod }m_{y_i}^2 \mapsto {\left(t \op{ mod }m_{y_i}^2\right)}^p=t^p+ \op{ mod } m_{y_i^{(1)}}^2.\] This is a bijection. 
    Moreover, we can combine it with the surjection $\frac{m_{y_{i}^{(1)}}}{m_{{y_{i}}^{(1)}}^2}\to
    \mathbb{L}_{i+1}$ to have a surjective function
    \[
    \frac{m_{y_i}}{m_{y_i}^2} \twoheadrightarrow \mathbb{L}_{i+1}.
    \]
    It is essential to point out that it is not $k$-linear. It is Frobenius twisted $k$-linear, so we can treat this function as a $k$-linear if necessary, since $k$ is perfect but will not be linear on the nose. 

    We have just proved that for each $i\ge 1$ we have the following diagram:
    \begin{center}
        \begin{tikzcd}
        \frac{m_{y_{i+1}}}{m_{y_{i+1}}^2}\ar[ddd]\ar[r, two heads] & \mathbb{K}_{i+1}\ar[ddd, bend right=45, dashed, two heads]\ar[r,hook] & \frac{m_{y_i}}{m_{y_i}^2}\ar[r]& \widetilde{\Omega_{Y_{i}/Y_{i+1},y_{i}}}\ar[r]& 0\\
        &\mathbb{K}_{i+1\to i}\ar[u,hook]\ar[r,hook,dashed]&\mathbb{L}_i \ar[u, hook]&&\\
        & & \frac{m_{y_{i-1}^{(1)}}}{m_{y_{i-1}^{(1)}}^2} \ar[u, two heads]& &\\
        \frac{m_{y_{i}}}{m_{y_{i}}^2}\ar[r, two heads] & \mathbb{K}_i\ar[ruu, "0", bend left=10, dashed]\ar[r,hook] & \frac{m_{y_{i-1}}}{m_{y_{i-1}}^2}\ar[u, dashed, "t \mapsto t^p"']\ar[r]& \widetilde{\Omega_{Y_{i-1}/Y_i,y_{i-1}}}\ar[r]\ar[luu, "\simeq'"', dashed]& 0,
        \end{tikzcd}
    \end{center}
    where  the composition $\mathbb{K}_{i}\to \mathbb{L}_i$ is zero, because if  $t\in \cO_{Y_{i+1}}$, then the class of $t^p$ is zero in $\frac{m_{y_{i}}}{m_{y_{i}}^2}$. Consequently, $\mathbb{K}_{i+1\to i}$ is inside $\mathbb{L}_i$, and there is a map $\widetilde{\Omega_{Y_{i-1}/Y_i,y_{i-1}}}\to \mathbb{L}_i$ that is a (twisted) isomorphism.

    We can link the above diagrams for unspecified $i$ into the following big diagram:

    \begin{center}
        \begin{tikzcd}
        \ldots\ar[d]&\ldots &\ldots& \ldots &\ldots\\
        \frac{m_{y_{4}}}{m_{y_{4}}^2}\ar[dd]\ar[r, two heads] & \mathbb{K}_4\ar[dd, bend right=45, two heads]\ar[r,hook] & \frac{m_{y_{3}}}{m_{y_{3}}^2}\ar[u, two heads]\ar[r]& \widetilde{\Omega_{Y_{3}/Y_4,y_{3}}}\ar[r]& 0\\
        &\mathbb{K}_{4\to3}\ar[u,hook]\ar[r,hook]&\mathbb{L}_3\ar[u,hook]\ar[uu, bend right=50, two heads]&&\\
        \frac{m_{y_{3}}}{m_{y_{3}}^2}\ar[dd]\ar[r, two heads] & \mathbb{K}_3\ar[dd, bend right=45, two heads]\ar[r,hook] & \frac{m_{y_{2}}}{m_{y_{2}}^2}\ar[u, two heads]\ar[r]& \widetilde{\Omega_{Y_{2}/Y_3,y_{2}}}\ar[r]& 0\\
        &\mathbb{K}_{3\to2}\ar[u,hook]\ar[r,hook]&\mathbb{L}_2\ar[u,hook]\ar[uu, bend right=50, two heads]&&\\
        \frac{m_{y_{2}}}{m_{y_{2}}^2}\ar[dd]\ar[r, two heads] & \mathbb{K}_2\ar[dd, bend right=45, two heads]\ar[r,hook] & \frac{m_{y_{1}}}{m_{y_{1}}^2}\ar[u, two heads]\ar[r]& \widetilde{\Omega_{Y_{1}/Y_2,y_{1}}}\ar[r]& 0\\
        &\mathbb{K}_{2\to1}\ar[u,hook]\ar[r,hook]&\mathbb{L}_1\ar[u,hook]\ar[uu, bend right=50, two heads]&&\\
          \frac{m_{y_{i}}}{m_{y_{i}}^2}\ar[r, two heads] & \mathbb{K}_1\ar[r,hook] & \frac{m_{x}}{m_{x}^2}\ar[u, two heads]\ar[r]& \widetilde{\Omega_{X/Y_1,x}}\ar[r]& 0,
        \end{tikzcd}
    \end{center}
    where the compositions $\mathbb{L}_i\to \mathbb{L}_{i+1}$ are surjective, because of the equality of sheaves $\Omega_{Y_{i+1}/X^{(i+1)}}=\Omega_{Y_{i+1}/Y_i^{(1)}}$ that follows from the condition $K(X)^{p^{i+1}}\cdot K(Y_{i+1})^p=K(Y_i)^p$ from Definition \ref{Power Tower - Definition}. Moreover, the kernel of $\mathbb{L}_i\to \mathbb{L}_{i+1}$ is equal to $\mathbb{K}_{i+1\to i}$ since $\mathbb{K}_i\to \mathbb{L}_i$ is zero, and their dimensions agree (see below).

    We can observe that the dimension of $\mathbb{K}_i$ is equal $d-r_i$, where $r_i$ is the dimension of $\mathbb{L}_i$ that is also the dimension of $\widetilde{\Omega_{Y_{i-1}/Y_i,y_{i-1}}}$. So, the dimension of $\mathbb{K}_{i+1\to i}$ is $r_{i+1}-r_i$.

    Finally, we can proceed to the proof.

    Let $t_{1,1},\ldots,t_{1,d}\in m_x\subset \cO_{X,x}$ be elements whose classes in $m_x/m_x^2$ form a basis of this vector space. Without loss of generality, we can assume that $t_{1,r_1+1},\ldots,t_{1,d}$ is a basis of $\mathbb{K}_1$.
    Therefore, $t_{1,1}^p,\ldots,t_{1,r_1}^p$ is a basis of $\mathbb{L}_1$.

    Let $b_{1},\ldots,b_{r_1-r_2}\in m_{y_2}$ be elements whose classes form a basis of $\mathbb{K}_{2\to 1}$. Without loss of generality, we can assume that $t_{1,1}^p,\ldots,t_{1,r_2}^p,b_{1},\ldots,b_{r_1-r_2}$ forms a basis of $\mathbb{L}_1$. The elements $b_j$ are $p$-powers in $\cO_{X,x}$, so we can take their $p$-roots in this ring. We denote $t_{2,j}\coloneqq b_j^{1/p}$.

    The elements $t_{1,1}^{p^2},\ldots,t_{1,r_2}^{p^2}$ form a basis for $\mathbb{L}_2$. 
    Let $b_{1},\ldots,b_{r_2-r_3}\in m_{y_3}$ be elements whose classes form a basis of $\mathbb{K}_{3\to 2}$.
    Without loss of generality, we can assume that $t_{1,1}^{p^2},\ldots,t_{1,r_3}^{p^2},b_{1},\ldots,b_{r_2-r_3}$ form a basis of $\mathbb{L}_2$.
    The elements $b_j$ are $p^{2}$-powers in $\cO_{X,x}$, 
    so we can take their $p^2$-roots in this ring. We denote $t_{3,j}\coloneqq b_j^{1/p^2}$.

    And so on and so on:

    The elements $t_{1,1}^{p^i},\ldots,t_{1,r_i}^{p^i}$ form a basis for $\mathbb{L}_i$. 
    Let $b_{1},\ldots,b_{r_i-r_{i+1}}\in m_{y_{i+1}}$ be elements whose classes form a basis of $\mathbb{K}_{i+1\to i}$.
    Without loss of generality, we can assume that $t_{1,1}^{p^i},\ldots,t_{1,r_{i+1}}^{p^i},b_{1},\ldots,b_{r_i-r_{i+1}}$ form a basis of $\mathbb{L}_i$.
    The elements $b_j$ are $p^{i}$-powers in $\cO_{X,x}$, 
    so we can take their $p^i$-roots in this ring. We denote $t_{i+1,j}\coloneqq b_j^{1/p^i}$.

    If our power tower is of finite length $m$, as it is in the first part of the theorem, then we continue the above procedure till $i=m$, and the following coordinates, in the opposite order,
    \[
    t_{m+1,1},\ldots,t_{m+1,r_m},
    t_{m,1},\ldots,t_{m+1,r_{m-1}-r_{m}},
    \ldots,
    t_{1,1},\ldots,t_{1,d-r_{1}}
    \]
    satisfy the theorem.

    If our power tower is of infinite length, then we do not stop the procedure. And, in this case, there is an integer $m\ge 1$ such that $Y_m,Y_{m+1},\ldots$ is a $\infty$-foliation of rank $r>0$ on $Y_m$, i.e., $r=r_{m+1}=r_{m+2}=r_{m+3}=\ldots>0$, because the degrees are nonincreasing by Proposition \ref{Nonincreasing Degrees Power Tower - Proposition}. By the finite length case for $m,m+1,m+2,\ldots$, we have that:
    \begin{align*}
     \widehat{f_m}: k[[t_{m+1,1}^{p^m},\ldots,t_{m+1,r}^{p^m},
    t_{m,1}^{p^{m-1}},\ldots,t_{1,d-r_{1}}]]&\to k[[t_{m+1,1},\ldots,t_{1,d-r_{1}}]], \\
    \widehat{f_{m+1}}: k[[t_{m+2,1}^{p^{m+1}},\ldots,t_{m+2,r}^{p^{m+1}},
    t_{m,1}^{p^{m-1}},\ldots,t_{1,d-r_{1}}]]&\to k[[t_{m+2,1},\ldots,t_{1,d-r_{1}}]], \\
    \widehat{f_{m+2}}: k[[t_{m+3,1}^{p^{m+2}},\ldots,t_{m+3,r}^{p^{m+2}},
    t_{m,1}^{p^{m-1}},\ldots,t_{1,d-r_{1}}]]&\to k[[t_{m+3,1},\ldots,t_{1,d-r_{1}}]], \\
    \ldots &\to \ldots .
    \end{align*}

    This implies that
    \[
    \bigcap_{i\ge 0} k[[t_{m+i,1}^{p^{m+i-1}},\ldots,t_{m+i,r}^{p^{m+i-1}},
    t_{m,1}^{p^{m-1}},\ldots,t_{1,d-r_{1}}]] = k[[
    t_{m,1}^{p^{m-1}},\ldots,t_{1,d-r_{1}}]],
    \]
    because only the first $r$ elements are modified each step.

    Consequently, we have
    \[
    \widehat{f_{\infty}}: k[[t_{m+1,1}^{p^{\infty}},\ldots,t_{m+1,r}^{p^{\infty}},
    t_{m,1}^{p^{m-1}},\ldots,t_{1,d-r_{1}}]]\to k[[t_{m+1,1},\ldots,t_{1,d-r_{1}}]].
    \]

    This finishes the proof.
\end{proof}

\begin{remark}
    In Theorem \ref{formal Frobenius Theorem for power towers- thm}, we assume that $k$ is algebraically closed instead of just perfect to avoid technicalities around variations of residue fields. I did not follow all the details for fields $k$, which are only perfect. Is the result correct in the form that we replace $k$ with its finite extension $k'=k(x)$, $k'$ is the common residue field for all $y_i$, and the proof goes the same way from there? Maybe, I don't know.
\end{remark}

\begin{remark}
    The function $\widehat{f_\infty}$ in Theorem \ref{formal Frobenius Theorem for power towers- thm} is a function from formal first integrals to all formal functions at the point $x$, i.e., it is of the same nature as the inclusion $\C[[t_1,\ldots,t_m]]\to \C[[t_1,\ldots,t_n]]\simeq \widehat{\cO_{X,x}} $ from the context paragraph. Indeed, let say that $d=3$ and $a_0=1,a_2=44, a_3=\infty$, then we get that the power tower at $x$ corresponds to
    \[
    k[[t_1^{p^{0}},t_2^{p^{44}},t_3^{p^\infty}]]=k[[t_1,t_2^{p^{44}}]]\to k[[t_1,t_2,t_3]] \simeq \widehat{\cO_{X,x}},
    \]
    and the elements in $k[[t_1,t_2^{p^{44}}]]$ are precisely the elements from $ k[[t_1,t_2,t_3]]$ that are constants for the subalgebra $\op{Diff}_{Y_\bullet}(X)_{| \overline{x}}$ (of \emph{continuous/formal} differential operators).
\end{remark}

\begin{question}
    Is the above theorem \ref{formal Frobenius Theorem for power towers- thm} true for every subalgebra of ``continuous/formal'' differential operators? Or is there an extra condition that needs to be added?

    I believe that all ``formal power towers'' are of the form from the above theorem. Is it actually true? If so, then, in the language of Sweedler's Galois theory \cite{Sweedler-modular}, we could say that every ``formal power tower'' is ``modular'', which is nice, because not every power tower on a field or on a variety is ``modular''. Some examples are not such.
\end{question}

\subsection{Stratifications Induced by Regular Power Towers}

The formal Frobenius theorem for power towers, Theorem \ref{formal Frobenius Theorem for power towers- thm}, says that every regular power tower on a variety $X/k$, where $k=\overline{k}$, at every closed point $x\in X(k)$ is locally in a formal neighborhood given by
\[
k[[t_1^{p^{a_1}},t_2^{p^{a_2}},t_3^{p^{a_3}},\ldots,t_d^{p^{a_d}}]]\to k[[t_1,t_2,t_3,\ldots, t_d]]\simeq \widehat{\mathcal{O}_{X,x}}.
\]
The tuple $(a_1,\ldots,a_d)$ is determined by the point $x$; however, it may differ at other points of $X$. Nevertheless, the function $x\mapsto (a_1,\ldots,a_d)$ is constructible, meaning that the sets of points with the same tuple are locally closed. This induces a stratification of the variety. This stratification may be nontrivial; an example is given later, Example \ref{not Ekedahl power tower}.

\begin{thm}\label{stratification exists - thm}
Let $X$ be a variety over an \emph{algebraically closed} field $k$ of characteristic $p>0$ such that $K(X)^{p^\infty}=k$. Let $d$ be the dimension of $X/k$. 
Let $Y_\bullet$ be a regular power tower on $X$.

Then, there exists a constructible function 
\begin{align*}
    G(Y_\bullet):X(k)&\to \{\infty,0,1,2\ldots\}^d\\
    x &\mapsto (a_1,\ldots,a_d),
\end{align*}
where the $d$-tuple $(a_1,\ldots,a_d)$ is the unique one satisfying:
\[
\widehat{f_\infty}: k[[t_1^{p^{a_1}},t_2^{p^{a_2}},t_3^{p^{a_3}},\ldots,t_d^{p^{a_d}}]]\to k[[t_1,t_2,t_3,\ldots, t_d]]\simeq \widehat{\mathcal{O}_{X,x}}
\]
with $0\le a_1\le a_2\le a_3 \le \ldots \le a_d \le \infty$, where the elements $t_1,\ldots, t_d$ are some coordinates at the point $x$ according to Theorem \ref{formal Frobenius Theorem for power towers- thm}.

In particular, the sets $X(k)_{(a_1,\ldots,a_d)}\coloneqq G(Y_\bullet)^{-1}((a_1,\ldots,a_d))$ are locally closed in $X(k)$, only finitely many of them are not empty, and their union is the set of all the closed points, i.e., $\bigcup_{a\in \{\infty,0,1,2\ldots,\}^d} X(k)_a=X(k)$.
\end{thm}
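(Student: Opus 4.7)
The plan is to translate the tuple $(a_1(x),\ldots,a_d(x))$ produced by Theorem~\ref{formal Frobenius Theorem for power towers- thm} into a collection of integer-valued invariants that are automatically constructible, and then to assemble the stratification from their joint level sets.

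First, I would unwind the Frobenius theorem to read off the tuple from semi-continuous data. From the proof of Theorem~\ref{formal Frobenius Theorem for power towers- thm}, the tuple at a closed point $x$ is encoded by the non-increasing sequence of integers
\[
\rho_m(x) \coloneqq \#\{i : a_i(x)\ge m\} \;=\; \dim_{\kappa(y_{m-1})} \widetilde{\Omega_{Y_{m-1}/Y_m,\,y_{m-1}}}, \qquad m\ge 1,
\]
with $\rho_0(x)=d$, and with the multiplicity of $m$ in the tuple equal to $\rho_m(x)-\rho_{m+1}(x)$. Here $\widetilde{\Omega_{Y_{m-1}/Y_m,\,y_{m-1}}}$ is the cotangent space at $y_{m-1}$ of the fiber of $f'_m\colon Y_{m-1}\to Y_m$, that is, the fiber $\Omega_{Y_{m-1}/Y_m}\otimes\kappa(y_{m-1})$ of a coherent sheaf on the noetherian scheme $Y_{m-1}$.

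Next, I would argue that each $\rho_m$ is a constructible function of $x$. By the standard upper semicontinuity of fiber dimensions of a coherent sheaf, $\rho_m$ is upper semicontinuous on $Y_{m-1}$. Since the purely inseparable morphism $f_{m-1}$ is a universal homeomorphism, composing with the induced bijection on closed points transports this upper semicontinuity to $X(k)$. In particular every sublevel set $\{\rho_m\le N\}$ is open and every level set $\{\rho_m=N\}$ is locally closed.

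For finiteness, I would observe that $0\le\rho_m(x)\le d$, the sequence $(\rho_m(x))_{m\ge 1}$ is non-increasing, and the generic ranks $r_m$ of the Jacobson sequence bound $\rho_m$ uniformly from above. Hence only finitely many $m$ can contribute non-trivially (for finite-length towers $\rho_m\equiv 0$ past the length; for $\infty$-foliations the sequence stabilises at some $\rho_\infty(x)$), and each $\rho_m$ takes only finitely many integer values on $X(k)$. Consequently the joint function $x\mapsto(\rho_m(x))_m$, and therefore $G(Y_\bullet)$, takes only finitely many values. The assembly is then automatic: each $X(k)_{(a_1,\ldots,a_d)}$ is a finite intersection of locally closed loci of the form $\{\rho_m=c_m\}$, which is locally closed; disjointness and the cover $\bigcup_a X(k)_a=X(k)$ are tautological.

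The main obstacle is the first step, namely matching the formal invariant $\#\{i:a_i(x)\ge m\}$ coming from Theorem~\ref{formal Frobenius Theorem for power towers- thm} with the fibrewise invariant $\dim\widetilde{\Omega_{Y_{m-1}/Y_m,\,y_{m-1}}}$. This requires carefully reading through the iterative construction of the coordinates $t_{i,j}$ in the proof of Theorem~\ref{formal Frobenius Theorem for power towers- thm}—in particular identifying the dimensions of the vector spaces $\mathbb{K}_i,\mathbb{L}_i$, and $\widetilde{\Omega_{Y_{i-1}/Y_i,\,y_{i-1}}}$—and then verifying that this local-formal invariant coincides with the fibre dimension of the coherent sheaf $\Omega_{Y_{m-1}/Y_m}$ (or equivalently, via Nakayama, with the minimal number of generators of $\mathcal O_{Y_{m-1},y_{m-1}}$ as an $\mathcal O_{Y_m,y_m}$-module). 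Once this identification is in place, constructibility, finiteness, and the assembly of the stratification follow from entirely standard results.
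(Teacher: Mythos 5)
Your first step — identifying $\rho_m(x)=\#\{i:a_i(x)\ge m\}$ with the fiber dimension $\dim_{\kappa(y_{m-1})}\bigl(\Omega_{Y_{m-1}/Y_m}\otimes\kappa(y_{m-1})\bigr)$ — is where the proof breaks. For a \emph{regular} power tower, Proposition~\ref{regular power tower - meaning - prop}(6) says $\Omega_{Y_{m-1}/Y_m}$ is a vector bundle on $Y_{m-1}$; hence that fiber dimension is constant, every $\rho_m$ is constant, and your $G(Y_\bullet)$ would be constant. But Example~\ref{not Ekedahl power tower} exhibits a regular $2$-foliation on $\mathbb{A}^2$ whose stratification jumps along $\{y=0\}$, so the tuple cannot be recovered from the fiber dimensions of the relative cotangent sheaves in the Jacobson sequence, and the upper-semicontinuity argument you lean on never enters the picture.

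The jump is caused by the fact that the $p$-filtration of the saturated subalgebra $\D$ does not commute with restriction to a subscheme $Z$: the space $(\D_{|Z})_m$ can be strictly larger than $(\D_m)_{|Z}$, because a distinguished generator whose leading form has a coefficient vanishing along $Z$ drops in order after restriction (this is the ``avalanche'' of Remark~\ref{avalanche - remark}). Each $\Omega_{Y_{m-1}/Y_m}$, being a single floor of the Jacobson sequence, is blind to this interaction between floors. The paper's proof therefore works instead with the relative operator modules $\op{Diff}_{B_N}(A,C)$, builds distinguished generators for $\op{gr}\D_{|Z}$ via the iterated factorizations from Proposition~\ref{pd envelope - prop}, and stratifies $X$ by the vanishing loci of the leading forms of these generators. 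Constructibility then comes from those loci being closed, not from fiberwise upper semicontinuity of a single coherent sheaf. To salvage your plan you would need a pointwise invariant built from the ambient subalgebra $\D$ itself — e.g.\ the orders of operators in a $k$-basis of $\D\otimes\kappa(x)$ — rather than from the relative differentials of the individual $f'_m$.
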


\begin{remark}[Avalanche]\label{avalanche - remark}
    Before the proof of \ref{stratification exists - thm}, we will explain the main idea behind the proof and why it is true. The actual proof executes this idea, but it does so with some machinery that might hide the idea.
    
    Let's say that we have a regular power tower of length $2$ with $d=4$, $r_1=3$, and $r_2=2$ on $k[x_1,x_2,x_3,x_4]$. Then it admits a set of saturated, well-chosen distinguished generators \ref{saturated dist generators - exist and do the job - prop}: 
    \begin{center}
        $3$ in degree $1$: $G_1,G_2,G_3$ and $2$ in degree $p$: $H_1$, $H_2$.
    \end{center}
    Let say that the leading form $[H_1]$ admits an invertible function in its formula in the coordinates, e.g., $H_1=1 \cdot\frac{1}{p!}\frac{\partial^p}{\partial x_4^p}$, and let say that $[H_2]$ does not, e.g., $H_2=x_1\cdot \frac{1}{p!}\frac{\partial^p}{\partial x_3^p}+\frac{\partial}{\partial x_1}$. \footnote{I do not claim that a power tower with such $H_1, H_2$ exists! This is simply an explanation of what is important to focus on.}

    Then, at every closed point $x$ outside $V(x_1)$ the reductions of $G_1,G_2,G_3,H_1,H_2$ form a set of distinguished generators for $\D_{|x}$. So, the tuple is $(0,0,1,2,2)$ at these points.

    However, if the point $x$ belongs to $V(x_1)$, then the reductions of $G_1,G_2,G_3,H_1,H_2$ are still generators of the algebra, but not necessarily distinguished generators. Indeed, the degree of the operator $H_2$ drops after assuming $x_1=0$, then $\left(\mathcal{D}_{|V(x_1)}\right)_1$ has greater rank than $\left(\mathcal{D}_1\right)_{|V(x_1)}$. 
    (In general, the $p$-filtration does not commute with a restriction to a subvariety!) 
    However, $H_1$ stays in degree $p$. Finally, by the proof, we will know that $\mathcal{D}_{|V(x_1)}$ admits a set of distinguished generators, so for the rank reasons it must be:
    \begin{center}
        $4$ in degree $1$: $G'_1,G'_2,G'_3, G'_4$ and $1$ in degree $p$: $H'_1$,
    \end{center}
    where most likely we can assume that $G'_1,G'_2,G'_3$ are reductions of $G_1,G_2,G_3$ and $H'_1$ is the reduction of $H_1$, and then there is an extra $G'_4$ that emerges somehow from $H_2$ modulo $x_1$ interacting with operators of order $<p$ taken modulo $x_1$ too. Consequently, the tuple is $(0,1,1,1,2)$ for all points $x \in V(x_1)$. 
    
    In this explanation, a part of one of the twos fell into the lower order after $x_1=0$. Such cascades explain all changes in the stratification. And, if the power tower's length is great, there could be many of them co-occurring, and we could call this phenomenon an avalanche. This is a picturesque explanation, I like!

    In a less poetic language, there is an order on partitions, our tuples. We can only get a smaller or equal partition after restricting our power tower to a subvariety, and whether we got a smaller one depends solely on the leading forms of saturated distinguished generators. It is independent of the generators chosen.
\end{remark}

\begin{proof}
        First, we prove that the function $G(Y_\bullet)$ is well defined. Indeed, let $x\in X(k)$ be a closed point. Then, according to Theorem \ref{formal Frobenius Theorem for power towers- thm}, there exist some coordinates $t_1,\ldots,t_d$ at $x$ such that we have
    \[
\widehat{f_\infty}: k[[t_1^{p^{a_1}},t_2^{p^{a_2}},t_3^{p^{a_3}},\ldots,t_d^{p^{a_d}}]]\to k[[t_1,t_2,t_3,\ldots, t_d]]\simeq \widehat{\mathcal{O}_{X,x}}.
    \]
    Let $m\ge 0$ be an integer. By observing that $\widehat{f_m}$ is given by
    \[
    \widehat{f_m}: \widehat{\mathcal{O}_{Y_m,y_m}}\simeq k[[t_1^{p^{\op{min}(m,a_1)}},\ldots,t_d^{p^{\op{min}(m,a_d)}}]]\to k[[t_1,\ldots, t_d]]\simeq \widehat{\mathcal{O}_{X,x}}
    \]
    and $k[[t_1^{p^{\op{min}(m,a_1)}},\ldots,t_d^{p^{\op{min}(m,a_d)}}]]$ is a ``composite'' of the complete rings $k[[t_1^{p^{a_1}},\ldots,t_d^{p^{a_d}}]]$ and $k[[t_1^{p^m},\ldots, t_d^{p^m}]]$,
    thus we conclude that the indices $a_i$ are determined by the dimensions of $k[[t_1,\ldots, t_d]]$ over $k[[t_1^{p^{\op{min}(m,a_1)}},\ldots,t_d^{p^{\op{min}(m,a_d)}}]]$ that are finite, and they do not depend on the coordinates. Therefore, the indices $a_i$ do not depend on the coordinates. This proves that the function $G(Y_\bullet)$ is well defined.

    Next, we prove that $\bigcup_{a\in \{\infty,0,1,2\ldots,\}^d} X(k)_a=X(k)$. This is a property of every function: the union of all inverse images is the whole domain.

    Finally, we proceed to prove that the function $G$ is constructible.
    
    Let $\D$ be a saturated subalgebra of differential operators corresponding to $Y_\bullet$, Theorem \ref{MainTheorem - var - theorem}. And let $\F_\bullet$ be its saturated Jacobson sequence. We know that $\D_n=\op{Diff}_{Y_n}(X)$, and that Unpacking \ref{unpacking - corollary - regular power towers} is true for $\D$. 

    Let $Z\subset X$ be a closed subscheme/subvariety.
    We will prove that a version of Unpacking works for the pullback $\D_{| Z}$ of $\D$ to $Z$. This will be enough to prove that the graded (commutative) algebra (!) $\op{gr}\D_{| Z}$ admits generators that look like distinguished ones. ($\D_{| Z}$ is not an algebra, just a module.) This will be enough to conclude that generically $\D_{| Z}$ allows only for one tuple, which means that $G$ is constructible, because outside the locus of vanishing of any of (the leading forms of) these generators, all is the same at every point.

    We can assume that $X,Y_1,\ldots$ are affine and given by rings $A,B_1,B_2,\ldots$. Let $Z$ be a subvariety of $X$ corresponding to a ring $C$. Let $N\ge 1$ be an integer.

    First, by the definition we have $\D_{N| Z}=\op{Diff}_{B_N}(A,A)\otimes C$. And, by Proposition \ref{pd envelope - prop}, we have that $\op{Diff}_{B_N}(A,A)\otimes C=\op{Diff}_{B_N}(A,C)$, $\op{Diff}_{k}(A,A)\otimes C=\op{Diff}_{k}(A,C)$, and the inclusion $\op{gr}\op{Diff}_{B_N}(A,C)\subset \op{gr}\op{Diff}_k(A,C)$ factorizes through ${S}(\F_1^*)^{*gr}$, where $\F_1=\mathcal{D}_N\cap \op{Der}_k(A,C)=\op{Der}_{B_N}(A,C)=\op{Der}_{B_1}(A,C)$.
    We define $\F_i\coloneqq \op{Der}_{B_i}(B_{i-1},C)$, for $i\ge 1$.
    By iterating Proposition \ref{pd envelope - prop}, we can obtain the following diagram:

    \hspace{-0.5cm}
    \begin{tikzcd}
        \op{gr}\op{Diff}_{B_N}(A,C) \ar[r] \ar[d,two heads]& {S}(\F_1^*)^{*gr} \ar[r]\ar[rd]& \op{gr}\op{Diff}_k(A,C)\\
        \op{gr}\op{Diff}_{B_N}(A,C)/(\F_1^*)\simeq \op{gr}\op{Diff}_{B_N}(B_1,C)\ar[r] \ar[d,two heads]& {S}(\F_2^*)^{*gr}\ar[r] \ar[rd]& {S}(\F_1^*)^{*gr}/(\F_1^*)\\
        \op{gr}\op{Diff}_{B_N}(B_1,C)/((\F_2)^*)\simeq \op{gr}\op{Diff}_{B_N}(B_2,C)\ar[r] \ar[d,two heads]& {S}(\F_3^*)^{*gr}\ar[r] \ar[dr]& {S}(\F_2^*)^{*gr}/((\F_2)^*)\\
        \ldots\ar[r]&\ldots \ar[r]&\ldots
    \end{tikzcd}
   
By Proposition \ref{diff=end - new}, we can check the surjectivity of the vertical maps before taking $\op{gr}$ and by replacing $\op{Diff}$ with $\op{End}$. The surjectivity is clear using $\op{End}$. 

The number $N$ was just a scaffolding to use $\op{End}$ instead of $\op{Diff}$. We can choose it in an arbitrary way. We will use it to get a basis for the whole $\op{gr}\D_{| Z}$.

Now, we can repeat the proof of Proposition \ref{saturated dist generators - exist and do the job - prop}, or \ref{dist generators exist - prop}: for each $m\ge 1$ we take a basis of $G_1^m,\ldots, G_{r_m}^m$ of $\F_m$, and we take their preimages from $\gamma_{p^{m-1}}\F_1 \subset \D_{N| Z}$ for $N>m$. This is possible because of the basic arithmetic of divided power rings.

The set $G=\{G_1^m,\ldots, G_{r_m}^m\}_{m\ge 1}$ is like a set of leading forms of a set distinguished generators (we can call any its preimage $G'$ with respect to grading ``\textbf{a set of distinguished generators $G'$ for $\D_{| Z}$}''), and by an ``Unpacking'' along the vertical lines they determine the dimensions of free modules $\F_i$, so
if we take them modulo another ideal, corresponding to $Z_1\subset Z$. All of them stay nonzero, then they will keep this role for $\D_{| Z_1}$ and its $\F_i$'s (in this proof, $\F_i$'s depended on $\D$ and $Z$). Generators will stay generators. Bases will stay bases. 

In particular, it is true for closed points, where we will get that $r_i=\op{dim}_k(\mathbb{L}_i)$, where $\mathbb{L}_i$ follows the notation from the proof of \ref{formal Frobenius Theorem for power towers- thm}. Consequently, there is an open subset, namely, $U\coloneqq Z\setminus \bigcup_{i,m} V(G^i_m)\subset Z$, where $V(G^i_m)$ is the vanishing locus of $G^i_m$, such that all points $x\in U(k)$ have the same value $G(x)$ that is determined by integers $r_1,r_2,\ldots$.
\end{proof}

\begin{question}
    Can we use power towers to study partitions? 
    
    Or partitions to study power towers?
\end{question}

\begin{prop}\label{the tuple for the constant - prop}
    In Theorem \ref{stratification exists - thm}, if the function $G(Y_\bullet)$ is constant, then the value depends only on the degrees $p^{r_{i+1}}=[K(Y_{i}): K(Y_{i+1})]$ for $i\ge 0$.

    In particular, the tuple is equal to $d-r_1$ of zeros, $r_1-r_2$ of ones, $r_2-r_3$ of threes, and so on till we reach for the first time the moment $r_m=r_{m+1}=\ldots$. Then, we have $r_{m-1}-r_{m}$ of $m-1$, and finally $r=r_m$ of $\infty$.

    Moreover, the same formula works for the generic value of the function $G$ on a subvariety $Z\subset X$, where we take $r_i$ to be the ranks of $\op{Der}_{Y_i}(Y_{i-1}, Z)$.
\end{prop}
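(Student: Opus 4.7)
The plan is to extract the multiplicities of the values in the tuple $(a_1,\ldots,a_d)$ directly from the degrees of the maps $f_m$ in the power tower. The key input is the Formal Frobenius Theorem \ref{formal Frobenius Theorem for power towers- thm}: at any closed point $x\in X(k)$ with image $y_m=f_m(x)$ one has a local description
\[
\widehat{f_m}: k[[t_1^{p^{\min(m,a_1)}},\ldots,t_d^{p^{\min(m,a_d)}}]]\to k[[t_1,\ldots,t_d]]\simeq \widehat{\cO_{X,x}},
\]
where $(a_1,\ldots,a_d)=G(Y_\bullet)(x)$. The rank of $\widehat{\cO_{X,x}}$ as a module over $\widehat{\cO_{Y_m,y_m}}$ is therefore $p^{\sum_{i=1}^d \min(m,a_i)}$.

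First I would establish the identity
\[
\sum_{i=1}^d \min(m,a_i) \;=\; r_1+r_2+\cdots+r_m
\]
for every finite $m\ge 1$. By Proposition \ref{regular power tower - meaning - prop} the map $f_m:X\to Y_m$ is flat between regular varieties, so its local rank at $x$ equals the generic rank $[K(X):K(Y_m)]$. The latter factors as $\prod_{i=1}^{m}[K(Y_{i-1}):K(Y_i)]=p^{r_1+\cdots+r_m}$, and matching this with the Frobenius expression above gives the identity. Telescoping it yields that the number of indices $i$ with $a_i\ge m$ equals $r_m$, which immediately recovers the partition: there are $d-r_1$ indices with $a_i=0$, $r_j-r_{j+1}$ indices with $a_i=j$ for each $j\ge 1$, and — once the nonincreasing sequence $r_\bullet$ stabilizes at some integer $m$ with $r_m=r_{m+1}=\ldots=r$, which exists by Proposition \ref{Nonincreasing Degrees Power Tower - Proposition} — exactly $r$ indices with $a_i=\infty$. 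Only the degrees $r_i$ enter this description, proving the first assertion.

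For the ``moreover'' statement, I would adapt the avalanche argument (Remark \ref{avalanche - remark}) used in the proof of Theorem \ref{stratification exists - thm}. On a subvariety $Z\subset X$, restricting the saturated subalgebra $\D$ and its Jacobson sequence $\F_\bullet$ produces the new rank data $r_i^Z\coloneqq \op{rk}\bigl(\op{Der}_{Y_i}(Y_{i-1},Z)\bigr)$, which at a generic closed point of $Z$ plays the role of $r_i$ in the computation above. The flatness input is replaced by the content of the proof of Theorem \ref{stratification exists - thm}: the reductions modulo the ideal of $Z$ of a set of saturated, well-chosen distinguished generators of $\D$ form, on a dense open of $Z$, a set of distinguished generators for $\D_{|Z}$, and their ``unpacked'' differentials $d(A/B_{i-1})$ give $B_{i-1}\otimes C$-bases of $\op{Der}_{Y_i}(Y_{i-1},Z)$. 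Hence the same rank-matching identity $\sum_i \min(m,a_i)=r_1^Z+\cdots+r_m^Z$ holds at a generic closed point of $Z$, and the combinatorial extraction of the tuple proceeds verbatim.

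The main obstacle I anticipate is purely bookkeeping in the ``moreover'' part: one must verify that the definition of $r_i^Z$ as the rank of $\op{Der}_{Y_i}(Y_{i-1},Z)$ agrees, on the open locus where $G(Y_\bullet)$ is constant on $Z$, with the rank of the $i$-th layer of distinguished generators produced by the avalanche procedure — in other words, that no ``rank drop'' survives past the restriction to the constant-value stratum. This is exactly what the proof of Theorem \ref{stratification exists - thm} ensures via the leading-form argument, so no new ideas should be needed, but the indexing between $\F_i$ on $X$, its restriction to $Z$, and the emergent ``extra'' generators coming from higher-order operators whose order drops under the restriction must be tracked carefully.
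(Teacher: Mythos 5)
You take a genuinely different route from the paper, using the Formal Frobenius Theorem plus flatness of $f_m$ to derive a rank identity, whereas the paper argues directly with saturated distinguished generators. Unfortunately, your route has a gap at the first step.

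The problem is the asserted rank formula
\[
\mathrm{rk}\,\widehat{\cO_{X,x}}\ \text{over}\ \widehat{\cO_{Y_m,y_m}}\;=\;p^{\sum_{i=1}^d \min(m,a_i)},
\]
where $(a_1,\dots,a_d)=G(Y_\bullet)(x)$. You cite Theorem \ref{formal Frobenius Theorem for power towers- thm} for it, but that theorem only asserts a normal form for each $m$ \emph{separately}, with coordinates that are a priori allowed to depend on $m$; it says nothing about the normal form of $\widehat{f_m}$ being the \emph{truncation} of the $\widehat{f_\infty}$ normal form in a common coordinate system. (That stronger ``truncated form'' claim appears only inside the proof of Theorem \ref{stratification exists - thm}, and it is a substantive assertion, not part of the statement of the Frobenius theorem.) Since flatness of $f_m$ holds unconditionally for regular power towers (Proposition \ref{regular power tower - meaning - prop}), your argument as written would give $\sum_i\min(m,a_i)=r_1+\cdots+r_m$ at \emph{every} closed point, without using the constancy hypothesis at all, and hence would show that $G(Y_\bullet)$ is \emph{always} constant. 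That contradicts Example \ref{not Ekedahl power tower}, which exhibits a regular $2$-foliation (so flatness applies) with nonconstant $G$: at the origin the tuple is $(1,1)$ and $r_1=1$, so $\min(1,1)+\min(1,1)=2\neq 1$.

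What actually fails at such a point is exactly the truncated-form input: the Frobenius normal form tuple $a^{(m)}$ for $\widehat{f_m}$ need not equal $(\min(m,a_i))_i$. To repair the argument you would have to invoke the constancy hypothesis before the identity, e.g., by noting that the identity holds on the dense open stratum where the Frobenius coordinates can be chosen compatibly for all $m$, and then using constancy of $G$ to transport it to every closed point. The paper's proof sidesteps all of this: if $G$ is constant, the avalanche argument (proof of Theorem \ref{stratification exists - thm}) shows a set of saturated distinguished generators remains distinguished at every closed point, so the number of generators in each order $p^{m-1}$ — namely $r_m$ — determines the tuple immediately, with no appeal to the truncated normal form at all. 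The telescoping combinatorics you write down at the end is correct, and the ``moreover'' part is in the right spirit, but the first identity needs a different justification.
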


\begin{proof}
    If the function is constant, then any set of saturated distinguished generators remains a set of saturated distinguished generators at every closed point, since the number of generators from each degree $p^m$ determines the tuple, thus the tuple is constant, and given by the values in the statement.

    The ``moreover'' part is proved in the same way by using ``a set of distinguished generators $G$ for $\D_{| Z}$'' from the proof of Theorem \ref{stratification exists - thm} instead of $G$ for $\D$.
\end{proof}

\begin{remark}[Visualization is... Natural?]
    The proof of Theorem \ref{stratification exists - thm} implies that every reduction $\op{D}_{|Z}$ of a saturated subalgebra $\op{D}$ of the algebra of differential operators $\op{Diff}_k(X)$ to a subscheme $Z$ of $X$, assuming that $\D$ corresponds to a regular power tower, admits a set of distinguished generators. In other words, the visualization \ref{fig:stairs} of a subalgebra $\op{Diff}_{W_\bullet}(K)$ is true for all modules $\op{Diff}_{Y_\bullet}(X,Z)$. This seems to be a deep fundamental property of ``regular $\op{Diff}$ objects''. I do not know how far this observation goes, and I do not know how to formulate it better. 
    
    However, it is a promising observation because power towers are close to being infinitesimal groupoids, and this observation strengthens that. It shows that structures on $\op{Diff}$ pullback quite well. Likely, power towers are avatars of something more categorical.
\end{remark}

\begin{remark}
    We can combine Theorem \ref{codim 2 regular power tower finite length- prop}: every purely inseparable morphism is a regular power tower away from a codim $2$ subset (that depends on the morphism), and Theorem \ref{stratification exists - thm}: regular power towers induce stratifications. The result is:
    \begin{center}
        Every purely inseparable morphism induces a stratification of its source away from a codim $2$ subset that depends on the morphism. Each piece is determined by the local structure of the purely inseparable morphism according to the formal Frobenius theorem for PTs \ref{formal Frobenius Theorem for power towers- thm}
    \end{center}
\end{remark}

\subsection{Ekedahl Power Towers}
In this section, we study saturated subalgebras of differential operators that remain saturated after taking the gradation with respect to orders. Not all subalgebras satisfy this; therefore, we introduce names for those that do. This leads to gr-saturated subalgebras, and their more loose version we call \textit{Ekedahl power towers}. We provide criteria. We compute not-examples. 

\begin{example}\label{simple example - grs-aturated fails - example}
    We provide a simple example of how gradation can disturb being saturated.
    
    Let $G=\Z e_1\oplus \Z e_2$ be an abelian group. It admits a filtration $F_0 G \coloneqq 0$, $F_1 G\coloneqq\Z e_1\oplus 0 $, $F_2 G\coloneqq \Z e_1\oplus \Z e_2$.
    
    A free subgroup $M=\Z (e_1+2e_2) \subset G$ is saturated in $G$, however its gradation with respect to the filtration $F$ is equal $\op{gr} M=2[e_2]$, which is not saturated inside $\op{gr} G=\Z [e_1]\oplus \Z [e_2]$.
\end{example}

\subsubsection{Definition and Criteria}

\begin{defin}\label{gr-saturated - def}
    Let $X$ be a normal variety over a perfect field $k$ of characteristic $p>0$ satisfying $K(X)^{p^\infty}=k$.
    Let $Y_\bullet$ be a power tower on $X$, and let $\mathcal{D}$ be the saturated subalgebra of differential operators that corresponds to it.

    We say that the power tower $Y_\bullet$ is \textbf{\emph{gr-saturated}} if the sheaf $\op{gr}\mathcal{D}$ is saturated in $\op{gr}\op{Diff}_k(X)$, i.e., the quotient $\op{gr}\op{Diff}_k(X)/\op{gr}\mathcal{D}$ is torsion-free.
\end{defin}

\begin{example}
    Every $1$-foliation is gr-saturated. 
    If the length of a power tower is at least $2$, then it may not be gr-saturated. 
    
    Therefore, being gr-saturated is a meaningful notion only for exponents $>1$.
\end{example}

\begin{defin}\label{Ekedahl - def}
    Let $X$ be a normal variety over a perfect field $k$ of characteristic $p>0$ satisfying $K(X)^{p^\infty}=k$.
    Let $Y_\bullet$ be a power tower on $X$. 

    We say that the power tower $Y_\bullet$ is an \textbf{\emph{Ekedahl power tower}} on $X$ if there is a big open subset $U\subset X$, i.e., $X\setminus U$ is of codimension at least $2$, such that the power tower $Y_\bullet \cap U$ on $U$ is gr-saturated. 
\end{defin}

\begin{remark}
    An Ekedahl $n$-foliation is an $n$-foliation that is an Ekedahl power tower.
\end{remark}

\begin{remark}\label{interpretation - ekedahl foli of height n - remark}
    We named power towers on a variety $X$ that are gr-saturated away from a codimension $\ge 2$ closed subset \emph{``Ekedahl''} to honor Torsten Ekedahl's contributions to the theory of $n$-foliations, mostly present in the paper \cite{EkedahlFoliation1987}. 
    In that paper, he defined ``foliations of height $n$'' on $X$, which we can reinterpret in our vocabulary in the following way:
    \begin{center}
      Ekedahl's ``foliations of height $n$'' on $X$ = gr-saturated regular $n$-foliations on $X$,
    \end{center} and he defined ``foliations of height $\infty$'' to be gr-saturated regular $\infty$-foliations on $X$. Our Ekedahl power towers are a slight generalization of his notion that is compatible with how he uses it in his proofs. Indeed, in his paper, he primarily works with ``singular foliations of height $n$'' defined as purely inseparable morphisms that are ''foliations of height $n$'' away from a codimension $\ge 2$ closed subset. Therefore, we took his working definition and put it into a proper definition, in which we dropped the assumption of being an $n$-foliation or regular. The result is a notion that is much more versatile.

    So, the adjectives \textit{regular} and \textit{gr-saturated} are separated, though they will often come together.
\end{remark}

\begin{lemma}\label{n-foli torsion sheaves - lemma}
    Let $X$ be a normal variety over a perfect field $k$ of characteristic $p>0$ satisfying $K(X)^{p^\infty}=k$.
    Let $Y_\bullet$ be a $n$-foliation on $X$. Let $\F_\bullet$ be its Jacobson sequence.

    Then $\frac{T_{Y_i/k}}{\F_{i+1}+\G_i}$ for $i=1,2,\ldots, n-1$ are torsion sheaves, where we put $\G_i\coloneqq T_{Y_i/Y_{i-1}^{(1)}}$.
\end{lemma}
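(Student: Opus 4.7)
\medskip

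\noindent\textbf{Proof proposal.} The plan is to reduce the statement to a computation of generic ranks. A coherent sheaf on the normal variety $Y_i$ is torsion if and only if its restriction to the generic point $\op{Spec}(K(Y_i))$ is zero, so it suffices to show that $(\F_{i+1}+\G_i)\otimes K(Y_i)=T_{K(Y_i)/k}$ for $i=1,\ldots,n-1$.

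First I would collect the generic ranks. Let $d=\op{tr.deg}_k K(X)$, which equals $\op{tr.deg}_k K(Y_i)$ for every $i$ because purely inseparable extensions preserve transcendence degree. Since $k=K(X)^{p^\infty}$ is perfect, each $K(Y_i)/k$ is separable (Corollary \ref{Over Perfect is Separable}), so $T_{K(Y_i)/k}$ has $K(Y_i)$-dimension $d$, and in particular $[K(Y_i):K(Y_i)^p]=p^d$. Because $Y_\bullet$ is an $n$-foliation of rank $r$, we have $[K(Y_{i-1}):K(Y_i)]=p^r$ for $1\le i\le n$, hence $\F_{i+1}\otimes K(Y_i)=T_{K(Y_i)/K(Y_{i+1})}$ has dimension $r$ for $0\le i\le n-1$.

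Next I would compute the rank of $\G_i$ on the generic point. The morphism $g_i\colon Y_i\to Y_{i-1}^{(1)}$ corresponds to the inclusion $K(Y_{i-1})^p\subset K(Y_i)$, obtained by factoring $K(Y_{i-1})^p\subset K(Y_i)\subset K(Y_{i-1})$. Then
\[
[K(Y_i):K(Y_{i-1})^p]=\frac{[K(Y_{i-1}):K(Y_{i-1})^p]}{[K(Y_{i-1}):K(Y_i)]}=\frac{p^d}{p^r}=p^{d-r},
\]
so $\G_i\otimes K(Y_i)=T_{K(Y_i)/K(Y_{i-1})^p}$ has dimension $d-r$.

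Finally, the compatibility condition from Definition \ref{Jacobson sequence - variety - definition}, applied to the index $i+1$, gives
\[
\bigl(\F_{i+1}\otimes K(Y_i)\bigr)\cap T_{K(Y_i)/K(Y_{i-1}^{(1)})}=0
\]
inside $T_{K(Y_i)/k}$. Since $K(Y_{i-1}^{(1)})=K(Y_{i-1})^p$, this intersection is precisely $(\F_{i+1}\cap\G_i)\otimes K(Y_i)$, which is therefore zero. Hence the sum $(\F_{i+1}+\G_i)\otimes K(Y_i)$ has dimension $r+(d-r)=d=\dim_{K(Y_i)}T_{K(Y_i)/k}$, so it fills up the whole generic tangent space. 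The quotient $T_{Y_i/k}/(\F_{i+1}+\G_i)$ has generic rank zero and is thus torsion, which finishes the proof. The only non-routine input is the Jacobson compatibility condition; everything else is degree bookkeeping, and I expect no real obstacle.
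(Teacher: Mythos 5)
Your proof is correct and follows essentially the same route as the paper: reduce to the generic point, use the Jacobson-sequence compatibility to get disjointness of $\F_{i+1}$ and $\G_i$ there, and use the $n$-foliation rank condition to see the dimensions are complementary, hence the quotient is generically zero. You simply spell out the degree bookkeeping that the paper leaves implicit.
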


\begin{proof}
    We prove that the sheaves $\frac{T_{Y_i/k}}{\F_{i+1}+\G_i}$ for $i=1,2,\ldots, n-1$ are always torsion for $n$-foliations. Indeed, we have
    \[
    \left(\F_{i+1}\otimes K(Y_i)\right) \oplus \left(\G_i\otimes K(Y_i) \right)= T_{K(Y_i)/k},
    \]
    so $\frac{T_{Y_i/k}}{\F_{i+1}+\G_i}\otimes K(Y_i)=0$, i.e., it is a torsion sheaf. The equality follows from two facts. First, on the generic point $K(Y_i)$, the vector spaces $\F_{i+1}\otimes K(Y_i) $ and $ \G_i\otimes K(Y_i)$ are disjoint by Definition \ref{Jacobson Sequence - definition}. Second, the ranks of the sheaves $\F_{i+1}$ and $\G_i$ are complementary, because $Y_\bullet$ is a $n$-foliation, Definition \ref{n-foliation}.
\end{proof}

\begin{prop}[Criteria for Being gr-Saturated/Ekedahl]\label{saturated in terms of Jacobson sequences- Proposition}\label{criterion for being ekedahl}
Let $X$ be a normal variety over a perfect field $k$ of characteristic $p>0$ satisfying $K(X)^{p^\infty}=k$.
    Let $Y_\bullet$ be a \textit{regular} power tower on $X$. 
    Let $\F_\bullet$ be the Jacobson sequence corresponding to $Y_\bullet$.
    Let $\mathcal{D}$ be the saturated subalgebra corresponding to $Y_\bullet$. Then:

    \begin{enumerate}
     \item The following are equivalent:
    \begin{enumerate}
        \item The power tower $Y_\bullet$ is gr-saturated.
        \item For any/every set $G$ is a set of saturated well-chosen distinguished generators for $\mathcal{D}$ (on an open subset around any point of $X$) the submodule spanned by  $[G^m_j]$ inside $\op{gr}\op{Diff}_k(X)$ is saturated, i.e., all the submodules, for $m\ge 1$,
        \[
        \F'_m\coloneqq \op{Span}_A([G^m_1],\ldots,[G^m_{r_m}])\subset \op{gr}\D.
        \]
        are saturated. Equivalently, each $\F'_m$ is saturated in $\left(\gamma_{p^{m-1}}(\F_1)\otimes A\right)$.
        \item The stratification from Theorem \ref{stratification exists - thm} induced by $Y_\bullet$ on $X$ is constant, i.e., the image of $G(Y_\bullet)$ is one point.
        \item All the subsheaves 
    \[
    d(Y_i/X^{(i)})(\F_{i+1})\subset T_{X^{(i)}/k} \otimes \cO_{Y_i/k}
    \]
    are saturated for $i\ge 1$.
        \item All the subsheaves
    \[
    d(Y_i/Y^{(1)}_{i-1})(\F_{i+1})\subset d(Y_i/Y^{(1)}_{i-1})(T_{Y_i/k})=\F_i^{(1)}\otimes \cO_{Y_i/k}
    \]
    are saturated for $i\ge 1$.
    \end{enumerate}
    \item Let $Y_\bullet$ be a $n$-foliation, $n<\infty$. Let $\G_i\coloneqq T_{Y_i/Y_{i-1}^{(1)}}$ for $i\ge 1$.
    \begin{enumerate}
        \item The $n$-foliation $Y_\bullet$ is gr-saturated if and only if
        the quotient sheaves $\frac{T_{Y_i/k}}{\F_{i+1}+\G_i}$ for $1\le i<n$ are zero.
        \item The $n$-foliation $Y_\bullet$ is an Ekedahl power tower
    if and only if
    the quotient sheaves $\frac{T_{Y_i/k}}{\F_{i+1}+\G_i}$ for $1\le i<n$ are torsion with support of codimension at least $2$.
    \end{enumerate}
    \end{enumerate}
\end{prop}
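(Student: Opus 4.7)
Throughout, I will work locally on an affine open $\op{Spec}(A)\subset X$ admitting a set of saturated well-chosen distinguished generators $G=\{G^m_i\}$ for $\mathcal{D}$ (Proposition~\ref{saturated dist generators - exist and do the job - prop}), and write $\F'_m\coloneqq \op{Span}_A([G^m_1],\ldots,[G^m_{r_m}])\subset \gamma_{p^{m-1}}(\F_1)\otimes A\subset \op{gr}\op{Diff}_k(A)$. The strategy is to translate everything into a statement about each $\F'_m$ separately and then pass between the four equivalent formulations using, respectively, the divided-power structure, the stratification, and the graded differentials.

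First, I would establish $(a)\Longleftrightarrow(b)$. By Proposition~\ref{divided power for gr diff - prop}, $\op{gr}\op{Diff}_k(A)$ is a divided-power polynomial $A$-algebra, hence a graded free $A$-module; and by Proposition~\ref{description of any subalgebra with dist generators - prop} applied to the leading forms, $\op{gr}\mathcal{D}$ is a graded free $A$-submodule with basis the monomials in the $[G^m_i]$. Because saturation of a graded submodule of a graded free module can be tested one homogeneous degree at a time, and because each homogeneous piece decomposes into a direct sum of products involving the $\F'_m$'s and complementary free pieces, the torsion of $\op{gr}\op{Diff}_k(A)/\op{gr}\mathcal{D}$ is detected exactly by the torsion of some $(\gamma_{p^{m-1}}(\F_1)\otimes A)/\F'_m$. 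The equivalence $(b)\Longleftrightarrow(c)$ is then immediate from Theorem~\ref{stratification exists - thm} and Proposition~\ref{the tuple for the constant - prop}: by the avalanche mechanism of Remark~\ref{avalanche - remark}, the tuple $G(Y_\bullet)(x)$ drops exactly where the rank of some reduction of $\F'_m$ drops, and since $\gamma_{p^{m-1}}(\F_1)\otimes A$ is locally free by regularity, $\F'_m$ being saturated is equivalent to its being a subbundle, equivalently having constant fiber rank.

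For $(b)\Longleftrightarrow(d)\Longleftrightarrow(e)$, I would use the graded differentials $\op{gr}\,d(X/X^{(m-1)})$ and $\op{gr}\,d(Y_{m-1}/Y_{m-2}^{(1)})$ from Lemma~\ref{Graded Differential for K/K^m - lemma}. In the degree $p^{m-1}$ piece, each of these maps $\gamma_{p^{m-1}}(T_{X/k})\otimes A$ isomorphically onto its image. Combining this with the sheafified version of the ``packing'' isomorphism from Remark~\ref{packing} and with Unpacking (Corollary~\ref{unpacking - corollary - regular power towers}), which identifies $\F_m$ with $d(X/Y_{m-1})(\mathcal{D})\cap T_{Y_{m-1}/k}$, one checks that $\F'_m$ is carried isomorphically onto $d(Y_{m-1}/X^{(m-1)})(\F_m)$ inside $T_{X^{(m-1)}/k}\otimes \cO_{Y_{m-1}}$, and onto $d(Y_{m-1}/Y_{m-2}^{(1)})(\F_m)$ inside $\F_{m-1}^{(1)}\otimes \cO_{Y_{m-1}}$. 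Since saturation is preserved under an isomorphism of submodules of locally free ambient sheaves, this yields the equivalence with (d) and with (e).

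Item~2 reduces to item~1. For an $n$-foliation, Proposition~\ref{dist generators for n-foli - prop} lets one choose $[G^m_i]=\gamma_{p^{m-1}}([G^1_i])$, and by rank considerations $\F_{i+1}$ and $\F_i^{(1)}\otimes \cO_{Y_i}$ both have rank $r$. The map $d(Y_i/Y_{i-1}^{(1)})$ has kernel $\G_i$, so the image of $\F_{i+1}$ equals $(\F_{i+1}+\G_i)/\G_i\simeq \F_{i+1}$ (using $\F_{i+1}\cap \G_i=0$ from Definition~\ref{Jacobson sequence - variety - definition}), and this image lies generically as a rank-$r$ subsheaf of the rank-$r$ ambient $\F_i^{(1)}\otimes \cO_{Y_i}$. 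A short diagram chase shows that the image is saturated (i.e.\ equal to $\F_i^{(1)}\otimes \cO_{Y_i}$) if and only if $\F_{i+1}+\G_i=T_{Y_i/k}$, giving 2(a). Part 2(b) then follows by restricting to a big open subset: by Lemma~\ref{n-foli torsion sheaves - lemma} the quotients $T_{Y_i/k}/(\F_{i+1}+\G_i)$ are torsion anyway, so being Ekedahl is equivalent to their supports having codimension at least $2$.

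\emph{Main obstacle.} The delicate point is Step~3: transferring saturation between $\F'_m$ and the differential-image sheaves requires tracking, degree by degree, how the graded differential of Lemma~\ref{Graded Differential for K/K^m - lemma} interacts with the divided-power decomposition of $\op{gr}\op{Diff}_k(A)$, and verifying that the sheafified ``packing'' isomorphism really identifies the two submodules as subsheaves of locally free ambient sheaves (not merely generically). All the necessary bundle-freeness comes from the regularity hypothesis via Proposition~\ref{regular power tower - meaning - prop}, so the obstruction is purely bookkeeping; but this is where the proof must be written carefully.
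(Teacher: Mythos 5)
Your proposal follows the same overall route as the paper: prove $(a)\Leftrightarrow(b)$ using the divided-power structure on $\op{gr}\op{Diff}_k(X)$ and the saturated distinguished generators, $(b)\Leftrightarrow(c)$ via the stratification/avalanche mechanism, $(b)\Leftrightarrow(d)\Leftrightarrow(e)$ via the graded differentials and the packing isomorphism, and item~2 as a specialisation. The structural match is close enough that I would call this essentially the paper's argument.

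There is one place where your proposal is noticeably more elliptic than the paper's proof and where, as written, it is not yet a complete argument: the direction $(b)\Rightarrow(a)$. You write that ``each homogeneous piece decomposes into a direct sum of products involving the $\F'_m$'s and complementary free pieces,'' so that torsion in $\op{gr}\op{Diff}_k(A)/\op{gr}\D$ is detected exactly by torsion in some $(\gamma_{p^{m-1}}(\F_1)\otimes A)/\F'_m$. But the decomposition of Lemma~\ref{divided powers - decompostion of order p^n} is only stated in degree $p^n$, not in arbitrary degree, so it is not automatic that the quotient decomposes degree-by-degree into copies of $(\gamma_{p^{m-1}}(\F_1)\otimes A)/\F'_m$. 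The paper's proof addresses exactly this: it works in coordinates and, using saturation of $\F'_m$, arranges (after an $A$-base change) that each $[G^m_j]$ has a ``pivot'' term $\frac{1}{p^{m-1}!}\frac{\partial^{p^{m-1}}}{\partial x_{c(j)}^{p^{m-1}}}$ with a $k$-unit coefficient and that the map $j\mapsto c(j)$ is injective; the pivot structure then propagates through all monomials (via Corollary~\ref{explicit formulas calculus - cor} and Proposition~\ref{Determination1}) and directly rules out torsion. To make your decomposition argument precise you would need either to run the same pivot argument, or to justify why $\F'_m$ saturated allows a local choice of completion $E^m_j$ such that the $A$-basis of $\op{gr}\op{Diff}_k(A)$ given by monomials in $\{[G]\}\cup\{[E]\}$ actually splits $\op{gr}\D$ as a direct summand. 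A related subtlety sits in $(b)\Leftrightarrow(c)$: a saturated subsheaf of a locally free sheaf over a regular surface need not be a subbundle in general, so ``saturated $\Leftrightarrow$ constant fibre rank'' is not a free lunch; what saves the argument (and what both the paper and you implicitly lean on) is that $\F'_m$ is not an arbitrary saturated submodule of $\gamma_{p^{m-1}}(\F_1)\otimes A$, but is the leading-form image of $\F_m\subset T_{Y_{m-1}/k}$, which is a subbundle over $B_{m-1}$ by regularity; this forces the coefficients of $[G^m_j]$ to come from $B_{m-1}$ and links ``rank drop'' to failure of saturation. Making that link explicit would close the remaining gap in the stratification step. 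The rest of your outline — the use of the graded differential from Lemma~\ref{Graded Differential for K/K^m - lemma}, Unpacking, and the diagram chase for item~2 — matches the paper's proof and is sound.
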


\begin{proof}
    We prove (1). The main difficulty is getting that (a) is equivalent to (b). After that, the rest will just unpack itself.

    All conditions are local - it is checking torsion, so we can assume that all $X, Y_i$ are affine, given by rings $A, B_i$, and the local distinguished generators are given on these rings already, not only on their localizations. Let $G$ be any set of saturated well-chosen distinguished generators for $\mathcal{D}$, Proposition \ref{saturated dist generators - exist and do the job - prop}.

    We prove that if (a) then (b). Indeed, if $\D$ is gr-saturated, then it is true for submodules spanned by elements from $[G]$ from different orders, because 
    \[
    \F'_m\coloneqq \op{Span}_A([G^m_1],\ldots,[G^m_{r_m}])=\left(\gamma_{p^{m-1}}(\F_1)\otimes A\right) \cap \op{gr}\D.
    \]  
    which is an intersection of two saturated submodules of $\op{gr}\op{Diff}_k(X)$.

    We prove that if (b) then (a). We know that the modules $\op{Span}_A([G^m_1],\ldots,[G^m_{r_m}])$ are saturated in $\op{gr}\op{Diff}_k(X)$, so they are saturated in $\left(\gamma_{p^{m-1}}(\F_1)\otimes A\right)$, because it is a saturated submodule containing the span. Thus, if we write each of $[G^m_{r_j}]$ in terms of some coordinates $x_1,\ldots,x_d$ for $A/k$, then, maybe after a $A$-base change of $\F'_m$, we can assume that $[G^m_{r_j}]$ has $\frac{1}{p^{m-1}!}\frac{\partial^{p^{m-1}}}{\partial x_{c(j)}^{p^{m-1}}}$ in its expansion with a coefficient from $k$ that is not zero, and other $[G^m_{r_i}]$, $i\ne j$, do not have that element in their formulas, so $j\mapsto c(j)$ will be injective. 
    
    Consequently, we can use that to conclude that leading forms of all monomials in $[G^i_j]$ with exponents from $0$ up to $p-1$, that are a basis of $\op{gr}\D$ by Proposition \ref{saturated dist generators - exist and do the job - prop}, include products of $\frac{1}{p^{m-1}!}\frac{\partial^{p^{m-1}}}{\partial x_{c(j)}^{p^{m-1}}}$ with exponents from $0$ up to $p-1$ up to invertible function in their explicit formulas in terms of the coordinates, Corollary \ref{explicit formulas calculus - cor} and Proposition \ref{Determination1}. Therefore, there cannot be any graded operator $D\in \op{gr}\op{Diff}_k(A)$, that after multiplying by non-invertible $a\in A$ gets into $\op{gr}\D$ without being there already, because that would mean that its coefficient in front of that product of $\frac{1}{p^{m-1}!}\frac{\partial^{p^{m-1}}}{\partial x_{c(j)}^{p^{m-1}}}$ would be from $\frac{1}{a}A\setminus A$, which cannot be true.
    
    We prove (b) iff (c). The changes of values of $G(Y_\bullet)$ happen, i.e., not-(c), if and only if a set $G$ stops being a set of distinguished generators after reducing it to a subscheme. This ``stop'' happens if and only if the rank of $\F'_i$ drops after restriction, which is precisely not-(b). This proves not-(c) iff not-(b).

    We prove (b) iff (d) iff (e). All these conditions in terms of subsheaves are equivalent, because there are some explicit isomorphisms around.

    First, we focus on (b) iff (d). Let $T'_{X^{(m)}/k}=\left(\gamma_{p^{m}}(T_{X/k})\otimes A\right)$, i.e., these are $\F'_i$ for the power tower of $p$-powers, Example \ref{p-power power tower}. We have
    $
    \F'_m \subset T'_{X^{(m)}/k} 
    $
    and $\left(\gamma_{p^{m}}(T_{X/k})\otimes A\right)$ is saturated in $\op{gr}\op{Diff}_k(A)$. So, we have (b) iff $\F'_m$ is saturated in $T'_{X^{(m)}/k}$.  Now, we have an isomorphism
    \[
    d(X/X^{(m)}):\frac{T'_{X^{(m)}/k}}{\F'_{m+1}} \simeq \frac{ T_{X^{(m)}/k} \otimes \cO_{X/k}}{d(Y_m/X^{(m)})(\F_{m+1})}
    \]
    that is induced by Packing, see Remark \ref{packing}:
    \begin{center}
        \begin{tikzcd}
            \F'_{m+1} \ar[r]\ar[d, "\simeq"]&  T'_{X^{(m)}/k}\ar[d, "\simeq"]\\
            \F_{m+1} \otimes \mathcal{O}_{X/k} \ar[r] & T_{X^{(m)}/k} \otimes \mathcal{O}_{X/k},
        \end{tikzcd}
    \end{center}
    so the left-hand side is torsion if and only if the right-hand side is torsion. 
    
    We prove (d) iff (e). We take 
    $\F_{i+1}\subset T_{Y_i/k}$. We apply $d(Y_i/Y^{(1)}_{i-1})$ to it. We get
    \[
    d(Y_i/Y^{(1)}_{i-1})(\F_{i+1}) \to d(Y_i/Y^{(1)}_{i-1})(T_{Y_i/k}) \to T_{Y^{(1)}_{i-1}/k} \otimes \cO_{Y_i/k}.
    \]
    By Proposition \ref{regular power tower - meaning - prop}, we have that $d(Y_i/Y^{(1)}_{i-1})(T_{Y_i/k})=T_{Y^{(1)}_{i-1}/Y_i^{(1)}}\otimes \cO_{Y_i/k}=\F_i^{(1)}\otimes \cO_{Y_i/k}$. Since we know that $\F_1$ is saturated in $T_{X=Y_0/k}$, thus by an induction we can proceed to conclude that $d(Y_i/Y^{(1)}_{i-1})(T_{Y_i/k})=\F_i^{(1)}\otimes \cO_{Y_i/k} \subset T_{Y^{(1)}_{i-1}/k} \otimes \cO_{Y_i/k}$ is a saturated subsheaf. Finally, $d(Y^{(1)}_{i-1}/X^{(i)})\otimes \cO_{Y_i/k}$ is injective on $\F_i^{(1)}\otimes \cO_{Y_i/k} $ and the image is saturated in $T_{X^{(i)}/k} \otimes \cO_{Y_i/k}$, so $d(Y_i/Y^{(1)}_{i-1})(\F_{i+1})$ is saturated in $\F_i^{(1)}\otimes \cO_{Y_i/k}$ if and only if $d(Y_i/X^{(i)})(\F_{i+1})$ is saturated in $T_{X^{(i)}/k} \otimes \cO_{Y_i/k}$.
    
    We prove (2). The quotient sheaves are torsion by Lemma \ref{n-foli torsion sheaves - lemma}.

    (a) implies (b), because $Y_\bullet$ is Ekedahl if and only if $\frac{T_{Y_i/k}}{\F_{i+1}+\G_i}$ is zero away from codimension $2$ subset. So, after leaving that subset, it is enough to prove that $Y_\bullet$ is gr-saturated if and only if $\frac{T_{Y_i/k}}{\F_{i+1}+\G_i}$ is zero. 
    
    We can use (1): (a) iff (e) that we already proved.

    Indeed, $Y_\bullet$ is gr-saturated if and only if
    $d(Y_i/Y^{(1)}_{i-1})(\F_{i+1})\subset\F_i^{(1)}\otimes \cO_{Y_i/k}$ are saturated for $i=1,2,\ldots,n-1$. These sheaves are of the same rank. This means they must be equal. However, the quotient satisfies:
    \[
    \frac{\F_i^{(1)}\otimes \cO_{Y_i/k}}{d(Y_i/Y^{(1)}_{i-1})(\F_{i+1})}=\frac{d(Y_i/Y^{(1)}_{i-1})(T_{Y_i/k})}{d(Y_i/Y^{(1)}_{i-1})(\F_{i+1})}\simeq \frac{T_{Y_i/k}}{\F_{i+1}+\op{ker}d(Y_i/Y^{(1)}_{i-1})\cap T_{Y_i/k}}=\frac{T_{Y_i/k}}{\F_{i+1}+\G_i}.
    \]
    So, the equality holds if and only if $\frac{T_{Y_i/k}}{\F_{i+1}+\G_i}$ is zero. This finishes the proof.
\end{proof}

\begin{prop}[Ekedahl's ``foliations of height $n$'']\label{gr-saturared + regular = pd envelope - lemma}
    Let $X/k$ be a \emph{regular} variety over a perfect field $k$ of characteristic $p>0$ satisfying $K(X)^{p^\infty}=k$. Let $Y_\bullet$ be an $n$-foliation on $X$, $n=1,2,\ldots,\infty$. Let $\F_\bullet$ be its Jacobson sequence and let $\mathcal{D}$ be its saturated subalgebra. We assume that the $1$-foliation $\F_1$ is regular.

    Then, the $n$-foliation is gr-saturated and regular if and only if the equality 
    \[
    \op{gr}\mathcal{D}={S}({\F_1}^*)^{*gr} \cap \op{gr}\op{Diff}_{X^{(n)}}(X)
    \]
    holds. 

    In particular, the $\infty$-foliation is gr-saturated and regular if and only if the equality 
    \[
    \op{gr}\mathcal{D}={S}({\F_1}^*)^{*gr}
    \]
    holds. 
\end{prop}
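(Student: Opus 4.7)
The plan is to realize both sides of the displayed equality as saturated subsheaves of $\op{gr}\op{Diff}_k(X)$ and pin them down by their restrictions to the generic point.

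For the forward direction, assume $Y_\bullet$ is regular and gr-saturated. Working locally, Proposition \ref{regular power tower - meaning - prop} provides a $p$-basis $t_1, \ldots, t_d$ of $X/k$ with $\F_1 = \bigoplus_{i \le r}\cO_X \partial/\partial t_i$; under the identification of $\op{gr}\op{Diff}_k(X)$ with the divided power polynomial algebra $\cO_X\langle y_1, \ldots, y_d\rangle$ from Proposition \ref{divided power for gr diff - prop}, the subalgebra ${S}({\F_1}^*)^{*gr}$ is the divided power polynomial subalgebra $\cO_X\langle y_1, \ldots, y_r\rangle$, and $\op{gr}\op{Diff}_{X^{(n)}}(X)$ is the $\cO_X$-submodule spanned by monomials with each $y_i$-exponent less than $p^n$. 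Both are direct summands, hence so is their intersection, which is therefore saturated in $\op{gr}\op{Diff}_k(X)$. The other subsheaf $\op{gr}\mathcal{D}$ is saturated by hypothesis. At the generic point, both saturated subsheaves coincide with ${S}(\F_1^* \otimes K(X))^{*gr} \cap \op{gr}\op{Diff}_{K(X)^{p^n}}(K(X))$ by Corollary \ref{graded subalgebra of n-foli - only W1 and n - corollary}, so by Lemma \ref{Saturated are in bij with gen subspaces - lemma} they coincide globally, giving the equality.

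For the reverse direction, assume the equality. The gr-saturation of $\op{gr}\mathcal{D}$ is immediate from the saturation of the right-hand side just established. For the regularity of $Y_\bullet$, I will induct on $n$, the base case $n=1$ being the standing hypothesis that $\F_1$ is regular. For the inductive step, intersecting both sides of the equality with $\op{gr}\op{Diff}_{X^{(n-1)}}(X)$ yields the analogous equality for the truncated $(n-1)$-foliation $Y_0, Y_1, \ldots, Y_{n-1}$; by induction this truncation is regular, so $Y_{n-1}$ is regular and $\F_1, \ldots, \F_{n-1}$ are subbundles. It remains to show that $\F_n \subset T_{Y_{n-1}/k}$ is a subbundle of rank $r$ and that $Y_n$ is regular. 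The graded equality in degree $p^{n-1}$ guarantees that locally $\op{gr}\mathcal{D}_n$ contains elements with leading forms $y_i^{[p^{n-1}]}$ for $i \le r$; lifting these to operators $H_1, \ldots, H_r \in \mathcal{D}_n$ and applying the differential $d(X/Y_{n-1})$ (admissible by inductive regularity together with Proposition \ref{Criterion: admitting a differential}) produces derivations in $T_{Y_{n-1}/k}$ whose classes at each closed point are $\cO_{Y_{n-1}}$-linearly independent. Combined with the generic rank-$r$ condition coming from being an $n$-foliation and the saturation of $\F_n$, this shows $\F_n$ is locally free of rank $r$; regularity of $Y_n$ then follows from Lemma \ref{regular 1-foliation = p-bases} applied to the $1$-foliation $\F_n$ on the regular variety $Y_{n-1}$.

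The main obstacle is the last step of the reverse direction: the lifts $H_i$ of the leading forms $y_i^{[p^{n-1}]}$ are not unique, and different lifts produce genuinely different local expressions for $\cO_{Y_n}$, as already visible for $n=2$, $d=2$, $r=1$, where $\cO_{Y_2} = k[x^{p^2}, y+x^p]$ and the standard $k[x^{p^2}, y]$ share the same $\op{gr}\mathcal{D}$. What is invariant, and what must be extracted, is that the derivations $d(X/Y_{n-1})(H_i)$ always form a local frame for $\F_n$ regardless of the lift; carrying this out requires using the divided power structure on $\op{gr}\op{Diff}_k(X)$ to control the interaction between orders $<p^{n-1}$ and order $p^{n-1}$, together with the packing isomorphism of Remark \ref{packing} which gives a canonical isomorphism between $\overline{\F_n}$ (an invariant of $\op{gr}\mathcal{D}$) and $\F_n$ tensored up to $Y_{n-1}$.
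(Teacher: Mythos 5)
Your forward direction is essentially the paper's argument: both sides reduce to the always-true inclusion $\op{gr}\mathcal{D}\subset {S}({\F_1}^*)^{*gr} \cap \op{gr}\op{Diff}_{X^{(n)}}(X)$ (Proposition \ref{pd envelope - prop}), equality at the generic point (Proposition \ref{dist generators for n-foli - prop}), and gr-saturation to close. Verifying the right-hand side is itself saturated, as you do, is harmless but not needed for this direction. For the reverse direction, your gr-saturation step is also correct and the same as the paper's.

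For regularity in the reverse direction, the organization differs. You induct backward, truncating at $n-1$; the paper iterates forward, passing from $(X,\F_1)$ to $(Y_1,\F_2)$. Concretely, it fixes local $p$-bases $x_1,\ldots,x_r$ of $\cO_X$ over $\cO_{Y_1}$ and $y_1,\ldots,y_d$ of $\cO_{Y_1}$ over $\cO_X^p$ (Lemma \ref{regular 1-foliation = p-bases}, available since $\F_1$ is regular), lifts $\gamma_p[\partial/\partial x_i]$ into $\mathcal{D}_2$, shows via $d(X/Y_1)$ that $\F_2$ is a regular $1$-foliation on $Y_1$ with local frame $\partial/\partial x_i^{p}$, and then -- this is the step missing from your proposal -- descends the graded equality to $Y_1$ by faithful flatness of $\cO_X$ over $\cO_{Y_1}$, so that $(Y_1,\F_2)$ with the shifted tower satisfies the identical hypotheses. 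Iterating produces all the $\F_m$ as subbundles and simultaneously carries along the local coordinate systems $y_j, x_i^{p^m}$ on each $Y_m$.

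The obstacle you name in the last paragraph is misdiagnosed. Non-uniqueness of the lifts $H_i$ is not a difficulty: by the graded equality, the lower-order part of any such lift lies in $\mathcal{D}_{n-1}=\op{Diff}_{Y_{n-1}}(X)$, and $\cO_{Y_{n-1}}$-linear operators restrict to $\cO_{Y_{n-1}}$ as multiplication by their value on $1$, so $d(X/Y_{n-1})(H_i)$ is, up to an additive constant, independent of the lift and equals $\partial/\partial x_i^{p^{n-1}}$ regardless. Your example $k[x^{p^2},y+x^p]$ versus $k[x^{p^2},y]$ only illustrates that $\op{gr}\mathcal{D}$ does not determine $\mathcal{D}$ (Corollary \ref{graded subalgebra of n-foli - only W1 and n - corollary}); here $\mathcal{D}$ is given, so the lifts are constrained to $\mathcal{D}_n$ and there is no ambiguity to resolve. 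The genuine work -- which the paper's forward iteration performs floor by floor but your appeal to "inductive regularity of the truncation" does not supply -- is to know that $x_1^{p^{n-1}},\ldots,x_r^{p^{n-1}}$ are part of a local coordinate system on $Y_{n-1}$, so that the $\partial/\partial x_i^{p^{n-1}}$ are pointwise independent and $\F_n$ (being their saturation of the same rank) coincides with their span. "$Y_{n-1}$ is regular" alone does not say which $p$-powers survive as coordinates there. The packing isomorphism of Remark \ref{packing} is a statement at the generic point and does not by itself control this.
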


\begin{proof}
    Let it be gr-saturated and regular. For $n$-foliations, we always have $\op{gr}\mathcal{D}\subset {S}({\F_1}^*)^{*gr} \cap \op{gr}\op{Diff}_{X^{(n)}}(X)$, see Proposition \ref{pd envelope - prop}. And the equality is true on the generic point: $\op{gr}\mathcal{D}\otimes K(X)= {S}(({\F_1}\otimes K(X))^*)^{*gr} \cap \op{gr}\op{Diff}_{X^{(n)}}(X)\otimes K(X)$, Proposition \ref{dist generators for n-foli - prop}. So, if it is gr-saturated, then they are equal.

    We prove that the equality implies being gr-saturated and regular.
    We show it for the finite $n$ - the case of $\infty$ follows from it.

    Being gr-saturated is easy, both  ${S}({\F_1}^*)^{*gr}$ and $\op{gr}\op{Diff}_{X^{(n)}}(X)$ are saturated in $\op{gr}\op{Diff}_{k}(X)$. The first is because it is a divided power polynomial subring, and the second is almost by definition. So, the harder part is to get regularity.

    We can work affine locally.
    We know $\F_1$ is regular. Let assume that we enough local that we can use Lemma \ref{regular 1-foliation = p-bases} directly: Let $x_1,\ldots, x_r$, and $y_1,\ldots,y_d$ be $p$-bases fo $A$ over $B$ and $B$ over $A^p$, where $X=\op{Spec}(A)$, and $Y_1=\op{Spec}(B)$. In particular, $X$ and $Y_1$ are regular. Together, $x_i,y_j$ are coordinates for $A/k$.

    Let $G=\{G^m_1,\ldots,G^m_r\}_{m=1,\ldots, n}\subset \D$ be a choice of operators such that $G^1_i=\frac{\partial}{\partial x_i}$ is a basis of $\F_1$, and $[G^m_i]=\gamma_{p^{m-1}}[G^1_i]$.

    By Proposition \ref{regular power tower - meaning - prop}, we know $d(X/Y_1)$ exists. We can unpack: $d(X/Y_1)\D=\op{Diff}_{Y_n}(Y_1)\otimes A$. In particular, $d(X/Y_1)(G^2_i)\in\F_2=d(X/Y_1)\op{Span}_A(G^2_1,\ldots,G^2_r)\cap T_{Y_1/k}$. And, $\frac{\partial}{\partial y_j}$ restricts to be derivations on $Y_1$. They are a basis of $T_{Y_1/X^{(1)}}$. Consequently, $\F_2\oplus T_{Y_1/X^{(1)}}=T_{Y_1/k}$, i.e., $\F_2$ is a regular $1$-foliation. Moreover, we have
    \begin{align*}
    \op{Diff}_{Y_n}(Y_1)\otimes A=d(X/Y_1)\D&=d(X/Y_1)({S}({\F_1}^*)^{*gr} \cap \op{gr}\op{Diff}_{X^{(n)}}(X))\\
    &={S}({\F_2}^*)^{*gr}\otimes A \cap \op{gr}\op{Diff}_{Y_1^{(n-1)}}(Y_1))\otimes A   \\
    &=({S}({\F_2}^*)^{*gr} \cap \op{gr}\op{Diff}_{Y_1^{(n-1)}}(Y_1)))\otimes A,
    \end{align*}
    but $A$ is faithfully flat over $B$, so $\op{Diff}_{Y_n}(Y_1)={S}({\F_2}^*)^{*gr} \cap \op{gr}\op{Diff}_{Y_1^{(n-1)}}(Y_1)$. Consequently, we can repeat the reasoning for $\F_3$, then for $\F_4$, and so on till $\F_n$. This proves that $Y_\bullet$ is regular.
\end{proof}

\subsubsection{Not-Ekedahl Power Tower}

In this section, we describe a $2$-foliation that is not Ekedahl. Moreover, we draw pictures to visualize this $2$-foliation. 

\begin{example}[A non-Ekedahl $2$-foliation on a variety.]\label{not Ekedahl power tower}
    Let $k$ be a perfect field of characteristic $p>0$.
Let $X=\mathbb{A}^2_k=\op{Spec}(k[x,y])$.
    
    Let $Y_1=\op{Spec}(k[x^p,y])$, $Y_2=\op{Spec}( k[x^{p^2}, y^{p}, y^{p+1}-x^p])$.

    Then $X,Y_1,Y_2,Y_2,Y_2,\ldots$ is a $2$-foliation on $X$, and its Jacobson sequence is:
    \begin{center}
        $\F_1$ is spanned by the vector field $\frac{\partial}{\partial x}$ on $X$,

        $\F_2$ by spanned by the vector field $y^p\frac{\partial}{\partial x^p}+\frac{\partial}{\partial y}$ on $Y_1$, 
        
        and $T_{Y_1/k}$ is spanned by $\frac{\partial}{\partial x^p}$ and $\frac{\partial}{\partial y}$.

        Also, $\G_1 = T_{Y_1/X^{(1)}}$ is spanned by the vector field $\frac{\partial}{\partial y}$ on $Y_1$.
    \end{center}
    By the definition of a Jacobson sequence, we have that the vector fields $y^p\frac{\partial}{\partial x^p}+\frac{\partial}{\partial y}$ and $\frac{\partial}{\partial y}$ on $Y_1$ are perpendicular on the generic point. However, this is not true at every closed point of $Y_1$. 
    Indeed, on the vanishing locus $V(y)$, i.e., if $y=0$, then we have
    $y^p\frac{\partial}{\partial x^p}+\frac{\partial}{\partial y}=\frac{\partial}{\partial y}$, so they are equal and therefore parallel on this locus. Now, we can observe that the locus where these two generators are parallel to each other is precisely $\frac{T_{Y_1/k}}{\F_{2}+\G_1}$, so in our case, its support is of codimension $1$. Therefore, by Proposition \ref{criterion for being ekedahl}, the $2$-foliation $f_2: X\to Y_2$ is not Ekedahl.

    The figures \ref{fig:non-example Ekedahl1} and \ref{fig:non-example Ekedahl2} contain pictures (These pictures were made with Wolfram Cloud \nolinkurl{https://www.wolframcloud.com/}.) of the above vector fields with an extra image for the support of $\frac{T_{Y_1/k}}{\F_{2}+\G_1}$.

    Furthermore, it is worth to notice that the sheaves $T_{Y_1/k},\F_2,\G_1$, and $\frac{T_{Y_1/k}}{\F_{2}+\G_1}$ fit the following diagram, where we put $R=k[x^p,y]$.

    \begin{center}
        \begin{tikzcd}
            &0&0&0&\\
            0\ar[r]&R\ar[r]\ar[u]&R\ar[r]\ar[u]&R/y^p R=\frac{T_{Y_1/k}}{\F_2+\G_1}\ar[r]\ar[u]&0\\
            0\ar[r]& R=\G_1\ar[r,"1\mapsto (0; 1)"]\ar[u]&R\frac{\partial}{\partial x^p}\oplus R\frac{\partial}{\partial y}=T_{Y_1/k}\ar[r]\ar[u]&R\ar[r]\ar[u]&0\\
            0\ar[r]&0\ar[r]\ar[u]&R=\F_2\ar[r]\ar[u,"1\mapsto (y^p; 1)"']&R\ar[r]\ar[u]&0\\
            &0\ar[u]&0\ar[u]&0\ar[u]&
        \end{tikzcd}
    \end{center}

    Finally, if $k$ is algebraically closed, then the stratification from Theorem \ref{stratification exists - thm} for this power tower is given by
    \begin{align*}
        X(k)_{(0,2)} &= \{(x,y): \ y \ne 0  \}\subset k\times k = X(k),\\
        X(k)_{(1,1)} &= \{(x,y): \ y=0 \}\subset k\times k = X(k).
    \end{align*}

    In other words, the function $G(Y_\bullet):k^2\to \{\infty,0,1,2,\ldots\}^2$ is given by
    \[
    G(Y_\bullet)(x,y) = \begin{cases}
        (0,2) , \ \text{if $y\ne 0$}\\
        (1,1) , \ \text{if $y=0$}
    \end{cases}.
    \]

    Consequently, it is not constant, which is another reason why it is not an Ekedahl power tower, Proposition \ref{criterion for being ekedahl}.
    
\FloatBarrier
\begin{figure}%
    \centering
    \subfloat[\centering $\F_1$: $\frac{\partial}{\partial x}$ on $X$]{{\includegraphics[width=5.5cm]{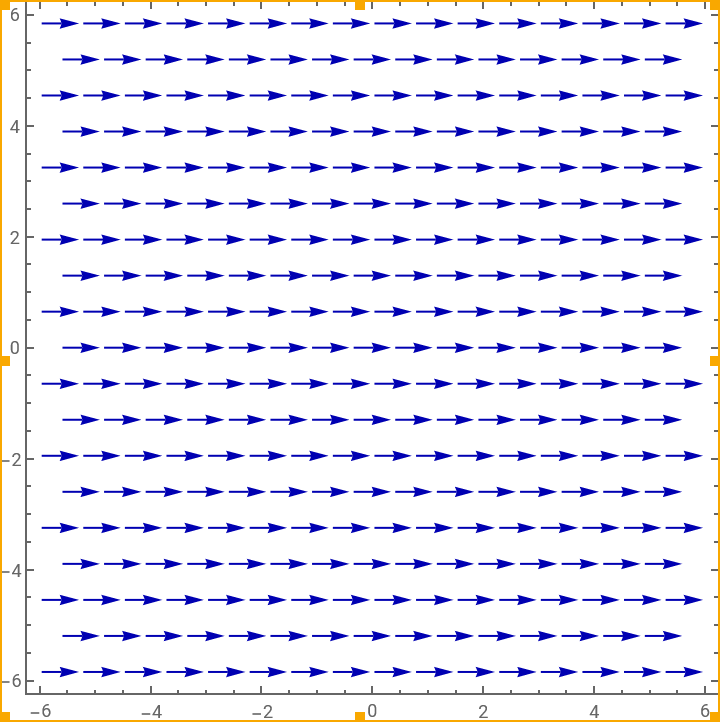} }}%
    \qquad
    \subfloat[\centering $\F_2$: $y^p\frac{\partial}{\partial x^p}+\frac{\partial}{\partial y}$ on $Y_1$]{{\includegraphics[width=5.5cm]{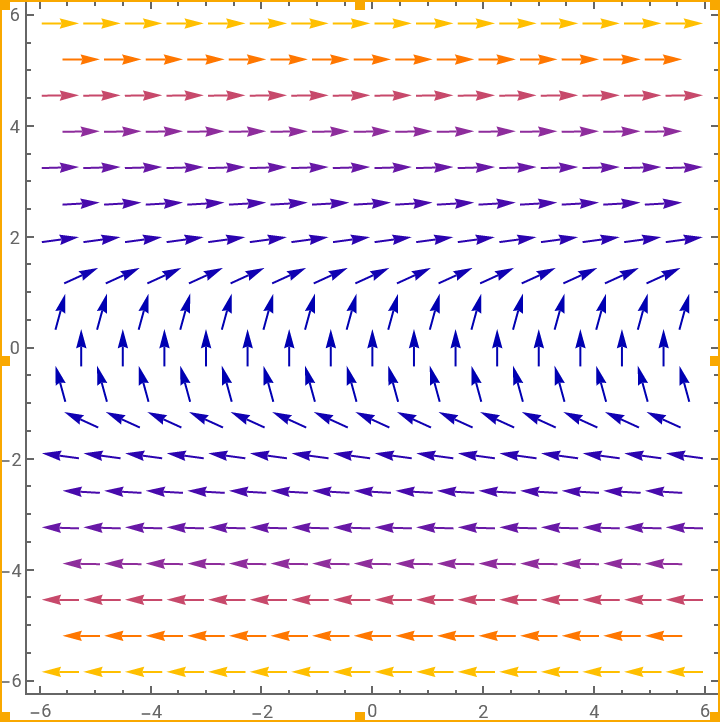} }}%
    \caption{The Jacobson sequence for $k[x^{p^2}, y^p, y^{p+1}-x^p]\to k[x,y]$.}%
    \label{fig:non-example Ekedahl1}%
\end{figure}

\begin{figure}%
    \centering
    \subfloat[\centering $\G_1$: $\frac{\partial}{\partial y}$ on $Y_1$]{{\includegraphics[width=5.5cm]{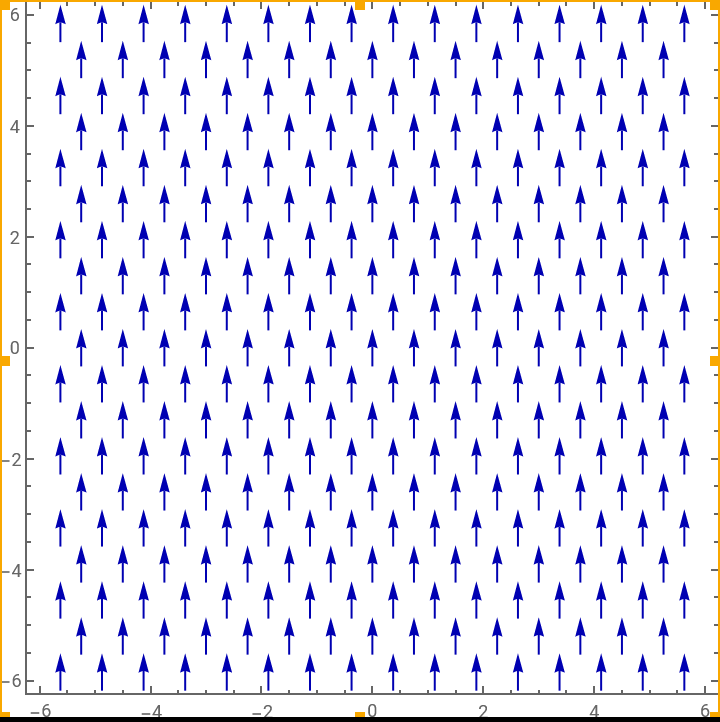} }}%
    \qquad
    \subfloat[\centering The support of $\frac{T_{Y_1/k}}{\F_{2}+\G_1}$ on $Y_1$]{{\includegraphics[width=5.5cm]{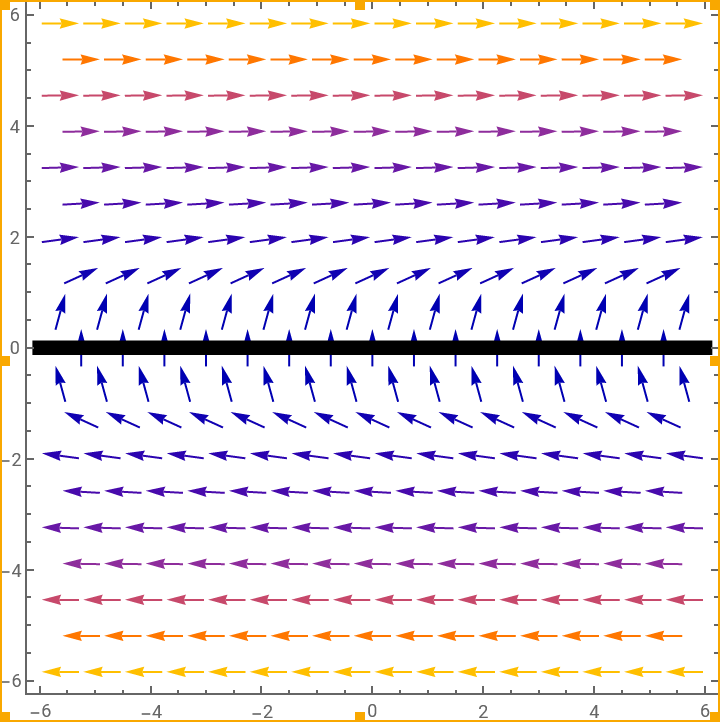} }}%
    \caption{A visualization of being non-Ekedahl for $k[x^{p^2},y^p, y^{p+1}-x^p]\to k[x,y]$.}%
    \label{fig:non-example Ekedahl2}%
\end{figure}
\FloatBarrier

\end{example}

\begin{remark}
    In general, it is true, that working with power towers on surfaces is (essentially) equivalent to working with a series of vector fields (if we discard the first (boring?) part of $p$-powers, $p$-powers,... and we only focus on the tail that is a $n$-foliation, for $n\in\{0,1,2,\ldots,\infty\}$). (See Proposition \ref{Nonincreasing Degrees Power Tower - Proposition}.)
    
    Therefore, one can draw pictures, like the ones in \ref{fig:non-example Ekedahl1} and \ref{fig:non-example Ekedahl2}, of these vector fields and \textbf{see} the geometrical interactions between different $Y_i$.
\end{remark}

\subsection{Morphisms and Power Towers}

Subfields were a good source of power towers. In this section, we use morphisms to get power towers. We primarily focus on fibrations and smooth morphisms.

\subsubsection{Basic Definitions}

\begin{defin}
    Let $X$ be a normal variety over a perfect field $k$ of characteristic $p>0$ satisfying $K(X)^{p^\infty}=k$. 

    Let $f: X\to Y$ be a dominant morphism, or a dominant rational map over $k$. 
    Then, the power tower on $X$ that corresponds to the power tower of $K(Y)$ on $K(X)$ is called \emph{the power tower of $f$ on $X$}. We denote this power tower by $\left(f_1: X\to Y_1, f_2:X\to Y_2, \ldots\right)$, $f_\bullet$, or $Y_\bullet$.
\end{defin}

\begin{defin}
    Let $X$ be a normal variety over a perfect field $k$ of characteristic $p>0$ satisfying $K(X)^{p^\infty}=k$. 
    Let $Y_\bullet$ be a power tower on $X$.

    We say that the power tower $Y_\bullet$ is \emph{algebraically integrable} if there is a dominant rational map $f: X\to Z$ such that $Y_\bullet = Z_\bullet$.
\end{defin}

\begin{remark}
    It is generally hard to reconstruct a nice morphism from an algebraically integrable power tower. This is similar to the GIT theory's study of quotients. The similarities for foliations are discussed in more detail in Bongiorno's preprint \cite{bongiorno2021foliation}. Power towers are analogous to foliations, so much of this discussion carries over to power towers of infinite length.

    On the other hand, the generic point could be figured out completely, Theorem \ref{infty=s}. So, the problem is in ``spreading out'' what we know at the generic point to rings.
\end{remark}

\begin{defin}
    Let $X$ be a normal variety over a perfect field $k$ of characteristic $p>0$ satisfying $K(X)^{p^\infty}=k$. 
    Let $Y_\bullet$ be a power tower on $X$. Let $\mathcal{D}$ be the saturated subalgebra of differential operators that corresponds to $Y_\bullet$.

    Then, the \textbf{\emph{sheaf of first integral}} of $Y_\bullet$ is defined by
    \[
    \cO_{Y_\infty/k}(U)\coloneqq \{f\in \cO_{X/k}(U): \ \forall_{ D\in \mathcal{D}(U)} [D,f]=0 \}= \op{const}(\mathcal{D}(U)),
    \]
    where $U\subset X$ is an open subset.
\end{defin}

\subsubsection{Fibrations Inject into Power Towers}

In this section, we define fibrations and prove that a particular class of fibrations is determined by its rational equivalence class.

The following definition is a version of \cite[Definition 0.1.]{Fibration_Voisin}, which is a definition of a fibration due to Claire Voisin from her survey paper. However, we dropped an assumption ``$X$ is a projective/compact complex variety'' to allow affine fibrations. 
I did this because I want the projections $\mathbb{A}^n\to\mathbb{A}^m$ for $n\ge m$ to be fibrations, as I consider these maps to be a basic example of what a fibration is, since in differential geometry every fibration is such locally.

\begin{defin}\label{Fibration: voisin modified definition}
Let $k$ be a field.
  A (rational) \emph{fibration} on a variety
$X/k$ is a dominant (rational) map $X\to Y$ with connected general fiber, where $Y/k$ is a variety.  
\end{defin}

\begin{defin}
    Let $X$ be a variety over a field $k$. Let $f: X\to Y$ be a fibration on $X$. Let $P$ be a property of a morphism of varieties, e.g., surjective, flat, faithfully flat, smooth. We say that the fibration $f$ is $P$ if the morphism $f$ is $P$. 
\end{defin}

\begin{defin}\label{sep closed/separable fibration}
    Let $X$ be a scheme over a field $k$. Let $f: X\to Y$ be a dominant morphism over $k$.
    \begin{itemize}
        \item We say that $f$ is \emph{separably closed} if $K(Y)$ is separably closed in $K(X)$, i.e., $K(Y)=K(Y)^s\subset K(X)$,
        \item We say that $f$ is \emph{separable} if the field extension $K(X)/K(Y)$ is separable, Definition \ref{Separable: Definition}.
    \end{itemize}
\end{defin}

\begin{remark}
    I am aware that the definition \ref{sep closed/separable fibration} is clumsy, and there are probably better words to be used, or these things already have names. If so, send it to me.
\end{remark}

\begin{defin}
    Let $g: X\to Y$ and $g: X\to Y'$ be morphisms between varieties over a field $k$. We say that $f$ and $g$ are \emph{rationally equivalent} if there exists a commutative diagram:
        \begin{center}
            \begin{tikzcd}
                &&&Y\\
               X \ar[rrru,"f"', bend left=20]\ar[rrrd,"g",bend right =20]&U\ar[r]\ar[l,"j_U"']& V\ar[ru,"j_V"]\ar[rd,"j'_{V}"']&\\
               &&&Y',
            \end{tikzcd}
        \end{center}
        where the maps $j_U, j_V, j'_{V}$ are nonempty open immersions.
\end{defin}

\begin{prop}\label{recovering flat separable fibrations - prop}
    Let $X,Y,Y'$ be varieties over a field $k$.
    Let $f: X\to Y$ and $g: X\to Y'$ be faithfully flat, separable, dominant morphisms of finite type.
    
    If $f$ and $g$ are rationally equivalent, then there exists an isomorphism $\rho: Y\to Y'$ such that $\rho \circ f =g$.
\end{prop}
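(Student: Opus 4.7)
The plan is to reconstruct $Y$ and $Y'$ as fpqc quotients of $X$ by the equivalence relations coming from $f$ and $g$, and to show these two presentations coincide.

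First, tracing through the rational equivalence diagram on generic points identifies both $f^\ast K(Y)$ and $g^\ast K(Y')$ with $K(V)$ inside $K(X)$, giving the equality $K(Y) = K(Y')$ of subfields of $K(X)$. Set $R := X \times_Y X$; since $Y$ is separated, $R$ is a closed subscheme of $X \times_k X$. Because $K(X)/K(Y)$ is separable, MacLane's criterion gives that tensoring reduced $K(Y)$-algebras with $K(X)$ preserves reducedness, and applied to the reduced generic fiber $X_\eta$ of $f$, this shows that the generic fiber of the flat first projection $p_1: R \to X$ is reduced. Since $X$ is integral and $p_1$ is flat, all associated points of $R$ lie in this reduced generic fiber, so $R$ itself is reduced.

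Next, I will show that $g$ is $R$-invariant, meaning $g \circ p_1 = g \circ p_2$ as morphisms $R \to Y'$. On the open $U \times_Y U \subset R$ this is immediate from the rational equivalence, since there $f|_U$ and $g|_U$ factor through the same morphism $U \to V$, and this open is dense in $R$ because $U$ contains the generic point of $X$. Combined with reducedness of $R$ and separatedness of $Y'$, the equalizer of $g \circ p_1$ and $g \circ p_2$ is a closed subscheme of $R$ containing a dense open, hence equals $R$. Faithfully flat descent of morphisms along the fpqc map $f$ (working affine-locally on $Y$, using that $f$ is of finite type) then yields a unique morphism $\psi: Y \to Y'$ with $\psi \circ f = g$.

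The symmetric argument produces $\psi': Y' \to Y$ with $\psi' \circ g = f$. The two compositions $\psi' \circ \psi$ and $\psi \circ \psi'$ become identities after precomposition with $f$ and $g$ respectively, and since faithfully flat quasi-compact morphisms are epimorphisms of schemes, $\psi' \circ \psi = \op{id}_Y$ and $\psi \circ \psi' = \op{id}_{Y'}$. Setting $\rho := \psi$ gives the required isomorphism. The main obstacle is the reducedness of $R$: this is precisely where separability of $f$ enters, for otherwise nilpotents in the generic fiber of $R \to X$ would prevent the rigidity argument that propagates $R$-invariance of $g$ from the dense open $U \times_Y U$ to all of $R$.
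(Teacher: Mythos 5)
Your proof is correct and follows the paper's overall strategy---prove that $X\times_Y X$ is reduced, use density of $U\times_V U$ to force $g\circ p_1 = g\circ p_2$, then descend---but you establish the key reducedness step by a genuinely different argument. The paper invokes generic smoothness: separability yields a nonzero $c\in A$ over which $B_c/A_c$ is finite \'etale over a polynomial ring, so $B_c\otimes_{A_c}B_c$ is visibly reduced, and flatness of $B/A$ shows $c$ is a nonzerodivisor on $B\otimes_A B$, giving an injection $B\otimes_A B \hookrightarrow B_c\otimes_{A_c}B_c$. You instead observe that $p_1\colon X\times_Y X\to X$ is flat over the integral scheme $X$, so all associated points of $X\times_Y X$ sit in the generic fiber $X_{\eta_Y}\otimes_{K(Y)}K(X)$, which is reduced by MacLane's criterion applied to the separable extension $K(X)/K(Y)$. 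This bypasses the local structure theory of smooth morphisms at the cost of the standard ``$\operatorname{Ass}$ of a flat module over an integral base lies over the generic point'' lemma; both are standard, and your version is arguably the cleaner reduction of separability to a field-theoretic input. Your finish---descend $g$ along the fpqc cover $f$, then use symmetry and the fact that faithfully flat quasi-compact morphisms are epimorphisms---is an equivalent repackaging of the paper's conclusion, which identifies both $X\times_Y X$ and $X\times_{Y'}X$ with the schematic closure of $U\times_V U$ inside $X\times X$ and appeals to the universal property of the two groupoid quotients; both rest on the same effective-epimorphism fact for fpqc covers. One detail worth spelling out in either write-up: the density of $U\times_V U$ in $X\times_Y X$ is itself a flatness statement (the projections $p_1,p_2$ send generic points of $X\times_Y X$ to the generic point of $X$), since without the hypothesis of connected fibers $X\times_Y X$ need not be irreducible, so ``contains the generic point of $X$'' does not immediately give density.
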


\begin{proof}
    First, we prove that a closed subscheme $X\times_Y X\subset X\times X$ is a subvariety. To do this, it is enough to prove that it is reduced. 

    We prove that $X\times_Y X$ is reduced. Being reduced is a local property; therefore, we can check it on an open affine covering of $X\times_Y X$. We can produce a convenient covering by observing that $f$ is flat and of finite type, therefore it is open. Let $U=\op{Spec}(B)$ be an open affine subset of $X$, then the restriction of $f$ to this open subset defines a morphism 
    \[
    f: U \to f(U),
    \]
    where $f(U)$ is an open subscheme of $Y$. Now, since $Y$ is separated, it means that this morphism is affine, see e.g. \cite{affine_Lurie}. Therefore, if $\op{Spec}(A)\subset f(U)$ is an open affine subset, then $f^{-1}(\op{Spec}(A))$ is an open affine subset of $U$. Consequently, we can conclude that there is an open affine covering $\op{Spec}(A_i)$ of $Y$, for some indices $i$, such that the set $f^{-1}(\op{Spec}(A_i))=\op{Spec}(B_i)$ is an open affine covering of $X$.
    Now, the set $\op{Spec}(B_i)\times_{\op{Spec}(A_i)}\op{Spec}(B_i)$ is an open affine covering of $X\times_Y X$. Consequently, it is enough to prove that $X\times_Y X$ is reduced, assuming that $X$ and $Y$ are affine.

    Let $X, Y$ be affine schemes corresponding to rings $B$ and $A$ respectively. Let $f^{\#}: A\to B$ be the ring map corresponding to $f$. 
    Then, there exists a nonzero $c\in A$ and algebraically independent over $A_c$ elements $x_1,\ldots,x_n\in B_c$ such that
    \[
    A_c[x_1,\ldots,x_n] \subset B_c
    \]
    is a finite {\'e}tale ring extension, i.e., it is a smooth ring map. Indeed, since $(A)\subset (B)$ is separable, then there is an open subset $U$ of $Y$ such that $f: f^{-1}(U)\to U$ is smooth by \cite[Chapter V, Corollary 4.3.]{Mumford_II}. So, we can take $U=Y-V(c)$, and then we have the above decomposition by \cite[Lemma 00T7]{stacks-project}.
    
    Now, we can conclude that we have an inclusion
    \[
    B\otimes_A B \hookrightarrow \left(B\otimes_A B\right)_c \simeq  B_c \otimes_{A_c} B_c.
    \]
    Indeed, the inclusion follows from the element $c$ not being a zero divisor on $B\otimes_A B$, so the localizing $c$ is an inclusion. (This is an element of this ring as $c\coloneqq c\otimes 1 = 1\otimes c$.) The $c$ is not a zero divisor, because we have that the short exact sequence
    \[
    0\to B \xrightarrow{c} B \to B/cB \to 0
    \]
    tensored with $B$ over $A$ is still exact, by $A\to B$ being flat, and equal to the short exact sequence
    \[
    0\to B\otimes_A B \xrightarrow{c} B\otimes_A B \to B/cB \otimes_A B \to 0.
    \]
    The isomorphism follows from an elementary property of tensoring:
    \[
    B_c \otimes_{A_c} B_c\simeq (B\otimes_A A_c) \otimes_{A_c} (B\otimes_A A_c) \simeq B\otimes_A B \otimes_A A_c \simeq \left(B\otimes_A B\right)_c.
    \]
    
    From this inclusion, we can deduce that $B\otimes_A B$ is reduced if $B_c \otimes_{A_c} B_c$ is reduced. The last statement can be directly computed from the above form for $B_c/A_c$. This proves that $X\times_Y X$ is reduced, and therefore a variety.

    Next, let the following be a diagram given by $f$ and $g$ being rationally equivalent to each other.
    \begin{center}
            \begin{tikzcd}
                &&&Y\\
               X \ar[rrru,"f"', bend left=20]\ar[rrrd,"g",bend right =20]&U\ar[r]\ar[l,"j_U"']& V\ar[ru,"j_V"]\ar[rd,"j'_{V}"']&\\
               &&&Y',
            \end{tikzcd}
        \end{center}
    We have that $U \times_V U \subset X\times_Y X$ is an open dense subset. Now, since $X\times_Y X$ is a closed subvariety of $X\times X$, we get that the schematic closure $\overline{U \times_V U}$ of $U \times_V U $ in $X\times X$ is equal $X\times_Y X$. This reasoning also applies to $X\times_{Y'} X$. Consequently, we have the following equalities of groupoids on $X$:
    \[
    X\times_Y X = \overline{U \times_V U} = X\times_{Y'} X.
    \]

    Finally, we know that $f,g$ are fpqc coverings since they are faithfully flat morphisms between varieties, so these morphisms are effective epimorphisms
    by \cite[Lemma 023Q]{stacks-project}. This means precisely that $Y$ is a categorical quotient of $X\times_Y X$ and $Y'$ is a categorical quotient of $X\times_{Y'} X$. Now, we know that these groupoids are equal. Therefore, from the universal property of a groupoid quotient, there is an isomorphism $\rho: Y \to Y'$ such that $\rho \circ f = g$. This finishes the proof.
\end{proof}

The above was a step towards the following.

\begin{thm}[Flat Separable Fibrations into $\infty$-Foliations]\label{fibrations into infty-folaitions - varieties - theorem}
    Let $X$ be a normal variety over a perfect field $k$ of characteristic $p>0$ satisfying $K(X)^{p^\infty}=k$.

    Then, there is an injection from a set of flat, separably closed, separable, surjective fibrations on $X$ up to rational equivalence to $\infty$-foliations on $X$.

    In particular, if $f: X\to Y$ is a flat, separably closed, separable, surjective fibration on $X$, then the first integrals of the power tower $Y_\bullet$ are equal to $f^{-1}\cO_{Y/k}$.
\end{thm}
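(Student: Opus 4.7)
My plan is to construct the map by sending a fibration $f\colon X\to Y$ to its associated power tower $Y_\bullet$ on $X$, to verify this is an $\infty$-foliation and that it factors through rational equivalence, and finally to prove injectivity via the explicit description of first integrals as $f^{-1}\cO_{Y/k}$. The first step is easy: by Convention \ref{power tower is P - convention}, being an $\infty$-foliation is a generic-point condition. Since $K(X)/K(Y)$ is separable (by hypothesis) and $k\subset K(Y)$, Proposition \ref{fibrations into infty-foliations - proposition} gives that the power tower of $K(Y)$ on $K(X)$ is an $\infty$-foliation. Well-definedness modulo rational equivalence is automatic, since rationally equivalent fibrations agree on a nonempty open and hence induce the same subfield of $K(X)$.

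The heart of the argument is the ``in particular'' statement $\cO_{Y_\infty/k}=f^{-1}\cO_{Y/k}$, which then supplies injectivity. By Theorem \ref{MainTheorem - var - theorem}, the saturated subalgebra associated to $Y_\bullet$ is $\mathcal{D}=\bigcup_i \op{Diff}_{Y_i}(X)$, so $\cO_{Y_\infty/k}=\op{const}(\mathcal{D})=\bigcap_i \op{const}(\op{Diff}_{Y_i}(X))$. By Proposition \ref{diff=end - new} (Diff=End), $\op{Diff}_{Y_i}(X)=\op{End}_{Y_i}(X)$ (the inclusion $\cO_{Y_i}\subset \cO_X$ has exponent $\le i$), and topological Jacobson--Bourbaki yields $\op{const}(\op{End}_{Y_i}(X))=\cO_{Y_i}$. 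The inclusion $f^{-1}\cO_Y\subset \bigcap_i \cO_{Y_i}$ is clear since $K(Y)\subset K(Y_i)$ for every $i$. For the reverse inclusion, at the generic point Theorem \ref{infty=s} combined with the separable-closedness hypothesis gives $K(Y_\infty)=(k\cdot K(Y))^s=K(Y)^s=K(Y)$, so stalks of $\bigcap_i\cO_{Y_i}$ land in $K(Y)\cap \cO_{X,x}$. The final ingredient is flatness: since $f$ is flat, $\cO_{Y,f(x)}\to \cO_{X,x}$ is a faithfully flat local ring map, and a standard argument (if $ca=b$ in $\cO_{X,x}$ with $b,c\in\cO_{Y,f(x)}$, then $b\in c\cO_X\cap \cO_Y=c\cO_Y$) yields $K(Y)\cap\cO_{X,x}=\cO_{Y,f(x)}=(f^{-1}\cO_Y)_x$.

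For injectivity, suppose $f\colon X\to Y$ and $g\colon X\to Y'$ induce the same power tower. Then $f^{-1}\cO_Y=g^{-1}\cO_{Y'}$ inside $\cO_X$ by the step above, and in particular $K(Y)=K(Y')$ as subfields of $K(X)$. This equality of function fields defines a canonical birational map $\phi\colon Y\dashrightarrow Y'$ restricting to an isomorphism between some nonempty opens $V\subset Y$ and $V'\subset Y'$. On $U\coloneqq f^{-1}(V)$, both $g|_U$ and $(\phi\circ f)|_U$ are morphisms to $Y'$ inducing the same field inclusion $K(Y')\hookrightarrow K(X)$ on the generic point, hence by separatedness of $Y'$ they coincide on $U$. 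This is precisely the rational equivalence of $f$ and $g$, completing the proof.

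The main obstacle is the passage from the generic-point statement $K(Y_\infty)=K(Y)$ to the sheaf-theoretic equality $\cO_{Y_\infty/k}=f^{-1}\cO_{Y/k}$. It is crucial here to combine Diff=End (to recognize the constants of each $\op{Diff}_{Y_i}(X)$) with flatness of $f$ (to control the intersection $K(Y)\cap \cO_{X,x}$); neither alone suffices, and dropping either hypothesis would break the argument.
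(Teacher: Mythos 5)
Your proof is correct but takes a genuinely different route for the key step. For the ``in particular'' statement $\cO_{Y_\infty/k}=f^{-1}\cO_{Y/k}$, the paper invokes fpqc descent: it observes that $f$ is an effective epimorphism (\cite[Lemma 023Q]{stacks-project}), so $Y$ represents the sheaf of $X\times_Y X$-invariant functions, and then identifies those invariants with the constants of $\op{Diff}_Y(X)$. You instead pass through the stalks: you reduce to showing $K(Y)\cap\cO_{X,x}=\cO_{Y,f(x)}$, and deduce this from the standard fact that for a faithfully flat local ring map $B\to A$ between domains, $c A\cap B = cB$ for $c\in B$, hence $\op{Frac}(B)\cap A=B$. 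This is more elementary than the descent machinery, makes the role of flatness completely transparent, and cleanly isolates the generic-point input (Theorem \ref{infty=s} plus separable closedness gives $\bigcap_i K(Y_i)=K(Y)$). You also reverse the logical order for injectivity: the paper proves injectivity first, up front, via the standalone Proposition \ref{recovering flat separable fibrations - prop} (whose proof itself reconstructs the groupoid $X\times_Y X$), whereas you obtain $K(Y)=K(Y')$ as a corollary of the first-integrals computation and then run a short separatedness argument to upgrade this to rational equivalence; this makes the two halves of the theorem mutually reinforcing rather than independent. One small imprecision: your citation of ``topological Jacobson--Bourbaki'' to get $\op{const}(\op{Diff}_{Y_i}(X))=\cO_{Y_i}$ reaches for a field-theoretic statement where you are actually working with sheaves of rings; what you need is exactly the variety-level correspondence $\op{const}(\mathcal{D}_i)=\cO_{Y_i}$ already built into Theorem \ref{MainTheorem - var - theorem} (item 4 of the explicit formulas), which you cite at the start of that paragraph — so the gap is bibliographic rather than mathematical, and the better fix is to drop the Diff=End/Jacobson--Bourbaki detour and invoke the main correspondence directly.
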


\begin{proof}
    Let $f: X\to Y$ be a flat separably closed separable surjective fibration on $X$.
    This morphism is faithfully flat, because it is flat and surjective. It is dominant because it is surjective. And, $X, Y$ are varieties over $k$, so $f$ is of finite type. Consequently, we can use Proposition \ref{recovering flat separable fibrations - prop} to conclude that the operation
    \[
    f:X\to Y \mapsto k\subset K(Y)\subset K(X)
    \]
    is injective precisely up to rational equivalence of flat separably closed separable surjective fibrations on $X$.

    Next, by the assumptions on $f$, we have that $K(Y)$ is separably closed in $K(X)$, i.e., $K(Y)=K(Y)^s$, and the field extension $K(X)/K(Y)$ is separable. Therefore, by Proposition \ref{fibrations into infty-foliations - proposition}, the subfield $K(Y)\subset K(X)$ corresponds to the power tower of $K(Y)$ on $K(X)$ that is a $\infty$-foliation on $K(X)$.

    Finally, by Lemma \ref{power towers on X = power towers on K(X) - lemma}, power towers on $X$ and on $K(X)$ are equivalent, thus $f$ injects into power towers on $X$ and its image is a $\infty$-foliation on $X$. This proves the injection.

    Now, we prove that the sheaf of first integrals equals $f^{-1}\cO_{Y/k}$. We can observe that the groupoid $X\times_Y X$ admits a quotient and it is equal to $f$, by \cite[Lemma 023Q]{stacks-project}. This means that $Y$ represents the sheaf of invariant functions for this groupoid, \cite[Section 02VE]{stacks-project}. By the definition, a local section $g\in \cO_{X/k}(U)$ is invariant with respect to $X\times_Y X$, i.e., it belongs to $f^{-1}\cO_{Y/k}(U)$, if and only if $g$ commutes with $\op{Diff}_Y{X}(U)$, i.e., it is a first integral of the power tower $Y_\bullet$, Definition \ref{First Integrals Power Towers - Definition}. This finishes the proof.
\end{proof}

\subsubsection{Smooth Morphisms Inject into Power Towers}

\begin{prop}[Smooth Fibrations into gr-Saturated Regular $\infty$-Foliations]\label{smooth fibrations into ekedahl - Proposition}
    Let $X$ be a regular variety over a perfect field $k$ of characteristic $p>0$ satisfying $K(X)^{p^\infty}=k$. 
    Let $Y$ be a regular variety over $k$.
    Let $f: X\to Y$ be a smooth surjective morphism such that $K(Y)=K(Y)^s\subset K(X)$.
    
    Then, the power tower of $f$ is a gr-saturated regular $\infty$-foliation.

    Moreover, the sheaf of first integrals of the power tower $Y_\bullet$ is equal to the pullback sheaf
    $
    f^{-1}\mathcal{O}_{Y/k} \subset  \mathcal{O}_{X/k}.
    $

    Therefore, there is an injection from separably closed smooth surjective morphisms $X\to Y$ with $Y$ regular into gr-saturated regular $\infty$-foliations on $X$.
\end{prop}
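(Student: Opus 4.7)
The plan is to reduce everything to an étale-local computation with a $p$-basis and then invoke the Galois-type machinery for power towers together with the reconstruction result from Proposition \ref{recovering flat separable fibrations - prop}.

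First, I would work at the generic point. Smoothness of $f$ forces $K(X)/K(Y)$ to be separable (smoothness of field extensions is separability), and by hypothesis $K(Y) = K(Y)^s$ inside $K(X)$, so Proposition \ref{fibrations into infty-foliations - proposition} applies and gives that the induced power tower on $K(X)$ is an $\infty$-foliation. By Convention \ref{power tower is P - convention}, the power tower $Y_\bullet$ on $X$ is then an $\infty$-foliation.

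Next, to establish regularity and gr-saturation, I would pass to an étale-local chart in which smoothness of $f$ supplies sections $t_1, \ldots, t_n \in \cO_X$ whose differentials form an $\cO_X$-basis of $\Omega_{X/Y}$; equivalently, $\{t_i\}$ is a $p$-basis of $\cO_X$ over $f^{-1}\cO_Y$. The key technical step is to iterate this structure: applying Lemma \ref{regular 1-foliation = p-bases} at each level, I would show that in such a chart the sheaf $\cO_{Y_m} = (f^{-1}\cO_Y)\cdot \cO_X^{p^m}$ admits $\{t_i^{p^m}\}$ as a $p$-basis over $\cO_{Y_{m+1}}$ and is smooth over $Y$. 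This makes each $Y_m$ regular over $k$ and identifies each $\F_m = T_{Y_{m-1}/Y_m}$ as the rank-$n$ subbundle of $T_{Y_{m-1}/k}$ locally spanned by $\partial/\partial t_i^{p^{m-1}}$, so that $Y_\bullet$ is a regular $\infty$-foliation. In the same chart, the corresponding saturated subalgebra $\D$ has as saturated well-chosen distinguished generators (Proposition \ref{saturated dist generators - exist and do the job - prop}) the operators $\tfrac{1}{p^m!}\tfrac{\partial^{p^m}}{\partial t_i^{p^m}}$, whose leading forms exhaust the divided power polynomial algebra on $\F_1$, giving $\op{gr}\D = S(\F_1^*)^{*gr}$. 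By Proposition \ref{gr-saturared + regular = pd envelope - lemma}, $Y_\bullet$ is then gr-saturated.

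For the first integrals, the inclusion $f^{-1}\cO_Y \subseteq \op{const}(\D)$ is immediate from $\D \subseteq \op{Diff}_Y(X)$. For the reverse inclusion I would use the Galois correspondence (Theorem \ref{MainTheorem - var - theorem}) to write $\op{const}(\D) = \bigcap_m \op{const}(\D_m) = \bigcap_m \cO_{Y_m}$, which étale locally reduces to $\bigcap_m (f^{-1}\cO_Y)[t_i^{p^m}] = f^{-1}\cO_Y$ by a $t$-degree argument in the $p$-basis expansion. Finally, for the injection of morphisms into power towers, if two such fibrations $f: X\to Y$ and $g: X\to Y'$ produce the same $Y_\bullet$, then the first-integrals computation forces $f^{-1}\cO_Y = g^{-1}\cO_{Y'}$ and hence $K(Y) = K(Y')$ inside $K(X)$, so $f$ and $g$ are rationally equivalent; smoothness gives faithful flatness, separability, and finite type, so Proposition \ref{recovering flat separable fibrations - prop} supplies the desired isomorphism $Y\to Y'$ compatible with $f$ and $g$.

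The main obstacle I expect will be the étale-local iteration of the $p$-basis structure: one has to check that $\cO_{Y_m} = (f^{-1}\cO_Y)\cdot \cO_X^{p^m}$ is already normal and smooth over $Y$ (no further normalization needed) and that $\{t_i^{p^m}\}$ remains a valid $p$-basis at the next level, which amounts to a short Frobenius-and-étale-base-change computation. Everything else then falls out cleanly from the machinery already developed in the paper.
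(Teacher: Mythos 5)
Your proof is correct in outline and shares the paper's core local computation (the smooth factorization from \cite[Theorem 039Q]{stacks-project}, the identification of $\op{gr}\D$ with the divided-power algebra $S(\F_1^*)^{*gr}$ via the operators $\frac{1}{p^m!}\frac{\partial^{p^m}}{\partial t_i^{p^m}}$, and the appeal to Proposition~\ref{gr-saturared + regular = pd envelope - lemma}). The structural divergence from the paper is that you do not invoke Theorem~\ref{fibrations into infty-folaitions - varieties - theorem}, which the paper uses at the outset: a separably closed smooth surjective morphism is in particular a flat, separably closed, separable, surjective fibration, so that theorem dispatches three of the four conclusions at once ($\infty$-foliation structure, the first-integrals identity, and the injection), leaving only gr-saturation and regularity to check. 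You instead go back to the field-level Proposition~\ref{fibrations into infty-foliations - proposition} and rebuild the injection and first-integrals claims by hand, which is valid but duplicates already-proved material. A small economy you also missed: once $\op{gr}\D=S(\F_1^*)^{*gr}$ is established, Proposition~\ref{gr-saturared + regular = pd envelope - lemma} yields regularity together with gr-saturation, so your separate inductive pass through Lemma~\ref{regular 1-foliation = p-bases} is redundant (though not incorrect).

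Of the details you defer, two deserve explicit arguments rather than an expectation. First, you assert $\cO_{Y_m}=(f^{-1}\cO_{Y/k})\cdot\cO_X^{p^m}$ with no further normalization, while the construction of $Y_m$ via the Galois correspondence passes through one; this identity does hold --- $(f^{-1}\cO_{Y/k})\cdot\cO_X^{p^m}$ is the image of the relative Frobenius of the smooth morphism $X/Y$, whose source $X\times_{Y,F^m_Y}Y$ is smooth over $Y$ and hence normal, and the Frobenius is injective on that integral domain --- but the claim needs to be proved, not flagged as an obstacle. Second, your reduction of $\bigcap_m\cO_{Y_m}=f^{-1}\cO_{Y/k}$ to a ``$t$-degree argument'' in $\bigcap_m(f^{-1}\cO_{Y/k})[t_i^{p^m}]$ is only verbatim correct when $\cO_X$ equals the polynomial ring $(f^{-1}\cO_{Y/k})[t_1,\ldots,t_n]$; the structure theorem only gives an \'etale map from that ring into $\cO_X$, so $\cO_{Y_m}$ can be strictly larger than $(f^{-1}\cO_{Y/k})[t_i^{p^m}]$ and the polynomial degree argument does not transfer directly. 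A cleaner path combines the field-level $\bigcap_m K(Y_m)=K(Y)$ (Theorem~\ref{infty=s}) with surjectivity of $f$ to get $K(Y)\cap\cO_X=f^{-1}\cO_{Y/k}$. Both points are patchable, and the paper avoids them entirely by reusing Theorem~\ref{fibrations into infty-folaitions - varieties - theorem}.
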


\begin{proof}
    A separably closed smooth surjective morphism is a flat separably closed separable surjective fibration, so, by Theorem \ref{fibrations into infty-folaitions - varieties - theorem}, we conclude that the power tower $Y_\bullet$ of $f: X\to Y$ is a $\infty$-foliation, the sheaf of first integrals is equal $f^{-1}\mathcal{O}_{Y/k}$, and we have an injection from separably closed smooth surjective morphisms into $\infty$-foliations.
    Therefore, the only thing that remains to be proved is that the power tower $Y_\bullet$ is gr-saturated and regular.

    First, we prove that $Y_\bullet$ is gr-saturated. This is a local question, so we can assume that $X$ and $Y$ are affine. The morphism $f:X\to Y$ is smooth, so, by \cite[Theorem 039Q.]{stacks-project}, we can assume that $X=\op{Spec}(A)$, $Y=\op{Spec}(B)$, and that there exist $x_1,\ldots, x_n\in A$ such that we have
    \[
    B \subset B[x_1,\ldots, x_n] \subset A,
    \]
    where $B[x_1,\ldots, x_n]$ is isomorphic to a polynomial ring, and the inclusion of rings $B[x_1,\ldots, x_n] \subset A$ is {\'e}tale. Moreover, $Y$ is regular over a perfect field $k$, so it is smooth by \cite[Lemma 038V (3), Lemma 038X]{stacks-project}. So, we can apply \cite[Theorem 039Q.]{stacks-project} again, this time to $k\to B$, meaning that there exist $y_1,\ldots,y_m\in B$ such that
    \[
    k\subset k[y_1,\ldots,y_m]\subset B
    \]
    with $k[y_1,\ldots,y_m]\subset B$ being {\'e}tale.
    Now, the set $y_1,\ldots, y_m, x_1,\ldots, x_n$ is a set of coordinates for $A/k$. Therefore, by using explicit formulas for differential operators, we conclude that $\op{Diff}_B(A)=\op{Diff}_{Y_\bullet}(X)$ is generated by $\frac{1}{n!}\frac{\partial^n}{\partial x_i^n}$. This observation implies the power tower is gr-saturated as $\op{gr}\op{Diff}_B(A)$ is generated by all $\left[\frac{1}{n!}\frac{\partial^n}{\partial x_i^n}\right]$ and the whole graded algebra $\op{gr}\op{Diff}_k(A)$ by all $\left[\frac{1}{n!}\frac{\partial^n}{\partial x_i^n}\right]$ and all $\left[\frac{1}{n!}\frac{\partial^n}{\partial y_j^n}\right]$.

    Now, we prove that the power tower $Y_\bullet$ is regular. First, $X$ is regular based on the assumption. So, we only have to prove that all subsheaves $\F_i \subset T_{Y_{i-1}/k}$ are subbundles. We put $Y_i=\op{Spec}(B_i)$ and use the previous paragraph's assumptions. We have that $B_i$ is equal to the integral closure of $B[x_1^{p^i},\ldots, x_n^{p^i}]$ in $(B[x_1^{p^i},\ldots, x_n^{p^i}])^s\subset (A)=K(X)$ by the Galois-type Correspondences \ref{MainTheorem - var - theorem}. In particular, the rings $B_i$ are regular with coordinates $y_1,\ldots,y_m,x_1^{p^i},\ldots, x_n^{p^i}$, and therefore we have, for $i\ge 0$,
    \[
    \F_{i+1}=\op{Span}(\frac{\partial}{\partial x_1^{p^i}},\ldots,\frac{\partial}{\partial x_1^{p^i}}) \subset 
    \op{Span}(\frac{\partial}{\partial x_1^{p^i}},\ldots,\frac{\partial}{\partial x_n^{p^i}},\frac{\partial}{\partial y_1},\ldots,\frac{\partial}{\partial y_m})=T_{Y_{i}/k}.
    \]
    Consequently, all of them are subbundles, meaning that $\op{Sing}(Y_\bullet)=\emptyset$. This finishes the proof.
\end{proof}

\newpage
\section{Foliations, $1$-Foliations, and $\infty$-Foliations}

The main source of power towers so far has been subfields and morphisms. In particular, if we are given a dominant map $f: X\to Y$, then we can take its
power tower $X\to Y_1\to Y_2\to \ldots$, where $Y_i$ is an approximation of $Y$ up to $p^i$-powers. We showed that nice classes of morphisms and subfields give rise to nice power towers. However, we can reverse this procedure. This leads to a question:
\begin{center}
    How to grow power towers?
\end{center}
Indeed, we can start with a $p$-Lie algebra/$1$-foliation on $X$, that is equivalent to $X\to Y_1\to X^{(1)}$, and then... what do we do next? How to get more $Y_2,Y_3,\ldots$ for this $Y_1$ systematically? Is it even always possible to continue, or maybe there are power towers that cannot be prolonged? If so, what is the obstruction? In general, these questions lead to:

\begin{question}
    What is the moduli of power towers? 
\end{question}

The above question is not necessarily tricky, but it is definitely perplexing. Consequently, this chapter will focus on much more minor problems.

\vspace{0.3cm}
\begin{center}
    \textbf{Problem 1:} Given a $n$-foliation on $X$, can we find $(n+1)$-foliation on $X$ that extends it?
\end{center}
\vspace{0.3cm}

This question depends only on $K(X)$, which is about fields. The answer is: yes, it is always possible to take an $n$-foliation: $K=W_0, W_1,\ldots, W_n$, and to find $W_{n+1}$ such that $K=W_0, W_1,\ldots, W_n, W_{n+1}$ is an $(n+1)$-foliation. This is proved in Theorem \ref{always}.

However, if we want to make it about a problem about varieties, then we have to add a variety-only adjective about power towers to the problem.

\vspace{0.3cm}
\begin{center}
    \textbf{Problem 2:} Given a \textbf{regular/gr-saturated/Ekedahl} $n$-foliation on $X$, can we find \textbf{regular/gr-saturated/Ekedahl} $(n+1)$-foliation on $X$ that extends it?
\end{center}
\vspace{0.3cm}

Now, it turns out that the answer might be negative! It was already shown by Ekedahl: there exists a variety $X$ with a $1$-foliation on $X$ given by $f_1:X\to Y_1$ such that there is no Ekedahl $2$-foliation on $X$ given by $f_1:X\to Y_1, f_2: X\to Y_2$.
Indeed, in the paper \cite[pages 145-146]{EkedahlFoliation1987}, Ekedahl constructed a ``$p$-cyclic cover of an abelian surface'' and showed that this $1$-foliation cannot be extended to any Ekedahl $2$-foliation.

Curiously, $1$-foliations that are ``$p$-cyclic covers of abelian surfaces'' tend to be counterexamples to many statements that are inspired by characteristic zero. Indeed, the same non-extendable $1$-foliations from  Ekedahl's paper show up in Miyaoka's paper \cite{Miyaoka1987}, and they are counterexamples to a ``bend and break for special $1$-foliations'' there.
These $1$-foliations are also counterexamples to bend and break results from Langer's paper \cite[Section 5.1.]{Langer2}.
And, another example is Bernasconi's preprint \cite{bernasconi2024counterexamples} in which he shows that the main ingredients of a minimal model program fail for $1$-foliations. His counterexamples are again ``$p$-cyclic covers of abelian surfaces'', i.e., $1$-foliations that do not extend to Ekedahl $2$-foliations.

Consequently, I believe that a problem of studying extensions of $1$-foliations to Ekedahl $n$-foliations and Ekedahl $\infty$-foliations is vital for positive characteristic algebraic geometry.

For example, Miyaoka already studied this problem in his paper \cite{Miyaoka1987}. Namely, I interpret his paper that he (not explicitly) uses a fact that we prove in the below Theorem \ref{lifting => extension - thm}, that a lifting of a $1$-foliation to a foliation induces a unique extension to an Ekedahl $\infty$-foliation. Indeed, my understanding is that he actually performed ``deformations along \emph{special} Ekedahl $\infty$-foliations'', but he did not have a theory of such objects, so he manipulated them using the liftings instead of interacting with them directly.

Therefore, Miyaoka's paper \cite{Miyaoka1987} is the inspiration for the main result of this chapter, Theorem \ref{lifting => extension - thm}: liftings of $1$-foliations to foliations extend them to nice $\infty$-foliations. So, there is an interaction between moduli of foliations and moduli of power towers.

\subsection{Extensions of $n$-Foliations on Fields}

We prove that every $1$-foliation extends to a $n$-foliation for $n=2,3,\ldots,\infty$.

\begin{defin}
   Let $K$ be a field of characteristic $p>0$. Let $k=K^{p^\infty}$ be the perfection of $K$. Let $K/k$ be a finitely generated field extension. 

   Let $(K, W_1, W_2,\ldots)$ be a power tower of finite length ($\le n$) on $K$, then we denote it by $W=W_n$, i.e., by the subfield it corresponds to. The index is at least the length, if we write the index, then we assume that the length is not greater then that.

   We say that a power tower $W_\bullet$ on $K$ \emph{\textbf{extends}} a power tower of finite length $V_n$ on $K$ if $W_n=V_n$.
\end{defin}

The following lemma reduces the problem of extending $1$-foliation to an $n$-foliation to a problem of extending $1$-foliation to a $2$-foliation.

\begin{lemma}\label{2 and 2 is 3 foliation - lemma}
Let $K$ be a field of characteristic $p>0$. Let $k=K^{p^\infty}$ be the perfection of $K$. Let $K/k$ be a finitely generated field extension.

Let $(K,W_1,W_2,W_2,\ldots)$ be a $2$-foliation on $K$, and let $(W_1,W_2,W_3,W_3,\ldots)$ be a $2$-foliation on $W_1$. Then the tower $(K,W_1,W_2,W_3,W_3,\ldots)$ is a $3$-foliation on $K$.
\end{lemma}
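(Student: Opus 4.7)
The plan is to verify directly the three conditions in the definition of a $3$-foliation: that the given sequence is a power tower on $K$, that the three successive degrees $[K:W_1]$, $[W_1:W_2]$, $[W_2:W_3]$ coincide, and that the tower stabilises at step $3$ with $W_3\subsetneq W_2$.

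First, I would use Lemma \ref{Power Tower - alternative def}(2) to check the power-tower property, so that it suffices to verify $W_n = W_{n+1}\cdot K^{p^n}$ for every $n\ge 1$. The case $n=1$ is immediate from the hypothesis that $(K,W_1,W_2,\ldots)$ is a power tower on $K$. For $n=2$, since $(W_1,W_2,W_3,\ldots)$ is a power tower on $W_1$, we have $W_2=W_3\cdot W_1^{p}$; substituting $W_1=W_2\cdot K^{p}$ gives $W_2=W_3\cdot W_2^{p}\cdot K^{p^2}$, and Lemma \ref{Subsequent Extensions are of exponent 1 - lemma} applied to the second tower yields $W_2^{p}\subset W_3$, so $W_2=W_3\cdot K^{p^2}$. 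For $n\ge 3$ both sides stabilise at $W_3$, and one only needs $K^{p^n}\subset W_3$; this follows by composing the inclusions $K^{p^3}\subset W_1^{p^2}\subset W_3$, where the first uses $K^{p}\subset W_1$ (a consequence of Observation \ref{observation W_n is exponent <=n} for the first tower) and the second uses Observation \ref{observation W_n is exponent <=n} for the second tower, which has length $2$ over $W_1$.

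Next, I would check the degree equalities. Because the first tower is a $2$-foliation, $[K:W_1]=[W_1:W_2]$; because the second tower is a $2$-foliation on $W_1$, $[W_1:W_2]=[W_2:W_3]$. Hence all three degrees agree, which is exactly the constancy requirement for a $3$-foliation (Definition \ref{n-foliation}). Finally, the length is exactly $3$: for $i\ge 3$ we have $W_i=W_3$ by construction, while $W_3\subsetneq W_2$ because the second tower on $W_1$ has length $2$ (so Lemma \ref{structure of inclusions - power tower - lemma} guarantees the inclusion $W_3\subset W_2$ is proper).

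I do not expect any real obstacle; the only step that requires more than bookkeeping is the identity $W_2=W_3\cdot K^{p^2}$, which is the concrete place where the ``exponent $\le 1$'' feature of successive floors (Lemma \ref{Subsequent Extensions are of exponent 1 - lemma}) is used to eliminate the spurious $W_2^{p}$ factor coming from $W_1^{p}=(W_2\cdot K^{p})^{p}$.
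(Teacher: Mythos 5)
Your proof is correct and takes essentially the same approach as the paper: both reduce to verifying $W_n=W_{n+1}\cdot K^{p^n}$ via Lemma \ref{Power Tower - alternative def}, with the identical key computation for $n=2$ using $W_2=W_3\cdot W_1^p$, the substitution $W_1^p=W_2^p\cdot K^{p^2}$, and the absorption $W_2^p\subset W_3$ from Lemma \ref{Subsequent Extensions are of exponent 1 - lemma}. You are merely more explicit than the paper about the $n\ge 3$ cases and the length-$3$ requirement, both of which the paper leaves implicit.
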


\begin{proof}
    If the tower $(K,W_1,W_2,W_3,W_3,\ldots)$ is a power tower on $K$, then it is a $3$-foliation, because
    $[K:W_1]=[W_1:W_2]$ and $[W_1:W_2]=[W_2:W_3]$ by the definition of a $2$-foliation, Definition \ref{n-foliation}. Therefore, we only have to prove this tower is a power tower.

    By Lemma \ref{Power Tower - alternative def}, it is enough to prove that $W_3\cdot K^{p^2}=W_2$. We prove this equality below. 
    We have that $W_1=W_2\cdot K^p$, so $W_1^p=W_2^p\cdot K^{p^2}$. Consequently, we have 
    \[
    W_2=W_3\cdot W_1^p=W_3\cdot (W_2^p\cdot K^{p^2})=(W_3\cdot W_2^p)\cdot K^{p^2}=W_3\cdot K^{p^2},\] because $W_3\supset W_2^p$ by Lemma \ref{Subsequent Extensions are of exponent 1 - lemma}. This finishes the proof.
\end{proof}

\begin{thm}\label{always}
    Let $K$ be a field of characteristic $p>0$. Let $k=K^{p^\infty}$ be the perfection of $K$. Let $K/k$ be a finitely generated field extension.

    Every $1$-foliation on $K$ can be extended to a $2$-foliation on $K$.
\end{thm}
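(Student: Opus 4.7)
The plan is to translate the extension problem via the Jacobson correspondence into the construction of a single $p$-Lie algebra with one intersection property, and then exhibit that algebra via a carefully chosen $p$-basis. Denote the given $1$-foliation by $K \supset W_1 \supset K^p$ with $[K:W_1] = p^r$, and set $d = \log_p[W_1:W_1^p]$. By Lemma~\ref{Power Tower - alternative def}, an extension to a $2$-foliation is the same as producing a subfield $W_2 \subsetneq W_1$ with $W_1^p \subset W_2$, $[W_1:W_2] = p^r$, and $W_2 \cdot K^p = W_1$. Applying Jacobson's correspondence (Theorem~\ref{Jacobson Purely Inseparable Galois Theory for Exponent $1$}) to $W_1$, this is equivalent to exhibiting a $p$-Lie algebra $\F_2 \subset T_{W_1/W_1^p}$ of $W_1$-dimension $r$ whose intersection with $T_{W_1/K^p}$ is trivial; this last condition is precisely the Jacobson-sequence compatibility ensuring $W_2 \cdot K^p = W_1$, cf.\ Theorem~\ref{MainTheorem}.

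To build such an $\F_2$ I would pick a $p$-basis $x_1,\dots,x_r$ of the extension $K/W_1$ and put $y_j \coloneqq x_j^p \in W_1 \cap K^p$. A direct expansion of $p$-th powers shows that $K^p = W_1^p(y_1,\dots,y_r)$ and that the $y_j$ form a $p$-basis of $K^p$ over $W_1^p$. In particular the $y_j$ are $p$-independent in $W_1$ over $W_1^p$: since $W_1^p \subset K^p$, any $W_1^p$-linear dependence among the monomials $y^e$ would already be a $K^p$-linear dependence. By Lemma~\ref{p-basis - characterisation}, extend the $y_j$ to a $p$-basis $w_1,\dots,w_{d-r},y_1,\dots,y_r$ of $W_1/W_1^p$. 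Let $\partial_{w_i},\partial_{y_j}$ denote the dual derivations and define
\[
\F_2 \coloneqq W_1\,\partial_{y_1}\oplus\cdots\oplus W_1\,\partial_{y_r}.
\]

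The verification is then immediate. The generators $\partial_{y_j}$ mutually commute and satisfy $\partial_{y_j}^p = 0$, and every $D \in \F_2$ kills each $w_i$; hence $D^p$, which is again a derivation on $W_1$, still kills each $w_i$ and is therefore of the form $\sum_j D^p(y_j)\,\partial_{y_j}$, i.e.\ lies in $\F_2$. So $\F_2$ is a $p$-Lie algebra of rank $r$. Since every derivation of $W_1$ annihilates $W_1^p$ automatically and $K^p = W_1^p(y_1,\dots,y_r)$, one has $T_{W_1/K^p} = \bigoplus_i W_1\,\partial_{w_i}$, so indeed $\F_2 \cap T_{W_1/K^p} = 0$. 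Jacobson's correspondence then yields the required $W_2$, completing the extension. The main obstacle is the middle step --- producing an aligned $p$-basis of $W_1/W_1^p$ whose "upper" half is a $p$-basis of $K^p/W_1^p$; the key observation that makes this possible is that this upper half can be realized inside $W_1$ itself as the $p$-powers of a $p$-basis of $K/W_1$, after which the $p$-Lie structure of $\F_2$ is tautological because $\F_2$ is commutative with vanishing Frobenius.
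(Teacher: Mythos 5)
Your proof is correct, and it takes a genuinely different route from the paper's. The paper works on the field side: it fixes a minimal generating set $t_1,\ldots,t_r$ for $K/W_1$, extends the $t_i^p$ to a $p$-basis $a_1,\ldots,a_{N-r},t_1^p,\ldots,t_r^p$ of $W_1$ \emph{over the perfection $k$}, shows $a_\bullet,t_\bullet$ is then a $p$-basis of $K/k$, defines $W_2 := k(a_\bullet,t^{p^2}_\bullet)^s$, and verifies $W_2\cdot K^p = W_1$ by direct field manipulation with separable closures. You instead work on the derivation side of the Jacobson correspondence: you build the aligned $p$-basis $w_1,\ldots,w_{N-r},y_1,\ldots,y_r$ of $W_1$ \emph{over $W_1^p$}, define $\F_2 = \bigoplus_j W_1\partial_{y_j}$, and verify the Jacobson-sequence transversality $\F_2\cap T_{W_1/K^p}=0$ as a tautology of the basis decomposition, letting the correspondence machinery produce $W_2 = \op{Ann}(\F_2)$. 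The shared key observation is the same in both arguments (the elements $y_j = x_j^p$, with $x_j$ a $p$-basis of $K/W_1$, are $p$-independent in $W_1$ and generate $K^p$ over $W_1^p$), but your framing buys two things: you never need to leave $W_1/W_1^p$, so all the separable-closure bookkeeping that occupies the second half of the paper's proof disappears; and the verification that $\F_2$ is a $p$-Lie algebra with the right intersection property becomes immediate because $\F_2$ and $T_{W_1/K^p}$ are literally complementary summands of $T_{W_1/W_1^p}$. The paper's version is more explicit about what $W_2$ actually is as a subfield of $K$, which is pedagogically useful but not logically necessary. Both are correct; yours is the more economical.
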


\begin{proof}
The proof is a construction of a specific $p$-basis, and then using it to define a $2$-foliation extending the given $1$-foliation. 

    Let $\op{tr.deg}(K/k)=N$.

    Let $W_1$ be a $1$-foliation on $K$ of rank $r$.

    There exist elements $t_1,t_2,\ldots,t_r\in K$ such that $W_1(t_1,t_2,\ldots,t_r)=K$, i.e., a minimal set of generators. These elements are $p$-independent, Definition \ref{p-basis definition}, in $K$, because monomials $\prod_{i=1}^r t_i^{a_i}$, where $0\le a_i<p$, are $W_1$-linearly independent. Therefore, they are $K^p$-linearly independent, because $W_1\supset K^p$. 
    Moreover, we can prove that their $p$-powers $t_i^p\in W_1$ for $1\le i\le r$ are $p$-independent in $W_1$. Indeed, by taking $p$-roots, it follows from that the monomials $\prod_{i=1}^r t_i^{p \cdot a_i}$, where $0\le a_i<p$, are $W_1^p$-linearly independent. This means that differentials $d(t_i^p)$ are $W_1$-linearly independent in $\Omega_{W_1/k}$.

    Let $a_1,a_2,\ldots, a_N$ be a $p$-basis for $W_1/k$, Definition \ref{p-basis definition}. This means that differentials $da_1,da_2,\ldots,da_N$ are a $W_1$-linear basis of $\Omega_{W_1/k}$.
    Consequently, the notion of a $p$-basis admits an exchange lemma, i.e., any $p$-independent set can be completed to a $p$-independent set of maximal size by adding some elements from another maximal size independent set. Therefore, there are $N-r$ elements among $a_1,a_2,\ldots, a_N$, say $a_1,a_2,\ldots,a_{N-r}$, such that $a_1,a_2,\ldots,a_{N-r}, t_1^p, t_2^p, \ldots, t_r^p$ is a $p$-basis for $W_1/k$.

    We can prove that $a_1,a_2,\ldots,a_{N-r}, t_1, t_2, \ldots, t_r$ forms a $p$-basis for the extension $K/k$.
    In this situation, being a $p$-basis is equivalent to being a separable transcendental basis, Theorem \ref{p-basis is separable transcedence basis}, so we will prove that this set is such a basis.
    
    First, these elements are algebraically independent, because if not, then there would be a relation $F\in k[x_1,\ldots,x_N]$ between them, but then $F^p$ would give such a relation for $a_1,a_2,\ldots, a_{N-r}, t_1^p, t_2^p, \ldots, t_r^p$ over $k^p=k$. This is a contradiction with the fact that $a_1, \ldots, a_{N-r}, t_1^p, \ldots, t_r^p$ is a $p$-basis for $W_1/k$.
    
    Next, we show that
    \[
    k(a_1, \ldots, a_{N-r}, t_1, \ldots, t_r)^s=K.
    \]
    Indeed, we have a diagram of finite field extensions
    \begin{center}
        \begin{tikzcd}
        &K=W_1(t_1, \ldots, t_r)&\\
        k(a_1, \ldots, a_{N-r}, t_1, \ldots, t_r)^s\ar[ur]&&W_1\ar[ul]\\
        &k(a_1, \ldots, a_{N-r}, t_1^p, \ldots, t_r^p)^s\ar[ur, equal]\ar[ul],&
        \end{tikzcd}
    \end{center}
    wherein the separable closures are taken in $K$.
    Now, we see that the extensions 
    
    $k(a_1, \ldots, a_{N-r}, t_1, \ldots, t_r)^s/k(a_1, \ldots, a_{N-r}, t_1^p, \ldots, t_r^p)^s$ and $K/W_1$ are of order $p^r$. 
    
    So, the extension $K/k(a_1, \ldots, a_{N-r}, t_1, \ldots, t_r)^s$ must be of order $1$. This proves the equality.
    
    Consequently, we conclude that $a_1, \ldots, a_{N-r}, t_1, \ldots, t_r$ form a separating transcendental basis for $K/k$, so it is a $p$-basis for $K/k$.

    Finally, we can define a $2$-foliation extending $W_1$. Indeed, we consider the following diagram of fields (in which $(a_\bullet,t_\bullet)$ stands for $(a_1, \ldots, a_{N-r}, t_1, \ldots, t_r)$, etc):

    \begin{center}
        \begin{tikzcd}
            K=k(a_\bullet,t_\bullet)^s & W_1=k(a_\bullet,t^p_\bullet)^s\ar[l] & W_2\coloneqq k(a_\bullet,t^{p^2}_\bullet)^s\ar[l]\\
            k(a_\bullet,t_\bullet)\ar[u]&k(a_\bullet,t^p_\bullet)\ar[u]\ar[l]&k(a_\bullet,t^{p^2}_\bullet)\ar[u]\ar[l]
        \end{tikzcd}
    \end{center}
    Now, we claim that 
    \[
    K\supset W_1\supset W_2\supset W_2\supset W_2\supset \ldots
    \] 
    is a $2$-foliation on $K$.
    Indeed, we have that $W_1\supset K^{p}$, $W_2\supset K^{p^2}$, and $[K:W_1]=[W_1:W_2]=p^r$. We want to show it is a power tower, i.e., that the equality $W_2\cdot K^p=W_1$ holds.
    It follows from the following calculation:
    \[
    W_1\supset W_2\cdot K^p\supset k(a_\bullet,t^{p^2}_\bullet)^s(a_\bullet,t^p_\bullet)=k(a_\bullet,t^{p^2}_\bullet)^s(t^p_\bullet)=k(a_\bullet^p,t^p_\bullet)^s=W_1
    \]
    where we are allowed to move the separable closure and generators around, because all subfields involved are of finite exponent, this implies that separable closures of these subfields are these subfields.
    This finishes the construction of $W_2$ extending $W_1$.
\end{proof}

\begin{cor}\label{1-foli extends to a infty-foli - corollary}
    Let $K$ be a field of characteristic $p>0$. Let $k=K^{p^\infty}$ be the perfection of $K$. Let $K/k$ be a finitely generated field extension. Let $n>0$ be an integer.

    Every $n$-foliation on $K$ can be extended to a $\infty$-foliation on $K$.
\end{cor}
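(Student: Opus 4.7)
The plan is to prove the corollary by induction on the number of new floors we add, reducing everything to Theorem \ref{always} (which handles the case $n=1$) plus an iterated version of the gluing Lemma \ref{2 and 2 is 3 foliation - lemma}. First I would set up the inductive step: given an $n$-foliation $(K,W_1,\ldots,W_n)$, I want to produce $W_{n+1}\subset W_n$ such that $(K,W_1,\ldots,W_n,W_{n+1})$ is an $(n+1)$-foliation on $K$. Iterating this finitely many times gives an $N$-foliation extending the original for every $N\ge n$, and the sequence of subfields so obtained then defines a $\infty$-foliation $(K,W_1,W_2,\ldots)$, as every finite truncation is a power tower of the correct length and rank.

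To carry out one inductive step, first I would verify that Theorem \ref{always} is applicable to the base field $W_{n-1}$: since $k\subset W_{n-1}\subset K$ with $k$ perfect and $K^{p^\infty}=k$, we have $W_{n-1}^{p^\infty}\subset K^{p^\infty}=k\subset W_{n-1}^{p^\infty}$, so the perfection of $W_{n-1}$ is exactly $k$; moreover $W_{n-1}/k$ is finitely generated because $W_{n-1}\supset K^{p^{n-1}}$, the extension $K^{p^{n-1}}/k$ is isomorphic to $K/k$ via Frobenius, and $W_{n-1}/K^{p^{n-1}}$ is finite. The inclusion $W_n\subset W_{n-1}$ is then a $1$-foliation on $W_{n-1}$ of rank $r\coloneqq\log_p[W_{n-1}:W_n]$, and Theorem \ref{always} produces a $2$-foliation $(W_{n-1},W_n,W_{n+1})$ on $W_{n-1}$ extending it, with $[W_n:W_{n+1}]=p^r$.

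Next I would check that $(K,W_1,\ldots,W_n,W_{n+1})$ really is an $(n+1)$-foliation on $K$. The rank condition is automatic: all the degrees $[W_{i-1}:W_i]$ for $i\le n$ equal $p^r$ by hypothesis, and $[W_n:W_{n+1}]=p^r$ by construction. The only nontrivial point is the power-tower relation, which by Lemma \ref{Power Tower - alternative def} reduces (since $W_1,\ldots,W_n$ are unchanged) to checking $W_n=W_{n+1}\cdot K^{p^n}$. Following the calculation of Lemma \ref{2 and 2 is 3 foliation - lemma} in general index: from the original power tower relation $W_{n-1}=W_n\cdot K^{p^{n-1}}$ I get $W_{n-1}^p=W_n^p\cdot K^{p^n}$, and from the $2$-foliation on $W_{n-1}$ (via Lemma \ref{Power Tower - alternative def}) I get $W_n=W_{n+1}\cdot W_{n-1}^p$; substituting yields
\[
W_n=W_{n+1}\cdot W_n^p\cdot K^{p^n}=W_{n+1}\cdot K^{p^n},
\]
where the last equality uses $W_{n+1}\supset W_n^p$, which holds by Lemma \ref{Subsequent Extensions are of exponent 1 - lemma} applied to the $2$-foliation on $W_{n-1}$.

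The main obstacle is purely bookkeeping: making sure that when we ``glue'' a freshly produced $2$-foliation on $W_{n-1}$ on top of an $n$-foliation on $K$, the power tower relation $W_{n+1}\cdot K^{p^n}=W_n$ still holds. This is the content of the generalized version of Lemma \ref{2 and 2 is 3 foliation - lemma} sketched above; once that is in hand the induction and the passage to the infinite limit are immediate.
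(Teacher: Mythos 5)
Your proposal is correct and takes essentially the same route as the paper: iterate Theorem \ref{always} to extend the top floor $W_n$, viewed as a $1$-foliation on $W_{n-1}$, to a $2$-foliation on $W_{n-1}$, then verify the gluing condition in the style of Lemma \ref{2 and 2 is 3 foliation - lemma} (which the paper cites without spelling out the shifted-index version you write out). Your extra checks — that $W_{n-1}$ has perfection $k$ and is finitely generated over $k$, so Theorem \ref{always} applies, and the explicit degree bookkeeping — are details the paper leaves implicit, not a different argument.
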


\begin{proof}
    Let $W_n$ be a $n$-foliation on $K$. 
    Then $W_n$ is a $1$-foliation on $W_{n-1}=W_n\cdot K^{p^{n-1}}$.
    Let $W_{n+1}$ be an extension of $W_n$ to a $2$-foliation on $W_{n-1}$. It exists by Proposition \ref{always}.

    Then $W_{n+1}$ is a $1$-foliation on $W_{n}$.
    Let $W_{n+2}$ be an extension of $W_{n+1}$ to a $2$-foliation on $W_{n}$.

    Then $W_{n+2}$ is a $1$-foliation on $W_{n+1}$.
    Let $W_{n+3}$ be an extension of $W_{n+2}$ to a $2$-foliation on $W_{n+1}$.

    And so on and so on, we obtain a tower of subfields of $K$: $(K, W_1, W_2,\ldots)$. This tower is a $\infty$-foliation on $K$ extending $W_n$ by Lemma \ref{2 and 2 is 3 foliation - lemma}.
\end{proof}

\begin{question}
    Is there an abstract $p$-Lie algebra cohomology? A purely inseparable cohomology? Assuming yes, and that we could use that cohomology to have a tangent obstruction theory for the problem of extending power towers, then we just proved that some elements in it are zero.

An \emph{abstract $p$-Lie algebra} can be defined, see e.g. \cite{Hochschild-p-lie-cohomology}.

Let $W_1$ be a subfield of $K$ of exponent one; it corresponds to a $p$-Lie algebra $\F_1=T_{K/W_1}$. Then, we have a short exact sequence of abstract $p$-Lie algebras on $W_1$:
\[
0 \to T_{W_1/K^p} \to T_{W_1/W_1^p} \xrightarrow{d(W_1/K^p)} T_{K^P/W_1^p}\otimes W_1 \to 0.
\]
This sequence admits a right splitting of abstract $p$-Lie algebras (there is an $\F_2$) if and only if $W_1$ can be extended to a $2$-foliation on $K$. 
We can conclude that some \emph{Ext groups of $p$-Lie algebras}, whatever they are, probably will control the extension problem for $n$-foliations for fields, for varieties, for whatever. However, it could be weirder than that. I do not know.
\end{question}

\subsection{Extensions via Liftings}
We prove that a lifting of a $1$-foliation to a foliation induces an extension to an Ekedahl $\infty$-foliation for this $1$-foliation, and that this $\infty$-foliation is regular on a big open subset.

We denote ($p$-typical) Witt vectors over $k$ by $W(k)$. For the basic theory of these rings, one can consult the following paper \cite{Hazewinkel_witt_vectors_1}. (If one does not know Witt vectors, one can think they are just $p$-adic integers.) And, a flat lifting of a variety $X$ over $k$ to $W(k)$ is an integral saturated scheme $\overline{X}$ flat over $W(k)$ whose pullback $\overline{X}\otimes_{\op{Spec}(W(K))} \op{Spec}(k)$, i.e., its reduction modulo $p$, is equal to $X$.

\begin{defin}
    Let $X/k$ be a normal variety over a perfect field $k$ of characteristic $p>0$ satisfying $K(X)^{p^\infty}=k$. Let $\F$ be a $p$-Lie algebra corresponding to a $1$-foliation $f_1: X\to Y_1$ on $X$.

    Then, a \emph{lifting of the $1$-foliation} $f_1$ is a pair $(\overline{X},\overline{\F})$ such that
    \begin{itemize}
        \item $\overline{X}$ is a flat lifting of $X$ to  $W(k)$,
        \item $\overline{\F}\subset T_{\overline{X}/W(k)}$ is a Lie subsheaf, i.e., a foliation on $\overline{X}/W(k)$, whose reduction modulo $p$ is equal $\F \subset T_{X/k}$.
    \end{itemize} 
\end{defin}

\begin{defin}
    Let $X/k$ be a normal variety over a perfect field $k$ of characteristic $p>0$ satisfying $K(X)^{p^\infty}=k$. Let $\F$ be a $p$-Lie algebra corresponding to a $1$-foliation $f_1: X\to Y_1$ on $X$. Let $(\overline{X},\overline{\F})$ be a lifting of $f_1$.

    Then, we define a \emph{saturation of a subalgebra of differential operators on $\overline{X}/W(k)$ generated by $\overline{\F}$} to be the following sheaf:
    \[
    \op{Sat}\left<\overline{\F}\right>(U)\coloneqq  \left(\left<\overline{\F}\right>\otimes  K(\overline{X})\right)\cap \op{Diff}_{W(k)}(\overline{X})(U),
    \]
    where $U\subset \overline{X}$ is an open subset, $\left<\overline{\F}\right>\subset \op{Diff}_{W(k)}(\overline{X})$ is the smallest subalgebra containing $\overline{\F}$, and $K(\overline{X})$ is the field of rational functions on $\overline{X}$ (It includes $\frac{1}{p}$.).
\end{defin}

\begin{thm}\label{lifting => extension - thm}
    Let $X/k$ be a normal variety over a perfect field $k$ of characteristic $p>0$ satisfying $K(X)^{p^\infty}=k$. Let $\F$ be a $p$-Lie algebra corresponding to a $1$-foliation $f_1: X\to Y_1$ on $X$. Let $(\overline{X},\overline{\F})$ be a lifting of $f_1$.

    Then, the lifting of the $1$-foliation induces a unique extension of this $1$-foliation to an Ekedahl $\infty$-foliation that is compatible with the lifting.

    Explicitly, the extension is given by the following saturated subalgebra:
    \[
    \mathcal{D}(\overline{X},\overline{\F})\coloneqq \op{Sat}\left<\overline{\F}\right> \otimes k \subset \op{Diff}_{W(k)}(\overline{X})\otimes k = \op{Diff}_k(X).
    \]

    Moreover, this $\infty$-foliation is regular on a big open subset.
\end{thm}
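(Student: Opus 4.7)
The plan is to analyze the explicit subalgebra $\mathcal{D}(\overline{X},\overline{\F})$ locally via a formal Frobenius-type straightening of $\overline{\F}$, and then transport the conclusion to $X$ via the Galois-type correspondence of Theorem~\ref{MainTheorem - var - theorem}. First I would restrict attention to a big open subset $U \subset X$ on which $X$ is regular, $\F$ is a subbundle of $T_{X/k}$, and the preimage $\overline{U}\subset\overline{X}$ is $W(k)$-smooth with $\overline{\F}|_{\overline{U}}$ a subbundle of $T_{\overline{U}/W(k)}$. Each of the ``failure loci'' has codimension $\ge 2$ in $X$ by the standard singular-support argument (cf.\ the proof of Proposition~\ref{codim 2 regular power tower finite length- prop} and \cite[Proposition~22]{Friedman-bundles}), so such $U$ exists.

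At each closed point $x \in U$, Cohen's structure theorem identifies $\widehat{\cO_{\overline{X},x}}$ with $W(k(x))[[z_1,\ldots,z_d]]$. On this formal disc the regular involutive distribution $\overline{\F}$ can be straightened by an iterative, Hensel-style Frobenius construction so that $\overline{\F}\otimes \widehat{\cO_{\overline{X},x}}$ is freely spanned by $\partial_1,\ldots,\partial_r$, where $\partial_i=\partial/\partial z_i$ and $r=\op{rank}(\overline{\F})$. Granting this, it is straightforward to compute that
\[
\op{Sat}\langle\overline{\F}\rangle\otimes\widehat{\cO_{\overline{X},x}}
\;=\;\bigoplus_{n_1,\ldots,n_r\ge 0}\widehat{\cO_{\overline{X},x}}\cdot\prod_{i=1}^{r}\tfrac{1}{n_i!}\partial_i^{n_i},
\]
because each $\tfrac{1}{n_i!}\partial_i^{n_i}$ lies in $\op{Diff}_{W(k)}(\widehat{\cO_{\overline{X},x}})$ (it sends $z_i^m\mapsto \binom{m}{n_i}z_i^{m-n_i}$, with integer coefficients), and inverting $p$ this exhausts $\langle\overline{\F}\rangle\otimes K(\overline{X})$. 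Reducing mod $p$ yields $\mathcal{D}(\overline{X},\overline{\F})$ locally equal to $\op{Diff}_{k(x)[[z_{r+1},\ldots,z_d]]}\bigl(k(x)[[z_1,\ldots,z_d]]\bigr)$, which by Proposition~\ref{gr-saturared + regular = pd envelope - lemma} is precisely the saturated subalgebra of a gr-saturated regular $\infty$-foliation of rank $r$ whose first term is the subbundle spanned by $\partial_1,\ldots,\partial_r$, i.e.\ $\F|_x$.

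With this local model at every point of $U$, I would conclude that $\mathcal{D}(\overline{X},\overline{\F})|_U$ is a saturated $\cO_U$-subalgebra of $\op{Diff}_k(U)$; by Theorem~\ref{MainTheorem - var - theorem} it corresponds to a power tower $Y_\bullet|_U$ that is (i) an $\infty$-foliation of rank $r$, (ii) regular on $U$, (iii) gr-saturated on $U$, and (iv) has $\F_1=\F|_U$ via unpacking (Corollary~\ref{unpacking - corollary - regular power towers}). The saturated-subsheaf bijection of Lemma~\ref{Saturated are in bij with gen subspaces - lemma} then extends $\mathcal{D}(\overline{X},\overline{\F})|_U$ uniquely to a saturated subalgebra on $X$, producing the Ekedahl $\infty$-foliation on $X$ regular on the big open subset $U$. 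Compatibility with the lifting and uniqueness of this extension are both built into the explicit formula $\mathcal{D}(\overline{X},\overline{\F})=\op{Sat}\langle\overline{\F}\rangle\otimes k$.

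The main obstacle is the formal Frobenius straightening of a regular involutive distribution on a $W(k)$-smooth formal disc. In characteristic zero this is classical, but over $W(k)$ one must guarantee that the straightening coordinates $z_i$ and the change-of-basis matrix for $\overline{\F}$ remain integral rather than merely rational. I expect this to follow from an iterative Hensel-style argument (at each step, solve the linearized straightening equation mod $p^n$ and bootstrap), using crucially that $\overline{\F}$ is a subbundle — this prevents $p$-denominators from appearing — together with flatness of $\overline{X}$ over $W(k)$. This is also the step where the hypothesis that $(\overline{X},\overline{\F})$ is a genuine lifting (rather than, say, a purely generic-fiber foliation) enters in an essential way.
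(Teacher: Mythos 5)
Your proposal hinges on a formal Frobenius straightening of the distribution $\overline{\F}$ over $W(k)$ — you explicitly identify this as the main obstacle, and it is indeed the load-bearing step. Unfortunately, this step is false in general, even under all the stated hypotheses: $\overline{\F}$ a subbundle, $\overline{X}$ flat over $W(k)$, $\overline{\F}$ closed under Lie brackets, and $\F = \overline{\F} \otimes k$ a $p$-Lie algebra. Consider $\overline{X} = \mathbb{A}^2_{W(k)} = \op{Spec}(W(k)[z_1,z_2])$ with $\overline{\F}$ spanned by $v = \partial_1 + p z_1^{p^2-1}\partial_2$. This is a rank-$1$ subbundle (hence automatically involutive), and its reduction mod $p$ is $\partial_1$, which certainly generates a $p$-Lie algebra. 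But the formal first integral of $v$ is $u_2 = z_2 - \tfrac{1}{p}z_1^{p^2}$, because $\partial_1 u_2 = -p z_1^{p^2-1}$ forces the integral of $z_1^{p^2-1}$ and hence a denominator of $p^2$. So the straightening coordinates exist over $W(k)[1/p]$ but are not integral, and no Hensel iteration can fix this: the denominator appears already at the zeroth approximation. The hypothesis that $\F$ is a $p$-Lie algebra mod $p$ does not control $p$-adic valuations of the flow at higher orders.

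The paper's proof avoids straightening entirely. Instead it constructs, purely algebraically, divided-power lifts $\overline{D_{i,(j)}}$ inside $\op{Sat}\langle\overline{\F}\rangle$ by an induction of the form $\overline{D_{i,(j+1)}} = a\bigl(\overline{D_{i,(j)}}^{\circ p} - \overline{E_i}\bigr)/p!$, where $\overline{E_i}$ is an integral correction term chosen from the submodule $\mathbb{M}_j$ already built, and $a$ is a unit. Flatness of $\overline{A}$ over $W(k)$ guarantees that the difference $\overline{D_{i,(j)}}^{\circ p} - \overline{E_i}$ is $p$-divisible (because its reduction mod $p$ vanishes), so no formal integration is ever invoked. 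The payoff is the identity $\op{gr}\op{Sat}\langle\overline{\F}\rangle = S(\overline{\F}^*)^{*gr}$, from which the conclusion follows via Proposition~\ref{gr-saturared + regular = pd envelope - lemma} after reduction mod $p$. Your local model for $\op{Sat}\langle\overline{\F}\rangle$ in straightened coordinates would give the same graded algebra \emph{if} straightening held, but since it does not, you need the leading-form bookkeeping to replace it. If you want to keep the formal-local flavor, the correct statement at each closed point is the paper's formal Frobenius theorem (Theorem~\ref{formal Frobenius Theorem for power towers- thm}) applied mod $p$ \emph{after} the $\infty$-foliation is constructed, not a $W(k)$-integral straightening of $\overline{\F}$ before.
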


\begin{proof}
    The proof is an explicit construction of a set of liftings of saturated distinguished generators $G$ for the extension, Proposition \ref{saturated dist generators - exist and do the job - prop}, on a big open subset of $X$, on which the given $1$-foliation is regular, and then using these liftings to show that
    \[
    \op{gr}\op{Sat}\left<\overline{\F}\right>  \cong {S}(\overline{\F}^*)^{*gr}.
    \]
    From this equality, the whole theorem follows. Indeed, this equality modulo $p$, i.e., after tensoring with $k$, gives
    \[
    \op{gr}\left(\op{Sat}\left<\overline{\F}\right>\otimes k\right)  \cong {S}({\F}^*)^{*gr}.
    \]
    And, since a reduction modulo $p$ of a subalgebra is a subalgebra, thus by Proposition \ref{gr-saturared + regular = pd envelope - lemma} we get that the subalgebra $\op{Sat}\left<\overline{\F}\right>\otimes k$ is gr-saturated (in particular, it is saturated) and regular. Hence, we will be done. 

    The main trick is an induction. We take already constructed $G^i_j$, we take its lifting  $\overline{G^i_j}$, a lifting $C$ of ${G^i_j}^{\circ p}$, and we put $\overline{G^{i+1}_j}=\frac{\overline{G^i_j}^{\circ p}-C}{p!}$ up to a scalar. The technical part is to get control over the leading forms of these liftings.
    
    Here are the details: 

    Without loss of generality, we can assume that the $1$-foliation $f_1$ is regular, Definition \ref{regular power tower - def}, because there exists a big open subset $U\subset X$ such that $f_1 \cap U$ is regular, Proposition \ref{codim 2 regular power tower finite length- prop}, and being Ekedahl means being gr-saturated on a big open subset, Definition \ref{Ekedahl - def}, so it is enough to prove that a lifting of a regular $1$-foliation induces a gr-saturated regular $\infty$-foliation, which is characterized by Proposition \ref{gr-saturared + regular = pd envelope - lemma}.

    Let $f_1$ be a regular $1$-foliation. This implies that $X$ is regular and $\F\subset T_{X/k}$ is a subbundle, Proposition \ref{regular power tower - meaning - prop}. Moreover, by a simple deformation theory, it implies that $\overline{X}$ is regular over $W(k)$ and that $\overline{\F}\subset T_{\overline{X}/W(k)}$ is a subbundle as well.

    Without loss of generality, we can assume that $X$, $Y_1$, and $\overline{X}$ are affine, because being gr-saturated and regular are local properties. We put $X=\op{Spec}(A)$, $ Y_1=\op{Spec}(B_1)$, and $\overline{X}=\op{Spec}(\overline{A})$. (Because a flat lifting of an affine scheme is affine.) Moreover, we can assume that both subbundles $\F\subset T_{X/k}$ and $\overline{\F}\subset T_{\overline{X}/W(k)}$ are actually free submodules.

    Let $D_1,\ldots, D_r$ be a basis of $\F$.

    We construct the following data:

    \begin{itemize}
        \item For $j\ge 1$, operators $\overline{D_{i,(j)}}\in \op{Sat}\left<\overline{\F}\right>$, where $i=1,2,\ldots,r$, such that:
        \begin{itemize}
            \item For $j\ge 1$ and $i=1,2,\ldots,r$, we have $\left[\overline{D_{i,(j)}}\right]=\gamma_{p^{j-1}}(\overline{D_{i,(1)}})$.
        \item For $i=1,2,\ldots,r$, we have $\overline{D_{i,(1)}}\in \overline{\F}$ and $\left(\overline{D_{i,(1)}} \ \op{mod} \ p\right)= D_i$.
        \end{itemize}
    \end{itemize}

    We begin the construction.

    Let $j=1$. We put $\overline{D_{i,(1)}}\coloneqq \overline{D_{i}}$, where $\overline{D_i}\in \overline{\F}$ is a lifting of $D_i$, i.e., an operator that modulo $p$ is equal $D_i$.

    Let $j=2$. Let $\overline{D_i^p}\in \overline{\F}$ be a lifting of the $p$-power $D_i^p$. We have that $\overline{D_i}^{\circ p}$ mod $p$ is equal $D_i^p$, however its order is $p$, not $1$. Consequently, the difference $\overline{D_i}^{\circ p}-\overline{D_i^p}$ is divisible by $p$. Therefore, the following operator is well defined:
    \[
    \overline{D_{i,(2)}}\coloneqq \frac{\overline{D_i}^{\circ p}-\overline{D_i^p}}{p!} \in \op{Sat}\left<\overline{\F}\right>.
    \]
    By explicit computation in any coordinates, we can justify the following equality for the leading form:
    \[
    [\overline{D_{i,(2)}}]=\gamma_p(\overline{D_i}).
    \]

    Let $n\ge 2$. Let us assume that the construction is already done for $j\le n$. We construct the operators for $j=n+1$.

    Let $\mathbb{M}_n\subset \op{Sat}\left<\overline{\F}\right>$ be the $\overline{A}$-submodule spanned by the operators 
    \[
    \prod_{i=1,\ldots,r;\ j=1,\ldots,n} \overline{D_{i,(j)}}^{\circ a_{i,j}},
    \]
    where $0\le a_{i,j}\le p-1$. (The order of compositions may matter, and we fix one order for each choice of indices.) These operators are a $\overline{A}$-basis of this module, because their leading forms are independent. Moreover, this module is gr-saturated, i.e., $\op{gr}\mathbb{M}_n \subset \op{gr}\op{Diff}_{W(k)}(\overline{X})$ is saturated, because the leading forms $[\overline{D_{i,(j)}}]$ are not divisible by any non-invertible element of $\overline{A}$.

    The reduction modulo $p$ of the module $\mathbb{M}_n$ is a saturated subalgebra on $X$. We denote it by $\mathcal{D}_n \coloneqq  \mathbb{M}_n\otimes k$. Indeed, we have that
    \[
    \op{gr}\op{Sat}\left<\overline{\F}\right> \subset {S}(\overline{\F}^*)^{*gr}
    \]
    by the universal property of ${S}(\overline{\F}^*)^{*gr}$ as in Proposition \ref{pd envelope - prop}. So, we have that the gradation of the subalgebra generated by all of $\overline{D_{i,(j)}}$ for $j\le n$ is inside ${S}(\overline{\F}^*)^{*gr}$ too, i.e., $\op{gr}\left< \overline{D_{i,(j)}}\right>_{i=1,\ldots,r; 1\le j\le n}\subset {S}(\overline{\F}^*)^{*gr}$, and it is saturated.
    Consequently, the reduction modulo $p$ of $\left< \overline{D_{i,(j)}}\right>_{i=1,\ldots,r; 1\le j\le n}$ is a gr-saturated subalgebra whose gradation is inside ${S}({\F}^*)^{*gr}$. Actually, because the reduction of the operator $\overline{D_{i,(j)}}$ is of order $p^{j-1}$, we get that
    \[
    \op{gr}\left(\left< \overline{D_{i,(j)}}\right>_{i=1,\ldots,r; 1\le j\le n}\otimes k\right) = {S}({\F}^*)^{*gr} \cap \op{Diff}_{X^{(n)}}(X).
    \]
    Therefore, by Proposition \ref{gr-saturared + regular = pd envelope - lemma}, we get that $\left< \overline{D_{i,(j)}}\right>_{i=1,\ldots,r; 1\le j\le n}\otimes k$ corresponds to a gr-saturated regular $n$-foliation. The first order is $\F$, so it extends $\F$. Moreover, the set of the reductions $\{\overline{D_{i,(j)}}\otimes k\}$ is a set of saturated distinguished generators of this subalgebra, Proposition \ref{saturated dist generators - exist and do the job - prop}. So, the monomials $\prod_{i=1,\ldots,r;\ j=1,\ldots,n} (\overline{D_{i,(j)}}\otimes k)^{\circ a_{i,j}}$,  where $0\le a_{i,j}\le p-1$, are a basis of this algebra, Proposition \ref{saturated dist generators - exist and do the job - prop}. Finally, this shows that this is an algebra.

    Let $i\in \{1,2,\ldots,r\}$. Let $E_i$ be $\overline{D_{i,(n)}}^{\circ p}$ modulo $p$. It is equal $\left( \overline{D_{i,(n)}} \ \op{mod} \ p\right)^{\circ p}$, so it belongs to $\mathcal{D}_n$ since it is an algebra. Let $\overline{E_i}\in \mathbb{M}_n$ be a lifting of $E_i$. (It is important that the lifting is taken from this module! Otherwise, we could enter a computational nightmare.)
    Consequently, the following operator is well defined:
    \[
    \overline{D_{i,(n+1)}}'\coloneqq a\cdot \frac{\overline{D_{i,(n)}}^{\circ p}-\overline{E_i}}{p!} \in \op{Sat}\left<\overline{\F}\right>,
    \]
    where $a=\frac{p!\left(p^n!\right)^p}{p^{n+1}!}$ is an invertible modulo $p$ integer. 
    We put this extra $a$ into the formula to in a moment get $\gamma_{p^n}(D_i)$ instead of $\gamma_p(\gamma_{p^{n-1}}(D_i))$. These two values are the same up to multiplication by this integer. Still, we have to modify this operator a bit to achieve this leading form, because this operator may have too high order.

    If the leading form of $\overline{D_{i,(n+1)}}'$ is of order $> p^n$, then it is contributed solely by $\frac{\overline{E_i}}{p!}$, i.e., $[\overline{D_{i,(n+1)}}' ]=a[\frac{\overline{E_i}}{p!}]=\frac{1}{p!}[\overline{E_i}]$. However, since $\op{gr}\mathbb{M}_n$ is saturated, it means that the form $\frac{1}{p!}[\overline{E_i}]$ is well defined over $W(k)$ if and only if the form $[\overline{E_i}]$ is divisible by $p$.
    In our case, it is well defined, so there exists an operator $L_i\in \mathbb{M}_n$ such that $p![L_i]=a[\overline{E_i}]$. We make a correction:
    \[
    \overline{D_{i,(n+1)}}''\coloneqq \overline{D_{i,(n+1)}}' - L_i.
    \]
    The new operator has a lower order. If it is still $> p^n$, then we repeat the correction. So, we can assume that the order of $\overline{D_{i,(n+1)}}''$ is $\le p^n$. Actually, in this situation, it is equal $p^n$, because by the construction it has a nonzero coefficient next to an operator $\frac{1}{p^n!}\frac{\partial ^{p^n}}{\partial x_i^{p^n}}$, where $x_1,\ldots, x_d$ is any set of coordinates for $\overline{A}/W(k)$. It is contributed from $\frac{\overline{D_{i,(n)}}^{\circ p}}{p!} $. (An explicit computation.)
    
    Next, we observe that there exists a canonical decomposition, Lemma \ref{divided power polynomial is PD lemma}:
    \[
    {S}(\overline{\F}^*)^{*gr} \cap \op{gr}^{p^n}\op{Diff}_{W(k)}(\overline{A}) = \left(\gamma_{p^n}(\overline{\F}) \otimes \overline{A}\right) \oplus \text{REST},
    \]
    where the submodule $\text{REST}$ is given by
    \[
    \text{REST}\coloneqq {S}(\overline{\F}^*)^{*gr} \cap \op{gr}\mathbb{M}_n \cap \op{gr}^{p^n}\op{Diff}_{W(k)}(\overline{A})\subset \op{gr}\mathbb{M}_n,
    \]
    so this submodule depends only on $\overline{\F}$, because $\op{gr}\mathbb{M}_n$ is determined by the leading forms that are compositions of divided powers of elements from $\overline{\F}$. With respect to this decomposition, a direct computation reveals that
    \[
    \left[\overline{D_{i,(n+1)}}''\right]=\left(\gamma_{p^n}(\overline{{D_i}}), [G_i] \right),
    \]
    where $G_i\in \mathbb{M}_n$.
    Now, we can make the last correction
    \[
    \overline{D_{i,(n+1)}}\coloneqq \overline{D_{i,(n+1)}}'' -{G_i}.
    \]

    This finishes the construction of the liftings of saturated distinguished generators.

    Finally, we can prove the theorem. Indeed, by the universal property of symmetric algebras, we have a natural factorization (compare it with Proposition \ref{pd envelope - prop}):
    \[
    \op{gr}\op{Sat}\left<\overline{\F}\right>  \subset {S}(\overline{\F}^*)^{*gr} \subset \op{gr} \op{Diff}_{W(k)}(\overline{X}).
    \]
    Also, the leading forms of operators $\overline{D_{i,(j)}}$ generate ${S}(\overline{\F}^*)^{*gr}$, therefore we have $\op{gr}\op{Sat}\left<\overline{\F}\right>  = {S}(\overline{\F}^*)^{*gr} $.

    Let $\mathcal{D}\coloneqq \op{Sat}\left<\overline{\F}\right> \otimes k$. It is a subalgebra of $\op{Diff}_k(X)$, because it is a reduction modulo $p$ of a subalgebra. Moreover, taking a gradation commutes with reduction modulo $p$, so
    we have
    \[
    \op{gr}\mathcal{D}=\op{gr}\op{Sat}\left<\overline{\F}\right> \otimes k
    ={S}(\overline{\F}^*)^{*gr}\otimes k
    ={S}({\F}^*)^{*gr}.
    \]
    Therefore, by Proposition \ref{gr-saturared + regular = pd envelope - lemma}, we get that $\mathcal{D}$ corresponds to a gr-saturated regular $\infty$-foliation. 

    This $\infty$-foliation extends $\F$. This finishes the proof.
\end{proof}

\begin{remark}
    In Theorem \ref{lifting => extension - thm}, we only used that the algebras are flat over $W(k)$, so we could conclude divisibility by $p$ in some calculations.
    
    Consequently,  a lifting of a $1$-foliation to a foliation modulo $p^2$ (i.e., a saturated subsheaf of $T_{\overline{X}/W_2(k)}$ closed under Lie brackets), i.e., a lifting to Witt vectors of length $2$ denoted by $W_2(k)$, is enough to give a $\infty$-foliation that extends it. (Where the saturation must be defined differently using a substitute for the field of fractions.) Nevertheless, psychologically, it was easier to prove it for $W(k)$.
    
    This observation is analogous to the famous Deligne--Illusie theorem that a lifting modulo $p^2$ of a variety induces a degeneration of an important spectral sequence on it, see their paper \cite{deligne-illusie-p^2}.
\end{remark}

\begin{remark}
    In Theorem \ref{lifting => extension - thm}, we have to take the subalgebra $\op{Sat}\left<\overline{\F}\right> $, because $\left<\overline{\F}\right>$ is too small. Indeed, we can observe the essential behavior behind this choice on the simplest example: the vector field $\frac{\partial}{\partial x}$ on $\mathbb{A}^1_k=\op{Spec}(k[x])$, which spans the $p$-Lie algebra $\F\coloneqq T_{\mathbb{A}^1_k/k}\subset T_{\mathbb{A}^1_k/k}$. This $p$-Lie algebra admits a lifting to $\overline{\F}\coloneqq T_{\mathbb{A}^1_{W(k)}/{W(k)}}\subset T_{\mathbb{A}^1_{W(k)}/{W(k)}}$ that is also spanned by $\frac{\partial}{\partial x}$.

    Now, the above algebras for this example are given by:
    \begin{align*}
        \left<\overline{\F}\right> &= \bigoplus_{n\ge 0} \frac{\partial}{\partial x}^{\circ n} W(k)[x],\\
        \op{Sat}\left<\overline{\F}\right> &= \bigoplus_{n\ge 0} \frac{1}{n!}\frac{\partial^n}{\partial x ^n} W(k)[x].
    \end{align*}
    The main, and essentially only, difference is that we have 
    \begin{center}
      $\frac{1}{p^n!}\frac{\partial^{p^n}}{\partial x ^{p^n}}\in \op{Sat}\left<\overline{\F}\right>$,
    but $\frac{1}{p^n!}\frac{\partial^{p^n}}{\partial x ^{p^n}}\not\in \left<\overline{\F}\right>$, for $n\ge 1$.
    \end{center}
    Consequently, in order to get the right leading forms, we must saturate our subalgebras. However, the reductions modulo $p$ of both of these algebras are saturated anyway.
\end{remark}

\newpage
\section{Pullbacks of Canonical Divisors}

\subsection{Formula for Pullbacks of Canonical Divisors}

\begin{thm}[Canonical Divisor Formula]\label{Canonical Divisor Formula - theorem}
    Let $f: X\to Y\to X^{(n)}$ be a purely inseparable morphism of exponent $n$ between normal varieties over a perfect field $k$ of characteristic $p>0$ such that $k=K(X)^{p^\infty}$.
    Let $\mathcal{F}_\bullet$ be the Jacobson sequence corresponding to the power tower of $f$ on $X$. Let $K_X, K_Y$ be canonical divisors of $X, Y$ respectively.
    
    Then, there exist divisors $D_2,D_3,\ldots, D_n$ on $X$ such that
    \[
    f^* K_Y -K_X = (p^n - 1) (-c_1(\mathcal{F}_1)) + \sum_{i=2}^n (p^{n+1-i} - 1)D_i,
    \]
    where $c_1$ is the first Chern class.
    Explicitly, these divisors are given by, for $i\ge 2$, 
    \[
    D_i\coloneqq f_{i-1}^*c_1\left(\frac{T_{Y_{i-1}/k}}{\F_i+\G_{i-1}}\right),
    \]
    where $\G_{i}\coloneqq T_{Y_i/Y_{i-1}^{(1)}}$.
\end{thm}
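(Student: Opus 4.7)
The plan is to proceed by induction on the exponent $n$. The base case $n = 1$ is precisely Ekedahl's formula \cite[Corollary 3.4]{EkedahlFoliation1987}: here $f = f_1$ is a $1$-foliation with $p$-Lie algebra $\F_1$, and the sum over $i \ge 2$ is empty. For the inductive step, factor $f$ as $X \xrightarrow{f_{n-1}} Y_{n-1} \xrightarrow{f'_n} Y_n = Y$, where $f_{n-1}$ has exponent $n-1$ and $f'_n$ has exponent one with $p$-Lie algebra $\F_n$. The inductive hypothesis applied to $f_{n-1}$, combined with Ekedahl's formula applied to $f'_n$ and then pulled back by $f_{n-1}$, gives
\[
f^* K_Y - K_X \;=\; (p^{n-1}-1)(-c_1(\F_1)) + \sum_{i=2}^{n-1}(p^{n-i}-1)\, D_i \;-\; (p-1)\, f_{n-1}^*\, c_1(\F_n).
\]

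Direct comparison with the target formula reduces the inductive step to the divisor identity on $X$, which I call $(\ast)$:
\[
p^{n-1}\, c_1(\F_1) \;=\; \sum_{i=2}^{n-1} p^{n-i}\, D_i \;+\; f_{n-1}^*\, c_1(\F_n) \;+\; D_n.
\]
Since both sides are Weil divisor classes on the normal variety $X$, it is enough to verify $(\ast)$ on any big open subset. By Proposition \ref{codim 2 regular power tower finite length- prop}, choose such an open subset on which $Y_\bullet$ is a regular power tower; there, Proposition \ref{regular power tower - meaning - prop} supplies two short exact sequences of vector bundles on $Y_{n-1}$. The first,
\[
0 \to \F_n \oplus \G_{n-1} \to T_{Y_{n-1}/k} \to T_{Y_{n-1}/k}/(\F_n + \G_{n-1}) \to 0,
\]
uses $\F_n \cap \G_{n-1} = 0$: both are saturated in $T_{Y_{n-1}/k}$ and meet trivially at the generic point by Definition \ref{Jacobson sequence - variety - definition}, so their saturated intersection vanishes. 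The second,
\[
0 \to \G_{n-1} \to T_{Y_{n-1}/k} \to g_{n-1}^*\, \F_{n-1}^{(1)} \to 0,
\]
is the regular $1$-foliation sequence for $g_{n-1}: Y_{n-1} \to Y_{n-2}^{(1)}$, whose relative tangent is $\G_{n-1}$ and whose quotient $T_{Y_{n-2}^{(1)}/Y_{n-1}^{(1)}}$ equals $\F_{n-1}^{(1)}$.

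Taking first Chern classes in both sequences, equating $c_1(T_{Y_{n-1}/k})$, pulling back by $f_{n-1}$, and using the factorization $g_{n-1} \circ f_{n-1} = F_{Y_{n-2}/k} \circ f_{n-2}$ together with $F_{Y_{n-2}/k}^*\, c_1(\F_{n-1}^{(1)}) = p\, c_1(\F_{n-1})$ (under the canonical identification of Frobenius twists over the perfect base $k$), I obtain the recursive identity $(\ast\ast)$:
\[
D_n \;=\; p\, f_{n-2}^*\, c_1(\F_{n-1}) \;-\; f_{n-1}^*\, c_1(\F_n).
\]
Substituting $(\ast\ast)$ into the right-hand side of $(\ast)$ cancels $f_{n-1}^*\, c_1(\F_n)$ and replaces it with $p\, f_{n-2}^*\, c_1(\F_{n-1})$; combining this with the term $p \cdot D_{n-1}$ already present and reapplying $(\ast\ast)$ at index $n-1$ collapses them to $p^2\, f_{n-3}^*\, c_1(\F_{n-2})$. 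Iterating this telescoping down to the $n = 2$ instance $D_2 = p\, c_1(\F_1) - f_1^*\, c_1(\F_2)$ reduces the right-hand side of $(\ast)$ to $p^{n-1}\, c_1(\F_1)$, which completes the induction.

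The main obstacle is the bookkeeping required to make the two exact sequences on $Y_{n-1}$ rigorous when $Y_\bullet$ fails to be regular, since then $\F_n$ and $\G_{n-1}$ are only torsion-free coherent sheaves, not subbundles; this is handled cleanly by Proposition \ref{codim 2 regular power tower finite length- prop} together with the principle that Weil divisor class identities on normal varieties are determined by their restriction to a big open subset. A secondary subtlety is tracking Frobenius twists and the identification of canonical divisors between $Y_i$ and $Y_i^{(1)}$ over the perfect base $k$, which is precisely what makes $F_{Y_{n-2}/k}^*\, c_1(\F_{n-1}^{(1)}) = p\, c_1(\F_{n-1})$ the correct Frobenius pullback formula feeding into $(\ast\ast)$.
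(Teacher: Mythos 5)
Your proof is correct and follows essentially the same approach as the paper: the key identity $(\ast\ast)$, which you derive from the two short exact sequences on $Y_{n-1}$ together with $F_{Y_{n-2}/k}^*c_1(\F_{n-1}^{(1)}) = p\,c_1(\F_{n-1})$, is precisely the paper's Step~3 relation pulled back to $X$, and the paper's Step~4 iterates it exactly as your telescoping does. The only organizational difference is that you run a clean induction on the exponent $n$, citing Ekedahl's formula for the base case, whereas the paper proves the exponent-one case from scratch (its Step~1) and first establishes the accumulated sum $f^*K_Y - K_X = (1-p)\sum_{i=1}^{n} f_{i-1}^*c_1(\F_i)$ (its Step~2) before solving the recursion; both routes rely on the same exact sequences, the same reduction to a regular power tower on a big open subset via Proposition~\ref{codim 2 regular power tower finite length- prop}, and the same Frobenius bookkeeping.
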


\begin{proof}
    Without loss of generality, we can assume that the power tower $Y_\bullet$ of $f$ on $X$ is regular. Indeed, by Proposition \ref{codim 2 regular power tower finite length- prop}, there exists a big open subset $U\subset X$ such that the power tower $Y_\bullet \cap U$ is regular. And, any identity between divisors on a variety is true if it is true on a big open subset by the excision property \cite[Chapter II, Proposition 6.5 (b)]{Hartshorne}.

    Let the power tower $Y_\bullet$ be regular. We recall that we put $Y_0=X$.

    We prove the formula in \textbf{four steps}.

   \textbf{Step 1}, we prove that the formula holds for $1$-foliations. Indeed, let $f: X\to Y$ be a $1$-foliation.
    From Proposition \ref{regular power tower - meaning - prop}, we conclude that we have the following short exact sequences of vector bundles on $X$:
    \begin{align*}
    0\to \F_1 \to T_{X/k} &\to f_1^* \G_1 \to 0,\\
    0 \to f_1^* \G_1 \to f_1^*T_{Y_1/k} &\to f_1^*\left(\frac{T_{Y_1/k}}{\G_1}\right) \to 0,
    \end{align*}
    where $\G_{1}\coloneqq T_{Y_1/Y_{0}^{(1)}}=T_{Y_1/X^{(1)}}$.
    So, from the linearity of the first Chern class, we get
    \begin{align*}
    c_1(\F_1) + c_1(f_1^* \G_1)&= -K_X,\\
    c_1(f_1^* \G_1) +c_1(f_1^*\left(\frac{T_{Y_1/k}}{\G_1}\right)) &= -f_1^* K_{Y_1}.
    \end{align*}
    And, from this, we get
    \[
    f_1^* K_{Y_1} -K_X = c_1(\F_1) - c_1(f_1^*\left(\frac{T_{Y_1/k}}{\G_1}\right)).
    \]
    Next, we can observe that
    \[
    f_1^*\left(\frac{T_{Y_1/k}}{\G_1}\right) = F_{X/k}^*\left(\F_1^{(1)}\right),
    \]
    where $\F_\bullet^{(1)}$ is the Jacobson sequence corresponding to $f^{(1)}:X^{(1)}\to Y^{(1)}$.
    Indeed, this follows from the isomorphism
    \[
    \frac{T_{Y_1/k}}{\G_1} \xrightarrow{d(Y_1/X^{(1)})} \F_1^{(1)} \otimes \cO_{Y_1/k}=g_1^*\left(\F_1^{(1)}\right).
    \]
    Thus, we have
    \[
    c_1(f_1^*\left(\frac{T_{Y_1/k}}{\G_1}\right))=c_1(F_{X/k}^*(\F_1^{(1)}))=pc_1(\F_1),
    \]
    because the line bundle $c_1(F_{X/k}^*(\F_1^{(1)}))$ is defined locally by $p$-powers of local generators of $c_1(\F_1)$. Finally, we conclude
    \[
    f_1^* K_{Y_1} -K_X = c_1(\F_1) - c_1(f_1^*\left(\frac{T_{Y_1/k}}{\G_1}\right))=c_1(\F_1)-pc_1(\F_1)=(p-1)(-c_1(\F_1)).
    \]
    This finishes the proof of the formula for a $1$-foliation.

    \textbf{Step 2}, we prove that, for a power tower $Y_\bullet$ of length $n$, we have\footnote{Actually, this formula holds for any sequence of $1$-foliations.}
    \[
    f_n^* K_{Y_n} - K_X=(1-p)\left(c_1(\F_1)+f_1^* c_1(\F_2)+\ldots+f_{n-1}^* c_1(\F_{n})\right).
    \]
    Indeed, we already know the formula from the theorem for $1$-foliations, so we can apply it to the $1$-foliations $X\to Y_1, Y_1\to Y_2, \ldots, Y_{n-1}\to Y_n$, and then we can pull back all these formulas to $X$:
    \begin{align*}
    (1-p)c_1(\F_1) &= f_1^* K_{Y_1} - K_X,\\
    (1-p)f_1^* c_1(\F_2) &= f_2^* K_{Y_2} - f_1^* K_{Y_1},\\
    &\ldots,\\
    (1-p)f_{m-1}^* c_1(\F_m) &= f_m^* K_{Y_m} - f_{m-1}^* K_{Y_{n-1}}.
\end{align*}
    The statement follows from adding them all up.

    \textbf{Step 3}, we prove a relation:
    \[
    {f'}_{i-2}^{*}\left(c_1(\F_i)\right)=  pc_1(\F_{i-1}) - {f'}_{i-2}^{*}c_1\left(\frac{T_{Y_{i-1}/k}}{\F_i+\G_{i-1}}\right),
    \]
    where we used the functions $f'_i$ from Notation \ref{power tower on variety - notation}. We recall $\G_{i}\coloneqq T_{Y_i/Y_{i-1}^{(1)}}$.
    
    Indeed, it follows from the following diagram:
    \begin{center}
        \begin{tikzcd}
            &0&0&0&\\
            0\ar[r]&\G_{i-1}\ar[r]\ar[u]&{f'_{i}}^* \G_{i}\ar[r]\ar[u]& \frac{T_{Y_{i-1}/k}}{\F_i+\G_{i-1}}\ar[u]\ar[r]&0\\
            0\ar[r]& \G_{i-1}\ar[r]\ar[u]&T_{Y_{i-1}/k}\ar[r]\ar[u]&g_{i-1}^* \F_{i-1}^{(1)}\ar[r]\ar[u]&0\\
            0\ar[r]&0\ar[r]\ar[u]&\F_{i}\ar[r]\ar[u]&\F_{i}\ar[r]\ar[u]&0\\
            &0\ar[u]&0\ar[u]&0\ar[u]&
        \end{tikzcd}
    \end{center}
    From which, we deduce an equality on $Y_{i-1}$:
    \[
    c_1(\F_i) + c_1\left(\frac{T_{Y_{i-1}/k}}{\F_i+\G_{i-1}}\right) = c_1 (g_{i-1}^{*} \F_{i-1})= g_{i-1}^{*}c_1 ( \F_{i-1}),
    \]
    where we used $g_i$ from Notation \ref{power tower on variety - notation}.
    If we apply ${f'}_{i-2}^{*}$ to this equality, then we are done.

    \textbf{Last Step 4}, we can combine the previous steps to finish the proof. We recall that $D_i\coloneqq f_{i-1}^*c_1\left(\frac{T_{Y_{i-1}/k}}{\F_i+\G_{i-1}}\right)$.

    By iterating the relation from step 3, we obtain
    \[
    f_{i-1}^* c_1(\F_i) = p^{i-1} c_1(\F_1)-p^{i-2}D_2-  \ldots  -p^2D_{i-2}-pD_{i-1} -D_i.
    \]
    
    Finally, we can combine the developments from the second and third steps:
    \begin{align*}
        \frac{1}{1-p}(f_{m}^*K_{Y_m}-K_X)=& \sum_{i=1}^{m} f_{i-1}^* c_1(\F_{i} )\\
        =& \sum_{i=1}^{m}\left( p^{i-1} c_1(\F_1)-\sum_{j=2}^{i} p^{i-j}D_j \right)\\
        =& \sum_{i=1}^{m} p^{i-1} c_1(\F_1) - \sum_{i=1}^{m}\sum_{j=2}^{i} p^{i-j}D_j \\
        =& \frac{1-p^m}{1-p} c_1(\F_1) - \sum_{i=1}^{m}\sum_{j=2}^{i} p^{i-j}D_j \\
        =& \frac{1-p^m}{1-p} c_1(\F_1) -\sum_{j=2}^{m} \frac{1-p^{m-j+1}}{1-p}D_j.
    \end{align*}
    The proof is finished by multiplying the above equation by $1-p$.
\end{proof}

Some control over the divisors for $n$-foliations.

\begin{prop}\label{canonical divisor for n-foli and ekedahl - prop}
    Let $f: X\to Y$ be a purely inseparable morphism from Theorem \ref{Canonical Divisor Formula - theorem}.
    
    If $f: X\to Y$ is a $n$-foliation on $X$, then $D_i\ge 0$, for $i\ge 2$. 
    
    If $f: X\to Y$ is an Ekedahl $n$-foliation on $X$, then $D_i= 0$, for $i\ge 2$.
\end{prop}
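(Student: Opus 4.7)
The proof should be a short deduction from the preceding structural results, specifically Lemma~\ref{n-foli torsion sheaves - lemma} together with the characterizations in Proposition~\ref{criterion for being ekedahl}. The plan is to read off the positivity (respectively vanishing) of each $D_i$ directly from the size of the support of the quotient sheaf
\[
\mathcal{T}_i \coloneqq \frac{T_{Y_{i-1}/k}}{\F_i + \G_{i-1}}
\]
on $Y_{i-1}$, and then pull back along the finite morphism $f_{i-1}\colon X\to Y_{i-1}$.

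First I would fix the meaning of $c_1$ for coherent sheaves on our normal varieties $Y_{i-1}$. Since we work on normal varieties, the first Chern class of a coherent sheaf $\mathcal{T}$ is well defined as a Weil divisor class by the formula $c_1(\mathcal{T}) = \sum_{E} \mathrm{length}_{\cO_{Y_{i-1},E}}(\mathcal{T}_E)\cdot E$, where $E$ runs over the prime divisors of $Y_{i-1}$ and the subscript denotes the stalk at the generic point of $E$. In particular, if $\mathcal{T}$ is a torsion sheaf, then this expression is a non-negative linear combination of prime divisors, hence an effective class, and it vanishes exactly when the support of $\mathcal{T}$ has codimension at least $2$.

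Next, for the $n$-foliation case, I would invoke Lemma~\ref{n-foli torsion sheaves - lemma}, which guarantees that $\mathcal{T}_i$ is a torsion sheaf on $Y_{i-1}$ for $1\le i-1<n$. By the previous paragraph, $c_1(\mathcal{T}_i)$ is effective. Since $f_{i-1}\colon X\to Y_{i-1}$ is a finite dominant morphism between normal varieties, the pullback of an effective Weil divisor class is effective (one can check this by pulling back along the restriction to the smooth locus, whose complement has codimension $\ge 2$, and then extending by normality). This gives $D_i = f_{i-1}^{\ast}c_1(\mathcal{T}_i)\ge 0$.

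For the Ekedahl case, I would apply Proposition~\ref{criterion for being ekedahl}~(2)(b), which states precisely that being an Ekedahl $n$-foliation is equivalent to the support of each $\mathcal{T}_i$ having codimension at least $2$. By the Chern-class discussion above, this forces $c_1(\mathcal{T}_i)=0$ in the Weil divisor class group of $Y_{i-1}$, and hence $D_i = f_{i-1}^{\ast}c_1(\mathcal{T}_i)=0$. No step here should be genuinely hard; the only small point requiring care is the definition of $c_1$ for a torsion coherent sheaf on a normal (not necessarily smooth) variety, and the compatibility of $f_{i-1}^{\ast}$ with effectivity, which is why I would spell these out at the start rather than relying on a smoothness assumption that we do not have.
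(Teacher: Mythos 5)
Your proof is correct and has the same architecture as the paper's: invoke Lemma~\ref{n-foli torsion sheaves - lemma} to see the quotient sheaves are torsion, read off effectivity of $c_1$, invoke Proposition~\ref{criterion for being ekedahl}(2)(b) in the Ekedahl case to see the support has codimension $\ge 2$, hence $c_1 = 0$, and then pull back. Where you diverge is purely in the technical underpinnings. You compute $c_1$ of the torsion sheaf directly by the lengths-at-codimension-one-primes formula; the paper instead appeals to the Stacks Project definition via resolutions by vector bundles together with proper pushforward of a gradation supported on the closed immersion of the support. Both yield the same effective divisor class, and your route is the more elementary one, though your opening sentence asserts the formula for a general coherent sheaf --- it only makes sense for sheaves that are torsion at all codimension-one points, which you do restrict to a moment later, so this is a wording slip rather than a gap. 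For the compatibility of $f_{i-1}^{\ast}$ with effectivity, you restrict to the smooth locus and extend by normality; the paper instead uses that $f_{i-1}$, being purely inseparable, is finite and a homeomorphism, so $|f_{i-1}^*(E)| = |E|$ for each prime divisor $E$ and the multiplicities stay non-negative. The homeomorphism argument is cleaner and specific to this purely inseparable setting, but yours is also valid.
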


\begin{proof}
    Let $f:X\to Y$ be a $n$-foliation. Then, for $i=2,\ldots,n$, the sheaf $\frac{T_{Y_{i-1}/k}}{\F_i+\G_{i-1}}$ is torsion, Lemma \ref{n-foli torsion sheaves - lemma}. Therefore, from functorial properties of Chern classes of coherent sheaves: a definition by using resolutions by vector bundles, \cite[Section 0ESY]{stacks-project} and a proper pushforward \cite[Section 02R3]{stacks-project} applied to the closed immersion $\op{Supp}(\frac{T_{Y_{i-1}/k}}{\F_i+\G_{i-1}})\to Y_{i-1}$ and a gradation of $\frac{T_{Y_{i-1}/k}}{\F_i+\G_{i-1}}$ that is coherent on $\op{Supp}(\frac{T_{Y_{i-1}/k}}{\F_i+\G_{i-1}})$, we deduce that $c_1(\frac{T_{Y_{i-1}/k}}{\F_i+\G_{i-1}})$ is a sum of divisors $\sum a_i E_i$, where $E_1,\ldots,E_k$ are irreducible components of the support of $\frac{T_{Y_{i-1}/k}}{\F_i+\G_{i-1}}$ that are of codimension $1$ in $X$, and $a_i\ge 0$ are integers. Consequently, the pullback satisfies $f_{i-1}^* c_1(\frac{T_{Y_{i-1}/k}}{\F_i+\G_{i-1}})=\sum a_i f_{i-1}^*(E_i)$, where $f_{i-1}^*(E_i)$ are effective divisors on $X$ that satisfy $|f_{i-1}^*(E_i)|=|E_i|$, because $f$ is a finite morphism and a homeomorphism. This proves that
    \[
    D_i\coloneqq f_{i-1}^* c_1(\frac{T_{Y_{i-1}/k}}{\F_i+\G_{i-1}}) \ge 0.
    \]

    Let $f:X\to Y$ be an Ekedahl $n$-foliation. Then, for $i=2,\ldots,n$, the supports of the sheaves $\frac{T_{Y_{i-1}/k}}{\F_i+\G_{i-1}}$ are of codimension at least $2$ by Proposition \ref{criterion for being ekedahl}, therefore $c_1(\frac{T_{Y_{i-1}/k}}{\F_i+\G_{i-1}})=0$, and so $D_i=f_{i-1}^* c_1(\frac{T_{Y_{i-1}/k}}{\F_i+\G_{i-1}})=f_{i-1}^* 0=0$.
\end{proof}

\begin{question}
    I believe that if I have a fibration $f: X\to Y$ that can be recovered from its power tower, then the formulas for $K_X-f_i^*K_{Y_i}$ converge (possibly in the $p$-adic sense) to a formula for $K_X-f^*K_{Y}$. Is it true? Is it a special case of a more general fact that there exists a ``canonical divisor of a power tower'' $K_{Y_\bullet}$ and this divisor satisfies that formula?
    
    I do not have any proof for it; however, there is evidence for this claim in Marta Benozzo's research. She studied formulas for $K_X-f^*K_{Y}$ in her PhD thesis, which is partially covered in her preprints \cite{benozzo2023canonicalbundleformulapositive},\cite{benozzo2024superadditivityanticanonicaliitakadimension}, and her paper \cite{benozzo2022iitakaconjectureanticanonicaldivisors_paper!}, and she communicated to me that it seems that supports of her ``extra divisors''  in those formulas coincide with the supports of extra divisors appearing in ``the limit'' of my formulas, at least in very well-behaved cases. More research is needed.
\end{question}

\subsection{Application: Not-L{\" u}roth Theorems}

L{\" u}roth-type theorems say that if a variety $Y/k$ of dimension $n$ is dominated by $\mathbb{P}^n_k$, then, under some extra conditions, $Y/k$ is birational to $\mathbb{P}^n_k$. One such condition that works is $n=1$ and $Y$ being smooth. This is \emph{the} L{\" u}roth theorem, see e.g. \cite[Chapter IV, Example 2.5.5.]{Hartshorne}. 

In this section, we prove that if a morphism $f:\mathbb{P}^n_k \to Y$ is purely inseparable, then in most cases $Y$ is \textit{not} like $\mathbb{P}^n_k$, and we can be more precise about what it means. This result is an application of the formula from Theorem \ref{Canonical Divisor Formula - theorem}.

The following definition is \cite[Definition 2.1.3.]{positivity1}; it is applied to a variety over an arbitrary field, not necessarily $\mathbb{C}$.

\begin{defin}\label{kodaira definiton}
    Let $D$ be a divisor on a normal variety $X/k$. The \emph{Iitaka dimension} of $D$ is defined to be an integer $d$
    such that there are positive real numbers $a,b>0$ satisfying
    \[
    am^d \le h_0(X, mD) \le b m^d
    \]
    for $m>>0$. In other words, $h_0(X, mD)\sim m^d$. We denote it by $\kappa(X, D)=d$.
\end{defin}

\begin{example}\label{iitaka dim for a div on P^n - Example}
    Let $X/k=\Proj_k^n$. Let $D$ be a divisor on $X$. Then there are only three options for $\kappa(X,D)$:
    \begin{itemize}
        \item If $d<0$, then we have $h_0(\mathbb{P}^2_k, md)=0$, for $m>0$. So, $\kappa(X,D)=-\infty$.
        \item If $d=0$, then we have $h_0(\mathbb{P}^2_k, md)=1$, for $m>0$. So, $\kappa(X,D)=0$.
        \item If $d>0$, then we have $h_0(\mathbb{P}^2_k, md) \sim m^n$, for $m>>0$. So, $\kappa(X,D)=n$.
    \end{itemize}
    The computation follows from \cite[Chapter III, Theorem 5.1]{Hartshorne}.
\end{example}

\begin{lemma}\label{iitaka is preserved under purely insep - lem}
    Let $f: X\to Y$ be a purely inseparable morphism between normal varieties over a perfect field $k$ such that $k=K(X)^{p^\infty}$. Let $D$ be a divisor on $Y$. Then, we have 
    \[
    \kappa(Y,D)=\kappa(X, f^* D).
    \]
\end{lemma}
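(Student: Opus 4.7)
The plan is to squeeze $h^0(X, m f^*D)$ between $h^0(Y, mD)$ and $h^0(Y, p^{r}mD)$ for a fixed $r$ (the exponent of $f$), and read off equality of Iitaka dimensions from Definition \ref{kodaira definiton}.

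First I would record two maps on sections. The pullback of rational functions gives a $k$-linear injection
\[
f^{*}\colon H^{0}(Y, mD) \hookrightarrow H^{0}(X, m f^{*}D)
\]
for every $m\ge 0$; this is well defined because pulling back a rational function in $K(Y)\subset K(X)$ along $f$ only enlarges the set of effective local equations (and $f^{*}D$ is defined since $f$ is a finite dominant morphism between normal varieties). In the other direction, since $f$ is purely inseparable of exponent $r$, Definition \ref{purely inseparable - varieties - def} yields $K(X)^{p^{r}}\subset K(Y)$; thus for any $s\in H^{0}(X, m f^{*}D)\subset K(X)$ the $p^{r}$-th power $s^{p^{r}}$ lies in $K(Y)$.

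Next I would verify that $s\mapsto s^{p^{r}}$ lands in $H^{0}(Y, p^{r}mD)$. The key compatibility is that for any $t\in K(Y)$ one has $\mathrm{div}_{X}(t)=f^{*}\mathrm{div}_{Y}(t)$: indeed $f$ is a homeomorphism on underlying spaces (being purely inseparable), so every prime divisor $E$ on $Y$ satisfies $f^{*}E=e(E)\cdot E'$ with $E'=f^{-1}(E)_{\mathrm{red}}$ prime on $X$ and $e(E)$ a $p$-power, and then $\mathrm{ord}_{E'}(t)=e(E)\mathrm{ord}_{E}(t)$. Applying this to $s^{p^{r}}$ gives
\[
f^{*}\bigl(\mathrm{div}_{Y}(s^{p^{r}})+p^{r}mD\bigr)=\mathrm{div}_{X}(s^{p^{r}})+p^{r}m f^{*}D=p^{r}\bigl(\mathrm{div}_{X}(s)+m f^{*}D\bigr)\ge 0.
\]
Since the supports of pullbacks of distinct prime divisors on $Y$ are distinct primes on $X$, the relation $f^{*}E\ge 0$ forces $E\ge 0$; hence $\mathrm{div}_{Y}(s^{p^{r}})+p^{r}mD\ge 0$, i.e.\ $s^{p^{r}}\in H^{0}(Y, p^{r}mD)$. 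The map $s\mapsto s^{p^{r}}$ is clearly injective.

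Combining the two inclusions yields, for every $m\ge 0$,
\[
h^{0}(Y, mD)\;\le\; h^{0}(X, m f^{*}D)\;\le\; h^{0}(Y, p^{r}mD).
\]
Now I would conclude. If $\kappa(Y,D)=-\infty$, the right inequality forces $h^{0}(X, m f^{*}D)=0$ for all $m>0$, so $\kappa(X, f^{*}D)=-\infty$. If $\kappa(Y,D)=d\ge 0$, choose $a,b>0$ with $am^{d}\le h^{0}(Y,mD)\le bm^{d}$ for $m$ in the appropriate range; then the squeeze gives $am^{d}\le h^{0}(X, m f^{*}D)\le b(p^{r})^{d}m^{d}$, whence $\kappa(X, f^{*}D)=d$. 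In both cases $\kappa(X, f^{*}D)=\kappa(Y, D)$. The only slightly delicate step is the identity $\mathrm{div}_{X}(t)=f^{*}\mathrm{div}_{Y}(t)$ for $t\in K(Y)$, which is where the assumption of purely inseparable (hence universally homeomorphic) $f$ between normal varieties is essential.
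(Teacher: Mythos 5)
Your proof is correct and follows essentially the same route as the paper's: both squeeze $h^{0}(X,mf^{*}D)$ between $h^{0}(Y,mD)$ and $h^{0}(Y,p^{r}mD)$ and read off equality of Iitaka dimensions. The only difference is presentational — you justify the second inclusion $H^{0}(X,mf^{*}D)\hookrightarrow H^{0}(Y,p^{r}mD)$ by a direct divisor computation (using $\operatorname{div}_{X}(t)=f^{*}\operatorname{div}_{Y}(t)$ for $t\in K(Y)$, which holds since prime divisors are in bijection under the universal homeomorphism $f$ with $p$-power ramification), whereas the paper packages the same map as a further pullback along the ``inverse Frobenius'' factor $Y^{(-n)}\to X$ of $F^{n}_{Y^{(-n)}/k}$, noting that the total composition on sections is $a\mapsto a^{p^{n}}$. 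Your version makes the verification more explicit; the paper's is slicker but hides the divisor arithmetic inside the Frobenius factorization. Both are fine.
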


\begin{proof}
    Let $n$ be the exponent of $f$. It is an integer, not $\infty$, by Lemma \ref{Purely Inseparable is of Finite Exponent}. Therefore, there is a factorization of $F_{Y^{(-n)}/k}$:
    \[
    Y^{(-n)}\to X \xrightarrow{f} Y.
    \]
    (We can take the negative exponent $-n$ in the following way: we take $K(Y)$, and we take the normalization of $\cO_{X/k}$ in $K(Y)^{p^{-n}}$.)
    From this factorization, we obtain the following sequence.
    \[
    H^0(Y,mD)\to H^0(X,mf^* D)\to H^0(Y^{(-n)},m\left(F_{Y^{(-n)/k}}^n\right)^*D)\simeq H^0(Y,mp^n D).
    \]
    All the above arrows are injections, because the composition takes a section to its $p^n$-power, i.e., $a\mapsto a^{p^n}$, and it is an injection. (For the second arrow, we apply the same argument for $X^{(-n)}\to Y^{(-n)} \to X$.)

    Finally, if $\kappa(Y,D)=d$, then there are positive real numbers $a,b\ge 0$ such that
    \[
   a m^d \le h_0(Y,mD) \le h_0(X,mf^* D) \le h_0(Y,mp^n D) \le (bp^{nd}) m^d,
   \]
   where $m\ge 0$ is an integer. Therefore, we have $\kappa(X, f^* D)=d$. (The opposite implication follows from applying the same argument to $Y^{(-n)}\to X$ and $f^* D$ on $X$.) This proves the lemma.
\end{proof}

\begin{thm}[A Not-L{\" u}roth Theorem]\label{big not-luroth theorem}
    Let $n>1$ be an integer. 
    Let $\mathbb{P}^n_k$ be the $n$-dimensional projective space over a perfect field $k$ of characteristic $p>0$. 
    Let $p-1$ not divide $n+1$.
    
    If $f: \mathbb{P}^n_k\to Y$ is a purely inseparable morphism between normal varieties over $k$, then the Iitaka dimension of a canonical divisor of $Y$ is equal $-\infty$ or $n$, i.e., we have
    \[
    \kappa(Y, K_Y) = -\infty, \text{or } n.
    \]
\end{thm}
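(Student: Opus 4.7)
The plan is to reduce everything to an arithmetic statement about the degree of $f^*K_Y$ on $\mathbb{P}^n_k$, and then exploit the fact that all of the coefficients appearing in the Canonical Divisor Formula (Theorem \ref{Canonical Divisor Formula - theorem}) are divisible by $p-1$. First, by Lemma \ref{iitaka is preserved under purely insep - lem}, we have $\kappa(Y,K_Y)=\kappa(\mathbb{P}^n_k, f^*K_Y)$, so it suffices to study the Iitaka dimension of $f^*K_Y$ on $\mathbb{P}^n_k$. By Example \ref{iitaka dim for a div on P^n - Example}, every divisor $D$ on $\mathbb{P}^n_k$ satisfies $\kappa(\mathbb{P}^n_k,D)\in\{-\infty,0,n\}$, and since $\op{Pic}(\mathbb{P}^n_k)=\mathbb{Z}\cdot H$ with $H$ the hyperplane class, $\kappa(\mathbb{P}^n_k,D)=0$ if and only if $\deg D=0$. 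So the theorem reduces to showing that $\deg f^*K_Y\neq 0$.

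Next, I would apply the Canonical Divisor Formula. Let $m$ denote the exponent of $f$ (which is finite by Lemma \ref{Purely Inseparable is of Finite Exponent}). Then Theorem \ref{Canonical Divisor Formula - theorem} gives divisors $D_2,\ldots,D_m$ on $\mathbb{P}^n_k$ with
\[
f^*K_Y - K_{\mathbb{P}^n_k} \;=\; (p^m-1)\bigl(-c_1(\mathcal{F}_1)\bigr) \;+\; \sum_{i=2}^{m}(p^{m+1-i}-1)\,D_i.
\]
Taking degrees on $\mathbb{P}^n_k$ and using $\deg K_{\mathbb{P}^n_k}=-(n+1)$, I obtain the scalar identity
\[
\deg f^*K_Y \;=\; -(n+1) \;+\; (p^m-1)\,a \;+\; \sum_{i=2}^{m}(p^{m+1-i}-1)\,b_i,
\]
where $a\coloneqq\deg(-c_1(\mathcal{F}_1))$ and $b_i\coloneqq\deg D_i$ are integers.

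Finally, I would reduce modulo $p-1$. For every $j\geq 1$, the factorization $p^j-1=(p-1)(p^{j-1}+\cdots+1)$ shows that every coefficient of $a$ and of the $b_i$ in the formula above is divisible by $p-1$. Hence
\[
\deg f^*K_Y \;\equiv\; -(n+1) \pmod{p-1}.
\]
Assume for contradiction that $\deg f^*K_Y = 0$. Then $p-1 \mid n+1$, contradicting the hypothesis. Therefore $\deg f^*K_Y\neq 0$, which gives $\kappa(\mathbb{P}^n_k,f^*K_Y)\in\{-\infty,n\}$, and the conclusion follows.

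There is no real obstacle here beyond carefully matching indices between the exponent $m$ of the morphism and the dimension $n$ of the projective space; the core of the argument is simply the observation that all the exotic contributions on the right-hand side of the Canonical Divisor Formula vanish modulo $p-1$, so the obstruction to $f^*K_Y$ being trivial is governed entirely by the residue of $K_{\mathbb{P}^n_k}$ modulo $p-1$.
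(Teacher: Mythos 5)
Your proof is correct and takes essentially the same approach as the paper: reduce to showing $f^*K_Y$ is nonzero on $\mathbb{P}^n_k$, apply the Canonical Divisor Formula, and observe that every coefficient on the right-hand side is divisible by $p-1$, forcing $\deg f^*K_Y \equiv -(n+1) \pmod{p-1}$. The paper presents the same divisibility argument by explicitly factoring $(1-p)$ out of the right-hand side rather than working in terms of degrees and congruences, but the mathematical content is identical.
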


\begin{proof}
    By Lemma \ref{iitaka is preserved under purely insep - lem}, we know that 
    \[
    \kappa(Y, K_Y)=\kappa(\mathbb{P}^n_k, f^* K_Y).
    \]
    By Example \ref{iitaka dim for a div on P^n - Example}, we know that $\kappa(\mathbb{P}^n_k, f^* K_Y)= -\infty$, or $ n$ if and only if $f^* K_Y\ne 0$. So, we have to prove that $f^* K_Y\ne 0$. Let us assume that it is false, i.e., $f^* K_Y=0$. Let $m$ be the exponent of $f$. Then, the formula from Theorem \ref{Canonical Divisor Formula - theorem} applied to $f$ gives:
    \begin{align*}
        f^*K_{Y}-K_{\mathbb{P}^n_k} &= (1-p^m)c_1(\F_1) - \sum_{j=2}^{m} (1-p^{m-j+1})D_j,\\
        n+1 &=(1-p^m)c_1(\F_1) - \sum_{j=2}^{m} (1-p^{m-j+1})D_j,\\
        n+1 &=(1-p)\left(\frac{1-p^m}{1-p}c_1(\F_1) - \sum_{j=2}^{m} \frac{1-p^{m-j+1}}{1-p} D_j\right).
    \end{align*}
    This is a contradiction, because $p-1$ does not divide $n+1$. This proves the theorem.
\end{proof}

\begin{question}
    The conclusion of Theorem \ref{big not-luroth theorem} is most likely false if $p-1$ divides $n+1$. Indeed, I believe that counterexamples exist. In particular, the ``normalisations of K3-like coverings'' from \cite[Section 4]{Fanelli-Schroer-k3-like} look like a promising direction for finding these counterexamples for $n=2$. 
    
    Are these morphisms counterexamples? Are all counterexamples like them?
\end{question}

\begin{remark}
    The condition ``$p-1$ does not divide $n+1$'' is true if $p>n+2$, so if the characteristic is great enough with respect to the dimension of the projective space, then Theorem \ref{big not-luroth theorem} holds.

    Moreover, for $n=2$, the assumption on $p$ in Theorem \ref{big not-luroth theorem} can be replaced with $p>2$, because a prime number satisfies $p>2$ if and only if $p-1$ does not divide $3$.
\end{remark}

\newpage
\bibliographystyle{amsalpha} 
\bibliography{bib}

\end{document}